\newsavebox{\@brx}
\newcommand{\llangle}[1][]{\savebox{\@brx}{\(\m@th{#1\langle}\)}%
  \mathopen{\copy\@brx\kern-0.5\wd\@brx\usebox{\@brx}}}
\newcommand{\rrangle}[1][]{\savebox{\@brx}{\(\m@th{#1\rangle}\)}%
  \mathclose{\copy\@brx\kern-0.5\wd\@brx\usebox{\@brx}}}
\DeclareFontFamily{OML}{rsfs}{\skewchar\font'177}
\DeclareFontShape{OML}{rsfs}{m}{n}{ <5> <6> rsfs5 <7> <8> <9> rsfs7
  <10> <10.95> <12> <14.4> <17.28> <20.74> <24.88> rsfs10 }{}
\DeclareMathAlphabet{\mathfs}{OML}{rsfs}{m}{n}
\newtheorem{theorem}{Theorem}
\newtheorem{lemma}[theorem]{Lemma}
\newtheorem{proposition}[theorem]{Proposition}
\newtheorem{corollary}[theorem]{Corollary}
\theoremstyle{definition}
\theoremstyle{remark}
\newtheorem{remark}[theorem]{\bf Remark}
\numberwithin{equation}{section}
\numberwithin{theorem}{section}
\newcommand{\intav}[1]{\mathchoice {\mathop{\vrule width 6pt height 3 pt depth  -2.5pt
\kern -8pt \intop}\nolimits_{\kern -6pt#1}} {\mathop{\vrule width
5pt height 3  pt depth -2.6pt \kern -6pt \intop}\nolimits_{#1}}
{\mathop{\vrule width 5pt height 3 pt depth -2.6pt \kern -6pt
\intop}\nolimits_{#1}} {\mathop{\vrule width 5pt height 3 pt depth
-2.6pt \kern -6pt \intop}\nolimits_{#1}}}
\newcommand{\intavl}[1]{\mathchoice {\mathop{\vrule width 6pt height 3 pt depth  -2.5pt
\kern -8pt \intop}\limits_{\kern -6pt#1}} {\mathop{\vrule width 5pt
height 3  pt depth -2.6pt \kern -6pt \intop}\nolimits_{#1}}
{\mathop{\vrule width 5pt height 3 pt depth -2.6pt \kern -6pt
\intop}\nolimits_{#1}} {\mathop{\vrule width 5pt height 3 pt depth
-2.6pt \kern -6pt \intop}\nolimits_{#1}}}
\newcommand{\norm}[1]{{\left\| #1 \right\|}}
\newcommand{\vertiii}[1]{{\left\vert\kern-0.2ex\left\vert\kern-0.2ex\left\vert #1 
    \right\vert\kern-0.2ex\right\vert\kern-0.2ex\right\vert}}
\newcommand{\un}{\underline}
\newcommand{\ve}{\varepsilon}
\newcommand{\wt}{\widetilde}
\newcommand{\wh}{\widehat}
\newcommand{\vt}{\vartheta}
\newcommand{\vf}{\varphi}
\newcommand{\R}{\mathbb{R}}
\newcommand{\N}{\mathbb{N}}
\newcommand{\Z}{\mathbb{Z}}
\newcommand{\F}{\mathfs{F}}
\newcommand{\M}{\mathfs{M}}
\newcommand{\wM}{\widetilde{\mathfs{M}}}
\renewcommand{\exp}[1]{{\rm exp}_{#1}}
\newcommand{\Hol}[1]{{\rm Hol}_{#1}}
\newcommand{\Lip}[1]{{\rm Lip}\left({#1}\right)}
\newcommand{\Sas}{d_{\rm Sas}}
\newcommand{\inj}{{\rm inj}}
\begin{document}

\title[Symbolic dynamics for maps with singularities in high dimension]{Symbolic dynamics for nonuniformly hyperbolic maps with singularities in high dimension}

\author{Ermerson Araujo, Yuri Lima, and Mauricio Poletti}

\address{Departamento de Matem\'atica, Universidade Federal do Cear\'a (UFC), Campus do Pici,
Bloco 914, CEP 60455-760. Fortaleza -- CE, Brasil}
\email{ermersonaraujo@gmail.com, yurilima@gmail.com, mpoletti@impa.br}
\address{Laboratoire de Math\'ematiques d'Orsay \\ CNRS - UMR 8628, Univ. Paris-Saclay,
Orsay 91405, France}
\email{mpoletti@impa.br}
\address{Departamento de Matem\'atica/CCET, Universidade Federal do Maranh\~ao (UFMA), 
Cidade Universit\'aria Dom Delgado, CEP 65080-805, S\~ao Lu\'is -- MA, Brazil}
\email{ermersonaraujo@gmail.com}

\date{\today}
\keywords{Markov partition, Pesin theory, symbolic dynamics}
\subjclass[2020]{37-02, 37B10, 37C05, 37C35, 37C83, 37D25, 37D35}
\thanks{The authors thank Viviane Baladi, Snir Ben Ovadia, J\'er\^ome Buzzi, Gianluigi Del Magno,
Mark Demers, Fran\c cois Ledrappier, Roberto Markarian, Omri Sarig, Domokos Sz\'asz
for useful explanations and suggestion. Araujo is supported by CAPES.
Lima is supported by CNPq and Instituto Serrapilheira, grant ``Jangada Din\^amica:
Impulsionando Sistemas Din\^amicos na Regi\~ao Nordeste''.
During the preparation of this manuscript, Poletti was supported by the ERC project 692925 NUHGD}

\begin{abstract}
We construct Markov partitions for non-invertible and/or singular
nonuniformly hyperbolic systems defined on higher dimensional Riemannian manifolds.
The generality of the setup covers classical examples not treated so far, such as geodesic
flows in closed manifolds, multidimensional billiard maps, and Viana maps, and includes all the recent
results of the literature. We also provide a wealth of applications.
\end{abstract}

\maketitle

\tableofcontents

\section{Introduction}\label{Section-introduction}

In 2013, Sarig constructed countable Markov partitions with full topological entropy for $C^{1+\beta}$
surface diffeomorphisms \cite{Sarig-JAMS}. Since then, his method is being further developed to
other classes of systems, including classes where the classical literature of Markov partitions 
was not able to handle, such as billiard maps.
Here are some of the recent developments:
\begin{enumerate}[$\circ$]
\item Lima and Sarig for three dimensional flows without fixed points \cite{Lima-Sarig}.
\item Lima and Matheus for surface maps with discontinuities \cite{Lima-Matheus}.
\item Ben Ovadia for diffeomorphisms in any dimension \cite{Ben-Ovadia-2019}.
\item Lima for one-dimensional maps \cite{Lima-IHP}.
\end{enumerate}

The goal of the present work is to go far beyond the above results, constructing Markov structures
for maps on higher dimensional Riemannian manifolds, with the novelty of allowing the map
to be non-invertible and have a singular set, and the manifold to be disconnected and
non-compact. This unified framework allows us to cover many classical examples
of nonuniform hyperbolic dynamics, such as geodesic flows in closed manifolds, billiard maps, and Viana maps.
Also, this wider framework includes the four works cited above, in some cases
providing even better results (in comparison to \cite{Lima-IHP}). Not only this: using the recent work of
Ben Ovadia \cite{Ben-Ovadia-2020}, we are able to identify the points coded
by recurrent sequences. 

Before stating the main result, let us mention some of its applications.
Let $N$ be a closed $C^\infty$ Riemannian manifold,
let $X$ be a $C^{1+\beta}$ vector field on $N$ s.t. $X_x\neq 0$ for all $x\in N$,
and let $g=\{g^t\}_{t\in\R}:N\to N$ be the flow  determined by $X$.
Assume that $g$ has positive topological entropy, call it $h$.
Let $\mu$ be a $g$--invariant Borel probability measure on $N$.
We call $\mu$ {\em hyperbolic} if all of its Lyapunov exponents are non-zero, except for
the zero exponent in the flow direction. A {\em simple closed orbit} of length $\ell$ is a parametrized
curve $\gamma(t)=g^t(x)$, $0\leq t\leq\ell$, s.t. $\gamma(0)=\gamma(\ell)$ and
$\gamma(0)\neq \gamma(t)$ for $0<t<\ell$. Denoting the {\em trace} of $\gamma$ by the set
$\{\gamma(t):0\leq t\leq\ell\}$, we let $[\gamma]$ denote the equivalence class of the relation
$\gamma_1\sim\gamma_2\Leftrightarrow\gamma_1,\gamma_2$
have equal lengths and traces, and let ${\rm Per}_T(g):=\#\{[\gamma]:\gamma\textrm{ is simple and }\ell(\gamma)\leq T\}$.

\begin{theorem}\label{Thm-Flows-periodic}
Let $g$ be as above. If $g$ has a hyperbolic measure of maximal entropy,
then $\exists C>0$ s.t. ${\rm Per}_T(g)\geq C{e^{hT}}/{T}$ for all $T$ large enough.
\end{theorem}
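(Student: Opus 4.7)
The plan is to reduce the counting problem for simple closed orbits of the flow $g$ to a counting problem on a suspension over a countable topological Markov shift, and then invoke the asymptotic for periodic points on transitive countable Markov shifts. Using the main symbolic coding result of the paper (applied to a Poincaré return map to a suitable collection of smooth sections transverse to $X$), we obtain a countable topological Markov shift $(\Sigma,\sigma)$, a H\"older roof function $r\colon\Sigma\to(0,\infty)$ bounded away from zero and infinity, and a H\"older factor map $\pi_r\colon\Sigma_r\to N$ from the suspension flow $(\Sigma_r,\sigma_r)$ onto an invariant subset of $N$ of full measure for every hyperbolic invariant probability. In particular, any $g$--invariant hyperbolic measure $\mu$ lifts to a $\sigma_r$--invariant measure of equal entropy, and closed orbits of $(\Sigma_r,\sigma_r)$ project to closed orbits of $g$ with bounded multiplicity.

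Next I would use the hyperbolic measure of maximal entropy assumption. Lifting the measure of maximal entropy $\mu$ to $\Sigma_r$ gives a $\sigma_r$--invariant probability of entropy $h$; by Abramov's formula, the induced $\sigma$--invariant measure $\bar\mu$ on $\Sigma$ has entropy equal to $h\int r\,d\bar\mu>0$. Restricting to an irreducible component $\Sigma'\subset\Sigma$ carrying $\bar\mu$, the Gurevich entropy of $(\Sigma',\sigma)$ is at least $h\int r\,d\bar\mu$. The existence of a measure of maximal entropy on $\Sigma'$ together with the finite roof implies that the induced shift has a recurrent equilibrium state, i.e. $(\Sigma',\sigma)$ is \emph{positively recurrent} in the sense of Gurevich--Sarig, with topological pressure $P(-h\,r)=0$ realised by an equilibrium state whose projection to $N$ is a measure of maximal entropy for $g$.

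With positive recurrence in hand, I would apply the Gurevich--Savchenko--Sarig counting theorem for suspension flows over positively recurrent countable Markov shifts: the number of periodic $\sigma_r$--orbits of period at most $T$ in $\Sigma'$ is asymptotic to $e^{hT}/(hT)$, up to a constant multiplicative factor depending on the period of the irreducible component. From here I pass back to $g$: each periodic orbit of $\sigma_r$ projects to a periodic orbit of $g$, and by the Bowen--Sarig bounded-to-one property proved in the paper's coding theorem, only a bounded number of symbolic orbits can share the same geometric trace. Dividing by this uniform bound gives the lower bound ${\rm Per}_T(g)\geq C e^{hT}/T$ for $T$ large.

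The main obstacle, and the place where the setup of this paper is essential, is the verification that the symbolic lift of the measure of maximal entropy lives on a single irreducible component which is positively recurrent; in the previously known classes (surface diffeomorphisms, three-dimensional flows) this is the heart of the argument in the corresponding papers, and here the added difficulties of singularities, non-invertibility and high dimension have to be absorbed into the coding theorem. Once the lifted measure is identified as an equilibrium state of the normalised roof $-h\,r$ on an irreducible recurrent component, the counting estimate becomes an off-the-shelf consequence. The only remaining routine work is to control the multiplicity of the coding so that the asymptotic lower bound for symbolic periodic orbits descends to the geometric ones without losing the $1/T$ factor.
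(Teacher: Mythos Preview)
Your proposal is correct and follows essentially the same route as the paper. The paper constructs the suspension coding $\pi_r:\Sigma_r\to N$ via an adapted Poincar\'e section (Theorem~\ref{Thm-Flows}) and then simply invokes \cite[Theorem~8.1]{Lima-Sarig} for the symbolic counting step; your outline unpacks precisely what that reference does (lift the ergodic hyperbolic MME to an irreducible component, identify it as an equilibrium state for $-hr$ so the component is positively recurrent, apply the suspension-flow periodic orbit count, and project back).

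One small imprecision: the coding in Theorem~\ref{Thm-Flows} is only \emph{finite}-to-one, not uniformly bounded-to-one, so you cannot simply ``divide by a uniform bound'' over all of $\Sigma_r^\#$. The fix is the standard one already present in \cite{Sarig-JAMS,Lima-Sarig}: fix a vertex $v$ with positive $\bar\mu$-measure in your irreducible component, count only symbolic periodic orbits passing through $v$, and use that the multiplicity bound $N(v,v)$ from the proof of Theorem~\ref{Thm-widehat-pi}(3) is uniform for such orbits. This costs nothing in the asymptotic and your argument then goes through.
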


The second application deals with equilibrium measures, see Section \ref{Section-equilibrium}
for its definition and Section \ref{Section-Flows} for the definition of the geometric potential $J$ of a flow
and its equilibrium measures.

\begin{theorem}\label{Thm-Flows-Bernoulli}
For $g$ as above, let $\vf:N\to\R$ be a H\"older continuous
potential or $\vf=tJ$ for $t\in\R$. Then every equilibrium measure of $\vf$
has at most countably many ergodic hyperbolic components, and each of them is Bernoulli up to a period.
\end{theorem}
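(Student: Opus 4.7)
The plan is to reduce the statement to its countable Markov shift analogue using the symbolic model produced by the main theorem of the paper. First I would feed a cross-section of the flow $g$ (which exists globally since $X_x\neq 0$ everywhere) into the paper's coding machinery to obtain a two-sided countable Markov shift $(\Sigma,\sigma)$, a H\"older roof function $r:\Sigma\to(0,\infty)$, and a finite-to-one semi-conjugacy $\pi$ from the suspension $(\Sigma_r,\sigma_r)$ onto $(N,g)$. Combining this with the identification of coded orbits in \cite{Ben-Ovadia-2020} shows that the image of $\pi$ carries every hyperbolic $g$-invariant probability measure, so the symbolic model is rich enough to lift any such measure.

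Given an equilibrium measure $\mu$ of $\vf$ and an ergodic hyperbolic component $\mu_0$ of it, I would next lift $\mu_0$ to a $\sigma_r$-invariant probability measure $\wh\mu_0$ on $\Sigma_r$. Because $\pi$ is finite-to-one on the hyperbolic locus, entropy is preserved, so $\wh\mu_0$ is itself an equilibrium measure for the pull-back potential $\wh\vf=\vf\circ\pi$. When $\vf$ is H\"older the pull-back is H\"older on $\Sigma_r$; when $\vf=tJ$ the needed H\"older continuity of the unstable Jacobian along symbolic charts is a byproduct of the Pesin-block construction. I would then invoke Sarig's spectral decomposition for countable Markov shifts together with its extension to suspension flows: equilibrium measures of H\"older potentials on $(\Sigma_r,\sigma_r)$ have at most countably many ergodic components, one per topologically transitive subshift, and on each component the Gibbs--Markov structure of the base combined with an Ornstein-type argument (as in \cite{Sarig-JAMS} for the discrete case) yields the Bernoulli property up to a period equal to the gcd of the loop lengths of the incidence graph. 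Pushing down by $\pi$ and using that the multiplicity is uniformly bounded on a full-measure set, both the ergodic decomposition and the Bernoulli property descend to $\mu$.

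The main obstacle is the lifting step: showing that every hyperbolic ergodic component $\mu_0$ lifts to a genuine equilibrium measure $\wh\mu_0$ upstairs. For the existence of the lift one relies on the characterization of coded orbits from \cite{Ben-Ovadia-2020}, which identifies the carrier of $\pi$ and lets one build an invariant measure in $\Sigma_r$ that projects back to $\mu_0$. For the equilibrium property one needs a variational principle tailored to the symbolic suspension, obtained by combining an Abramov--Rokhlin formula with the fact that $\wh\mu_0$ is a finite-entropy lift of $\mu_0$. Once this is in place, the remainder of the argument is a routine translation of the symbolic results, with the only subtlety being that the period obtained symbolically needs to be reinterpreted on the continuous-time side.
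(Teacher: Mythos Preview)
Your proposal is essentially correct and follows the same route the paper takes (which in turn defers to \cite{LLS-2016}): pass to an adapted Poincar\'e section, apply the main coding theorem to the return map to get a topological Markov flow extension, lift the equilibrium measure, use Buzzi--Sarig uniqueness and Sarig's Bernoulli results at the symbolic level, and project back down via the finite-to-one factor map.

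One imprecision worth flagging: you assert that the symbolic model ``carries every hyperbolic $g$-invariant probability measure'' and attribute this to \cite{Ben-Ovadia-2020}. That is not quite right, and not what is needed. The Poincar\'e section produced by Theorem~\ref{Thm-Adapted-Section} is adapted to a \emph{specific} $g$-invariant measure, and the resulting coding (Theorem~\ref{Thm-Flows}) is only guaranteed to carry that measure. The correct setup is to build the section adapted to the given equilibrium measure $\mu$ itself; since $\log d(\cdot,\mathfs S)$ is in $L^1$ of the induced measure $\nu$, almost every ergodic component of $\nu$ is again $f$-adapted, and the hyperbolic ones are then captured by the single coding for the return map. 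This is what makes the countability argument go through via Theorems~\ref{Thm-Application-1} and~\ref{Thm-Application-2}. The role of \cite{Ben-Ovadia-2020} in this paper is different: it is used to identify the coded set ${\rm NUH}^\#$, and that identification is already absorbed into Theorem~\ref{Thm-Main}. Your outline is otherwise sound; in particular your handling of the $tJ$ case via H\"older continuity of the unstable direction on the symbolic space matches Proposition~\ref{Lemma-Holder-directions}.
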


The above result is the higher dimensional extension of Theorems 1.1 and 1.4 of \cite{LLS-2016}.
Observe that, by Theorem \ref{Thm-Flows-Bernoulli}, if we know that
the equilibrium measure is mixing, then it is necessarily Bernoulli.
This observation applies for geodesic flows of closed rank one manifolds, where
Burns et al proved that if the pressure gap $P({\rm Sing},g)<P(g)$ holds then $\vf$ has a unique
equilibrium measure and it is hyperbolic \cite{BCFT-2018}, and Call \& Thompson proved that this
measure has the K property and that the unique measure of maximal entropy is
Bernoulli \cite{Call-Thompson-2019}. Here, ${\rm Sing}$ is the set of vectors
with rank larger than one. We thus obtain the following corollary.

\begin{corollary}\label{Coro-Flows-Rank-1}
Let $g$ be the geodesic flow of a closed rank one manifold $N$, and assume that
$P({\rm Sing},g)<P(g)$. If $\vf:T^1N\to\R$ is H\"older continuous or $\vf=tJ$ for $t\in\R$,
then the unique equilibrium state of $\vf$ is Bernoulli.
\end{corollary}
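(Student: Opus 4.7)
The plan is to deduce the corollary by combining the hypotheses with three external inputs: the uniqueness and hyperbolicity of the equilibrium state from Burns--Climenhaga--Fisher--Thompson \cite{BCFT-2018}, the $K$ property from Call--Thompson \cite{Call-Thompson-2019}, and the Bernoulli-up-to-a-period decomposition provided by Theorem \ref{Thm-Flows-Bernoulli}.

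Under the pressure gap $P({\rm Sing},g)<P(g)$, \cite{BCFT-2018} asserts that $\vf$ admits a unique equilibrium state $\mu$, and moreover that $\mu$ is hyperbolic. The work \cite{Call-Thompson-2019} strengthens this to the statement that $\mu$ has the $K$ property; in particular $\mu$ is ergodic and mixing under the flow $g$.

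Next I would invoke Theorem \ref{Thm-Flows-Bernoulli} applied to $\mu$: the hyperbolic equilibrium state $\mu$ decomposes into at most countably many ergodic hyperbolic components, each Bernoulli up to a period. Since $\mu$ is itself ergodic, the decomposition is trivial and consists of the single component $\mu$, so $\mu$ is Bernoulli up to some period $p\ge 1$. A flow that is Bernoulli up to a period $p\ge 2$ admits a nontrivial finite rotation factor, which is incompatible with mixing; hence $p=1$ and $\mu$ is Bernoulli, which is the desired conclusion. This is exactly the observation made in the paragraph preceding the corollary, now applied to \emph{every} H\"older potential rather than to the zero potential only.

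The only non-routine point is checking that the inputs from \cite{BCFT-2018} and \cite{Call-Thompson-2019} genuinely cover both classes of potentials appearing in the statement. For H\"older $\vf$ this is immediate. For the geometric potential $\vf=tJ$, one uses that $J$ fits into the framework of those two papers (continuous with the requisite Bowen regularity along orbit segments tracked by the specification-at-scale argument), so the same existence, uniqueness, hyperbolicity and $K$-property conclusions apply and the argument above goes through unchanged.
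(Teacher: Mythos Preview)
Your proposal is correct and follows essentially the same approach as the paper: the paragraph preceding the corollary already spells out that one combines uniqueness and hyperbolicity from \cite{BCFT-2018}, the $K$ property from \cite{Call-Thompson-2019}, and Theorem~\ref{Thm-Flows-Bernoulli}, using the observation that mixing rules out any nontrivial period.
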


In the above context, Knieper proved that the measure of maximal entropy is unique
\cite{Knieper-Rank-One-Entropy}, and Babillot proved that it is mixing \cite{Babillot-Mixing}.
Using these results, we obtain a proof of Corollary \ref{Coro-Flows-Rank-1}
for $\vf=0$ that is different from and does not use \cite{Call-Thompson-2019}. 
The proofs of Theorems \ref{Thm-Flows-periodic} and \ref {Thm-Flows-Bernoulli} are in Section \ref{Section-Flows},
which also contains a theorem that provides a symbolic coding for the considered flows. 

The next application deals with multidimensional billiards.
Let $f$ be the billiard map associated to one of the following contexts:
dispersing billiards with finitely many disjoint scatterers, billiards of hard balls systems, or
a three-dimensional Bunimovich stadium as considered in \cite{Bunimovich-delMagno-hyperbolicity},
see Figure \ref{figure-billiards-Intro}.
In all of these cases, it is known that the natural smooth $f$--invariant measure $\mu_{\rm SRB}$
is hyperbolic, mixing, and has Kolmogorov-Sina{\u\i} entropy $h>0$.
Let ${\rm Per}_n(f)$ denote the number of periodic orbits of period $n$ for $f$.
\begin{center}
\begin{figure}[hbt!]
\centering
\def\svgwidth{12.3cm}
\begingroup%
  \makeatletter%
  \providecommand\color[2][]{%
    \errmessage{(Inkscape) Color is used for the text in Inkscape, but the package 'color.sty' is not loaded}%
    \renewcommand\color[2][]{}%
  }%
  \providecommand\transparent[1]{%
    \errmessage{(Inkscape) Transparency is used (non-zero) for the text in Inkscape, but the package 'transparent.sty' is not loaded}%
    \renewcommand\transparent[1]{}%
  }%
  \providecommand\rotatebox[2]{#2}%
  \newcommand*\fsize{\dimexpr\f@size pt\relax}%
  \newcommand*\lineheight[1]{\fontsize{\fsize}{#1\fsize}\selectfont}%
  \ifx\svgwidth\undefined%
    \setlength{\unitlength}{656.62351441bp}%
    \ifx\svgscale\undefined%
      \relax%
    \else%
      \setlength{\unitlength}{\unitlength * \real{\svgscale}}%
    \fi%
  \else%
    \setlength{\unitlength}{\svgwidth}%
  \fi%
  \global\let\svgwidth\undefined%
  \global\let\svgscale\undefined%
  \makeatother%
  \begin{picture}(1,0.39584813)%
    \lineheight{1}%
    \setlength\tabcolsep{0pt}%
    \put(0,0){\includegraphics[width=\unitlength,page=1]{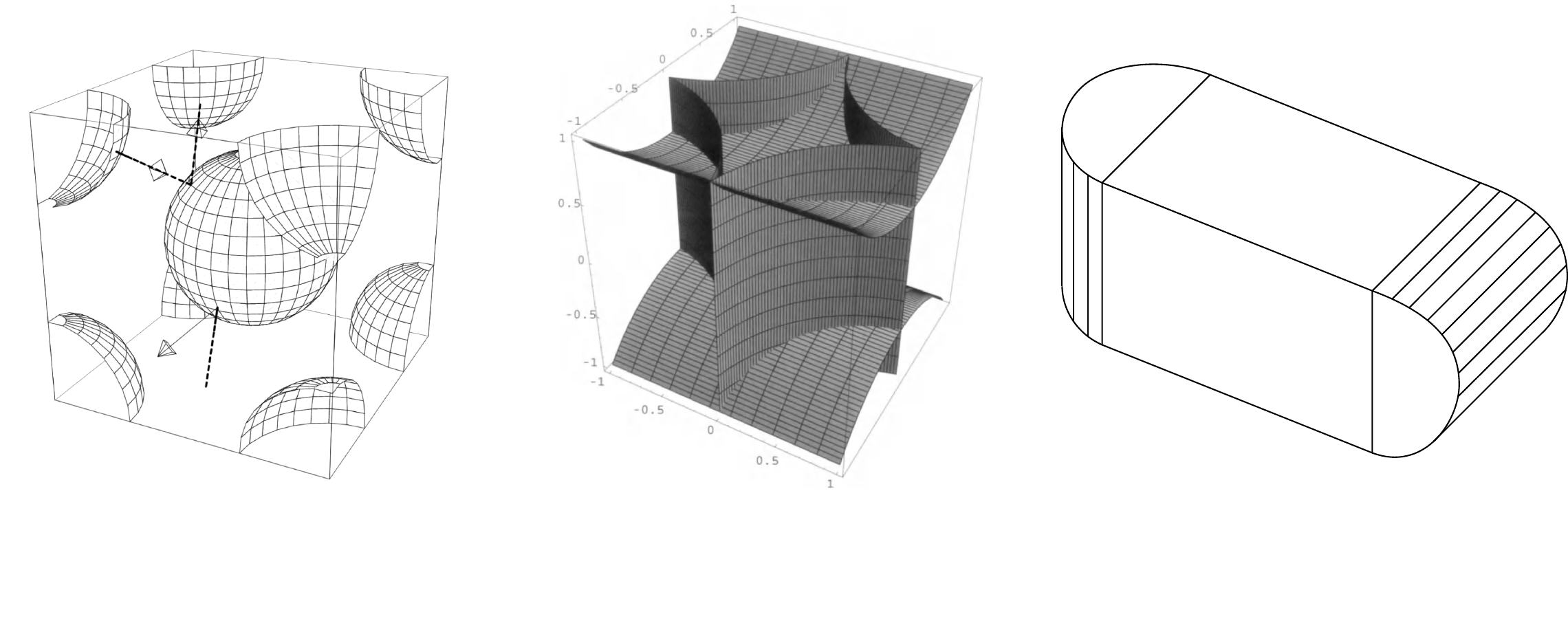}}%
    \put(0.11604656,0.00919117){\color[rgb]{0,0,0}\makebox(0,0)[lt]{\lineheight{1.25}\smash{\begin{tabular}[t]{l}(a)\end{tabular}}}}%
    \put(0.45180016,0.00919117){\color[rgb]{0,0,0}\makebox(0,0)[lt]{\lineheight{1.25}\smash{\begin{tabular}[t]{l}(b)\end{tabular}}}}%
    \put(0.76471193,0.00919117){\color[rgb]{0,0,0}\makebox(0,0)[lt]{\lineheight{1.25}\smash{\begin{tabular}[t]{l}(c)\end{tabular}}}}%
  \end{picture}%
\endgroup%

\caption{(a) Dispersing billiard in $\mathbb T^3$ (credits to Imre P\'eter T\'oth).
(b) Billiard of a hard balls system in $\mathbb T^3$ (credits to \cite{Posch-Hirschl}).
(c) Three-dimensional Bunimovich stadium.}
\label{figure-billiards-Intro}
\end{figure}
\end{center}

\begin{theorem}\label{Thm-Billiards-periodic}
For the above billiards, there exists $C>0$ s.t. ${\rm Per}_n(f)\geq Ce^{hn}$ for all $n$ large enough.
\end{theorem}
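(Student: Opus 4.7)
The plan is to reduce the counting of periodic orbits of each billiard $f$ to a counting problem on a countable topological Markov shift (TMS), using the symbolic coding that is the main result of the paper, and then invoke the Sarig--Gurevich machinery for counting on TMS.

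First, I would verify that each of the three billiard systems (multidimensional dispersing billiards, hard ball systems, three-dimensional Bunimovich stadium) fits the hypotheses of our main symbolic coding theorem; the required nonuniform hyperbolicity, structure of the singular set, and Pesin-type regularity and distortion estimates are available in the cited classical literature. The coding theorem then produces a locally compact TMS $(\Sigma,\sigma)$ together with a H\"older continuous, finite-to-one projection $\pi:\Sigma\to M$ satisfying $\pi\circ\sigma=f\circ\pi$, and enjoying the key property that every ergodic, $f$-invariant, hyperbolic measure of positive entropy admits an ergodic, $\sigma$-invariant lift of the same entropy. Applied to $\mu_{\rm SRB}$, this produces an ergodic lift $\wh\mu$ on $\Sigma$ with $h(\wh\mu)=h$.

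Next I would localize to an irreducible component and count on it. The measure $\wh\mu$ is supported on a single strongly connected subshift $\Sigma'\subseteq\Sigma$; by Gurevich's variational principle, $h_{\rm Gur}(\Sigma')\geq h$. Since $\wh\mu$ realizes this Gurevich entropy (which is built into the coding), $\Sigma'$ is positive recurrent in Sarig's sense, and the $f$-mixing of $\mu_{\rm SRB}$ ascends to $\sigma$-mixing of $\wh\mu$, forcing the period of $\Sigma'$ to be one. The Gurevich--Savchenko asymptotic for positive recurrent aperiodic TMS then yields, for any fixed vertex $a$ of $\Sigma'$ and all $n$ large enough,
\begin{equation*}
\#\{x\in\Sigma' : \sigma^n x=x,\ x_0=a\}\;\geq\;C_1\, e^{hn}.
\end{equation*}

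Finally, I would transfer this count back to $M$: each $\sigma^n$-fixed point in $\Sigma'$ projects to an $f^n$-fixed point in $M$, and the fibers of $\pi$ have cardinality at most a uniform constant $K$, whereas orbits of exact period a proper divisor of $n$ contribute at most of order $e^{hn/2}$ and are thus negligible, yielding ${\rm Per}_n(f)\geq Ce^{hn}$ for all $n$ large enough. The main obstacle is the first step: verifying that each specific billiard (especially the hard ball systems, whose singular structure is intricate in high dimension) satisfies the technical hypotheses of our general coding theorem, requiring delicate uniform estimates on the singular set and on the Pesin charts; a secondary subtlety is extracting the aperiodicity of $\Sigma'$ from the mixing of $\mu_{\rm SRB}$, which uses that the coding is finite-to-one on a set of full $\wh\mu$-measure and thus mixing passes between downstairs and upstairs.
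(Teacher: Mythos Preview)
Your outline follows the paper's general strategy (code symbolically, count on a TMS, push down), but there is a genuine gap in the middle step that the paper handles differently.

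The problematic sentence is: ``Since $\wh\mu$ realizes this Gurevich entropy (which is built into the coding), $\Sigma'$ is positive recurrent.'' Nothing in the coding forces $h_{\rm Gur}(\Sigma')=h$. The lift $\wh\mu$ has entropy $h$, but the irreducible component $\Sigma'$ supporting it may well have Gurevich entropy strictly larger than $h$ (any $\sigma$--invariant measure on $\Sigma'$ projects to an $f$--invariant measure, but nothing bounds that projected entropy by $h_{\mu_{\rm SRB}}(f)$, only by $h_{\rm top}(f)$). If $h_{\rm Gur}(\Sigma')>h$, then $\wh\mu$ is not a measure of maximal entropy for $\Sigma'$, you cannot conclude positive recurrence from its mere existence, and the Gurevich--Savchenko asymptotic does not apply as stated. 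A secondary issue: mixing does not in general ascend through finite-to-one (as opposed to injective) extensions, so your argument for aperiodicity of $\Sigma'$ is incomplete.

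The paper resolves both issues by invoking Buzzi's refinement \cite{Buzzi-2019}: from the finite-to-one coding $\pi$ one builds a further TMS $(\wt\Sigma,\wt\sigma)$ and a map $\theta$ so that $\pi\circ\theta:\wt\Sigma\to{\rm NUH}^\#$ is an \emph{injective} semiconjugacy carrying all relevant measures. Injectivity makes the lift $\lambda$ of $\wh\mu$ automatically mixing, hence the supporting subshift $\wt\Sigma_\lambda$ is topologically mixing. The paper then runs a dichotomy: if $\lambda$ is a measure of maximal entropy for $\wt\Sigma_\lambda$, apply \cite[Lemma~6.2]{Buzzi-2019}; if not, pass to a finite aperiodic subgraph of $\wt\Sigma_\lambda$ with topological entropy $h'>h$ and use the elementary finite-alphabet count. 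Injectivity also dispenses with your fibre-bound $K$ (which, incidentally, is not uniform over $\Sigma$---the map $\pi$ is finite-to-one but not bounded-to-one; it can be made uniform only after fixing the recurrent vertex $a$, via the constant $N(a,a)$).
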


The proof of Theorem \ref{Thm-Billiards-periodic} and a detailed discussion on multidimensional
billiards is in Section \ref{Section-Billiards}. The last applications deal with {\em Viana maps},
introduced in  \cite{Viana-maps}. These maps are defined as follows. Let $a_0\in (1,2)$ be a parameter
s.t. $t=0$ is pre-periodic for the quadratic map $t\mapsto a_0-t^2$. For fixed $d\geq 16$ and
$\alpha>0$, the associated Viana map is the skew product 
$f=f_{a_0,d,\alpha}:\mathbb S^1\times \mathbb{R}\to \mathbb S^1\times \mathbb{R}$ defined by 
$f(\theta,t)=(d\theta,a_0+\alpha \sin(2\pi \theta)-t^2)$. 
If $\alpha>0$ is small enough then there is a compact interval $I_0\subset (-2,2)$ such that 
$f(\mathbb S^1\times I_0)\subset \textrm{int}(\mathbb S^1\times I_0)$. Hence $f$ has 
an attractor inside $\mathbb S^1\times I_0$, and so we consider the restriction of $f$ to
$\mathbb S^1\times I_0$. Observe that $f$ has a critical
set $\mathbb S^1\times\{0\}$, where $df$ is non-invertible. When $\alpha>0$ is small,
$f$ possesses a unique invariant probability measure $\mu_{\rm SRB}$
that is absolutely continuous with respect to Lebesgue \cite{Alves-SRB-2000,Alves-Viana-2002},
nonuniformly expanding \cite{Viana-maps}, and mixing \cite{Alves-Luzzato-Pinheiro}.
As a matter of fact, all these properties hold for a $C^3$ neighborhood of $f$.
Letting ${\rm Per}_n(f)$ as before and $h=h_{\mu_{\rm SRB}}(f)>0$, our main theorem
implies the following results.

\begin{theorem}\label{Thm-Viana-periodic}
If $\alpha>0$ is small, then $\exists C>0$ s.t. ${\rm Per}_n(f)\geq Ce^{hn}$ for all $n$ large enough.
The same applies to a $C^3$ neighborhood of $f$. 
\end{theorem}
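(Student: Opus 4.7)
The plan is to derive Theorem~\ref{Thm-Viana-periodic} as a routine application of the paper's main symbolic coding theorem, once it is checked that Viana maps fall inside the general framework, and then to count periodic orbits by transferring the problem to a countable topological Markov shift (TMS).

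\textbf{Step 1: Fit Viana maps into the framework.} I would first verify that $f = f_{a_0,d,\alpha}$, and every $g$ in a small $C^3$-neighborhood, satisfies the hypotheses of the main symbolic coding theorem stated earlier in the paper. The singular set is the critical curve $\mathbb{S}^1 \times \{0\}$ along which $Df$ drops rank; elsewhere the map is smooth. By \cite{Viana-maps} the SRB measure $\mu_{\rm SRB}$ is nonuniformly expanding, hence hyperbolic in the sense needed; by \cite{Alves-SRB-2000, Alves-Viana-2002} this persists on a $C^3$-neighborhood, giving a family of hyperbolic SRB measures with the same entropy bound $h = h_{\mu_{\rm SRB}}(f) > 0$. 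All the quantitative estimates (recurrence to Pesin blocks, control of distortion near the singular set) needed by the main theorem are exactly the content of the Viana/Alves analysis.

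\textbf{Step 2: Obtain the symbolic model and a maximal component.} Applying the main theorem produces a countable TMS $(\Sigma, \sigma)$ together with a Hölder factor map $\pi \colon \Sigma \to \mathbb{S}^1 \times I_0$ satisfying $f \circ \pi = \pi \circ \sigma$, with bounded-to-one multiplicity on periodic orbits, and such that $\mu_{\rm SRB}$ lifts to a $\sigma$-invariant measure $\widehat{\mu}$ on $\Sigma$ of entropy $h$. By Gurevich's spectral decomposition, $\Sigma$ is a countable union of irreducible subshifts, and since $\widehat{\mu}$ is ergodic it is carried by a single irreducible component $\Sigma'$ of Gurevich entropy $\geq h$. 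Mixing of $(f, \mu_{\rm SRB})$, established in \cite{Alves-Luzzato-Pinheiro}, forces the period $p$ of this irreducible component to be $1$, so $\Sigma'$ is topologically mixing.

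\textbf{Step 3: Count periodic orbits and push down.} Gurevich's theorem for topologically mixing countable TMS (with Sarig's refinements) gives a constant $C_0 > 0$ and $n_0$ such that the number of periodic points of period $n$ in $\Sigma'$ is at least $C_0 e^{hn}$ for all $n \geq n_0$. Each such periodic point projects under $\pi$ to an $f$-periodic point of period dividing $n$, and the uniform bounded multiplicity of $\pi$ on periodic orbits (which depends only on the alphabet and Pesin chart data, not on $n$) ensures the bound passes to $f$: ${\rm Per}_n(f) \geq C e^{hn}$ for $n$ large, with a constant uniform on a $C^3$-neighborhood.

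\textbf{Main obstacle.} The technically delicate point is not the counting in the shift, which is standard, but rather (a) verifying, once and for all, that the Viana setup truly fits the abstract hypotheses of the main theorem, in particular the quantitative control near the critical set $\mathbb{S}^1\times\{0\}$, and (b) arguing that the irreducible component supporting the lift of $\mu_{\rm SRB}$ is aperiodic so that the lower bound holds for every large $n$ and not only along an arithmetic progression. The former is where the strength of Viana's and Alves--Viana's estimates is genuinely needed; the latter is handled by transporting mixing of $\mu_{\rm SRB}$ to the shift via the ergodic decomposition of $\widehat{\mu}$ with respect to $\sigma^p$.
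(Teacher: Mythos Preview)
Your outline is the right shape, but it diverges from the paper's actual argument in two technical places, one of which is a genuine gap.

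\textbf{The gap: multiplicity.} You assert that $\pi$ has ``bounded-to-one multiplicity on periodic orbits'' uniformly in $n$. The main theorem only gives that $\pi\restriction_{\Sigma^\#}$ is \emph{finite}-to-one; the bound $N(R,S)$ established in the proof of Theorem~\ref{Thm-widehat-pi}(3) depends on the symbols $R,S$ that repeat in the past/future and is not uniform. Your pushdown in Step~3 therefore does not go through as stated. The standard repair is to fix a single vertex $R$ with $\wh\mu[Z(R)]>0$ and count loops at $R$: for these the multiplicity is bounded by $N(R,R)$, and Gurevich's theory gives the $e^{hn}$ growth of loops at a fixed vertex. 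The paper avoids this issue entirely by a different route: it packages the periodic-orbit count into Theorem~\ref{Thm-Application-3}, whose proof invokes Buzzi's recent work \cite{Buzzi-2019} to upgrade the finite-to-one coding to an \emph{injective} semiconjugacy $\pi\circ\theta:\wt\Sigma\to{\rm NUH}^\#_\chi$. Injectivity kills the multiplicity problem, and moreover the lift of $\wh\mu$ to $\wt\Sigma$ is genuinely mixing, so the supporting component is automatically aperiodic --- which also tightens your aperiodicity sketch in Step~2.

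\textbf{What you glossed over.} The main theorem requires the measure to be \emph{$f$--adapted}, i.e.\ $\log d(x,\mathfs S)\in L^1(\mu_{\rm SRB})$. You fold this into ``control of distortion near the singular set'', but it is a separate integrability condition that the paper checks explicitly (Proposition~\ref{prop.viana-adapted}), via the Pesin entropy formula and the Ruelle inequality for endomorphisms. Also, $\pi$ lands in the natural extension $\wh M$, not in $\mathbb S^1\times I_0$; this is harmless here since periodic points of $f$ and $\wh f$ are in bijection, but your commutation relation $f\circ\pi=\pi\circ\sigma$ is misstated.

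In summary, the paper's reduction is: verify (A1)--(A7) (Proposition~\ref{prop.viana-conditions}), verify adaptedness and hyperbolicity of $\mu_{\rm SRB}$ (Proposition~\ref{prop.viana-adapted}), and then invoke Theorem~\ref{Thm-Application-3}, whose engine is Buzzi's injective coding rather than a direct multiplicity bound.
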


\begin{theorem}\label{Thm-Viana-Bernoulli}
If $\alpha>0$ is small, there exists a $C^3$ neighborhood $\mathfs U$ of $f$ such that 
every map in $\mathfs U$ has at most countably many ergodic 
measures of maximal entropy, and the lift to the natural extension 
of each of them is Bernoulli up to a period.
\end{theorem}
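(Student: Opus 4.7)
The plan is to combine the symbolic extension provided by the main theorem of this paper with the well-developed ergodic theory of countable Markov shifts, as used in \cite{Sarig-JAMS}, \cite{Lima-Sarig} and \cite{Ben-Ovadia-2019}. Concretely, the first step is to apply the general result to every $\tilde f\in\mathfs U$, obtaining a countable topological Markov shift $(\Sigma,\sigma)$ together with a H\"older factor map $\pi:\Sigma\to\mathbb S^1\times I_0$ satisfying $\pi\circ\sigma=\tilde f\circ\pi$ and coding every hyperbolic ergodic measure of sufficiently large entropy. The verification of the abstract hypotheses for the Viana family uses the nonuniform expansion of \cite{Viana-maps}, the mixing and absolute continuity statements of \cite{Alves-SRB-2000,Alves-Viana-2002,Alves-Luzzato-Pinheiro}, and the explicit description of the critical set $\mathbb S^1\times\{0\}$, all of which persist in a $C^3$ neighborhood of $f$.

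Next, every ergodic measure of maximal entropy $\mu$ of $\tilde f$ has to be lifted to $\Sigma$. Nonuniform expansion combined with Ruelle's inequality guarantees $h_\mu(\tilde f)>0$ and that $\mu$ has non-zero Lyapunov exponents, and since $\mu$ realizes $h_{\rm top}(\tilde f)$ its entropy is large enough for the coding to apply. The lifting result of Ben Ovadia \cite{Ben-Ovadia-2020} then produces a $\sigma$-invariant probability $\hat\mu$ on $\Sigma$ with $\pi_*\hat\mu=\mu$ and $h_{\hat\mu}(\sigma)=h_\mu(\tilde f)$, supported on recurrent sequences so that $\pi$ becomes injective $\hat\mu$-a.e. after passing to the natural extension.

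With the lifting in hand, the proof reduces to the spectral theory of countable Markov shifts. Decomposing $\Sigma$ into its countably many topologically transitive components and discarding those carrying no MME, each remaining component supports a unique measure of maximal entropy, which is exact up to a finite period $p$ and whose natural extension is isomorphic to the product of a Bernoulli shift with a cyclic rotation on $p$ atoms; see \cite{Sarig-JAMS}. Pushing these measures forward by $\pi$, and using the essential injectivity of $\pi$ on recurrent sequences to transfer the Bernoulli property to the natural extension of $\tilde f$, yields at most countably many ergodic MMEs of $\tilde f$ and shows that the natural extension of each is Bernoulli up to a period.

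The main difficulty I expect is the verification that the hypotheses of the general theorem hold uniformly on a $C^3$ neighborhood $\mathfs U$: although the Viana map itself is by now well understood, the sizes of the Pesin hyperbolic blocks, the control of the first-return map to a good set, and the recurrence near $\mathbb S^1\times\{0\}$ must all be estimated with constants depending only on finitely many $C^3$ data of the perturbed map. This is precisely where the generality of the present framework, which simultaneously allows for non-invertibility and for a singular set, becomes essential; once these uniform estimates are available, the symbolic output feeds directly into the spectral decomposition machinery described above.
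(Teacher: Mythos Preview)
Your overall architecture is correct and matches the paper's route: verify that each $\tilde f\in\mathfs U$ satisfies (A1)--(A7), show that every ergodic measure of maximal entropy is both $\chi$--hyperbolic and $f$--adapted for some $\chi>0$, then invoke Corollary~\ref{Coro-Application-1}. However, your middle step is incomplete in two ways.

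First, and most seriously, you never verify that an arbitrary ergodic MME $\mu$ is \emph{$f$--adapted}, i.e. that $\log d(x,\mathfs S)\in L^1(\mu)$. The main theorem only codes adapted measures, and adaptedness is not a consequence of positive entropy or nonuniform expansion. The paper closes this gap in Proposition~\ref{prop.viana-adapted}: since $|\det d\tilde f_x|\leq C\, d(x,\mathfs S)$, it suffices to show $\log|\det d\tilde f_x|\in L^1(\mu)$, and this follows from Kingman's theorem together with the fact (established below) that both Lyapunov exponents of $\mu$ are finite and positive, so $\int\log|\det d\tilde f|\,d\mu=\overline\lambda_1+\overline\lambda_2\in(0,\infty)$. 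Your phrase ``recurrence near $\mathbb S^1\times\{0\}$'' gestures at this, but the actual mechanism is the Jacobian bound plus the entropy constraint, not any recurrence estimate.

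Second, your hyperbolicity argument is too loose. Viana's nonuniform expansion concerns Lebesgue-a.e.\ points, hence only $\mu_{\rm SRB}$; it says nothing directly about an arbitrary MME. The paper's argument runs as follows: the top exponent $\overline\lambda_1$ of any invariant measure is within $2\ve$ of $\log d$ (from the explicit Jacobian structure of $d\tilde f$), the Pesin entropy formula gives $h_{\mu_{\rm SRB}}=\lambda_1+\lambda_2$ with $\lambda_2>c>0$, and then Ruelle's inequality $h_\mu\leq\overline\lambda_1+\max\{0,\overline\lambda_2\}$ combined with $h_\mu\geq h_{\mu_{\rm SRB}}$ forces $\overline\lambda_2>0$. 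Your sentence ``nonuniform expansion combined with Ruelle's inequality guarantees \ldots\ non-zero Lyapunov exponents'' does not capture this comparison. Finally, note that $\pi$ lands in the natural extension $\wh M$, not in $\mathbb S^1\times I_0$; the Bernoulli-up-to-a-period conclusion is already about $\wh\mu$, so no extra transfer step via ``essential injectivity'' is needed.
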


The proofs of Theorems \ref{Thm-Viana-periodic} and \ref{Thm-Viana-Bernoulli} are given in
Section \ref{Section-Viana-maps}.

%
%
%
%
%

\subsection{Statement of main theorem}

To cover all the above contexts at the same time, our framework requires seven assumptions to be satisfied:
the first four are assumptions on the manifold, and the last three are on the map. 
We remark that the first four assumptions coincide with those in \cite{Lima-Matheus},
and the last three are similar to those in \cite{Lima-IHP}.
In the sequel we specify them. Let $M$ be a smooth, possibly disconnected and/or with boundary and/or not compact,
Riemannian manifold with finite diameter. We will always assume that the diameter of $M$
is smaller than one (just multiply the metric by a small constant). We allow $M$ to have infinitely
many connected components, as long as it is contained in a larger Riemannian manifold\footnote{This avoids 
pathological situations in which natural conditions, e.g. precompactness, are lost.}.
We fix hence and for all a closed set $\mathfs D\subset M$, which will denote the set
of {\em discontinuities} of the map. Given $x\in M$, let $T_xM$ denote
the tangent space of $M$ at $x$. For $r>0$, let $B_x[r]\subset T_xM$ denote the open ball with center 0
and radius $r$. Given $x\in M\backslash\mathfs D$, let $\inj(x)$ be the {\em injectivity radius} of $M$ at $x$,
and let $\exp{x}:B_x[\inj(x)]\to M$ be the {\em exponential map} at $x$.

The Riemannian metric on $M$ induces a Riemannian metric on the tangent bundle $TM$,
called the {\em Sasaki metric}, see e.g. \cite[\S2]{Burns-Masur-Wilkinson}.
Denote the Sasaki metric by $\Sas(\cdot,\cdot)$, and 
denote the Sasaki metric on $TB_x[r]$ by the same notation (the context
will be clear in which space we are). For nearby small vectors, the Sasaki metric is
almost a product metric as follows. Given a geodesic $\gamma$ joining $y$ to $x$,
let $P_\gamma:T_yM\to T_xM$ be the parallel transport along $\gamma$.
If $v\in T_xM$, $w\in T_yM$ then
$\Sas(v,w)\asymp d(x,y)+\|v-P_\gamma w\|$ as $\Sas(v,w)\to 0$, see
\cite[Appendix A]{Burns-Masur-Wilkinson}. 
The properties required on $M$ are about
the exponential maps. Here are the first two. For $x\in M$ and $r>0$,
let $B(x,r)\subset M$ denote the open ball with center $x$ and radius $r$.

\medskip
\noindent
{\sc Regularity of $\exp{x}$:} $\exists a>1$ s.t. for all
$x\in M\backslash\mathfs D$ there is $d(x,\mathfs D)^a<\mathfrak d(x)<1$
s.t. for $\mathfrak B_x:=B(x,2\mathfrak d(x))$ the following holds:
\begin{enumerate}[ii]
\item[(A1)] If $y\in \mathfrak B_x$ then $\inj(y)\geq 2\mathfrak d(x)$, $\exp{y}^{-1}:\mathfrak B_x\to T_yM$
is a diffeomorphism onto its image, and
$\tfrac{1}{2}(d(x,y)+\|v-P_{y,x}w\|)\leq \Sas(v,w)\leq 2(d(x,y)+\|v-P_{y,x} w\|)$ for all $y\in \mathfrak B_x$ and
$v\in T_xM,w\in T_yM$ s.t. $\|v\|,\|w\|\leq 2\mathfrak d(x)$, where 	
$P_{y,x}:=P_\gamma$ for the length minimizing geodesic $\gamma$ joining $y$ to $x$.
\item[(A2)] If $y_1,y_2\in \mathfrak B_x$ then
$d(\exp{y_1}v_1,\exp{y_2}v_2)\leq 2\Sas(v_1,v_2)$ for $\|v_1\|$, $\|v_2\|\leq 2\mathfrak d(x)$,
and $\Sas(\exp{y_1}^{-1}z_1,\exp{y_2}^{-1}z_2)\leq 2[d(y_1,y_2)+d(z_1,z_2)]$
for $z_1,z_2\in \mathfrak B_x$ whenever the expression makes sense.
In particular $\|d(\exp{x})_v\|\leq 2$ for $\|v\|\leq 2\mathfrak d(x)$,
and $\|d(\exp{x}^{-1})_y\|\leq 2$ for $y\in \mathfrak B_x$.
\end{enumerate}

\medskip
The next two assumptions are on the regularity of the derivative $d\exp{x}$.
For $x,x'\in\ M\backslash\mathfs D$, let $\mathfs L _{x,x'}:=\{A:T_xM\to T_{x'}M:A\text{ is linear}\}$
and $\mathfs L _x:=\mathfs L_{x,x}$. 
Observe that the parallel transport $P_{y,x}$ considered in (A1) is in $\mathfs L_{y,x}$.
Given $y\in \mathfrak B_x,z\in \mathfrak B_{x'}$ and $A\in \mathfs L_{y,z}$,
let $\widetilde{A}\in\mathfs L_{x,x'}$, $\widetilde{A}:=P_{z,x'} \circ A\circ P_{x,y}$.
By definition, $\widetilde{A}$ depends on $x,x'$ but different basepoints define
a map that differs from $\widetilde{A}$ by pre and post composition with isometries.
In particular, $\|\widetilde{A}\|$ does not depend on the choice of $x,x'$.
Similarly, if $A_i\in\mathfs L_{y_i,z_i}$ then $\|\widetilde{A_1}-\widetilde{A_2}\|$ does
not depend on the choice of $x,x'$.
Define the map $\tau=\tau_x:\mathfrak B_x\times \mathfrak B_x\to \mathfs L_x$
by $\tau(y,z)=\widetilde{d(\exp{y}^{-1})_z}$, where we use the identification
$T_v(T_{y}M)\cong T_{y}M$ for all $v\in T_yM$.

\medskip
\noindent
{\sc Regularity of $d\exp{x}$:}
\begin{enumerate}[ii]
\item[(A3)] If $y_1,y_2\in \mathfrak B_x$ then
$
\|\widetilde{d(\exp{y_1})_{v_1}}-\widetilde{d(\exp{y_2})_{v_2}}\|
\leq d(x,\mathfs D)^{-a}\Sas(v_1,v_2)
$
for all $\|v_1\|,\|v_2\|\leq 2\mathfrak d(x)$, and 
$\|\tau(y_1,z_1)-\tau(y_2,z_2)\|\leq d(x,\mathfs D)^{-a}[d(y_1,y_2)+d(z_1,z_2)]$
for all $z_1,z_2\in \mathfrak B_x$.
\item[(A4)] If $y_1,y_2\in \mathfrak B_x$ then the map 
$\tau(y_1,\cdot)-\tau(y_2,\cdot):\mathfrak B_x\to \mathfs L_x$
has Lipschitz constant $\leq d(x,\mathfs D)^{-a}d(y_1,y_2)$.
\end{enumerate}

\medskip
Conditions (A1)--(A2) say that the exponential maps and their inverses
are well-defined and have uniformly bounded Lipschitz constants in balls
of radii $d(x,\mathfs D)^a$.
Condition (A3) controls the Lipschitz constants of the derivatives of these maps,
and condition (A4) controls the Lipschitz constants of their second derivatives.
Here are some situations when (A1)--(A4) are satisfied:
\begin{enumerate}[$\circ$]
\item The curvature tensor $R$ of $M$ is globally bounded, e.g. when $M$ is the
phase space of a billiard map.
\item $R,\nabla R,\nabla^2 R,\nabla^3R$ grow at most polynomially
fast with respect to the distance to $\mathfs D$, e.g. when $M$ is a moduli space
of curves equipped with the Weil-Petersson metric \cite{Burns-Masur-Wilkinson}.
\end{enumerate}

Now we discuss the assumptions on the map. Consider a map
$f:M\backslash\mathfs D\to M$. 
We assume that $f$ is differentiable at every point $x\in M\backslash\mathfs D$, and we let
$\mathfs C=\{x\in M\backslash\mathfs D: df_x\text{ is not invertible}\}$ be the {\em critical set} of $f$.
We assume that $\mathfs C$ is a closed subset of $M$.

\medskip
\noindent
{\sc Singular set $\mathfs S$:} The {\em singular set of $f$} is
$\mathfs S:=\mathfs C\cup \mathfs D$. 

\medskip
The singular set $\mathfs S$ is closed. We assume that $f$ satisfies the following properties.

\medskip
\noindent
{\sc Regularity of $f$:} $\exists \mathfrak K>1$ s.t. for all $x\in M$ with 
$x,f(x)\notin\mathfs S$ there is $\min\{d(x,\mathfs S)^a,$ $d(f(x),\mathfs S)^a\}<\mathfrak r(x)<1$
s.t. for $D_x:=B(x,2\mathfrak r(x))$ and $E_x:=B(f(x),2\mathfrak r(x))$ the following holds:
\begin{enumerate}[.......]
\item[(A5)] The restriction of $f$ to $D_x$ is a diffeomorphism onto its image;
the inverse branch of $f$ taking $f(x)$ to $x$ is a well-defined diffeomorphism from 
$E_x$ onto its image.
%
\item[(A6)] For all $y\in D_x$ it holds $d(x,\mathfs S)^a\leq \|df_y\|\leq d(x,\mathfs S)^{-a}$; for
all $z\in E_x$ it holds $d(x,\mathfs S)^a\leq \|dg_z\|\leq d(x,\mathfs S)^{-a}$,
where $g$ is the inverse branch of $f$ taking $f(x)$ to $x$.
\item[(A7)] For all $y,z\in D_x$ it holds $\|\wt{df_y}-\wt{df_z}\|\leq\mathfrak Kd(y,z)^\beta$;
for all $y,z\in E_x$ it holds $\|\wt{dg_y}-\wt{dg_z}\|\leq\mathfrak Kd(y,z)^\beta$,
where $g$ is the inverse branch of $f$ taking $f(x)$ to $x$.
\end{enumerate}

\medskip
Although technical, conditions (A5)--(A7) hold in most cases of interest, e.g.
if $\|df^{\pm 1}\|,\|d^2f^{\pm 1}\|$ grow at most polynomially fast with respect to
the distance to $\mathfs S$.

\begin{remark}\label{remark-mult-constants}
In practice, it is enough to check assumptions (A3), (A4) and (A6) up to multiplicative constants.
As a matter of fact, if they hold with multiplicative constants, then we can multiply the metric by a
small parameter and increase $a$ to obtain (A3), (A4) and (A6) as stated above. 
A similar reduction occurs in (A7): if 
$\|\wt{df_y}-\wt{df_z}\|\leq\mathfrak Kd(x,\mathfs S)^{-c}d(y,z)^\beta$ for some 
$c>0$, then we can replace $a$ by $\max\{a,2c/\beta\}$, $\beta$ by $\beta/2$,
and $\mathfrak K$ by $4^{\beta/2}\mathfrak K$ and obtain the first inequality in (A7).
\end{remark}

Finally, we define the measures that we are able to code.

\medskip
\noindent
{\sc $f$--adapted measure:} An $f$--invariant probability
measure $\mu$ on $M$ is said to be {\em $f$--adapted}
if $\log d(x,\mathfs S)\in L^1(\mu)$. A fortiori, $\mu(\mathfs S)=0$.

\medskip
Due to assumption (A6), if $\mu$ is $f$--adapted then the conditions of the
non-invertible version of the Oseledets theorem are satisfied. Therefore, if $\mu$ is $f$--adapted
then for $\mu$--a.e. $x\in M$ the Lyapunov exponent
$\chi(v)=\lim_{n\to+\infty}\tfrac{1}{n}\log\|df^n(v)\|$ exists for all $v\in T_xM$.
Among the adapted measures, we consider the hyperbolic ones. Fix $\chi>0$.

\medskip
\noindent
{\sc $\chi$--hyperbolic measure:} An $f$--invariant probability measure $\mu$ is called
{\em $\chi$--hyperbolic} if for $\mu$--a.e. $x\in M$ we have $|\chi(v)|>\chi$
for all $v\in T_xM$.

\medskip
The next theorem is the main result of this paper. For that, let $\widehat{f}:\widehat{M}\to\widehat{M}$
be the natural extension of $f$, and let $\widehat\mu$ be the lift of $\mu$, see Subsection
\ref{Section-natural-extension} for the definitions. 

\begin{theorem}[Main theorem]\label{Thm-Main}
Assume that $M,f$ satisfy assumptions {\rm (A1)--(A7)}. For all $\chi>0$, there is a set
${\rm NUH}^\#\subset \wh M$, a locally compact countable topological Markov shift $(\Sigma,\sigma)$ and
a H\"older continuous map $\pi:\Sigma\to\wh M$ s.t.:
\begin{enumerate}[{\rm (1)}]
\item $\pi\circ\sigma=\wh f\circ\pi$.
\item $\wh\mu[{\rm NUH}^\#]=1$ for all $f$--adapted $\chi$--hyperbolic measures $\mu$. 
\item $\pi\restriction_{\Sigma^\#}:\Sigma^\#\to{\rm NUH}^\#$ is a surjective
finite-to-one map.
\end{enumerate}
\end{theorem}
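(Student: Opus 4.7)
The plan is to follow the Sarig symbolic-coding program, adapted to this unified setting. The three main novelties over \cite{Sarig-JAMS} — arbitrary dimension, non-invertibility (forcing passage to the natural extension $\widehat M$), and a singular set $\mathfs S$ that orbits may approach — each impose quantitative modifications at every stage. The backbone remains: build $\varepsilon$-Pesin charts adapted to $\mathfs S$, organize them into a countable family of double charts indexing a TMS $(\Sigma,\sigma)$ via a Sarig-type admissibility relation, define $\pi$ as the intersection point of shrinking admissible stable/unstable manifolds, and refine into a Markov partition in the sense of Sinai–Bowen.

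First I would set up tempered Pesin data. Fix $\chi>0$ and $\varepsilon\ll\chi$. For $\hat x$ in the lift $\widehat{\rm NUH}$ of Lyapunov-regular orbits with $|\chi(v)|>\chi$, I would define a parameter $q(\hat x)>0$ that (i) tempers along the orbit, i.e.\ $q(\hat f^{\pm 1}\hat x)\geq e^{-\varepsilon}q(\hat x)$, and (ii) dominates a suitable power of $d(\hat x_0,\mathfs S)$. Using (A1)–(A2), the chart $\Psi_{\hat x}:=\exp{\hat x_0}\circ C(\hat x)$, with $C(\hat x)$ an Oseledets frame, is a bi-Lipschitz diffeomorphism on $B(0,q(\hat x))$; (A3)–(A4) and (A6)–(A7) render $f_{\hat x}:=\Psi_{\hat f\hat x}^{-1}\circ f\circ\Psi_{\hat x}$ a $C^{1+\beta}$ small perturbation of a hyperbolic linear map. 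The adapted hypothesis $\log d(\cdot,\mathfs S)\in L^1(\mu)$ together with Birkhoff ensures that this tempering constraint is compatible with $\hat\mu$-a.e. orbit. Defining $\text{NUH}^\#\subset\widehat{\text{NUH}}$ as those $\hat x$ whose forward and backward orbits both hit some common Pesin ``regularity level set'' infinitely often, Poincar\'e recurrence gives $\hat\mu[\text{NUH}^\#]=1$, i.e.\ item (2).

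Next I would introduce double charts $v=\Psi^{p^s,p^u}_{\hat x}$ with admissibility parameters $p^s,p^u\leq q(\hat x)$ encoding the sizes of $(s/u)$-admissible Lipschitz graphs, and an edge $v\to w$ iff $\hat f$ maps $s$-admissible graphs of $v$ to $s$-admissible graphs of $w$ (and the symmetric $u$-condition), with exact rates prescribed by the hyperbolic estimates. A Sarig-style coarse-graining argument — using that the Pesin parameters lie in a countable-dimensional space depending polynomially on $d(\cdot,\mathfs S)$ — produces a countable family $\mathcal V$ such that every $\hat x\in\text{NUH}^\#$ admits a $\mathcal V$-valued itinerary. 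Let $\Sigma$ be the TMS on $\mathcal V$ and $\Sigma^\#$ its recurrent part. The graph-transform produces, for each $\underline v\in\Sigma$, nested $(s/u)$-admissible manifolds whose intersection is a single point $\pi(\underline v)\in\widehat M$; the construction makes $\pi\circ\sigma=\hat f\circ\pi$ automatic and yields H\"older continuity from the exponential contraction of the graph transform. Surjectivity of $\pi|_{\Sigma^\#}$ onto $\text{NUH}^\#$ follows by construction, since every point in $\text{NUH}^\#$ has a $\mathcal V$-itinerary.

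The hardest step is upgrading this cover to a Markov structure with a finite-to-one coding. I would follow the Sinai–Bowen refinement of the cover $\{\pi([v])\}_{v\in\mathcal V}$, verifying the local product structure via the Pesin stable/unstable manifolds supplied by the graph transform. For the finite-to-one property in this non-invertible, singular, high-dimensional setting, I expect the main obstacle, I would invoke and adapt Ben Ovadia's inverse theorem \cite{Ben-Ovadia-2020}: two sequences in $\Sigma^\#$ coding the same point must have comparable Pesin parameters along the orbit, and only finitely many $\mathcal V$-symbols fit that constraint. The critical quantitative point underlying the whole scheme is that every Pesin estimate used — the regularity of $\exp{}$ in (A3)–(A4), of $f$ in (A7), and of the Oseledets frames — must degrade at most polynomially in $d(\cdot,\mathfs S)^{-1}$, so that the tempering of $q(\hat x)$, sustained by $\log d(\cdot,\mathfs S)\in L^1(\mu)$, absorbs these losses uniformly along the orbit.
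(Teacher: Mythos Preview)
Your outline is correct and follows essentially the same route as the paper: Pesin charts whose size is controlled by powers of $d(\cdot,\mathfs S)$, $\varepsilon$--double charts with a greedy edge relation, coarse-graining to a countable alphabet, graph transforms defining $\pi$, a Sina{\u\i}--Bowen refinement of the resulting locally finite cover, and an inverse theorem (with an improvement lemma, and Ben Ovadia's trick of passing to a weaker Lyapunov norm $\chi'<\chi$) to get finite-to-one. One point you pass over but that the paper treats as a genuine technical novelty: since $\widehat M$ is not smooth, the graph-transform objects living there are not manifolds but \emph{stable/unstable sets} --- subsets of $\widehat M$ whose zeroth coordinate lies on an admissible manifold in $M$ and whose past coordinates are pinned down by the inverse branches prescribed by the $\varepsilon$--gpo --- and the product structure, Markov property, and Bowen relation all have to be reproved for these set-theoretic objects before the refinement can be carried out in $\widehat M$.
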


Above, $\Sigma^\#$ is the {\em recurrent set} of $\Sigma$; it carries all $\sigma$--invariant
probability measures, see Section \ref{Section-preliminaries} for the definition.
In general, $\pi$ might not be bounded-to-one. In summary, Theorem \ref{Thm-Main}
provides a single symbolic extension that is finite-to-one and onto a set that carries
all $f$--adapted $\chi$--hyperbolic measures.


\subsection{Method of proof}

Let $m$ denote the dimension of $M$.
The proof of Theorem \ref{Thm-Main} can be summarized in the following steps:
\begin{enumerate}[......]
\item[(1)] The derivative cocycle $df$ induces an invertible cocycle $\wh{df}$, defined on a fiber bundle over
the natural extension space $\wh M$, with the same spectrum as $df$.
\item[(2)] For a fixed $\chi>0$, define a nonuniformly hyperbolic locus ${\rm NUH}\subset \wh M$
of points with nonuniform hyperbolicity at least $\chi>0$.
\item[(3)] For each $\wh x\in{\rm NUH}$, define a Pesin chart $\Psi_{\wh x}$ from a subset of $\R^m$ to
$M$ for which $f$ and its inverse branche defined
by $\wh x$ are small perturbations of hyperbolic matrices.
\item[(4)] Define a set ${\rm NUH}^*\subset{\rm NUH}$
in which there are values $Q(\wh x)$ s.t. the restriction of $\Psi_{\wh x}$
to the ball with radius $Q(\wh x)$ is well-defined and $\lim\limits_{n\to\infty}\tfrac{1}{n}\log Q(\wh f^n(\wh x))=0$.
\item[(5)] Introduce the $\ve$--double chart $\Psi_{\wh x}^{p^s,p^u}$ as a pair of Pesin
charts with the same center but different domains. The parameters $p^s/p^u$ give definite sizes
for the stable/unstable manifolds at $\wh x$.
\item[(6)] Define abstract counterparts of stable/unstable manifolds in $\wh M$,
which we call stable/unstable sets.
\item[(7)] Construct a countable collection of $\ve$--double charts that are dense
in the space of all $\ve$--double charts, where denseness is defined in terms of
finitely many parameters of $\wh x$.
\item[(8)] Draw an edge $\Psi_{\wh x}^{p^s,p^u}\rightarrow\Psi_{\wh y}^{q^s,q^u}$
if $f$ and its respective inverse branch, when represented in the Pesin charts,
are small perturbations of hyperbolic matrices, and if the parameters $p^s,q^u$ are as large as possible.
\item[(9)] Using the stable/unstable sets, each path of $\ve$--charts defines an element of $\wh M$,
and this coding induces a {\em locally finite} countable cover on $\wh M$.
\item[(10)] Applying an abstract refinement procedure to this cover, the resulting partition defines
$(\Sigma,\sigma)$ and $\pi$ that satisfy Theorem \ref{Thm-Main}.
\end{enumerate}
In step (9), local finiteness means that each element of the cover intersects finitely many others.
It is important to stress the need of this property, since it guarantees that the partition in step (10)
is countable. If not, then we could end up with an uncountable partition
(e.g. the refinement of the cover of $\R$ by intervals with rational endpoints is uncountable).
Local finiteness is essential, and it is the reason that the whole proof is tailored in a way to provide
a precise control on all parameters involved in the construction.

The idea of considering double charts is due to Sarig \cite{Sarig-JAMS}, and it allows to treat the
hyperbolicity in the stable and unstable directions separately. We believe that this is one of the main
reasons for the method to work, together with a precise control of all the parameters involved in the construction.
The method has been further refined and simplified \cite{Ben-Ovadia-2019,Lima-Sarig,Lima-Matheus,Lima-IHP}.
Since here we allow derivatives to explode, an even finer control of the parameters is required.
For instance, while consisting on the general idea that stable/unstable subspaces improve smoothness
when backward/forward iterated, the improvement lemma of Section \ref{Section-improvement}
needs to be implemented at a scale that is small enough to have a control on derivatives and large
enough to code all relevant orbits. 

Additionally, neither $\wh M$ nor $\wh f$ are smooth objects.
This is a main issue in the proof, specially when dealing with graph transforms. One of the novelties
in this paper is to consider invariant sets instead of invariant manifolds and treat them
using both a geometrical (their zeroth positions are submanifolds) and an abstract perspective
(they are subsets of $\wh M$). This approach is what allows us to construct a Markov partition
for the natural extension.

We also mention a novelty in comparison with \cite{Lima-IHP}.
Even when working with nonuniformly expanding maps, we will use the parameter $p^s$.
Recalling that $p^s$ represents a choice of size for stable manifolds, it has no geometrical
meaning in this case, but its use provides a symmetrization between future and past and,
more importantly, allows to obtain a locally compact $(\Sigma,\sigma)$ in Theorem \ref{Thm-Main}.
This is better than \cite{Lima-IHP}, where the graph might have vertices with infinite degree.

\subsection{Related literature}\label{Section-related-literature}

Symbolic models for systems with hyperbolicity have a longstanding history that can be traced back 
at least to the 19th century with the study of geodesic flows on surfaces
with constant negative curvature by Hadamard \cite{Hadamard-1898}, see also \cite{Katok-Ugarcovici-Symbolic}.
During the late sixties and early seventies of the 20th century, the area saw a great deal of development when 
symbolic models were constructed for uniformly hyperbolic diffeomorphisms and flows, via the works of
of Adler \& Weiss \cite{Adler-Weiss-PNAS,Adler-Weiss-Similarity-Toral-Automorphisms},
Sina{\u\i} \cite{Sinai-Construction-of-MP,Sinai-MP-U-diffeomorphisms},
Bowen \cite{Bowen-MP-Axiom-A,Bowen-Symbolic-Flows}, and
Ratner \cite{Ratner-MP-three-dimensions,Ratner-MP-n-dimensions}.
Since then, other approaches were created, each of them treating different settings. 
In the sequel we discuss some of them. 

\medskip
\noindent
{\sc Hofbauer towers:} Takahashi created a combinatorial method
to construct an isomorphism between a large subset $X$ of the natural extension of $\beta$--shifts
and countable topological Markov shifts \cite{Takahashi-isomorphisms}.
Hofbauer showed that $X$ carries all measures of positive entropy and so
$\beta$--shifts have a unique measure of maximal entropy  \cite{Hofbauer-beta-shifts}.
Later, Hofbauer extended his construction to piecewise continuous interval maps
\cite{Hofbauer-intrinsic-I,Hofbauer-intrinsic-II}. The symbolic models obtained by his methods
are called {\em Hofbauer towers}, and they have been extensively used to understand ergodic properties
of one-dimensional maps.

\medskip
\noindent
{\sc Higher dimensional Hofbauer towers:} Buzzi constructed Hofbauer towers for piecewise
expanding affine maps in any dimension \cite{Buzzi-affine-maps}, for perturbations of fibered
products of one-dimensional maps \cite{Buzzi-produits-fibres}, and for arbitrary piecewise invertible
maps whose entropy generated by the boundary of some dynamically relevant partition is {\em less} than
the topological entropy of the map \cite{Buzzi-multidimensional}. 
These towers carry all invariant measures with entropy close enough to the topological
entropy of the system. 

\medskip
\noindent
{\sc Inducing schemes:} Many systems, although not hyperbolic, do have sets on which
it is possible to define a (not necessarily first) return map that is uniformly hyperbolic.
This process is known as {\em inducing}. Hofbauer towers can actually be regarded as inducing schemes
to obtain a uniformly expanding map, see \cite{Bruin-inducing-Hofbauer} for this relation.
Some invariant measures of the original system lift to the induced one and, when this occurs,
the uniform hyperbolicity can be explored to provide ergodic theoretical properties 
of the original measure. This approach was done for one-dimensional maps \cite{Pesin-Senti},
higher dimensional maps that do not have full ``boundary entropy'' \cite{Pesin-Senti-Zhang},
and expanding measures \cite{Pinheiro-expanding}.

\medskip
\noindent
{\sc Yoccoz puzzles:} Yoccoz constructed Markov structures for quadratic maps of the complex
plane, now called {\em Yoccoz puzzles}, and used them to prove the MLC conjecture
for finitely renormalizable parameters, see e.g. \cite{Hubbard-Yoccoz-puzzles}.

\medskip
\noindent
{\sc Billiards:} Bunimovich, Chernov, and Sina{\u\i} constructed
countable Markov partitions for two dimensional
dispersing billiard maps \cite{Bunimovich-Chernov-Sinai}.
Young construced an inducing scheme for certain two dimensional dispersing billiard maps and
used it to prove exponential decay of correlations \cite{Young-towers}. Young's methods
are nowadays called {\em Young towers}, and are known to exist
for various billiards and in many other contexts, see e.g.
\cite{Chernov-1999,Markarian-polynomial} and references therein.
Lima and Matheus constructed countable
Markov partitions for two dimensional billiard maps and hyperbolic
measures that are adapted to the billiard map \cite{Lima-Matheus}.

\medskip
\noindent
{\sc Symbolic extensions:} a symbolic extension is an extension by a subshift over a finite alphabet s.t.
the projection map is continuous. They reveal a great deal on the entropy
structure of topological systems, and are known to exist in various contexts, see 
\cite{Boyle-Downarowicz-Inventiones,Downarowicz-Newhouse,Downarowicz-Maass,Burguet-Inventiones,Downarowicz-Book}
and references therein.

\medskip
\noindent
{\sc Nonuniformly hyperbolic diffeomorphisms and flows:} Katok constructed horseshoes
of large topological entropy for $C^{1+\beta}$ diffeomorphisms \cite{KatokIHES}.
Such horseshoes usually have zero measure for measures of maximal entropy.
Sarig constructed countable Markov partitions with full topological entropy for $C^{1+\beta}$
surface diffeomorphisms \cite{Sarig-JAMS}.
Ben Ovadia extended the work of Sarig to any dimension \cite{Ben-Ovadia-2019}.
Lima and Sarig constructed symbolic models for nonuniformly hyperbolic three dimensional
flows with positive speed \cite{Lima-Sarig}.

\medskip
It is important noticing that the projection map $\pi$ given by Theorem \ref{Thm-Main} is {\em H\"older continuous},
which is usually not the case for Hofbauer towers neither for symbolic extensions. 
Also, its restriction to $\Sigma^\#$ is {\em finite-to-one} and hence preserves entropy.
Finally, as mentioned above, while Hofbauer towers only work in specific higher dimensional 
situations, our setting is broad enough to cover various contexts that were not 
known to have countable Markov partitions neither finite-to-one symbolic extensions by countable Markov shifts.

\section{Preliminaries}

\subsection{Basic definitions}\label{Section-preliminaries}

Let $\mathfs G=(V,E)$ be an oriented graph, where $V=$ vertex set and $E=$ edge set.
We denote edges by $v\to w$, and we assume that $V$ is countable.

\medskip
\noindent
{\sc Topological Markov shift (TMS):} A {\em topological Markov shift} (TMS) is a pair $(\Sigma,\sigma)$
where
$$
\Sigma:=\{\text{$\Z$--indexed paths on $\mathfs G$}\}=
\left\{\un{v}=\{v_n\}_{n\in\Z}\in V^{\Z}:v_n\to v_{n+1}, \forall n\in\Z\right\}
$$
and $\sigma:\Sigma\to\Sigma$ is the left shift, $[\sigma(\un v)]_n=v_{n+1}$. 
The {\em recurrent set} of $\Sigma$ is
$$
\Sigma^\#:=\left\{\un v\in\Sigma:\exists v,w\in V\text{ s.t. }\begin{array}{l}v_n=v\text{ for infinitely many }n>0\\
v_n=w\text{ for infinitely many }n<0
\end{array}\right\}.
$$
We endow $\Sigma$ with the distance $d(\un v,\un w):={\rm exp}[-\min\{|n|\in\Z:v_n\neq w_n\}]$.
This choice is not canonical, and affects the H\"older regularity of
$\pi$ in Theorem \ref{Thm-Main}.

\medskip
Throughout this text, we will consider subsets of euclidean spaces $\mathbb R^n$.
All calculations in $\R^n$ will be made with respect to the 
canonical euclidean norm $\norm{\cdot}$ induced by the canonical inner product
$\langle \cdot,\cdot\rangle_{\R^n}$
of $\R^n$.
Given a linear transformation $T:\R^n\to \R^m$, let $\|T\|:=\sup_{v\in\R^n\atop{v\neq 0}}\tfrac{\|Tv\|}{\|v\|}$.
Given an open bounded set $U\subset \R^n$ and $h:U\to \R^m$,
let $\|h\|_{C^0}:=\sup_{x\in U}\|h(x)\|$ denote the $C^0$ norm of $h$.
In most cases the domain $U$ will be clear, but when it is not we will write $\|h\|_{C^0(U)}$.

For $0<\delta\leq 1$, let $\Hol{\delta}(h):=\sup\frac{\norm{h(x)-h(y)}}{\|x-y\|^\delta}$ 
where the supremum ranges over distinct elements $x,y\in U$. 
Observe that $\Hol{1}(h)$ is the best Lipschitz constant for $h$, and so we denote
it by ${\rm Lip}(h)$. We call $h$ a {\em contraction} if ${\rm Lip}(h)<1$,
and an {\em expansion} if $h$ is invertible and ${\rm Lip}(h^{-1})<1$.
When $h$ is a linear transformation, then $\|h\|={\rm Lip}(h)$.
We also define $\norm{h}_{C^\delta}:=\norm{h}_{C^0}+\Hol{\delta}(h)$.
Now assume that $h$ is differentiable. By the mean value inequality,
${\rm Lip}(h)=\|dh\|_{C^0}$. In this case, define its $C^1$ norm by
$\|h\|_{C^1}:=\|h\|_{C^0}+\|dh\|_{C^0}=\|h\|_{C^0}+{\rm Lip}(h)$. Finally,
for $0<\delta\leq 1$ let
$$
\|h\|_{C^{1+\delta}}:=\|h\|_{C^1}+\Hol{\delta}(dh)=\|h\|_{C^0}+\|dh\|_{C^\delta}=
\|h\|_{C^0}+\|dh\|_{C^0}+\Hol{\delta}(dh)
$$
denote its $C^{1+\delta}$ norm.
%
%

The euclidean norm is equivalent to any other norm in $\R^n$, but the 
notions of contraction and expansion differ from the choice of the norm. 
On the other hand, statements claiming that a norm is small are not too sensitive
to the choice of the norm. Since it is the euclidean norm that will be related to the
definition of the Lyapunov inner product (see Section \ref{Section-reduction}),
we prefer to work only with the euclidean norm.
Given $r>0$, denote the ball of $\R^n$ with center 0 and radius $r$ by $B^n[r]$. 
Observe that if $0\leq d\leq n$ then $B^n[r]\subset B^d[r]\times B^{n-d}[r]\subset B^n[\sqrt{2}r]$.

We fix a smooth Riemannian manifold $(M,\langle\cdot,\cdot\rangle)$ of dimension $m$ and finite diameter.
Multiplying the metric by a small constant if necessary, we assume that the diameter of $M$ is smaller than one.
We do not require $M$ to be connected neither compact.
For $r>0$, let $B_x[r]\subset T_xM$ denote the open ball with center 0 and radius $r$. 
Since $M$ will be fixed throughout the majority of the paper, we will denote the ball of $\R^m$
with center 0 and radius $r$ simply by $B[r]$. 
Write $a=e^{\pm\ve}b$ when $e^{-\ve}\leq \frac{a}{b}\leq e^\ve$,
and $a=\pm b$ when $-|b|\leq a\leq |b|$. We use $a\wedge b$ to denote $\min\{a,b\}$.

\subsection{Natural extensions}\label{Section-natural-extension}

Most of the discussion in this section is classical, see e.g. \cite{Rohlin-Exactness} or
\cite[\S 3.1]{Aaronson-book}.
Given a map $f:M\to M$, let
$$
\wh M:=\{\wh x=(x_n)_{n\in\Z}:f(x_{n-1})=x_n, \forall n\in\Z\}.
$$
We will write $\wh x=(\ldots,x_{-1};x_0,x_1,\ldots)$ where ; denotes the separation between
the positions $-1$ and 0.
Although $\wh M$ does depend on $f$, we will not write this dependence. Endow $\wh M$ with the distance
$\wh d(\wh x,\wh y):=\sup\{2^nd(x_n,y_n):n\leq 0\}$; then $\wh M$ is
a compact metric space. As for TMS, 
the definition of $\widehat d$ is not canonical and affects the
H\"older regularity of $\pi$ in Theorem \ref{Thm-Main}.
For each $n\in\Z$, let $\vartheta_n:\wh M\to M$ be the projection into the $n$--th
coordinate, $\vartheta_n[\wh x]=x_n$. Consider the 
sigma-algebra in $\wh M$ generated by $\{\vartheta_n:n\leq 0\}$, i.e.
the smallest sigma-algebra that makes all $\vartheta_n$, $n\leq 0$, measurable.

\medskip
\noindent
{\sc Natural extension of $f$:} The {\em natural extension} of $f$ is the 
map $\wh f:\wh M\to\wh M$ defined by
$\wh f(\ldots,x_{-1};x_0,\ldots)=(\ldots,x_0;f(x_0),\ldots)$.
It is an invertible map, with inverse $\wh f^{-1}(\ldots,x_{-1};x_0,\ldots)=(\ldots,x_{-2};x_{-1},\ldots)$.

\medskip
Note that $\wh f$ is indeed an extension
of $f$, since $\vt_0\circ \wh f=f\circ\vt_0$. It is the
smallest invertible extension of $f$: any other invertible extension of $f$ is an 
extension of $\wh f$. The benefit of considering the natural extension is that,
in addition to having an invertible map explicitly defined,
there is a bijection between $f$--invariant and $\wh f$--invariant probability measures,
as follows.

\medskip
\noindent
{\sc Projection of a measure:} If $\wh\mu$ is an $\wh f$--invariant probability measure, then
$\mu=\wh\mu\circ \vt_0^{-1}$ is an $f$--invariant probability measure.

\medskip
\noindent
{\sc Lift of a measure:} If $\mu$ is an $f$--invariant probability measure,
let $\wh\mu$ be the unique probability measure on $\wh M$ s.t.
$\wh\mu[\{\wh x\in\wh M:x_n\in A\}]=\mu[A]$ for all $A\subset M$ Borel and all $n\leq 0$.

\medskip
It is clear that $\wh\mu$ is $\wh f$--invariant. It is also clear that if $\mu,\wh\mu$ are related
either by the projection or lift procedures, then for every measurable $\vf:M\to\R$ it
holds $\int_M\vf d\mu=\int_{\wh M}(\vf\circ\vt_0)d\wh\mu$.
What is less clear is that the projection and lift procedures are inverse
operations, and that they preserve ergodicity and the Kolmogorov-Sina{\u\i} entropy, see \cite{Rohlin-Exactness}.
Here is one consequence of these facts: $\mu$ is an ergodic equilibrium measure for a potential $\varphi:M\to\R$ iff
$\wh\mu$ is an ergodic equilibrium measure for $\varphi\circ\vt_0:\wh M\to\R$. 
In particular, the topological entropies of $f$ and $\wh f$ coincide, and 
$\mu$ is a measure of maximal entropy for $f$ iff $\wh\mu$ is a measure
of maximal entropy for $\wh f$.


Now let $N=\bigsqcup_{x\in M}N_x$ be a vector bundle over $M$, and let $A:N\to N$ measurable
s.t. for every $x\in M$ the restriction $A\restriction_{N_x}$ is a linear
isomorphism $A_x:N_x\to N_{f(x)}$. The map $A$ defines a (possibly non-invertible) cocycle $(A^{(n)})_{n\geq 0}$
over $f$ by $A^{(n)}_x=A_{f^{n-1}(x)}\cdots A_{f(x)}A_x$ for $x\in M$, $n\geq 0$.
There is a way of extending $(A^{(n)})_{n\geq 0}$ to an invertible cocycle over $\wh f$.
For $\wh x\in\wh M$, let $N_{\wh x}:=N_{\vt_0[\wh x]}$ and let
$\wh N:=\bigsqcup_{\wh x\in\wh M} N_{\wh x}$, a vector bundle over $\wh M$.
Define the map $\wh A:\wh N\to\wh N$, $\wh A_{\wh x}:=A_{\vt_0[\wh x]}$.
For $\wh x=(x_n)_{n\in\Z}$, define 
$$
\wh A^{(n)}_{\wh x}:=
\left\{
\begin{array}{ll}
A^{(n)}_{x_0}&,\text{ if }n\geq 0\\
A_{x_{-n}}^{-1}\cdots A_{x_{-2}}^{-1}A_{x_{-1}}^{-1}&,\text{ if }n\leq 0.
\end{array}
\right.
$$
By definition,
$\wh A^{(m+n)}_{\wh x}=\wh A^{(m)}_{\wh f^n(\wh x)}\wh A^{(n)}_{\wh x}$
for all $\wh x\in\wh M$ and all $m,n\in\Z$, hence $(\wh A^{(n)})_{n\in\Z}$
is an invertible cocycle over $\wh f$.

Let $M,f$ satisfying (A1)--(A7), and take $N=TM$ and $A=df$. 
Define $\rho:\wh M\to \R$ by $\rho(\wh x):=d\left(\{\vt_{-1}[\wh x],\vt_{0}[\wh x],\vt_{1}[\wh x]\},\mathfs S\right)$.
We note that $\mu$ is $f$--adapted iff $\log\rho\in L^1(\wh\mu)$:
\begin{enumerate}[$\circ$]
\item If $\mu$ is $f$--adapted then by $\wh f$--invariance the functions
$\log d(\vt_i[\wh x],\mathfs S)$, $i=-1,0,1$, are in $L^1(\wh\mu)$ and so is their minimum $\log\rho$. 
\item Reversely,
if $\log\rho\in L^1(\wh\mu)$ then $\log d(\vt_0[\wh x],\mathfs S)\in L^1(\wh\mu)$ and so
$$
\int_M |\log d(x,\mathfs S)|d\mu(x)=\int_{\wh M} |\log d(\vt_0[\wh x],\mathfs S)|d\wh\mu(\wh x)<\infty,$$
thus proving that $\log d(x,\mathfs S)\in L^1(\mu)$.
\end{enumerate}
For simplicity, we will write $\vt$ to represent $\vartheta_0$.
If $\theta$ is a function on $M$, we will sometimes represent the composition
$\theta\circ\vt$ also by $\theta$.
For example, $\mathfrak d(\wh x):=\mathfrak d(\vt[\wh x])$ and
$\mathfrak B_{\wh x}:=\mathfrak B_{\vt[\wh x]}$.

\part{Proof of the main theorem}\label{Part-Main-Thm}

\section{Pesin theory}\label{Section-Pesin-theory}

Consider the derivative cocycle $(df^n_x)_{n\geq 0}$. Since $df_x$ is not necessarily invertible for
all $x\in M$, we apply the method described in the previous section to extend it to an invertible
cocycle, defined in a subset of $\wh M$.

\medskip
\noindent
{\sc Invertible cocycle:} The cocycle $(df^n_x)_{n\geq 0}$ induces an invertible cocycle
$(\wh{df}^{(n)}_{\wh x})_{n\in\Z}$, defined for all
$\wh x=(x_n)_{n\in\Z}\in \wh M\backslash \bigcup_{n\in\Z}{\wh f}^n(\vt^{-1}[\mathfs S])$ by
$$
\wh{df}^{(n)}_{\wh x}:=
\left\{
\begin{array}{ll}
df^{n}_{x_0}&,\text{ if }n\geq 0\\
(df_{x_{-n}})^{-1}\cdots (df_{x_{-2}})^{-1}(df_{x_{-1}})^{-1}&,\text{ if }n\leq 0.
\end{array}
\right.
$$

\medskip
Since $\wh x\not\in \bigcup_{n\in\Z}{\wh f}^n(\vt^{-1}[\mathfs S])$, each $df_{x_n}$ is invertible
and so the above definition makes sense. The vector bundle in which $(\wh{df}^{(n)}_{\wh x})_{n\in\Z}$
acts is $\wh{TM}$, where $\wh{TM}_{\wh x}=T_{x_0}M$. 
We consider a parameter $\chi >0$, fixed throughout all of Part \ref{Part-Main-Thm}.

\medskip
\noindent
{\sc The nonuniformly hyperbolic locus ${\rm NUH}$:} It is defined as the set of points \label{Def-NUH}
$\wh x\in \wh M\backslash \bigcup_{n\in\Z}{\wh f}^n(\vt^{-1}[\mathfs S])$ for which there is a splitting
$\wh{TM}_{\wh x}=E^s_{\wh x}\oplus E^u_{\wh x}$ s.t.:
\begin{enumerate}[(NUH1)]
\item Every $v\in E^s_{\wh x}$ contracts in the future at least $-\chi$ and expands in the past:
$$\limsup_{n\to+\infty}\tfrac{1}{n}\log \|\wh{df}^{(n)} v\|\leq -\chi\ \text{ and } 
\ \liminf_{n\to+\infty}\tfrac{1}{n}\log \|\wh{df}^{(-n)} v\|>0.
$$ 
\item Every $v\in E^u_{\wh x}$ contracts in the past at least $-\chi$ and expands in the future:
$$\limsup_{n\to+\infty}\tfrac{1}{n}\log \|\wh{df}^{(-n)} v\|\leq -\chi\text{ and } 
\liminf_{n\to+\infty}\tfrac{1}{n}\log \|\wh{df}^{(n)}v\|>0.$$
\item \label{Def-NUH3} The parameters $s(\wh x)=\sup\limits_{v\in E^s_{\wh x}\atop{\|v\|=1}}S(\wh x,v)$
and $u(\wh x)=\sup\limits_{w\in E^u_{\wh x}\atop{\|w\|=1}}U(\wh x,w)$ are finite, where:
\begin{align*}
S(\wh x,v)&=\sqrt{2}\left(\sum_{n\geq 0}e^{2n\chi}\|\wh{df}^{(n)} v\|^2\right)^{1/2},\\
U(\wh x,w)&=\sqrt{2}\left(\sum_{n\geq 0}e^{2n\chi}\|\wh{df}^{(-n)}w\|^2\right)^{1/2}.
\end{align*}
\end{enumerate}

The above conditions are weaker than Lyapunov regularity, hence
${\rm NUH}$ contains all Lyapunov regular points with exponents greater than $\chi$
in absolute value. Moreover, a periodic point $\wh x$ is in ${\rm NUH}$ iff all of its
exponents are greater than $\chi$ in absolute value. 
But ${\rm NUH}$ might contain points with some Lyapunov exponents equal to $\pm\chi$,
and even non-regular points, where the contraction rates oscillate infinitely often. 
Formally, ${\rm NUH}={\rm NUH}_{\chi}$ depends on $\chi>0$, but since
this parameter is fixed, we will omit such dependence. When we need to
consider ${\rm NUH}$ for different values of $\chi$, we will be explicit.
Clearly, ${\rm NUH}$ is $\wh f$--invariant. By the Oseledets theorem, 
if $\mu$ is $\chi$--hyperbolic then (NUH1)--(NUH3) hold $\wh\mu$--a.e.

\subsection{Oseledets-Pesin reduction}\label{Section-reduction}

We provide a diagonalization for the cocycle $(\wh{df}^{(n)}_{\wh x})_{n\in\Z}$
inside $ {\rm NUH}$.
Write $E^s\oplus E^u$ to represent the splitting on ${\rm NUH}$.

\medskip
\noindent {\sc Lyapunov inner product:} We define an inner product $\llangle \cdot,\cdot\rrangle$ on
${\rm NUH}$, which we call {\em Lyapunov inner product}, by the following identities:
\begin{enumerate}[$\circ$]
\item For $v_1^s,v_2^s\in E^s$:
$$
\llangle v^s_1,v^s_2\rrangle=2\sum_{n\geq 0}e^{2n\chi}\left\langle \wh{df}^{(n)}v^s_1,\wh{df}^{(n)}v^s_2\right\rangle.
$$
\item For $v_1^u,v_2^u\in E^u$:
$$
\llangle v^u_1,v^u_2\rrangle=2\sum_{n\geq 0}e^{2n\chi}\left\langle \wh{df}^{(-n)}v^u_1,\wh{df}^{(-n)}v^u_2\right\rangle.
$$
\item For $v^s\in E^s$ and $v^u\in E^u$:
$$
\llangle v^s,v^u\rrangle=0.
$$
\end{enumerate}

By conditions (NUH1)--(NUH3), the infinite sums above are finite.
Let $\vertiii{\cdot}$ denote the norm induced by $\llangle \cdot,\cdot\rrangle$.
As we know, $\vertiii{\cdot}$ uniquely defines $\llangle \cdot,\cdot\rrangle$, so 
we also call $\vertiii{\cdot}$ the Lyapunov inner product.
Because of the term $n=0$ that appears in the sums defining $\llangle\cdot,\cdot\rrangle$,
we have $\vertiii{v}\geq 2\|v\|$ for all $v\in E^s\oplus E^u$.
Furthermore, if $v^s\in E^s_{\wh x}$ then $\vertiii{v^s}=S(\wh x,v^s)$, and if
$v^u\in E^u_{\wh x}$ then $\vertiii{v^u}=U(\wh x,v^u)$.

For $\wh x\in {\rm NUH}$, let $d_s(\wh x),d_u(\wh x)\in\N$ be the dimensions
of $E^s_{\wh x},E^u_{\wh x}$ respectively. Since the splitting $E^s\oplus E^u$ is $\wh{df}$--invariant,
the functions $d_s,d_u$ are $\wh f$--invariant. Recall that $\langle \cdot,\cdot\rangle_{\R^n}$
denotes the canonical inner product in $\R^n$.

\medskip
\noindent
{\sc Linear map $C(\wh x):$} For $\wh x\in {\rm NUH}$, define $C(\wh x): \R^n\to \wh{TM}_{\wh x}$ to be a linear map
satisfying the following conditions:
\begin{enumerate}[$\circ$]
\item $C(\wh x)$ sends the subspace $\R^{d_s(\wh x)}\times\{0\}$ to $E^s_{\wh x}$
and $\{0\}\times \R^{d_u(\wh x)}$ to $E^u_{\wh x}$.
\item $\langle v,w\rangle_{\R^n}=\llangle C(\wh x)v,C(\wh x)w\rrangle$ for all $v,w\in\R^n$, i.e.
$C(\wh x)$ is an isometry between $(\R^n,\langle \cdot,\cdot\rangle_{\R^n})$ and
$(\wh{TM}_{\wh x},\llangle\cdot,\cdot\rrangle)$.
\end{enumerate}

\medskip
The map $C(\wh x)$ is not uniquely defined (for instance, rotations inside $E^s_{\wh x},E^u_{\wh x}$
preserve both properties above). What is important for us is to define $C(\wh x)$ s.t. the map
$\wh x\in{\rm NUH}\mapsto C(\wh x)$ is measurable, and this can be done as in
\cite[Footnote at page 48]{Ben-Ovadia-2019}. Below we list the main properties of $C(\wh x)$.

\begin{lemma}\label{Lemma-linear-reduction}
For all ${\wh x}\in{\rm NUH}$, the following holds.
\begin{enumerate}[{\rm (1)}]
\item $\|C({\wh x})\|\leq 1$ and 
$$
\|C(\wh x)^{-1}\|^2=
\sup_{v^s+v^u\in E^s_{\wh x}\oplus E^u_{\wh x}\atop{\|v^s+v^u\|\neq 0}}\frac{\vertiii{v^s}^2+\vertiii{v^u}^2}{\|v^s+v^u\|^2}\cdot
$$
\item $D({\wh x})=C(\wh f({\wh x}))^{-1}\circ \wh{df}_{\wh x}\circ C({\wh x})$ is a block matrix 
$$
D({\wh x})=\left[\begin{array}{cc}D_s({\wh x}) & 0 \\
0 & D_u({\wh x})
\end{array}\right]
$$
where $D_s({\wh x})$ is a $d_s(\wh x)\times d_s(\wh x)$ matrix
with $\|D_s({\wh x})v_1\|< e^{-\chi}$ for all unit vectors $v_1\in \R^{d_s(\wh x)}$,
and  $D_u({\wh x})$ is a $d_u(\wh x)\times d_u(\wh x)$ matrix with
$\|D_u({\wh x})v_2\|>e^{\chi}$ for all unit vectors $v_2\in \R^{d_u(\wh x)}$.
\item If $v_1\in \R^{d_s(\wh x)}$ is unitary then 
$$
\|D_s({\wh x})v_1\|\geq d(\vt[\wh x],\mathfs S)^a(1+e^{2\chi})^{-1/2},
$$
and if $v_2\in \R^{d_u(\wh x)}$ is unitary then
$$
\|D_u({\wh x})v_2\|\leq  d(\vt[\wh x],\mathfs S)^{-a}(1+e^{2\chi})^{1/2}.
$$
\item $d(\vt[\wh x],\mathfs S)^{2a}(1+e^{2\chi})^{-1/2}\leq 
\tfrac{\norm{C(\wh f(\wh x))^{-1}}}{\norm{C(\wh x)^{-1}}}\leq d(\vt[\wh x],\mathfs S)^{-2a}(1+e^{2\chi})^{1/2}$.
\end{enumerate}
\end{lemma}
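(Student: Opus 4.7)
The plan is to prove the four items in order, using two tools repeatedly: the isometry $\langle v,w\rangle_{\R^n}=\llangle C(\wh x)v,C(\wh x)w\rrangle$ built into the definition of $C(\wh x)$, and the a priori inequality $\vertiii{u}\geq 2\|u\|$ for $u\in E^s_{\wh x}\oplus E^u_{\wh x}$, which is immediate from the $n=0$ term in the series defining $\llangle\cdot,\cdot\rrangle$. For item (1), the isometry gives $\vertiii{C(\wh x)v}=\|v\|$, and combining with $\|\cdot\|\leq\tfrac12\vertiii{\cdot}$ yields $\|C(\wh x)\|\leq \tfrac12\leq 1$. For the formula for $\|C(\wh x)^{-1}\|^2$, I would decompose any $w\in\wh{TM}_{\wh x}$ as $v^s+v^u\in E^s_{\wh x}\oplus E^u_{\wh x}$; orthogonality of this splitting under $\llangle\cdot,\cdot\rrangle$ together with the isometry on each factor gives $\|C(\wh x)^{-1}w\|^2=\vertiii{v^s}^2+\vertiii{v^u}^2$, and the claimed supremum follows.

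For item (2), the block-diagonal structure of $D(\wh x)$ is immediate from the $\wh{df}$-invariance of the splitting and the fact that $C(\cdot)$ sends the coordinate planes $\R^{d_s}\times\{0\}$ and $\{0\}\times\R^{d_u}$ to $E^s$ and $E^u$ respectively. The sharp estimates on $D_s,D_u$ rest on two identities obtained by a single index shift in the series defining $S$ and $U$:
\[
\vertiii{\wh{df}_{\wh x}w}^2 = e^{-2\chi}\bigl(\vertiii{w}^2-2\|w\|^2\bigr), \quad w\in E^s_{\wh x},
\]
\[
\vertiii{\wh{df}_{\wh x}w}^2 = e^{2\chi}\vertiii{w}^2 + 2\|\wh{df}_{\wh x}w\|^2, \quad w\in E^u_{\wh x}.
\]
Applied to $w=C(\wh x)v_i$ with $\|v_i\|=1$ so $\vertiii{w}=1$, these give $\|D_s(\wh x)v_1\|<e^{-\chi}$ (strict, since $\|w\|>0$) and $\|D_u(\wh x)v_2\|>e^{\chi}$.

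For item (3), I would feed the (A6) bounds $\|\wh{df}_{\wh x}^{\pm 1}\|\leq d(\vt[\wh x],\mathfs S)^{-a}=:d^{-a}$ into the two identities above. In the stable case, the chain $\|w\|\leq d^{-a}\|\wh{df}_{\wh x}w\|\leq \tfrac{d^{-a}}{2}\vertiii{\wh{df}_{\wh x}w}$, substituted into $1=\vertiii{w}^2=2\|w\|^2+e^{2\chi}\vertiii{\wh{df}_{\wh x}w}^2$ and solved for $\vertiii{\wh{df}_{\wh x}w}^2$, combined with $d\leq 1$ to absorb constants, yields the claimed lower bound $\|D_s(\wh x)v_1\|\geq d^{a}(1+e^{2\chi})^{-1/2}$. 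In the unstable case, substituting $\|\wh{df}_{\wh x}w\|\leq \tfrac{d^{-a}}{2}\vertiii{w}=\tfrac{d^{-a}}{2}$ directly into the unstable identity and again using $d\leq 1$ gives the upper bound $\|D_u(\wh x)v_2\|\leq d^{-a}(1+e^{2\chi})^{1/2}$.

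Item (4) is purely algebraic. From $D(\wh x)=C(\wh f(\wh x))^{-1}\wh{df}_{\wh x}C(\wh x)$ one has $C(\wh f(\wh x))^{-1}=D(\wh x)\,C(\wh x)^{-1}\,\wh{df}_{\wh x}^{-1}$, so the block structure and item (3) give $\|D(\wh x)\|\leq \max(\|D_s\|,\|D_u\|)\leq d^{-a}(1+e^{2\chi})^{1/2}$; combined with $\|\wh{df}_{\wh x}^{-1}\|\leq d^{-a}$ this yields the upper inequality. The reverse factorization with the analogous estimate $\|D(\wh x)^{-1}\|\leq d^{-a}(1+e^{2\chi})^{1/2}$—where now the control on $\|D_s^{-1}\|$ comes from the just-proved lower bound on $\|D_s v_1\|$—gives the lower inequality. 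The main technical care will be required in item (3), where the Euclidean-norm bounds from (A6) and the Lyapunov-norm bounds must be compared at a single point whose Lyapunov distortion $\|C(\wh x)^{-1}\|$ may be arbitrarily large; the two identities in (2) are what render this mixing tractable, since the extra term $2\|w\|^2$ couples the two scales in a controlled way.
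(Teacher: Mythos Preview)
Your proof is correct and follows essentially the same route as the paper's: both rest on the two index-shift identities for $\vertiii{\cdot}$ (the paper's relations (\ref{relation-s})--(\ref{relation-u})) to extract the contraction/expansion of $D_s,D_u$, and both feed the (A6) bound $\|\wh{df}^{\pm1}\|\leq d(\vt[\wh x],\mathfs S)^{-a}$ into those identities for item (3); your factorization $C(\wh f(\wh x))^{-1}=D(\wh x)\,C(\wh x)^{-1}\,\wh{df}_{\wh x}^{-1}$ for item (4) is a cosmetic repackaging of the paper's use of the formula in (1). One slip to fix: the $n=0$ term gives $\vertiii{u}^2\geq 2\|u\|^2$, i.e.\ $\vertiii{u}\geq \sqrt{2}\,\|u\|$ (for $u\in E^s$ or $u\in E^u$), not $\vertiii{u}\geq 2\|u\|$; this weakens your intermediate bounds (so $\|C(\wh x)\|\leq 1/\sqrt{2}$, and the chains in (3) should read $\leq \tfrac{d^{-a}}{\sqrt{2}}\vertiii{\cdot}$), but after the same ``$d\leq 1$'' absorption step all four stated conclusions survive verbatim.
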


\begin{proof}
Parts (1) and (2) can be proved as in \cite[Lemma~2.9]{Ben-Ovadia-2019}, but for later use on some
estimates we will prove them. Let $v=v_1+v_2$, where $v_1\in\R^{d_s(\wh x)}\times\{0\}$
and $v_2\in\{0\}\times\R^{d_u(\wh x)}$. By the Cauchy-Schwarz inequality and the
arithmetic-geometric mean inequality, we have
$$
2\langle C(\wh x)v_1,C(\wh x)v_2\rangle \leq 2\|C(\wh x)v_1\|\cdot\|C(\wh x)v_2\|\leq \|C(\wh x)v_1\|^2+\|C(\wh x)v_2\|^2,
$$
hence 
\begin{align*}
&\, \|C(\wh x)v\|^2=\|C(\wh x)v_1\|^2+\|C(\wh x)v_2\|^2+2\langle C(\wh x)v_1,C(\wh x)v_2\rangle\\
&\leq 2(\|C(\wh x)v_1\|^2+\|C(\wh x)v_2\|^2)
\leq \vertiii{C(\wh x)v_1}^2+\vertiii{C(\wh x)v_2}^2\\
&=\|v_1\|^2+\|v_2\|^2=\|v\|^2,
\end{align*}
thus proving that $\|C(\wh x)\|\leq 1$. The formula for $\|C(\wh x)^{-1}\|$ is almost automatic:
if $v=v^s+v^u\in E^s_{\wh x}\oplus E^u_{\wh x}$ then 
$\|C(\wh x)^{-1}v\|^2=\llangle v,v\rrangle = \vertiii{v^s}^2+\vertiii{v^u}^2$ and so
$$
\|C(\wh x)^{-1}\|^2=\sup_{v\in\wh{TM}_{\wh x}\atop{v\neq 0}}\frac{\vertiii{v^s}^2+\vertiii{v^u}^2}{\|v\|^2}\cdot
$$
This proves part (1). Now we prove part (2). 
By the definition of $C(\cdot)$, we have
$$
\R^{d_s(\wh x)}\times\{0\} \xrightarrow[]{C(\wh x)} E^s_{\wh x} \xrightarrow[]{\ \wh{df}_{\wh x}\ }
E^s_{\wh f(\wh x)} \xrightarrow[]{C(\wh f(\wh x))^{-1}} \R^{d_s(\wh f(\wh x))}\times \{0\}.
$$
Similarly,
$$
\{0\}\times\R^{d_u(\wh x)} \xrightarrow[]{C(\wh x)} E^u_{\wh x} \xrightarrow[]{\ \wh{df}_{\wh x}\ }
E^u_{\wh f(\wh x)} \xrightarrow[]{C(\wh f(\wh x))^{-1}} \{0\}\times\R^{d_u(\wh f(\wh x))},
$$
and so $D({\wh x})$ has the block form 
$$
D({\wh x})=\left[\begin{array}{cc}D_s({\wh x}) & 0 \\
0 & D_u({\wh x})
\end{array}\right],
$$
where $D_s(\wh x)$ has dimension $d_s(\wh x)$ and $D_u(\wh x)$ has dimension $d_u(\wh x)$.
To obtain the estimates of $\|D_s(\wh x)\|$ and $\|D_u(\wh x)\|$, we give general estimates
for the ratios $\tfrac{\vertiii{\wh{df} v^s}^2}{\vertiii{v^s}^2}$ and $\tfrac{\vertiii{\wh{df} v^u}^2}{\vertiii{v^u}^2}$.\\

\noindent
\medskip
{\sc Claim:} For all nonzero $v^s\in E^s, v^u\in E^u$ it holds
\begin{align*}
&d(\vt[\wh x],\mathfs S)^a(1+e^{2\chi})^{-1/2}
<\tfrac{\vertiii{\wh{df} v^s}}{\vertiii{v^s}}< e^{-\chi} \ \text{ and }\\
&e^\chi<\tfrac{\vertiii{\wh{df} v^u}}{\vertiii{v^u}}<
d(\vt[\wh x],\mathfs S)^{-a}(1+e^{2\chi})^{1/2}.
\end{align*}

\begin{proof}[Proof of the claim.]
Write $\wh x=(x_n)_{n\in\Z}$. Observe that for
$v^s\in E^s$ we have
\begin{align}\label{relation-s}
\vertiii{v^s}^2=2\|v^s\|^2+2e^{2\chi}\sum_{n\geq 0}e^{2n\chi}\|\wh{df}^{(n)}(\wh{df}v^s)\|^2=
2\|v^s\|^2+e^{2\chi}\vertiii{\wh{df}v^s}^2
\end{align}
and similarly for $v^u\in E^u$ we have
\begin{align}\label{relation-u}
\vertiii{v^u}^2=2\|v^u\|^2+2e^{2\chi}\sum_{n\geq 0}e^{2n\chi}\left\|\wh{df}^{(-n)}(\wh{df}^{(-1)}v^u)\right\|^2=
2\|v^u\|^2+e^{2\chi}\vertiii{\wh{df}^{(-1)}v^u}^2.
\end{align}
Equation (\ref{relation-s}) implies that
$$
\tfrac{\vertiii{\wh{df} v^s}^2}{\vertiii{v^s}^2}=e^{-2\chi}\left(1-\tfrac{2\|v^s\|^2}{\vertiii{v^s}^2}\right)\cdot
$$
On one hand, $\tfrac{\vertiii{\wh{df} v^s}^2}{\vertiii{v^s}^2}< e^{-2\chi}$. On
the other hand, since
$\vertiii{v^s}^2> 2\|v^s\|^2+2e^{2\chi}\|\wh{df} v^s\|^2\geq 2[1+e^{2\chi}d(x_0,\mathfs S)^{2a}]\|v^s\|^2$,
we have $\tfrac{2\|v^s\|^2}{\vertiii{v^s}^2}< \tfrac{1}{1+e^{2\chi}d(x_0,\mathfs S)^{2a}}$ and so
$$
\tfrac{\vertiii{\wh{df} v^s}^2}{\vertiii{v^s}^2}> e^{-2\chi}\left(1-\tfrac{1}{1+e^{2\chi}d(x_0,\mathfs S)^{2a}}\right)
=\tfrac{d(x_0,\mathfs S)^{2a}}{1+e^{2\chi}d(x_0,\mathfs S)^{2a}}
>\tfrac{d(x_0,\mathfs S)^{2a}}{1+e^{2\chi}}\cdot
$$
Hence $d(x_0,\mathfs S)^a(1+e^{2\chi})^{-1/2}<\tfrac{\vertiii{\wh{df} v^s}}{\vertiii{v^s}}< e^{-\chi}$.

The second estimate is proved similarly. Using equation (\ref{relation-u}) with $\wh{df}v^u$ instead of $v^u$,
we have $\vertiii{\wh{df}v^u}^2=2\|\wh{df}v^u\|^2+e^{2\chi}\vertiii{v^u}^2$ and so
$$
\tfrac{\vertiii{v^u}^2}{\vertiii{\wh{df}v^u}^2}=e^{-2\chi}\left(1-\tfrac{2\|\wh{df}v^u\|^2}{\vertiii{\wh{df}v^u}^2}\right)\cdot
$$
Then $\tfrac{\vertiii{v^u}^2}{\vertiii{\wh{df}v^u}^2}<e^{-2\chi}$ and, since
$\vertiii{\wh{df}v^u}^2>2\|\wh{df}v^u\|^2+2e^{2\chi}\|\wh{df}^{(-1)}(\wh{df}v^u)\|^2\geq
2[1+e^{2\chi}d(x_0,\mathfs S)^{2a}]\|\wh{df}v^u\|^2$, we also have 
$$
\tfrac{\vertiii{v^u}^2}{\vertiii{\wh{df}v^u}^2}> e^{-2\chi}\left(1-\tfrac{1}{1+e^{2\chi}d(x_0,\mathfs S)^{2a}}\right)
>\tfrac{d(x_0,\mathfs S)^{2a}}{1+e^{2\chi}},
$$
thus implying that  $e^\chi<\tfrac{\vertiii{\wh{df} v^u}}{\vertiii{v^u}}<d(x_0,\mathfs S)^{-a}(1+e^{2\chi})^{1/2}$.
\end{proof}

The claim actually implies parts (2) and (3), since if
$v^s\in E^s$ and $v^u\in E^u$ are nonzero vectors then the definition of $C(\wh x)$ implies that
$\tfrac{\|D_s(\wh x)v^s\|}{\|v^s\|}=\tfrac{\vertiii{\wh{df}\circ C(\wh x)v^s}}{\vertiii{C(\wh x)v^s}}$
and $\tfrac{\|D_u(\wh x)v^u\|}{\|v^u\|}=\tfrac{\vertiii{\wh{df}\circ C(\wh x)v^u}}{\vertiii{C(\wh x)v^u}}$.
To prove part (4), observe that since $\wh {df}_{\wh x}$ is invertible, part (1) implies
that
$$
\norm{C(\wh f(\wh x))^{-1}}^2=
\sup_{v^s+v^u\in E^s_{\wh x}\oplus E^u_{\wh x}\atop{\|v^s+v^u\|\neq 0}}
\tfrac{\vertiii{\wh{df} v^s}^2+\vertiii{\wh{df} v^u}^2}{\norm{\wh{df}(v^s+v^u)}^2}\cdot 
$$
Now apply the claim and (A6) to conclude part (4).
\end{proof}

In summary, what we did above was to define, inside ${\rm NUH}$,
a Lyapunov inner product that depends on $\chi>0$. The construction
works because of conditions (NUH1)--(NUH3). For $\chi'<\chi$, 
we can perform the same construction inside ${\rm NUH}_{\chi'}$, obtaining
a different Lyapunov inner product $\vertiii{\cdot}_{\chi'}$.
Since ${\rm NUH}_{\chi'}\supset{\rm NUH}$, the Lyapunov inner
product $\vertiii{\cdot}_{\chi'}$ is also defined inside ${\rm NUH}$.
Notice that $\vertiii{\cdot}_{\chi'}\leq \vertiii{\cdot}$, so we say that 
$\vertiii{\cdot}$ is stronger than $\vertiii{\cdot}_{\chi'}$.
For $\wh x\in{\rm NUH}_{\chi'}$, define a linear map $C_{\chi'}(\wh x)$ using $\vertiii{\cdot}_{\chi'}$.
Then $C_{\chi'}(\wh x)$ satisfies an analogue of Lemma \ref{Lemma-linear-reduction}
for a block matrix $D_{\chi'}(\wh x)$, with $\chi$ replaced by $\chi'$.
Note that $D(\wh x)$ has better hyperbolicity rates than $D_{\chi'}(\wh x)$.
Also, since $\vertiii{\cdot}_{\chi'}\leq \vertiii{\cdot}$ then part (1) implies that
$\norm{C_{\chi'}(\wh x)^{-1}}\leq \norm{C(\wh x)^{-1}}$.

The Lyapunov inner product $\vertiii{\cdot}$ will be used in the majority of the paper,
but there is one subtle point that we will need to use $\vertiii{\cdot}_{\chi'}$ (see Section \ref{Section-finite}),
as successfully implemented by Ben Ovadia \cite{Ben-Ovadia-2020}.

\subsection{Pesin charts}

The next step is to define charts for which
$\wh f$ is a small perturbation of a hyperbolic matrix. Recall
that $\mathfrak d(\wh x):=\mathfrak d(\vt[\wh x])$.

\medskip
\noindent
{\sc Pesin chart $\Psi_{\wh x}$:} The {\em Pesin chart} at $\wh x\in{\rm NUH}$ is
$\Psi_{\wh x}: B[\mathfrak d (\wh x)]\to M$ defined by
$\Psi_{\wh x}:=\exp{\vt[\wh x]}\circ C(\wh x)$. 

\begin{lemma}\label{Lemma-Pesin-chart}
The Pesin chart $\Psi_{\wh x}$ is a diffeomorphism onto its image s.t.:
\begin{enumerate}[{\rm (1)}]
\item $\Lip{\Psi_{\wh x}}\leq 2$ and $\Lip{\Psi_{\wh x}^{-1}}\leq 2\|C({\wh x})^{-1}\|$.
\item
$\|\widetilde{d(\Psi_{\wh x})_{v_1}}-\widetilde{d(\Psi_{\wh x})_{v_2}}\|\leq d(\vt[\wh x],\mathfs S)^{-a}\|v_1-v_2\|$
for all $v_1,v_2\in B[\mathfrak d(\wh x)]$.
\end{enumerate}
\end{lemma}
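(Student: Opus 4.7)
\bigskip

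\textbf{Proof plan.}
The plan is to exploit the fact that $\Psi_{\wh x}$ is the composition of the linear isometry $C(\wh x)$ (between inner product spaces) with the exponential map $\exp{\vt[\wh x]}$, and then invoke the regularity hypotheses (A1)--(A3) from the setup.

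First, for part (1), I would verify that the domain is well chosen. By the definition of $\mathfrak d$, we have $\mathfrak d(\wh x)<1$ and $\mathfrak d(\wh x)>d(\vt[\wh x],\mathfs D)^a$. Taking $y=x=\vt[\wh x]$ in (A1) gives $\inj(\vt[\wh x])\geq 2\mathfrak d(\wh x)$, so $\exp{\vt[\wh x]}$ is a diffeomorphism onto its image on $B_{\vt[\wh x]}[2\mathfrak d(\wh x)]$. By Lemma~\ref{Lemma-linear-reduction}(1), $\|C(\wh x)\|\leq 1$, so $C(\wh x)(B[\mathfrak d(\wh x)])\subset B_{\vt[\wh x]}[\mathfrak d(\wh x)]$. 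Since $C(\wh x)$ is invertible (being an isometry between spaces of equal finite dimension), $\Psi_{\wh x}$ is a diffeomorphism onto its image. For the Lipschitz bounds, the chain rule gives $\Lip(\Psi_{\wh x})\leq \Lip(\exp{\vt[\wh x]})\cdot\|C(\wh x)\|\leq 2\cdot 1=2$ by (A2) and Lemma~\ref{Lemma-linear-reduction}(1); symmetrically, $\Lip(\Psi_{\wh x}^{-1})\leq \|C(\wh x)^{-1}\|\cdot\Lip(\exp{\vt[\wh x]}^{-1})\leq 2\|C(\wh x)^{-1}\|$.

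For part (2), the chain rule yields
$$
d(\Psi_{\wh x})_{v}=d(\exp{\vt[\wh x]})_{C(\wh x)v}\circ C(\wh x),
$$
and because $C(\wh x)$ is a fixed linear map from $\R^m$ to $T_{\vt[\wh x]}M$ (independent of $v$), post-composing with the parallel transport to the basepoint only affects the first factor. Hence
$$
\widetilde{d(\Psi_{\wh x})_{v_i}}=\widetilde{d(\exp{\vt[\wh x]})_{C(\wh x)v_i}}\circ C(\wh x),\qquad i=1,2.
$$
Subtracting and taking operator norms,
$$
\bigl\|\widetilde{d(\Psi_{\wh x})_{v_1}}-\widetilde{d(\Psi_{\wh x})_{v_2}}\bigr\|
\leq \bigl\|\widetilde{d(\exp{\vt[\wh x]})_{C(\wh x)v_1}}-\widetilde{d(\exp{\vt[\wh x]})_{C(\wh x)v_2}}\bigr\|\cdot\|C(\wh x)\|.
$$
Now I would invoke (A3) with $x=\vt[\wh x]$ and $y_1=y_2=\vt[\wh x]$, noting that $C(\wh x)v_i$ lies in $B_{\vt[\wh x]}[\mathfrak d(\wh x)]\subset T_{\vt[\wh x]}M$, to bound the first factor by $d(\vt[\wh x],\mathfs D)^{-a}\Sas(C(\wh x)v_1,C(\wh x)v_2)$. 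Since both vectors live in the same tangent space, (A1) gives $\Sas(C(\wh x)v_1,C(\wh x)v_2)\leq 2\|C(\wh x)(v_1-v_2)\|\leq 2\|v_1-v_2\|$. Combining with $\|C(\wh x)\|\leq 1$ and $\mathfs D\subset \mathfs S$, we get
$$
\bigl\|\widetilde{d(\Psi_{\wh x})_{v_1}}-\widetilde{d(\Psi_{\wh x})_{v_2}}\bigr\|\leq 2\,d(\vt[\wh x],\mathfs S)^{-a}\|v_1-v_2\|.
$$

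The only remaining issue is the extra factor $2$. This is exactly the sort of multiplicative constant addressed in Remark~\ref{remark-mult-constants}: by enlarging $a$ slightly (or rescaling the metric), the factor $2$ is absorbed into $d(\vt[\wh x],\mathfs S)^{-a}$, yielding the required estimate. The main obstacle here is purely notational -- correctly tracking the tilde identifications through the composition and separating out the linear factor $C(\wh x)$; once that is done, the estimate is a one-line application of (A3) together with Lemma~\ref{Lemma-linear-reduction}(1).
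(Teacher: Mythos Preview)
Your proof is correct and follows the same approach as the paper. The only difference is that you pick up an unnecessary factor of $2$ in part (2): since both $C(\wh x)v_1$ and $C(\wh x)v_2$ lie in the \emph{same} tangent space $T_{\vt[\wh x]}M$, the Sasaki distance between them equals exactly $\|C(\wh x)v_1-C(\wh x)v_2\|$ (the Sasaki metric restricts to the Riemannian norm on each fiber), so (A3) gives the bound $d(\vt[\wh x],\mathfs D)^{-a}\|C(\wh x)v_1-C(\wh x)v_2\|\leq d(\vt[\wh x],\mathfs S)^{-a}\|v_1-v_2\|$ directly---no need to invoke (A1) or Remark~\ref{remark-mult-constants}.
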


\begin{proof}
The proof is very similar to the proof of \cite[Lemma~3.1]{Lima-Matheus}.
Write $\wh x=(x_n)_{n\in\Z}$.
Since $C(\wh x)$ is a contraction, we have $C(\wh x)(B[\mathfrak d(x_0)])\subset B_{x_0}[\mathfrak d(x_0)]$
and so by (A1) it follows that $\Psi_{\wh x}$ is a well-defined differentiable map 
with $\Psi_{\wh x}(B[\mathfrak d(x_0)])\subset\mathfrak B_{x_0}$.
Its inverse is $C(\wh x)^{-1}\circ \exp{x_0}^{-1}$ which,
again by (A1), is a well-defined differentiable map from $\Psi_{\wh x}(B[\mathfrak d(x_0)])$
onto $B[\mathfrak d(x_0)]$. This proves that $\Psi_{\wh x}$ is a diffeomorphism onto its image.

\medskip
\noindent
(1) By Lemma \ref{Lemma-linear-reduction}(1) and (A2), $\Lip{\Psi_{\wh x}}\leq 2$
and $\Lip{\Psi_{\wh x}^{-1}}\leq 2\|C(\wh x)^{-1}\|$.

\medskip
\noindent
(2) Since $\|C(\wh x)v_i\|\leq \|v_i\|\leq \mathfrak d(x_0)$, assumption (A3) implies that
\begin{align*}
&\, \|\widetilde{d(\Psi_{\wh x})_{v_1}}-\widetilde{d(\Psi_{\wh x})_{v_2}}\|=
\|\widetilde{d(\exp{x_0})_{C(\wh x)v_1}}\circ C(\wh x)-
\widetilde{d(\exp{x_0})_{C(\wh x)v_2}}\circ C(\wh x)\|\\
&\leq d(x_0,\mathfs S)^{-a}\|C(\wh x)v_1-C(\wh x)v_2\|\leq d(x_0,\mathfs S)^{-a}\|v_1-v_2\|,
\end{align*}
thus completing the proof.
\end{proof}

Now we introduce a parameter for which the restriction of $\Psi_{\wh x}$ to a smaller domain
with this size has better properties.
Given $\ve>0$, let $I_\ve:=\{e^{-\frac{1}{3}\ve n}:n\geq 0\}$. 
Recall that  $\rho(\wh x)=d(\{\vartheta_{-1}[\wh x],\vartheta_0[\wh x],\vartheta_1[\wh x]\},\mathfs S)$.

\medskip
\noindent
{\sc Parameter $Q(\wh x)$:} For $\wh x\in{\rm NUH}$,
define $Q(\wh x)=\max\{Q\in I_\ve:Q\leq \widetilde Q(\wh x)\}$,
where
$$
\widetilde Q(\wh x)= \ve^{6/\beta}\min\left\{\norm{C(\wh x)^{-1}}^{-48/\beta},\rho(\wh x)^{96a/\beta}\right\}.
$$

\medskip
This parameter depends on $\ve>0$, but for simplicity we will suppress this dependence.
It is important noting that most of the results will hold for $\ve>0$ small enough. 
The term $\ve^{6/\beta}$ will allow to absorb multiplicative constants, while 
the minimum takes into account the two situations where 
we will have to take a smaller domain for the Pesin chart:
\begin{enumerate}[$\circ$]
\item The rate of nonuniform hyperbolicity is measured by $\norm{C(\wh x)^{-1}}$:
the longer it takes to see hyperbolicity, the bigger $\norm{C(\wh x)^{-1}}$ is.
\item The term $\rho(\wh x)$ controls how small the Pesin charts needs to be
for its image not to intersect $\mathfs S$.
\end{enumerate}
We observe that this definition is stronger than those in \cite{Sarig-JAMS,Lima-Sarig,Lima-Matheus}.
In particular, calculations in these works that only depend
on the definition of $Q(\wh x)$ can be used here. From its definition
and Lemma \ref{Lemma-linear-reduction}(4), we have the following bounds for $Q(\wh x)$
for $\ve>0$ small enough:
\begin{align}\label{estimates-Q}
\begin{array}{l}
Q(\wh x)\leq \ve^{6/\beta}, \ \|C(\wh x)^{-1}\|Q(\wh x)^{\beta/48}\leq \ve^{1/8},
\  \|C(\wh f(\wh x))^{-1}\|Q(\wh x)^{\beta/12}\leq \ve^{1/4},\\
\\
\rho(\wh x)^{-a}Q(\wh x)^{\beta/96}<\ve^{1/16}.
\end{array}
\end{align}

We can also define, for each $\wh x\in{\rm NUH}_{\chi'}$, a
Pesin chart $\Psi_{\chi',\wh x}=\exp{\vt[\wh x]}\circ C_{\chi'}(\wh x)$,
for which Lemma \ref{Lemma-Pesin-chart} holds with $\chi$ replaced by $\chi'$.
We could also define a parameter $Q_{\chi'}(\wh x)$, where $\|C(\wh x)^{-1}\|$ is replaced by
$\|C_{\chi'}(\wh x)^{-1}\|$. In this case, for $\wh x\in{\rm NUH}$
we have $Q(\wh x)\leq Q_{\chi'}(\wh x)$. Since all calculations will be made with the
smaller value $Q(\wh x)$, the parameter $Q_{\chi'}$ has no essential role in this paper.

\subsection{The map $f$ and its inverse branches in Pesin charts}

We start introducing a notation for the inverse branches of $f$.

\medskip
\noindent
{\sc Inverse branches of $f$:} If $x,f(x)\not\in\mathfs S$, let $f_x^{-1}$ be the inverse branch of $f$
that sends $f(x)$ to $x$ and is defined in $E_x=B(f(x),2\mathfrak r(x))$.
Given $\wh x=(x_n)_{n\in\Z}\in\wh M$, write $f_{\wh x}^{-1}:=f_{x_0}^{-1}$.

\medskip
By assumption (A5), $f_x$ is a well-defined diffeomorphism from $E_x$ onto its image.
For $x\in M$, define
$\mathfrak m(x):=\tfrac{1}{2}\min\{d(x,\mathfs S)^{2a},d(f(x),\mathfs S)^{2a}\}$,
and $\mathfrak m(\wh x):=\mathfrak m(\vt[\wh x])$.

\medskip
\noindent
{\sc The maps $F_{\wh x}$ and $F_{\wh x}^{-1}$:} For $\wh x\in{\rm NUH}$,
define $F_{\wh x}:B[\mathfrak m(\wh x)]\to \R^m$ by the composition
$F_{\wh x}= \Psi_{\wh f(\wh x)}^{-1}\circ f\circ\Psi_{\wh x}$; similarly, define
$F_{\wh x}^{-1}:B[\mathfrak m(\wh x)]\to \R^m$ by
$F_{\wh x}^{-1}= \Psi_{\wh x}^{-1}\circ f_{\wh x}^{-1}\circ\Psi_{\wh f(\wh x)}$.

\begin{theorem}\label{Thm-non-linear-Pesin}
The following holds for all $\ve>0$ small enough. If ${\wh x}\in{\rm NUH}$
then $F_{\wh x},F_{\wh x}^{-1}$ are well-defined diffeomorphisms onto their images.
Furthermore, the restrictions of $F_{\wh x}$ and $F_{\wh x}^{-1}$ to
$B[20Q(\wh x)]$ and $B[20Q(\wh f(\wh x))]$ respectively satisfy:
\begin{enumerate}[{\rm (1)}]
\item $F_{\wh x}^{-1}\circ F_{\wh x}$ and $F_{\wh x}\circ F_{\wh x}^{-1}$ are the identity map.
\item $d(F_{\wh x}^{\pm 1})_0=D(\wh x)^{\pm 1}$, cf. Lemma {\rm \ref{Lemma-linear-reduction}}.
\item $F_{\wh x}=D(\wh x)+H^+$ and
$F_{\wh x}^{-1}=D(\wh x)^{-1}+H^-$ where: 
\begin{enumerate}[{\rm (a)}]
\item $H^{\pm}(0)=0$ and $d(H^{\pm})_0=0$.
\item $\norm{H^{\pm}}_{C^{1+\frac{\beta}{2}}}<\ve$.
\end{enumerate}
\item $\norm{dF^{\pm 1}_{\wh x}}_{C^0}<2d(\vt[\wh x],\mathfs S)^{-a}(1+e^{2\chi})^{1/2}$. 
\end{enumerate}
\end{theorem}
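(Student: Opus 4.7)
The plan is to unpack the composition $F_{\wh x} = \Psi_{\wh f(\wh x)}^{-1} \circ f \circ \Psi_{\wh x}$ and verify each claim in turn, essentially transporting assumptions (A1)--(A7) and Lemma \ref{Lemma-linear-reduction} through the Pesin charts.

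First I would establish well-definedness. Since $\|C(\wh x)\|\leq 1$, the image $\Psi_{\wh x}(B[\mathfrak m(\wh x)])$ lies inside $B(\vartheta[\wh x],\mathfrak m(\wh x))$. Because $\mathfrak m(\wh x)\leq \tfrac12 d(\vartheta[\wh x],\mathfs S)^{2a} < \mathfrak r(\vartheta[\wh x])$, this ball is contained in $D_{\vartheta[\wh x]}$, where by (A5) the map $f$ is a diffeomorphism. A parallel argument shows that $f\circ\Psi_{\wh x}(B[\mathfrak m(\wh x)])$ lies inside the domain of $\Psi_{\wh f(\wh x)}^{-1}$, so $F_{\wh x}$ is a well-defined diffeomorphism onto its image; the same reasoning works for $F_{\wh x}^{-1}$. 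Part (1) (mutual inversion on $B[20Q(\wh x)]$) then follows once we check that these smaller balls land inside the larger domains, using the inequalities (\ref{estimates-Q}).

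Next I compute $d(F_{\wh x})_0$. Since $d(\exp{\vartheta[\wh x]})_0 = \mathrm{Id}$, the chain rule gives $d(\Psi_{\wh x})_0 = C(\wh x)$ and $d(\Psi_{\wh f(\wh x)}^{-1})_{\vartheta[\wh f(\wh x)]} = C(\wh f(\wh x))^{-1}$, so $d(F_{\wh x})_0 = C(\wh f(\wh x))^{-1}\circ df_{\vartheta[\wh x]}\circ C(\wh x) = D(\wh x)$. Since $\Psi_{\wh x}(0)=\vartheta[\wh x]$, we also have $F_{\wh x}(0)=0$. Writing $F_{\wh x} = D(\wh x) + H^+$ we immediately get $H^+(0)=0$ and $dH^+(0)=0$, which is (3a); the analog for $H^-$ is identical.

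The main obstacle is part (3b), the bound $\|H^\pm\|_{C^{1+\beta/2}}<\varepsilon$ on $B[20Q(\wh x)]$. I would write
\[
dF_{\wh x}(v) \;=\; \widetilde{d(\Psi_{\wh f(\wh x)}^{-1})}_{f(\Psi_{\wh x}(v))}\circ \widetilde{df}_{\Psi_{\wh x}(v)}\circ \widetilde{d(\Psi_{\wh x})}_{v}
\]
and estimate $\|dF_{\wh x}(v)-dF_{\wh x}(w)\|$ by splitting into three telescoping terms. Each outer factor is H\"older with exponent one and constant $d(\vartheta[\wh x],\mathfs S)^{-a}\|C(\wh x)^{-1}\|$ by Lemma \ref{Lemma-Pesin-chart}(2) and Lemma \ref{Lemma-linear-reduction}(1), while the middle factor is H\"older with exponent $\beta$ via (A7), with an extra $\|C(\wh x)^{-1}\|$ coming from $\Lip{\Psi_{\wh x}}$. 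Collecting, $\Hol{\beta}(dF_{\wh x})$ is bounded by a polynomial in $\rho(\wh x)^{-a}$ and $\|C(\wh x)^{-1}\|$. Interpolating at scale $Q(\wh x)$ to convert $\Hol{\beta}$ into $\Hol{\beta/2}$ on $B[20Q(\wh x)]$ costs a factor $Q(\wh x)^{\beta/2}$, and the estimates (\ref{estimates-Q}) on $Q(\wh x)$ absorb all the polynomial growth in $\rho^{-a}$ and $\|C^{-1}\|$, leaving a bound of order $\varepsilon$. The $C^0$ and $C^1$ parts of $H^+$ are then recovered from $H^+(0)=0$, $dH^+(0)=0$ together with the H\"older bound, integrating twice on $B[20Q(\wh x)]$. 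The same scheme with $f^{-1}_{\wh x}$ replacing $f$ and (A6)--(A7) applied to inverse branches yields the bound for $H^-$.

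Finally, part (4) is a one-line consequence of (2), (3) and Lemma \ref{Lemma-linear-reduction}(3): on $B[20Q(\wh x)]$ we have $\|dF_{\wh x}^{\pm 1}\|_{C^0} \leq \|D(\wh x)^{\pm 1}\|+\|dH^\pm\|_{C^0}\leq d(\vartheta[\wh x],\mathfs S)^{-a}(1+e^{2\chi})^{1/2}+\varepsilon$, which for $\varepsilon$ small enough is dominated by the stated bound with the factor $2$.
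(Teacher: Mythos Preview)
Your proposal is correct and follows essentially the same route as the paper: chain-rule decomposition of $dF_{\wh x}$ into three factors, telescoping to bound $\Hol{\beta}(dF_{\wh x})$, then trading $\Hol{\beta}$ for $\Hol{\beta/2}$ on $B[20Q(\wh x)]$ at the cost of a factor $Q(\wh x)^{\beta/2}$, which via the estimates (\ref{estimates-Q}) absorbs all the polynomial growth in $\rho(\wh x)^{-a}$ and $\|C(\cdot)^{-1}\|$. The only organizational difference is that the paper factors out $C(\wh f(\wh x))^{-1}$ and $C(\wh x)$ at the outset and estimates the middle triple $A_iB_iC_i$ built from $\widetilde{d\exp^{-1}}$, $\widetilde{df}$, $\widetilde{d\exp}$ directly; this makes it transparent that the single unbounded factor is $\|C(\wh f(\wh x))^{-1}\|$ (handled through Lemma~\ref{Lemma-linear-reduction}(4) and (\ref{estimates-Q})), whereas your phrasing ``extra $\|C(\wh x)^{-1}\|$ coming from $\Lip{\Psi_{\wh x}}$'' is slightly off---$\Lip{\Psi_{\wh x}}\leq 2$ and the $\|C^{-1}\|$ really enters via $\Psi_{\wh f(\wh x)}^{-1}$.
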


\begin{proof}
The proof is an adaptation of the proof of \cite[Theorem~3.3]{Lima-Matheus} to our context.
We start showing that $F_{\wh x}^{\pm 1}$ are well-defined. Write $\wh x=(x_n)_{n\in\Z}$.
Given $v\in B[\mathfrak m(\wh x)]$, Lemma \ref{Lemma-Pesin-chart}(1) implies
that $\Psi_{\wh x}(v)\in B(x_0,2\mathfrak m(\wh x))\subset D_{x_0}$ and by (A6) we get that
$(f\circ\Psi_{\wh x})(v)\in B(f(x_0),2\mathfrak m(\wh x)d(x_0,\mathfs S)^{-a})\subset \mathfrak B_{f(x_0)}$,
where in the latter inclusion we used that
$2\mathfrak m(\wh x)d(x_0,\mathfs S)^{-a}\leq d(f(x_0),\mathfs S)^a<\mathfrak d(f(x_0))$.
By (A1), the composition $(\Psi_{\wh f(x)}^{-1}\circ f\circ\Psi_{\wh x})(v)$ is well-defined.
We proceed similarly for $F_{\wh x}^{-1}$: given
$v\in B[\mathfrak m(\wh x)]$, we have
$\Psi_{\wh f(\wh x)}(v)\in B(f(x_0),2\mathfrak m(\wh x))\subset E_{x_0}$
and so (A6) gives that 
$(f_{\wh x}^{-1}\circ\Psi_{\wh f(\wh x)})(v)\in B(x_0,2\mathfrak m(\wh x)d(x_0,\mathfs S)^{-a})\subset \mathfrak B_{x_0}$,
since $2\mathfrak m(\wh x)d(x_0,\mathfs S)^{-a}\leq d(x_0,\mathfs S)^a<\mathfrak d(x_0)$.
By (A1), the composition $(\Psi_{\wh x}^{-1}\circ f_{\wh x}^{-1}\circ\Psi_{\wh f(\wh x)})(v)$ is well-defined.
Now we prove parts (1)--(4).

\medskip
\noindent
(1) It is clear that, whenever defined, the compositions
$F_{\wh x}^{-1}\circ F_{\wh x}$ and $F_{\wh x}\circ F_{\wh x}^{-1}$ are the identity. 
We start showing that $F_{\wh x}^{-1}\circ F_{\wh x}$ is
well-defined in $B[20Q(\wh x)]$. By the beginning of the proof, it is enough to show that
$F_{\wh x}(B[20Q(\wh x)])\subset B[\mathfrak m(\wh x)]$.
Indeed, assumption (A6) and Lemma \ref{Lemma-Pesin-chart}(1) imply that
$$
F_{\wh x}(B[20Q(\wh x)])\subset B[80d(x_0,\mathfs S)^{-a}\|C(\wh f(\wh x))^{-1}\|Q(\wh x)]
$$
and this latter radius is
$80d(x_0,\mathfs S)^{-a}\|C(\wh f(\wh x))^{-1}\|Q(\wh x)<\tfrac{1}{2}\rho(\wh x)^{2a}\leq \mathfrak m(\wh x)$
for $\ve>0$ sufficiently small, since by (\ref{estimates-Q}) it holds
$\rho(\wh x)^{-3a}Q(\wh x)^{1/2}<\rho(\wh x)^{-3a}Q(\wh x)^{\beta/32}<\ve^{3/16}$ and 
$\|C(\wh f(\wh x))^{-1}\| Q(\wh x)^{1/2}<\|C(\wh f(\wh x))^{-1}\|Q(\wh x)^{\beta/12}<\ve^{1/4}$. 

Similarly, $F_{\wh x}^{-1}(B[20Q(\wh f(\wh x))])\subset 
B[80d(x_0,\mathfs S)^{-a}\|C(\wh x)^{-1}\|Q(\wh f(\wh x))]$.
By Lemma \ref{Lemma-linear-reduction}(4), we have
$$
80d(x_0,\mathfs S)^{-a}\|C(\wh x)^{-1}\|Q(\wh f(\wh x))\leq
80(1+e^{2\chi})^{1/2}d(x_0,\mathfs S)^{-3a}\|C(\wh f(\wh x))^{-1}\|Q(\wh f(\wh x))
$$
which is smaller than $\tfrac{1}{2}\rho(\wh f(\wh x))^{2a}$ since by (\ref{estimates-Q}) we have
$\rho(\wh f(\wh x))^{-5a}Q(\wh f(\wh x))^{1/2}<\ve^{5/16}$ and
$\|C(\wh f(\wh x))^{-1}\|Q(\wh f(\wh x))^{1/2}<\ve^{1/8}$.

\medskip
\noindent
(2) This follows from the identities $d(\Psi_{\wh x})_0=C(\wh x)$, $df_{x_0}=\wh{df}_{\wh x}$,
$d(f_{\wh x}^{-1})_{x_1}=(df_{x_0})^{-1}=(\wh{df}_{\wh x})^{-1}$
and from Lemma \ref{Lemma-linear-reduction}(2).

\medskip
\noindent
(3) Define $H^+:B[20Q(\wh x)]\to \R^m$ and $H^-:B[20Q(\wh f(\wh x))]\to \R^m$
by the equalities $F_{\wh x}=D(\wh x)+H^+$ and $F_{\wh x}^{-1}=D(\wh x)^{-1}+H^-$.
Property (a) follows from part (2) above, so it remains to prove (b). We show the following fact.\\

\noindent
\medskip
{\sc Claim:} $\|d(F_{\wh x})_{w_1}-d(F_{\wh x})_{w_2}\|\leq \frac{\ve}{3}\|w_1-w_2\|^{\beta/2}$
for every $w_1,w_2\in B[20Q(\wh x)]$, and 
 $\|d(F_{\wh x}^{-1})_{w_1}-d(F_{\wh x}^{-1})_{w_2}\|\leq \frac{\ve}{3}\|w_1-w_2\|^{\beta/2}$
for every $w_1,w_2\in B[20Q(\wh f(\wh x))]$.

\medskip
Before proving the claim, let us show how to conclude (b).
Applying the claim with $w_2=0$,
we get $\|d(H^\pm)_w\|\leq \frac{\ve}{3}\|w\|^{\beta/2}<\tfrac{\ve}{3}$ and so 
$\norm{dH^\pm}_{C^0}<\tfrac{\ve}{3}$.
By the mean value inequality,
$\|H^\pm(w)\|\leq \tfrac{\ve}{3}\|w\|<\tfrac{\ve}{3}$, thus $\norm{H^\pm}_{C^0}<\tfrac{\ve}{3}$.
The claim also implies that $\Hol{\beta/2}(dH^\pm)\leq\tfrac{\ve}{3}$.
These estimates imply that $\|H^\pm\|_{C^{1+\frac{\beta}{2}}}<\ve$.

\begin{proof}[Proof of the claim.]
We begin proving it for $F_{\wh x}$. Remind that
$\wh x=(x_n)_{n\in\Z}$. For $i=1,2$, write $u_i=C(\wh x)w_i$ and define
$$
A_i= \widetilde{d(\exp{f(x_0)}^{-1})_{(f\circ \exp{x_0})(u_i)}}\,,\
B_i=\widetilde{df_{\exp{x_0}(u_i)}}\,,\ C_i=\widetilde{d(\exp{x_0})_{u_i}}.
$$
Note that $\|u_1-u_2\|\leq \|w_1-w_2\|$. We first estimate $\|A_1 B_1 C_1-A_2 B_2 C_2\|$.
\begin{enumerate}[$\circ$]
\item By (A2), $\|A_i\|\leq 2$. By (A2), (A3), (A6):
\begin{align*}
&\, \|A_1-A_2\|\leq d(f(x_0),\mathfs S)^{-a}d((f\circ \exp{x_0})(u_1),(f\circ \exp{x_0})(u_2))\\
&\leq 2d(x_0,\mathfs S)^{-a}d(f(x_0),\mathfs S)^{-a}\|u_1-u_2\|\leq 2\rho(\wh x)^{-2a}\|w_1-w_2\|.
\end{align*}
\item By (A6), $\|B_i\|\leq \rho(\wh x)^{-a}$. By (A2) and (A7):
$$
\|B_1-B_2\|\leq \mathfrak K d(\exp{x_0}(u_1),\exp{x_0}(u_2))^{\beta}\leq 2\mathfrak K\|u_1-u_2\|^\beta
\leq 2\mathfrak K\|w_1-w_2\|^\beta.
$$
\item By (A2), $\|C_i\|\leq 2$. By (A3):
$$
\|C_1-C_2\|\leq d(x_0,\mathfs S)^{-a}\|u_1-u_2\|\leq \rho(\wh x)^{-a}\|w_1-w_2\|.
$$
\end{enumerate}
Applying some triangle inequalities, we get that
$$
\|A_1 B_1 C_1-A_2 B_2 C_2\|\leq 24\mathfrak K\rho(\wh x)^{-3a}\|w_1-w_2\|^\beta.
$$
Now we estimate $\|d(F_{\wh x})_{w_1}-d(F_{\wh x})_{w_2}\|$:
\begin{align*}
&\, \|d(F_{\wh x})_{w_1}-d(F_{\wh x})_{w_2}\|\leq
\|C(\wh f(\wh x))^{-1}\| \cdot \|A_1 B_1 C_1-A_2 B_2 C_2\|\cdot \|C(\wh x)\|\\
&\leq 24\mathfrak K\rho(\wh x)^{-3a}\|C(\wh f(\wh x))^{-1}\|\cdot\|w_1-w_2\|^\beta.
\end{align*}
Since $\|w_1-w_2\|<40Q(\wh x)$, using Lemma \ref{Lemma-linear-reduction}(4) and the definition of $Q(\wh x)$
we obtain that for $\ve>0$ small enough it holds
\begin{align*}
&\ 24\mathfrak K\rho(\wh x)^{-3a}\|C(\wh f(\wh x))^{-1}\|\|w_1-w_2\|^{\beta/2}\\
&\leq 200\mathfrak K(1+e^{2\chi})^{1/2}\rho(\wh x)^{-5a}\|C(\wh x)^{-1}\| Q(\wh x)^{\beta/2}\\
&=200\mathfrak K(1+e^{2\chi})^{1/2}\left[\rho(\wh x)^{-5a}Q(\wh x)^{5\beta/96}\right]\cdot
\left[\|C(\wh x)^{-1}\| Q(\wh x)^{\beta/48}\right]Q(\wh x)^{41\beta/96}\\
&\leq 200\mathfrak K(1+e^{2\chi})^{1/2}\ve^{5/16}\ve^{1/8}\ve^{41/16}
=200\mathfrak K(1+e^{2\chi})^{1/2}\ve^3<\tfrac{\ve}{3},
\end{align*}
which completes the proof of the claim for $F_{\wh x}$. 
Now we prove it for $F_{\wh x}^{-1}$. 
For $i=1,2$, write $u_i=C(\wh f(\wh x))w_i$ and define
$$
A_i= \widetilde{d(\exp{x_0}^{-1})_{(f_{\wh x}^{-1}\circ \exp{f(x_0)})(u_i)}}\,,\
B_i=\widetilde{d(f_{\wh x}^{-1})_{\exp{f(x_0)}(u_i)}}\,,\ C_i=\widetilde{d(\exp{f(x_0)})_{u_i}}.
$$
Again, we have $\|u_1-u_2\|\leq \|w_1-w_2\|$ and the following similar estimates:
\begin{enumerate}[$\circ$]
\item By (A2), $\|A_i\|\leq 2$. By (A2), (A3), (A6):
\begin{align*}
&\, \|A_1-A_2\|\leq d(x_0,\mathfs S)^{-a}d((f_{\wh x}^{-1}\circ \exp{f(x_0)})(u_1),(f_{\wh x}^{-1}\circ \exp{f(x_0)})(u_2))\\
&\leq 2d(x_0,\mathfs S)^{-2a}\|u_1-u_2\|\leq 2\rho(\wh f(\wh x))^{-2a}\|w_1-w_2\|.
\end{align*}
\item By (A6), $\|B_i\|\leq \rho(\wh f(\wh x))^{-a}$. By (A2) and (A7):
$$
\|B_1-B_2\|\leq \mathfrak K d(\exp{f(x_0)}(u_1),\exp{f(x_0)}(u_2))^{\beta}\leq 2\mathfrak K\|u_1-u_2\|^\beta
\leq 2\mathfrak K\|w_1-w_2\|^\beta.
$$
\item By (A2), $\|C_i\|\leq 2$. By (A3):
$$
\|C_1-C_2\|\leq d(f(x_0),\mathfs S)^{-a}\|u_1-u_2\|\leq \rho(\wh f(\wh x))^{-a}\|w_1-w_2\|.
$$
\end{enumerate}
As before, the above estimates imply that
$$
\|A_1 B_1 C_1-A_2 B_2 C_2\|\leq 24\mathfrak K\rho(\wh f(\wh x))^{-3a}\|w_1-w_2\|^\beta
$$
and then a calculation identical to the one made for $F_{\wh x}$ works:
\begin{align*}
&\ 24\mathfrak K\rho(\wh f(\wh x))^{-3a}\|C(\wh x)^{-1}\|\|w_1-w_2\|^{\beta/2}\\
&\leq 200\mathfrak K(1+e^{2\chi})^{1/2}\rho(\wh f(\wh x))^{-5a}\|C(\wh f(\wh x))^{-1}\| Q(\wh f(\wh x))^{\beta/2}\\
&\leq 200\mathfrak K(1+e^{2\chi})^{1/2}\ve^3<\tfrac{\ve}{3}\cdot
\end{align*}
Hence $\|d(F_{\wh x}^{-1})_{w_1}-d(F_{\wh x}^{-1})_{w_2}\|<\tfrac{\ve}{3}\|w_1-w_2\|^{\beta/2}$,
thus proving the claim.
\end{proof}

\noindent
(4) By Lemma \ref{Lemma-linear-reduction}(3) and part (2) above,
$$
\|d(F_{\wh x})_0\|=\|D_u(\wh x)\|\leq  d(\vt[\wh x],\mathfs S)^{-a}(1+e^{2\chi})^{1/2}.
$$
By part (3) above, if $w\in B[20Q(\wh x)]$ then
$$
\|d(F_{\wh x})_w\|\leq \ve\|w\|^{\beta/2}+d(\vt[\wh x],\mathfs S)^{-a}(1+e^{2\chi})^{1/2}<
2d(\vt[\wh x],\mathfs S)^{-a}(1+e^{2\chi})^{1/2},
$$
since $\ve\|w\|^{\beta/2}<1<d(\vt[\wh x],\mathfs S)^{-a}(1+e^{2\chi})^{1/2}$ for small $\ve>0$.
Similarly, using that $\|d(F_{\wh x}^{-1})_0\|=\|D_s(\wh x)^{-1}\|\leq d(\vt[\wh x],\mathfs S)^{-a}(1+e^{2\chi})^{1/2}$
we conclude that $\|d(F_{\wh x}^{-1})_w\|<2d(\vt[\wh x],\mathfs S)^{-a}(1+e^{2\chi})^{1/2}$
for all $w\in B[20Q(\wh f(\wh x))]$.
\end{proof}

An analogous result holds for the Lyapunov inner product $\vertiii{\cdot}_{\chi'}$:
defining $F_{\chi',\wh x}$ and $F_{\chi',\wh x}^{-1}$ as compositions 
with the Pesin charts $\Psi_{\chi',\cdot}$, then Theorem \ref{Thm-non-linear-Pesin} holds with
$D(\wh x)$ and $\chi$ replaced by $D_{\chi'}(\wh x)$ and $\chi'$, but still in the same
domain $B[20Q(\wh x)]$ (indeed, it holds in the larger domain $B[20Q_{\chi'}(\wh x)]$).

\subsection{Temperedness and the parameters $q(\wh x),q^s(\wh x),q^u(\wh x)$}\label{Section-Temperedness}

Now we will focus on the points for which
$Q$ does not converge exponentially fast to zero too fast along the trajectory of $\wh x$.
Intuitively, this condition will guarantee that hyperbolicity will prevail over the
degeneracies of nonuniform hyperbolicity, of the derivative, and of the proximity to $\mathfs S$.
Recall that $\chi>0$ is fixed a priori and $\ve>0$ is sufficiently small. Define
$\delta_\ve:=e^{-\ve n}\in I_{\ve}$ where $n$ is the unique positive integer s.t. $e^{-\ve n}<\ve\leq e^{-\ve(n-1)}$.
In particular, we have $\delta_\ve<\ve$.

\medskip
\noindent
{\sc Parameter $q(\wh x)$:} For $\wh x\in{\rm NUH}$, let
$q(\wh x):=\delta_\ve\min\{e^{\ve|n|}Q(\wh f^n(\wh x)):n\in\Z\}$.

\medskip
\noindent
{\sc The set ${\rm NUH}^*$:} It is the set of $\wh x\in{\rm NUH}$ such that $q(\wh x)>0$.

\medskip
Observe that if $\wh x\in{\rm NUH}$ satisfies
$$
\lim_{n\to\pm\infty}\tfrac{1}{|n|}\log \|C(\wh f^n(\wh x))^{-1}\|=
\lim_{n\to\pm\infty}\tfrac{1}{|n|}\log d(\vt_n[\wh x],\mathfs S)=0,
$$
then $\lim_{n\to\pm\infty}\tfrac{1}{|n|}\log Q(\wh f^n(\wh x))=0$ and so $\wh x\in{\rm NUH}^*$.
The next lemma states the basic properties of $q$.

\begin{lemma}\label{Lemma-q}
For all $\wh x\in{\rm NUH}^*$, $0<q(\wh x)<\ve Q(\wh x)$
and $\tfrac{q(\wh f(\wh x))}{q(\wh x)}=e^{\pm\ve}$.
\end{lemma}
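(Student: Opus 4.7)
The plan is to unpack the definition of $q$ and track what happens when we shift the trajectory by one step under $\wh f$. Both claims are essentially bookkeeping about a minimum over $\mathbb{Z}$ of terms $e^{\varepsilon|n|}Q(\wh f^n(\wh x))$, so no deep hyperbolicity input is needed.

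For the positivity and the bound $q(\wh x)<\varepsilon Q(\wh x)$, the first part is immediate from the definition of ${\rm NUH}^*$. For the upper bound, I would plug $n=0$ into the defining minimum: since $e^{\varepsilon\cdot 0}Q(\wh x)=Q(\wh x)$ is one of the values over which the infimum is taken, we get $q(\wh x)\leq \delta_\varepsilon Q(\wh x)$, and then $\delta_\varepsilon<\varepsilon$ (from the choice $\delta_\varepsilon=e^{-\varepsilon n}<\varepsilon$) yields the strict inequality.

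For the ratio estimate, I would reindex the minimum. Writing $m=n+1$,
\[
q(\wh f(\wh x))=\delta_\varepsilon \min_{n\in\Z}e^{\varepsilon|n|}Q(\wh f^{n+1}(\wh x))=\delta_\varepsilon\min_{m\in\Z}e^{\varepsilon|m-1|}Q(\wh f^m(\wh x)).
\]
The key elementary observation is $\bigl||m-1|-|m|\bigr|\leq 1$ for every $m\in\Z$, hence
\[
e^{-\varepsilon}\,e^{\varepsilon|m|}\leq e^{\varepsilon|m-1|}\leq e^{\varepsilon}\,e^{\varepsilon|m|}.
\]
Multiplying by $Q(\wh f^m(\wh x))$ and taking the infimum term by term gives
\[
e^{-\varepsilon}q(\wh x)\leq q(\wh f(\wh x))\leq e^{\varepsilon}q(\wh x),
\]
which is exactly $q(\wh f(\wh x))/q(\wh x)=e^{\pm\varepsilon}$. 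In particular $q(\wh f(\wh x))>0$, so ${\rm NUH}^*$ is $\wh f$--invariant as a byproduct.

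There is no real obstacle here — both statements are purely formal consequences of the definition. The only thing worth being careful about is that the minimum in the definition of $q$ is attained (or at least finite and positive) at a point of ${\rm NUH}^*$, which is guaranteed because $q(\wh x)>0$ forces the sequence $e^{\varepsilon|n|}Q(\wh f^n(\wh x))$ to be bounded below by $q(\wh x)/\delta_\varepsilon>0$; this justifies the term-by-term comparison when reindexing.
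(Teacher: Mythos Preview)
Your proof is correct and is exactly the standard argument; the paper does not give its own proof but simply cites \cite[Lemma 4.1]{Lima-Matheus}, whose proof is the one you wrote.
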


This is \cite[Lemma 4.1]{Lima-Matheus}, and the proof goes through without change.
Now we introduce two more parameters, that will be regarded as sizes
of the stable and unstable manifolds at $\wh x$.

\medskip
\noindent
{\sc Parameters $q^s(\wh x),q^u(\wh x)$:}  For $\wh x\in{\rm NUH}$, define
\begin{align*}
q^s(\wh x)&:=\delta_\ve\min\{e^{\ve|n|}Q(\wh f^n(\wh x)):n\geq 0\}\\
q^u(\wh x)&:=\delta_\ve\min\{e^{\ve|n|}Q(\wh f^n(\wh x)):n\leq 0\}.
\end{align*}

\medskip
Note that $q^s(\wh x),q^u(\wh x)$ are just the separation, in the definition of $q(\wh x)$, of
the past from the future. We list the main properties of $q^s(\wh x)$ and $q^u(\wh x)$.

\begin{lemma}\label{Lemma-q^s}
For all $\wh x\in{\rm NUH}^*$, the following holds.
\begin{enumerate}[{\rm (1)}]
\item {\sc Good definition:} $0<q^s(\wh x),q^u(\wh x)<\ve Q(\wh x)$ and
$q^s(\wh x)\wedge q^u(\wh x)=q(\wh x)$.
\item {\sc Greedy algorithm:} For all $n\in\Z$ it holds
\begin{align*}
q^s(\wh f^n(\wh x))&=\min\{e^\ve q^s(\wh f^{n+1}(\wh x)),\delta_\ve Q(\wh f^n(\wh x))\}\\
q^u(\wh f^n(\wh x))&=\min\{e^\ve q^u(\wh f^{n-1}(\wh x)),\delta_\ve Q(\wh f^n(\wh x))\}.
\end{align*}
\end{enumerate}
\end{lemma}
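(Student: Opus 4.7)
The plan is to prove both parts by straightforward bookkeeping from the definitions; no real dynamics is needed since this is a statement about the minima that define $q^s,q^u,q$.

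For part (1), I would first note that as sets of indices, $\{n\geq 0\}\cup\{n\leq 0\}=\Z$, so
\[
q^s(\wh x)\wedge q^u(\wh x)=\delta_\ve\min\!\left\{e^{\ve|n|}Q(\wh f^n(\wh x)):n\in\Z\right\}=q(\wh x),
\]
which gives the second equality. Positivity then comes for free: since $\wh x\in{\rm NUH}^*$ means $q(\wh x)>0$, and $q^s(\wh x),q^u(\wh x)\geq q(\wh x)$, both are strictly positive. For the upper bound, I would simply evaluate each infimum at $n=0$ to obtain $q^s(\wh x),q^u(\wh x)\leq\delta_\ve Q(\wh x)$, and use $\delta_\ve<\ve$ (by the definition of $\delta_\ve$ given just before the parameter $q(\wh x)$).

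For part (2), I would split the minimum defining $q^s(\wh f^n(\wh x))$ into the $k=0$ term and the $k\geq 1$ tail:
\[
q^s(\wh f^n(\wh x))=\delta_\ve\min\!\left\{Q(\wh f^n(\wh x)),\ \min_{k\geq 1}e^{\ve k}Q(\wh f^{n+k}(\wh x))\right\}.
\]
The $k=0$ term yields $\delta_\ve Q(\wh f^n(\wh x))$. Re-indexing the tail by $j=k-1\geq 0$ gives
\[
\delta_\ve\min_{k\geq 1}e^{\ve k}Q(\wh f^{n+k}(\wh x))=e^\ve\cdot\delta_\ve\min_{j\geq 0}e^{\ve j}Q(\wh f^{(n+1)+j}(\wh x))=e^\ve q^s(\wh f^{n+1}(\wh x)),
\]
which proves the greedy formula for $q^s$. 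For $q^u$, I would run the symmetric computation: split the $k\leq 0$ infimum into $k=0$ and $k\leq -1$, and re-index the tail by $j=-k-1\geq 0$ so that $e^{\ve|k|}=e^\ve e^{\ve j}$ and $\wh f^{n+k}=\wh f^{(n-1)-j}$, yielding $e^\ve q^u(\wh f^{n-1}(\wh x))$.

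There is no substantive obstacle here; the only thing to be careful with is the index arithmetic in the re-indexing step for $q^u$ (keeping track of the absolute value $|k|$ for $k\leq 0$). The statement is really a formal consequence of the definitions, analogous to \cite[Lemma~4.1]{Lima-Matheus} for $q$ itself.
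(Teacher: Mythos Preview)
Your proposal is correct and follows precisely the standard bookkeeping argument; the paper itself does not write out a proof but simply cites \cite[Lemma~4.2]{Lima-Matheus} and notes that the same proof works, so your write-up is exactly what that reference would supply.
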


This is \cite[Lemma 4.2]{Lima-Matheus}, and the same proof works.
Now we introduce the set that we will effectively code.

\medskip
\noindent
{\sc The set ${\rm NUH}^\#$:} It is the set of $\wh x\in{\rm NUH}^*$ s.t.
$$
\limsup_{n\to+\infty}q(\wh f^n(\wh x))>0\text{ and }\limsup_{n\to-\infty}q(\wh f^n(\wh x))>0.
$$

For a fixed $t>0$, the set $\{\wh x\in {\rm NUH}^*:q(\wh x)>t\}$
can be regarded, in the usual literature, as a Pesin block. In this sense,
$\wh x\in{\rm NUH}^\#$ iff its trajectory returns to some Pesin block
infinitely often in the past and in the future. Therefore, ${\rm NUH}^\#$ is the dynamical
counterpart of the recurrent set $\Sigma^\#$ introduced in Section \ref{Section-preliminaries}.
The next lemma shows that
${\rm NUH}^\#$ is large for all $f$--adapted $\chi$--hyperbolic measures.

\begin{lemma}\label{Lemma-adaptedness}
If $\mu$ is an $f$--adapted measure and $\wh\mu$ is carried by ${\rm NUH}$,
then $\wh\mu$ is carried by ${\rm NUH}^\#$.
In particular, ${\rm NUH}^\#$ carries all $f$--adapted $\chi$--hyperbolic measures.
\end{lemma}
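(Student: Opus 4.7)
The plan is to first establish $\wh\mu({\rm NUH}^*)=1$, and then upgrade this to $\wh\mu({\rm NUH}^\#)=1$ via Poincaré recurrence on Pesin blocks. The last sentence of the lemma will then be automatic: as recorded right after the definition of ${\rm NUH}$, any $f$--adapted $\chi$--hyperbolic measure has its lift $\wh\mu$ carried by ${\rm NUH}$ by the Oseledets theorem.

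For $\wh\mu({\rm NUH}^*)=1$, observe that $q(\wh x)>0$ is equivalent to $e^{\ve|n|}Q(\wh f^n(\wh x))\to\infty$ as $|n|\to\infty$. In view of the formulas defining $Q$ and $\widetilde Q$, this reduces to proving that for $\wh\mu$--a.e.\ $\wh x$,
\[
\lim_{n\to\pm\infty}\tfrac{1}{|n|}\log\rho(\wh f^n(\wh x))=0 \quad\text{and}\quad \lim_{n\to\pm\infty}\tfrac{1}{|n|}\log\|C(\wh f^n(\wh x))^{-1}\|=0.
\]
The first limit follows from $f$--adaptedness: since $\log\rho\in L^1(\wh\mu)$, $\wh f$--invariance and the summability $\sum_n\wh\mu\{|\log\rho|>\delta n\}\leq \delta^{-1}\int|\log\rho|\,d\wh\mu<\infty$, together with Borel--Cantelli, yield $\tfrac{1}{|n|}\log\rho(\wh f^n(\wh x))\to 0$ almost everywhere for both signs of $n$. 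The second limit is the tempered behavior of the Pesin normalization, and it follows from the Oseledets multiplicative ergodic theorem applied to the invertible cocycle $\wh{df}$ on ${\rm NUH}$: on $\wh\mu$--a.e.\ orbit the Oseledets splitting coincides with the $E^s\oplus E^u$ of (NUH1)--(NUH2), the Lyapunov norms on these subspaces are tempered, and the angle between them varies subexponentially, so $\|C(\wh x)^{-1}\|$, which aggregates these quantities, is tempered along orbits.

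Next I would pass from ${\rm NUH}^*$ to ${\rm NUH}^\#$. For each integer $k\geq 1$ set $A_k:=\{\wh x\in{\rm NUH}^*:q(\wh x)>1/k\}$; these are nested measurable sets with $\bigcup_{k\geq 1}A_k={\rm NUH}^*$ of full $\wh\mu$ measure. Applying Poincaré recurrence to $\wh f$ and to $\wh f^{-1}$ restricted to $A_k$, I obtain a full-measure subset $B_k\subset A_k$ consisting of points that return to $A_k$ infinitely often in both time directions, so that $\limsup_{n\to\pm\infty}q(\wh f^n(\wh x))\geq 1/k$ holds on $B_k$. Setting $B:=\bigcup_{k\geq 1}B_k$, one has $\wh\mu(B)=1$ and $B\subset{\rm NUH}^\#$.

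The main obstacle is the temperedness of $\log\|C(\wh x)^{-1}\|$ in the first step. Unlike $\log\rho$, which is integrable by adaptedness, $\log\|C^{-1}\|$ need not lie in $L^1(\wh\mu)$, so one cannot apply Borel--Cantelli directly. It is precisely the qualitative Oseledets theorem on ${\rm NUH}$ that supplies the needed temperedness, and this is available because conditions (NUH1)--(NUH3) are designed to single out the set where the Oseledets data are well-defined and vary temperedly along orbits. Once the two displayed limits are in hand, the remainder is a routine combination of Borel--Cantelli and Poincaré recurrence.
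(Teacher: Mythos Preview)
Your overall strategy matches the paper's: reduce to showing $\wh\mu({\rm NUH}^*)=1$, i.e.\ that $\lim_{n\to\pm\infty}\tfrac{1}{|n|}\log\rho(\wh f^n(\wh x))=0$ and $\lim_{n\to\pm\infty}\tfrac{1}{|n|}\log\|C(\wh f^n(\wh x))^{-1}\|=0$ a.e., and then upgrade to ${\rm NUH}^\#$ via Poincar\'e recurrence on the Pesin blocks $\{q>1/k\}$. The treatment of $\rho$ via Borel--Cantelli and the final recurrence step are both correct; the paper uses the pointwise ergodic theorem in place of Borel--Cantelli for $\rho$, but that is cosmetic.

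The gap is the temperedness of $\|C(\cdot)^{-1}\|$. Invoking ``Oseledets'' as a black box does not suffice: the standard statement that the Lyapunov change of norm is tempered at regular points uses that the exponents are strictly bounded away from the fixed rate $\chi$, whereas the lemma's hypothesis $\wh\mu({\rm NUH})=1$ (which is \emph{weaker} than $\chi$--hyperbolicity) allows exponents equal to $\pm\chi$, and ${\rm NUH}$ itself contains non-regular points. Your last sentence, that (NUH1)--(NUH3) are ``designed to single out the set where the Oseledets data \dots\ vary temperedly along orbits'', is not accurate: those are pointwise conditions and say nothing about the orbit-wise growth of $\|C^{-1}\|$.

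The paper supplies the missing ingredient with Lemma~\ref{Lemma-linear-reduction}(4), the one-step a~priori bound
\[
\Big|\log\|C(\wh f(\wh x))^{-1}\|-\log\|C(\wh x)^{-1}\|\Big|\ \leq\ 2a\,|\log d(\vt[\wh x],\mathfs S)|+\tfrac{1}{2}\log(1+e^{2\chi}),
\]
which lies in $L^1(\wh\mu)$ by adaptedness. Writing $\varphi$ for this difference, its Birkhoff sums telescope to $\varphi_n(\wh x)=\log\|C(\wh f^n(\wh x))^{-1}\|-\log\|C(\wh x)^{-1}\|$; the pointwise ergodic theorem gives a.e.\ convergence of $\tfrac{1}{n}\varphi_n$, and Poincar\'e recurrence applied to level sets of $\|C^{-1}\|$ shows $\liminf_n|\varphi_n|<\infty$ a.e., forcing the limit to be zero. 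This coboundary argument is what your plan is missing, and it requires no spectral input whatsoever.
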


\begin{proof}
Fix an $f$--adapted measure $\mu$ s.t. $\wh\mu[{\rm NUH}]=1$.
By the Poincar\'e recurrence theorem, every measure supported in ${\rm NUH}^*$ is supported in
${\rm NUH}^\#$, so it is enough to show that
$$
\lim_{n\to\pm\infty}\tfrac{1}{|n|}\log \|C(\wh f^n(\wh x))^{-1}\|=
\lim_{n\to\pm\infty}\tfrac{1}{|n|}\log \rho(\wh f^n(\wh x))=0
$$
for $\wh\mu$--a.e. $\wh x\in{\rm NUH}$. Let us prove these.
Since $\mu$ is $f$--adapted, $\log\rho\in L^1(\wh\mu)$ and so by the Birkhoff ergodic theorem
$\lim\limits_{n\to\pm\infty}\tfrac{1}{|n|}\log \rho(\wh f^n(\wh x))=0$
for $\wh\mu$--a.e. $\wh x\in{\rm NUH}$. For the other equality, define $\varphi:{\rm NUH}\to\R$ by
$$
\vf(\wh x):=\log\left[\tfrac{\|C(\wh f(\wh x))^{-1}\|}{\|C(\wh x)^{-1}\|}\right]=\log \|C(\wh f(\wh x))^{-1}\|-\log \|C(\wh x)^{-1}\|.
$$
Using Lemma \ref{Lemma-linear-reduction}(4) and again that $\log\rho\in L^1(\wh\mu)$,
we get that $\log\left[\tfrac{\|C(\wh f(\wh x))^{-1}\|}{\|C(\wh x)^{-1}\|}\right]$ is in $L^1(\wh\mu)$.
Letting $\varphi_n(\wh x)=\log\|C(\wh f^n(\wh x))^{-1}\|-\log\|C(\wh x)^{-1}\|$ denote the $n$--th Birkhoff
sum of $\varphi$, by the Birkhoff ergodic theorem
$\lim\limits_{n\to+\infty}\tfrac{\varphi_n(\wh x)}{n}$ exists $\wh\mu$--a.e.
The Poincar\'e recurrence theorem implies that
$$
\liminf\limits_{n\to+\infty}|\varphi_n(\wh x)|=
\liminf\limits_{n\to+\infty}\left|\log\|C(\wh f^n(\wh x))^{-1}\|-\log\|C(\wh x)^{-1}\|\right|<\infty
$$
for $\wh\mu$--a.e  $\wh x\in{\rm NUH}$, hence $\lim\limits_{n\to+\infty}\tfrac{\varphi_n(\wh x)}{n}=0$
for $\wh \mu$--a.e. $\wh x\in{\rm NUH}$.
Proceeding similarly for $n\to-\infty$, we get that
$\lim_{n\to\pm\infty}\tfrac{1}{|n|}\log \|C(\wh f^n(\wh x))^{-1}\|=0$ for $\wh\mu$--a.e. $\wh x\in{\rm NUH}$.
\end{proof}

\section{Overlap, double charts, and graph transforms}

We will use the Pesin charts to apply the graph transform method.
For that, we allow the size of the domain of definition to vary. Recall that $B[r]$
is the ball of center 0 and radius $r$ in $\R^m$.

\medskip
\noindent
{\sc Pesin chart $\Psi_{\wh x}^\eta$:} It is the restriction of $\Psi_{\wh x}$ to $B[\eta]$,
where $0<\eta\leq Q(\wh x)$.

\subsection{The overlap condition}\label{section-overlap}

We recall a notion, introduced in \cite{Sarig-JAMS}, that allows to change coordinates from
$\Psi_{\wh x_1}$ to $\Psi_{\wh x_2}$.

\medskip
\noindent
{\sc $\ve$--overlap:} Two Pesin charts $\Psi_{\wh x_1}^{\eta_1},\Psi_{\wh x_2}^{\eta_2}$ are said to
{\em $\ve$--overlap} if $d_{s/u}(\wh x_1)=d_{s/u}(\wh x_2)$, $\tfrac{\eta_1}{\eta_2}=e^{\pm\ve}$
and if $\exists x\in M$ s.t. $\vt[\wh x_1],\vt[\wh x_2]\in \mathfrak B_x$ and
$$
d(\vt[\wh x_1],\vt[\wh x_2])+\norm{\widetilde{C(\wh x_1)}-\widetilde{C(\wh x_2)}}<(\eta_1\eta_2)^4.
$$
When this happens, we write $\Psi_{\wh x_1}^{\eta_1}\overset{\ve}{\approx}\Psi_{\wh x_2}^{\eta_2}$.

\medskip
We claim that if $\ve>0$ is small enough, then
$\Psi_{\wh x_1}^{\eta_1}\overset{\ve}{\approx}\Psi_{\wh x_2}^{\eta_2}$ implies that
\begin{align}
\Psi_{\wh x_i}(B[20Q(\wh x_i)])\subset \mathfrak B_{\vt[\wh x_1]}\cap \mathfrak B_{\vt[\wh x_2]}
\text{ for }i=1,2
\end{align}
and hence we can apply (A1)--(A4) without mentioning $x$.
We prove this for $i=1$.
Firstly, since $d(\vt[\wh x_1],\vt[\wh x_2])<\ve^2 d(\vt[\wh x_2],\mathfs S)$, we have
$d(\vt[\wh x_1],\mathfs S)=d(\vt[\wh x_2],\mathfs S)\pm d(\vt[\wh x_1],\vt[\wh x_2])=
(1\pm\ve^2)d(\vt[\wh x_2],\mathfs S)=e^{\pm\ve}d(\vt[\wh x_2],\mathfs S)$.
By Lemma \ref{Lemma-Pesin-chart}(1),
$\Psi_{\wh x_1}(B[20Q(\wh x_1)])\subset B(\vt[\wh x_1],40Q(\wh x_1))$ which is 
contained in $\mathfrak B_{\vt[\wh x_1]}$ since
$40Q(\wh x_1)\ll40\ve^{6/\beta}\rho(\wh x_1)^a<2\mathfrak d(\wh x_1)$.
For the other inclusion, note that
$$
\Psi_{\wh x_1}(B[20Q(\wh x_1)])\subset B(\vt[\wh x_1],40 Q(\wh x_1))\subset
B(\vt[\wh x_2],40Q(\wh x_1)+d(\vt[\wh x_1],\vt[\wh x_2])).
$$
Since
$40Q(\wh x_1)+d(\vt[\wh x_1],\vt[\wh x_2])\leq 40\ve^{6/\beta}e^{a\ve}d(\vt[\wh x_2],\mathfs S)^a+
d(\vt[\wh x_2],\mathfs S)^a<2\mathfrak d(\wh x_2)$ for $\ve>0$ small enough, it follows that
$\Psi_{\wh x_1}(B[20Q(\wh x_1)])\subset \mathfrak B_{\vt[\wh x_2]}$.

The next result quantifies the closeness of Pesin charts when an $\ve$--overlap occurs.

\begin{proposition}\label{Proposition-overlap}
The following holds for $\ve>0$ small enough.
If $\Psi_{\wh x_1}^{\eta_1}\overset{\ve}{\approx}\Psi_{\wh x_2}^{\eta_2}$ and
$C_i=\widetilde{C(\wh x_i)}$ for $i=1,2$ then:
\begin{enumerate}[{\rm (1)}]
\item {\sc Control of $\rho$:}  $\tfrac{d(\vt_i[\wh x_2],\mathfs S)}{d(\vt_i[\wh x_1],\mathfs S)}=e^{\pm\ve}$
for $i=-1,0,1$. In particular, $\tfrac{\rho(\wh x_1)}{\rho(\wh x_2)}=e^{\pm\ve}$.
\item {\sc Control of $C^{-1}$:} $\norm{C_1^{-1}-C_2^{-1}}<(\eta_1\eta_2)^3$ and
$\tfrac{\norm{C_1^{-1}}}{\norm{C_2^{-1}}}=e^{\pm (\eta_1\eta_2)^3}$.
\item {\sc Control of $Q$:} $\tfrac{Q(\wh x_1)}{Q(\wh x_2)}={\rm exp}[\pm \tfrac{97a\ve}{\beta}]$.
\item {\sc Overlap:} $\Psi_{\wh x_i}(B[e^{-2\ve}\eta_i])\subset \Psi_{\wh x_j}(B[\eta_j])$ for $i,j=1,2$.
\item {\sc Change of coordinates:} For $i,j=1,2$, the map $\Psi_{\wh x_i}^{-1}\circ\Psi_{\wh x_j}$
is well-defined in $B[d(\vt[\wh x_j],\mathfs S)^a]$,
and $\norm{\Psi_{\wh x_i}^{-1}\circ\Psi_{\wh x_j}-{\rm Id}}_{C^{1+\frac{\beta}{2}}}<\ve(\eta_1\eta_2)^2$
where the norm is taken in $B[d(\vt[\wh x_j], \mathfs{S})^{2a}]$.
\end{enumerate}
\end{proposition}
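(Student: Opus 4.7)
The plan is to work through the five parts in order, using the smallness $(\eta_1\eta_2)^4 \ll \varepsilon^{\text{large}}$ coming from $\eta_i \le Q(\widehat x_i) \le \varepsilon^{6/\beta}$ to absorb all multiplicative constants, and the bound $\|C_i^{-1}\| \le \varepsilon^{1/8}\eta_i^{-\beta/48}$ (from \eqref{estimates-Q}) to convert between the estimates on $\widetilde C_i$ and estimates on the charts themselves. The proof is structurally parallel to \cite[Prop.~5.2]{Lima-Matheus}; the novelty here is only the presence of the factor $\rho(\widehat x)^a$ inside $Q(\widehat x)$, which requires checking that the overlap scale stays below the scales on which (A1)--(A7) are safely applicable.

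\textbf{Parts (1)--(2).} For (1), the bound $d(\vartheta[\widehat x_1],\vartheta[\widehat x_2])<(\eta_1\eta_2)^4\le\varepsilon^{48/\beta}$ is trivially much smaller than $\varepsilon\rho(\widehat x_i)^a$, so the case $i=0$ is immediate from the triangle inequality. For $i=\pm 1$, I would observe that $\vartheta[\widehat x_2]\in D_{\vartheta[\widehat x_1]}\cap E_{\vartheta[\widehat x_1]}$, then use (A6) to propagate the Lipschitz estimate $d(\vartheta_i[\widehat x_1],\vartheta_i[\widehat x_2])\le d(\vartheta[\widehat x_1],\mathfs S)^{-a}(\eta_1\eta_2)^4$, which is still $\ll\varepsilon\,d(\vartheta_i[\widehat x_1],\mathfs S)$. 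For (2), I use $\widetilde C_1^{-1}-\widetilde C_2^{-1}=\widetilde C_2^{-1}(\widetilde C_2-\widetilde C_1)\widetilde C_1^{-1}$ together with $\|\widetilde C_i^{-1}\|\le \varepsilon^{1/8}\eta_i^{-\beta/48}$, to get $\|\widetilde C_1^{-1}-\widetilde C_2^{-1}\|\le\varepsilon^{1/4}(\eta_1\eta_2)^{4-\beta/48}<(\eta_1\eta_2)^3$ for small $\varepsilon$ (using $\beta\le 1$). The ratio estimate then follows from $\|C_i^{-1}\|\ge\sqrt{2}$ (from the $n=0$ term of the Lyapunov product) and $1+x\le e^x$.

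\textbf{Parts (3)--(4).} Part (3) is algebraic: $\widetilde Q(\widehat x_i)=\varepsilon^{6/\beta}\min\{\|C_i^{-1}\|^{-48/\beta},\rho(\widehat x_i)^{96a/\beta}\}$, and (1), (2) say the two arguments of the min vary by factors of $e^{\pm 48(\eta_1\eta_2)^3/\beta}$ and $e^{\pm 96a\varepsilon/\beta}$, respectively; the discretization to $I_\varepsilon$ costs another $e^{\pm\varepsilon/3}$, giving the stated $e^{\pm 97a\varepsilon/\beta}$. Part (4) will be a corollary of (5): once one knows $\|\Psi_{\widehat x_j}^{-1}\circ\Psi_{\widehat x_i}-\mathrm{Id}\|_{C^0}<\varepsilon(\eta_1\eta_2)^2$, a point in $B[e^{-2\varepsilon}\eta_i]$ maps into $B[e^{-2\varepsilon}\eta_i+\varepsilon(\eta_1\eta_2)^2]\subset B[\eta_j]$ because $\eta_i/\eta_j=e^{\pm\varepsilon}$ gives $e^{-2\varepsilon}\eta_i\le e^{-\varepsilon}\eta_j$, and $(1-e^{-\varepsilon})\eta_j\gg\varepsilon(\eta_1\eta_2)^2$.

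\textbf{Part (5) --- the main work.} Writing $p_i=\vartheta[\widehat x_i]$, I decompose
$$\Psi_{\widehat x_2}^{-1}\circ\Psi_{\widehat x_1}=C(\widehat x_2)^{-1}\circ\bigl(\exp_{p_2}^{-1}\circ\exp_{p_1}\bigr)\circ C(\widehat x_1).$$
The $C^0$ bound follows from $\exp_{p_2}^{-1}(p_1)$ having norm $\le 2d(p_1,p_2)$ by (A2), combined with $\|C(\widehat x_i)^{-1}\|\le\varepsilon^{1/8}\eta_i^{-\beta/48}$ and $d(p_1,p_2)<(\eta_1\eta_2)^4$. For the first-derivative bound, I would use that at any $v\in B[d(p_1,\mathfs S)^{2a}]$, writing $u=C(\widehat x_1)v$ and $w=\exp_{p_1}(u)$,
$$d(\Psi_{\widehat x_2}^{-1}\circ\Psi_{\widehat x_1})_v = C(\widehat x_2)^{-1}\circ d(\exp_{p_2}^{-1})_w\circ d(\exp_{p_1})_u\circ C(\widehat x_1).$$
At $v=0$ this is (after passing to $\widetilde{\cdot}$ via parallel transport to a common basepoint) close to $\widetilde C_2^{-1}\,\widetilde C_1$, which differs from $\mathrm{Id}$ by $\|\widetilde C_2^{-1}\|\cdot\|\widetilde C_1-\widetilde C_2\|\le\varepsilon^{1/8}\eta_2^{-\beta/48}(\eta_1\eta_2)^4\ll\varepsilon(\eta_1\eta_2)^2$. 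The correction for $v\ne 0$ is controlled by (A3):
$$\bigl\|\widetilde{d(\exp_{p_2}^{-1})_w}-\mathrm{Id}\bigr\|+\bigl\|\widetilde{d(\exp_{p_1})_u}-\mathrm{Id}\bigr\|\le d(p_1,\mathfs S)^{-a}(\|u\|+d(p_1,p_2)),$$
which remains $\ll\varepsilon(\eta_1\eta_2)^2$ since $\|u\|\le d(p_1,\mathfs S)^{2a}$ and we have plenty of margin from the exponent $6/\beta$ in $Q$. The $\beta/2$-Hölder estimate on the derivative is obtained identically, bounding the difference at $v_1,v_2$ by combining (A3) (Lipschitz in $v$) for one of the factors with (A4) (Lipschitz of the $\tau$-maps, giving the Hölder control of the second derivative) for the other, again absorbed by the $(\eta_1\eta_2)^2$ factor.

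The principal obstacle is bookkeeping in Part (5): one must ensure that after multiplying by $\|C_1\|,\|C_2^{-1}\|$ and the $d(\cdot,\mathfs S)^{-a}$ factors coming from (A3)--(A4), the remaining power of $\eta_1\eta_2$ is at least $2$, with an explicit factor $\varepsilon$ to spare. This is precisely why the definition of $Q(\widehat x)$ uses the exponents $-48/\beta$ and $96a/\beta$ (forcing $\|C^{-1}\|Q^{\beta/48}$ and $\rho^{-a}Q^{\beta/96}$ to be small), and why the overlap threshold is taken to the fourth power rather than, say, the second.
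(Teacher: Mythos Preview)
Your treatment of parts (1)--(3) is correct and essentially identical to the paper's. Deriving (4) from (5) is fine and slightly slicker than the paper's direct argument via (A1)--(A2).

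There is, however, a genuine gap in your Part (5) first-derivative estimate. You bound
\[
\bigl\|\widetilde{d(\exp_{p_2}^{-1})_w}-\mathrm{Id}\bigr\|+\bigl\|\widetilde{d(\exp_{p_1})_u}-\mathrm{Id}\bigr\|\le d(p_1,\mathfs S)^{-a}\bigl(\|u\|+d(p_1,p_2)\bigr)
\]
and then claim this is $\ll\ve(\eta_1\eta_2)^2$ on the domain $\|u\|\le d(p_1,\mathfs S)^{2a}$. But the term $d(p_1,\mathfs S)^{-a}\|u\|$ then has size up to $d(p_1,\mathfs S)^{a}$, and since $\eta_i\le Q(\wh x_i)\le\ve^{6/\beta}\rho(\wh x_i)^{96a/\beta}$ we have $d(p_1,\mathfs S)^{a}\gtrsim\eta_i^{\beta/96}$, which is \emph{enormously larger} than $(\eta_1\eta_2)^2\approx\eta_i^4$ when $\eta_i$ is small. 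The ``margin from $6/\beta$'' does not help here---it only forces $\eta_i$ to be small, which makes the inequality harder, not easier. (The same issue propagates to your $C^0$ bound, which you only argued at the origin.)

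The point you are missing is a cancellation: the large deviations of $\widetilde{d(\exp_{p_1})_u}$ and $\widetilde{d(\exp_{p_2}^{-1})_w}$ from $\mathrm{Id}$ are essentially inverse to each other, because $d(\exp_{p_1}^{-1})_w\circ d(\exp_{p_1})_u=\mathrm{Id}$. The paper captures this by subtracting the charts rather than composing: setting
\[
\Theta=\exp_{p_2}^{-1}-P_{p_1,p_2}\circ\exp_{p_1}^{-1},
\]
one has $\widetilde{d\Theta_z}=\tau(p_2,z)-\tau(p_1,z)$, which by (A3) is bounded by $d(x,\mathfs D)^{-a}\,d(p_1,p_2)$ \emph{uniformly in $z$}---no $\|u\|$ term appears. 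Then $\Psi_{\wh x_2}^{-1}\circ\Psi_{\wh x_1}-\mathrm{Id}=C_2^{-1}P_{p_2,x}\circ(\Theta\circ\Psi_{\wh x_1})+C_2^{-1}(C_1-C_2)$, and all three of $\|\Theta\circ\Psi_{\wh x_1}\|_{C^0}$, $\|d(\Theta\circ\Psi_{\wh x_1})\|_{C^0}$, $\Hol{\beta/2}(d(\Theta\circ\Psi_{\wh x_1}))$ are controlled by powers of $d(p_1,p_2)<(\eta_1\eta_2)^4$ via (A2), (A3), (A4) respectively; multiplying by $\|C_2^{-1}\|\le\ve^{1/8}\eta_2^{-\beta/48}$ still leaves room for the $(\eta_1\eta_2)^2$ bound.
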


\begin{proof} Recall from (\ref{estimates-Q}) that
$\norm{C_i^{-1}}\leq \ve^{1/8}Q(\wh x_i)^{-\beta/48}\leq \ve^{1/8}\eta_i^{-\beta/48}$.

\medskip
\noindent
(1)  The case $i=0$ was treated above. Consider $i=1$.
Since $d(\vt[\wh x_1],\vt[\wh x_2])<(\eta_1\eta_2)^4<\ve^2\rho(\wh x_2)^{2a}$,
we have that $\vt[\wh x_1]\in D_{\wh x_2}$. By (A6),
$d(\vt_1[\wh x_1],\vt_1[\wh x_2])\leq d(\vt[\wh x_2],\mathfs S)^{-a}d(\vt[\wh x_1],\vt[\wh x_2])<
\ve^2\rho(\wh x_2)^a<\ve^2 d(\vt_1[\wh x_2],\mathfs S)$ and so 
$d(\vt_1[\wh x_1],\mathfs S)=d(\vt_1[\wh x_2],\mathfs S)\pm
d(\vt_1[\wh x_1],\vt_1[\wh x_2])=(1\pm\ve^2)d(\vt_1[\wh x_2],\mathfs S)=e^{\pm\ve}d(\vt_1[\wh x_2],\mathfs S)$. 
The same proof applies to $i=-1$.

\medskip
\noindent
(2) We have $C_1^{-1}-C_2^{-1}=C_1^{-1}(C_2-C_1)C_2^{-1}$, hence
\begin{align*}
\norm{C_1^{-1}-C_2^{-1}}\leq \norm{C_1^{-1}}\cdot\norm{C_2^{-1}}\cdot\norm{C_1-C_2}\leq 
\ve^{1/4}(\eta_1\eta_2)^{4-\beta/48}<\tfrac{1}{2}(\eta_1\eta_2)^3.
\end{align*}
Additionally,
\begin{align*}
\left|\tfrac{\norm{C_1^{-1}}}{\norm{C_2^{-1}}}-1\right|\leq \norm{C_1^{-1}-C_2^{-1}} \leq \tfrac{1}{2}(\eta_1\eta_2)^3
\end{align*}
and so $\tfrac{\norm{C_1^{-1}}}{\norm{C_2^{-1}}}=e^{\pm (\eta_1\eta_2)^3}$.

\medskip
\noindent
(3) By parts (1) and (2),
$\tfrac{\rho(\wh x_1)^{96a/\beta}}{\rho(\wh x_2)^{96a/\beta}}={\rm exp}\left[\pm \tfrac{96a\ve}{\beta}\right]$,
and $\tfrac{\|C(\wh x_1)^{-1}\|^{-48/\beta}}{\|C(\wh x_2)^{-1}\|^{-48/\beta}}={\rm exp}\left[\pm\tfrac{48\ve}{\beta}\right]$.
Since $\tfrac{48\ve}{\beta}<\tfrac{96a\ve}{\beta}$, we get
$\tfrac{\wt Q(\wh x_1)}{\wt Q(\wh x_2)}={\rm exp}\left[\pm \tfrac{96a\ve}{\beta}\right]$.
By the definition of $Q$, we obtain
$$
\tfrac{Q(\wh x_1)}{Q(\wh x_2)}={\rm exp}\left[\pm\left(\tfrac{96a\ve}{\beta}+\tfrac{\ve}{3}\right)\right]=
{\rm exp}\left[\pm\tfrac{97a\ve}{\beta}\right].
$$

\medskip
\noindent
(4) The proof of this and of the next part follows \cite[Prop. 3.4]{Lima-Matheus}.
We prove that $\Psi_{\wh x_1}(B[e^{-2\ve}\eta_1])\subset \Psi_{\wh x_2}(B[\eta_2])$.
If $v\in B[e^{-2\ve}\eta_1]$ then
$\norm{C(\wh x_1)v}\leq e^{-2\ve}\eta_1<2\mathfrak d(x)$, hence by (A1):
$$
\Sas(C(\wh x_1)v,C(\wh x_2)v)\leq 2\left[d(\vt[\wh x_1],\vt[\wh x_2])+\norm{C_1v-C_2v}\right]\leq 2(\eta_1\eta_2)^4.
$$
By (A2), 
$d(\Psi_{\wh x_1}(v),\Psi_{\wh x_2}(v))\leq 4(\eta_1\eta_2)^4$ and so 
$\Psi_{\wh x_1}(v)\in B(\Psi_{\wh x_2}(v),4(\eta_1\eta_2)^4)$.
By Lemma \ref{Lemma-Pesin-chart}(1),
$B(\Psi_{\wh x_2}(v),4(\eta_1\eta_2)^4)\subset \Psi_{\wh x_2}(B)$ where
$B\subset \R^m$ is the ball with center $v$ and radius $8\norm{C_2^{-1}}(\eta_1\eta_2)^4$,
hence it is enough that $B\subset B[\eta_2]$. If $w\in B$ then
$\norm{w}\leq \norm{v}+8\norm{C_2^{-1}}(\eta_1\eta_2)^4\leq (e^{-\ve}+8\ve^{6/\beta})\eta_2<\eta_2$
for $\ve>0$ small enough.

\medskip
\noindent
(5) The proof that $\Psi_{\wh x_2}^{-1}\circ \Psi_{\wh x_1}$ is well-defined in
$B[d(\vt[\wh x_1],\mathfs S)^a]$ is similar to the proof of (4), the only difference being in the last estimate:
if $\ve>0$ is small enough then for $w\in B$ it holds
\begin{align*}
&\, \norm{w}\leq \norm{v}+8\norm{C_2^{-1}}(\eta_1\eta_2)^4\leq d(\vt[\wh x_1],\mathfs S)^a+8(\eta_1\eta_2)^3\\
&\leq [e^{a\ve}+8\ve^{6/\beta}]d(\vt[\wh x_2],\mathfs S)^a<2\mathfrak d(\wh x_2).
\end{align*}
Now:
\begin{align*}
&\Psi_{\wh x_2}^{-1}\circ \Psi_{\wh x_1}-{\rm Id}=
C(\wh x_2)^{-1}\circ\exp{\vt[\wh x_2]}^{-1}\circ\exp{\vt[\wh x_1]}\circ C(\wh x_1)-{\rm Id}\\
&=[C_2^{-1}\circ P_{\vt[\wh x_2],x}]\circ[\exp{\vt[\wh x_2]}^{-1}\circ\exp{\vt[\wh x_1]}-
P_{\vt[\wh x_1],\vt[\wh x_2]}]\circ [P_{x,\vt[\wh x_1]}\circ C_1]\\
&\ \ \ \, +C_2^{-1}(C_1-C_2)\\
&=[C_2^{-1}\circ P_{\vt[\wh x_2],x}]\circ[\exp{\vt[\wh x_2]}^{-1}-P_{\vt[\wh x_1],\vt[\wh x_2]}\circ\exp{\vt[\wh x_1]}^{-1}]\circ\Psi_{\wh x_1}+C_2^{-1}(C_1-C_2).
\end{align*}
We calculate the $C^{1+\frac{\beta}{2}}$ norm of
$[\exp{\vt[\wh x_2]}^{-1}-P_{\vt[\wh x_1],\vt[\wh x_2]}\circ\exp{\vt[\wh x_1]}^{-1}]\circ\Psi_{\wh x_1}$
in the ball $B[d(\vt[\wh x_1], \mathfs{S})^{2a}]$.
By Lemma \ref{Lemma-Pesin-chart}, $\|d\Psi_{\wh x_1}\|_0\leq 2$ and
$$
\Hol{\frac{\beta}{2}}(d\Psi_{\wh x_1})\leq d(\vt[\wh x_1],\mathfs S)^{-a}2d(\vt[\wh x_1],\mathfs{S})^{2a(1-\beta/2)}
=2d(\vt[\wh x_1],\mathfs S)^{a(1-\beta)}
< 2.
$$
Call $\Theta:=\exp{\vt[\wh x_2]}^{-1}-P_{\vt[\wh x_1],\vt[\wh x_2]}\circ\exp{\vt[\wh x_1]}^{-1}$.
For $\ve>0$ sufficiently small, inside $\mathfrak B_{\wh x_1}$ we have the following bounds:
\begin{enumerate}[$\circ$]
\item By (A2),
$\norm{\Theta(z)}\leq \Sas(\exp{\vt[\wh x_2]}^{-1}(z),\exp{\vt[\wh x_1]}^{-1}(z))\leq 2d(\vt[\wh x_1],\vt[\wh x_2])
\leq 2\ve^{6/\beta}(\eta_1\eta_2)^3$
thus $\norm{\Theta\circ \Psi_{\wh x_1}}_{C^0}<\ve^{2/\beta}(\eta_1\eta_2)^3$.
\item By (A3), $\norm{d\Theta_z}=\|\tau(\vt[\wh x_2],z)-\tau(\vt[\wh x_1],z)\|
\leq d(\vt[\wh x_1],\mathfs S)^{-a}d(\vt[\wh x_1],\vt[\wh x_2])<\ve^{6/\beta}(\eta_1\eta_2)^3$.
Hence $\norm{d\Theta}_{C^0}<\ve^{6/\beta}(\eta_1\eta_2)^3$ and
$\norm{d(\Theta\circ\Psi_{\wh x_1})}_{C^0}\leq 2\ve^{6/\beta}(\eta_1\eta_2)^3<\ve^{2/\beta}(\eta_1\eta_2)^3$.
\item By (A4),
\begin{align*}
&\, \norm{\widetilde{d\Theta_y}-\widetilde{d\Theta_z}}=\norm{[\tau(\vt[\wh x_2],y)-\tau(\vt[\wh x_1],y)]
-[\tau(\vt[\wh x_2],z)-\tau(\vt[\wh x_1],z)]}\\
&\leq d(\vt[\wh x_1],\mathfs S)^{-a}d([\vt[\wh x_1],\vt[\wh x_2])d(y,z)
\end{align*}
hence $\Lip{d\Theta}\leq d(\vt[\wh x_1],\mathfs S)^{-a}d(\vt[\wh x_1],\vt[\wh x_2])$.
\item Using that
\begin{align*}
\Hol{\frac{\beta}{2}}(d(\Theta_1\circ\Theta_2))&\leq \norm{d\Theta_1}_{C^0}\Hol{\frac{\beta}{2}}(d\Theta_2)+\\
&\ \ \ \ \, \Lip{d\Theta_1}\norm{d\Theta_2}_{C^0}^{2}2d(\vt[\wh x_1],\mathfs{S})^{2a(1-\beta/2)}
\end{align*}
for $\Theta_2$ defined in $B[d(\vt[\wh x_1],\mathfs{S})^{2a}]$, we obtain that
\begin{align*}
&\ \Hol{\frac{\beta}{2}}[d(\Theta\circ\Psi_{\wh x_1})]\\
&\leq \norm{d\Theta}_{C^0}\Hol{\frac{\beta}{2}}(d\Psi_{\wh x_1})+
\Lip{d\Theta}\norm{d\Psi_{\wh x_1}}_{C^0}^2 2d(\vt[\wh x_1],\mathfs{S})^{2a(1-\beta/2)}\\
&<2\ve^{6/\beta}(\eta_1\eta_2)^3+ 
d(\vt[\wh x_1],\mathfs S)^{-a}d(\vt[\wh x_1],\vt[\wh x_2])8d(\vt[\wh x_1],\mathfs{S})^{2a(1-\beta/2)}\\
&<2\ve^{6/\beta}(\eta_1\eta_2)^3+8\ve^{6/\beta}(\eta_1\eta_2)^3<\ve^{2/\beta}(\eta_1\eta_2)^3.
\end{align*}
\end{enumerate}
This implies that $\norm{\Theta\circ\Psi_{\wh x_1}}_{C^{1+\frac{\beta}{2}}}<3\ve^{2/\beta}(\eta_1\eta_2)^3$, hence
$$
\norm{C_2^{-1}\circ P_{\vt[\wh x_2],x}\circ\Theta\circ\Psi_{\wh x_1}}_{C^{1+\frac{\beta}{2}}}
\leq \|C_2^{-1}\|3\ve^{2/\beta}(\eta_1\eta_2)^3 \leq 3\ve^{2/\beta}(\eta_1\eta_2)^2.
$$
Thus
$\norm{\Psi_{\wh x_2}^{-1}\circ \Psi_{\wh x_1}-{\rm Id}}_{C^{1+\frac{\beta}{2}}}\leq
3\ve^{2/\beta}(\eta_1\eta_2)^2+\norm{C_2^{-1}}(\eta_1\eta_2)^4<
3\ve^{2/\beta}(\eta_1\eta_2)^2+\ve^{6/\beta}(\eta_1\eta_2)^3<4\ve^{2/\beta}(\eta_1\eta_2)^2<\ve(\eta_1\eta_2)^2$.
\end{proof}

\subsection{The maps $F_{\wh x,\wh y}$ and $F^{-1}_{\wh x,\wh y}$}

Let $\wh x,\wh y\in{\rm NUH}$. When $\Psi_{\wh f(\wh x)}^{\eta}\overset{\ve}{\approx}\Psi_{\wh y}^{\eta'}$,
we are able to change $\Psi_{\wh f(\wh x)}$ by $\Psi_{\wh y}$ in $F_{\wh x}$. Similarly,
when $\Psi_{\wh x}^{\eta}\overset{\ve}{\approx}\Psi_{\wh f^{-1}(\wh y)}^{\eta'}$,
we can change $\Psi_{\wh x}$ by $\Psi_{\wh f^{-1}(\wh y)}$ in $F^{-1}_{\wh x}$.
In this section, we prove that these changes define maps that satisfy a result similar
to Theorem \ref{Thm-non-linear-Pesin}.

\medskip
\noindent
{\sc The maps $F_{\wh x,\wh y}$ and $F^{-1}_{\wh x,\wh y}$:}
If $\Psi_{\wh f(\wh x)}^{\eta}\overset{\ve}{\approx}\Psi_{\wh y}^{\eta'}$,
define $F_{\wh x,\wh y}:=\Psi_{\wh y}^{-1}\circ f\circ \Psi_{\wh x}$. Similarly, if
$\Psi_{\wh x}^{\eta}\overset{\ve}{\approx}\Psi_{\wh f^{-1}(\wh y)}^{\eta'}$, define
$F^{-1}_{\wh x,\wh y}:=\Psi_{\wh x}^{-1}\circ f^{-1}_{\wh x}\circ \Psi_{\wh y}$. 

\medskip
We will write $F^{\pm}_{\wh x,\wh y}$ to represent both  $F_{\wh x,\wh y}$
and  $F^{-1}_{\wh x,\wh y}$. In the above form, the definitions are not symmetric,
since the natural would be to define $F_{\wh x,\wh y}^{-1}$ taking the inverse branch 
$f_{\wh f^{-1}(\wh y)}^{-1}$ instead of $f_{\wh x}^{-1}$.
As a matter of fact, the overlap is strong enough to guarantee that,
in our domains of definition, $f_{\wh x}^{-1}$ and $f_{\wh f^{-1}(\wh y)}^{-1}$ coincide.
We state this as a separate lemma, since it will be used a few times in the text.

\begin{lemma}\label{Lemma-inverse-branches}
Let $\wh x,\wh y\in{\rm NUH}$ and assume
$\Psi_{\wh x}^{\eta}\overset{\ve}{\approx}\Psi_{\wh f^{-1}(\wh y)}^{\eta'}$.
If $A=\Psi_{\wh y}(B[20Q(\wh y)])$, then 
$A\subset E_{\wh x}$ and $f_{\wh f^{-1}(\wh y)}^{-1}(A)\subset f_{\wh x}^{-1}(E_{\wh x})$.
\end{lemma}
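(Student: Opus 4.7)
The plan is to exploit the extreme closeness enforced by the $\ve$--overlap to reduce the identity $f_{\wh f^{-1}(\wh y)}^{-1}=f_{\wh x}^{-1}$ on $A$ to uniqueness of the local inverse of $f$ on the common open set $D_{x_0}$. Write $x_0=\vt[\wh x]$, $y_0=\vt[\wh y]$, $y_{-1}=\vt[\wh f^{-1}(\wh y)]$, so $f(y_{-1})=y_0$. The assumption $\Psi_{\wh x}^{\eta}\overset{\ve}{\approx}\Psi_{\wh f^{-1}(\wh y)}^{\eta'}$ gives $d(x_0,y_{-1})<(\eta\eta')^4$ and, via Proposition \ref{Proposition-overlap}(1), $d(y_{-1},\mathfs S)=e^{\pm\ve}d(x_0,\mathfs S)$ and $d(y_0,\mathfs S)=e^{\pm\ve}d(f(x_0),\mathfs S)$. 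Set $\delta:=\min\{d(x_0,\mathfs S),d(f(x_0),\mathfs S)\}$, so $\mathfrak r(x_0)>\delta^a$. The repeated numerical input will be that $\rho(\wh y)\leq e^{\ve}\delta$, whence $Q(\wh y)\leq\ve^{6/\beta}\rho(\wh y)^{96a/\beta}\leq C_\ve\,\delta^{96a/\beta}\ll\delta^a$ because $96/\beta\geq 1$ and $\delta<1$.

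For the first inclusion $A\subset E_{\wh x}=B(f(x_0),2\mathfrak r(x_0))$, Lemma \ref{Lemma-Pesin-chart}(1) gives $A\subset B(y_0,40Q(\wh y))$, while $f(y_{-1})=y_0$ combined with (A6) yields $d(y_0,f(x_0))\leq d(x_0,\mathfs S)^{-a}d(x_0,y_{-1})\leq d(x_0,\mathfs S)^{-a}(\eta\eta')^4$. Both $40Q(\wh y)$ and this last quantity are much smaller than $\delta^a$ for $\ve>0$ small --- the first by the key estimate, the second because $\eta\eta'\leq Q(\wh x)Q(\wh f^{-1}(\wh y))$ inherits several extra powers of $\delta$ by the same mechanism. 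Summing and using $\mathfrak r(x_0)>\delta^a$ yields $A\subset E_{\wh x}$.

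For the second inclusion it suffices to prove that $f_{\wh f^{-1}(\wh y)}^{-1}$ and $f_{\wh x}^{-1}$ agree on $A$. Fix $z\in A$ and set $w=f_{\wh f^{-1}(\wh y)}^{-1}(z)\in D_{y_{-1}}$. From $y_{-1}=f_{\wh f^{-1}(\wh y)}^{-1}(y_0)$ and the pointwise bound $\|d(f_{\wh f^{-1}(\wh y)}^{-1})\|\leq d(y_{-1},\mathfs S)^{-a}$ from (A6), we get $d(w,y_{-1})\leq d(y_{-1},\mathfs S)^{-a}\cdot 40Q(\wh y)$, which by the key estimate and $96a/\beta\geq 2a$ is much smaller than $\delta^a$. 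The triangle inequality with $d(y_{-1},x_0)<(\eta\eta')^4\ll\delta^a$ then gives $d(w,x_0)<2\mathfrak r(x_0)$, so $w\in D_{x_0}$. Since $f(w)=z\in E_{\wh x}$ by the first inclusion and $f|_{D_{x_0}}$ is a diffeomorphism by (A5), uniqueness of the local inverse forces $w=f_{\wh x}^{-1}(z)$. Hence $f_{\wh f^{-1}(\wh y)}^{-1}(A)=f_{\wh x}^{-1}(A)\subset f_{\wh x}^{-1}(E_{\wh x})$.

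The main obstacle is purely the bookkeeping: $\mathfrak r(x_0)$ is only known to exceed $\delta^a$, which can be arbitrarily small, while several derivative bounds in (A6) contribute negative powers of distances to $\mathfs S$. The calculations work because the exponent $96a/\beta$ in the definition of $Q$ was chosen to dominate $a$ by a comfortable margin, and because the fourth power in $(\eta\eta')^4$ entering the definition of $\ve$--overlap supplies enough surplus powers of $\delta$ to absorb the factor $d(x_0,\mathfs S)^{-a}$ coming from the mean value estimate for $f$.
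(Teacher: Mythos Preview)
Your argument for the first inclusion $A\subset E_{\wh x}$ is essentially the paper's, with only cosmetic differences in bookkeeping; it is correct.

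For the second inclusion you take a genuinely different route from the paper. The paper argues purely by set containments: it shows $f_{\wh f^{-1}(\wh y)}^{-1}(A)\subset B(x_0,R)$ for a small $R$ (the same estimate you obtain), and then uses (A6) together with a short topological argument to get $f_{\wh x}^{-1}(E_{x_0})\supset B(x_0,2d(x_0,\mathfs S)^a\mathfrak r(x_0))$, which contains that small ball. You instead aim for the stronger pointwise statement $f_{\wh f^{-1}(\wh y)}^{-1}=f_{\wh x}^{-1}$ on $A$, from which the inclusion is immediate. This is a nice and conceptually cleaner idea, and the stronger statement is in fact what the paper eventually needs.

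There is, however, a real gap in your final step. From $w\in D_{x_0}$ and $f(w)=z$ you correctly identify $w$ as the unique preimage of $z$ \emph{in $D_{x_0}$}, by injectivity of $f|_{D_{x_0}}$. But $f_{\wh x}^{-1}$ is a local section of $f$ defined on $E_{x_0}$, and (A5) gives no a~priori relation between its image $f_{\wh x}^{-1}(E_{x_0})$ and $D_{x_0}$; so you do not yet know that $f_{\wh x}^{-1}(z)$ lies in $D_{x_0}$, and ``uniqueness of the local inverse'' cannot be invoked as stated. The fix is the symmetric Lipschitz estimate you already have in hand: since $z\in A\subset E_{x_0}$ and $d(z,f(x_0))\leq 40Q(\wh y)+d(y_0,f(x_0))$, (A6) for $g=f_{\wh x}^{-1}$ gives
\[
d\bigl(f_{\wh x}^{-1}(z),x_0\bigr)\leq d(x_0,\mathfs S)^{-a}\,d(z,f(x_0))\ll \delta^a<\mathfrak r(x_0),
\]
the last inequality using exactly the surplus exponent $96a/\beta-2a>0$ in the bound for $Q(\wh y)$. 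Now both $w$ and $f_{\wh x}^{-1}(z)$ lie in $D_{x_0}$ and map to $z$ under $f$, so injectivity of $f|_{D_{x_0}}$ gives $w=f_{\wh x}^{-1}(z)$ and your proof goes through.
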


\begin{proof}
Write $\wh x=(x_n)_{n\in\Z}$
and $\wh y=(y_n)_{n\in\Z}$. The map $f_{\wh x}^{-1}$ is the inverse branch of $f$
taking $x_1$ to $x_0$ and, by (A5), it is well-defined in $E_{x_0}$.
By Proposition \ref{Proposition-overlap}(1), we have $d(y_{-1},\mathfs S)=e^{\pm\ve}d(x_0,\mathfs S)$
and $d(y_{0},\mathfs S)=e^{\pm\ve}d(x_1,\mathfs S)$, then
\begin{align*}
&\, Q(\wh y)<\ve^{6/\beta}\rho(\wh y)^a\leq \ve^{6/\beta}\min\{d(y_{-1},\mathfs S)^a,d(y_0,\mathfs S)^a\}\\
&\leq \ve^{6/\beta}e^{a\ve}\min\{d(x_0,\mathfs S)^a,d(x_1,\mathfs S)^a\}<\ve\mathfrak r(x_0).
\end{align*}
Also, (A6) implies that
$d(x_1,y_0)<\rho(\wh x)^{-a}d(x_0,y_{-1})<\rho(\wh x)^{-a}Q(\wh x)\ll \ve\rho(\wh x)^a<\ve\mathfrak r(x_0)$
and so 
$A\subset B(y_0,40Q(\wh y))\subset B(x_1,40Q(\wh y)+d(x_1,y_0))\subset E_{x_0}$.
Now we prove the second inclusion. Since $A\subset B(y_0,40Q(\wh y))$,
again by (A6) we have
\begin{align*}
&\, f_{\wh f^{-1}(\wh y)}^{-1}(A)\subset B(y_{-1},40d(y_{-1},\mathfs S)^{-a}Q(\wh y))\\
&\subset B(x_0,40d(y_{-1},\mathfs S)^{-a}Q(\wh y)+d(y_{-1},x_0)).
\end{align*}
Assumption (A6) also implies that
$f_{\wh x}^{-1}(E_{x_0})\supset B(x_0,2d(x_0,\mathfs S)^a\mathfrak r(x_0))$.
Since
\begin{align*}
&\, 40d(y_{-1},\mathfs S)^{-a}Q(\wh y)+d(y_{-1},x_0)<
\min\{d(y_{-1},\mathfs S)^{3a},d(y_0,\mathfs S)^{3a}\}\\
& \leq e^{3a\ve}\min\{d(x_0,\mathfs S)^{3a},d(x_1,\mathfs S)^{3a}\}<2d(x_0,\mathfs S)^a\mathfrak r(x_0),
\end{align*}
we obtain that $f_{\wh f^{-1}(\wh y)}^{-1}(A)\subset f_{\wh x}^{-1}(E_{x_0})$.
\end{proof}

Now we prove a version of Theorem \ref{Thm-non-linear-Pesin} for $F^{\pm}_{\wh x,\wh y}$.

\begin{theorem}\label{Thm-non-linear-Pesin-2}
The following holds for all $\ve>0$ small enough. If $\wh x,\wh y\in{\rm NUH}$
and $\Psi_{\wh f(\wh x)}^{\eta}\overset{\ve}{\approx}\Psi_{\wh y}^{\eta'}$
then $F_{\wh x,\wh y}$ is well-defined in $B[20Q(\wh x)]$ and can be written as
$F_{\wh x,\wh y}=D(\wh x)+H^+$ s.t. for all $t\in [\eta,20Q(\wh x)]$ it holds:
\begin{enumerate}[{\rm (a)}]
\item $\|H^+(0)\|<\ve \eta$.
\item $\|H^+\|_{C^1(B[t])}<\ve t^{\beta/3}$.
\item $\Hol{\frac{\beta}{3}}(dH^+)<\ve$.
\end{enumerate}
Similarly, if $\wh x,\wh y\in{\rm NUH}$ and
$\Psi_{\wh x}^{\eta}\overset{\ve}{\approx}\Psi_{\wh f^{-1}(\wh y)}^{\eta'}$
then $F^{-1}_{\wh x,\wh y}$ is well-defined in $B[20Q(\wh y)]$ and can be written as
$F^{-1}_{\wh x,\wh y}=D(\wh f^{-1}(\wh y))^{-1}+H^-$ s.t. for all $t\in [\eta,20Q(\wh y)]$ it holds:
\begin{enumerate}[{\rm(d)}]
\item[{\rm(d)}] $\|H^-(0)\|<\ve \eta$.
\item[{\rm(e)}] $\|H^-\|_{C^1(B[t])}<\ve t^{\beta/3}$.
\item[{\rm(f)}] $\Hol{\frac{\beta}{3}}(dH^-)<\ve$.
\end{enumerate}
\end{theorem}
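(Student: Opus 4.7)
The plan is to reduce the estimates on $F_{\hat x,\hat y}^{\pm 1}$ to those on $F_{\hat x}^{\pm 1}$ (Theorem~\ref{Thm-non-linear-Pesin}) by inserting the change of coordinates controlled by Proposition~\ref{Proposition-overlap}. Concretely, I would begin by factoring
\[
F_{\hat x,\hat y}=\bigl(\Psi_{\hat y}^{-1}\circ\Psi_{\hat f(\hat x)}\bigr)\circ F_{\hat x},
\]
and, for the inverse, use Lemma~\ref{Lemma-inverse-branches} to replace $f_{\hat x}^{-1}$ by $f_{\hat f^{-1}(\hat y)}^{-1}$ on the image $\Psi_{\hat y}(B[20Q(\hat y)])$, so that
\[
F^{-1}_{\hat x,\hat y}=\bigl(\Psi_{\hat x}^{-1}\circ\Psi_{\hat f^{-1}(\hat y)}\bigr)\circ F^{-1}_{\hat f^{-1}(\hat y)}.
\]
Setting $R^+:=\Psi_{\hat y}^{-1}\circ\Psi_{\hat f(\hat x)}-\mathrm{Id}$ and $R^-:=\Psi_{\hat x}^{-1}\circ\Psi_{\hat f^{-1}(\hat y)}-\mathrm{Id}$, Proposition~\ref{Proposition-overlap}(5) yields $\|R^\pm\|_{C^{1+\beta/2}}<\varepsilon(\eta\eta')^2$ on a ball of radius $\asymp d(\cdot,\mathfs{S})^{2a}$ around $0$.

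The first step is to verify the compositions are well-defined. Theorem~\ref{Thm-non-linear-Pesin}(4) bounds $\|F_{\hat x}\|_{C^0(B[20Q(\hat x)])}$ by a polynomial factor in $\rho(\hat x)^{-a}$ times $Q(\hat x)$, and the estimates in (\ref{estimates-Q}) together with Proposition~\ref{Proposition-overlap}(1) show that the image is comfortably inside $B[d(\vartheta[\hat f(\hat x)],\mathfs{S})^{2a}]$; the analogous check for $F^{-1}$ uses Lemma~\ref{Lemma-inverse-branches}. Once this is done, write
\[
H^+=H_0^+ + R^+\!\circ F_{\hat x},\qquad H^-=H_0^- + R^-\!\circ F^{-1}_{\hat f^{-1}(\hat y)},
\]
where $H_0^\pm$ are the perturbations furnished by Theorem~\ref{Thm-non-linear-Pesin}(3).

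For property (a), use $H_0^\pm(0)=0$, so $H^\pm(0)=R^\pm(F^{\pm}_{\hat z}(0))=R^\pm(0)$, and bound $\|R^\pm(0)\|\leq\|R^\pm\|_{C^0}<\varepsilon(\eta\eta')^2<\varepsilon\eta$ using $\eta'\leq e^\varepsilon\eta<1$. For (b) and (c) I would estimate each summand separately on $B[t]$. Theorem~\ref{Thm-non-linear-Pesin}(3) directly gives $\|H_0^\pm\|_{C^1(B[t])}\ll \varepsilon t^{\beta/3}$ and $\mathrm{Hol}_{\beta/3}(dH_0^\pm)\ll\varepsilon$, since $t\leq 20Q(\hat x)$ is very small and $\beta/3<\beta/2$ provides slack. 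For the composite $R^\pm\circ F^{\pm}_{\hat z}$, apply the chain rule $d(R^\pm\circ F^{\pm}_{\hat z})_v=(dR^\pm)_{F^{\pm}_{\hat z}(v)}\circ (dF^{\pm}_{\hat z})_v$; the factor $\|dF^{\pm}_{\hat z}\|_{C^0}\leq 2\rho^{-a}(1+e^{2\chi})^{1/2}$ from Theorem~\ref{Thm-non-linear-Pesin}(4) is controlled because $\|dR^\pm\|_{C^0}<\varepsilon(\eta\eta')^2$ is extremely small, and the product $\rho^{-a}(\eta\eta')^2$ is absorbed using (\ref{estimates-Q}). The $\beta/3$-Hölder estimate on $d(R^\pm\circ F^{\pm}_{\hat z})$ follows from the standard composition inequality $\mathrm{Hol}_{\beta/3}(dR\circ F\cdot dF)\leq \mathrm{Hol}_{\beta/2}(dR)\cdot\|dF\|_{C^0}^{1+\beta/3}+\|dR\|_{C^0}\cdot\mathrm{Hol}_{\beta/3}(dF)$, and every term carries an $(\eta\eta')^2$ or $Q(\hat x)^{\beta/3}$ that beats the polynomial blow-up in $\rho^{-a}$.

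The main obstacle I foresee is the careful bookkeeping of the polynomial degeneracy $\rho(\hat z)^{-a}$ coming from $\|dF^{\pm}_{\hat z}\|_{C^0}$ and from the Hölder constants in (A3)--(A7): each such factor must be absorbed by a power of $Q$ (via (\ref{estimates-Q})) or by the factor $(\eta\eta')^2$ supplied by the overlap. The drop from the $C^{1+\beta/2}$ regularity of $H_0^\pm$ and $R^\pm$ to the weaker $C^{1+\beta/3}$ conclusion is exactly the slack needed to absorb these polynomial losses while retaining a universal small constant $\varepsilon$ on the right-hand side. Once the bookkeeping is done, the two statements for $F_{\hat x,\hat y}$ and $F^{-1}_{\hat x,\hat y}$ follow by the same argument, with the roles of forward and backward orbits interchanged.
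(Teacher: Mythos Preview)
Your proposal is correct and follows essentially the same approach as the paper: factor $F_{\wh x,\wh y}=(\Psi_{\wh y}^{-1}\circ\Psi_{\wh f(\wh x)})\circ F_{\wh x}$, use Lemma~\ref{Lemma-inverse-branches} to get the analogous factorization for $F^{-1}_{\wh x,\wh y}$ through $F^{-1}_{\wh f^{-1}(\wh y)}$, and then combine Theorem~\ref{Thm-non-linear-Pesin} with the overlap bound $\|R^\pm\|_{C^{1+\beta/2}}<\ve(\eta\eta')^2$ from Proposition~\ref{Proposition-overlap}(5). Your additive split $H^+=H_0^+ + R^+\circ F_{\wh x}$ is just a slightly tidier bookkeeping of the paper's direct computation with $H^+=G\circ F_{\wh x}-D(\wh x)$; the two are algebraically identical since $G=\mathrm{Id}+R^+$, and both arguments absorb the polynomial $\rho^{-a}$ factors via the slack in passing from $\beta/2$ to $\beta/3$ and via (\ref{estimates-Q}).
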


Observe that the overlap condition is strong enough to guarantee that
$F^{\pm}_{\wh x,\wh y}$ is defined in a domain of radius $20Q(\cdot)$
and, except for the $C^1$ norm, the estimates are also uniform in this domain.

\begin{proof}
First assume that $\Psi_{\wh f(\wh x)}^{\eta}\overset{\ve}{\approx}\Psi_{\wh y}^{\eta'}$,
and let us prove items (a)--(c).
We write $F_{\wh x,\wh y}=(\Psi_{\wh y}^{-1}\circ\Psi_{\wh f(\wh x)})\circ F_{\wh x}=:G\circ F_{\wh x}$
and see it as a small perturbation of $F_{\wh x}$.
By Theorem \ref{Thm-non-linear-Pesin}(3--4),
$$
F_{\wh x}(0)=0,\|dF_{\wh x}\|_{C^0}< 2d(\vt[\wh x],\mathfs S)^{-a}(1+e^{2\chi})^{1/2},
\|d(F_{\wh x})_v-d(F_{\wh x})_w\|\leq \ve\|v-w\|^{\beta/2}
$$
for $v,w\in B[20Q(\wh x)]$, where the $C^0$ norm is taken in $B[20Q(\wh x)]$,
and by Proposition \ref{Proposition-overlap}(5) we have
$$
\|G-{\rm Id}\|<\ve(\eta\eta')^2,\ \|d(G-{\rm Id})\|_{C^0}<\ve(\eta\eta')^2,
\ \|dG_v-dG_w\|\leq\ve(\eta\eta')^2\|v-w\|^{\beta/2}
$$
for $v,w\in B[d(\vt_1[\wh x],\mathfs{S})^{2a}]$, where the $C^0$ norm is taken in this same domain.
We start proving that $F_{\wh x,\wh y}$ is well-defined in $B[20Q(\wh x)]$. We have
$$
F_{\wh x}(B[20Q(\wh x)])\subset B[40\rho(\wh x)^{-a}(1+e^{2\chi})^{1/2}Q(\wh x)]
\subset B[d(\vt_1[\wh x],\mathfs S)^{2a}]
$$
since
$40\rho(\wh x)^{-a}(1+e^{2\chi})^{1/2}Q(\wh x)<
40(1+e^{2\chi})^{1/2}\ve^{6/\beta}d(\vt_1[\wh x],\mathfs S)^{2a}<d(\vt_1[\wh x],\mathfs S)^{2a}$
for $\ve>0$ small enough. By Proposition \ref{Proposition-overlap}(5), $F_{\wh x,\wh y}$ is well-defined.
 
Let $H^+:=G\circ F_{\wh x}-d(F_{\wh x})_0=F_{\wh x,\wh y}-D(\wh x)$,
cf. Lemma \ref{Lemma-linear-reduction}.
By Proposition \ref{Proposition-overlap}(3),
we have $\|H^+(0)\|=\|G(0)\|<\ve(\eta\eta')^2<\tfrac{1}{8}\ve\eta$, which is stronger than (a).
Now we prove (c). Start observing that for $\ve>0$ small enough it holds:
\begin{align*}
&\, \|d(H^+)_0\|\leq \|dG_0\circ D(\wh x)-D(\wh x)\|\leq \|d(G-{\rm Id})_0\|\cdot\|D(\wh x)\|\\
&<\ve(\eta\eta')^2 d(\vt[\wh x],\mathfs S)^{-a}(1+e^{2\chi})^{1/2}<\ve\eta\eta' \ve^{6/\beta}(1+e^{2\chi})^{1/2}
<\tfrac{1}{8}\ve\eta^{\beta/3}.
\end{align*}
Since $F_{\wh x}(B[20Q(\wh x)])\subset B[d(\vt_1[\wh x],\mathfs S)^{2a}]$, if $\ve>0$
is small enough then for all $v,w\in B[20Q(\wh x)]$ we have
\begin{align*}
&\, \|d(H^+)_v-d(H^+)_w\|=\|dG_{F_{\wh x}(v)}\circ d(F_{\wh x})_v-dG_{F_{\wh x}(w)}\circ d(F_{\wh x})_w\|\\
&\leq \|dG_{F_{\wh x}(v)}-dG_{F_{\wh x}(w)}\|\cdot\|d(F_{\wh x})_v\|+
\|dG_{F_{\wh x}(w)}\|\cdot\|d(F_{\wh x})_v-d(F_{\wh x})_w\|\\
&\leq \ve(\eta\eta')^2\|F_{\wh x}(v)-F_{\wh x}(w)\|^{\beta/2}\|d(F_{\wh x})\|_{C^0}+\ve\|dG\|_{C^0}\|v-w\|^{\beta/2}\\
&\leq \left[\ve(\eta\eta')^2\|d(F_{\wh x})\|_{C^0}^{1+\beta/2}+80\ve Q(\wh x)^{\beta/6}\right]\|v-w\|^{\beta/3}\\
&\leq \left[4\eta d(\vt[\wh x],\mathfs S)^{-2a}(1+e^{2\chi})+ 80Q(\wh x)^{\beta/6}\right]\ve\|v-w\|^{\beta/3}\\
&\leq \left[4\ve^{6/\beta}(1+e^{2\chi})+ 80\ve\right]\ve\|v-w\|^{\beta/3}<\tfrac{1}{8}\ve\|v-w\|^{\beta/3},
\end{align*}
hence $\Hol{\frac{\beta}{3}}(dH^+)<\tfrac{1}{8}\ve$, which is stronger than (c).
Finally, we fix $\eta\leq t\leq 20Q(\wh x)$ and prove (b). For $v\in B[t]$:
\begin{enumerate}[$\circ$]
\item $\|d(H^+)_v\|\leq \|d(H^+)_0\|+\tfrac{1}{8}\ve\|v\|^{\beta/3}<
\tfrac{1}{8}\ve\eta^{\beta/3}+\tfrac{1}{8}\ve t^{\beta/3}\leq \tfrac{1}{4}\ve t^{\beta/3}$,
therefore
$\|dH^+\|_{C^0(B[t])}<\tfrac{1}{4}\ve t^{\beta/3}$.
\item $\|H^+(v)\|\leq \|H^+(0)\|+\|dH^+\|_{C^0}\|v\|<\tfrac{1}{8}\ve\eta+\tfrac{1}{4}\ve t^{\beta/3}<
\tfrac{1}{2}\ve t^{\beta/3}$, which implies that $\|H^+\|_{C^0(B[t])}<\tfrac{1}{2}\ve t^{\beta/3}$.
\end{enumerate}
Thus, $\|H^+\|_{C^1(B[t])}<\ve t^{\beta/3}$.

Now we prove the second part. Assume that
$\Psi_{\wh x}^{\eta}\overset{\ve}{\approx}\Psi_{\wh f^{-1}(\wh y)}^{\eta'}$.
By Lemma \ref{Lemma-inverse-branches}, we have
$F_{\wh x,\wh y}^{-1}=\Psi_{\wh x}^{-1}\circ f_{\wh f^{-1}(\wh y)}^{-1}\circ \Psi_{\wh y}$
in the domain $B[20Q(\wh y)]$. Writing
$F_{\wh x,\wh y}^{-1}=(\Psi_{\wh x}^{-1}\circ\Psi_{\wh f^{-1}(\wh y)})\circ F_{\wh f^{-1}(\wh y)}^{-1}
=:G\circ  L$
and seeing it as a small perturbation of $L=F_{\wh f^{-1}(\wh y)}^{-1}$, we have
by Theorem \ref{Thm-non-linear-Pesin}(3--4) that
$$
L(0)=0,\ \|dL\|_{C^0}< 2d(\vt_{-1}[\wh y],\mathfs S)^{-a}(1+e^{2\chi})^{1/2},
\ \|dL_v-dL_w\|\leq \ve\|v-w\|^{\beta/2}
$$
for $v,w\in B[20Q(\wh y)]$, where the $C^0$ norm is taken in $B[20Q(\wh y)]$,
and by Proposition \ref{Proposition-overlap}(5) that
$$
\|G-{\rm Id}\|<\ve(\eta\eta')^2,\ \|d(G-{\rm Id})\|_{C^0}<\ve(\eta\eta')^2,
\ \|dG_v-dG_w\|\leq\ve(\eta\eta')^2\|v-w\|^{\beta/2}
$$
for $v,w\in B[d(y_{-1},\mathfs{S})^{2a}]$, where the $C^0$ norm is taken in this same domain.
To prove that $F_{\wh x,\wh y}$ is well-defined in $B[20Q(\wh y)]$, notice that
$$
L(B[20Q(\wh y)])\subset B[40\rho(\wh y)^{-a}(1+e^{2\chi})^{1/2}Q(\wh y)]
\subset B[d(y_{-1},\mathfs S)^{2a}]
$$
since
$40\rho(\wh y)^{-a}(1+e^{2\chi})^{1/2}Q(\wh y)<
40(1+e^{2\chi})^{1/2}\ve^{6/\beta}d(y_{-1},\mathfs S)^{2a}<d(y_{-1},\mathfs S)^{2a}$
for $\ve>0$ small enough. By Proposition \ref{Proposition-overlap}(5),
$F_{\wh x,\wh y}^{-1}$ is well-defined.
 
Let $H^-:=G\circ L-dL_0=F_{\wh x,\wh y}^{-1}-D(\wh f^{-1}(\wh y))^{-1}$.
By Proposition \ref{Proposition-overlap}(5), we have
$\|H^-(0)\|=\|G(0)\|<\ve(\eta\eta')^2<\tfrac{1}{8}\ve\eta$, which is stronger than (d).
To prove (f), start observing that for $\ve>0$ small enough it holds:
\begin{align*}
&\, \|d(H^-)_0\|\leq \|dG_0\circ D(\wh f^{-1}(\wh y))^{-1}-D(\wh f^{-1}(\wh y))^{-1}\|
\leq \|d(G-{\rm Id})_0\|\|D(\wh f^{-1}(\wh y))^{-1}\|\\
&<\ve(\eta\eta')^2 d(\vt_{-1}[\wh y],\mathfs S)^{-a}(1+e^{2\chi})^{1/2}<\ve\eta\eta' \ve^{6/\beta}(1+e^{2\chi})^{1/2}
<\tfrac{1}{8}\ve\eta^{\beta/3}.
\end{align*}
Since $L(B[20Q(\wh y)])\subset B[d(y_{-1},\mathfs S)^{2a}]$, if $\ve>0$
is small enough then for all $v,w\in B[20Q(\wh y)]$ we have:
\begin{align*}
&\, \|d(H^-)_v-d(H^-)_w\|=\|dG_{L(v)}\circ dL_v-
dG_{L(w)}\circ dL_w\|\\
&\leq \norm{dG_{L(v)}-dG_{L(w)}}\cdot\norm{dL_v}+\norm{dG_{L(w)}}\cdot\norm{dL_v-dL_w}\\
&\leq \ve(\eta\eta')^2\|L(v)-L(w)\|^{\beta/2}\|dL\|_{C^0}+\ve\|dG\|_{C^0}\|v-w\|^{\beta/2}\\
&\leq \left[\ve(\eta\eta')^2\|dL\|_{C^0}^{1+\beta/2}+80\ve Q(\wh y)^{\beta/6}\right]\|v-w\|^{\beta/3}\\
&\leq \left[4\eta'd(\vt_{-1}[\wh y],\mathfs S)^{-2a}(1+e^{2\chi})+ 80Q(\wh y)^{\beta/6}\right]\ve\|v-w\|^{\beta/3}\\
&\leq \left[4\ve^{6/\beta}(1+e^{2\chi})+ 80\ve\right]\ve\|v-w\|^{\beta/3}<\tfrac{1}{8}\ve\|v-w\|^{\beta/3}.
\end{align*}
This implies that $\Hol{\frac{\beta}{3}}(dH^-)<\tfrac{1}{8}\ve$, which is stronger than (f).
Finally, we fix $\eta\leq t\leq 20Q(\wh y)$ and prove (e). For $v\in B[t]$:
\begin{enumerate}[$\circ$]
\item $\|d(H^-)_v\|\leq \|d(H^-)_0\|+\tfrac{1}{8}\ve\|v\|^{\beta/3}<
\tfrac{1}{8}\ve\eta^{\beta/3}+\tfrac{1}{8}\ve t^{\beta/3}\leq \tfrac{1}{4}\ve t^{\beta/3}$,
therefore
$\|dH^-\|_{C^0(B[t])}<\tfrac{1}{4}\ve t^{\beta/3}$.
\item $\|H^-(v)\|\leq \|H^-(0)\|+\|dH^-\|_{C^0}\|v\|<\tfrac{1}{8}\ve\eta+\tfrac{1}{4}\ve t^{\beta/3}<
\tfrac{1}{2}\ve t^{\beta/3}$, which implies that $\|H^-\|_{C^0(B[t])}<\tfrac{1}{2}\ve t^{\beta/3}$.
\end{enumerate}
Thus, $\|H^-\|_{C^1(B[t])}<\ve t^{\beta/3}$.
\end{proof}

\subsection{Double charts}\label{Section-double-charts}

We now define $\ve$--double charts. Recall that $\ve>0$ is a fixed small parameter,
and that $\delta_\ve=e^{-\ve n}\in I_\ve$ where $n$ is the unique positive integer s.t.
$e^{-\ve n}<\ve\leq e^{-\ve(n-1)}$.

\medskip
\noindent
{\sc $\ve$--double chart:} An {\em $\ve$--double chart} is a pair of Pesin charts
$\Psi_{\wh x}^{p^s,p^u}=(\Psi_{\wh x}^{p^s},\Psi_{\wh x}^{p^u})$ where $p^s,p^u\in I_\ve$
with $0<p^s,p^u\leq \delta_\ve Q(\wh x)$.

\medskip
When $v=\Psi_{\wh x}^{p^s,p^u}$, we call $\wh x$ the center of $v$.
The parameters $p^s/p^u$ will define sizes for the stable/unstable manifold at $\wh x$.
Hence, for $\wh x\in{\rm NUH}^*$ the prototype of $\ve$--double chart is
$\Psi_{\wh x}^{q^s(\wh x),q^u(\wh x)}$.

\medskip
\noindent
{\sc Edge $v\overset{\ve}{\rightarrow}w$:} Given $\ve$--double charts $v=\Psi_{\wh x}^{p^s,p^u}$
and $w=\Psi_{\wh y}^{q^s,q^u}$, we draw an edge from $v$ to $w$ if the following conditions are
satisfied:
\begin{enumerate}[iii\,]
\item[(GPO1)] $\Psi_{\wh f(\wh x)}^{q^s\wedge q^u}\overset{\ve}{\approx}\Psi_{\wh y}^{q^s\wedge q^u}$
and $\Psi_{\wh f^{-1}(\wh y)}^{p^s\wedge p^u}\overset{\ve}{\approx}\Psi_{\wh x}^{p^s\wedge p^u}$.
\item[(GPO2)] $p^s=\min\{e^\ve q^s,\delta_\ve Q(\wh x)\}$ and $q^u=\min\{e^\ve p^u,\delta_\ve Q(\wh y)\}$.
\end{enumerate}

\medskip
Since $d_{s/u}$ are $\wh f$--invariant, (GPO1) implies that $d_{s/u}(\wh x)=d_{s/u}(\wh y)$.
Condition (GPO1) allows to pass from an $\ve$--double chart at $\wh x$
to an $\ve$--double chart at $\wh y$ and vice-versa.
By Lemma \ref{Lemma-q^s}(2), we have 
$\Psi_{\wh x}^{q^s(\wh x),q^u(\wh x)}\overset{\ve}{\rightarrow}
\Psi_{\wh f(\wh x)}^{q^s(\wh f(\wh x)),q^u(\wh f(\wh x))}$. 
Hence we can regard (GPO2) as the counterpart of Lemma \ref{Lemma-q^s}(2)
for a general edge. Observe that (GPO2) implies that
$\tfrac{p^s\wedge p^u}{q^s\wedge q^u}=e^{\pm\ve}$. Indeed, 
we have $p^s\wedge p^u=\min\{e^\ve q^s,\delta_\ve Q(\wh x)\}\wedge p^u=\min\{e^\ve q^s,p^u\}$
and similarly $q^s\wedge q^u=\min\{q^s,e^\ve p^u\}$. Thus
$p^s\wedge p^u=e^\ve \min\{q^s,e^{-\ve}p^u\}\leq e^\ve \min\{q^s,e^\ve p^u\}=e^\ve(q_s\wedge q_u)$
and similarly $q_s\wedge q_u\leq e^\ve(p_s\wedge p_u)$. In particular, 
the ingoing and outgoing degrees of every $\ve$--double chart are finite.

\medskip
\noindent
{\sc $\ve$--generalized pseudo-orbit ($\ve$--gpo):} An {\em $\ve$--generalized pseudo-orbit ($\ve$--gpo)}
is a sequence $\un{v}=\{v_n\}_{n\in\Z}$ of $\ve$--double charts
s.t. $v_n\overset{\ve}{\rightarrow}v_{n+1}$ for all $n\in\Z$.

\subsection{The graph transform method}\label{Sec-graph-transform}

The proofs of this section are collected, in a self-contained manner, in the Appendix.
We hope this separation will both clarify the presentation as highlight the main arguments
used to employ the graph transform method, without boundedness assumptions
on the derivative.
Let $v=\Psi_{\wh x}^{p^s,p^u}$ be an $\ve$--double chart.
Recall from Lemma \ref{Lemma-linear-reduction} that
$$
D({\wh x})=\left[\begin{array}{cc}D_s({\wh x}) & 0 \\
0 & D_u({\wh x})
\end{array}\right]
$$
where $D_s(\wh x)$ is a $d_s(\wh x)\times d_s(\wh x)$ matrix s.t.
$\|D_s({\wh x})\|< e^{-\chi}$,
and $D_u(\wh x)$ is a $d_u(\wh x)\times d_u(\wh x)$ matrix s.t.
$\|D_u({\wh x})^{-1}\|<e^{-\chi}$.

\medskip
\noindent
{\sc Admissible manifolds:} We define an {\em $s$--admissible manifold at $v$} as a set
of the form $\Psi_{\wh x}\{(v_1,G(v_1)):v_1\in B^{d_s(\wh x)}[p^s]\}$ where
$G:B^{d_s(\wh x)}[p^s]\to\R^{d_u(\wh x)}$ is a $C^{1+\beta/3}$ function
s.t.:
\begin{enumerate}
\item[(AM1)] $\norm{G(0)}\leq 10^{-3}(p^s\wedge p^u)$.
\item[(AM2)] $\norm{dG_0}\leq \tfrac{1}{2}(p^s\wedge p^u)^{\beta/3}$.
\item[(AM3)] $\|dG\|_{C^0}+\Hol{\frac{\beta}{3}}(dG)\leq\tfrac{1}{2}$ where the norms
are taken in $B^{d_s(\wh x)}[p^s]$.
\end{enumerate}
Similarly, a {\em $u$--admissible manifold at $v$} is a set
of the form $\Psi_{\wh x}\{(G(v_2),v_2):v_2\in B^{d_u(\wh x)}[p^u]\}$
where $G:B^{d_u(\wh x)}[p^u]\to\R^{d_s(\wh x)}$ is a $C^{1+\beta/3}$ function
satisfying (AM1)--(AM3), where the norms are taken in $B^{d_u(\wh x)}[p^u]$.

\medskip
The function $G$ is called the {\em representing function}.
Since we are working with the euclidean norm,
admissible manifolds at $v$ are contained in $\Psi_{\wh x}(B^{d_s(\wh x)}[p^s]\times B^{d_u(\wh x)}[p^s])$.
We let $\mathfs M^s(v)$ (resp. $\mathfs M^u(v)$) denote the set of all $s$--admissible
(resp. $u$--admissible) manifolds at $v$.
Define a metric on $\M^s(v)$, comparing representing functions: for $V_1,V_2\in\M^s(v)$ with
representing functions $G_1,G_2$ and for $i\geq 0$, define
$d_{C^i}(V_1,V_2):=\|G_1-G_2\|_{C^i}$ where the norm is taken in $B^{d_s(\wh x)}[p^s]$.
A similar definition holds for $\M^u(v)$.

\begin{lemma}\label{Lemma-admissible-manifolds}
The following holds for $\ve>0$ small enough. If $v=\Psi_{\wh x}^{p^s,p^u}$ is an $\ve$--double chart, then for
every $V^s\in\mathfs M^s(v)$ and $V^u\in\mathfs M^u(v)$
the intersection $V^s\cap V^u$ consists of a single point $y=\Psi_{\wh x}(w)$,
and $\|w\|<50^{-1}(p^s\wedge p^u)$. Additionally, $y$
is a Lipschitz function of the pair $(V^s,V^u)$, with Lipschitz constant $3$.
\end{lemma}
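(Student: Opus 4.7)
The natural approach is to reformulate the intersection $V^s\cap V^u$ as a fixed-point problem and apply Banach's theorem. Writing the representing functions as $G^s:B^{d_s(\wh x)}[p^s]\to\R^{d_u(\wh x)}$ for $V^s$ and $G^u:B^{d_u(\wh x)}[p^u]\to\R^{d_s(\wh x)}$ for $V^u$, a point of $\Psi_{\wh x}^{-1}(V^s\cap V^u)$ is a pair $(v_1,v_2)$ satisfying $v_2=G^s(v_1)$ and $v_1=G^u(v_2)$; substitution collapses this into the single equation $v_1=(G^u\circ G^s)(v_1)$. I will therefore study the map $T:=G^u\circ G^s$ on a sufficiently small ball in $\R^{d_s(\wh x)}$.

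By (AM3), $\|dG^s\|_{C^0},\|dG^u\|_{C^0}\leq\tfrac{1}{2}$, so the chain rule gives that $T$ is $(1/4)$-Lipschitz; this is the contraction input. Combined with the size bound (AM1) on $G^s(0),G^u(0)$, a direct computation shows that with $r:=\tfrac{1}{100}(p^s\wedge p^u)$ one has $G^s(B^{d_s(\wh x)}[r])\subset B^{d_u(\wh x)}[p^u]$ (so the composition is well-defined on $B^{d_s(\wh x)}[r]$) and $T(B^{d_s(\wh x)}[r])\subset B^{d_s(\wh x)}[r]$. Banach's theorem then produces a unique fixed point $w_1$; setting $w:=(w_1,G^s(w_1))$, the unique intersection point is $y=\Psi_{\wh x}(w)$. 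The quantitative bound $\|w\|<50^{-1}(p^s\wedge p^u)$ follows from $\|w_1\|\leq\tfrac{4}{3}\|T(0)\|\leq\tfrac{4}{3}\bigl(\|G^u(0)\|+\tfrac{1}{2}\|G^s(0)\|\bigr)$ combined with the analogous control $\|G^s(w_1)\|\leq\|G^s(0)\|+\tfrac{1}{2}\|w_1\|$ and the euclidean identity $\|w\|^2=\|w_1\|^2+\|G^s(w_1)\|^2$, which leaves comfortable room between the numerics and the declared threshold.

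For the Lipschitz dependence, apply the construction to two pairs $(V^s_i,V^u_i)$ with representing functions $G^{s/u}_i$ and fixed points $w_1^{(i)}$ of $T_i:=G^u_i\circ G^s_i$. The standard fixed-point comparison
\[
\|w_1^{(1)}-w_1^{(2)}\|\leq\tfrac{1}{4}\|w_1^{(1)}-w_1^{(2)}\|+\|T_1(w_1^{(2)})-T_2(w_1^{(2)})\|,
\]
together with the triangle-inequality split $T_1-T_2=(G^u_1-G^u_2)\circ G^s_2+G^u_1\circ G^s_1-G^u_1\circ G^s_2$ (whose pieces are controlled by $d_{C^0}(V^u_1,V^u_2)$ and $\tfrac{1}{2}d_{C^0}(V^s_1,V^s_2)$ respectively), bounds $\|w_1^{(1)}-w_1^{(2)}\|$ linearly in $d_{C^0}(V^s_1,V^s_2)+d_{C^0}(V^u_1,V^u_2)$. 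The same reasoning controls $\|G^s_1(w_1^{(1)})-G^s_2(w_1^{(2)})\|$, hence $\|w^{(1)}-w^{(2)}\|$, and $\Lip{\Psi_{\wh x}}\leq 2$ from Lemma \ref{Lemma-Pesin-chart}(1) transfers this to $d(y_1,y_2)$.

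I expect the main obstacle to be the numerical bookkeeping required to arrive at exactly the Lipschitz constant $3$: this calls for the right choice of metric on $\M^s(v)\times\M^u(v)$ and tight tracking of the multiplicative factors $\tfrac{4}{3}$ (from $(1-\tfrac{1}{4})^{-1}$) and $2$ (from the Pesin chart). Beyond that, everything is a routine application of Banach's fixed-point theorem, driven entirely by the Lipschitz constant $1/2$ in (AM3) and the smallness of $G^{s/u}(0)$ provided by (AM1).
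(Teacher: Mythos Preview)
Your approach is essentially the paper's (the proof is deferred to Lemma~\ref{App-Lemma-admissible-graphs} in the Appendix): cast the intersection as a fixed point of $G^u\circ G^s$ and apply Banach. The one substantive difference is that the paper does not rely on the bound $\|dG^{s/u}\|_{C^0}\leq\tfrac12$ from (AM3) alone; it combines (AM2) and (AM3) with the smallness of $p^s,p^u$ (both $\leq\delta_\ve Q(\wh x)<\ve^{6/\beta}$) to obtain the much sharper estimate $\|dG^{s/u}\|_{C^0}<\ve$ (this is Lemma~\ref{App-Lemma-introductory}). The contraction rate is then $\ve^2$ rather than $\tfrac14$, and the Lipschitz constant for $w$ comes out as $\tfrac{2}{1-\ve^2}$, dissolving the numerical obstacle you flagged without any delicate bookkeeping. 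The paper also handles global uniqueness (beyond the small ball on which you run Banach) by a Kirszbraun extension of $G^s,G^u$ to balls of common radius $p^s\vee p^u$; in your setup this step is actually unnecessary, since the $\tfrac14$-Lipschitz bound for $G^u\circ G^s$ already holds on its entire domain of definition and forces uniqueness there directly---but you should say so explicitly rather than leave uniqueness only inside $B^{d_s(\wh x)}[r]$.
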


Let $v=\Psi_{\wh x}^{p^s,p^u}$, $w=\Psi_{\wh y}^{q^s,q^u}$ be $\ve$--double charts with
$v\overset{\ve}{\rightarrow}w$. We now define the {\em graph transforms}: these are two maps
that work in different directions of the edge $v\overset{\ve}{\rightarrow}w$, one of them
sending $u$--admissible manifolds at $v$ to $u$--admissible manifolds at $w$, and the other
sending $s$--admissible manifolds at $w$ to $s$--admissible manifolds at $v$. 
In the sequel, we will write $d=d_s(\wh x)=d_s(\wh y)$ and $m-d=d_u(\wh x)=d_u(\wh y)$.

\medskip
\noindent
{\sc Graph transforms $\mathfs F_{v,w}^s$ and $\mathfs F_{v,w}^u$:} The {\em graph transform}
$\mathfs F_{v,w}^s:\mathfs M^s(w)\to\mathfs M^s(v)$ is the map that sends
an $s$--admissible manifold at $w$ with representing function $G:B^d[q^s]\to\R^{m-d}$
to the unique $s$--admissible manifold at $v$ with representing function
$\wt G:B^d[p^s]\to\R^{m-d}$ s.t.
$$
\left\{(v_1,\wt G(v_1)):v_1\in B^d[p^s]\right\} \subset 
F_{\wh x,\wh y}^{-1}\left\{(v_1,G(v_1)):v_1\in B^d[q^s]\right\}.
$$  
The {\em graph transform} $\mathfs F_{v,w}^u:\mathfs M^u(v)\to\mathfs M^u(w)$
is the map that sends a $u$--admissible manifold at $v$ with representing function
$G:B^{m-d}[p^u]\to\R^d$ to the unique $u$--admissible manifold at $w$ with
representing function $\wt G:B^{m-d}[q^u]\to\R^d$ s.t.
$$
\left\{(\wt G(v_2),v_2):v_2\in B^{m-d}[q^u]\right\} \subset 
F_{\wh x,\wh y}\left\{(G(v_2),v_2):v_2\in B^{m-d}[p^u]\right\}.
$$

\medskip
In other words, the representing functions of $s,u$--admissible manifolds
change by the application of $F_{\wh x,\wh y}^{-1},F_{\wh x,\wh y}$ respectively.

\begin{proposition}\label{Prop-graph-transform}
The following holds for $\ve>0$ small enough. If $v\overset{\ve}{\rightarrow}w$ then
$\mathfs F_{v,w}^s$ and $\mathfs F_{v,w}^u$ are well-defined. Furthermore, if
$V_1,V_2\in \mathfs M^u(v)$ then:
\begin{enumerate}[{\rm (1)}]
\item $ d_{C^0}(\mathfs F_{v,w}^u(V_1),\mathfs F_{v,w}^u(V_2))\leq e^{-\chi/2} d_{C^0}(V_1,V_2)$.
\item $ d_{C^1}(\mathfs F_{v,w}^u(V_1),\mathfs F_{v,w}^u(V_2))\leq
e^{-\chi/2}( d_{C^1}(V_1,V_2)+ d_{C^0}(V_1,V_2)^{\beta/3})$.
\item $f(V_i)$ intersects every element of $\mathfs M^s(w)$ at exactly one point.
\end{enumerate}
An analogous statement holds for $\mathfs F_{v,w}^s$, where in {\rm (3)} we consider 
$f_{\wh x}^{-1}(V_i)$.
\end{proposition}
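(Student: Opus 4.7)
The plan is to treat $\mathfs F^u_{v,w}$ in detail; the argument for $\mathfs F^s_{v,w}$ is symmetric under time reversal (using the inverse branch and reversing the roles of $D_s$ and $D_u$). Write $d=d_s(\wh x)=d_s(\wh y)$ and decompose, via Theorem~\ref{Thm-non-linear-Pesin-2},
$F_{\wh x,\wh y}(v_1,v_2)=(D_s(\wh x)v_1+h_1(v_1,v_2),\,D_u(\wh x)v_2+h_2(v_1,v_2))$, where $h=(h_1,h_2)$ is the perturbation $H^+$. Given $G\in\M^u(v)$, I set $\Phi(v_2):=D_u(\wh x)v_2+h_2(G(v_2),v_2)$ on $B^{m-d}[p^u]$ and define $\wt G$ on $\Phi(B^{m-d}[p^u])$ by $\wt G(\Phi(v_2))=D_s(\wh x)G(v_2)+h_1(G(v_2),v_2)$. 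Because $\|D_u(\wh x)^{-1}\|<e^{-\chi}$ and (AM3) gives $\mathrm{Lip}(G)\le\tfrac12$, while Theorem~\ref{Thm-non-linear-Pesin-2}(b) gives $\|dh\|_{C^0(B[20Q(\wh x)])}<\ve(20Q(\wh x))^{\beta/3}$, the map $\Phi$ is a bi-Lipschitz diffeomorphism onto its image with $\mathrm{Lip}(\Phi^{-1})\le e^{-\chi}(1-O(\ve))^{-1}\le e^{-\chi/2}$ for $\ve$ small. The first task is then to check $B^{m-d}[q^u]\subset\Phi(B^{m-d}[p^u])$: a direct lower bound $\|\Phi(v_2)\|\ge e^{\chi}\|v_2\|-O(\ve)(p^s\wedge p^u+\|v_2\|)\ge e^{\chi/2}\|v_2\|$, combined with (GPO2) $q^u=\min\{e^\ve p^u,\delta_\ve Q(\wh y)\}$ and the constraint $p^u\le\delta_\ve Q(\wh x)$, forces the required containment since $e^{\chi/2}p^u\ge e^\ve p^u\ge q^u$.

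To check AM1--AM3 for $\wt G$, I combine Theorem~\ref{Thm-non-linear-Pesin-2}(a)--(c) with the contraction $\|D_s(\wh x)\|<e^{-\chi}$. For (AM1), at $v_2^0:=\Phi^{-1}(0)$ one has $\|v_2^0\|\le O(\ve)(p^s\wedge p^u)$, so $\|\wt G(0)\|\le e^{-\chi}\|G(v_2^0)\|+\|h_1\|_{C^0}\le e^{-\chi}(10^{-3}+O(\ve))(p^s\wedge p^u)+\ve(20Q(\wh x))^{\beta/3}\le 10^{-3}(q^s\wedge q^u)$, using $q^s\wedge q^u=e^{\pm\ve}(p^s\wedge p^u)$. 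For (AM2)--(AM3), implicit differentiation yields
\[
d\wt G_{\Phi(v_2)}=\bigl(D_s(\wh x)\,dG_{v_2}+d_1h_1\,dG_{v_2}+d_2h_1\bigr)\bigl(D_u(\wh x)+d_1h_2\,dG_{v_2}+d_2h_2\bigr)^{-1},
\]
and the $\|D_s(\wh x)\|<e^{-\chi}$ contraction, together with the $C^{1+\beta/3}$ estimates on $h$, yield $\|d\wt G\|_{C^0}\le\tfrac12 e^{-\chi/2}<\tfrac12$ and, after a standard Hölder calculation using that the composition of $C^{\beta/3}$ maps has a controlled Hölder constant, $\mathrm{Hol}_{\beta/3}(d\wt G)\le\tfrac12-\|d\wt G\|_{C^0}$, giving (AM3). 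The small-origin bound (AM2) follows from the same formula evaluated at $\Phi(v_2^0)$ using (AM1) for $G$ and $\|d_\cdot h\|\le\ve(p^s\wedge p^u)^{\beta/3}$ from the $t=p^s\wedge p^u$ case of Theorem~\ref{Thm-non-linear-Pesin-2}(b).

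For the contraction estimates (1) and (2), I compare two graphs $V_1,V_2\in\M^u(v)$ with representing functions $G_1,G_2$. Writing $\Phi_i,\wt G_i$ for the corresponding implicit maps, the key identity
\[
\wt G_1(z)-\wt G_2(z)=\bigl[D_s(\wh x)+d_1h_1\bigr](G_1(\Phi_1^{-1}z)-G_2(\Phi_2^{-1}z))+\text{correction},
\]
where the correction involves $\Phi_1^{-1}z-\Phi_2^{-1}z$ bounded by $e^{-\chi/2}\|G_1-G_2\|_{C^0}$ (again from $\mathrm{Lip}(\Phi_i^{-1})\le e^{-\chi/2}$), gives the $C^0$ contraction $e^{-\chi}+O(\ve)\le e^{-\chi/2}$. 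For (2), I differentiate and use the same calculus, picking up the $C^0$ term on the right from the $\beta/3$-Hölder regularity of $dh$ evaluated at two nearby points, which produces the additive $d_{C^0}(V_1,V_2)^{\beta/3}$ term. Finally, (3) follows from Lemma~\ref{Lemma-admissible-manifolds}: the image $f(V_i)$ contains the graph of $\wt G_i$ over the full ball $B^{m-d}[q^u]$, hence $f(V_i)\cap \Psi_{\wh y}(B^d[q^s]\times B^{m-d}[q^u])$ is itself a $u$--admissible manifold at $w$ (possibly after enlarging by the amount permitted), and Lemma~\ref{Lemma-admissible-manifolds} produces the unique intersection with any element of $\M^s(w)$.

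The principal obstacle I anticipate is the $C^1$ contraction in (2): one must shepherd the $\beta/3$-Hölder control of $dh$ through the implicit differentiation formula without losing the $e^{-\chi/2}$ rate, and in particular avoid picking up uncontrolled factors of $\rho(\wh x)^{-a}$ that appear in Theorem~\ref{Thm-non-linear-Pesin}(4). The point where this is bought is the definition of $Q(\wh x)$ with the $\ve^{6/\beta}$ and $\rho^{96a/\beta}$ factors, which absorb these blow-ups exactly as in the estimates (\ref{estimates-Q}); thus the careful bookkeeping of admissibility parameters against $Q$ via (GPO2) is where the argument has to be kept tight.
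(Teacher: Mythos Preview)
Your approach is the same as the paper's (which is carried out abstractly in the Appendix under hypotheses (GT1)--(GT3) and then specialized via Theorem~\ref{Thm-non-linear-Pesin-2} to $F=F_{\wh x,\wh y}$ with $\delta=\beta/3$): invert the expanding coordinate, verify (AM1)--(AM3) for the induced representing function, and establish the $C^0/C^1$ contraction directly. Two of your estimates, however, fail as written.

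For (AM1), the global bound $\|h_1\|_{C^0}\le\ve(20Q(\wh x))^{\beta/3}$ is far too coarse: since $p^s\wedge p^u$ can be arbitrarily small relative to $Q(\wh x)$ (there is no lower bound in the definition of an $\ve$--double chart), the quantity $\ve(20Q(\wh x))^{\beta/3}$ is in general much larger than $10^{-3}(q^s\wedge q^u)$, and your displayed inequality is false. The cure is to use the \emph{local} information: Theorem~\ref{Thm-non-linear-Pesin-2}(a) gives $\|H^+(0)\|<\ve\eta$ with $\eta=p^s\wedge p^u$, and combining this with (b) at scale $t=2\eta$ yields $\|h_1(G(v_2^0),v_2^0)\|\le\|H^+(0)\|+\|dH^+\|_{C^0(B[2\eta])}\cdot 2\eta=O(\ve)\eta$, which is the correct order. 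This is precisely why the paper isolates condition (GT3)(a) in the Appendix (see Lemma~\ref{App-Lemma-condition-AM1}).

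For (3), invoking Lemma~\ref{Lemma-admissible-manifolds} on the restriction $\mathfs F^u_{v,w}(V_i)$ only shows that this $u$--admissible manifold meets each $V^s\in\mathfs M^s(w)$ once; it does not exclude further intersections of the \emph{full} image $f(V_i)$ with $V^s$ at points whose $(m-d)$--coordinate lies outside $B^{m-d}[q^u]$. Since there is no bound of $q^s$ in terms of $q^u$, the $s$--admissible manifolds at $w$ need not be confined to your box. The paper handles this (Remark~\ref{App-remark-extension} and Lemma~\ref{App-Lemma-graphs-intersection}) by observing that $F_{\wh x,\wh y}(V_i)$ is a graph over the entire expanded domain $\Phi(B^{m-d}[p^u])$ with slope $<\ve$, and then running the same contraction/Kirszbraun argument as in Lemma~\ref{Lemma-admissible-manifolds} with this larger graph.
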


As already mentioned, all the proofs are in the Appendix, where the required
assumptions (GT1)--(GT3) are weaker than we need.
If $\Psi_{\wh x}^{p^s,p^u}\overset{\ve}{\rightarrow}\Psi_{\wh y}^{q^s,q^u}$
then (GT1)--(GT3) are satisfied for:
\begin{enumerate}[$\circ$]
\item $p=q^s,\wt p=p^s,\eta=q^s\wedge q^u,\wt\eta=p^s\wedge p^u$ and $F=F_{\wh x,\wh y}^{-1}$,
thus $\mathfs F^s_{v,w}$ satisfies the appendix.
\item $p=p^u,\wt p=q^u,\eta=p^s\wedge p^u,\wt\eta=q^s\wedge q^u$
and $F=F_{\wh x,\wh y}$, thus $\mathfs F^u_{v,w}$ satisfies the appendix.
\end{enumerate}

\subsection{Stable/unstable manifolds of $\ve$--gpo's}

Call a sequence ${\un v}^+=\{v_n\}_{n\geq 0}$ a {\em positive $\ve$--gpo} if $v_n\overset{\ve}{\to}v_{n+1}$
for all $n\geq 0$. Similarly, a {\em negative $\ve$--gpo} is a sequence ${\un v}^-=\{v_n\}_{n\leq 0}$
s.t. $v_{n-1}\overset{\ve}{\to}v_n$ for all $n\leq 0$.

\medskip
\noindent
{\sc Stable/unstable manifold of positive/negative $\ve$--gpo:} The {\em stable manifold}
of a positive $\ve$--gpo ${\un v}^+=\{v_n\}_{n\geq 0}$ is 
$$
V^s[{\un v}^+]:=\lim_{n\to\infty}
(\mathfs F_{v_0,v_1}^s\circ\cdots\circ\mathfs F_{v_{n-2},v_{n-1}}^s\circ\mathfs F_{v_{n-1},v_n}^s)(V_n)
$$
for some (any) choice of $(V_n)_{n\geq 0}$ with $V_n\in\mathfs M^s(v_n)$.
The {\em unstable manifold} of a negative $\ve$--gpo ${\un v}^-=\{v_n\}_{n\leq 0}$ is 
$$
V^u[{\un v}^-]:=\lim_{n\to-\infty}
(\mathfs F_{v_{-1},v_0}^u\circ\cdots\circ\mathfs F_{v_{n+1},v_{n+2}}^u\circ\mathfs F_{v_n,v_{n+1}}^u)(V_n)
$$
for some (any) choice of $(V_n)_{n\leq 0}$ with $V_n\in\mathfs M^u(v_n)$.
\medskip
For an $\ve$--gpo $\un{v}=\{v_n\}_{n\in\Z}$, let $V^s[\un v]:=V^s[\{v_n\}_{n\geq 0}]$
and $V^u[\un v]:=V^u[\{v_n\}_{n\leq 0}]$.

\medskip
Before proceeding to the main properties of these manifolds, let us show that
the action of the graph transforms has a simple description in terms of positive/negative
$\ve$--gpo's.

\begin{lemma}\label{Lemma-transform-manifolds}
Let $v\overset{\ve}{\to}w$. If $\un v^-=\{v_n\}_{n\leq 0}$ is a negative $\ve$--gpo
with $v_0=v$, then 
$$
\mathfs F^u_{v,w}(V^u[\un v^-])=V^u[\un w^-]
$$
where $\un w^-=\{w_n\}_{n\leq 0}$ is the negative $\ve$--gpo with
$w_0=w$ and $w_n=v_{n+1}$ for $n<0$. Similarly, if $\un w^+=\{w_n\}_{n\geq 0}$
is a positive $\ve$--gpo with $w_0=w$, then 
$$
\mathfs F^s_{v,w}(V^s[\un w^+])=V^s[\un v^+]
$$
where $\un v^+=\{v_n\}_{n\geq 0}$ is the positive $\ve$--gpo with
$v_0=v$ and $v_n=w_{n-1}$ for $n>0$.
\end{lemma}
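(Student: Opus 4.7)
The plan is to unfold the definition of $V^u[\un v^-]$ as a limit of iterated graph transforms, apply $\mathfs F^u_{v,w}$ to both sides, and then re-index the composition so that it matches the defining limit for $V^u[\un w^-]$. The only analytic input needed is the continuity of $\mathfs F^u_{v,w}$, which is furnished by Proposition \ref{Prop-graph-transform}(1) (Lipschitz with constant $e^{-\chi/2}$ in the $C^0$ metric), and the fact (implicit in the definition of $V^u[\cdot]$) that the limit does not depend on the choice of initial seeds $V_n\in\mathfs M^u(v_n)$.

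I would first treat the unstable case. Fix any sequence $(V_n)_{n\le -1}$ with $V_n\in\mathfs M^u(v_n)$, and set
$$
W_n := \mathfs F^u_{v_n,v_{n+1}}(V_n)\in \mathfs M^u(v_{n+1}) = \mathfs M^u(w_n)
$$
for $n\le -1$ (here I use the indexing $w_n=v_{n+1}$ for $n<0$). Since $w_{k-1}=v_k$ and $w_k=v_{k+1}$ for $k\le -1$, the identity $\mathfs F^u_{w_{k-1},w_k}=\mathfs F^u_{v_k,v_{k+1}}$ holds, while $\mathfs F^u_{w_{-1},w_0}=\mathfs F^u_{v,w}$. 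Therefore, for every $n\le -2$,
$$
\bigl(\mathfs F^u_{w_{-1},w_0}\circ \mathfs F^u_{w_{-2},w_{-1}}\circ\cdots\circ \mathfs F^u_{w_n,w_{n+1}}\bigr)(W_n)
= \mathfs F^u_{v,w}\bigl(\mathfs F^u_{v_{-1},v_0}\circ\cdots\circ \mathfs F^u_{v_{n+1},v_{n+2}}\bigr)(V_{n+1}).
$$
Taking $n\to -\infty$ on the left gives $V^u[\un w^-]$ (by definition, using the seed $(W_n)$), while the right-hand side tends to $\mathfs F^u_{v,w}(V^u[\un v^-])$ by continuity of $\mathfs F^u_{v,w}$ applied to the limit defining $V^u[\un v^-]$ with the seed $(V_{n+1})_{n\le -2}$. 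This proves the first identity.

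The stable case is entirely analogous, reversing the direction of composition: starting from seeds $V_n\in \mathfs M^s(v_n)$ for $n\ge 1$ in the positive gpo $\un v^+$ (with $v_0=v$ and $v_n=w_{n-1}$ for $n\ge 1$), and applying $\mathfs F^s_{v,w}$ to both sides of the defining limit for $V^s[\un w^+]$. The identities $\mathfs F^s_{v_{k-1},v_k}=\mathfs F^s_{w_{k-2},w_{k-1}}$ for $k\ge 2$ and $\mathfs F^s_{v_0,v_1}=\mathfs F^s_{v,w}$ let us telescope the composition exactly as above, and continuity of $\mathfs F^s_{v,w}$ (again Proposition \ref{Prop-graph-transform}(1), stable version) passes the limit through.

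No step here is truly an obstacle: everything is a matter of bookkeeping. The only point requiring care is the re-indexing of the composition so that the intermediate transforms align—one must verify in particular that the shifted seeds $W_n=\mathfs F^u_{v_n,v_{n+1}}(V_n)$ land in the correct admissible manifold space $\mathfs M^u(w_n)$, which is where the conditions $w_n=v_{n+1}$ for $n\le -1$ and $w_0=w$ (together with $v\xrightarrow{\ve}w$ being the extra edge) are used. The fact that both limits exist independently of the choice of seed, used implicitly above, is what makes the whole argument unambiguous.
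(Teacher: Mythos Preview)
Your proof is correct and follows essentially the same approach as the paper: reindex the defining composition for $V^u[\un w^-]$ so that the outermost factor is $\mathfs F^u_{v,w}$, then use continuity of $\mathfs F^u_{v,w}$ (from Proposition~\ref{Prop-graph-transform}) to pull it outside the limit. The paper's version is marginally shorter because it chooses the seeds directly in $\mathfs M^u(w_n)=\mathfs M^u(v_{n+1})$ rather than obtaining them as $W_n=\mathfs F^u_{v_n,v_{n+1}}(V_n)$, but the content is identical.
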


\begin{proof}
We prove the first statement. By definition, 
\begin{align*}
V^u[{\un w}^-]&=\lim_{n\to-\infty}
(\mathfs F_{w_{-1},w_0}^u\circ\mathfs F_{w_{-2},w_{-1}}^u\circ\cdots\circ\mathfs F_{w_n,w_{n+1}}^u)(V_n)\\
&=\lim_{n\to-\infty}
(\mathfs F_{v,w}^u\circ\mathfs F_{v_{-1},v_{0}}^u\circ\cdots\circ\mathfs F_{v_{n+1},v_{n+2}}^u)(V_n).
\end{align*}
for some (any) choice of $(V_n)_{n\leq 0}$ with $V_n\in\mathfs M^u(w_n)=\mathfs M^u(v_{n+1})$. 
By Proposition \ref{Prop-graph-transform},
$\mathfs F_{v,w}^u$ is continuous and so
\begin{align*}
V^u[{\un w}^-]&=\mathfs F_{v,w}^u\left[\lim_{n\to-\infty}
(\mathfs F_{v_{-1},v_{0}}^u\circ\cdots\circ\mathfs F_{v_{n+1},v_{n+2}}^u)(V_n)\right]=\mathfs F_{v,w}^u(V^u[\un v^-]).
\end{align*}
The second statement is proved analogously.
\end{proof}

The next proposition collects the main properties of stable/unstable manifolds.

\begin{proposition}\label{Prop-stable-manifolds}
The following holds for all $\ve>0$ small enough. For
every positive $\ve$--gpo ${\un v}^+=\{\Psi_{\wh x_n}^{p^s_n,p^u_n}\}_{n\geq 0}$
and every negative $\ve$--gpo ${\un v}^-=\{\Psi_{\wh x_n}^{p^s_n,p^u_n}\}_{n\leq 0}$:
\begin{enumerate}[{\rm (1)}]
\item {\sc Admissibility:} $V^s[{\un v}^+]$ is a well-defined admissible manifolds at $v_0$,
and if $G$ and $G_n$ are the representing functions of $V^s[{\un v}^+]$ and 
$(\mathfs F_{v_0,v_1}^s\circ\cdots\circ\mathfs F_{v_{n-1},v_n}^s)(V_n)$ 
respectively, then $dG_n\to dG$ uniformly. The same holds for $V^u[{\un v}^-]$.
\item {\sc Invariance:}
$$
f(V^s[\{v_n\}_{n\geq 0}])\subset V^s[\{v_n\}_{n\geq 1}]\text{ and }
f_{\wh x_{-1}}^{-1}(V^u[\{v_n\}_{n\leq 0}])\subset V^u[\{v_n\}_{n\leq -1}].
$$
\item {\sc Shadowing:} If $d=d_s(\wh x_0)$ then $V^s[{\un v}^+]$ equals
$$
\left\{y\in \Psi_{\wh x_0}(B^d[p^s_0]\times B^{m-d}[p^s_0]):
\begin{array}{c}f^n(y)\in \Psi_{\wh x_n}(B[20Q(\wh x_n)]),\\ \forall n\geq 0\end{array}
\right\}
$$
and $V^u[{\un v}^-]$ equals
$$
\left\{y\in \Psi_{\wh x_0}(B^d[p^u_0]\times B^{m-d}[p^u_0]):
\begin{array}{c}(f_{\wh x_n}^{-1}\circ\cdots\circ f_{\wh x_{-1}}^{-1})(y)\in \Psi_{\wh x_n}(B[20Q(\wh x_n)]),\\
\forall n< 0\end{array}
\right\}.
$$
\item {\sc Hyperbolicity:} If $y,z\in V^s[{\un v}^+]$ then
$$
d(f^n(y),f^n(z))\leq 4p^s_0e^{-\frac{\chi}{2}n}\text{ for all }n\geq 0.
$$
Similarly, if $y,z\in V^u[{\un v}^-]$ then
$$
d((f_{\wh x_n}^{-1}\circ\cdots\circ f_{\wh x_{-1}}^{-1})(y),(f_{\wh x_n}^{-1}\circ\cdots\circ f_{\wh x_{-1}}^{-1})(z))
\leq 4p^u_0e^{\frac{\chi}{2}n}\text{ for all }n\leq 0.
$$
\item {\sc H\"older property:} The map $\un v^+\mapsto V^s[\un v^+]$ is H\"older continuous,
i.e. there exists $K>0$ and $\theta<1$ s.t. for all $N\geq 0$, if $\un v^+,\un w^+$ are positive $\ve$--gpo's
with $v_n=w_n$ for $n=0,\ldots,N$
then $ d_{C^1}(V^s[\un v^+],V^s[\un w^+])\leq K\theta^N$. The same holds for the map
$\un v^-\mapsto V^u[\un v^-]$.
\end{enumerate}
\end{proposition}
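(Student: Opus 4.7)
All five parts flow from the contraction estimates of Proposition~\ref{Prop-graph-transform} combined with their compatibility with the shift (Lemma~\ref{Lemma-transform-manifolds}). I treat the stable case; the unstable case is time-symmetric.

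For (1), the key is to show that for any two choices $(V_n),(V_n')\subset \mathfs M^s(v_n)$, the sequences $W_n:=\mathfs F^s_{v_0,v_1}\circ\cdots\circ\mathfs F^s_{v_{n-1},v_n}(V_n)$ and $W_n'$ converge to the same $C^1$ limit. Admissibility (AM1)--(AM3) bounds the $C^0$ and $C^1$ distances inside each $\mathfs M^s(v_n)$ by a universal constant, so iterating Proposition~\ref{Prop-graph-transform}(1) yields the geometric bound $d_{C^0}(W_n,W_n')\leq Ce^{-n\chi/2}$. Then iterating Proposition~\ref{Prop-graph-transform}(2) as
\[
d_{C^1}(W_n,W_n')\leq e^{-\chi/2}\,d_{C^1}(W_{n-1},W_{n-1}') + e^{-\chi/2}\,d_{C^0}(W_{n-1},W_{n-1}')^{\beta/3},
\]
feeding in the just-obtained $C^0$ decay, gives exponential $C^1$ convergence by summing a geometric series; this bootstrap is the most delicate technical point, since the $d_{C^0}^{\beta/3}$ term is what lets admissibility survive the limit. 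Because (AM1)--(AM3) are closed conditions under $C^1$ convergence, the limit is admissible.

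For (2), Lemma~\ref{Lemma-transform-manifolds} already gives $V^s[\{v_n\}_{n\geq 0}]=\mathfs F^s_{v_0,v_1}(V^s[\{v_n\}_{n\geq 1}])$; unfolding the definition of the graph transform (representing functions pulled back via $F^{-1}_{\wh x_0,\wh x_1}=\Psi_{\wh x_0}^{-1}\circ f_{\wh x_0}^{-1}\circ\Psi_{\wh x_1}$) yields $V^s[\{v_n\}_{n\geq 0}]\subset f_{\wh x_0}^{-1}(V^s[\{v_n\}_{n\geq 1}])$, and applying $f$ gives the claim. For (3), the forward inclusion is immediate from (2) and $p_n^s\leq\delta_\ve Q(\wh x_n)\ll 20Q(\wh x_n)$. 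For the reverse inclusion, write $y=\Psi_{\wh x_0}(a_0^s,a_0^u)$, iterate $(a_{n+1}^s,a_{n+1}^u)=F_{\wh x_n,\wh x_{n+1}}(a_n^s,a_n^u)$ using (GPO1) and Lemma~\ref{Lemma-inverse-branches}, and let $G_n$ denote the representing function of $V^s[\{v_k\}_{k\geq n}]$. The difference $\delta_n:=a_n^u-G_n(a_n^s)$ satisfies a recursion whose linear part is governed by $D_u(\wh x_n)$ with $\|D_u(\wh x_n)^{-1}\|<e^{-\chi}$; Theorem~\ref{Thm-non-linear-Pesin-2} controls the nonlinear error, and the slope bound $\|dG_n\|_{C^0}\leq 1/2$ from (AM3) keeps the $s$-coordinate from spoiling expansion in the $u$-direction. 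Thus $\delta_n\neq 0$ forces $\|\delta_n\|$ to grow at rate $\geq e^{n\chi/2}$, contradicting $|a_n^u|\leq 20Q(\wh x_n)\leq 1$. Hence $\delta_0=0$, so $y\in V^s[\un v^+]$.

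For (4), (2) places $f^n(y),f^n(z)\in V^s[\{v_k\}_{k\geq n}]$, so in the chart $\Psi_{\wh x_n}$ they read $(\alpha_n^s,G_n(\alpha_n^s))$, $(\beta_n^s,G_n(\beta_n^s))$ with $\|dG_n\|_{C^0}\leq 1/2$. The recursion for the $s$-coordinate is $\alpha_{n+1}^s=D_s(\wh x_n)\alpha_n^s+\text{(small)}$, with $\|D_s(\wh x_n)\|<e^{-\chi}$ and an $\ve$-perturbation controlled by Theorem~\ref{Thm-non-linear-Pesin-2}, yielding $\|\alpha_n^s-\beta_n^s\|\leq 2p_0^se^{-n\chi/2}$; Lemma~\ref{Lemma-Pesin-chart}(1) converts this back to $d(f^n(y),f^n(z))\leq 4p_0^se^{-n\chi/2}$. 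Finally, for (5), suppose $\un v^+$ and $\un w^+$ agree through position $N$. Lemma~\ref{Lemma-transform-manifolds} gives
\[
V^s[\un v^+]=\mathfs F^s_{v_0,v_1}\circ\cdots\circ\mathfs F^s_{v_{N-1},v_N}\bigl(V^s[\{v_n\}_{n\geq N}]\bigr),
\]
and the same chain of graph transforms represents $V^s[\un w^+]$ starting from $V^s[\{w_n\}_{n\geq N}]\in\mathfs M^s(v_N)$. The two inner manifolds have $C^1$ distance at most a universal constant, so $N$-fold application of Proposition~\ref{Prop-graph-transform}(2) (handled exactly as in the bootstrap of (1)) yields $d_{C^1}(V^s[\un v^+],V^s[\un w^+])\leq K\theta^N$ with $\theta=e^{-\chi/2}$.
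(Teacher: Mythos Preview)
Your sketch is correct and follows precisely the approach the paper points to: the paper itself does not give details but refers to \cite[Prop.~3.12 and 4.4(1)]{Ben-Ovadia-2019}, remarking that those proofs carry over verbatim because they rely only on Theorem~\ref{Thm-non-linear-Pesin-2} and Proposition~\ref{Prop-graph-transform}. Your five-part outline is exactly how those referenced proofs proceed---the $C^0$-to-$C^1$ bootstrap for (1) and (5), the Lemma~\ref{Lemma-transform-manifolds} identity for (2), the $D_u$-expansion contradiction for the reverse inclusion in (3), and the chart-coordinate contraction for (4)---so there is nothing to add.
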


\begin{proof}
When $M$ is compact and $f$ is a $C^{1+\beta}$ diffeomorphism,
this is \cite[Prop. 3.12 and 4.4(1)]{Ben-Ovadia-2019}.
The same proofs work in our case,
since they only use the hyperbolicity of $F_{\wh x_n,\wh x_{n+1}}^{\pm 1}$
(Theorem \ref{Thm-non-linear-Pesin-2}) and the contracting properties
of the graph transforms (Proposition \ref{Prop-graph-transform}).
\end{proof}

\subsection{Stable/unstable sets of $\ve$--gpo's}\label{Section-stable-sets}

Having in mind that we want to build a symbolic dynamics for $\wh f:\wh M\to\wh M$,
we now introduce the set-theoretical counterparts of stable/unstable manifolds in $\wh M$, which 
we call stable/unstable sets.

\medskip
\noindent
{\sc Stable/unstable set of positive/negative $\ve$--gpo:} The {\em stable set}
of a positive $\ve$--gpo ${\un v}^+=\{\Psi_{\wh x_n}^{p^s_n,p^u_n}\}_{n\geq 0}$ is 
$$
\wh V^s[{\un v}^+]:=\{\wh y=(y_n)_{n\in\Z} \in \wh M: y_0\in V^s[\un v^+]\}.
$$
The {\em unstable set} of a negative $\ve$--gpo ${\un v}^-=\{\Psi_{\wh x_n}^{p^s_n,p^u_n}\}_{n\leq 0}$ is 
$$
\wh V^u[{\un v}^-]:=\{\wh y=(y_n)_{n\in\Z} \in \wh M: y_0\in V^u[\un v^-]\text{ and }
y_{n}=f_{\wh x_{n}}^{-1}(y_{n+1})\text{ for all }n<0\}.
$$
For an $\ve$--gpo $\un{v}=\{v_n\}_{n\in\Z}$, let
$\wh V^s[\un v]:=\wh V^s[\{v_n\}_{n\geq 0}]$
and $\wh V^u[\un v]:=\wh V^u[\{v_n\}_{n\leq 0}]$.

\medskip
In other words, $\wh V^s[\un v^+]$ is the set of all points in $\wh M$ for which the zeroth position
belongs to $V^s[v^+]$, while $\wh V^u[\un v^-]$ is the set of all points in $\wh M$ for which
the zeroth position belongs to $V^u[v^-]$ and {\em additionally} the negative positions
are uniquely determined by the inverse branches prescribed by $\un v^-$.
By Proposition \ref{Prop-stable-manifolds}(4), $\wh V^u[\un v^-]$ has diameter
$\leq 4p^u_0$. On the other hand, $\wh V^s[\un v^+]$ might have big diameter.
Indeed, two elements $\wh y,\wh z\in \wh V^s[\un v^+]$ can have
$y_{-1},z_{-1}$ generated by different inverse branches of $f$ and then, since the distance
in $\wh M$ is a supremum over all negative positions, the distance $d(\wh y,\wh z)$ can be large.
This is somewhat contradictory to calling $\wh V^s[\un v^+]$ a stable set, but we will
nevertheless use this terminology.
In any case, the coding we will construct will consider intersections of stable and unstable sets,
which in particular will have diameter $\leq 4p^u_0$.
The next proposition justifies our terminology.

\begin{lemma}\label{Lemma-stable-set}
The following holds for all $\varepsilon>0$ small enough.
\begin{enumerate}[{\rm (1)}]
\item If ${\un v}^+=\{\Psi_{\wh x_n}^{p^s_n,p^u_n}\}_{n\geq 0}$ is a positive $\ve$--gpo 
and $d=d_s(\wh x_0)$ then
$$
\wh V^s[\un v^+]=\vartheta^{-1}(V^s[\un v^+])=\left\{(y_n)_{n\in\Z}\in \wh M:
\begin{array}{c}y_0\in \Psi_{\wh x_0}(B^d[p^s_0]\times B^{m-d}[p^s_0])\text{ and}\\
y_n\in \Psi_{\wh x_n}(B[20Q(\wh x_n)]),\forall n\geq 0\end{array}
\right\}.
$$
\item If ${\un v}^-=\{\Psi_{\wh x_n}^{p^s_n,p^u_n}\}_{n\leq 0}$ is a negative $\ve$--gpo 
and $d=d_s(\wh x_0)$ then
$$
\wh V^u[\un v^-]=\left\{(y_n)_{n\in\Z}\in \wh M:
\begin{array}{c}y_0\in \Psi_{\wh x_0}(B^d[p^u_0]\times B^{m-d}[p^u_0])\text{ and}\\
y_n=f_{\wh x_n}^{-1}(y_{n+1})\in \Psi_{\wh x_n}(B[20Q(\wh x_n)]),\forall n<0\end{array}
\right\}.
$$
\end{enumerate}
\end{lemma}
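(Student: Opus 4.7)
The plan is to derive both parts directly from the shadowing characterization of $V^s[\un v^+]$ and $V^u[\un v^-]$ given in Proposition \ref{Prop-stable-manifolds}(3), together with the defining property of elements of the natural extension $\wh M$. No new estimates are needed; this is essentially a translation lemma between the submanifolds in $M$ and their set-theoretic lifts in $\wh M$.

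For part (1), I would begin by observing that the first identity $\wh V^s[\un v^+]=\vartheta^{-1}(V^s[\un v^+])$ is immediate from the definitions, since $\wh V^s[\un v^+]$ is defined as the set of $\wh y=(y_n)_{n\in\Z}\in\wh M$ with $y_0=\vartheta(\wh y)\in V^s[\un v^+]$. For the second identity, I would apply the shadowing property from Proposition \ref{Prop-stable-manifolds}(3): $y_0\in V^s[\un v^+]$ is equivalent to $y_0\in\Psi_{\wh x_0}(B^d[p_0^s]\times B^{m-d}[p_0^s])$ together with $f^n(y_0)\in\Psi_{\wh x_n}(B[20Q(\wh x_n)])$ for all $n\geq 0$. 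Because $\wh y\in\wh M$, the relation $y_n=f(y_{n-1})$ iterates to $y_n=f^n(y_0)$ for every $n\geq 0$, so the condition $f^n(y_0)\in\Psi_{\wh x_n}(B[20Q(\wh x_n)])$ is exactly $y_n\in\Psi_{\wh x_n}(B[20Q(\wh x_n)])$. This yields the desired equality.

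For part (2), I would use the shadowing characterization of $V^u[\un v^-]$: $y_0\in V^u[\un v^-]$ iff $y_0\in\Psi_{\wh x_0}(B^d[p_0^u]\times B^{m-d}[p_0^u])$ and $(f_{\wh x_n}^{-1}\circ\cdots\circ f_{\wh x_{-1}}^{-1})(y_0)\in\Psi_{\wh x_n}(B[20Q(\wh x_n)])$ for all $n<0$. The extra condition $y_n=f_{\wh x_n}^{-1}(y_{n+1})$ built into the definition of $\wh V^u[\un v^-]$ iterates downward from $y_0$ to give $y_n=(f_{\wh x_n}^{-1}\circ\cdots\circ f_{\wh x_{-1}}^{-1})(y_0)$, so the shadowing condition becomes $y_n\in\Psi_{\wh x_n}(B[20Q(\wh x_n)])$. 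This proves the inclusion $\wh V^u[\un v^-]\subseteq C'$, where $C'$ denotes the right-hand side. The reverse inclusion is equally direct: if $\wh y$ lies in $C'$, the relations $y_n=f_{\wh x_n}^{-1}(y_{n+1})$ iterate to reproduce the composition of prescribed inverse branches applied to $y_0$, and the assumption $y_n\in\Psi_{\wh x_n}(B[20Q(\wh x_n)])$ for all $n<0$ is exactly what is needed to conclude $y_0\in V^u[\un v^-]$ via Proposition \ref{Prop-stable-manifolds}(3), so $\wh y\in\wh V^u[\un v^-]$.

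The only point requiring care is the asymmetry between parts (1) and (2): in the forward direction the sequence $(y_n)_{n\geq 0}$ is uniquely determined by $y_0$ (one just applies $f$), so no extra compatibility is needed; in the backward direction, there may be several inverse branches available at each step, and this is precisely the role of the condition $y_n=f_{\wh x_n}^{-1}(y_{n+1})$ imposed in the definition of $\wh V^u[\un v^-]$, matching the inverse branches implicit in the characterization of $V^u[\un v^-]$ given by Proposition \ref{Prop-stable-manifolds}(3). I do not foresee any genuine obstacle in the proof; it is a bookkeeping lemma that aligns the abstract stable/unstable sets in $\wh M$ with the shadowing characterization of their zeroth projections.
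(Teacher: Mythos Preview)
Your proposal is correct and follows the same approach as the paper, which treats this lemma as a direct rewriting of the definitions of $\wh V^s[\un v^+]$ and $\wh V^u[\un v^-]$ in terms of the shadowing characterization of Proposition~\ref{Prop-stable-manifolds}(3). The one observation the paper adds and you leave implicit is that in part~(2) the expression $f_{\wh x_n}^{-1}(y_{n+1})$ is well-defined, i.e.\ $y_{n+1}\in E_{\wh x_n}$; this follows from Lemma~\ref{Lemma-inverse-branches} applied to the edge $v_n\overset{\ve}{\to}v_{n+1}$, which guarantees $\Psi_{\wh x_{n+1}}(B[20Q(\wh x_{n+1})])\subset E_{\wh x_n}$.
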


This lemma is just a rewriting of the definitions
of $\wh V^s[\un v^+]$ and $\wh V^u[\un v^-]$ in terms of Proposition \ref{Prop-stable-manifolds}(3).
We just observe that the equality $y_n=f_{\wh x_n}^{-1}(y_{n+1})$ in part (2)
follows from Lemma \ref{Lemma-inverse-branches}.

\medskip
\noindent
{\sc Stable/unstable sets at $v$:} Given an $\ve$--double chart $v$, a {\em stable set at $v$}
is any $\wh V^s[\un v^+]$ where $\un v^+$ is a positive $\ve$--gpo with $v_0=v$. We let
$\wh{\mathfs M}^s(v)$ denote the set of all stable sets at $v$. Similarly,
an {\em unstable set at $v$} is any $\wh V^u[\un v^-]$ where $\un v^-$ is a negative $\ve$--gpo
with $v_0=v$, and we let $\wh{\mathfs M}^u(v)$ denote the set of all unstable sets at $v$. 

\medskip
Now we define graph transforms for stable/unstable sets. Let $v\overset{\ve}{\to}w$.

\medskip
\noindent
{\sc Graph transforms $\wh{\mathfs F}^s_{v,w},\wh{\mathfs F}^u_{v,w}$:}
The {\em graph transform} $\wh{\mathfs F}^s_{v,w}:\wh{\mathfs M}^s(w)\to\wh{\mathfs M}^s(v)$
is the map that sends $\wh V^s[\un w^+]\in\wh{\mathfs M}^s(w)$ to
$\wh V^s[\un v^+]\in\wh{\mathfs M}^s(v)$ where $\un v^+=\{v_n\}_{n\geq 0}$ is defined
by $v_0=v$ and $v_n=w_{n-1}$ for all $n>0$. Similarly, the {\em graph transform}
$\wh{\mathfs F}^u_{v,w}:\wh{\mathfs M}^u(v)\to\wh{\mathfs M}^u(w)$
is the map that sends $\wh V^u[\un v^-]\in\wh{\mathfs M}^u(v)$ to
$\wh V^u[\un w^-]\in\wh{\mathfs M}^u(w)$ where $\un w^-=\{w_n\}_{n\leq 0}$
is defined by $w_0=w$ and $w_n=v_{n+1}$ for all $n<0$. 

\medskip
The next result collects the main properties of the graph transforms and stable/unstable sets.

\begin{proposition}\label{Prop-stable-sets}
The following holds for all $\ve>0$ small enough.
\begin{enumerate}[{\rm (1)}]
\item {\sc Product structure:} If $\wh V^s/\wh V^u$ is a stable$/$unstable set at $v$
then $\wh V^s\cap \wh V^u$ consists of  a single element of $\wh M$.
\item {\sc Action under $\wh{\mathfs F}^{s/u}$:} Let $v\overset{\ve}{\to}w$. If
$\wh V^u\in\wh{\mathfs M}^u(v)$ then $\wh{\mathfs F}^u_{v,w}[\wh V^u]\subset \wh f(\wh V^u)$.
Similarly, if $\wh V^s\in\wh{\mathfs M}^s(w)$ then
$\wh{\mathfs F}^s_{v,w}[\wh V^s]\subset \wh f^{-1}(\wh V^s)$.
\item {\sc Action under $\wh f$:}
$$
\wh f\left(\wh V^s[\{v_n\}_{n\geq 0}]\right)\subset \wh V^s[\{v_n\}_{n\geq 1}]\text{ and }
\wh f^{-1}\left(\wh V^u[\{v_n\}_{n\leq 0}]\right)\subset \wh V^u[\{v_n\}_{n\leq -1}].
$$
\item {\sc Hyperbolicity:} If $(y_n)_{n\in\Z},(z_n)_{n\in\Z}\in \wh V^s[{\un v}^+]$ where
${\un v}^+=\{\Psi_{\wh x_n}^{p^s_n,p^u_n}\}_{n\geq 0}$, then
$$
d(y_n,z_n)\leq 4p^s_0e^{-\frac{\chi}{2}n}\text{ for all }n\geq 0.
$$
Hence, taking $\theta:=\max\left\{e^{-\frac{\chi}{2}},\tfrac{1}{2}\right\}$,
if $\wh y,\wh z\in \wh V^s[{\un v}^+]$ then $d(\wh f^n(\wh y),\wh f^n(\wh z))<\theta^n$
for all $n\geq 0$.
Similarly, if $(y_n)_{n\in\Z},(z_n)_{n\in\Z}\in \wh V^u[{\un v}^-]$ where
${\un v}^-=\{\Psi_{\wh x_n}^{p^s_n,p^u_n}\}_{n\leq 0}$, then
$$
d(y_n,z_n)\leq 4p^u_0e^{\frac{\chi}{2}n}\text{ for all }n\leq 0.
$$
Hence, if $\wh y,\wh z\in \wh V^u[{\un v}^-]$ then $d(\wh f^n(\wh y),\wh f^n(\wh z))<\theta^{-n}$
for all $n\leq 0$.
\item {\sc Disjointness:} Let $v=\Psi_{\wh x}^{p^s,p^u}$ and $w=\Psi_{\wh y}^{q^s,q^u}$
with $\wh x=\wh y$.
If $\wh V^s,\wh W^s$ are stable sets at $v,w$ then they are either disjoint or one contains the
other. The same holds for unstable sets $\wh V^u,\wh W^u$ at $v,w$.
\item Let $v\overset{\ve}{\to}w$. If $\wh V^u$ is an unstable set at $v$ then $\wh f(\wh V^u)$ intersects every
stable set at $w$ at a single element; if $\wh V^s$ is a stable set at $w$ then $\wh f^{-1}(\wh V^s)$
intersects every unstable set at $v$ at a single element.
\end{enumerate}
\end{proposition}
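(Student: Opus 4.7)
The strategy throughout is to reduce each claim to a statement about admissible manifolds in $M$ via the lifting description in Lemma \ref{Lemma-stable-set}, and then invoke Propositions \ref{Prop-stable-manifolds} and \ref{Prop-graph-transform}. Part (1) would be immediate from Lemma \ref{Lemma-stable-set}(1), which gives $\wh V^s = \vartheta^{-1}(V^s[\un v^+])$, so any intersection point $\wh y \in \wh V^s \cap \wh V^u$ must have $y_0 \in V^s[\un v^+] \cap V^u[\un v^-]$, a single point by Lemma \ref{Lemma-admissible-manifolds}; the positive coordinates are then forced by $y_{n+1} = f(y_n)$ and the negative ones by the inverse branches specified in $\un v^-$. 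Part (3) would likewise reduce to Proposition \ref{Prop-stable-manifolds}(2), the only subtlety being a careful re-indexing of inverse branches when applying $\wh f^{-1}$, using Lemma \ref{Lemma-inverse-branches} to match the correct branch at each position.

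For part (2), the plan is to apply Lemma \ref{Lemma-transform-manifolds}, which identifies $\mathfs F^u_{v,w}(V^u[\un v^-])$ with $V^u[\un w^-]$; by the very construction of the graph transform, each point of $V^u[\un w^-]$ is the $f$--image of a unique point of $V^u[\un v^-]$, and checking (again via Lemma \ref{Lemma-inverse-branches}) that the inverse branches agree on the relevant domains yields $\wh{\mathfs F}^u_{v,w}[\wh V^u] \subset \wh f(\wh V^u)$; the stable case is symmetric. Part (6) would then follow from (1) and (2): existence comes from applying (1) to the pair $\wh{\mathfs F}^u_{v,w}[\wh V^u], \wh V^s$ at $w$, and uniqueness reduces to part (1) after verifying that any two points of $\wh f(\wh V^u) \cap \wh V^s$ lie in $\wh V^u[\un w^-] \cap \wh V^s$. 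For part (4), Proposition \ref{Prop-stable-manifolds}(4) bounds $d(y_k, z_k)$ for $k \geq 0$ in the stable case, and the definition $\wh d(\wh y, \wh z) = \sup_{k \leq 0} 2^k d(y_k, z_k)$ gives, after shifting by $n$, the bound $\sup_{k \leq 0} 2^k d(y_{n+k}, z_{n+k}) \leq \max\{4 p^s_0 e^{-\chi n/2}, 2^{-n}\} \leq \theta^n$, upon splitting the supremum into the ranges $k \geq -n$ (controlled by hyperbolicity) and $k < -n$ (controlled by the geometric weight $2^k$).

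Part (5) is where I expect the main difficulty. With $\wh x = \wh y$ a common center, I would assume WLOG $p^s \leq q^s$ and aim to prove $V^s[\un v^+] \subset V^s[\un w^+]$, after which Lemma \ref{Lemma-stable-set}(1) yields the containment of the corresponding lifted stable sets. Using the shadowing characterization in Proposition \ref{Prop-stable-manifolds}(3), both stable manifolds are described as points whose forward orbits are shadowed by the respective Pesin--chart sequences $\{\Psi_{\wh x_n}(B[20 Q(\wh x_n)])\}_{n\geq 0}$ and $\{\Psi_{\wh y_n}(B[20 Q(\wh y_n)])\}_{n\geq 0}$. If $p \in V^s[\un v^+] \cap V^s[\un w^+]$ then $f^n(p)$ lies in both chart neighborhoods for all $n \geq 0$, so Proposition \ref{Proposition-overlap} would let me compare the two Pesin charts along the whole orbit; for any $y \in V^s[\un v^+]$ the hyperbolic estimate $d(f^n(y), f^n(p)) \leq 4 p^s e^{-\chi n/2}$, combined with this orbit-wide comparability, should force $f^n(y) \in \Psi_{\wh y_n}(B[20 Q(\wh y_n)])$ for every $n \geq 0$, hence $y \in V^s[\un w^+]$. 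The unstable case is analogous using past orbits and the prescribed inverse branches. The hard part will be propagating the overlap estimates along the whole orbit: I will need to show inductively that the ratios $Q(\wh x_n)/Q(\wh y_n)$ stay controlled, extracting the estimate from iterated applications of Proposition \ref{Proposition-overlap}(3).
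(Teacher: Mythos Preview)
Your treatment of parts (1)--(4) and (6) is essentially the paper's, modulo minor points: in (4) the intermediate expression $\max\{4p^s_0 e^{-\chi n/2},2^{-n}\}$ is not literally the bound (the paper uses $2^k d(y_{n+k},z_{n+k})<\theta^{-k}\theta^{n+k}=\theta^n$ in the range $-n\leq k\leq 0$), and in (6) the paper appeals directly to Proposition~\ref{Prop-graph-transform}(3) for uniqueness of $z_0$ rather than routing through (1)--(2), but these are easily fixed.

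The real gap is in part (5). Your plan to invoke Proposition~\ref{Proposition-overlap} to ``compare the two Pesin charts along the whole orbit'' cannot work: that proposition \emph{assumes} the $\ve$--overlap condition $\Psi_{\wh x_n}^{\eta}\overset{\ve}{\approx}\Psi_{\wh y_n}^{\eta'}$, which includes a bound on $\|\widetilde{C(\wh x_n)}-\widetilde{C(\wh y_n)}\|$, and having a common orbit point $f^n(p)$ in both chart images gives you no such control. Consequently your proposed induction on $Q(\wh x_n)/Q(\wh y_n)$ via ``iterated applications of Proposition~\ref{Proposition-overlap}(3)'' never gets started. More importantly, that ratio is \emph{irrelevant}: to place $f^n(y)$ in $\Psi_{\wh y_n}(B[20Q(\wh y_n)])$ you only need $\|C(\wh y_n)^{-1}\|\cdot 4p^s_0 e^{-\chi n/2}$ to be small compared with $Q(\wh y_n)$, and this involves no comparison with the $\wh x_n$--charts at all.

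The paper's argument is different in structure. It first treats the unstable case (harder, because of inverse branches) and begins with a claim you do not mention: the inverse branches $f_{\wh y_n}^{-1}$ defining $\wh W^u$ can be replaced by $f_{\wh x_n}^{-1}$ on the relevant domains (this uses the intersection point and the hyperbolic estimate against $\mathfrak r(\wh x_{n-1})$). Only then does it exploit the competition between the exponential contraction $e^{\chi n/2}$ and the subexponential decay $p^u_n\geq e^{-\ve|n|}p^u_0$ coming from (GPO2): for $|n|$ large enough one gets $\vt_n[\wh W^u]\subset \Psi_{\wh x_n}(B^d[p^u_n]\times B^{m-d}[p^u_n])$ (Claim~3), and similarly $\vt_n[\wh V^u]$ sits in the half-size box (Claim~2). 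The remaining finitely many indices are then handled by the ``staying in windows'' mechanism of the graph transform, as in \cite[Prop.~6.4]{Sarig-JAMS} and \cite[Claim~3, p.~102]{Ben-Ovadia-2019}, not by any chartwise comparison. Your direct-verification plan would need a separate argument for these small $|n|$ that you have not supplied.
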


\begin{proof}
(1) Write $v=\Psi_{\wh x_0}^{p^s_0,p^u_0}$, $\wh V^s=\wh V^s[\{\Psi_{\wh x_n}^{p^s_n,p^u_n}\}_{n\geq 0}]$,
$\wh V^u=\wh V^u[\{\Psi_{\wh x_n}^{p^s_n,p^u_n}\}_{n\leq 0}]$. Let us see the conditions for which
$\wh y=(y_n)_{n\in\Z}$ belongs to $\wh V^s\cap\wh V^u$. By the definition of stable/unstable sets,
we must have that 
$y_0\in V^s[\{\Psi_{\wh x_n}^{p^s_n,p^u_n}\}_{n\geq 0}]\cap V^u[\{\Psi_{\wh x_n}^{p^s_n,p^u_n}\}_{n\leq 0}]$.
By Lemma \ref{Lemma-admissible-manifolds}, this latter intersection consists of a single element,
hence $y_0$ is uniquely defined. In particular, if $n\geq 0$ then each $y_n=f^n(y_0)$ is uniquely defined.
Finally, by Lemma \ref{Lemma-stable-set}(2), if $n\leq 0$ then
$y_n=(f_{\wh x_n}^{-1}\circ\cdots\circ f_{\wh x_{-1}}^{-1})(y_0)$ is uniquely defined.
These conditions uniquely characterize $\wh y$.

\medskip
\noindent
(2) We prove the statement for $\wh{\mathfs F}^{u}_{v,w}$ (the proof for  $\wh{\mathfs F}^s_{v,w}$
is simpler because there is no need to control inverse branches). Fix a negative $\ve$--gpo
$\un v^-=\{v_n\}_{n\leq 0}$ with $v_0=v$, and let $\un w^-=\{w_n\}_{n\leq 0}$ defined
by $w_0=w$ and $w_n=v_{n+1}$ for $n<0$. Let $V^u=V^u[\un v^-]$, $\wh V^u=\wh V^u[\un v^-]$, 
$W^u=V^u[\un w^-]$ and $\wh W^u=\wh V^u[\un w^-]$.
By definition, $\wh{\mathfs F}^u_{v,w}(\wh V^u)=\wh W^u$ and so we need to show
that $\wh f(\wh V^u)\supset \wh W^u$. By Lemma \ref{Lemma-transform-manifolds},
we have $\mathfs F^u_{v,w}(V^u)=W^u$. Since $\mathfs F^u_{v,w}$ is a restriction
of $f$ to a subset, $f(V^u)\supset W^u$. Writing
$w_n=\Psi_{\wh x_n}^{p^s_n,p^u_n}$, the definition of unstable set gives that
$$
\wh V^u=
\left\{\wh y=(y_n)_{n\in \Z}\in\wh M: y_0\in V^u\text{ and }y_n=f^{-1}_{\wh x_{n-1}}(y_{n+1})\text{ for all }n<0\right\}
$$
and so
\begin{align*}
&\, \wh f(\wh V^u)=\left\{\wh z=(z_n)_{n\in \Z}\in\wh M: z_0\in f(V^u)\text{ and }z_n=f^{-1}_{\wh x_n}(z_{n+1})\text{ for all }n<0\right\}\\
&\supset\left\{\wh z=(z_n)_{n\in \Z}\in\wh M: z_0\in W^u\text{ and }z_n=f^{-1}_{\wh x_n}(z_{n+1})\text{ for all }n<0\right\}
=\wh W^u.
\end{align*}

\medskip
\noindent
(3) By the definition of stable sets and Proposition \ref{Prop-stable-manifolds}(2),
\begin{align*}
&\,\wh f\left(\wh V^s[\{v_n\}_{n\geq 0}]\right)=\wh f\left(\left\{\wh y\in\wh M:y_0\in V^s[\{v_n\}_{n\geq 0}]\right\}\right)\\
&=\left\{\wh y\in\wh M:y_0\in f(V^s[\{v_n\}_{n\geq 0}])\right\}\subset
\left\{\wh y\in\wh M:y_0\in V^s[\{v_n\}_{n\geq 1}]\right\}\\
&=\wh V^s[\{v_n\}_{n\geq 1}].
\end{align*}
Similarly,
\begin{align*}
&\,\wh f^{-1}\left(\wh V^u[\{v_n\}_{n\leq 0}]\right)\\
&=\wh f^{-1}\left(\left\{\wh y\in\wh M:y_0\in V^u[\{v_n\}_{n\leq 0}]\text{ and }
y_{n}=f_{\wh x_{n}}^{-1}(y_{n+1}),\forall n<0\right\}\right)\\
&=\left\{\wh y\in\wh M:y_0\in f_{\wh x_{-1}}^{-1}(V^u[\{v_n\}_{n\leq 0}])\text{ and }
y_{n}=f_{\wh x_{n-1}}^{-1}(y_{n+1}),\forall n<0\right\}\\
&\subset\left\{\wh y\in\wh M:y_0\in V^u[\{v_n\}_{n\leq -1}]\text{ and }
y_{n}=f_{\wh x_{n-1}}^{-1}(y_{n+1}),\forall n<0\right\}\\
&=\wh V^u[\{v_n\}_{n\leq -1}].
\end{align*}

\medskip
\noindent
(4) The estimates of $d(y_n,z_n)$ follow directly from Proposition \ref{Prop-stable-manifolds}(4),
so we focus on the estimates of $d(\wh f^n(\wh y),\wh f^n(\wh z))$. We
let $\theta:=\max\left\{e^{-\frac{\chi}{2}},\tfrac{1}{2}\right\}$. Start with 
$\wh y,\wh z\in\wh V^s[\un v^+]$, and fix $n\geq 0$. Recalling that the diameter of $M$ is smaller than one
and using that $\wh f^n(\wh y)=\{y_{n+k}\}_{k\in\Z}$ and $\wh f^n(\wh z)=\{z_{n+k}\}_{k\in\Z}$, we have: 
\begin{enumerate}[$\circ$]
\item If $-n\leq k\leq 0$ then $d(y_{n+k},z_{n+k})\leq 4p^s_0 e^{-\frac{\chi}{2}(n+k)}<\theta^{n+k}$.
This implies that $2^kd(y_{n+k},z_{n+k})<\theta^{-k}\theta^{n+k}=\theta^n$. 
\item If $k<-n$ then $2^kd(y_{n+k},z_{n+k})<2^k\leq \theta^{-k}<\theta^n$.
\end{enumerate}
Hence $d(\wh f^n(\wh y),\wh f^n(\wh z))<\theta^n$. The other estimate is simpler.
Let $\wh y,\wh z\in\wh V^u[\un v^-]$ and fix $n\leq 0$. For $k\leq 0$ we have
$d(y_{n+k},z_{n+k})\leq 4p^u_0 e^{\frac{\chi}{2}(n+k)}<\theta^{-n-k}$ and so
$2^kd(y_{n+k},z_{n+k})<\theta^{-k}\theta^{-n-k}=\theta^{-n-2k}\leq\theta^{-n}$.
Hence $d(\wh f^n(\wh y),\wh f^n(\wh z))<\theta^{-n}$.

\medskip
\noindent
(5) The proof is motivated by \cite[Proposition 6.4]{Sarig-JAMS}, with the modifications needed
to deal with inverse branches. We start dealing with unstable sets. Let $\wh V^u=\wh V^u[\un v^-]$
where $\un v^-=\{\Psi_{\wh x_n}^{p^s_n,p^u_n}\}_{n\leq 0}$ and
$\wh W^u=\wh V^u[\un w^-]$ where $\un w^-=\{\Psi_{\wh y_n}^{q^s_n,q^u_n}\}_{n\leq 0}$,
and assume that $\wh x_0=\wh y_0$ and $\wh V^u\cap\wh W^u\neq\emptyset$.
We will show that if $q^u_0\leq p^u_0$ then $\wh W^u\subset\wh V^u$. To do this,
we first prove that for $n<0$ the inverse branches $f_{\wh x_n}^{-1}$ and $f_{\wh y_n}^{-1}$ coincide
in the domains of interest.

\medskip
\noindent
{\sc Claim 1:} For every $n\leq 0$ it holds $\vt_n[\wh W^u]\subset E_{\wh x_{n-1}}$
and $\vt_{n-1}[\wh W^u]\subset f_{\wh x_{n-1}}^{-1}(E_{\wh x_{n-1}})$.

\begin{proof}[Proof of Claim $1$.] Fix $\wh z=(z_k)_{k\in\Z}\in\wh V^u\cap\wh W^u$ and $n\leq 0$.
Recall, by the proof of Lemma \ref{Lemma-inverse-branches}, that $Q(\wh x_n)<\ve \mathfrak r(\wh x_{n-1})$. 
Using that $\wh z\in \wh V^u$, Lemma \ref{Lemma-stable-set} implies that
$$
z_n\in B(\vt_0[\wh x_n],40Q(\wh x_n))\subset B(\vt_1[\wh x_{n-1}],50Q(\wh x_n)),
$$
since by the overlap assumption we have $d(\vt_0[\wh x_n],\vt_1[\wh x_{n-1}])\ll p^s_n\wedge p^u_n <Q(\wh x_n)$.
Now let $\wh t=(t_k)_{k\in\Z}\in \wh W^u$ be arbitrary. Using part (4) and the inequality
$p^u_0\leq e^{-\ve n}p^u_n$, if $\ve>0$ is small then
$d(z_n,t_n)\leq 4q^u_0e^{\frac{\chi}{2}n}\leq 4q^u_0e^{\ve n}\leq 4\left(\tfrac{q^u_0}{p^u_0}\right)p^u_n\leq 4p^u_n$
and so $\vt_n[\wh W^u]\subset B(\vt_1[\wh x_{n-1}],50Q(\wh x_n)+4p^u_n)\subset E_{\wh x_{n-1}}$,
where in the last passage we used that
$50Q(\wh x_n)+4p^u_n<60Q(\wh x_n)<60\ve \mathfrak r(\wh x_{n-1})<2\mathfrak r(\wh x_{n-1})$.

The second inclusion is easier. We have $z_{n-1}\in B(\vt_0[\wh x_{n-1}],40Q(\wh x_{n-1}))$
and $d(z_{n-1},t_{n-1})\leq 4p^u_{n-1}$, thus $\vt_{n-1}[\wh W^u]\subset B(\vt_0[\wh x_{n-1}],50Q(\wh x_{n-1}))$.
Now, (A6) implies
$f_{\wh x_{n-1}}^{-1}(E_{\wh x_{n-1}})\supset B(\vt_0[\wh x_{n-1}],
2d(\vt_0[\wh x_{n-1}],\mathfs S)^a\mathfrak r(\wh x_{n-1}))$. The radius of this latter ball is
$>2\rho(\wh x_{n-1})^{2a}>50Q(\wh x_{n-1})$, therefore
$\vt_{n-1}[\wh W^u]\subset f_{\wh x_{n-1}}^{-1}(E_{\wh x_{n-1}})$.
\end{proof}

Letting $d=d_s(\wh x_0)$, Claim 1 implies that 
$$
\wh W^u=\left\{(z_n)_{n\in\Z}\in \wh M:
\begin{array}{c}z_0\in \Psi_{\wh x_0}(B^d[q^u_0]\times B^{m-d}[q^u_0])\text{ and}\\
z_n=f_{\wh x_n}^{-1}(z_{n+1})\in \Psi_{\wh y_n}(B[20Q(\wh y_n)]),\forall n<0\end{array}
\right\},
$$
i.e. we can substitute $f_{\wh y_n}^{-1}$ by $f_{\wh x_n}^{-1}$ in the inverse branches.
This allows us to employ the same methods of \cite[Proposition 6.4]{Sarig-JAMS}, summarized
in our context as follows. Below, ``$n$ is small enough'' means that $-n$ is large enough.

\medskip
\noindent
{\sc Claim 2:} If $n$ is small enough then
$\vt_n[\wh V^u]\subset \Psi_{\wh x_n}\left(B^d[\tfrac{1}{2}p^u_n]\times B^{m-d}[\tfrac{1}{2}p^u_n]\right)$.

\medskip
The proof is made exactly as in \cite[Claim 1 in pp. 420]{Sarig-JAMS}.

\medskip
\noindent
{\sc Claim 3:} If $n$ is small enough then
$\vt_n[\wh W^u]\subset \Psi_{\wh x_n}\left(B^d[p^u_n]\times B^{m-d}[p^u_n]\right)$.

\medskip
The proof of this claim is made exactly as in \cite[Claim 2 in pp. 420]{Sarig-JAMS}.
Finally, to prove that $\wh W^u\subset\wh V^u$, we proceed as in
\cite[Claim 3 in pp. 102]{Ben-Ovadia-2019}. The proof for stable sets is simpler and can
be made as in \cite[Proposition 6.4]{Sarig-JAMS} or \cite[Claim 3 in pp. 102]{Ben-Ovadia-2019},
since in this case we do not have difficulties with inverse branches.

\medskip
\noindent
(6) Since $\wh f$ is invertible, it is enough to prove the first statement.
Let $v=\Psi_{\wh x}^{p^s,p^u},w=\Psi_{\wh y}^{q^s,q^u}$, $\wh V^u=\wh V^u[\un v^-]$ an
unstable set at $v$, and $\wh V^s=\wh V^s[\un w^+]$ a stable set at $w$.
Let $V^u=V^u[\un v^-]$ and $V^s=V^s[\un w^+]$.
If $(z_n)_{n\in\Z}\in \wh f(\wh V^u)\cap \wh V^s$ then $z_0\in f(V^u)\cap V^s$.
By Proposition \ref{Prop-graph-transform}(3), $z_0$ is uniquely defined.
In particular, $z_n$ is uniquely defined for all $n\geq 0$.
Furthermore, $z_{-1}=f_{\wh x}^{-1}(z_0)$ is uniquely defined and therefore,
by Lemma \ref{Lemma-stable-set}(2), $z_n$ is uniquely defined for each $n<-1$. 
This show that $\wh f(\wh V^u)\cap \wh V^s$ consists of a single element.
\end{proof}

\medskip
\noindent
{\sc Shadowing:} We say that an $\ve$--gpo $\{\Psi_{\wh x_n}^{p^s_n,p^u_n}\}_{n\in\Z}$ {\em shadows}
a point $\wh y=(y_n)_{n\in\Z}\in\wh M$ if $y_n\in \Psi_{\wh x_n}(B[20Q(\wh x_n)])$ for all $n\in\Z$.

\begin{lemma}\label{Lemma-shadowing}
Every $\ve$--gpo shadows a unique point.
\end{lemma}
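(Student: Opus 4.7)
The plan is to derive existence from the single-intersection property of stable and unstable sets in $\wh M$, and uniqueness from a bootstrap argument in Pesin coordinates. Write $\un v = \{v_n\}_{n\in\Z}$ with $v_n = \Psi_{\wh x_n}^{p^s_n, p^u_n}$.

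For existence, I would set $\wh V^s := \wh V^s[\{v_n\}_{n\geq 0}]$ and $\wh V^u := \wh V^u[\{v_n\}_{n\leq 0}]$. By Proposition \ref{Prop-stable-sets}(1) the intersection $\wh V^s \cap \wh V^u$ consists of a single element $\wh y = (y_n)_{n\in\Z}$. Lemma \ref{Lemma-stable-set}(1) applied to $\wh V^s$ then yields $y_n \in \Psi_{\wh x_n}(B[20Q(\wh x_n)])$ for every $n\geq 0$, and Lemma \ref{Lemma-stable-set}(2) applied to $\wh V^u$ yields the same for $n\leq 0$. Hence $\un v$ shadows $\wh y$.

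For uniqueness, let $\wh z=(z_n)_{n\in\Z}\in\wh M$ be any shadowed point. I would work in Pesin coordinates: put $\xi_n := \Psi_{\wh x_n}^{-1}(y_n)$ and $\eta_n := \Psi_{\wh x_n}^{-1}(z_n)$, both in $B[20Q(\wh x_n)]$. Since $f(y_n)=y_{n+1}$ and $f(z_n)=z_{n+1}$, Theorem \ref{Thm-non-linear-Pesin-2} gives $\xi_{n+1}=F_{\wh x_n,\wh x_{n+1}}(\xi_n)$ and $\eta_{n+1}=F_{\wh x_n,\wh x_{n+1}}(\eta_n)$, with $F_{\wh x_n,\wh x_{n+1}}=D(\wh x_n)+H^+_n$ and
\[
\|H^+_n\|_{C^1(B[20Q(\wh x_n)])} < \ve (20Q(\wh x_n))^{\beta/3} \leq C\ve^3,
\]
the last inequality using $Q(\wh x_n)<\ve^{6/\beta}$ from \eqref{estimates-Q}. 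Decomposing $\delta_n := \xi_n-\eta_n = (\delta_n^s, \delta_n^u)$ according to Lemma \ref{Lemma-linear-reduction}(2), subtracting the evolution equations for $\xi$ and $\eta$, and applying $\|D_s(\wh x_n)\|<e^{-\chi}$ together with $\|D_u(\wh x_n)^{-1}\|<e^{-\chi}$ yields
\begin{align*}
\|\delta_{n+1}^s\| &\leq e^{-\chi}\|\delta_n^s\| + C\ve^3\|\delta_n\|,\\
\|\delta_n^u\| &\leq e^{-\chi}\|\delta_{n+1}^u\| + C\ve^3\|\delta_n\|.
\end{align*}
Set $B := \sup_{n\in\Z}\|\delta_n\|\leq 40\ve^{6/\beta}$. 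Iterating the unstable inequality forward from $0$ to $N$ and letting $N\to\infty$ (the boundary term $e^{-\chi N}\|\delta_N^u\|\leq e^{-\chi N}B\to 0$) gives $\|\delta_0^u\|\leq C_1\ve^3 B$; iterating the stable inequality forward from $-N$ to $0$ analogously gives $\|\delta_0^s\|\leq C_1\ve^3 B$. Shift-invariance of the argument (re-running it at any other index) then produces $\|\delta_n\|\leq \sqrt{2}C_1\ve^3 B$ for every $n$, whence $B\leq \sqrt{2}C_1\ve^3 B$, forcing $B=0$ once $\ve$ is small. Thus $\wh z=\wh y$.

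The principal obstacle is this uniqueness step: the shadowing ball $B[20Q(\wh x_n)]$ is much larger than the admissible box $B^{d_s(\wh x_n)}[p^s_n]\times B^{d_u(\wh x_n)}[p^s_n]$ on which the stable/unstable manifolds of Proposition \ref{Prop-stable-manifolds} are defined, so $\wh z\in \wh V^s\cap\wh V^u$ is not immediate from the shadowing condition and must be obtained by a quantitative bootstrap. The bootstrap succeeds precisely because the uniform smallness $Q(\wh x_n)<\ve^{6/\beta}$ renders the nonlinear remainder $H^+_n$ negligible against the hyperbolic rate $e^{-\chi}$.
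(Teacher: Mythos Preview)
Your argument is correct. The existence part coincides with the paper's: the shadowed point is exhibited as the single element of $\wh V^s[\un v]\cap\wh V^u[\un v]$, and Lemma~\ref{Lemma-stable-set} supplies the inclusion $y_n\in\Psi_{\wh x_n}(B[20Q(\wh x_n)])$.

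For uniqueness the routes diverge slightly. The paper defers to \cite[Theorem~4.2]{Lima-Sarig}, whose argument identifies any shadowed point with the intersection $V^s\cap V^u$ via the shadowing characterization of the invariant manifolds (the analogue of Proposition~\ref{Prop-stable-manifolds}(3)); once that identification is made, uniqueness follows from Lemma~\ref{Lemma-admissible-manifolds}, and as the paper remarks this simultaneously yields the stronger inclusion $y_n\in\Psi_{\wh x_n}(B[p^s_n\wedge p^u_n])$. Your approach bypasses the invariant-manifold characterization entirely: you compare two arbitrary shadowed trajectories in Pesin coordinates and run a direct hyperbolic bootstrap on their difference, using that $\|dH^+_n\|_{C^0}\leq C\ve^3$ uniformly (from $Q(\wh x_n)<\ve^{6/\beta}$) kills the nonlinear term against the rate $e^{-\chi}$. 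This is more elementary and self-contained, and it avoids the issue you correctly flag---that the shadowing box $B[20Q(\wh x_n)]$ is larger than the domain $B^d[p^s_0]\times B^{m-d}[p^s_0]$ on which Proposition~\ref{Prop-stable-manifolds}(3) is stated. The paper's route, once unwrapped, ultimately rests on the same contraction/expansion inequalities you invoke; what it gains is the corollary about the small box for free.
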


The proof is the same as \cite[Theorem 4.2]{Lima-Sarig}, with the shadowed
point of an $\ve$--gpo $\un v$ being the intersection $\wh V^s[{\un v}]\cap\wh V^u[\un v]$.
In particular, by Lemma \ref{Lemma-admissible-manifolds} we obtain that
shadowing implies the stronger inclusion
$y_n\in \Psi_{\wh x_n}(B[p^s_n\wedge p^u_n])$ for all $n\in\Z$.

\subsection{Hyperbolicity along stable/unstable sets}

In this section, we establish some estimates that will play an essential role in the proof
of Theorem \ref{Thm-inverse}.
Given a positive $\ve$--gpo $\un v^+$, let
$V^s/\wh V^s$ be its $s$--admissible manifold/set.
Below, we use the notation $T\wh V^s=\wh{TV^s}$ to represent the tangent bundle
of $V^s$ as a subset of $\wh{TM}$, i.e. for every $\wh y\in\wh V^s$ we let $T_{\wh y}\wh V^s$
be the tangent subspace $T_{\vt[\wh y]}V^s$ as a subset of $\wh{TM}_{\wh y}$.

\begin{proposition}\label{Prop-hyperbolicity-manifolds}
The following holds for $\ve>0$ small enough.
\begin{enumerate}[{\rm (1)}]
\item If $\wh V^s=\wh V^s[\un v^+]$ where $\un v^+=\{\Psi_{\wh x_n}^{p^s_n,p^u_n}\}_{n\geq 0}$ is a positive $\ve$--gpo,
then for every $v\in T\wh V^s$ with $\norm{v}=1$ and every $n\geq 0$ it holds
$$
\|\wh{df}^{(n)}v\|\leq 8\norm{C(\wh x_0)^{-1}}e^{-\frac{\chi}{2}n}.
$$
Similarly, if $\wh V^u=\wh V^u[\un v^-]$ where $\un v^-=\{\Psi_{\wh x_n}^{p^s_n,p^u_n}\}_{n\leq 0}$
is a negative $\ve$--gpo, then for every $v\in T\wh V^u$ with $\norm{v}=1$ and every $n\geq 0$ it holds
$$
\|\wh{df}^{(-n)}v\|\leq 8\norm{C(\wh x_0)^{-1}}e^{-\frac{\chi}{2}n}.
$$
\item If $\un v$ is an $\ve$--gpo and $\{\wh y\}=\wh V^s[\un v]\cap \wh V^u[\un v]$,
then for all nonzero $v\in T_{\wh y}\wh V^s$ it holds
$$
\liminf_{n\to+\infty}\tfrac{1}{n}\log\|\wh{df}^{(-n)}v\|>0
$$
and for all nonzero $v\in T_{\wh y}\wh V^u$ it holds 
$$
\liminf_{n\to+\infty}\tfrac{1}{n}\log\|\wh{df}^{(n)}v\|>0.
$$
\end{enumerate}
\end{proposition}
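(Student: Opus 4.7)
The plan is to pull every tangent vector through the appropriate Pesin chart and work with the maps $F_{\wh x,\wh y}^{\pm 1}$, which by Theorem \ref{Thm-non-linear-Pesin-2} are $C^1$-small perturbations of the block-diagonal matrix $D(\wh x)^{\pm 1}$. The key geometric input is (AM3): tangent spaces to $s$-admissible (resp.\ $u$-admissible) manifolds lie in the stable cone $\mathfs C^s=\{(u_1,u_2)\in\R^d\times\R^{m-d}:\|u_2\|\leq\tfrac{1}{2}\|u_1\|\}$ (resp.\ unstable cone $\mathfs C^u=\{(u_1,u_2):\|u_1\|\leq\tfrac{1}{2}\|u_2\|\}$), and vectors in these cones satisfy $\tfrac{2}{\sqrt 5}\|u\|\leq\max\{\|u_1\|,\|u_2\|\}\leq\|u\|$.

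For part (1) in the stable case I would set $w_n:=d(\Psi_{\wh x_n})^{-1}(\wh{df}^{(n)}v)$; the chain rule and Proposition \ref{Prop-stable-manifolds}(2) give that $w_n$ is tangent, in the Pesin chart at $\wh x_n$, to the admissible manifold $V^s[\{v_{n+k}\}_{k\geq 0}]$, and hence lies in $\mathfs C^s$. Writing $dF_{\wh x_{n-1},\wh x_n}=D(\wh x_{n-1})+dH^+$ with $\|dH^+\|<\ve$, examining the $\R^d$-component, and using $\|D_s(\wh x_{n-1})\|<e^{-\chi}$, one obtains
\[
\|w_n\|\leq \tfrac{\sqrt 5}{2}\bigl(e^{-\chi}+\tfrac{\sqrt 5}{2}\ve\bigr)\|w_{n-1}\|\leq e^{-\chi/2}\|w_{n-1}\|
\]
for $\ve$ small. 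Iterating and applying Lemma \ref{Lemma-Pesin-chart}(1) at both endpoints yields $\|\wh{df}^{(n)}v\|\leq 2\|w_n\|\leq 4\|C(\wh x_0)^{-1}\|e^{-n\chi/2}$, stronger than the advertised bound. The unstable case is symmetric, using $F_{\wh x,\wh y}^{-1}$, $\mathfs C^u$, and $\|D_u(\wh x)^{-1}\|<e^{-\chi}$.

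For part (2), let $\wh y$ be the unique shadow of $\un v$ and let $v\in T_{\wh y}\wh V^s$ be nonzero; set $w_{-n}:=d(\Psi_{\wh x_{-n}})^{-1}(\wh{df}^{(-n)}v)$. The central point is that although $\mathfs C^s$ is not forward invariant under $dF$, it \emph{is} invariant under $dF^{-1}=D^{-1}+dH^-$: indeed, $\|D_s(\wh x_{-n})\|<e^{-\chi}$ forces $\|D_s(\wh x_{-n})^{-1}u_1\|\geq e^\chi\|u_1\|$ for every $u_1\neq 0$, while $\|D_u(\wh x_{-n})^{-1}\|<e^{-\chi}$, and the mirror of the cone computation of part (1) yields $\|w_{-n}\|\geq e^{n\chi/2}\|w_0\|$. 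Hence $\|\wh{df}^{(-n)}v\|\geq \|w_{-n}\|/(2\|C(\wh x_{-n})^{-1}\|)$, and I would control the denominator via the $\ve$-gpo structure: (GPO2) gives $p^u_n\leq e^\ve p^u_{n-1}$, so $p^u_{-n}\geq e^{-n\ve}p^u_0$; combined with $p^u_{-n}\leq\ve Q(\wh x_{-n})$ this yields $Q(\wh x_{-n})\geq e^{-n\ve}p^u_0/\ve$, and then (\ref{estimates-Q}) gives $\|C(\wh x_{-n})^{-1}\|\leq K\,e^{n\ve\beta/48}$ for a constant $K=K(p^u_0,\ve)$. Therefore
\[
\liminf_{n\to+\infty}\tfrac{1}{n}\log\|\wh{df}^{(-n)}v\|\geq \tfrac{\chi}{2}-\tfrac{\ve\beta}{48}>0
\]
for $\ve$ small. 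The assertion for $T_{\wh y}\wh V^u$ is symmetric: iterate forward, use the forward invariance and expansion of $\mathfs C^u$ under $dF$, and control $\|C(\wh x_n)^{-1}\|$ via the chain $p^s_n\geq e^{-n\ve}p^s_0$.

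The main obstacle I expect is the cone bookkeeping under the nonlinear maps $F^{\pm 1}$: one must check that tangent vectors to the relevant admissible manifolds remain in the appropriate cone after every iteration, and that the $C^1$-small perturbation $H^\pm$ never absorbs the hyperbolic rate. Once the cone arithmetic is tight, the only place where the $\ve$-gpo hypothesis enters is through the subexponential control $\|C(\wh x_{\pm n})^{-1}\|\leq K\,e^{n\ve\beta/48}$, which (GPO2) provides essentially for free once one tracks the $p^{s/u}$ chains.
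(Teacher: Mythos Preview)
Your overall strategy is the right one and coincides with the paper's: pull everything to charts, use the block form of $D(\wh x)$ plus the $C^1$--smallness of $H^\pm$, and exploit that tangent vectors to admissible manifolds lie in a cone. There is however a genuine arithmetic gap in part~(1) as you wrote it.

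\textbf{Part (1).} Your per–step inequality
\[
\|w_n\|\;\leq\;\tfrac{\sqrt5}{2}\bigl(e^{-\chi}+\tfrac{\sqrt5}{2}\ve\bigr)\|w_{n-1}\|\;\leq\;e^{-\chi/2}\|w_{n-1}\|
\]
fails for small $\chi$: even at $\ve=0$ one has $\tfrac{\sqrt5}{2}e^{-\chi}>e^{-\chi/2}$ whenever $\chi<2\log\tfrac{\sqrt5}{2}\approx0.223$ (and $>1$ for $\chi<\log\tfrac{\sqrt5}{2}$), so iterating on $\|w_n\|$ gives no decay. The issue is that the factor $\tfrac{\sqrt5}{2}$ coming from the $\tfrac12$--cone is paid at \emph{every} step. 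The paper avoids this by iterating on the $\R^d$--component $w_k$ alone: writing $F_{\wh x_{k-1},\wh x_k}=D(\wh x_{k-1})+H_{k-1}^+$ and using $\|\begin{bmatrix}w_{k-1}\\ (dG_{k-1})w_{k-1}\end{bmatrix}\|\leq 2\|w_{k-1}\|$ one gets
\[
\|w_k\|\;\leq\;\bigl(\,\|D_s(\wh x_{k-1})\|+2\|dH^+_{k-1}\|\,\bigr)\|w_{k-1}\|\;\leq\;e^{-\chi/2}\|w_{k-1}\|,
\]
and the cone factor is applied only once at the end. Equivalently you could sharpen your cone: (AM2)+(AM3) actually give $\|dG\|_{C^0}<\ve$ (this is Lemma~\ref{App-Lemma-introductory}(1)), so the correct cone is $\{\|u_2\|\leq\ve\|u_1\|\}$, not the $\tfrac12$--cone you read off from (AM3) alone.

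\textbf{Part (2).} Your direct backward iteration with cone invariance is correct in spirit (and here the $\tfrac{\sqrt5}{2}$ factor causes no trouble, since $e^{\chi}-O(\ve)>e^{\chi/2}$ without constraint on $\chi$), but the paper takes a shorter route that avoids re-doing the cone computation. Since $\wh y$ is the shadowed point of the full $\ve$--gpo, one has $\wh f^{-n}(\wh y)\in\wh V^s_{-n}:=\wh V^s[\{v_k\}_{k\geq -n}]$ and $\wh{df}^{(-n)}v\in T_{\wh f^{-n}(\wh y)}\wh V^s_{-n}$; applying part~(1) to this vector at time $-n$ and inverting gives
\[
\|v\|=\|\wh{df}^{(n)}(\wh{df}^{(-n)}v)\|\leq 8\|C(\wh x_{-n})^{-1}\|e^{-\chi n/2}\|\wh{df}^{(-n)}v\|,
\]
whence $\|\wh{df}^{(-n)}v\|\geq\tfrac18\|v\|\,e^{\chi n/2}\|C(\wh x_{-n})^{-1}\|^{-1}$. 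Both your route and the paper's then finish identically, via $\|C(\wh x_{-n})^{-1}\|\leq Ke^{n\ve\beta/48}$ from (GPO2) and (\ref{estimates-Q}). Your approach buys a self-contained argument; the paper's buys economy by reusing part~(1) verbatim.
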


 
\begin{proof}
All statements are about the cocycle $(\wh{df}^{(n)})_{n\in\Z}$, therefore
we can prove the result in the domains of the Pesin charts and then push
it to the respective tangent spaces. As we will see, the proof is symmetric
and so we just need to prove the estimates for the stable manifolds.

\medskip
\noindent
(1) Let $d=d_s(\wh x_0)$, and let $G_k$ be the representing function of
$V^s[\{\Psi_{\wh x_n}^{p^s_n,p^u_n}\}_{n\geq k}]$. Fix $v\in T_{\wh y}\wh V^s$ with $\|v\|=1$, and
let $z=(v_0,G_0(v_0))$ with $\vt[\wh y]=y=\Psi_{\wh x_0}(z)$. Thus
$$
v=(d\Psi_{\wh x_0})_{z}\begin{bmatrix} w \\ (dG_0)_{v_0}w\end{bmatrix}
$$
for some $w\in \R^d$. In particular,
$\norm{w}\leq \norm{\begin{bmatrix} w \\ (dG_0)_{v_0}w\end{bmatrix}}\leq 2\norm{C(\wh x_0)^{-1}}$.
Using that
$\Psi_{\wh x_n}\circ  F_{\wh x_{n-1},\wh x_n}\circ\cdots\circ F_{\wh x_0,\wh x_1}=f^n\circ \Psi_{\wh x_0}$,
we have
$$
\wh{df}^{(n)}v=[d(\Psi_{\wh x_n}\circ  F_{\wh x_{n-1},\wh x_n}\circ\cdots\circ F_{\wh x_0,\wh x_1})]_z
\begin{bmatrix} w \\ (dG_0)_{v_0}w\end{bmatrix}
$$
and so
$$
\|\wh{df}^{(n)}v\|\leq 2\norm{[d( F_{\wh x_{n-1},\wh x_n}\circ\cdots\circ F_{\wh x_0,\wh x_1})]_z
\begin{bmatrix} w \\ (dG_0)_{v_0}w\end{bmatrix}}.
$$
Define inductively $z_k=(v_k,G_k(v_k))$ and $w_k\in\R^d$ by
$z_0=z$, $z_k=F_{\wh x_{k-1},\wh x_k}(z_{k-1})$, $w_0=w$ and
$$
\begin{bmatrix} w_k \\ (dG_k)_{v_k}w_k\end{bmatrix}=
(dF_{\wh x_{k-1},\wh x_k})_{z_{k-1}}\begin{bmatrix} w_{k-1} \\ (dG_{k-1})_{v_{k-1}}w_{k-1}\end{bmatrix}.
$$
In particular, $z_k\in B^d[p^s_k]\times B^{m-d}[p^s_k]\subset B[2p^s_k]$.
We want to bound $\norm{\begin{bmatrix} w_n \\ (dG_n)_{v_n}w_n\end{bmatrix}}$.
By Lemma \ref{App-Lemma-introductory}(1),
we have $\norm{\begin{bmatrix} w_k \\ (dG_k)_{v_k}w_k\end{bmatrix}}\leq 2\norm{w_k}$,
hence it is enough to estimate $\norm{w_n}$. Write
$F_{\wh x_{k-1},\wh x_k}=D(\wh x_{k-1})+H^+_{k-1}$,
where $H^+_{k-1}$ satisfies Theorem \ref{Thm-non-linear-Pesin-2}. Then
\begin{align*}
&\, \norm{w_k}\leq \norm{D_s(\wh x_{k-1})w_{k-1}}+\norm{(dH^+_{k-1})_{z_{k-1}}\begin{bmatrix} w_{k-1} \\ (dG_{k-1})_{v_{k-1}}w_{k-1}\end{bmatrix}}\\
&\leq \left(\norm{D_s(\wh x_{k-1})}+2\norm{(dH^+_{k-1})_{z_{k-1}}}\right)\norm{w_{k-1}}
\end{align*}
and so, by Lemma \ref{Lemma-linear-reduction}(2) and Theorem \ref{Thm-non-linear-Pesin-2}(b), we
obtain that 
\begin{equation}\label{estimate-w_k}
\norm{w_k}\leq \left[e^{-\chi}+4\ve(p^s_{k-1})^{\beta/3}\right]\norm{w_{k-1}}\leq e^{-\frac{\chi}{2}}\norm{w_{k-1}},
\end{equation}
where in the last passage we used that $\ve>0$ is small enough.
Therefore, we conclude that
$\norm{w_n}\leq e^{-\frac{\chi}{2}n}\norm{w_0}\leq 2\norm{C(\wh x_0)^{-1}}e^{-\frac{\chi}{2}n}$
and so $\|\wh{df}^{(n)}v\|\leq 8\norm{C(\wh x_0)^{-1}}e^{-\frac{\chi}{2}n}$.

\medskip
\noindent
(2) Write $\un v=\{v_n\}_{n\in\Z}=\{\Psi_{\wh x_n}^{p^s_n,p^u_n}\}_{n\in\Z}$ with $\eta_n=p^s_n\wedge p^u_n$,
and let $\wh y\in \wh V^s[\un v]\cap \wh V^u[\un v]$. Letting
$\wh V^s_n=\wh V^s[\{v_k\}_{k\geq n}]$ and $\wh V^u_n=\wh V^u[\{v_k\}_{k\leq n}]$,
we have $\{\wh f^n(\wh y)\}=\wh V^s_n\cap \wh V^u_n$.
Fix a nonzero vector $v\in T_{\wh y}\wh V^s$ and $n\geq 0$.
Then $w=\wh{df}^{(-n)}v\in T_{\wh f^{-n}(\wh y)}\wh V^s_{-n}$.
By part (1), we have
$\norm{v}=\|\wh{df}^{(n)}w\|\leq 8\norm{C(\wh x_{-n})^{-1}}e^{-\frac{\chi}{2}n}\norm{w}$
and so $\|\wh{df}^{(-n)}v\|\geq \tfrac{1}{8}\norm{v}e^{\frac{\chi}{2}n}\norm{C(\wh x_{-n})^{-1}}^{-1}$.
Since $\norm{C(\wh x_{-n})^{-1}}^{-1}\geq \eta_{-n}^{\beta/48}\geq (e^{-\ve n}\eta_0)^{\beta/48}$, we get that
$$
\|\wh{df}^{(-n)}v\|\geq\tfrac{1}{8}\eta_0^{\beta/48}\norm{v}e^{\left(\frac{\chi}{2}-\frac{\beta\ve}{48}\right)n}
$$
and so $\displaystyle\liminf_{n\to+\infty}\tfrac{1}{n}\log\|\wh{df}^{(-n)}v\|\geq 
\tfrac{\chi}{2}-\tfrac{\beta\ve}{48}>0$
if $\ve>0$ is small enough.
\end{proof}

Observe that part (1) can be improved if we have an estimate that is better than the one
provided by Theorem \ref{Thm-non-linear-Pesin-2}(b), which in general is not true for 
$\ve$--gpo's. Nevertheless, if we apply the method to a real orbit, then we can apply 
Theorem \ref{Thm-non-linear-Pesin} and, the smaller $\norm{z_{k-1}}$ is,
the  closer we get to a contraction of rate $e^{-\chi}$. 
This is the content of the next result, 
which is the claim in \cite[pp. 85]{Ben-Ovadia-2019}, adapted to our context.
Fix $\wh x\in{\rm NUH}^*$, and let $\wh x_n=\wh f^n(\wh x)$, $p^s_n=q^s(\wh x_n)$
and $p^u_n=q^u(\wh x_n)$. We know that $\{\Psi_{\wh x_n}^{p^s_n,p^u_n}\}_{n\in\Z}$
is an $\ve$--gpo. Fix $\chi'\in (0,\chi)$, let $\delta=(e^{-\chi'}-e^{-\chi})^{3/\beta}<1$ and 
consider the sequence $\un v=\{\Psi_{\wh x_n}^{\delta p^s_n,\delta p^u_n}\}_{n\in\Z}$.
In practice, $\delta$ diminishes the domains of the Pesin charts.
Observe that $\un v$ is {\em not necessarily} an $\ve$--gpo, since the parameters
$\delta p^s_n,\delta p^u_n$ might not satisfy  (GPO2).
Nevertheless, they do satisfy condition (GT1) in the Appendix,
so we can define graph transforms. Hence, although $\un v$ is not an $\ve$--gpo,
this sequence still defines invariant sets, which we will denote by $\wh V^s_{\chi'}=\wh V^s_{\chi'}[\un v^+]$
and $\wh V^u_{\chi'}=\wh V^u_{\chi'}[\un v^-]$. Now, since we diminished the domains of the Pesin charts
then the estimates used in the proof of Proposition \ref{Prop-hyperbolicity-manifolds}(1)
can be improved to give a rate of contraction of the order of $e^{-\chi'}$.

\begin{corollary}\label{Corollary-better-hyperbolicity}
Let $\wh x\in{\rm NUH}^*$. For a fixed $\chi'\in (0,\chi)$,
let $\un v$ be as above.
If $\wh V^s_{\chi'}=\wh V^s_{\chi'}[\un v^+]$ then for every $v\in T\wh V^s_{\chi'}$ with $\norm{v}=1$
and every $n\geq 0$ it holds
$$
\|\wh{df}^{(n)}v\|\leq 8\norm{C(\wh x)^{-1}}e^{-\chi' n}.
$$
Similarly, if $\wh V^u_{\chi'}=\wh V^u_{\chi'}[\un v^-]$ then for every $v\in T\wh V^u_{\chi'}$ with
$\norm{v}=1$ and every $n\geq 0$ it holds
$$
\|\wh{df}^{(-n)}v\|\leq 8\norm{C(\wh x)^{-1}}e^{-\chi' n}.
$$
\end{corollary}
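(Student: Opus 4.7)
The setup is essentially that of Proposition~\ref{Prop-hyperbolicity-manifolds}(1), with two crucial improvements available in the present situation: (i) $\{\wh x_n\}$ is a genuine orbit of $\wh f$, so the composition along the trajectory is governed by $F_{\wh x_{k-1}}$ of Theorem~\ref{Thm-non-linear-Pesin} rather than the weaker $F_{\wh x_{k-1},\wh x_k}$ of Theorem~\ref{Thm-non-linear-Pesin-2}; and (ii) the admissible manifolds in $\mathfs{M}^s(v_k)$ for $v_k=\Psi_{\wh x_k}^{\delta p^s_k,\delta p^u_k}$ are supported in $B^d[\delta p^s_k]\times B^{m-d}[\delta p^s_k]\subset B[2\delta p^s_k]$, i.e. a $\delta$-times shrunken neighborhood of $0$.

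I would first focus on $\wh V^s_{\chi'}$ and copy verbatim the graph-transform bookkeeping from Proposition~\ref{Prop-hyperbolicity-manifolds}(1): representing functions $G_k$ of the iterated admissible manifolds, base points $z_k=(v_k,G_k(v_k))\in B[2\delta p^s_k]$, tangent vectors $\binom{w_k}{(dG_k)_{v_k}w_k}$ produced recursively by the derivative of $F_{\wh x_{k-1}}$, and initial data with $\|w_0\|\leq 2\|C(\wh x)^{-1}\|$. The decomposition $F_{\wh x_{k-1}}=D(\wh x_{k-1})+H^+_{k-1}$ from Theorem~\ref{Thm-non-linear-Pesin} gives $H^+_{k-1}(0)=0$, $dH^+_{k-1}(0)=0$ and $\|H^+_{k-1}\|_{C^{1+\beta/2}}<\ve$. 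Consequently, for $z_{k-1}\in B[2\delta p^s_{k-1}]$,
\[
\|d(H^+_{k-1})_{z_{k-1}}\|=\|d(H^+_{k-1})_{z_{k-1}}-d(H^+_{k-1})_0\|\leq \ve\|z_{k-1}\|^{\beta/2}\leq \ve(2\delta)^{\beta/2},
\]
since $p^s_{k-1}\leq 1$. Combined with $\|D_s(\wh x_{k-1})\|<e^{-\chi}$ and the same calculation as in (\ref{estimate-w_k}), this yields
\[
\|w_k\|\leq \bigl(e^{-\chi}+4\ve(2\delta)^{\beta/2}\bigr)\|w_{k-1}\|.
\]

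The main (and only really new) point is the choice of $\delta$. From $\delta=(e^{-\chi'}-e^{-\chi})^{3/\beta}$ one has $\delta^{\beta/3}=e^{-\chi'}-e^{-\chi}$, and since $\delta<1$ also $\delta^{\beta/2}\leq \delta^{\beta/3}=e^{-\chi'}-e^{-\chi}$. Taking $\ve$ small enough that $4\cdot 2^{\beta/2}\ve<1$, we conclude $\|w_k\|\leq e^{-\chi'}\|w_{k-1}\|$, hence $\|w_n\|\leq 2\|C(\wh x)^{-1}\|e^{-\chi'n}$. Using as in Proposition~\ref{Prop-hyperbolicity-manifolds}(1) that $\|\wh{df}^{(n)}v\|\leq 2\|\binom{w_n}{(dG_n)_{v_n}w_n}\|\leq 4\|w_n\|$ gives the claimed $\|\wh{df}^{(n)}v\|\leq 8\|C(\wh x)^{-1}\|e^{-\chi'n}$.

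The unstable case is completely symmetric, using $F_{\wh x_{k-1}}^{-1}=D(\wh x_{k-1})^{-1}+H^-_{k-1}$ with $dH^-_{k-1}(0)=0$ and $\|H^-_{k-1}\|_{C^{1+\beta/2}}<\ve$ from Theorem~\ref{Thm-non-linear-Pesin}, iterating backwards along $\un v^-$ on the shrunken unstable admissible manifolds. There is no real obstacle; the content of the corollary is precisely that moving from an abstract $\ve$-gpo (where one only has Theorem~\ref{Thm-non-linear-Pesin-2} with $C^0$ bounds on $dH^\pm$ of size $\ve t^{\beta/3}$) to a real orbit (where $dH^\pm_0=0$ and a full $\beta/2$ Hölder bound on $dH^\pm$ is available) trades the rate $e^{-\chi/2}$ for any rate strictly better than $e^{-\chi}$, at the cost of shrinking the domain by the explicit factor $\delta$.
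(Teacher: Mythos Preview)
Your proof is correct and follows exactly the paper's approach: rerun the proof of Proposition~\ref{Prop-hyperbolicity-manifolds}(1), replace $F_{\wh x_{k-1},\wh x_k}$ by $F_{\wh x_{k-1}}$ (real orbit, Theorem~\ref{Thm-non-linear-Pesin}), and exploit the shrunken radius $\delta p^s_{k-1}$ together with $dH^+_{k-1}(0)=0$ to upgrade the recursion (\ref{estimate-w_k}) to contraction rate $e^{-\chi'}$. The only cosmetic difference is that the paper writes the error term with exponent $\beta/3$ (matching the definition $\delta^{\beta/3}=e^{-\chi'}-e^{-\chi}$ directly) while you use the sharper $\beta/2$ coming from Theorem~\ref{Thm-non-linear-Pesin}(3)(b) and then invoke $\delta^{\beta/2}\le\delta^{\beta/3}$; both routes give the same conclusion.
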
 

\begin{proof}
Proceed exactly as in the proof of Proposition \ref{Prop-hyperbolicity-manifolds}(1), except
at inequalities (\ref{estimate-w_k}) which, by Theorem \ref{Thm-non-linear-Pesin}(3)(b),
take the stronger form
$$
\norm{w_k}\leq \left[e^{-\chi}+4\ve(\delta p^s_{k-1})^{\beta/3}\right]\norm{w_{k-1}}
\leq\left[e^{-\chi}+\delta^{\beta/3}\right]\norm{w_{k-1}}=e^{-\chi'}\norm{w_{k-1}},
$$
thus establishing the result.
\end{proof}

Observe that the closer $\chi'$ is to $\chi$, the better the estimate gets
but the domains where the estimates hold will shrink to a point.

\section{Coarse graining}\label{Section-coarse-graining}

We now construct a countable family of $\ve$--double charts that defines
a topological Markov shift that shadows all relevant orbits. Recall the definitions of the
sets ${\rm NUH}$, ${\rm NUH}^*$, and ${\rm NUH}^\#$.

\begin{theorem}\label{Thm-coarse-graining}
For all $\ve>0$ sufficiently small, there exists a countable family $\mathfs A$ of $\ve$--double charts
with the following properties:
\begin{enumerate}[{\rm (1)}]
\item {\sc Discreteness}: For all $t>0$, the set $\{\Psi_{\wh x}^{p^s,p^u}\in\mathfs A:p^s,p^u>t\}$ is finite.
\item {\sc Sufficiency:} If $\wh x\in {\rm NUH}^\#$ then there is a recurrent
sequence $\un v\in{\mathfs A}^{\Z}$ that shadows $x$.
\item {\sc Relevance:} For all $v\in \mathfs A$ there is an $\ve$--gpo $\un{v}\in\mathfs A^\Z$
with $v_0=v$ that shadows a point in ${\rm NUH}^\#$.
\end{enumerate}
\end{theorem}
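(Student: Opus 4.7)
\medskip
\noindent
\textbf{Proof proposal.}
The plan is to build $\mathfs A$ by discretizing the essential data that defines an $\ve$--double chart, namely the quadruple $(\vt[\wh x], C(\wh x), p^s, p^u)$ (together with the dimensions $d_s, d_u$), in such a way that the overlap condition $\overset{\ve}{\approx}$ becomes coarse enough that only countably many representatives are needed. The countable family $\mathfs A$ will consist of one representative per cell of this discretization. The relevance clause will then be handled by simply discarding those representatives that are not actually visited by an $\ve$--gpo shadowing a point of ${\rm NUH}^\#$.

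For the construction, fix a countable dense subset $M_0\subset M\setminus\mathfs S$, and for each $x\in M_0$ fix a countable dense family $\mathfs C(x)$ of linear maps $C:\R^m\to T_xM$ satisfying $\|C\|\leq 1$ and $\|C^{-1}\|$ bounded in terms of a given scale. For every tuple $(x,C,p^s,p^u,d_s,d_u)$ with $x\in M_0$, $C\in\mathfs C(x)$, $p^s,p^u\in I_\ve$, and $d_s+d_u=m$, include in $\mathfs A$ a representative $\ve$--double chart centered at some $\wh x$ for which $\vt[\wh x]$ is very close to $x$, $C(\wh x)$ is very close to $C$, and the parameters $p^s,p^u\leq\delta_\ve Q(\wh x)$ hold, provided such an $\wh x\in{\rm NUH}$ exists. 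Discreteness is then automatic: if $p^s,p^u>t$, then by $p^s,p^u\leq\delta_\ve Q(\wh x)$ and the bound \eqref{estimates-Q} we obtain $\|C(\wh x)^{-1}\|\leq \ve^{1/8}(t/\delta_\ve)^{-\beta/48}$ and $\rho(\wh x)\geq (t/\delta_\ve\ve^{6/\beta})^{\beta/(96a)}$, so the tuples in question lie in a precompact set, and the net $M_0\times \mathfs C(\cdot)\times I_\ve\times I_\ve$ intersects this set in finitely many points.

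For sufficiency, fix $\wh x\in{\rm NUH}^\#$ and let $\wh x_n=\wh f^n(\wh x)$, $p^s_n=q^s(\wh x_n)$, $p^u_n=q^u(\wh x_n)$. By Lemma~\ref{Lemma-q^s}(2), $\{\Psi_{\wh x_n}^{p^s_n,p^u_n}\}_{n\in\Z}$ is an $\ve$--gpo that shadows $\wh x$. For each $n$, choose $v_n=\Psi_{\wh y_n}^{p^s_n,p^u_n}\in\mathfs A$ whose defining tuple is within the approximation tolerance of $(\vt[\wh x_n],C(\wh x_n),p^s_n,p^u_n,d_s(\wh x_n),d_u(\wh x_n))$; then $\Psi_{\wh y_n}^{p^s_n\wedge p^u_n}\overset{\ve}{\approx}\Psi_{\wh x_n}^{p^s_n\wedge p^u_n}$, and Proposition~\ref{Proposition-overlap} shows that the transitivity of $\overset{\ve}{\approx}$ along $\wh f$ gives (GPO1) for consecutive $v_n$'s; (GPO2) can be preserved exactly by keeping $p^s_n,p^u_n$ equal to $q^s(\wh x_n),q^u(\wh x_n)$ and using Proposition~\ref{Proposition-overlap}(3) to absorb the $e^{\pm 97a\ve/\beta}$ error from $Q(\wh y_n)$ vs $Q(\wh x_n)$ inside the tolerance of the discretization (this is the technical heart of the argument). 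Lemma~\ref{Lemma-shadowing} ensures $\un v$ shadows $\wh x$. Finally, the recurrence $\limsup_{n\to\pm\infty}q(\wh x_n)>0$ implies that infinitely many $v_n$ lie in the \emph{finite} subfamily $\{\Psi_{\wh y}^{p^s,p^u}\in\mathfs A:p^s\wedge p^u>t\}$ for some $t>0$, so by pigeonhole some fixed element of $\mathfs A$ is attained infinitely often in the future and another in the past, giving $\un v\in\mathfs A^{\#}$.

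For relevance, replace $\mathfs A$ with the subfamily $\mathfs A'$ consisting of those $v\in\mathfs A$ that occur as $v_0$ of some $\ve$--gpo $\un v\in\mathfs A^\Z$ shadowing a point of ${\rm NUH}^\#$; $\mathfs A'$ is still countable, still satisfies discreteness, and by the sufficiency proof it still satisfies the sufficiency property. The main obstacle is the delicate rigidity of condition (GPO2) under perturbation of centers: since $p^s,p^u\in I_\ve$ are quantized and $\delta_\ve Q(\wh y)$ depends on the perturbed center, we must ensure the discretization is fine enough that the rounding $\min\{e^\ve q^s_{n+1},\delta_\ve Q(\wh y_n)\}$ agrees with $q^s_n=\min\{e^\ve q^s_{n+1},\delta_\ve Q(\wh x_n)\}$ at every step. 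This is controlled by the overlap exponent $4$ in the definition of $\overset{\ve}{\approx}$ together with the ratio bound from Proposition~\ref{Proposition-overlap}(3), and dictates the fineness of the discretization of $\mathfs C(x)$.
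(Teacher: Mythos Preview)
There is a genuine gap in the sufficiency argument. You discretize only the position-zero data $(\vt[\wh y],C(\wh y))$, but (GPO1) for the edge $v_n\overset{\ve}{\to}v_{n+1}$ requires the overlap $\Psi_{\wh f(\wh y_n)}\overset{\ve}{\approx}\Psi_{\wh y_{n+1}}$, which involves $C(\wh f(\wh y_n))$. This quantity is \emph{not} controlled by $(\vt[\wh y_n],C(\wh y_n))$: the assignment $\wh z\mapsto C(\wh z)$ is only measurable, and $C(\wh f(\wh y_n))$ depends on the full past of $\wh y_n$ (through $E^u_{\wh f(\wh y_n)}$ and the unstable part of the Lyapunov inner product), which bears no a priori relation to the past of $\wh x_n$. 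Your appeal to ``transitivity of $\overset{\ve}{\approx}$ along $\wh f$'' via Proposition~\ref{Proposition-overlap} is unfounded: that proposition derives consequences \emph{from} an existing overlap; it provides no mechanism to push an overlap through $\wh f$. The same obstruction hits $C(\wh f^{-1}(\wh y_{n+1}))$ in the second half of (GPO1).

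The paper's remedy is to discretize, for each $\wh x$, the data at three consecutive times simultaneously:
$\Gamma(\wh x)=\big((\vt_i[\wh x])_{|i|\leq 1},\,(C(\wh f^i(\wh x)))_{|i|\leq 1},\,Q(\wh x)\big)\in M^3\times {\rm GL}(m,\R)^3\times(0,1]$,
partitioning the range into precompact cells $Y_{\un k,\un\ell,\un a,b,d}$ and choosing in each a finite $e^{-8(j+2)}$--net (with $j$ tied to $p^s\wedge p^u$ via (CG3)). Then the approximation $\Gamma(\wh y_n)\approx \Gamma(\wh f^n(\wh x))$ \emph{directly} yields $C(\wh f(\wh y_n))$ close to $C(\wh f^{n+1}(\wh x))$ close to $C(\wh y_{n+1})$, and (GPO1) follows from a single triangle inequality. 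A secondary issue: your (GPO2) argument keeps $p^{s/u}_n=q^{s/u}(\wh f^n(\wh x))$, but those satisfy the greedy recursion with $Q(\wh f^n(\wh x))$, not with $Q(\wh y_n)$, and the ratio $e^{\pm 97a\ve/\beta}$ from Proposition~\ref{Proposition-overlap}(3) is far too coarse to force equality in $I_\ve$. The paper instead \emph{defines} $p^{s/u}_n$ by the greedy recursion using the approximating values $Q(\wh y_\bullet)$, so (GPO2) is automatic, and checks $p^s_n\wedge p^u_n=e^{\pm\ve/3}q(\wh f^n(\wh x))$ a posteriori to obtain recurrence.
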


Above, $(v_n)_{n\in\Z}\in{\mathfs A}^{\Z}$ is called {\em recurrent}
if there are $v,w\in\mathfs A$ s.t. $v_n=v$ for infinitely many $n>0$ and
$v_n=w$ for infinitely many $n<0$.
Parts (1) and (3) will be crucial to prove the inverse theorem (Theorem \ref{Thm-inverse}).
Since ${\rm NUH}^\#$ carries all $f$--adapted $\chi$--hyperbolic measures
(see Lemma \ref{Lemma-adaptedness}), part (2) says that the $\ve$--gpo's in $\mathfs A$
shadow a.e. point with respect to every $f$--adapted $\chi$--hyperbolic measure.

\begin{proof}
We follow the same strategy of \cite[Theorem 5.1]{Lima-Matheus}.
For $t>0$, let $M_t=\{x\in M: d(x,\mathfs S)\geq t\}$.
Since $M$ has finite diameter (remember we are even assuming it is smaller than one), each $M_t$
is precompact.
Let $\N_0=\N\cup\{0\}$. Fix a countable open cover $\mathfs P=\{\mathfrak B_i\}_{i\in\N_0}$
of $M\backslash\mathfs S$ s.t.:
\begin{enumerate}[$\circ$]
\item $\mathfrak B_i:=\mathfrak B_{z_i}=B(z_i,2\mathfrak d(z_i))$ for some $z_i\in M$.
\item For every $t>0$, $\{\mathfrak B\in\mathfs P:\mathfrak B\cap M_t\neq\emptyset\}$ is finite.
\end{enumerate}

\medskip
Let $X:=M^3\times {\rm GL}(m,\R)^3\times (0,1]$.
For $\wh x\in{\rm NUH}^\#$, let
$\Gamma(\wh x)=(\un{\wh x},\un C,\un Q)\in X$ with
\begin{align*}
\un{\wh x}=(\vt_{-1}[\wh x],\vt_0[\wh x],\vt_1[\wh x]),\ \un C=(C(\wh f^{-1}(\wh x)),C(\wh x),C(\wh f(\wh x))),
\ \un Q=Q(\wh x).
\end{align*}
Let $Y=\{\Gamma(\wh x):\wh x\in{\rm NUH}_\chi^\#\}$. We want to construct a countable dense subset
of $Y$. Since the maps $\wh x\mapsto C(\wh x),Q(\wh x)$ are usually just measurable,
we apply a precompactness argument.
For each triple of vectors $\un{k}=(k_{-1},k_0,k_1)$, $\un{\ell}=(\ell_{-1},\ell_0,\ell_1)$,
$\un a=(a_{-1},a_0,a_1)\in\N_0^3$, $b\in\N_0$ and $d\in \{0,1,\ldots,m\}$, define
$$
Y_{\un k,\un \ell,\un a,b,d}:=\left\{\Gamma(\wh x)\in Y:
\begin{array}{cl}
e^{-k_i-1}\leq d(\vt_i[\wh x],\mathfs S)< e^{-k_i},& -1\leq i\leq 1\\
e^{\ell_i}\leq\|C(\wh f^i(\wh x))^{-1}\|<e^{\ell_i+1},&-1\leq i\leq 1\\
\vt_i[\wh x]\in \mathfrak B_{a_i},&-1\leq i\leq 1\\
e^{-b-1}\leq Q(\wh x)< e^{-b}&\\
d_s(\wh x)=d&\\
\end{array}
\right\}.
$$
Clearly, any dependence on $d\in \{0,1,\ldots,m\}$ is finite. To avoid 
an extra summand in the calculations, in the sequel we will omit the dependence on $d$.

\medskip
\noindent
{\sc Claim 1:} $Y=\bigcup_{\un k,\un\ell,\un a\in\N_0^3\atop{b,d\in\N_0}}Y_{\un k,\un\ell,\un a,b,d}$, and each
$Y_{\un k,\un\ell,\un a,b,d}$ is precompact in $X$.

\medskip
\noindent
{\em Proof of Claim $1$.}
The first statement is clear. We focus on the second.
Fix $\un k,\un\ell,\un a\in \N_0^3$, $b,d\in\N_0$. Take $\Gamma(\wh x)\in Y_{\un k,\un\ell,\un a,b,d}$. Then
$$
\un{\wh x}\in M_{e^{-k_{-1}-1}}\times M_{e^{-k_0-1}}\times M_{e^{-k_1-1}},$$
a precompact subset of $M^3$.
For $|i|\leq 1$, $C(\wh f^i(\wh x))$ is an element of ${\rm GL}(m,\R)$ with norm $\leq 1$ and
inverse norm $\leq e^{\ell_i+1}$, hence it belongs to a compact subset of ${\rm GL}(m,\R)$.
This guarantees that $\un C$ belongs to a compact subset of ${\rm GL}(m,\R)^3$. Also,
$\un Q\in [e^{-b-1},1]$, a compact subinterval of $(0,1]$. Since the product of precompact sets
is precompact, the claim is proved.

\medskip
Let $j\geq 0$. By Claim 1, there exists a finite set
$Y_{\un k,\un\ell,\un a,b,d}(j)\subset Y_{\un k,\un\ell,\un a,b,d}$
s.t. for every $\Gamma(\wh x)\in Y_{\un k,\un\ell,\un a,b,d}$
there exists $\Gamma(\wh y)\in Y_{\un k,\un\ell,\un a,b,d}(j)$
s.t.:
\begin{enumerate}[{\rm (a)}]
\item $ d(\vt_i[\wh x],\vt_i[\wh y])+\norm{\widetilde{C(\wh f^i(\wh x))}-\widetilde{C(\wh f^i(\wh y))}}<e^{-8(j+2)}$
for $-1\leq i\leq 1$.
\item $\tfrac{Q(\wh x)}{Q(\wh y)}=e^{\pm \ve/3}$.
\end{enumerate}

\medskip
\noindent
{\sc The alphabet $\mathfs A$:} Let $\mathfs A$ be the countable family of $\Psi_{\wh x}^{p^s,p^u}$ s.t.:
\begin{enumerate}[i i)]
\item[(CG1)] $\Gamma(\wh x)\in Y_{\un k,\un\ell,\un a,b,d}(j)$ for some
$(\un k,\un\ell,\un a,b,d,j)\in\N_0^3\times\N_0^3\times\N_0^3\times \N_0\times \N_0\times \N_0$.
\item[(CG2)] $0<p^s,p^u\leq \delta_\ve Q(\wh x)$ and $p^s,p^u\in I_\ve$.
\item[(CG3)] $e^{-j-2}\leq p^s\wedge p^u\leq e^{-j+2}$.
\end{enumerate}

\medskip
\noindent
{\em Proof of discreteness.}
By Lemma \ref{Lemma-linear-reduction}(4), we have:
\begin{align}\label{inequality-C}
\|C(\wh f^{\pm 1}(\wh x))^{-1}\|\leq  \rho(\wh x)^{-2a}(1+e^{2\chi})^{1/2}\|C(\wh x)^{-1}\|.
\end{align}
Fix $t>0$, and let $\Psi_{\wh x}^{p^s,p^u}\in\mathfs A$ with $p^s,p^u>t$.
Note that $\rho(\wh x)>\rho(\wh x)^{2a}>Q(\wh x)>p^s,p^u>t$.
If $\Gamma(\wh x)\in Y_{\un k,\un\ell,\un a,b,d}(j)$ then:
\begin{enumerate}[$\circ$]
\item Finiteness of $\un k$: for $|i|\leq 1$, $e^{-k_i}> d(\vt_i[\wh x],\mathfs S)\geq\rho(\wh x)>t$, hence $k_i< |\log t|$.
\item Finiteness of $\un\ell$: we have $e^{\ell_0}\leq \|C(\wh x)^{-1}\|<Q(\wh x)^{-1}<t^{-1}$,
hence $\ell_0<|\log t|$. For $i=\pm 1$, inequality (\ref{inequality-C}) implies that
$$
e^{\ell_i}\leq \|C(\wh f^{i}(\wh x))^{-1}\|<t^{-1}(1+e^{2\chi})^{1/2}t^{-1}<e^{\chi+1}t^{-2},
$$
hence $\ell_i<\chi+1+2|\log t|=:T_t$, which is bigger than $|\log t|$.
\item Finiteness of $b$: $e^{-b}>Q(\wh x)>t$, hence $b<|\log t|$.
\item Finiteness of $j$: $t<p^s\wedge p^u\leq e^{-j+2}$, hence $j\leq |\log t|+2$.
\item Finiteness of $\un a$: $\vt_i[\wh x]\in \mathfrak B_{a_i}\cap M_t$,
hence $\mathfrak B_{a_i}$ belongs to the finite set
$\{\mathfrak B\in\mathfs P:\mathfrak B\cap M_t\neq\emptyset\}$.
\item Finiteness of $(p^s,p^u)$: $t< p^s,p^u$, hence $\#\{(p^s,p^u):p^s,p^u>t\}\leq \#(I_\ve\cap (t,1])^2$ is finite.
\end{enumerate}
The first four items above give that, for $\un a\in\N_0^3$ and $t>0$,
\begin{align*}
\#\left\{\Gamma(\wh x):
\begin{array}{c}
\Psi_{\wh x}^{p^s,p^u}\in\mathfs A\text{ s.t. }p^s,p^u>t\\
\text{and }\vt_i[\wh x]\in \mathfrak B_{a_i}, |i|\leq 1
\end{array}
\right\}\leq\sum_{j=0}^{\lceil |\log t|\rceil+2}\sum_{b=0}^{\lceil |\log t|\rceil}
\sum_{k_i,\ell_i=0\atop{-1\leq i\leq 1}}^{T_t}
\# Y_{\un k,\un\ell,\un a,b,d}(j)
\end{align*}
is the finite sum of finite terms, hence finite. Together with the last two items,
we conclude that
\begin{align*}
\#\left\{\Psi_{\wh x}^{p^s,p^u}\in\mathfs A:p^s,p^u>t\right\}&\leq 
\sum_{j=0}^{\lceil |\log t|\rceil+2}\sum_{b=0}^{\lceil |\log t|\rceil}\sum_{k_i,\ell_i=0\atop{-1\leq i\leq 1}}^{T_t}
\# Y_{\un k,\un\ell,\un a,b,d}(j)\\
&\ \ \ \ \times (\#\{\mathfrak B\in\mathfs P:\mathfrak B\cap M_t\neq\emptyset\})^3\times (\#(I_\ve\cap (t,1]))^2
\end{align*}
is finite. This proves the discreteness of $\mathfs A$.

\medskip
\noindent
{\em Proof of sufficiency.}
Let $\wh x\in {\rm NUH}^\#$, and take $(k_i)_{i\in\Z}$, $(\ell_i)_{i\in\Z}$, $(a_i)_{i\in\Z}$,
$(b_i)_{i\in\Z}$, $(d_i)_{i\in\Z}$, $(j_i)_{i\in\Z}$  s.t.:
\begin{align*}
& d(\vt_i[\wh x],\mathfs S)\in [e^{-k_i-1},e^{-k_i}),\ \|C(\wh f^i(\wh x))^{-1}\|\in [e^{\ell_i},e^{\ell_i+1}),
\ \vt_i[\wh x]\in \mathfrak B_{a_i},\\
&Q(\wh f^i(\wh x))\in [e^{-b_i-1},e^{-b_i}), \ d_i=d_s(\wh f^i(\wh x)), \ q(\wh f^i(\wh x))\in[e^{-j_i-1},e^{-j_i+1}).
\end{align*}
For $n\in\Z$, define
$$
\un k^{(n)}=(k_{n-1},k_n,k_{n+1}),\ \un\ell^{(n)}=(\ell_{n-1},\ell_n,\ell_{n+1}),\ \un a^{(n)}=(a_{n-1},a_n,a_{n+1}).
$$
Then $\Gamma(\wh f^n(\wh x))\in Y_{\un k^{(n)},\un\ell^{(n)},\un a^{(n)},b_n,d_n}$.
Take $\Gamma(\wh x_n)\in Y_{\un k^{(n)},\un\ell^{(n)},\un a^{(n)},b_n,d_n}(j_n)$
s.t.:
\begin{enumerate}[aaa)]
\item[(${\rm a}_n$)] $ d(\vt_i[\wh f^n(\wh x)],\vt_i[\wh x_n])+
\norm{\wt{C(\wh f^i(\wh f^n(\wh x)))}-\wt{C(\wh f^i(\wh x_n))}}<e^{-8(j_n+2)}$
for $|i|\leq 1$.
\item[(${\rm b}_n$)] $\tfrac{Q(\wh f^n(\wh x))}{Q(\wh x_n)}=e^{\pm\ve/3}$.
\end{enumerate}
Define $p^s_n=\delta_\ve\min\{e^{\ve|k|}Q(\wh x_{n+k}):k\geq 0\}$ and
$p^u_n=\delta_\ve\min\{e^{\ve|k|}Q(\wh x_{n+k}):k\leq 0\}$.
We claim that $\{\Psi_{\wh x_n}^{p^s_n,p^u_n}\}_{n\in\Z}$ is an $\ve$--gpo
in $\mathfs A^\Z$ that shadows $x$.

\medskip
\noindent
{\sc Claim 2:} $\Psi_{\wh x_n}^{p^s_n,p^u_n}\in\mathfs A$ for all $n\in\Z$.

\medskip
\noindent
(CG1) By definition, $\Gamma(\wh x_n)\in Y_{\un k^{(n)},\un\ell^{(n)},\un a^{(n)},b_n,d_n}(j_n)$.

\noindent
(CG2) By (${\rm b}_n$), $\inf\{e^{\ve|k|}Q(\wh x_{n+k}):k\geq 0\}=
e^{\pm\ve/3}\inf\{e^{\ve|k|}Q(\wh f^{n+k}(\wh x)):k\geq 0\}$ is positive.
Since the only accumulation point of $I_\ve$ is zero, it follows that
$p^s_n$ is well-defined and positive. The same proof applies to $p^u_n$.
The other conditions are clear from the definition.


\noindent
(CG3) Again by (${\rm b}_n$), we have
$$
\min\{e^{\ve|k|}Q(\wh x_{n+k}):k\geq 0\}=e^{\pm\ve/3}\min\{e^{\ve|k|}Q(\wh f^{n+k}(\wh x)):k\geq 0\}
$$
hence $\tfrac{p^s_n}{q^s(\wh f^n(\wh x))}=e^{\pm\ve/3}$, and analogously 
$\tfrac{p^u_n}{q^u(\wh f^n(\wh x))}=e^{\pm\ve/3}$. By Lemma \ref{Lemma-q^s}(1),
$p^s_n\wedge p^u_n=e^{\pm\ve/3}q(\wh f^n(\wh x))\in [e^{-j_n-2},e^{-j_n+2})$.

\medskip
\noindent
{\sc Claim 3:} $\Psi_{\wh x_n}^{p^s_n,p^u_n}\overset{\ve}{\rightarrow}\Psi_{\wh x_{n+1}}^{p^s_{n+1},p^u_{n+1}}$
for all $n\in\Z$.

\medskip
\noindent
(GPO1) We have $\vt_1[\wh x_n],\vt_0[\wh x_{n+1}]\in \mathfrak B_{a_{n+1}}$, and by (${\rm a}_n$) with $i=1$
and (${\rm a}_{n+1}$) with $i=0$, we have
\begin{align*}
&\, d(\vt_1[\wh x_n],\vt_0[\wh x_{n+1}])+\norm{\wt{C(\wh f(\wh x_n))}-\wt{C(\wh x_{n+1})}}\\
&\leq  d(\vt_1[\wh f^n(\wh x)],\vt_1[\wh x_n])+
\norm{\wt{C(\wh f^{n+1}(\wh x))}-\wt{C(\wh f(\wh x_n))}}+\\
&\ \ \ \ d(\vt_1[\wh f^n(\wh x)],\vt_0[\wh x_{n+1}])+
\norm{\wt{C(\wh f^{n+1}(\wh x))}-\wt{C(\wh x_{n+1})}}\\
&<e^{-8(j_n+2)}+e^{-8(j_{n+1}+2)}\leq e^{-8}\left(q(\wh f^n(\wh x))^8+q(\wh f^{n+1}(\wh x))^8\right)\\
&\overset{!}{\leq} e^{-8}(1+e^{8\ve})q(\wh f^{n+1}(\wh x))^8\leq e^{-8+8\ve/3}(1+e^{8\ve})(p^s_{n+1}\wedge p^u_{n+1})^8
\overset{!!}{<}(p^s_{n+1}\wedge p^u_{n+1})^8,
\end{align*}
where in $\overset{!}{\leq}$ we used Lemma \ref{Lemma-q} and in $\overset{!!}{<}$
we used that $e^{-8+8\ve/3}(1+e^{8\ve})<1$ when $\ve>0$ is sufficiently small. This proves
that $\Psi_{\wh f(\wh x_n)}^{p^s_{n+1}\wedge p^u_{n+1}}\overset{\ve}{\approx}
\Psi_{\wh x_{n+1}}^{p^s_{n+1}\wedge p^u_{n+1}}$.
Similarly, we prove that
$\Psi_{\wh f^{-1}(\wh x_{n+1})}^{p^s_n\wedge p^u_n}\overset{\ve}{\approx}\Psi_{\wh x_n}^{p^s_n\wedge p^u_n}$.

\medskip
\noindent
(GPO2) The very definitions of $p^s_n,p^u_n$ guarantee that
$p^s_n=\min\{e^\ve p^s_{n+1},\delta_\ve Q(\wh x_n)\}$ and
$p^u_{n+1}=\min\{e^\ve p^u_n,\delta_\ve Q(\wh x_{n+1})\}$.

\medskip
\noindent
{\sc Claim 4:} $\{\Psi_{\wh x_n}^{p^s_n,p^u_n}\}_{n\in\Z}$ shadows $\wh x$.

\medskip
By (${\rm a}_n$) with $i=0$, we have
$\Psi_{\wh f^n(\wh x)}^{p^s_n\wedge p^u_n}\overset{\ve}{\approx}\Psi_{\wh x_n}^{p^s_n\wedge p^u_n}$, hence 
by Proposition \ref{Proposition-overlap}(4) we have
$\vt_n[\wh x]=\Psi_{\wh f^n(\wh x)}(0)\in \Psi_{\wh x_n}(B[p^s_n\wedge p^u_n])$,
thus $\{\Psi_{\wh x_n}^{p^s_n,p^u_n}\}_{n\in\Z}$ shadows $\wh x$.

\medskip
We thus proved that every $\wh x\in{\rm NUH}^\#$ is shadowed
by an $\ve$--gpo $\{\Psi_{\wh x_n}^{p^s_n,p^u_n}\}_{n\in\Z}\in \mathfs A^\Z$
s.t. $p^s_n\wedge p^u_n=e^{\pm\ve/3}q(\wh f^n(\wh x))$. Since we have $\limsup_{n\to+\infty}(p^s_n\wedge p^u_n)>0$
and  $\limsup_{n\to-\infty}(p^s_n\wedge p^u_n)>0$, the discreteness of $\mathfs A$
implies that $\{\Psi_{\wh x_n}^{p^s_n,p^u_n}\}_{n\in\Z}$ is recurrent. This completes
the proof of sufficiency.

\medskip
\noindent
{\em Proof of relevance.} The alphabet $\mathfs A$ might not a priori satisfy
the relevance condition, but we can easily reduce it to a sub-alphabet $\mathfs A'$ satisfying (1)--(3).
Call $v\in\mathfs A$ relevant if there is $\un v\in\mathfs A^\Z$ with $v_0=v$ s.t. $\un{v}$ shadows
a point in ${\rm NUH}^\#$. Since ${\rm NUH}^\#$ is $\wh f$--invariant, every $v_i$ is relevant.
Then $\mathfs A'=\{v\in\mathfs A:v\text{ is relevant}\}$ is discrete
because $\mathfs A'\subset\mathfs A$, it is sufficient and relevant by definition.
\end{proof}

Let $\Sigma$ be the TMS associated to the graph with vertex set $\mathfs A$ given by
Theorem \ref{Thm-coarse-graining} and
edges $v\overset{\ve}{\to}w$. An element of $\Sigma$ is an $\ve$--gpo, hence
we define $\pi:\Sigma\to \wh M$ by
$$
\{\pi[\{v_n\}_{n\in\Z}]\}:=\wh V^s[\{v_n\}_{n\geq 0}]\cap \wh V^u[\{v_n\}_{n\leq 0}].
$$
Recall the definition of $\Sigma^\#$. Here are the main properties of the triple $(\Sigma,\sigma,\pi)$.

\begin{proposition}\label{Prop-pi}
The following holds for all $\ve>0$ small enough.
\begin{enumerate}[{\rm (1)}]
\item Each $v\in\mathfs A$ has finite ingoing and outgoing degree, hence $\Sigma$ is locally compact.
\item $\pi:\Sigma\to \wh M$ is H\"older continuous.
\item $\pi\circ\sigma=\wh f\circ\pi$.
\item $\pi[\Sigma^\#]\supset{\rm NUH}^\#$.
\end{enumerate} 
\end{proposition}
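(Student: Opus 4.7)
The plan is to handle the four parts in the order (1), (3), (4), (2), leaving the H\"older estimate as the main technical point.

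For (1), any edge $v=\Psi_{\wh x}^{p^s,p^u}\overset{\ve}{\to}w=\Psi_{\wh y}^{q^s,q^u}$ in $\mathfs A$ satisfies $\tfrac{p^s\wedge p^u}{q^s\wedge q^u}=e^{\pm\ve}$ by the identity derived just after (GPO2) in Subsection \ref{Section-double-charts}; hence $q^s,q^u\geq e^{-\ve}(p^s\wedge p^u)>0$ are bounded below by a constant depending only on $v$, and the discreteness of $\mathfs A$ (Theorem \ref{Thm-coarse-graining}(1)) then leaves only finitely many $w$. The same reasoning handles ingoing edges, giving local compactness of $\Sigma$.

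For (3) I exploit uniqueness of shadowing: by Lemma \ref{Lemma-shadowing}, $\pi(\un v)$ is the unique point in $\wh M$ shadowed by $\un v$. If $\wh y$ is shadowed by $\un v$, then the shadowing inclusions $y_{n+1}\in\Psi_{\wh x_{n+1}}(B[20Q(\wh x_{n+1})])$ translate, after reindexing, into shadowing of $\wh f(\wh y)$ by $\sigma\un v$, so uniqueness forces $\wh f(\pi\un v)=\pi(\sigma\un v)$. For (4), given $\wh x\in{\rm NUH}^\#$, Theorem \ref{Thm-coarse-graining}(2) supplies a recurrent $\un v\in\mathfs A^{\Z}$ shadowing $\wh x$, and uniqueness again gives $\pi(\un v)=\wh x$; recurrence in $\mathfs A^{\Z}$ is exactly $\un v\in\Sigma^\#$.

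Part (2) is the heart of the proof. Let $\un v,\un w\in\Sigma$ agree on $\{-N,\ldots,N\}$ and set $\wh y=\pi(\un v),\wh z=\pi(\un w)$. Proposition \ref{Prop-stable-manifolds}(5) produces constants $K>0$ and $\theta\in(0,1)$ with $d_{C^1}(V^s[\{v_n\}_{n\geq 0}],V^s[\{w_n\}_{n\geq 0}])\leq K\theta^N$ and its unstable analogue; Lemma \ref{Lemma-admissible-manifolds} (Lipschitz intersection) upgrades these to $d(y_0,z_0)\leq 3K\theta^N$. The main obstacle is controlling the negative coordinates $y_n,z_n$ when only a finite window of the sequence is known; my resolution is to invoke the equivariance proven in (3): for $|n|\leq N/2$ the shifts $\sigma^n\un v,\sigma^n\un w$ still agree on $\{-N/2,\ldots,N/2\}$, so the same argument applied after shift gives $d(y_n,z_n)=d(\vt[\wh f^n\wh y],\vt[\wh f^n\wh z])\leq 3K\theta^{N/2}$, while for $n\leq -N/2$ the trivial bound $d(y_n,z_n)<1$ together with $2^n\leq 2^{-N/2}$ suffices. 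Taking the supremum defining $\wh d$ yields $\wh d(\wh y,\wh z)\leq C\max\{\theta,1/2\}^{N/2}$, which is H\"older of positive exponent in $d(\un v,\un w)=e^{-N}$.
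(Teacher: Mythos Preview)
Your proof is correct and follows essentially the same route as the paper. The only difference worth noting is in part (2): the paper exploits that for every $-N\le n\le 0$ the shifted sequences coincide on a window of radius $N+n$, so that $2^n d(y_n,z_n)\le 6K\,\theta_0^{-n}\theta_0^{N+n}=6K\theta_0^N$ (with $\theta_0=\max\{\theta,\tfrac12\}$), yielding the sharper H\"older bound $C\theta_0^N$ instead of your $C\theta_0^{N/2}$---but either estimate establishes H\"older continuity.
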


\begin{proof}
Part (1) follows from (GPO2), part (3) is obvious, and part (4) follows from Theorem \ref{Thm-coarse-graining}(2).
We prove part (2). Let $K>1,\theta\in(0,1)$ satisfying Proposition \ref{Prop-stable-manifolds}(5).
We will define constants $K_0>0$ and $\theta_0\in (0,1)$ s.t. if
$\un v=\{v_n\}_{n\in\Z},\un w=\{w_n\}_{n\in\Z}\in\Sigma$ satisfy $v_n=w_n$ for $|n|\leq N$
then $d(\pi(\un v),\pi(\un w)) \leq K_0\theta_0^N$. Recall that $M$
has diameter smaller than one.
Let $\theta_0:=\max\left\{\theta,\tfrac{1}{2}\right\}\in (0,1)$. For $\un v,\un w$ as above, write
$\pi(\un v)=\wh x=\{x_n\}_{n\in\Z}$ and $\pi(\un w)=\wh y=\{y_n\}_{n\in\Z}$. By part (3),
we have $\wh f^n(\wh x)=\pi[\sigma^n(\un v)]$ and $\wh f^n(\wh y)=\pi[\sigma^n(\un w)]$
and so $\{x_n\}=V^s[\sigma^n(\un v)]\cap V^u[\sigma^n(\un v)]$ and
$\{y_n\}=V^s[\sigma^n(\un w)]\cap V^u[\sigma^n(\un w)]$. Fix $-N\leq n\leq 0$.
Since $\sigma^n(\un v),\sigma^n(\un w)$ coincide in all positions $-(N+n),\ldots,N+n$,
Proposition \ref{Prop-stable-manifolds}(5) implies that
$d_{C^1}(V^s[\sigma^n(\un v)],V^s[\sigma^n(\un w)])\leq K\theta^{N+n}$
and $d_{C^1}(V^u[\sigma^n(\un v)],V^u[\sigma^n(\un w)])\leq K\theta^{N+n}$.
By Lemma \ref{Lemma-admissible-manifolds}, it follows that 
$d(x_n,y_n)\leq 6K\theta^{N+n}$ and so 
$$
2^nd(x_n,y_n)\leq 6K 2^n\theta^{N+n}\leq 6K \theta_0^{-n}\theta_0^{N+n}=6K \theta_0^N.
$$
Noting that if $n<-N$ then $2^nd(x_n,y_n)<2^n\leq \theta_0^{-n}<\theta_0^N$,
we conclude that $d(\wh x,\wh y)\leq 6K\theta_0^N$.
Letting $K_0:=6K$, the proof is complete.
\end{proof}

It is important noting that $(\Sigma,\sigma,\pi)$ does {\em not} satisfy Theorem \ref{Thm-Main},
since $\pi$ might be (and usually is) infinite-to-one. We use $\pi$ to induce a locally
finite cover of ${\rm NUH}^\#$, which will then be refined to a cover of ${\rm NUH}^\#$
by pairwise disjoint sets that will give a proof of Theorem \ref{Thm-Main}.

\section{The inverse problem}

Now we start investigating the non-injectivity of $\pi$. More specifically, if
$\un v=\{v_n\}_{n\in\Z}$ and $\un w=\{w_n\}_{n\in\Z}$ satisfy $\pi(\un{v})=\pi(\un{w})$,
then we want to compare $v_n$ and $w_n$ and show that they
are uniquely defined ``up to bounded error''. We do this under the additional assumption
that $\un{v},\un{w}\in\Sigma^\#$. Remind that $\Sigma^\#$ is the {\em recurrent set} of $\Sigma$:
$$
\Sigma^\#:=\left\{\un v\in\Sigma:\exists v,w\in V\text{ s.t. }\begin{array}{l}v_n=v\text{ for infinitely many }n>0\\
v_n=w\text{ for infinitely many }n<0
\end{array}\right\}.
$$
The main result is the following. 

\begin{theorem}[Inverse theorem]\label{Thm-inverse}
The following holds for $\ve>0$ small enough.
If $\{\Psi_{\wh x_n}^{p^s_n,p^u_n}\}_{n\in\Z}\in\Sigma^\#$ satisfies
$\pi[\{\Psi_{\wh x_n}^{p^s_n,p^u_n}\}_{n\in\Z}]=\wh x$ then $\wh x\in{\rm NUH}^*$,
and for all $n\in\Z$:
\begin{enumerate}[{\rm (1)}]
\item $d(\vt[\wh x_n],\vt[\wh f^n(\wh x)])<{25}^{-1}(p^s_n\wedge p^u_n)$ and
$\tfrac{\rho(\wh x_n)}{\rho(\wh f^n(\wh x))}=e^{\pm\ve}$.
\item $\tfrac{\norm{C(\wh x_n)^{-1}}}{\norm{C(\wh f^n(\wh x))^{-1}}}=e^{\pm2\sqrt{\ve}}$.
\item $\tfrac{Q(\wh x_n)}{Q(\wh f^n(\wh x))}=e^{\pm \sqrt[3]{\ve}}$.
\item $\tfrac{p^s_n}{q^s(\wh f^n(\wh x))}=e^{\pm\sqrt[3]{\ve}}$ and
$\tfrac{p^u_n}{q^u(\wh f^n(\wh x))}=e^{\pm\sqrt[3]{\ve}}$.
\item $(\Psi_{\wh f^n(\wh x)}^{-1}\circ\Psi_{\wh x_n})(v)=\delta_n+O_nv+\Delta_n(v)$ for $v\in B[10Q(\wh x_n)]$,
where $\delta_n\in\R^m$ satisfies $\norm{\delta_n}<{25}^{-1}(p^s_n\wedge p^u_n)$,
$O_n$ is an orthogonal linear map preserving the splitting $\R^d\times\R^{m-d}$,
and $\Delta_n:B[10Q(\wh x_n)]\to \R^m$ satisfies
$\Delta_n(0)=0$ and $\norm{d\Delta_n}_{C^0}<5\sqrt{\ve}$ on $B[10Q(\wh x_n)]$.
\end{enumerate}
In particular, $\wh x\in{\rm NUH}^\#$.
\end{theorem}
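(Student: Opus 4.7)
The starting point is the shadowing lemma: by Lemma \ref{Lemma-shadowing} applied to $\un v=\{\Psi_{\wh x_n}^{p^s_n,p^u_n}\}_{n\in\Z}$, the unique shadowed point is $\wh x=\wh V^s[\un v]\cap\wh V^u[\un v]$, so $\vt_n[\wh x]\in\Psi_{\wh x_n}(B[p^s_n\wedge p^u_n])$ for every $n$. Writing $\vt_n[\wh x]=\Psi_{\wh x_n}(w_n)$ with $\|w_n\|<50^{-1}(p^s_n\wedge p^u_n)$ via Lemma \ref{Lemma-admissible-manifolds}, Lemma \ref{Lemma-Pesin-chart}(1) immediately gives the position bound in (1), and (just as in the proof of Proposition \ref{Proposition-overlap}(1), using (A6)) the $\rho$--ratio follows. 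Also, because $\wh x\in\wh V^s[\un v]\cap\wh V^u[\un v]$ the splitting $T_{\wh x}\wh V^s\oplus T_{\wh x}\wh V^u$ is $\wh{df}$--invariant at every orbit point, and Proposition \ref{Prop-hyperbolicity-manifolds} already provides the hyperbolicity needed for (NUH1)--(NUH2) and the finiteness of the sums in (NUH3); so $\wh x\in{\rm NUH}$ and the subspaces $E^{s/u}_{\wh f^n(\wh x)}$ coincide with the tangent spaces to the invariant manifolds at $\vt_n[\wh x]$.

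The crucial step is (2), since everything else follows from it in a now standard way. The idea is to read the Lyapunov norm at $\wh f^n(\wh x)$ against the Lyapunov norm at $\wh x_n$ via the tangent direction of the invariant manifolds. For a unit vector $v^s\in E^s_{\wh f^n(\wh x)}$, represent it in the chart $\Psi_{\wh x_n}$ as the tangent to the graph of the representing function $G_n$ of $V^s[\sigma^n\un v]$; Proposition \ref{Prop-hyperbolicity-manifolds}(1) applied at each index $n$ and its refinement Corollary \ref{Corollary-better-hyperbolicity} (with a $\chi'$ arbitrarily close to $\chi$ on the shadowed real orbit) give the sharp bound $\|\wh{df}^{(k)}v^s\|\leq 8\|C(\wh x_n)^{-1}\|\,e^{-\chi' k}$ on one side and the trivial sum bound on the other. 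Summing the defining series of $\vertiii{v^s}$ and comparing with the formula from Lemma \ref{Lemma-linear-reduction}(1) for $\|C(\wh f^n(\wh x))^{-1}\|^2$ yields $\|C(\wh f^n(\wh x))^{-1}\|\leq e^{\sqrt{\ve}}\|C(\wh x_n)^{-1}\|$. The reverse inequality is obtained the same way by swapping roles, using that $T\wh V^{s/u}$ at $\wh x_n$ sit in a small $C^1$--neighborhood of $\R^{d_s}\times\{0\}$ and $\{0\}\times\R^{d_u}$ after pulling back via $\Psi_{\wh x_n}$ (item (AM2) of admissibility, improved by one iteration using Proposition \ref{Prop-graph-transform}(2)). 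This is the main technical obstacle because we must absorb two different kinds of error: the cocycle mismatch between $\wh{df}$ at $\vt_n[\wh x]$ and the block matrix $D(\wh x_n)$ (controlled by Theorem \ref{Thm-non-linear-Pesin-2}), and the slope of the admissible manifolds (controlled by admissibility); both are $O(\sqrt{\ve})$ on the domains of interest, which is exactly what is needed for the factor $e^{\pm 2\sqrt{\ve}}$.

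Once (2) is established, (3) is immediate from the definition $\widetilde Q(\wh y)=\ve^{6/\beta}\min\{\|C(\wh y)^{-1}\|^{-48/\beta},\rho(\wh y)^{96a/\beta}\}$ together with (1), (2) and the discretization in $I_\ve$, mirroring the control-of-$Q$ step of Proposition \ref{Proposition-overlap}(3). For (4), observe that by (3) and an induction on $n$ the greedy formula of Lemma \ref{Lemma-q^s}(2) for $q^{s/u}(\wh f^n(\wh x))$ and condition (GPO2) for $p^s_n,p^u_n$ produce the same sequence up to an $e^{\pm\sqrt[3]{\ve}}$ factor, exactly as in \cite[Lemma 4.2]{Lima-Matheus}. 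With (1) and (2) in hand, the hypotheses of Proposition \ref{Proposition-overlap}(5) are met for the pair $\Psi_{\wh f^n(\wh x)}^\eta\overset{\ve}{\approx}\Psi_{\wh x_n}^\eta$ at scale $\eta=p^s_n\wedge p^u_n$, and the decomposition (5) is then read off by splitting $\Psi_{\wh f^n(\wh x)}^{-1}\circ\Psi_{\wh x_n}$ into its value at $0$, the linear map $O_n=\widetilde{C(\wh f^n(\wh x))}^{-1}\widetilde{C(\wh x_n)}$ (which is orthogonal on each of $\R^d,\R^{m-d}$ because both $C$'s are isometries from the euclidean inner product to the Lyapunov one and respect the $E^s\oplus E^u$ splitting), and the higher-order part $\Delta_n$ controlled by the $C^{1+\beta/2}$ estimate of Proposition \ref{Proposition-overlap}(5). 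Finally, the recurrence hypothesis $\un v\in\Sigma^\#$ together with the discreteness clause of Theorem \ref{Thm-coarse-graining}(1) forces $\liminf p^s_n\wedge p^u_n>0$ along subsequences $n\to\pm\infty$, hence by (4) also $\limsup_{n\to\pm\infty}q(\wh f^n(\wh x))>0$, which is precisely the definition of ${\rm NUH}^\#$, closing the proof.
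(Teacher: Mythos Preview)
There is a genuine gap in the core step, namely in showing $\wh x\in{\rm NUH}$ and in establishing part (2).

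Proposition \ref{Prop-hyperbolicity-manifolds}(1) only yields contraction at rate $\chi/2$ along $T\wh V^s$, so the series defining $S(\wh x,v)=\sqrt{2}\big(\sum_{k\geq 0}e^{2k\chi}\|\wh{df}^{(k)}v\|^2\big)^{1/2}$ is \emph{not} controlled by it: plugging in the bound $8\|C(\wh x_n)^{-1}\|e^{-\chi k/2}$ gives a divergent sum, so (NUH3) fails and $\wh x\in{\rm NUH}$ does not follow. You then appeal to Corollary \ref{Corollary-better-hyperbolicity}, but that corollary is stated for a point already known to lie in ${\rm NUH}^*$ (it uses the real orbit and the parameters $q^{s/u}(\wh f^n(\wh x))$), so invoking it here is circular; and even when it applies, the constant in its bound is $\|C(\wh x)^{-1}\|$, precisely the quantity you are trying to estimate, not $\|C(\wh x_n)^{-1}\|$. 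The paper resolves this by a completely different mechanism: it uses the \emph{relevance} of each vertex (Theorem \ref{Thm-coarse-graining}(3)) to produce, for the recurrent symbol $v_0$, a genuine point of ${\rm NUH}^\#$ in a stable set at $v_0$; Corollary \ref{Corollary-better-hyperbolicity} applied to \emph{that} point yields a bound uniform in $\chi'<\chi$ (Lemma \ref{Lemma-finite-norm-1}), and then the \emph{Improvement Lemma} \ref{Lemma-improvement} together with the recurrence in $\Sigma^\#$ bootstraps this to the ratio $\tfrac{S(\wh f^n(\wh x),\cdot)}{S(\wh x_n,\cdot)}=e^{\pm\sqrt{\ve}}$ (Proposition \ref{Prop-finite-norm-2} and Section 6.5). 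Your outline bypasses the entire improvement mechanism, which is the heart of the argument.

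The approach to part (5) is also incorrect. Parts (1) and (2) give only $d(\vt[\wh x_n],\vt[\wh f^n(\wh x)])$ small and the \emph{ratio} $\|C(\wh x_n)^{-1}\|/\|C(\wh f^n(\wh x))^{-1}\|\approx 1$; this is far weaker than the overlap condition of Proposition \ref{Proposition-overlap}, which demands $\|\widetilde{C(\wh x_n)}-\widetilde{C(\wh f^n(\wh x))}\|<(\eta_1\eta_2)^4$, a condition on the matrices themselves that can fail even when the norms agree. Relatedly, the map $\widetilde{C(\wh f^n(\wh x))}^{-1}\widetilde{C(\wh x_n)}$ is \emph{not} orthogonal: the $C$'s are isometries onto the Lyapunov inner products at two \emph{different} basepoints, which are not isometric to one another. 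The paper instead introduces the identification $\Theta_{\wh x_n,\wh f^n(\wh x)}$ via the representing functions, shows $C(\wh f^n(\wh x))^{-1}\Theta\, C(\wh x_n)$ has norm $e^{\pm 2\sqrt{\ve}}$ on each block (this is equation (\ref{comparison-C-3}), a consequence of the improvement lemma), and then extracts $O_n$ by polar decomposition.
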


The above theorem compares the parameters of an $\ve$--gpo with the parameters
of its shadowed point, and it allows to identify the coded set in Theorem \ref{Thm-Main}.
Below, we state a corollary that is useful for comparing the
parameters of two $\ve$--gpo's that shadow a same point.

\begin{corollary}\label{Corollary-inverse}
The following holds for $\ve>0$ small enough.
If $\un v=\{\Psi_{\wh x_n}^{p^s_n,p^u_n}\}_{n\in\Z}\in\Sigma^\#$ 
and $\un w=\{\Psi_{\wh y_n}^{q^s_n,q^u_n}\}_{n\in\Z}\in\Sigma^\#$
satisfy $\pi(\un v)=\pi(\un w)$, then for all $n\in\Z$:
\begin{enumerate}[{\rm (1)}]
\item $d(\vt[\wh x_n],\vt[\wh y_n])<{10}^{-1}\max\{p^s_n\wedge p^u_n,q^s_n\wedge q^u_n\}$.
\item $\tfrac{p^s_n}{q^s_n}=e^{\pm 2\sqrt[3]{\ve}}$ and $\tfrac{p^u_n}{q^u_n}=e^{\pm2\sqrt[3]{\ve}}$.
\item $(\Psi_{\wh y_n}^{-1}\circ\Psi_{\wh x_n})(v)=\delta_n+O_nv+\Delta_n(v)$ for $v\in B[10Q(\wh x_n)]$,
where $\delta_n\in\R^m$ satisfies $\norm{\delta_n}<10^{-1}(q^s_n\wedge q^u_n)$,
$O_n$ is an orthogonal linear map preserving the splitting $\R^d\times\R^{m-d}$,
and $\Delta_n:B[10Q(\wh x_n)]\to \R^m$ satisfies
$\Delta_n(0)=0$ and $\norm{d\Delta_n}_{C^0}<9\sqrt{\ve}$ on $B[10Q(\wh x_n)]$.
\end{enumerate}
\end{corollary}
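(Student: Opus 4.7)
The plan is to apply Theorem \ref{Thm-inverse} separately to each of $\un v$ and $\un w$, which both shadow the common point $\wh x:=\pi(\un v)=\pi(\un w)\in{\rm NUH}^\#$, and then combine the resulting pairwise comparisons through the intermediate chart $\Psi_{\wh f^n(\wh x)}$. Thus for each $n\in\Z$, Theorem \ref{Thm-inverse} gives a small-perturbation decomposition of $\Psi_{\wh f^n(\wh x)}^{-1}\circ\Psi_{\wh x_n}$ (in terms of $p^s_n,p^u_n$) and an analogous one of $\Psi_{\wh f^n(\wh x)}^{-1}\circ\Psi_{\wh y_n}$ (in terms of $q^s_n,q^u_n$), together with sharp comparisons of $\rho,\norm{C^{-1}},Q,q^s,q^u$ between $\wh x_n,\wh y_n$ and $\wh f^n(\wh x)$.

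Statement (1) is then immediate from Theorem \ref{Thm-inverse}(1) and the triangle inequality:
\[
d(\vt[\wh x_n],\vt[\wh y_n])\leq d(\vt[\wh x_n],\vt[\wh f^n(\wh x)])+d(\vt[\wh f^n(\wh x)],\vt[\wh y_n])<\tfrac{1}{25}\bigl((p^s_n\wedge p^u_n)+(q^s_n\wedge q^u_n)\bigr),
\]
which is bounded by $10^{-1}\max\{p^s_n\wedge p^u_n,q^s_n\wedge q^u_n\}$. Statement (2) follows from Theorem \ref{Thm-inverse}(4) applied to both sequences: since $p^s_n/q^s(\wh f^n(\wh x))=e^{\pm\sqrt[3]{\ve}}$ and $q^s_n/q^s(\wh f^n(\wh x))=e^{\pm\sqrt[3]{\ve}}$, dividing gives $p^s_n/q^s_n=e^{\pm 2\sqrt[3]{\ve}}$, and likewise for the unstable parameters.

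For statement (3), I factor
\[
\Psi_{\wh y_n}^{-1}\circ\Psi_{\wh x_n}=\bigl(\Psi_{\wh y_n}^{-1}\circ\Psi_{\wh f^n(\wh x)}\bigr)\circ\bigl(\Psi_{\wh f^n(\wh x)}^{-1}\circ\Psi_{\wh x_n}\bigr).
\]
Theorem \ref{Thm-inverse}(5) applied to $\un v$ writes the right factor as $v\mapsto \delta^1+O^1v+\Delta^1(v)$ on $B[10Q(\wh x_n)]$, with $\norm{\delta^1}<25^{-1}(p^s_n\wedge p^u_n)$, $O^1$ orthogonal preserving $\R^d\times\R^{m-d}$, and $\norm{d\Delta^1}_{C^0}<5\sqrt{\ve}$. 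Applied to $\un w$ it writes $\Psi_{\wh f^n(\wh x)}^{-1}\circ\Psi_{\wh y_n}$ as $u\mapsto\delta^2+O^2u+\Delta^2(u)$ on $B[10Q(\wh y_n)]$, with the analogous bounds. Since $O^2$ is orthogonal and $\norm{d\Delta^2}_{C^0}<5\sqrt\ve<1/2$, the contraction mapping theorem applied to $u\mapsto (O^2)^T(w-\delta^2-\Delta^2(u))$ inverts this map on a ball of comparable size, yielding $\Psi_{\wh y_n}^{-1}\circ\Psi_{\wh f^n(\wh x)}(w)=\widehat\delta+(O^2)^Tw+\widehat\Delta(w)$ with $\widehat\Delta(0)=0$, $\norm{\widehat\delta}<2\norm{\delta^2}$, and $\norm{d\widehat\Delta}_{C^0}<5\sqrt\ve/(1-5\sqrt\ve)$. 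Composing with the expansion for $\un v$, grouping $O_n:=(O^2)^TO^1$ (orthogonal and splitting-preserving), $\delta_n:=(\Psi_{\wh y_n}^{-1}\circ\Psi_{\wh x_n})(0)$, and $\Delta_n(v):=(\Psi_{\wh y_n}^{-1}\circ\Psi_{\wh x_n})(v)-\delta_n-O_nv$, the bounds on $\delta_n$ follow from the triangle inequality combined with part (2) to convert $p^s_n\wedge p^u_n$ into $q^s_n\wedge q^u_n$, and the bound $\norm{d\Delta_n}_{C^0}<9\sqrt\ve$ follows by the chain rule, using that the cross terms produced are $O(\ve)$ and can be absorbed into the leading $\sqrt\ve$ terms for $\ve$ small enough.

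The main obstacle is the quantitative bookkeeping in (3): verifying that the image of $B[10Q(\wh x_n)]$ under $\Psi_{\wh f^n(\wh x)}^{-1}\circ\Psi_{\wh x_n}$ lies inside the domain of definition of the inverted left factor, which requires comparing $Q(\wh x_n),Q(\wh y_n),Q(\wh f^n(\wh x))$ through Theorem \ref{Thm-inverse}(3), and extracting the sharp constant $9\sqrt{\ve}$ from the sum of contributions $\norm{(O^2)^Td\Delta^1}+\norm{d\widehat\Delta\cdot(O^1+d\Delta^1)}$, which requires absorbing the $O(\ve)$ secondary terms carefully when $\ve$ is small.
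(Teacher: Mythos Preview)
Your arguments for (1) and (2) match the paper's exactly: both follow from Theorem~\ref{Thm-inverse}(1) and (4) via the triangle inequality and division through $\wh f^n(\wh x)$.

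For (3), you take a different route than the paper. The paper does not compose two full chart-changes through $\Psi_{\wh f^n(\wh x)}$; instead it reruns the proof of Theorem~\ref{Thm-inverse}(5) directly for $\Psi_{\wh y_0}^{-1}\circ\Psi_{\wh x_0}=C(\wh y_0)^{-1}\circ(\exp{y_0}^{-1}\circ\exp{x_0})\circ C(\wh x_0)$. The shadowed point $\wh x$ enters only at the \emph{linear} level, via $\Theta:=\Theta_{\wh y_0,\wh x}^{-1}\circ\Theta_{\wh x_0,\wh x}$, which is compared to the parallel transport $P_{x_0,y_0}$ using Lemma~\ref{Lemma-pi}. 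Since by (\ref{comparison-C-3}) one has $\|C(\wh y_0)^{-1}\circ\Theta\circ C(\wh x_0)v\|=e^{\pm 4\sqrt\ve}\|v\|$, the polar decomposition yields $O_n$ orthogonal and $\|R-{\rm Id}\|\le 8\sqrt\ve$; the remaining error $E=(d\Phi)_{C(\wh x_0)z}-\Theta$ is $\ll\sqrt\ve$, giving $\|d\Delta_n\|<9\sqrt\ve$ directly.

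Your compositional route, using Theorem~\ref{Thm-inverse}(5) as a black box, has two quantitative gaps. First, the domain: Theorem~\ref{Thm-inverse}(5) gives the decomposition of $\Psi_{\wh f^n(\wh x)}^{-1}\circ\Psi_{\wh y_n}$ only on $B[10Q(\wh y_n)]$, so the inverse you construct is defined only on the image of that ball, of radius roughly $(1-5\sqrt\ve)10Q(\wh y_n)$; but the image of $B[10Q(\wh x_n)]$ under the first factor has radius roughly $(1+5\sqrt\ve)10Q(\wh x_n)$, and since $Q(\wh x_n)/Q(\wh y_n)$ can be $e^{2\sqrt[3]\ve}>1$, the needed containment fails. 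Second, the constant: composing two $5\sqrt\ve$--perturbations of orthogonal maps gives $\|d\Delta_n\|\le 5\sqrt\ve+\tfrac{5\sqrt\ve}{1-5\sqrt\ve}(1+5\sqrt\ve)=\tfrac{10\sqrt\ve}{1-5\sqrt\ve}$, which tends to $10\sqrt\ve$, not $<9\sqrt\ve$. Both issues can be repaired by reopening the proof of Theorem~\ref{Thm-inverse}(5) (its domain and constants have slack), but at that point you are essentially carrying out the paper's direct argument anyway.
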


In the following section, we provide some preliminary results that will be used
to prove Theorem \ref{Thm-inverse}. After this preparation is done, we will proceed
to prove each part of \ref{Thm-inverse}. Clearly, we only need to prove it for $n=0$.

\subsection{Identification of invariant subspaces}

In order to analyse the inverse problem, we have to compare the parameters
of charts. One of the steps is to compare the hyperbolicity parameters, which
are related to the Lyapunov inner product, see Section \ref{Section-reduction}.
In principle, for $\wh x,\wh y\in {\rm NUH}$ with $d(\vt[\wh x],\vt[\wh y])\ll 1$,
it is not clear how to compare the norm in the Lyapunov inner product between
vectors in $E^{s/u}_{\wh x}$ and in $E^{s/u}_{\wh y}$. When $\wh x$ defines an 
$\ve$--double chart $\Psi_{\wh x}^{p^s,p^u}$ and $\wh y$ belongs to a stable/unstable
set at $\Psi_{\wh x}^{p^s,p^u}$, Ben Ovadia proposed a canonical way of making this comparison, 
using the representing function in the chart \cite{Ben-Ovadia-2019}.
The idea is to make an identification of subspaces in the chart and then push it
to the manifold. Here, we follow the same approach, adapted to our context.
Since the unstable subspace depends on inverse branches, the 
definition is made inside $\wh{TM}$. In particular, if $\wh y$ is the intersection of a stable 
and an unstable set, both at an $\ve$--double chart $\Psi_{\wh x}^{p^s,p^u}$, 
then we can compare the tangent spaces $\wh{TM}_{\wh x}$ and $\wh{TM}_{\wh y}$. 

Let $\Psi_{\wh x}^{p^s,p^u}$ be an $\ve$--double chart, let $V=V^s$ be an $s$--admissible
manifold at $\Psi_{\wh x}^{p^s,p^u}$ and $\wh V$ be the respective stable set.
In the sequel, we use again the notation $T\wh V=\wh{TV}$ to represent the tangent bundle
of $V$ as a subset of $\wh{TM}$. Writing $d=d_s(\wh x)$, recall that
$$
V=\Psi_{\wh x}\{(v_1,G(v_1)):v_1\in B^d[p^s]\}
$$
where $G:B^d[p^s]\to \R^{m-d}$ is a $C^{1+\beta/3}$ function satisfying (AM1)--(AM3).
Fix $\wh y\in \wh V$, and write $\vt[\wh y]=y=\Psi_{\wh x}(z)$ where $z=(v_1,G(v_1))$.
Denote the tangent space to the graph of $G$ at $z$ by
$T_{z}{\rm Graph}(G)$. We have
$$
T_{z}{\rm Graph}(G)=\left\{\begin{bmatrix} w \\ (dG)_{v_1}w\end{bmatrix}:w\in \R^d\right\},
$$
which is canonically isomorphic to $\R^d\times\{0\}$ via the map 
$$
\begin{array}{rcl}
\iota_s\ :\  \R^d\times\{0\}& \xrightarrow{\hspace*{8mm}}& T_{z}{\rm Graph}(G)\\
&&\\
\begin{bmatrix} w \\ 0\end{bmatrix}& \xmapsto{\hspace*{8mm}} & \begin{bmatrix} w \\ (dG)_{v_1}w\end{bmatrix}.\\
\end{array}
$$
Recalling that $E^s_{\wh x}=(d\Psi_{\wh x})_0[\R^d\times\{0\}]$ and 
$T_{\wh y}\wh V=(d\Psi_{\wh x})_z[T_{z}{\rm Graph}(G)]$, we have the following definition.

\medskip
\noindent
{\sc The map $\Theta^s_{\wh x,\wh y}$:} We define $\Theta^s_{\wh x,\wh y}:E^s_{\wh x}\to T_{\wh y}\wh V$ 
as the composition of the linear maps
$$
\Theta^s_{\wh x,\wh y}:= (d\Psi_{\wh x})_z\circ \iota_s \circ [(d\Psi_{\wh x})_0]^{-1}.
$$

\medskip
In other words, $\Theta^s_{\wh x,\wh y}$ is defined to make the diagram below commute
$$
\xymatrixcolsep{6pc}
\xymatrix{
\R^d\times\{0\} \ar[d]_{(d\Psi_{\wh x})_0} \ar[r]^{\iota_s} &T_z{\rm Graph}(G)\ar[d]^{(d\Psi_{\wh x})_z} \\
E^s_{\wh x} \ar[r]_{\Theta^s_{\wh x,\wh y}} &T_{\wh y}\wh V}
$$
so that
$$
\Theta^s_{\wh x,\wh y}\left((d\Psi_{\wh x})_{0}\begin{bmatrix} w \\ 0\end{bmatrix}\right)=
(d\Psi_{\wh x})_{z}\begin{bmatrix} w \\ (dG)_{v_1}w\end{bmatrix}.
$$
A similar definition holds when $\wh y$ belongs to an unstable set at
$\Psi_{\wh x}^{p^s,p^u}$. Let
$$
V=\Psi_{\wh x}\{(G(v_2),v_2):v_2\in B^{m-d}[p^u]\}
$$
where $G:B^{m-d}[p^u]\to \R^d$ is a $C^{1+\beta/3}$ function satisfying (AM1)--(AM3),
and let $\wh V$ be the respective unstable set. For $\wh y\in\wh V$,
write $\vt[\wh y]=y=\Psi_{\wh x}(z)$ where $z=(G(v_2),v_2)$. Now
$$
T_{z}{\rm Graph}(G)=\left\{\begin{bmatrix} (dG)_{v_2}w \\ w\end{bmatrix}:w\in \R^{m-d}\right\},
$$
which is canonically isomorphic to $\{0\}\times\R^{m-d}$ via the map
$$
\begin{array}{rcl}
\iota_u\ :\  \{0\}\times\R^{m-d}& \xrightarrow{\hspace*{8mm}}& T_{z}{\rm Graph}(G)\\
&&\\
\begin{bmatrix} 0 \\ w\end{bmatrix}& \xmapsto{\hspace*{8mm}} & \begin{bmatrix} (dG)_{v_2}w \\ w\end{bmatrix}.\\
\end{array}
$$

\medskip
\noindent
{\sc The map $\Theta^u_{\wh x,\wh y}$:} We define $\Theta^u_{\wh x,\wh y}:E^u_{\wh x}\to T_{\wh y}\wh V$ 
as the composition of the linear maps
$$
\Theta^u_{\wh x,\wh y}:= (d\Psi_{\wh x})_z\circ \iota_u \circ [(d\Psi_{\wh x})_0]^{-1}.
$$

\medskip
Similarly, $\Theta^u_{\wh x,\wh y}$ is defined to make the following diagram to commute
$$
\xymatrixcolsep{6pc}
\xymatrix{
\{0\}\times\R^{m-d} \ar[d]_{(d\Psi_{\wh x})_0} \ar[r]^{\iota_u} &T_z{\rm Graph}(G)\ar[d]^{(d\Psi_{\wh x})_z} \\
E^u_{\wh x} \ar[r]_{\Theta^u_{\wh x,\wh y}} &T_{\wh y}\wh V}
$$
and so
$$
\Theta^u_{\wh x,\wh y}\left((d\Psi_{\wh x})_{0}\begin{bmatrix} 0 \\ w\end{bmatrix}\right)=
(d\Psi_{\wh x})_{z}\begin{bmatrix} (dG)_{v_2}w \\ w\end{bmatrix}.
$$

Now assume that $\{\wh y\}=\wh V^s\cap \wh V^u$, where $\wh V^{s/u}$ is a stable/unstable set
at $\Psi_{\wh x}^{p^s,p^u}$. By the above discussion, we have two linear maps
$\Theta^{s/u}_{\wh x,\wh y}:E^{s/u}_{\wh x}\to T_{\wh y}\wh V^{s/u}$.
Since $E^s_{\wh x}\oplus E^u_{\wh x}=\wh{TM}_{\wh x}$
and $T_{\wh y}\wh V^s\oplus T_{\wh y}\wh V^u=\wh{TM}_{\wh y}$, the following definition makes sense.

\medskip
\noindent
{\sc The map $\Theta_{\wh x,\wh y}$:} We define $\Theta_{\wh x,\wh y}:\wh{TM}_{\wh x}\to \wh{TM}_{\wh y}$
as the unique linear map s.t. $\Theta_{\wh x,\wh y}\restriction_{E^s_{\wh x}}=\Theta^s_{\wh x,\wh y}$ and 
$\Theta_{\wh x,\wh y}\restriction_{E^u_{\wh x}}=\Theta^u_{\wh x,\wh y}$. More specifically, if $v=v^s+v^u$ with
$v^{s/u}\in E^{s/u}_{\wh x}$ then 
$$
\Theta_{\wh x,\wh y}(v):=\Theta^s_{\wh x,\wh y}(v^s)+\Theta^u_{\wh x,\wh y}(v^u).
$$

\medskip
We can similarly see $\Theta_{\wh x,\wh y}$ as a map defined in terms of a commuting diagram.
Let $G,H$ be the representing functions of $\wh V^s,\wh V^u$, let $\vt[\wh y]=y=\Psi_{\wh x}(z)$
with $z=(v_1,v_2)$, and let 
$$
\begin{array}{rcl}
\iota\ \ \ :\ \ \R^m& \xrightarrow{\hspace*{8mm}}& \R^m\\
&&\\
\begin{bmatrix} w_1 \\ w_2\end{bmatrix}& \xmapsto{\hspace*{8mm}} &
\begin{bmatrix} w_1+(dH)_{v_2}w_2 \\ w_2+(dG)_{v_1}w_1\end{bmatrix}.\\
\end{array}
$$
Then we obtain a commuting diagram
$$
\xymatrixcolsep{6pc}
\xymatrix{
\R^m \ar[d]_{(d\Psi_{\wh x})_0} \ar[r]^{\iota} &\R^m\ar[d]^{(d\Psi_{\wh x})_z} \\
\wh{TM}_{\wh x} \ar[r]_{\Theta_{\wh x,\wh y}} &\wh{TM}_{\wh y}}
$$
and so
$$
\Theta_{\wh x,\wh y}\left((d\Psi_{\wh x})_{0}\begin{bmatrix} w_1 \\ w_2\end{bmatrix}\right)=
(d\Psi_{\wh x})_{z}\begin{bmatrix} w_1+(dH)_{v_2}w_2 \\ w_2+(dG)_{v_1}w_1\end{bmatrix}.
$$
The next lemma proves that the maps just defined are close to parallel transports.

\begin{lemma}\label{Lemma-pi}
Let $\Psi_{\wh x}^{p^s,p^u}$ be an $\ve$--double chart with $\vt[\wh x]=x$ and $\eta=p^s\wedge p^u$.
\begin{enumerate}[\rm (1)]
\item Let $\wh V$ be a stable set at $\Psi_{\wh x}^{p^s,p^u}$,
and let $\wh y\in\wh V$. If $\vt[\wh y]=y\in \Psi_{\wh x}(B^d[\eta]\times B^{m-d}[\eta])$ then
$$
\norm{\Theta^s_{\wh x,\wh y}-P_{x,y}}\leq \tfrac{1}{2}\eta^{15\beta/48},
$$
where $P_{x,y}$ is the restriction of the parallel transport from $x$ to $y$ to the subspace
$E^s_{\wh x}$. 
In particular, $\tfrac{\norm{\Theta^s_{\wh x,\wh y}(v)}}{\norm{v}}={\rm exp}\left[\pm \eta^{15\beta/48}\right]$
for all $v\in E^s_{\wh x}$. An analogous statement holds for unstable sets.
\item If $\wh y\in \wh V^s\cap \wh V^u$ where $\wh V^{s/u}$ is a stable/unstable
set at $\Psi_{\wh x}^{p^s,p^u}$, then 
$$
\norm{\Theta_{\wh x,\wh y}-P_{x,y}}\leq \tfrac{1}{2}\eta^{15\beta/48}.
$$
In particular, $\tfrac{\norm{\Theta_{\wh x,\wh y}(v)}}{\norm{v}}={\rm exp}\left[\pm \eta^{15\beta/48}\right]$
for all $v\in \wh{TM}_{\wh x}$.
\end{enumerate}
\end{lemma}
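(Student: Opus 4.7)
The plan is as follows. First I would derive from (A3) the key approximation of $d\exp_x$ by parallel transport. Setting $y_1=y_2=x$ and $v_2=0$ in (A3), and noting that $\widetilde{d(\exp_x)_0}=\mathrm{Id}$ since the tilde-conjugation is trivial when both basepoints equal $x$, one obtains
\[
\|d(\exp_x)_v - P_{x,\exp_x(v)}\| = \|\widetilde{d(\exp_x)_v}-\mathrm{Id}\| \leq d(x,\mathfs S)^{-a}\|v\|
\]
for every $v\in T_xM$ with $\|v\|\leq 2\mathfrak d(x)$. This is the bridge between derivative-of-exponential estimates and parallel-transport estimates.

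For part (1), consider the stable case (the unstable case is symmetric, with $\iota_u$ and $H$ in place of $\iota_s$ and $G$). Write $v=(d\Psi_{\wh x})_0\left[\begin{smallmatrix}w\\0\end{smallmatrix}\right]\in E^s_{\wh x}$ and $y=\Psi_{\wh x}(z)$ with $z=(v_1,G(v_1))\in B^d[\eta]\times B^{m-d}[\eta]$, so $\|C(\wh x)z\|\leq\|z\|\leq\sqrt{2}\eta$. Using $\Psi_{\wh x}=\exp_x\circ C(\wh x)$ and the definition of $\Theta^s_{\wh x,\wh y}$, I would split
\[
\Theta^s_{\wh x,\wh y}(v)-P_{x,y}(v) = d(\exp_x)_{C(\wh x)z}\circ C(\wh x)\begin{bmatrix}0\\ (dG)_{v_1}w\end{bmatrix} + [d(\exp_x)_{C(\wh x)z}-P_{x,y}]\circ C(\wh x)\begin{bmatrix}w\\0\end{bmatrix}.
\]
The first term is bounded by $2\eta^{\beta/3}\|w\|$ using (A2) (so $\|d\exp_x\|,\|C(\wh x)\|\leq 2$) together with (AM2)--(AM3), which combine to give $\|(dG)_{v_1}\|\leq\|(dG)_0\|+\Hol{\beta/3}(dG)\|v_1\|^{\beta/3}\leq\eta^{\beta/3}$. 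The second term is bounded by $d(x,\mathfs S)^{-a}\sqrt{2}\eta\cdot\|v\|$ via the approximation above applied to $C(\wh x)z$. Since $v\in E^s_{\wh x}$ satisfies $\|w\|\leq\|C(\wh x)^{-1}\|\|v\|$, these two estimates assemble into
\[
\|\Theta^s_{\wh x,\wh y}-P_{x,y}\|\leq 2\eta^{\beta/3}\|C(\wh x)^{-1}\|+\sqrt{2}\,d(x,\mathfs S)^{-a}\eta.
\]
Now invoke the $Q$-estimates in \eqref{estimates-Q}: since $\eta\leq Q(\wh x)$, we have $\|C(\wh x)^{-1}\|\leq\ve^{1/8}\eta^{-\beta/48}$ and $d(x,\mathfs S)^{-a}\leq\ve^{1/16}\eta^{-\beta/96}$. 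Plugging in, the first summand becomes $2\ve^{1/8}\eta^{15\beta/48}$ and the second is at most $\sqrt{2}\ve^{3/16}\eta^{1-\beta/32}$; since $1-\beta/32\geq 15\beta/48$ for $\beta\leq 1$, the sum is $<\tfrac{1}{2}\eta^{15\beta/48}$ for $\ve$ small.

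For part (2), Lemma \ref{Lemma-admissible-manifolds} guarantees $y\in\Psi_{\wh x}(B^d[\eta]\times B^{m-d}[\eta])$ automatically. Writing $v=(d\Psi_{\wh x})_0\left[\begin{smallmatrix}w_1\\w_2\end{smallmatrix}\right]$ and using the commuting diagram defining $\Theta_{\wh x,\wh y}$, the same decomposition applies, the first summand now carrying the mixed block $\left[\begin{smallmatrix}(dH)_{v_2}w_2\\(dG)_{v_1}w_1\end{smallmatrix}\right]$, and the identical chain of estimates (with $\|(w_1,w_2)\|\leq\|C(\wh x)^{-1}\|\|v\|$) delivers the same bound $\tfrac{1}{2}\eta^{15\beta/48}$. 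The ``in particular'' conclusions then follow from the isometry of $P_{x,y}$: since $\|P_{x,y}v\|=\|v\|$, the triangle inequality puts $\|\Theta(v)\|/\|v\|$ into $[1-\tfrac{1}{2}\eta^{15\beta/48},1+\tfrac{1}{2}\eta^{15\beta/48}]\subset[e^{-\eta^{15\beta/48}},e^{\eta^{15\beta/48}}]$ for $\eta$ small. The main obstacle is the bookkeeping in the last step: the exponent $15\beta/48=\beta/3-\beta/48$ is precisely what survives the cancellation between the H\"older bound $\eta^{\beta/3}$ on $(dG)_{v_1}$ and the singular factor $\eta^{-\beta/48}$ coming from $\|C(\wh x)^{-1}\|$, which is why the exponent $\beta/48$ built into the definition of $Q(\wh x)$ in \eqref{estimates-Q} is exactly calibrated for this lemma.
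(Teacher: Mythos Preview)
Your proof is correct and follows essentially the same approach as the paper's. Both decompose $\Theta-P$ into the piece coming from the graph derivative (your first summand, the paper's error term $E=(d\exp_x)_{C(\wh x)z}\circ C(\wh x)\circ L\circ C(\wh x)^{-1}$ with $\iota=\mathrm{Id}+L$) and the piece $(d\exp_x)_{C(\wh x)z}-P_{x,y}$ controlled by (A3), then plug in the $Q$--bounds from \eqref{estimates-Q}. There is a minor numerical slip in your second summand (it should be $\sqrt{2}\,\ve^{1/16}\eta^{1-\beta/96}$ rather than $\sqrt{2}\,\ve^{3/16}\eta^{1-\beta/32}$, coming from $\rho(\wh x)^{-a}Q(\wh x)^{\beta/96}<\ve^{1/16}$), but since $1-\beta/96\geq 15\beta/48$ for $\beta\leq 1$ the conclusion is unaffected.
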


\begin{proof}
We start proving part (2). For simplicity, write $\Theta=\Theta_{\wh x,\wh y}$ and $P=P_{x,y}$.
By Lemma \ref{Lemma-admissible-manifolds}, $\norm{v_1}\leq\norm{(v_1,v_2)}<\eta$. By (AM2)--(AM3),
$$
\norm{(dG)_{v_1}}\leq \norm{(dG)_0}+\tfrac{1}{2}\norm{v_1}^{\beta/3}\leq \tfrac{1}{2}\eta^{\beta/3}+
\tfrac{1}{2}\eta^{\beta/3}=\eta^{\beta/3}.
$$
Similarly, $\norm{(dH)_{v_2}}\leq\eta^{\beta/3}$. Hence $\iota={\rm Id}+L$, where
$\norm{L}\leq \eta^{\beta/3}$. Writing $y=\Psi_{\wh x}(z)$ for $z\in B^d[\eta]\times B^{m-d}[\eta]$,
we have $\Theta=(d\Psi_{\wh x})_z\circ\iota\circ[(d\Psi_{\wh x})_0]^{-1}$ and so
$$
\Theta=(d\Psi_{\wh x})_z\circ [(d\Psi_{\wh x})_0]^{-1} + E=(d{\rm exp}_x)_{C(\wh x)z}+E
$$
where $E=(d{\rm exp}_x)_{C(\wh x)z}\circ C(\wh x)\circ L\circ C(\wh x)^{-1}$.
Therefore $\Theta-P=(d{\rm exp}_x)_{C(\wh x)z}-P+E$.
Observe that:
\begin{enumerate}[$\circ$]
\item $(d{\rm exp}_x)_{C(\wh x)z}-P=\widetilde{(d{\rm exp}_x)_{C(\wh x)z}}-\widetilde{(d{\rm exp}_x)_0}$
and so by (A3) we have
$$
\norm{(d{\rm exp}_x)_{C(\wh x)z}-P}\leq d(x,\mathfs S)^{-a}\norm{z}\leq 2\rho(\wh x)^{-a}\eta
\leq 2\ve^{1/16}\eta^{95\beta/96},
$$
where in the last passage we used that
$\rho(\wh x)^{-a}\eta^{\beta/96}\leq \rho(\wh x)^{-a}Q(\wh x)^{\beta/96}\leq \ve^{1/16}$.
\item $\norm{E}\leq 2\norm{C(\wh x)^{-1}}\norm{L}\leq 2\norm{C(\wh x)^{-1}}\eta^{\beta/3}\leq 2\ve^{1/8}\eta^{15\beta/48}$, where in the last passage we used that
$\norm{C(\wh x)^{-1}}\eta^{\beta/48}\leq \norm{C(\wh x)^{-1}}Q(\wh x)^{\beta/48}\leq \ve^{1/8}$.
\end{enumerate}
Therefore, if $\ve>0$ is small enough then
$$
\norm{\Theta-P}\leq 2\ve^{1/16}\eta^{95\beta/96}+2\ve^{1/8}\eta^{15\beta/48}<\tfrac{1}{2}\eta^{15\beta/48}.
$$
This proves part (2). Part (1) is proved exactly in the same manner, using
that $\iota_s={\rm Id}_{\R^d\times\{0\}}+L$ and that the same estimates above hold for $L$.
\end{proof}

Compare Lemma \ref{Lemma-pi} with the estimates in the proof of \cite[Lemma 4.6]{Ben-Ovadia-2019}.
Our assumption is stronger because we require $y$ to be very close to the center of the chart
$\Psi_{\wh x}^{p^s,p^u}$, but we obtain a much better estimate.
This sharpened estimate will be used in the next section, for the following reason.
Given an edge $\Psi_{\wh x}^{p^s,p^u}\overset{\ve}{\to}\Psi_{\wh y}^{q^s,q^u}$,
we will need to compare the parameters of the two charts. The usual implementation,
under the assumption that $f$ is a diffeomorphism,
makes this comparison using that the ratio $\tfrac{Q(\wh f(\wh x))}{Q(\wh x)}$
is bounded, since the derivative is uniformly bounded.
This is not the case in our context, and there is no a priori bound on the ratio $\tfrac{Q(\wh f(\wh x))}{Q(\wh x)}$.
On the other hand, we know that $\tfrac{p^s\wedge p^u}{q^s\wedge q^u}=e^{\pm\ve}$.
Hence, provided our estimates depend on $p^s\wedge p^u$ and $q^s\wedge q^u$,
we will be able to compare the parameters of $\Psi_{\wh x}^{p^s,p^u}$ with those of $\Psi_{\wh y}^{q^s,q^u}$.

\subsection{Improvement lemma}\label{Section-improvement}

As we have seen in Lemma \ref{Lemma-linear-reduction}, the Lyapunov inner product
allows to ``transform'' a nonuniformly hyperbolic system into a uniformly hyperbolic one
and then use the classical tools of uniform hyperbolicity, such as graph transforms.
One essential feature of uniform hyperbolicity is the following: forward iterations of the system improve 
the behavior along unstable directions (the forward expansion uniformizes the behavior),
and backward iterations improve the behavior along stable directions.
This philosophy is behind the recent work on anisotropic spaces, see e.g.
\cite{Blank-Keller-Liverani}. The main result of this section is another
manifestation of this behavior. We call it an {\em improvement lemma},
as in \cite{Lima-Matheus}. Recall the functions $S,U$ introduced in (NUH3)
and the graph transforms $\wh{\mathfs F}^{s/u}$ introduced in Section \ref{Section-stable-sets}.
Throughout this section, $d$ is the dimension of the stable subspace.

\begin{lemma}[Improvement lemma]\label{Lemma-improvement}
The following holds for all $\ve>0$ small enough. Let $v\overset{\ve}{\to}w$ with
$v=\Psi_{\wh x_0}^{p^s_0,p^u_0},w=\Psi_{\wh x_1}^{p^s_1,p^u_1}$, and write
$\eta_0=p^s_0\wedge p^u_0$, $\eta_1=p^s_1\wedge p^u_1$. Fix $\wh V_1\in\wh{\mathfs M}^s(w)$
and let $\wh V_0=\wh{\mathfs F}^s_{v,w}[\wh V_1]$. Let
$\wh y_0\in \wh V_0$, $\wh y_1=\wh f(\wh y_0)\in \wh V_1$, and assume that 
$$
\vt[\wh y_0]\in\Psi_{\wh x_0}(B^d[\eta_0]\times B^{m-d}[\eta_0])\ \text{ and }\
\vt[\wh y_1]\in\Psi_{\wh x_1}(B^d[\eta_1]\times B^{m-d}[\eta_1]).
$$
Let $v_1\in T_{\wh y_1}\wh V_1$ and $v_0=\wh{df}^{(-1)}v_1\in T_{\wh y_0}\wh V_0$.
Finally, let $w_0\in \wh{TM}_{\wh x_0}$ s.t. $v_0=\Theta^s_{\wh x_0,\wh y_0}(w_0)$
and $w_1\in \wh{TM}_{\wh x_1}$ s.t.  $v_1=\Theta^s_{\wh x_1,\wh y_1}(w_1)$.
If $\tfrac{S(\wh y_1,v_1)}{S(\wh x_1,w_1)}={\rm exp}[\pm\xi]$
for $\xi\geq {\sqrt{\ve}}$, then 
$$
\frac{S(\wh y_0,v_0)}{S(\wh x_0,w_0)}={\rm exp}\left[\pm(\xi-Q(\wh x_0)^{\beta/4})\right].
$$
An analogous statement holds for unstable sets.
\end{lemma}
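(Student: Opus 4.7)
The plan is to exploit the recursive identities
\[
S(\wh y_0,v_0)^2 = 2\|v_0\|^2 + e^{2\chi}S(\wh y_1,v_1)^2,\qquad S(\wh x_0,w_0)^2 = 2\|w_0\|^2 + e^{2\chi}S(\wh f(\wh x_0),\wh{df}w_0)^2,
\]
the first coming from $v_1 = \wh{df}v_0$ and the second from identity (\ref{relation-s}) since $w_0\in E^s_{\wh x_0}$. Expressing the ratio $S(\wh y_0,v_0)^2/S(\wh x_0,w_0)^2$ via these recursions reduces the lemma to two auxiliary comparisons: between the current-time Euclidean norms $\|v_0\|,\|w_0\|$, and between the tail Lyapunov sums $S(\wh f(\wh x_0),\wh{df}w_0)$ and $S(\wh x_1,w_1)$ along the two $\ve$-overlapping orbits.

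For the first, Lemma \ref{Lemma-pi}(1) at time $0$ applies because $\vt[\wh y_0]\in\Psi_{\wh x_0}(B^d[\eta_0]\times B^{m-d}[\eta_0])$, yielding $\|v_0\|/\|w_0\| = e^{\pm\eta_0^{15\beta/48}}$. For the second, the identity $\wh{df}v_0 = v_1 = \Theta^s_{\wh x_1,\wh y_1}(w_1)$ combined with Lemma \ref{Lemma-pi}(1) at time $1$ identifies $w_1$ with $\wh{df}w_0$ transported from $\wh f(\wh x_0)$ to $\wh x_1$ via the $\ve$-overlap; the control of $\|C^{-1}\|$ from Proposition \ref{Proposition-overlap}(2) combined with the H\"older estimate (A7) propagated along the two orbits yields $S(\wh f(\wh x_0),\wh{df}w_0)/S(\wh x_1,w_1) = e^{\pm\kappa}$ for some $\kappa\ll\Delta := Q(\wh x_0)^{\beta/4}$.

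Substituting $S(\wh y_1,v_1) = e^{\pm\xi}S(\wh x_1,w_1)$ and the two comparisons above, and setting $\tau := e^{2\chi}S(\wh f(\wh x_0),\wh{df}w_0)^2/S(\wh x_0,w_0)^2\in(0,1)$, the target ratio squared becomes $e^{\pm 2O(\eta_0^{15\beta/48}+\kappa)}\cdot[(1-\tau)+\tau e^{\pm 2\xi}]$, and the desired bound $e^{\pm 2(\xi-\Delta)}$ reduces to an inequality of the form $1-\tau \geq (1-e^{-2\Delta})/(1-e^{-2\xi})$. Since Lemma \ref{Lemma-linear-reduction}(1) gives $1-\tau = 2\|w_0\|^2/\vertiii{w_0}^2 \geq 2\|C(\wh x_0)^{-1}\|^{-2}$, the estimate $\|C(\wh x_0)^{-1}\|\leq\ve^{1/8}Q(\wh x_0)^{-\beta/48}$ from (\ref{estimates-Q}) together with $\xi\geq\sqrt{\ve}$ and $\Delta = Q(\wh x_0)^{\beta/4}$ confirms this inequality. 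The main obstacle will be the second step: a term-by-term comparison of two infinite Lyapunov sums along two distinct but $\ve$-overlapping orbits, where the $\ve$-overlap errors must be shown to accumulate to at most $\kappa\ll\Delta$; the smallness of $\|C(\wh x_0)^{-1}\|\eta_0^{\beta/48}\leq\ve^{1/8}$ together with the exponential damping on stable subspaces and the H\"older control (A7) is what makes this work. The analogous statement for unstable sets follows by the symmetric argument based on identity (\ref{relation-u}).
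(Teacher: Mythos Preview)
Your overall architecture is correct and matches the paper's: the recursion (\ref{relation-s}), an overlap comparison between two $\ve$--close Lyapunov norms, the algebraic reduction to a lower bound on $1-\tau = 2\|w_0\|^2/\vertiii{w_0}^2$, and its verification via $\|C(\wh x_0)^{-1}\|\leq\ve^{1/8}Q(\wh x_0)^{-\beta/48}$. Two points deserve comment.

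First, your final paragraph mischaracterizes the second step. You describe it as ``a term-by-term comparison of two infinite Lyapunov sums along two distinct but $\ve$--overlapping orbits, where the $\ve$--overlap errors must be shown to accumulate to at most $\kappa$''. This is not what is needed and would likely fail: the edge $v\overset{\ve}{\to}w$ provides an overlap at a single time only, so divergence of the future orbits of $\wh f(\wh x_0)$ and $\wh x_1$ is completely uncontrolled. The correct argument --- which you in fact allude to earlier when citing Proposition~\ref{Proposition-overlap}(2) --- is that the Lyapunov norm at a point is simply $\vertiii{u}=\|C(\cdot)^{-1}u\|$ for a \emph{single} matrix $C(\cdot)^{-1}$, and the overlap bounds $\|\wt{C(\wh f(\wh x_0))^{-1}}-\wt{C(\wh x_1)^{-1}}\|<\eta_1^6$ directly. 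Combined with a one-step comparison of the vectors $\wh{df}w_0$ and $w_1$ (via Lemma~\ref{Lemma-pi} at times $0,1$ and a single application of (A7) to $df$), this yields $\kappa\lesssim\eta_0^{\beta/4}$ with no iteration along orbits. So your ``main obstacle'' is not an obstacle at all once you use the right formulation; it is in fact the easy half of the proof.

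Second, the paper streamlines the algebra by introducing the intermediate vector $\varrho:=\wh{df}^{(-1)}w_1\in\wh{TM}_{\wh f^{-1}(\wh x_1)}$ and factoring $\tfrac{\vertiii{v_0}}{\vertiii{w_0}}=\tfrac{\vertiii{v_0}}{\vertiii{\varrho}}\cdot\tfrac{\vertiii{\varrho}}{\vertiii{w_0}}$. The second factor is the overlap comparison (done at time $0$ rather than time $1$, which is immaterial). The first factor uses the recursion $\vertiii{\varrho}^2=2\|\varrho\|^2+e^{2\chi}\vertiii{w_1}^2$, whose tail is \emph{exactly} $\vertiii{w_1}$, so the hypothesis $\vertiii{v_1}/\vertiii{w_1}=e^{\pm\xi}$ inserts with no secondary error $\kappa$ to carry along. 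Your inequality $1-\tau\geq(1-e^{-2\Delta})/(1-e^{-2\xi})$ and its verification are then exactly what the paper does on this first factor (normalising $\|v_0\|=1$ and showing $\|\varrho\|^2-e^{-2\xi}\geq 2\vertiii{\varrho}^2Q(\wh x_0)^{\beta/4}$). The introduction of $\varrho$ buys only cleaner bookkeeping; your variant is equivalent once the second step is done correctly.
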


\begin{proof}
Let $x_0=\vt[\wh x_0]$, $y_0=\vt[\wh y_0]$, $x_1=\vt[\wh x_1]$, $y_1=\vt[\wh y_1]$.
Also let $\varrho=\wh{df}^{(-1)}w_1=(df_{\wh x_0}^{-1})_{x_1} w_1\in \wh{TM}_{\wh f^{-1}(\wh x_1)}$,
see Figure \ref{figure_improvement}.
\begin{figure}[hbt!]\centering
\begin{tikzpicture}[scale=0.8]
	\coordinate (l) at (0,2);
  \coordinate (m) at (6,2);
	\draw[color=red,line width=1pt] (l) to [bend right=15] (m);
	\coordinate (l) at (0,0);
  \coordinate (m) at (6,0);
	\draw[color=blue,line width=1pt] (l) to [bend right=15] (m);
	\coordinate (l) at (8,2);
  \coordinate (m) at (14,2);
	\draw[color=red,line width=1pt] (l) to [bend right=15] (m);
	\coordinate (l) at (8,0);
  \coordinate (m) at (14,0);
	\draw[color=blue,line width=1pt] (l) to [bend right=15] (m);
	\draw[->,>=stealth,line width=1pt] (3,1.55) -- (5,1.52);
	\fill[thick] (3,1.55) circle (1.mm);
	\draw[->,>=stealth,line width=1pt] (3.4,0.3) -- (5.4,0.4);
	\fill[thick] (3.4,0.3) circle (1.mm);	
	\draw[->,>=stealth,line width=1pt] (3,-0.45) -- (5,-0.48);
	\fill[thick] (3,-0.45) circle (1.mm);
	\draw[->,>=stealth,line width=1pt] (11,1.55) -- (13,1.52);
	\fill[thick] (11,1.55) circle (1.mm);
	\draw[->,>=stealth,line width=1pt] (11,-0.45) -- (13,-0.48);
	\fill[thick] (11,-0.45) circle (1.mm);
	\fill[thick] (3,1.55) circle (0.mm) node[above] {$y_0$};
	\fill[thick] (3.4,0.3) circle (0.mm) node[left] {$f^{-1}_{\widehat x_0}(x_1)$};
	\fill[thick] (3,-0.55) circle (0.mm) node[below] {$x_0$};
	\fill[thick] (11,1.55) circle (0.mm) node[above] {$y_1$};
	\fill[thick] (11,-0.55) circle (0.mm) node[below] {$x_1$};
	\fill[thick] (5,1.43) circle (0.mm) node[below] {$v_0$};
	\fill[thick] (5.4,0.4) circle (0.mm) node[right] {$\varrho$};
	\fill[thick] (5,-0.65) circle (0.mm) node[below] {$w_0$};
	\fill[thick] (13,1.43) circle (0.mm) node[below] {$v_1$};
	\fill[thick] (13,-0.65) circle (0.mm) node[below] {$w_1$};
	\fill[thick] (0.5,1.9) circle (0.mm) node[above] {$\vt[\wh V_0]$};
	\fill[thick] (8.5,1.9) circle (0.mm) node[above] {$\vt[\wh V_1]$};
\end{tikzpicture}
\caption{Points and vectors in the improvement lemma.}
\label{figure_improvement}	
\end{figure}
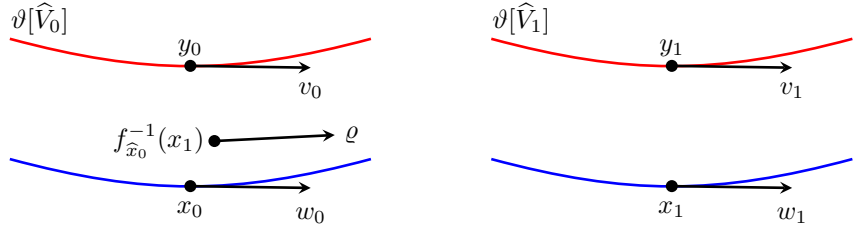
In the following, the basepoints in $\wh {TM}$ of all vectors involved in the calculations
are fixed, hence we write $S(\cdot,v)$ simply by $\vertiii{v}$.
We focus on one side of the estimate:
under the assumption that $\tfrac{\vertiii{v_1}}{\vertiii{w_1}}\leq {\rm exp}[\xi]$ for
$\xi\geq\sqrt{\ve}$ we will prove that $\tfrac{\vertiii{v_0}}{\vertiii{w_0}}\leq {\rm exp}[\xi-Q(\wh x_0)^{\beta/4}]$.
The inequality in the other direction will follow in the same way.
We are allowed to make one normalization, so we take $\norm{v_0}=1$. We write
$\tfrac{\vertiii{v_0}}{\vertiii{w_0}}=\tfrac{\vertiii{v_0}}{\vertiii{\varrho}}\cdot\tfrac{\vertiii{\varrho}}{\vertiii{w_0}}$
and estimate each of the fractions separately. Before, we compare the norms
of the fiber components of the vectors $v_0,w_0,\varrho$. The calculations are simple, 
once we pay attention to which parallel transports we apply.
\begin{enumerate}[$\circ$]
\item Comparison of $v_0,w_0$: by Lemma \ref{Lemma-pi}(1),
\begin{equation}\label{comparison-v-w}
\norm{P_{y_0,x_0}v_0-w_0}\leq \tfrac{1}{2}\eta_0^{15\beta/48}<\tfrac{1}{2}\eta_0^{14\beta/48}.
\end{equation}
\item Comparison of $v_0,\varrho$: recalling the definitions of $\varrho$ and $v_0$, we have
\begin{align*}
&\,\norm{\varrho-P_{y_0,f_{\wh x_0}^{-1}(x_1)}v_0}=
\norm{(df_{\wh x_0}^{-1})_{x_1}w_1-P_{y_0,f_{\wh x_0}^{-1}(x_1)}\circ (df_{\wh x_0}^{-1})_{y_1}v_1}\\
&=\norm{(df_{\wh x_0}^{-1})_{x_1}w_1-\widetilde{(df_{\wh x_0}^{-1})_{y_1}}[P_{y_1,x_1}{v_1}]}\\
&\leq \norm{(df_{\wh x_0}^{-1})_{x_1}}\norm{w_1-P_{y_1,x_1}{v_1}}+
\norm{\widetilde{(df_{\wh x_0}^{-1})_{x_1}}-\widetilde{(df_{\wh x_0}^{-1})_{y_1}}}\norm{v_1}. 
\end{align*}
By Lemma \ref{Lemma-pi}(1), 
$\norm{w_1-P_{y_1,x_1}{v_1}}\leq \tfrac{1}{2}\norm{v_1}\eta_1^{15\beta/48}\leq
d(x_0,\mathfs S)^{-a}\eta_0^{15\beta/48}$ for $\ve>0$ small enough,
since $\eta_1\leq e^{\ve}\eta_0$. Using assumptions (A6)--(A7) and
that $d(x_1,y_1)\leq 2\eta_1\leq 4\eta_0$, we conclude that
\begin{align*}
&\,\norm{\varrho-P_{y_0,f_{\wh x_0}^{-1}(x_1)}v_0}\leq d(x_0,\mathfs S)^{-2a}\eta_0^{15\beta/48}+
\mathfrak K(4\eta_0)^\beta d(x_0,\mathfs S)^{-a}\\
&\leq d(x_0,\mathfs S)^{-2a}\eta_0^{15\beta/48}+
4\mathfrak Kd(x_0,\mathfs S)^{-a}\eta_0^\beta \leq \ve^{1/8}\eta_0^{14\beta/48}+
4\ve^{1/16}\mathfrak K\eta_0^{95\beta/96}\\
&<\tfrac{1}{2}\eta_0^{14\beta/48}.
\end{align*}
\item Comparison of $w_0,\varrho$: the two estimates above imply that 
\begin{align*}
&\,\norm{\varrho-P_{x_0,f_{\wh x_0}^{-1}(x_1)}w_0}\leq
\norm{\varrho-P_{y_0,f_{\wh x_0}^{-1}(x_1)}v_0}+
\norm{P_{y_0,f_{\wh x_0}^{-1}(x_1)}v_0-P_{x_0,f_{\wh x_0}^{-1}(x_1)}w_0}\\
&=\norm{\varrho-P_{y_0,f_{\wh x_0}^{-1}(x_1)}v_0}+\norm{P_{y_0,x_0}v_0-w_0}<\eta_0^{14\beta/48}.
\end{align*}
\end{enumerate}
Now we estimate the fractions $\tfrac{\vertiii{v_0}}{\vertiii{\varrho}}$ and
$\tfrac{\vertiii{\varrho}}{\vertiii{w_0}}$.

\medskip
\noindent
{\sc Estimate of $\tfrac{\vertiii{\varrho}}{\vertiii{w_0}}$:} by the definition of $S$
and Lemma \ref{Lemma-pi}(1),
$$
\vertiii{w_0}=\norm{C(\wh x_0)^{-1}w_0}\geq \norm{w_0}\geq
{\rm exp}\left[-\eta_0^{15\beta/48}\right]\geq \tfrac{1}{2}\cdot
$$
Therefore, letting $A=C(\wh x_0)^{-1}$ and $B=C(\wh f^{-1}(\wh x_1))^{-1}$, we have
$$
\left|\tfrac{\vertiii{\varrho}}{\vertiii{w_0}}-1\right|\leq 2\left|\vertiii{\varrho}-\vertiii{w_0}\right|=
2\left|\norm{B\varrho}-\norm{Aw_0}\right|.
$$
Let $\widetilde B=B\circ P_{x_0,f_{\wh x_0}^{-1}(x_1)}$, so that $A,\widetilde B$
have the same domain (and codomain). Then
\begin{align*}
&\, \left|\norm{B\varrho}-\norm{Aw_0}\right|\leq
\left|\norm{B\varrho}-\|\widetilde Bw_0\|\right|+\norm{\widetilde Bw_0-Aw_0}\\
&\leq\norm{B\varrho-B\circ P_{x_0,f_{\wh x_0}^{-1}(x_1)} w_0}+
\norm{\widetilde Bw_0-Aw_0}\\
&\leq\norm{B}\norm{\varrho-P_{x_0,f_{\wh x_0}^{-1}(x_1)} w_0}+\norm{\widetilde B-A}\norm{w_0}
\end{align*}
By the overlap condition, Proposition \ref{Proposition-overlap}(2) and the comparison
of $w_0,\varrho$, the latter expression above is at most
$2\norm{A}\eta_0^{14\beta/48}+2\eta_0\leq 2\ve^{1/8}\eta_0^{13\beta/48}+2\eta_0<\tfrac{1}{4}\eta_0^{\beta/4}$.
Hence $\left|\tfrac{\vertiii{\varrho}}{\vertiii{w_0}}-1\right|<\tfrac{1}{2}\eta_0^{\beta/4}$ and so
\begin{equation}\label{1st-fraction}
\frac{\vertiii{\varrho}}{\vertiii{w_0}}={\rm exp}\left[\pm\eta_0^{\beta/4}\right].
\end{equation}

\medskip
\noindent
{\sc Estimate of $\tfrac{\vertiii{v_0}}{\vertiii{\varrho}}$:} by the latter estimate,
we need to prove that
\begin{equation}\label{2nd-fraction}
\frac{\vertiii{v_0}^2}{\vertiii{\varrho}^2}\leq{\rm exp}\left[2\xi-4Q(\wh x_0)^{\beta/4}\right].
\end{equation}
Recall our assumption that $\tfrac{\vertiii{v_1}}{\vertiii{w_1}}\leq e^{\xi}$ for
$\xi\geq\sqrt{\ve}$. By relation (\ref{relation-s}),
\begin{align*}
&\,\frac{\vertiii{v_0}^2}{\vertiii{\varrho}^2}=
\frac{2\norm{v_0}^2+e^{2\chi}\vertiii{v_1}^2}{2\norm{\varrho}^2+e^{2\chi}\vertiii{w_1}^2}
\leq\frac{2\norm{v_0}^2+e^{2\chi}e^{2\xi}\vertiii{w_1}^2}{2\norm{\varrho}^2+e^{2\chi}\vertiii{w_1}^2}\\
&=e^{2\xi}\left(\frac{\tfrac{2\norm{v_0}^2}{e^{2\xi}}+
e^{2\chi}\vertiii{w_1}^2}{2\norm{\varrho}^2+e^{2\chi}\vertiii{w_1}^2}\right)
=e^{2\xi}\left(1-\frac{2\norm{\varrho}^2-2\tfrac{\norm{v_0}^2}{e^{2\xi}}}{2\norm{\varrho}^2+e^{2\chi}\vertiii{w_1}^2}\right)\\
&=e^{2\xi}\left(1-2\frac{\norm{\varrho}^2-\tfrac{\norm{v_0}^2}{e^{2\xi}}}{\vertiii{\varrho}^2}\right)
\end{align*}
hence (\ref{2nd-fraction}) will follow once we prove that (remind that $\norm{v_0}=1$)
$$
\frac{\norm{\varrho}^2-e^{-2\xi}}{\vertiii{\varrho}^2}\geq 2Q(\wh x_0)^{\beta/4}
\iff \norm{\varrho}^2-e^{-2\xi}\geq 2\vertiii{\varrho}^2Q(\wh x_0)^{\beta/4}.
$$
We estimate the order of $\ve$ in both sides of this last inequality:
\begin{enumerate}[$\circ$]
\item By the comparison of $v_0,\varrho$, we have 
$|\norm{\varrho}-1|<\tfrac{1}{2}\eta_0^{14\beta/48}<Q(\wh x_0)^{\beta/4}<\ve^{3/2}$.
Since $\xi\geq\sqrt{\ve}$, we have $1-e^{-2\xi}\geq 1-e^{-2\sqrt{\ve}}>\ve^{1/2}$ and so
$$
\norm{\varrho}^2-e^{-2\xi}=\left(1-e^{-2\xi}\right)+\left(\norm{\varrho}^2-1\right)\geq 
\left(1-e^{-2\xi}\right)-\left|\norm{\varrho}^2-1\right|>\ve^{1/2}-3\ve^{3/2}.
$$
\item Since $\vertiii{\varrho}=\norm{B\varrho}\leq 2\norm{B}\leq 4\norm{A}$, we have
$$
2\vertiii{\varrho}^2Q(\wh x_0)^{\beta/4}\leq 32\norm{A}^2Q(\wh x_0)^{\beta/4}\leq 32\ve^{1/4}Q(\wh x_0)^{5\beta/24}
\leq 32\ve^{1/4}\ve^{5/4}=32\ve^{3/2}.
$$
\end{enumerate}
If $\ve>0$ is small enough then $\ve^{1/2}-3\ve^{3/2}>32\ve^{3/2}$. This proves (\ref{2nd-fraction})
which, together with (\ref{1st-fraction}), gives that
$\tfrac{\vertiii{v_0}}{\vertiii{w_0}}\leq {\rm exp}[\xi-Q(\wh x)^{\beta/4}]$. The proof for
stable sets is now complete.

The proof for unstable sets is the same, as we now explain. 
When dealing with the unstable direction, we usually need
to identify which inverse branches are being used. 
So we would expect to
require that, for the proof of the improvement lemma, the inverse
branches associated with $\wh f(\wh x_0)$ and $\wh x_1$ are the same,
so that we can compare $U(\wh f(\wh x_0),\wh{df}w_0)$ and $U(\wh x_1,w_1)$.
This is indeed {\em not} necessary, since this comparison comes directly from the overlap
condition, which guarantees that these two values are almost the same! 
Therefore, the same proof applied for stable sets also works for
unstable sets. 
\end{proof}

As we just saw, the proof of the improvement lemma relies on the estimate
of the fractions $\tfrac{\vertiii{\varrho}}{\vertiii{w_0}}$ and
$\tfrac{\vertiii{v_0}}{\vertiii{\varrho}}$. The estimate of the first fraction is consequence
of the overlap condition, which allows to compare the matrices $A$ and $B$.
The overlap condition depends heavily on the parameter $\chi$: if we consider a different
parameter $\chi'\in(0,\chi)$, then it is not clear how the respective matrices $A$ and $B$
defined in terms of this new parameter are related (in particular, we might not have an 
overlap). On the other hand, the estimate of the second fraction is entirely dynamical, and does not depend 
on the comparison of $A$ and $B$. Because of this, if we consider a real orbit
as in Corollary \ref{Corollary-better-hyperbolicity}, then the functions $S_{\chi'},U_{\chi'}$ 
defined in terms of $\chi'$ satisfy an improvement. This fact was observed in \cite{Ben-Ovadia-2020}.
Let us recall the context of Corollary \ref{Corollary-better-hyperbolicity}.
For a fixed $\chi'\in (0,\chi)$, let $\delta=(e^{-\chi'}-e^{-\chi})^{3/\beta}<1$.
Fix $\wh x\in{\rm NUH}^*$, and let $\wh x_n=\wh f^n(\wh x)$, $p^s_n=q^s(\wh x_n)$,
$p^u_n=q^u(\wh x_n)$. The sequence $\un v=\{\Psi_{\wh x_n}^{\delta p^s_n,\delta p^u_n}\}_{n\in\Z}$
defines sets $\wh V^s_{\chi'}[\un v]$ and $\wh V^u_{\chi'}[\un v]$.
Clearly, if $\wh y\in \wh V^s_{\chi'}[\un v]$ then
$\wh f(\wh y)\in \wh f(\wh V^s_{\chi'}[\un v])\subset \wh V^s_{\chi'}[\sigma(\un v)]$.

\begin{corollary}\label{Corollary-improvement}
The following holds for all $\ve>0$ small enough. 
For $\wh y\in \wh V^s_{\chi'}[\un v]$, let $v_1\in T_{\wh f(\wh y)}\wh V^s_{\chi'}[\sigma(\un v)]$
and $v_0=\wh{df}^{(-1)}v_1\in T_{\wh y}\wh V^s_{\chi'}[\un v]$.
Also, let $w_0\in \wh{TM}_{\wh x}$ s.t. $v_0=\Theta^s_{\wh x,\wh y}(w_0)$
and $w_1\in \wh{TM}_{\wh f(\wh x)}$ s.t.  $v_1=\Theta^s_{\wh f(\wh x),\wh f(\wh y)}(w_1)$.
If $\tfrac{S_{\chi'}(\wh f(\wh y),v_1)}{S_{\chi'}(\wh f(\wh x),w_1)}={\rm exp}[\pm\xi]$
for $\xi\geq {\sqrt{\ve}}$, then 
$$
\frac{S_{\chi'}(\wh y,v_0)}{S_{\chi'}(\wh x,w_0)}={\rm exp}\left[\pm(\xi-Q(\wh x)^{\beta/4})\right].
$$
An analogous statement holds for unstable sets.
\end{corollary}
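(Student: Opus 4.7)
The plan is to follow the proof of Lemma \ref{Lemma-improvement} almost verbatim, with two modifications tailored to the real-orbit setting. Writing $\wh x_0 = \wh x$, $\wh x_1 = \wh f(\wh x)$, $\wh y_0 = \wh y$, $\wh y_1 = \wh f(\wh y)$, and $\varrho = \wh{df}^{(-1)} w_1$, I will treat the stable case (the unstable case being symmetric) and prove one side of the estimate by factoring
$$
\frac{S_{\chi'}(\wh y_0, v_0)}{S_{\chi'}(\wh x_0, w_0)} = \frac{S_{\chi'}(\wh y_0, v_0)}{S_{\chi'}(\wh x_0, \varrho)} \cdot \frac{S_{\chi'}(\wh x_0, \varrho)}{S_{\chi'}(\wh x_0, w_0)}
$$
after the normalization $\norm{v_0} = 1$, exactly as in the lemma.

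The first modification concerns the ratio $S_{\chi'}(\wh x_0,\varrho)/S_{\chi'}(\wh x_0,w_0)$. In the original proof this ratio is controlled by combining a geometric comparison between $\varrho$ and $w_0$ via Lemma \ref{Lemma-pi}(1) with a comparison of the matrices $A = C(\wh x_0)^{-1}$ and $B = C(\wh f^{-1}(\wh x_1))^{-1}$ using Proposition \ref{Proposition-overlap}(2), and the latter step depends intrinsically on the overlap condition and hence on $\chi$. Along a true orbit this obstruction evaporates: $\wh f^{-1}(\wh x_1) = \wh x_0$, so the $\chi'$-matrices at these points literally coincide and no overlap estimate is required. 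The geometric step, on the other hand, only relies on (AM1)--(AM3) for the representing functions of $\wh V^{s/u}_{\chi'}[\un v]$ together with the bounds on $\norm{C_{\chi'}(\wh x_0)^{-1}}$ and $\rho(\wh x_0)$ used in the proof of Lemma \ref{Lemma-pi}; since $\norm{C_{\chi'}(\wh x_0)^{-1}} \leq \norm{C(\wh x_0)^{-1}}$ and the parameters $\delta p^{s/u}_n \leq \delta_\ve Q(\wh x_n)$ remain controlled by the stronger $Q$, the inequalities from (\ref{estimates-Q}) carry over and yield the first fraction ${\rm exp}[\pm Q(\wh x_0)^{\beta/4}]$.

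The second modification is entirely notational: the estimate of $S_{\chi'}(\wh y_0, v_0)/S_{\chi'}(\wh x_0, \varrho)$ rests on the identity
$$
S_{\chi'}(\wh z, v)^2 = 2\norm{v}^2 + e^{2\chi'} S_{\chi'}(\wh f(\wh z), \wh{df}v)^2,
$$
the $\chi'$-analogue of (\ref{relation-s}), together with the hypothesis $S_{\chi'}(\wh y_1, v_1) \leq e^{\xi} S_{\chi'}(\wh x_1, w_1)$ and the bound $\norm{\varrho} = 1 \pm Q(\wh x_0)^{\beta/4}$ already extracted above. Running the same algebraic manipulation as in the lemma, with $\chi$ replaced by $\chi'$ everywhere and using $\xi \geq \sqrt{\ve}$ to absorb the slack via $(1-e^{-2\sqrt\ve}) > \sqrt{\ve}$, gives the second fraction ${\rm exp}[\xi - 2Q(\wh x_0)^{\beta/4}]$, and combining the two fractions completes the proof. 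The only genuinely delicate point — and the one I expect to have to handle carefully — is the bookkeeping needed to apply Lemma \ref{Lemma-pi}(1) to $\wh V^{s/u}_{\chi'}[\un v]$: one must check that the graph transforms built on the reduced sequence $\un v = \{\Psi_{\chi',\wh x_n}^{\delta p^s_n,\delta p^u_n}\}$ still produce representing functions satisfying (AM1)--(AM3), which is exactly what the choice $\delta = (e^{-\chi'}-e^{-\chi})^{3/\beta}$ in Corollary \ref{Corollary-better-hyperbolicity} was designed to guarantee.
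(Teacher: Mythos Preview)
Your proposal is correct and follows essentially the same route as the paper; the paper shortcuts by asserting $w_0=\varrho$ (since along a real orbit $\varrho=\wh{df}^{(-1)}w_1$ already sits at $\wh x_0$) and thereby collapses the factorization into a single dynamical estimate via the $\chi'$--version of (\ref{relation-s}), whereas you keep both factors and note that $A=B$ makes the first one trivial---these are equivalent, and your version is arguably the more careful of the two. One small slip: the sequence $\un v$ is built from the original $\chi$--charts $\Psi_{\wh x_n}$ with reduced parameters $\delta p^{s/u}_n$, not from $\chi'$--charts $\Psi_{\chi',\wh x_n}$, so Lemma~\ref{Lemma-pi} and the (AM1)--(AM3) bookkeeping take place entirely in the $\chi$--framework; the role of $\delta$ is only to secure the rate $e^{-\chi'}$ in Corollary~\ref{Corollary-better-hyperbolicity}.
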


\begin{proof}
As in Lemma  \ref{Lemma-improvement}, it is enough to prove the result
for $\wh V^s_{\chi'}[{\un v}]$. We assume that $\norm{v_0}=1$ and
write $S_{\chi'}(\cdot,v)$ simply by $\vertiii{v}_{\chi'}$.
Again, we focus on one side of the estimate: assuming that
$\tfrac{\vertiii{v_1}_{\chi'}}{\vertiii{w_1}_{\chi'}}\leq {\rm exp}[\xi]$ for $\xi\geq\sqrt{\ve}$,
we will prove that $\tfrac{\vertiii{v_0}_{\chi'}}{\vertiii{w_0}_{\chi'}}\leq {\rm exp}[\xi-Q(\wh x)^{\beta/4}]$.
Note that, in the notation of Lemma \ref{Lemma-improvement}, we have $w_0=\varrho$,
and so we can obtain as in the estimate of $\tfrac{\vertiii{v_0}}{\vertiii{\varrho}}$ in
Lemma \ref{Lemma-improvement} that
\begin{align*}
&\,\frac{\vertiii{v_0}_{\chi'}^2}{\vertiii{w_0}_{\chi'}^2}=
\frac{2\norm{v_0}^2+e^{2\chi'}\vertiii{v_1}_{\chi'}^2}{2\norm{w_0}^2+e^{2\chi'}\vertiii{w_1}_{\chi'}^2}
\leq\frac{2\norm{v_0}^2+e^{2\chi'}e^{2\xi}\vertiii{w_1}_{\chi'}^2}{2\norm{w_0}^2+e^{2\chi'}\vertiii{w_1}_{\chi'}^2}\\
&=e^{2\xi}\left(1-2\frac{\norm{w_0}^2-\tfrac{\norm{v_0}^2}{e^{2\xi}}}{\vertiii{w_0}_{\chi'}^2}\right).
\end{align*}
Therefore, we just need to prove that
$$
\frac{\norm{w_0}^2-e^{-2\xi}}{\vertiii{w_0}_{\chi'}^2}\geq Q(\wh x)^{\beta/4}
\iff \norm{w_0}^2-e^{-2\xi}\geq \vertiii{w_0}_{\chi'}^2Q(\wh x)^{\beta/4}.
$$
Note that:
\begin{enumerate}[$\circ$]
\item $|\norm{w_0}-1|<\ve^{3/2}$ and so $\norm{w_0}^2-e^{-2\xi}>\ve^{1/2}-3\ve^{3/2}$.
\item Since
$\vertiii{w_0}_{\chi'}=\norm{C_{\chi'}(\wh x)^{-1}w_0}\leq \norm{C(\wh x)^{-1}w_0}\leq 2\norm{C(\wh x)^{-1}}$,
we have
$$
\vertiii{w_0}_{\chi'}^2Q(\wh x)^{\beta/4}\leq 4\norm{C(\wh x)^{-1}}^2Q(\wh x)^{\beta/4}\leq 4\ve^{1/4}Q(\wh x)^{5\beta/24}
\leq 4\ve^{3/2}.
$$
\end{enumerate}
If $\ve>0$ is small enough then $\ve^{1/2}-3\ve^{3/2}>4\ve^{3/2}$, which concludes the proof.
\end{proof}

The above corollary indeed works for the (potentially larger) invariant sets
associated to $\{\Psi_{\wh x_n}^{p^s_n,p^u_n}\}_{n\in\Z}$, but since our application will
require to diminish the Pesin charts by $\delta$ (so that Corollary \ref{Corollary-better-hyperbolicity} works),
we preferred to state it with $\delta$.

\subsection{Proof that $\wh x\in{\rm NUH}$}\label{Section-finite}

The first step in the proof of Theorem \ref{Thm-inverse} is guaranteeing that
$\wh x\in{\rm NUH}$. We start proving that the relevance of each symbol of
$\mathfs A$ implies the existence of stable/unstable sets
where $S/U$ are bounded. We continue employing the notation for tangent
bundles of stable/unstable sets. Recall the definitions of $s(\wh x),u(\wh x)$ in page \pageref{Def-NUH3}.
The result below is
\cite[Lemma 4.2 and Corollary 4.3]{Ben-Ovadia-2020}, adapted to our context.

\begin{lemma}\label{Lemma-finite-norm-1}
Let $\wh W^s=\wh V^s[\un w^+]$, where $\un w^+$ is a positive $\ve$--gpo.
If $\wh W^s\cap {\rm NUH}^\#\neq\emptyset$ then
$$
\sup_{\wh y\in \wh W^s}s(\wh y)<\infty.
$$
The same applies to negative $\ve$--gpo's, with respect to the function $u$.
\end{lemma}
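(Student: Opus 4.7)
The plan is to combine the improvement lemma (Lemma~\ref{Lemma-improvement}) with the hypothesis that $\wh W^s$ meets ${\rm NUH}^\#$, in order to bound $s(\wh y)$ uniformly by $s(\wh x_0)$, where $\wh x_0$ is the center of the zeroth symbol of the gpo $\un w^+=\{\Psi_{\wh x_n}^{p^s_n,p^u_n}\}_{n\geq 0}$. Given $\wh y\in\wh W^s$ and a unit vector $v_0\in T_{\wh y}\wh W^s$ (which is a natural candidate stable direction by Proposition~\ref{Prop-hyperbolicity-manifolds}(1)), I would propagate $v_n:=\wh{df}^{(n)}v_0\in T_{\wh f^n(\wh y)}\wh V^s[\sigma^n\un w^+]$. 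Proposition~\ref{Prop-stable-manifolds}(3)--(4) places $\wh f^n(\wh y)$ well inside the Pesin chart $\Psi_{\wh x_n}$; after a few iterations it actually falls inside $\Psi_{\wh x_n}(B^d[\eta_n]\times B^{m-d}[\eta_n])$ with $\eta_n:=p^s_n\wedge p^u_n$, which is exactly the regime where Lemma~\ref{Lemma-pi}(1) applies. So the identification $w_n:=(\Theta^s_{\wh x_n,\wh f^n(\wh y)})^{-1}v_n\in E^s_{\wh x_n}$ is legitimate and satisfies $\|w_n\|={\rm exp}[\pm\eta_n^{15\beta/48}]\|v_n\|$.

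Next I would introduce the log-ratio $\xi_n:=|\log(S(\wh f^n(\wh y),v_n)/S(\wh x_n,w_n))|$ and iterate Lemma~\ref{Lemma-improvement} backwards from time $N$ down to $0$, using that once $\xi_k<\sqrt{\ve}$ the ratio stays below $\sqrt{\ve}$ thereafter. This gives, for every $N\geq 1$,
$$\xi_0\leq\max\Bigl\{\sqrt{\ve},\;\xi_N-\sum_{k=0}^{N-1}Q(\wh x_k)^{\beta/4}\Bigr\}.$$
Once one shows that the right-hand maximum equals $\sqrt{\ve}$ for some $N$, one concludes
$$S(\wh y,v_0)\leq e^{\sqrt{\ve}}S(\wh x_0,w_0)\leq e^{\sqrt{\ve}}\|w_0\|\,s(\wh x_0)\leq 2e^{\sqrt{\ve}}\,s(\wh x_0),$$
a bound independent of $v_0$ and $\wh y$. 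Since $\wh x_0\in{\rm NUH}$ as the center of a Pesin chart, (NUH3) gives $s(\wh x_0)<\infty$, and taking suprema produces $\sup_{\wh y\in\wh W^s}s(\wh y)\leq 2e^{\sqrt{\ve}}s(\wh x_0)<\infty$.

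The main obstacle is to realize the maximum as $\sqrt{\ve}$ for some $N$, which requires simultaneously (i) divergence of $\sum_{k\geq 0}Q(\wh x_k)^{\beta/4}$ and (ii) a subpolynomial bound on the growth of $\xi_N$. This is where the hypothesis enters: picking $\wh z\in\wh W^s\cap{\rm NUH}^\#$, Theorem~\ref{Thm-coarse-graining} provides a recurrent gpo $\un v\in\Sigma^\#$ shadowing $\wh z$, along which $\tilde p^s_{n_j}\wedge\tilde p^u_{n_j}$ stays above a positive constant along some $n_j\to+\infty$. Since both $\un v$ and $\un w^+$ shadow $\wh z$ at corresponding forward times, their centers $\wh z_{n_j}$ and $\wh x_{n_j}$ become so close that Proposition~\ref{Proposition-overlap}(3) transfers the lower bound to $Q(\wh x_{n_j})$, yielding $\liminf_j Q(\wh x_{n_j})>0$ and hence (i). For (ii), I would use Lemma~\ref{Lemma-pi}(1) to get $S(\wh x_N,w_N)\geq\sqrt{2}\|w_N\|\asymp\|v_N\|$, while Proposition~\ref{Prop-hyperbolicity-manifolds}(1) gives $\|v_{N+k}\|\leq 8\|C(\wh x_N)^{-1}\|\,\|v_N\|e^{-\chi k/2}$, so the numerator $S(\wh f^N(\wh y),v_N)$ is bounded by a polynomial function of $\|C(\wh x_N)^{-1}\|$. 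Together this keeps $\xi_N$ growing subpolynomially in $N$, in particular much slower than $\sum_{k=0}^{N-1}Q(\wh x_k)^{\beta/4}$ restricted to $N=n_j$, so picking $N=n_j$ large enough achieves $\xi_0\leq\sqrt{\ve}$. The unstable assertion is proved by the symmetric argument, exchanging $S\leftrightarrow U$, $\wh V^s\leftrightarrow\wh V^u$, and forward with backward iteration.
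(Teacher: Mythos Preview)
There is a genuine gap in step (ii), and it is fatal to your strategy as written. You claim that Proposition~\ref{Prop-hyperbolicity-manifolds}(1) bounds $S(\wh f^N(\wh y),v_N)$ by a polynomial in $\|C(\wh x_N)^{-1}\|$, but that proposition only gives the contraction rate $e^{-\chi/2}$ along the stable manifold of an $\ve$--gpo (see estimate~(\ref{estimate-w_k})). Plugging $\|\wh{df}^{(k)}v_N\|\leq 8\|C(\wh x_N)^{-1}\|\,\|v_N\|\,e^{-\chi k/2}$ into the defining series
\[
S(\wh f^N(\wh y),v_N)^2=2\sum_{k\geq 0}e^{2k\chi}\|\wh{df}^{(k)}v_N\|^2
\]
yields a sum over $e^{k\chi}$, which diverges. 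So you have no a priori reason to know $S(\wh f^N(\wh y),v_N)<\infty$, and without this the hypothesis of Lemma~\ref{Lemma-improvement} (a finite ratio $e^{\pm\xi}$) is vacuous; the backward induction never starts.

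This is precisely why the paper does \emph{not} work with $S=S_\chi$ directly. Instead it fixes $\chi'<\chi$, passes to the weaker norm $\vertiii{\cdot}_{\chi'}$, and uses the \emph{real} orbit of a point $\wh x\in\wh W^s\cap{\rm NUH}^\#$ with diminished Pesin charts. Corollary~\ref{Corollary-better-hyperbolicity} then furnishes the improved contraction rate $e^{-\overline\chi}$ with $\overline\chi=\tfrac{\chi+\chi'}{2}>\chi'$, which makes $S_{\chi'}$ genuinely finite along $\wh V^s_{\overline\chi,n_k}$. The improvement lemma for real orbits (Corollary~\ref{Corollary-improvement}) is applied there, and the argument concludes by splitting $\vertiii{v}_{\chi'}^2$ at a return time $n_k$ and showing the resulting bound is uniform in $\chi'$; Claim~1 then recovers the statement for $S=S_\chi$ itself.

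A secondary issue: your argument for (i) tries to invoke Proposition~\ref{Proposition-overlap}(3) between the centers of $\un w^+$ and those of a recurrent gpo $\un v$ from Theorem~\ref{Thm-coarse-graining}, but merely knowing both shadow the same point $\wh z$ does not give the overlap condition $d(\vt[\wh x_n],\vt[\wh z_n])+\|\widetilde{C(\wh x_n)}-\widetilde{C(\wh z_n)}\|<(\eta_n\eta_n')^4$; that requires control of $C(\cdot)^{-1}$ which is only available \emph{after} the inverse theorem. The paper sidesteps this entirely by reading off $\limsup_n q(\wh f^n(\wh x))>0$ directly from $\wh x\in{\rm NUH}^\#$ and working along the real orbit.
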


\begin{proof}
We start proving the lemma for stable sets. For $\chi'\in(0,\chi)$, represent
$S_{\chi'}(\cdot,v)$ simply by $\vertiii{v}_{\chi'}$ and $S(\cdot,v)$ by $\vertiii{v}$. 

\medskip
\noindent
{\sc Claim 1:} For every $v$,
$\vertiii{v}=\sup_{\chi'<\chi}\vertiii{v}_{\chi'}$.
In particular, if $\vertiii{v}_{\chi'}\leq L$ for every $v\in T\wh W^s$ with $\|v\|=1$ and every $\chi'\in (0,\chi)$,
then $\sup_{\wh y\in \wh W^s}s(\wh y)\leq L$.

\medskip
The proof of Claim 1 is direct, and we leave the details to the reader.
By hypothesis, there is $\wh x\in\wh W^s\cap {\rm NUH}^\#$.
Write $\wh x_n=\wh f^n(\wh x)$, $p^s_n=q^s(\wh f^n(\wh x))$ and
$p^u_n=q^u(\wh f^n(\wh x))$. We have $\limsup_{n\to\infty}q(\wh x_n)>0$
and so there is $q>0$ and an increasing sequence
$\{n_k\}_{k\geq 0}$ s.t. $q(\wh x_{n_k})\geq q$ for all $k\geq 0$. By (\ref{estimates-Q})
and Lemma \ref{Lemma-q}, for every $\wh y\in{\rm NUH}^*$ we have
$\|C(\wh y)^{-1}\|<\ve^{1/8}Q(\wh y)^{-\beta/48}<q(\wh y)^{-\beta/48}$ and
so in particular $Q(\wh x_{n_k})>q$ and $\|C(\wh x_{n_k})^{-1}\|<q^{-\beta/48}$ for every $k\geq 0$.

\medskip
Fix $\chi'\in (0,\chi)$, and let $\overline{\chi}=\tfrac{\chi+\chi'}{2}$.
Also, let $\un v=\{v_n\}_{n\in\Z}$ with $v_n=\Psi_{\wh x_n}^{\delta p^s_n,\delta p^u_n}$, 
where $\delta=(e^{-\overline{\chi}}-e^{-\chi})^{3/\beta}<1$.
Write $\wh V^s_{\overline{\chi},n}=\wh V^s_{\overline{\chi}}[\{v_\ell\}_{\ell\geq n}]$.
We want to bound $\vertiii{v}_{\chi'}$, uniformly in $v$ with $\|v\|=1$ and $\chi'$.
Instead of $\wh W^s$, doing this for $\wh V^s_{\overline{\chi},0}$ is simpler,
since inside this latter set we have better estimates given by Corollary \ref{Corollary-better-hyperbolicity}.
Although $\wh W^s$ is in general not contained in $\wh V^s_{\overline{\chi},0}$, if
$n$ is large then $\wh f^n(\wh W^s)\subset \wh V^s_{\overline{\chi},n}$. The reason
for this inclusion is that, while by Proposition \ref{Prop-hyperbolicity-manifolds}(1)
the size of $\wh f^n(\wh W^s)$ is at most of the order of $e^{-\tfrac{\chi}{2}n}$,
the size of $\wh V^s_{\overline{\chi},n}$ is of the order of $q(\wh x_n)$, which decreases to zero
slower than $e^{-\ve n}$. Let us provide the details of this fact.

\medskip
\noindent
{\sc Claim 2:} If $n$ is large enough then $\wh f^n(\wh W^s)\subset \wh V^s_{\overline{\chi},n}$.

\begin{proof}[Proof of Claim $2$.] Write $\wh x=\{x_n\}_{n\in\Z}$ and let $W^s$ 
be the $s$--admissible manifold associated to $\wh W^s$.
We will check Proposition \ref{Prop-stable-manifolds}(3).
Since $B^d[\delta p^s_n]\times B^{m-d}[\delta p^s_n]\supset B[\delta p^s_n]$, it is enough to prove
that if $n$ is large enough then $f^n(W^s)\subset \Psi_{\wh x_n}(B[\delta p^s_n])$.
By Proposition \ref{Prop-stable-manifolds}(4), $f^n(W^s)\subset B(x_n,e^{-\frac{\chi}{2}n})$.
Since
$$
\Psi_{\wh x_n}(B[\delta p^s_n])\supset B\left(x_n,\tfrac{1}{2}\|C(\wh x_n)^{-1}\|^{-1}\delta p^s_n\right),
$$
it is enough to prove that for $n$ large enough
$$\tfrac{1}{2}\|C(\wh x_n)^{-1}\|^{-1}\delta p^s_n>e^{-\frac{\chi}{2}n} \iff
e^{-\frac{\chi}{2}n}\|C(\wh x_n)^{-1}\|\tfrac{1}{p^s_n}<\tfrac{\delta}{2}\cdot
$$
Since $\|C(\wh x_n)^{-1}\|(p^s_n)^{\beta/48}\leq \|C(\wh x_n)^{-1}\|Q(\wh x_n)^{\beta/48}<1$
and $p^s_n\geq e^{-\ve n}p^s_0$, we have 
\begin{align*}
&\ e^{-\frac{\chi}{2}n}\|C(\wh x_n)^{-1}\|\tfrac{1}{p^s_n}<
e^{-\frac{\chi}{2}n}\left(\tfrac{1}{p^s_n}\right)^{1+\frac{\beta}{48}}\leq 
e^{-\frac{\chi}{2}n}\left(\tfrac{e^{\ve n}}{p^s_0}\right)^{1+\frac{\beta}{48}}\\
&=\left(\tfrac{1}{p^s_0}\right)^{1+\frac{\beta}{48}}e^{-\left[\frac{\chi}{2}-\ve\left(1+\frac{\beta}{48}\right)\right]n}
\end{align*}
which, for $\ve>0$ small enough, converges to zero exponentially fast.
\end{proof}

Now we complete the proof of the lemma for $\wh W^s$. Start observing that, for every $n\geq 0$, 
Corollary \ref{Corollary-better-hyperbolicity} implies that if $v\in T\wh V^s_{\overline{\chi},n}$
with $\|v\|=1$ then
\begin{align*}
&\, \vertiii{v}_{\chi'}^2=2\sum_{\ell\geq 0}e^{2\ell \chi'}\|\wh{df}^{(\ell)}v\|^2
\leq 2\sum_{\ell\geq 0}e^{2\ell \chi'} 64e^{-2\ell \overline{\chi}}\|C(\wh x_n)^{-1}\|^2\\
&\leq
128\|C(\wh x_n)^{-1}\|^2\sum_{\ell\geq 0}e^{\ell(\chi'-\chi)}
\end{align*}
and so for every $k\geq 0$ and every $v\in T\wh V^s_{\overline{\chi},n_k}$ we obtain the bound
$$
\vertiii{v}_{\chi'}^2 \leq 128q^{-\beta/24}\sum_{\ell\geq 0}e^{\ell(\chi'-\chi)}.
$$
Call this bound $L^2$, so that $\vertiii{v}_{\chi'}\leq L$ for every $v\in T\wh V^s_{\overline{\chi},n_k}$
with $\|v\|=1$. Had $L$ been independent of $\chi'$, the proof would be complete, but unfortunately
$L\to\infty$ as $\chi'\to \chi$. The way to improve the estimate on $\vertiii{v}_{\chi'}$ to a bound
that is independent of $\chi'$ is by applying Corollary \ref{Corollary-improvement},
as we now explain. Define $\xi\geq \sqrt{\ve}$ by $e^\xi=\max\{\sqrt{2}L,e^{\sqrt{\ve}}\}$.
We claim that
$$
\tfrac{\vertiii{\Theta^s_{\wh x_{n_k},\wh y}(v)}_{\chi'}}{\vertiii{v}_{\chi'}}={\rm exp}[\pm \xi],
\ \ \text{ for all }\wh y\in\wh V^s_{\overline{\chi},n_k}\text{ and } v\in E^s_{\wh x_{n_k}}\backslash\{0\}.
$$
By a normalization, we just need to check this estimate for $\norm{v}=1$.
By Lemma \ref{Lemma-pi}(1), we have $\tfrac{1}{2}\leq \|\Theta^s_{\wh x_{n_k},\wh y}(v)\|\leq 2$
and so 
$$
(\sqrt{2}L)^{-1}=\tfrac{\sqrt{2}/2}{L}\leq\tfrac{\vertiii{\Theta^s_{\wh x_{n_k},\wh y}(v)}_{\chi'}}{\vertiii{v}_{\chi'}}
\leq\tfrac{2L}{\sqrt{2}}=\sqrt{2} L.
$$
Now fix $k\geq 1$. Apply Corollary \ref{Corollary-improvement} along the path
$\wh x_{n_{k-1}}\to\cdots\to \wh x_{n_k}$.
Since the ratio does not get worse for all transitions $\wh x_\ell\to \wh x_{\ell+1}$
and it improves a fixed amount in the last edge $\wh x_{n_{k-1}}\to \wh x_{n_{k-1}+1}$,
we conclude that
$$
\tfrac{\vertiii{\Theta^s_{\wh x_{n_{k-1}},\wh y}(v)}_{\chi'}}{\vertiii{v}_{\chi'}}={\rm exp}\left[\pm (\xi-q^{\beta/4})\right],
\ \ \text{ for all }\wh y\in\wh V^s_{\overline{\chi},n_{k-1}}\text{ and } v\in E^s_{\wh x_{n_{k-1}}}\backslash\{0\}.
$$
Repeating this procedure until reaching $\wh x_{n_0}$, we obtain
at least $k$ improvements, as long as the ratio remains outside $[{\rm exp}(-\sqrt{\ve}),{\rm exp}(\sqrt{\ve})]$.
Taking $k\to+\infty$, we conclude that
$\tfrac{\vertiii{\Theta^s_{\wh x_{n_0},\wh y}(v)}_{\chi'}}{\vertiii{v}_{\chi'}}={\rm exp}\left[\pm \sqrt{\ve}\right]$
for all $\wh y\in\wh V^s_{\overline{\chi},n_0}$ and $v\in E^s_{\wh x_{n_0}}\backslash\{0\}$.
Similarly, we obtain that for every $k\geq 0$ it holds
\begin{equation}\label{improved-estimate}
\tfrac{\vertiii{\Theta^s_{\wh x_{n_k},\wh y}(v)}_{\chi'}}{\vertiii{v}_{\chi'}}={\rm exp}[\pm \sqrt{\ve}],
\ \ \text{ for all }\wh y\in\wh V^s_{\overline{\chi},n_k}\text{ and } v\in E^s_{\wh x_{n_k}}\backslash\{0\}.
\end{equation}
\medskip
Now fix $k$ large enough s.t. $f^{n_k}(\wh W^s)\subset \wh V^s_{\overline{\chi},n_k}$.
Let $v\in T\wh W^s$ with $\norm{v}=1$.
If $w=\wh{df}^{(n_k)}v\in T\wh V^s_{\overline{\chi},n_k}$ then
\begin{align*}
&\,\vertiii{v}_{\chi'}^2=2\sum_{n\geq 0}e^{2n\chi'}\|\wh{df}^{(n)}v\|^2=
2\sum_{n=0}^{n_k-1}e^{2n\chi'}\|\wh{df}^{(n)}v\|^2+
2\sum_{n\geq n_k}e^{2n\chi'}\|\wh{df}^{(n)}v\|^2\\
&=2\sum_{n=0}^{n_k-1}e^{2n\chi'}\|\wh{df}^{(n)}v\|^2+e^{2n_k\chi'}\norm{w}^2\cdot\vertiii{\tfrac{w}{\|w\|}}_{\chi'}^2\\
&\leq 2\sum_{n=0}^{n_k-1}e^{2n\chi}\|\wh{df}^{(n)}v\|^2+e^{2n_k\chi}\norm{w}^2\cdot\vertiii{\tfrac{w}{\|w\|}}_{\chi'}^2.
\end{align*}
If $\wh z$ is the center of the zeroth chart defining $\wh W^s=\wh V^s[\un w^+]$,
then by Proposition \ref{Prop-hyperbolicity-manifolds}(1) the first term above is bounded
by $128\|C(\wh z)^{-1}\|^2\sum_{n=0}^{n_k-1}e^{n\chi}$.
To estimate the second term, write $w\in T_{\wh y}\wh V^s_{\overline\chi,n_k}$ and define
$\varrho$ by the equality $\tfrac{w}{\|w\|}=\Theta^s_{\wh x_{n_k},\wh y}(\varrho)$.
Using estimate (\ref{improved-estimate}), Lemma \ref{Lemma-pi}(1)
and Lemma \ref{Lemma-linear-reduction}(1), we get that
\begin{align*}
&\, \vertiii{\tfrac{w}{\|w\|}}_{\chi'}\leq e^{\sqrt{\ve}}\vertiii{\varrho}_{\chi'}\leq 2e^{\sqrt{\ve}}\vertiii{\tfrac{\varrho}{\|\varrho\|}}_{\chi'}\\
&\leq 2e^{\sqrt{\ve}}s(\wh x_{n_k})\leq 2e^{\sqrt{\ve}}\|C(\wh x_{n_k})^{-1}\|<2e^{\sqrt{\ve}}q^{-\beta/48}.
\end{align*}
Hence $e^{2n_k\chi}\norm{w}^2\cdot\vertiii{\tfrac{w}{\|w\|}}_{\chi'}^2\leq 256\|C(\wh z)^{-1}\|^2e^{n_k\chi+2\sqrt{\ve}}q^{-\beta/24}$,
and so
$$
\vertiii{v}_{\chi'}^2\leq 128\|C(\wh z)^{-1}\|^2\left[
\sum_{n=0}^{n_k-1}e^{n\chi }+2e^{n_k\chi+2\sqrt{\ve}}q^{-\beta/24}\right],
$$
which is independent of $v$ and $\chi'$. 
By Claim 1, the proof for $\wh W^s$ is complete.

\medskip
Now let $\wh W^u=\wh V^u[\un w^-]$ for a negative $\ve$--gpo
$\un w^-=\{\Psi_{\wh y_n}^{q^s_n,q^u_n}\}_{n\leq 0}$, and assume that
there exists $\wh x\in \wh W^u\cap{\rm NUH}^\#$. As before,
write $\wh x_n=\wh f^n(\wh x)$ and let
$\wh V^u_{\overline{\chi},n}=\wh V^u_{\overline{\chi}}[\{\Psi_{\wh x_k}^{\delta p^s_k,\delta p^u_k}\}_{k\leq n}]$
for $n\leq 0$. The proof for $\wh W^u$ is identical to the proof for $\wh W^s$,
provided the inverse branches used to define $\wh W^u$ are the same
as those used to define $\wh V^u_{\overline{\chi},n}$. In other words, we just need to verify that
$f_{\wh x_{n}}^{-1}$ and 
$f_{\wh y_{n}}^{-1}$ are the same inverse branch for all $n<0$. Fix $n<0$.
By Lemma \ref{Lemma-stable-set}(2),
$x_{n+1}\in \Psi_{\wh y_{n+1}}(B[20Q(\wh y_{n+1})])$.
Since $f_{\wh x_n}^{-1}(x_{n+1})=x_n=f_{\wh y_n}^{-1}(x_{n+1})$,
the uniqueness of inverse branches provided by assumption (A5) implies that
$f_{\wh x_{n}}^{-1}$ and $f_{\wh y_{n}}^{-1}$ are the same inverse branches.
\end{proof}

Now we proceed to show that, in the notation of Theorem \ref{Thm-inverse},
$\pi(\un v)\in {\rm NUH}$.
Recall the relevance property of each $\ve$--double chart of $\mathfs A$,
see Theorem \ref{Thm-coarse-graining}(3).

\begin{proposition}\label{Prop-finite-norm-2}
For every $v_0\in\mathfs A$, there exists a constant $L=L(v_0)$ s.t. the following holds.
If $\un v=\{v_n\}_{n\in\Z}\in\Sigma^\#$ satisfies $v_n=v_0$ for infinitely
many $n>0$ and if $\wh x=\pi(\un v)$, then $s(\wh x)<L$.
The same applies to $u(\wh x)$. In particular, $\wh x\in{\rm NUH}$. 
\end{proposition}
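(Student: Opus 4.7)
The plan is to mimic the proof of Lemma~\ref{Lemma-finite-norm-1}, with the recurrence of $\un v$ at $v_0$ playing the role that the existence of a ${\rm NUH}^\#$ point inside the stable set played there. Let $\wh z$ be the center of $v_0$, so that $q_0:=Q(\wh z)>0$ and $\|C(\wh z)^{-1}\|$ are positive quantities depending only on $v_0$; crucially, at every recurrent position $n_k$ the center of $v_{n_k}=v_0$ is $\wh z$, so these fixed quantities are available at infinitely many positions along $\un v$. I only prove the bound on $s(\wh x)$, the argument for $u(\wh x)$ being symmetric; combined with Proposition~\ref{Prop-hyperbolicity-manifolds}(2) (which supplies $E^s\oplus E^u$ with the required Lyapunov behavior), these imply $\wh x\in{\rm NUH}$.

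Following Claim~1 in the proof of Lemma~\ref{Lemma-finite-norm-1}, it suffices to bound $\vertiii{v}_{\chi'}$ uniformly in $\chi'\in(0,\chi)$ for unit $v\in T_{\wh x}\wh V^s[\un v^+]$. Fix $\chi'$, set $\overline\chi:=(\chi+\chi')/2$ and $\delta:=(e^{-\overline\chi}-e^{-\chi})^{3/\beta}$, and define the shrunken invariant set $\wh V^s_{\overline\chi,n}:=\wh V^s_{\overline\chi}[\{\Psi_{\wh z_k}^{\delta p^s_k,\delta p^u_k}\}_{k\geq n}]$ directly from the parameters of $\un v$ (no ${\rm NUH}^\#$ orbit is needed). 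An analog of Claim~2 of Lemma~\ref{Lemma-finite-norm-1} then shows $\wh f^n(\wh V^s[\un v^+])\subset\wh V^s_{\overline\chi,n}$ for all $n$ large: the diameter $4p^s_0 e^{-\chi n/2}$ of $f^n(V^s[\un v^+])$ (Proposition~\ref{Prop-stable-manifolds}(4)) is eventually dominated by the radius $\sim \delta p^s_n/\|C(\wh z_n)^{-1}\|$ of the shrunken chart since $\chi/2>\ve$. An $\ve$-gpo version of Corollary~\ref{Corollary-better-hyperbolicity}, proved by the same calculation using Theorem~\ref{Thm-non-linear-Pesin-2}(b) in place of Theorem~\ref{Thm-non-linear-Pesin}(3)(b), gives contraction at rate $e^{-\overline\chi}$ on $\wh V^s_{\overline\chi,n_k}$, yielding a bound $\vertiii{v}_{\chi'}\leq L(\chi')$ involving only the fixed quantity $\|C(\wh z)^{-1}\|$.

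The decisive step is then to iterate the improvement lemma (Lemma~\ref{Lemma-improvement}) backward along $\un v$ from position $n_k$ down to $0$. At each of the intermediate recurrent positions $n_j$, $1\leq j<k$, the improvement to the ratio $\vertiii{\Theta^s(w)}_{\chi'}/\vertiii{w}_{\chi'}$ is by the fixed positive amount $q_0^{\beta/4}$; the remaining positions contribute non-negative improvements. As $k\to\infty$ the cumulative improvement overcomes any finite initial ratio and forces the ratio at position $0$ to be at most $\exp[\sqrt\ve]$, uniformly in $\chi'$. Combined with the bound $\vertiii{w}_{\chi'}\leq s(\wh z^{(0)})\|w\|$ at the center $\wh z^{(0)}$ of the $0$th symbol of $\un v$ (finite because $\wh z^{(0)}\in{\rm NUH}$ by definition of $\mathfs A$), we obtain a $\chi'$-independent bound on $\vertiii{v}$, hence on $S(\wh x,v)=\vertiii{v}$; taking the supremum over $\chi'$ via Claim~1 concludes $s(\wh x)<\infty$. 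The analogous argument, run on unstable sets and past recurrences of $v_0$, bounds $u(\wh x)$.

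The main obstacle is the careful adaptation of Lemma~\ref{Lemma-finite-norm-1}'s machinery to the abstract $\ve$-gpo setting in the absence of a driving ${\rm NUH}^\#$ orbit: this requires establishing the $\ve$-gpo analog of Corollary~\ref{Corollary-better-hyperbolicity}, verifying the containment $\wh f^n(\wh V^s[\un v^+])\subset \wh V^s_{\overline\chi,n}$ from $\un v$'s parameters alone, and extracting a final bound depending only on $v_0$ by exploiting the universality of the improvement lemma's endpoint after many recurrences rather than on additional data of $\un v$ such as its $0$th symbol or the waiting time to the first recurrence.
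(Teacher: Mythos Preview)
There is a genuine gap in the approach. The claimed ``$\ve$--gpo version of Corollary~\ref{Corollary-better-hyperbolicity}'' does not follow from Theorem~\ref{Thm-non-linear-Pesin-2}(b). The point of Corollary~\ref{Corollary-better-hyperbolicity} is that for a \emph{real} orbit the map $F_{\wh x_n}$ satisfies $d(H^{\pm})_0=0$ exactly (Theorem~\ref{Thm-non-linear-Pesin}(3)(a)), so that $\|(dH^{\pm})_v\|\leq \ve\|v\|^{\beta/2}$ and the derivative error shrinks with the radius $\delta p^s_n$ of the domain. Along an $\ve$--gpo the relevant map is $F_{\wh x_{n-1},\wh x_n}$, and the proof of Theorem~\ref{Thm-non-linear-Pesin-2} only gives $\|(dH^+)_0\|<\tfrac{1}{8}\ve\,\eta_n^{\beta/3}$, which is nonzero and does \emph{not} shrink with $\delta$. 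The inequality needed for contraction at rate $e^{-\overline\chi}$ is $e^{-\chi}+2\|(dH^+)_v\|\leq e^{-\overline\chi}$, i.e.\ $2\|(dH^+)_v\|\leq \delta^{\beta/3}$; the irreducible term $\tfrac{1}{4}\ve\,\eta_n^{\beta/3}$ destroys this as soon as $\delta$ (hence $\chi-\chi'$) is small, which is exactly the regime you need for the supremum over $\chi'$.

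The same obstruction kills the second half of the plan. You propose to iterate an improvement lemma for the ratio $\vertiii{\Theta^s(w)}_{\chi'}/\vertiii{w}_{\chi'}$ along $\un v$, but the paper explicitly notes (in the paragraph following Lemma~\ref{Lemma-improvement}) that the estimate of $\vertiii{\varrho}/\vertiii{w_0}$ in that lemma relies on the overlap condition comparing $C(\wh x_0)^{-1}$ and $C(\wh f^{-1}(\wh x_1))^{-1}$, a condition that is $\chi$--specific and gives no control on the $\chi'$--linear maps along a $\chi$--gpo; this is why Corollary~\ref{Corollary-improvement} is stated only for genuine orbits. Alternatively, if you try to use Lemma~\ref{Lemma-improvement} itself (i.e.\ the $\chi$--version), you need finite starting ratios $S(\wh f^{n_k}(\wh x),\cdot)/S(\wh x_{n_k},\cdot)$, and you do not yet know $S(\wh f^{n_k}(\wh x),\cdot)<\infty$ --- that is precisely what you are trying to prove.

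The paper's proof avoids these issues by invoking relevance (Theorem~\ref{Thm-coarse-graining}(3)): there is an $\ve$--gpo $\un w$ with $w_0=v_0$ shadowing a point of ${\rm NUH}^\#$, so Lemma~\ref{Lemma-finite-norm-1} provides a reference stable set $\wh W^s=\wh V^s[\un w]$ with $\sup_{\wh W^s}s<\infty$. One then graph-transforms $\wh W^s$ back along $v_0\overset{\ve}{\to}\cdots\overset{\ve}{\to}v_{n_k}$ to obtain approximants $\wh W^s_k$, applies the $\chi$--improvement lemma (which \emph{does} work along $\ve$--gpos) to points $\wh y_k\in\wh W^s_k$ that have finite $S$ values, and passes to the limit via a lower semicontinuity argument for $S$. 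The relevance property is not a convenience here but the mechanism that supplies the finite initial data the improvement lemma needs.
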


\begin{proof}
As usual, we represent $S(\cdot,v)$ simply by $\vertiii{v}$.
Write $\wh V^s=\wh V^s[\un v]$, $v_n=\Psi_{\wh x_n}^{p^s_n,p^u_n}$,
$\eta_n=p^s_n\wedge p^u_n$, and let $\{n_k\}_{k\geq 1}$ be an increasing
sequence s.t. $v_{n_k}=v_0$. Since $v_0$ is relevant, there is $\un w\in\Sigma$
with $w_0=v_0$ s.t. $\pi(\un w)\in {\rm NUH}^\#$. By Lemma \ref{Lemma-finite-norm-1},
if $\wh W^s=\wh V^s[\un w]$ then $\sup\limits_{\wh y\in\wh W^s}s(\wh y)<\infty$.
By Lemma \ref{Lemma-pi}(1),
$$
L_0:=\sup\left\{\tfrac{\vertiii{\Theta^s_{\wh x_0,\wh y}(v)}}{\vertiii{v}}:
\begin{array}{c}
\wh y\in \wh W^s, \vt[\wh y]\in\Psi_{\wh x_0}(B^d[\eta_0]\times B^{m-d}[\eta_0]),\\
v\in E^s_{\wh x_0}\setminus\{0\}
\end{array}
\right\}
$$
is also finite. Define $L_1=\max\{L_0,e^{\sqrt{\ve}}\}>1$.

For each $k\geq 1$, we have $\wh W^s\in {\wh{\mathfs M}}^s(v_{n_k})$.
Starting from $\wh W^s$, apply the stable graph transform along the path
$v_0\overset{\ve}{\to}v_1\overset{\ve}{\to}\cdots\overset{\ve}{\to}v_{n_k}$
to obtain some stable set at $v_0$, call it $\wh W^s_k$.
Let $G$ be the representing function of $\wh V^s$, and let
$G_k$ be the representing function of $\wh W^s_k$. By Proposition \ref{Prop-stable-manifolds}(1),
$\norm{G_k-G}_{C^1}\xrightarrow[k\to\infty]{}0$.

\medskip
\noindent
{\sc Claim:} If $\{w_k\}_{k\geq 1}\subset TM$ converges to $w\in TM$ in the Sasaki metric, then
$$
\sum_{n\geq 0}e^{2n\chi}\|\wh{df}^{(n)}w\|^2\leq \liminf_{k\to+\infty}\sum_{n\geq 0}e^{2n\chi}\|\wh{df}^{(n)}w_k\|^2.
$$

\begin{proof}[Proof of the claim.]
Fix $N>0$. For each $\delta>0$, there exists $k_0\geq 1$ s.t. 
$$
k\geq k_0\Longrightarrow \tfrac{\|\wh{df}^{(n)}w_k\|}{\|\wh{df}^{(n)}w\|}=e^{\pm\delta} \text{ for }n=0,1,\ldots,N.
$$
For $k\geq k_0$, we have 
$$
\sum_{n=0}^N e^{2n\chi}\|\wh{df}^{(n)}w\|^2\leq e^{2\delta}\sum_{n=0}^N e^{2n\chi}\|\wh{df}^{(n)}w_k\|^2
\leq e^{2\delta}\sum_{n\geq 0}e^{2n\chi}\|\wh{df}^{(n)}w_k\|^2
$$
and so 
$$
\sum_{n=0}^N e^{2n\chi}\|\wh{df}^{(n)}w\|^2\leq
e^{2\delta}\liminf_{k\to+\infty}\sum_{n\geq 0}e^{2n\chi}\|\wh{df}^{(n)}w_k\|^2.
$$
Since $\delta>0$ is arbitrary, we get that 
$$
\sum_{n=0}^N e^{2n\chi}\|\wh{df}^{(n)}w\|^2\leq
\liminf_{k\to+\infty}\sum_{n\geq 0}e^{2n\chi}\|\wh{df}^{(n)}w_k\|^2.
$$
Finally, take $N\to+\infty$ to conclude the proof of the claim.
\end{proof}

Write $\vt[\wh x]=\Psi_{\wh x_0}(z,G(z))$. Since $\wh x=\pi(\un v)$, 
by Lemma \ref{Lemma-admissible-manifolds} we have that $\|z\|<\eta_0$.
For each $k\geq 1$,
define $\wh y_k$ as the unique element of $\wh W^s_k$ s.t. $\vt[\wh y_k]=\Psi_{\wh x_0}(z,G_k(z))$.
Fix $v\in E^s_{\wh x_0}$ with $\norm{v}=1$.
If $v=(d\Psi_{\wh x_0})_0\begin{bmatrix} w \\ 0\end{bmatrix}$ then
$$
\Theta^s_{\wh x_0,\wh x}(v)=(d\Psi_{\wh x_0})_{(z,G(z))}\begin{bmatrix} w \\ (dG)_z w\end{bmatrix}\ \text{ and }\
\Theta^s_{\wh x_0,\wh y_k}(v)=(d\Psi_{\wh x_0})_{(z,G_k(z))}\begin{bmatrix} w \\ (dG_k)_z w\end{bmatrix}
$$
and so $\Theta^s_{\wh x_0,\wh y_k}(v)\to \Theta^s_{\wh x_0,\wh x}(v)$ in the Sasaki metric. By the claim,
$$
\vertiii{\Theta^s_{\wh x_0,\wh x}(v)}\leq \liminf_{k\to+\infty}\vertiii{\Theta^s_{\wh x_0,\wh y_k}(v)}
$$
and so it is enough to bound the right hand side above.

We claim that $\vt[\wh f^n(\wh y_k)]\in \Psi_{\wh x_n}(B^d[\eta_n]\times B^{m-d}(\eta_n))$
for $n=0,\ldots,n_k$. To prove this, note that $\wh f^n(\wh y_k)$ belongs to a stable set
and so it is enough to show that, in the charts representation, the first coordinate of
$\vt[\wh f^n(\wh y_k)]$ belongs to $B^d[\eta_n]$. The case $n=0$ is true
because $\|z\|<\eta_0$. The proof is by induction, so we just show how to obtain it for $n=1$.
Write $F_{\wh x_0,\wh x_1}=D(\wh x_0)+(H^+_1,H^+_2)$,
where $D(\wh x_0)$ is given by Lemma \ref{Lemma-linear-reduction}(2) and 
$H^+=(H^+_1,H^+_2)$ satisfies itens (a)--(c) in Theorem \ref{Thm-non-linear-Pesin-2}.
We have $\vt[\wh y_k]=\Psi_{\wh x_0}(z,G_k(z))$ and so
$\vt[\wh f(\wh y_k)]=\Psi_{\wh x_1}(\overline{z},*)$ where
$\overline z=D_s(\wh x_0)z+H^+_1(z,G_k(z))$. Then
\begin{align*}
&\, \norm{\overline z}\leq \norm{D_s(\wh x_0)z}+\norm{H^+_1(z,G_k(z))}\leq 
\norm{D_s(\wh x_0)z}+\norm{H^+(z,G_k(z))}\\
&\leq \norm{D_s(\wh x_0)}\norm{z}+\norm{H^+(0,0)}+
\norm{dH^+}_{C^0(B[2\eta_0])}\norm{(z,G_k(z))}\\
&\leq e^{-\chi}\eta_0+\ve \eta_0+\ve(2\eta_0)^{\beta/3}2\eta_0\leq (e^{-\chi}+3\ve)\eta_0
\leq (e^{-\chi}+3\ve)e^{\ve}\eta_1 
\end{align*}
is smaller than $\eta_1$ for $\ve>0$ small enough.
\medskip
Now, for fixed $k\geq 1$ write $\varrho_k=\Theta^s_{\wh x_0,\wh y_k}(v)$ and define $w_\ell\in E^s_{\wh x_\ell}$
by the equality $\Theta^s_{\wh x_\ell,\wh f^\ell(\wh y_k)}(w_\ell)=\wh{df}^{(\ell)}\varrho_k$,
for all $\ell\geq 0$.
Since $\wh f^{n_k}(\wh y_k)\in \wh f^{n_k}(\wh W^s_k)\subset \wh W^s$, we have
that $\tfrac{\vertiii{\wh{df}^{n_k}\varrho_k}}{\vertiii{w_{n_k}}}\leq L_1$.
By Lemma \ref{Lemma-improvement}, we get that
$\tfrac{\vertiii{\wh{df}^{n_k-1}\varrho_k}}{\vertiii{w_{n_k-1}}}\leq L_1$ and, repeating this
procedure, we  obtain that $\tfrac{\vertiii{\varrho_k}}{\vertiii{w_{0}}}\leq L_1$.
By Lemma \ref{Lemma-pi}(1), $\vertiii{\varrho_k}\leq L_1\vertiii{w_0}\leq 2L_1 s(\wh x_0)$
and so
\begin{equation}\label{estimate-finite-norm}
\vertiii{\Theta^s_{\wh x_0,\wh x}(v)}\leq 2L_1 s(\wh x_0).
\end{equation}
Defining $L=L(v_0):=3L_1s(\wh x_0)$ and applying Lemma \ref{Lemma-pi}(1) again,
it follows that $\vertiii{v}<L$ for all $v\in T_{\wh x}\wh V^s$ with $\norm{v}=1$,
and so $s(\wh x)<L$.

\medskip
The proof for $\wh V^u[\un v]$ is identical to the above since, 
as in the proof of Lemma \ref{Lemma-finite-norm-1}, the inverse branches
of the unstable sets involved in the proof coincide.
Finally, we show that $\wh x\in{\rm NUH}$.
Defining $E^s_{\wh x}=T_{\wh x}(\wh V^s[\un v])$ and
$E^u_{\wh x}=T_{\wh x}(\wh V^u[\un v])$, then what we just proved and
Proposition \ref{Prop-hyperbolicity-manifolds}(2) imply that $\wh x$ satisfies (NUH1)--(NUH3).
\end{proof}

Consequently, if $\un v\in\Sigma^\#$ then $\wh x=\pi(\un v)\in{\rm NUH}$ and so
the Lyapunov inner product $\vertiii{\cdot}$ is well-defined in $\wh{TM}_{\wh x}$,
as well as $C(\wh x),Q(\wh x)$. In the next sections we will prove parts (1)--(5) of
Theorem \ref{Thm-inverse}.

\subsection{Control of $d$ and $\rho$}

We start estimating $d(\vt[\wh x_0],\vt[\wh x])$. By Lemma \ref{Lemma-admissible-manifolds},
$\vt[\wh x]=\Psi_{\wh x_0}(w)$ for some $\norm{w}< 50^{-1}(p^s_0\wedge p^u_0)$. Since
$\vt[\wh x_0]=\Psi_{\wh x_0}(0)$ and $\Psi_{\wh x_0}$ is $2$--Lipschitz,
we get that $d(\vt[\wh x_0],\vt[\wh x])<25^{-1}(p^s_0\wedge p^u_0)$.
Now we estimate $\tfrac{\rho(\wh x_0)}{\rho(\wh x)}$. Such estimate
{\em does not} follow from Proposition \ref{Proposition-overlap}(1), since
we assume no overlap condition between $\wh x_0$ and $\wh x$. Nevertheless,
we can employ a calculation similar to that used to prove Proposition \ref{Proposition-overlap}(1).
Start observing that, by the above estimate, 
$d(\vt[\wh x_0],\vt[\wh x])<25^{-1}(p^s_0\wedge p^u_0)<\ve^2\rho(\wh x_0)^2<\ve^2d(\vt[\wh x_0],\mathfs S)$
and so 
$$
d(\vt[\wh x],\mathfs S)=d(\vt[\wh x_0],\mathfs S)\pm d(\vt[\wh x_0],\vt[\wh x])
=(1\pm\ve^2)d(\vt[\wh x_0],\mathfs S)=e^{\pm\ve}d(\vt[\wh x_0],\mathfs S).
$$
To estimate $\tfrac{d(\vt_1[\wh x],\mathfs S)}{d(\vt_1[\wh x_0],\mathfs S)}$,
start observing that, since $\vt[\wh x]\in D_{\wh x_0}$, assumption (A6) implies that
$d(\vt_1[\wh x],\vt_1[\wh x_0])<\ve^2\rho(\wh x_0)<\ve^2d(\vt_1[\wh x_0],\mathfs S)$
and so
\begin{align*}
&\ d(\vt_1[\wh x],\mathfs S)=d(\vt_1[\wh x_0],\mathfs S)\pm d(\vt_1[\wh x_0],\vt_1[\wh x])\\
&=(1\pm\ve^2)d(\vt_1[\wh x_0],\mathfs S)=e^{\pm\ve}d(\vt_1[\wh x_0],\mathfs S).
\end{align*}
We estimate of $\tfrac{d(\vt_{-1}[\wh x],\mathfs S)}{d(\vt_{-1}[\wh x_0],\mathfs S)}$
in the same way, using that $\vt[\wh x]\in E_{\wh f^{-1}(\wh x_0)}$.
Altogether, we conclude that $\tfrac{\rho(\wh x_0)}{\rho(\wh x)}=e^{\pm\ve}$.
This proves part (1) of Theorem \ref{Thm-inverse}.

\subsection{Control of $C^{-1}$}

Recall that $\wh x=\pi(\un v)$ for $\un v\in\Sigma^\#$.
We proved in Section \ref{Section-finite} that $\wh x\in {\rm NUH}$, 
i.e. there is a splitting $\wh{TM}_{\wh x}=E^s_{\wh x}\oplus E^u_{\wh x}$
satisfying (NUH1)--(NUH3). To control $C^{-1}$, we need to control the Lyapunov
inner product for vectors in $E^s_{\wh x}$ and $E^u_{\wh x}$. We explain
how to make the control in $E^s_{\wh x}$ (the control in $E^u_{\wh x}$ is analogous).
Write $\un v=\{v_n\}_{n\in\Z}$ and $\Theta_n=\Theta_{\wh x_n,\wh f^n(\wh x)}$. Without loss of generality,
assume that $v_{0}$ repeats infinitely often in the future, i.e. there is an increasing sequence
$\{n_k\}_{k\geq 1}$ s.t. $v_{n_k}=v_0$ for all $k\geq 1$.
Let $L=3L_1s(\wh x_0)$ as in Proposition \ref{Prop-finite-norm-2},
and let $\xi>0$ s.t. $L=e^{\xi}$. Since $L_1\geq e^{\sqrt{\ve}}$ and $s(\wh x_0)\geq \sqrt{2}$,
we have $\xi>\sqrt{\ve}$. We claim that
\begin{equation}\label{estimate-for-improvement}
\frac{\vertiii{v}}{\vertiii{\Theta_{n_k}(v)}}={\rm exp}[\pm \xi],
\ \text{ for all }v\in E^s_{\wh x_0}\backslash\{0\}.
\end{equation}
By a normalization, we just need to check this for $\norm{v}=1$.
We proved in Proposition \ref{Prop-finite-norm-2}
that $\vertiii{\Theta_{n_k}(v)}\leq 2L_1s(\wh x_0)$, see estimate
(\ref{estimate-finite-norm}). On one hand, applying Lemma \ref{Lemma-pi}(1) we have
$\tfrac{\vertiii{v}}{\vertiii{\Theta_{n_k}(v)}}\leq 2s(\wh x_0)<L$,
and on the other hand
$\tfrac{\vertiii{v}}{\vertiii{\Theta_{n_k}(v)}}\geq \frac{\sqrt{2}}{2L_1s(\wh x_0)}>L^{-1}$,
which proves (\ref{estimate-for-improvement}). Now fix $k\geq 1$.
Using (\ref{estimate-for-improvement}), apply
Lemma \ref{Lemma-improvement} along the path
$v_{n_{k-1}}\overset{\ve}{\to}\cdots\overset{\ve}{\to}v_{n_k}$.
Since the ratio does not get worse for all edges $v_{\ell}\overset{\ve}{\to}v_{\ell+1}$
and it improves a fixed amount in the last edge $v_{n_{k-1}}\overset{\ve}{\to}v_{n_{k-1}+1}$,
we conclude that
$\tfrac{\vertiii{v}}{\vertiii{\Theta_{n_{k-1}}(v)}}={\rm exp}\left[\pm (\xi-Q(\wh x_0)^{\beta/4})\right]$
for all $v\in E^s_{\wh x_0}\backslash\{0\}$. Repeating this procedure until reaching $v_0$, we obtain
at least $k$ improvements, as long as the ratio remains outside $[{\rm exp}(-\sqrt{\ve}),{\rm exp}(\sqrt{\ve})]$.
Taking $k\to+\infty$, we conclude that
$\frac{\vertiii{v}}{\vertiii{\Theta_0(v)}}={\rm exp}[\pm \sqrt{\ve}]$
for all $v\in E^s_{\wh x_0}\backslash\{0\}$.
Since $v_{n_k}=v_0$, we obtain similarly that
$\frac{\vertiii{v}}{\vertiii{\Theta_{n_k}(v)}}={\rm exp}[\pm \sqrt{\ve}]$
for all $v\in E^s_{\wh x_0}\backslash\{0\}$. Finally, given $n\in\Z$, let $n_k>n$ and apply
Lemma \ref{Lemma-improvement} along the path $v_{n}\overset{\ve}{\to}\cdots\overset{\ve}{\to}v_{n_k}$
to conclude that
\begin{equation}\label{comparison-C-1}
\frac{\vertiii{v}}{\vertiii{\Theta_n(v)}}={\rm exp}[\pm \sqrt{\ve}],
\ \text{ for all }v\in E^s_{\wh x_n}\backslash\{0\}.
\end{equation}
By a similar argument, we conclude that
\begin{equation}\label{comparison-C-2}
\frac{\vertiii{v}}{\vertiii{\Theta_n(v)}}={\rm exp}[\pm \sqrt{\ve}],
\ \text{ for all }v\in E^u_{\wh x_n}\backslash\{0\}.
\end{equation}

We are now ready to prove part (2) of Theorem \ref{Thm-inverse}.
For simplicity, assume $n=0$ and write $\Theta=\Theta_0$.
If $v=v^s+v^u\in E^s_{\wh x_0}\oplus E^u_{\wh x_0}$,
then $\Theta(v)=\Theta(v^s)+\Theta(v^u)\in E^s_{\wh x}\oplus E^u_{\wh x}$.
By the calculation made in the proof of Lemma \ref{Lemma-linear-reduction}(1) 
and estimates (\ref{comparison-C-1}) and (\ref{comparison-C-2}),
\begin{equation}\label{comparison-C-3}
\frac{\norm{C(\wh x_0)^{-1}v}^2}{\norm{C(\wh x)^{-1}\Theta(v)}^2}=
\frac{\vertiii{v^s}^2+\vertiii{v^u}^2}{\vertiii{\Theta(v^s)}^2+\vertiii{\Theta(v^u)}^2}={\rm exp}[\pm 2\sqrt{\ve}]
\end{equation}
and so $\frac{\|C(\wh x_0)^{-1}\|}{\norm{C(\wh x)^{-1}\Theta}}={\rm exp}[\pm\sqrt{\ve}]$.
By Lemma \ref{Lemma-pi}(2), if $\ve>0$ is small enough then
$\norm{\Theta^{\pm 1}}={\rm exp}[\pm\sqrt{\ve}]$, hence
$\frac{\|C(\wh x_0)^{-1}\|}{\|C(\wh x)^{-1}\|}={\rm exp}[\pm2\sqrt{\ve}]$.

\subsection{Control of $Q,p^s,p^u$ and proof that $\wh x\in{\rm NUH}^\#$}

Now we prove parts (3) and (4) of Theorem \ref{Thm-inverse}.
We begin controlling $Q$. As usual, let $n=0$.
Recall that $Q(\wh x)=\max\{Q\in I_\ve:Q\leq \widetilde Q(\wh x)\}$, where
$$
\widetilde Q(\wh x)= \ve^{6/\beta}\min\left\{\norm{C(\wh x)^{-1}}^{-48/\beta},\rho(\wh x)^{96a/\beta}\right\}.
$$
By part (1),
$\tfrac{\rho(\wh x_0)^{96a/\beta}}{\rho(\wh x)^{96a/\beta}}=
{\rm exp}\left[\pm\tfrac{96a\ve}{\beta}\right]={\rm exp}\left[\pm\tfrac{96\sqrt{\ve}}{\beta}\right]$
for $\ve>0$ small enough. By part (2),
$\tfrac{\|C(\wh x_0)^{-1}\|^{-48/\beta}}{\|C(\wh x)^{-1}\|^{-48/\beta}}={\rm exp}\left[\pm \tfrac{96\sqrt{\ve}}{\beta}\right]$.
Hence, $\tfrac{\widetilde Q(\wh x_0)}{\widetilde Q(\wh x)}={\rm exp}\left[\pm \tfrac{96\sqrt{\ve}}{\beta}\right]$
and so $\tfrac{Q(\wh x_0)}{Q(\wh x)}={\rm exp}\left[\pm\left( \tfrac{96\sqrt{\ve}}{\beta}+\tfrac{\ve}{3}\right)\right]$.
If $\ve>0$ is small enough then $\tfrac{Q(\wh x_0)}{Q(\wh x)}={\rm exp}\left[\pm\sqrt[3]{\ve}\right]$.

Now we prove part (4). We show how to control $p^s$ (the proof for $p^u$ is analogous).
By (GPO2), we have
$p^s_0=\delta_\ve\inf\{e^{\ve n}Q(\wh x_n):n\geq 0\}$ and so by part (3)
\begin{align*}
q^s(\wh x)=\delta_\ve\inf\{e^{\ve n}Q(\wh f^n(\wh x)):n\geq 0\}=p^s_0\times {\rm exp}\left[\pm\sqrt[3]{\ve}\right].
\end{align*}
Hence $q^s(\wh x)>0$ and $\tfrac{p^s_0}{q^s(\wh x)}= {\rm exp}\left[\pm\sqrt[3]{\ve}\right]$.

Finally, observe that we also have
$\tfrac{p^s_n\wedge p^u_n}{q(\wh f^n(\wh x))}= {\rm exp}\left[\pm\sqrt[3]{\ve}\right]$.
Since $\un v\in\Sigma^\#$, it follows that $\limsup\limits_{n\to+\infty}q(\wh f^n(\wh x))>0$
and $\limsup\limits_{n\to-\infty}q(\wh f^n(\wh x))>0$, thus proving that $\wh x\in{\rm NUH}^\#$.

\subsection{Control of $\Psi_{\wh f^n(\wh x)}^{-1}\circ\Psi_{\wh x_n}$}

Fix $n=0$, and let $x_0=\vt[\wh x_0]$, $x=\vt[\wh x]$, $\eta=p^s_0\wedge p^u_0$,
$\Theta=\Theta_{\wh x_0,\wh x}$, and $P=P_{x_0,x}$.
We start proving that $\Psi_{\wh x}^{-1}\circ\Psi_{\wh x_0}$ is well-defined in $B[10Q(\wh x_0)]$.
We have
$$
\Psi_{\wh x_0}(B[10Q(\wh x_0)])\subset B(x_0,20Q(\wh x_0))\subset B(x,20Q(\wh x_0)+d(x_0,x))
\subset\mathfrak B_{\wh x},
$$
where in the last inclusion we used parts (1) and (3) to obtain that
$20Q(\wh x_0)+d(x_0,x)<20Q(\wh x_0)+\tfrac{\eta}{25}<25Q(\wh x_0)\leq
25e^{\sqrt[3]{\ve}}Q(\wh x)<\mathfrak d(\wh x)$.
By assumption (A1), $\Psi_{\wh x}^{-1}\circ\Psi_{\wh x_0}$ is well-defined in $B[10Q(\wh x_0)]$.

Now we represent $\Psi_{\wh x}^{-1}\circ\Psi_{\wh x_0}$ as required. 
We have $\Psi_{\wh x}^{-1}\circ\Psi_{\wh x_0}=C(\wh x)^{-1}\circ \Phi\circ C(\wh x_0)$,
where $\Phi=\exp{x}^{-1}\circ \exp{x_0}$. The composition
$C(\wh x)^{-1}\circ \Theta\circ C(\wh x_0)$ has norm close to one. Indeed, 
by (\ref{comparison-C-3}) we have
$\|C(\wh x)^{-1}\circ \Theta\circ C(\wh x_0)v\|=e^{\pm 2\sqrt{\ve}}\|C(\wh x_0)^{-1}\circ C(\wh x_0)v\|=
e^{\pm 2\sqrt{\ve}}\norm{v}$. By the polar decomposition for matrices,
$C(\wh x)^{-1}\circ \Theta\circ C(\wh x_0)=OR$ where $O$ is an orthogonal matrix
and $R$ is positive symmetric with $\norm{Rv}=e^{\pm 2\sqrt{\ve}}\norm{v}$ for all $v\in\R^m$.
Since $C(\wh x)^{-1}\circ \Theta\circ C(\wh x_0)$ preserves the splitting $\R^d\times\R^{m-d}$,
the same holds for $O$. Also, diagonalizing $R$ and estimating its eigenvalues,
we get that if $\ve>0$ is small enough then $\norm{R-{\rm Id}}\leq 4\sqrt{\ve}$,
see details in \cite[pp. 100]{Ben-Ovadia-2019}.
Define $\delta=(\Psi_{\wh x}^{-1}\circ\Psi_{\wh x_0})(0)\in\R^m$, and $\Delta:B[10Q(\wh x_0)]\to \R^m$
s.t. $\Psi_{\wh x}^{-1}\circ\Psi_{\wh x_0}=\delta+O+\Delta$.
We start estimating $d\Delta$. For $z\in B[10Q(\wh x_0)]$,
\begin{align*}
&\,(d\Delta)_z=C(\wh x)^{-1}\circ (d\Phi)_{C(\wh x_0)z}\circ C(\wh x_0)-O\\
&=C(\wh x)^{-1}\circ \underbrace{\left[(d\Phi)_{C(\wh x_0)z}-\Theta\right]}_{=E}\circ C(\wh x_0)+OR-O.
\end{align*}
To estimate $E$, observe that:
\begin{enumerate}[$\circ$]
\item By lemma \ref{Lemma-pi}(2), $\norm{P-\Theta}\leq \tfrac{1}{2}\eta^{15\beta/48}$.
\item By assumptions (A2)--(A3),
\begin{align*}
&\, \norm{(d\Phi)_{C(\wh x_0)z}-P}=
\norm{\widetilde{(d\exp{x}^{-1})_{\Psi_{\wh x_0}(z)}(d\exp{x_0})_{C(\wh x_0)z}}-\widetilde{{\rm Id}}}\\
&=\norm{\widetilde{(d\exp{x}^{-1})_{\Psi_{\wh x_0}(z)}(d\exp{x_0})_{C(\wh x_0)z}}-
\widetilde{(d\exp{x_0}^{-1})_{\Psi_{\wh x_0}(z)}(d\exp{x_0})_{C(\wh x_0)z}}}\\
&\leq \norm{\widetilde{(d\exp{x}^{-1})_{\Psi_{\wh x_0}(z)}}-
\widetilde{(d\exp{x_0}^{-1})_{\Psi_{\wh x_0}(z)}}}\cdot\norm{(d\exp{x_0})_{C(\wh x_0)z}}\\
&\leq 2d(x_0,\mathfs S)^{-a}d(x,x_0)
\end{align*}
which, by part (1), is bounded by
$\tfrac{2}{25}d(x_0,\mathfs S)^{-a}\eta\leq \tfrac{2}{25}\rho(\wh x_0)^{-a}\eta
\ll \tfrac{1}{2}\eta^{15\beta/48}$.
\end{enumerate}
Hence, $\norm{E}<\eta^{15\beta/48}$ and so, by part (2), we get that
\begin{align*}
&\,\norm{C(\wh x)^{-1}\circ E\circ C(\wh x_0)}\leq \norm{C(\wh x)^{-1}}\eta^{15\beta/48}
\leq e^{2\sqrt{\ve}}\norm{C(\wh x_0)^{-1}}\eta^{15\beta/48}\\
&\leq e^{2\sqrt{\ve}}\ve^{1/8}\eta^{14\beta/48}\ll \sqrt{\ve}.
\end{align*}
Since $\norm{OR-O}=\norm{R-{\rm Id}}\leq 4\sqrt{\ve}$, we conclude that
$\norm{(d\Delta)_z}\leq 5\sqrt{\ve}$. In particular, since $\Delta(0)=0$, we have
$\norm{\Delta(z)}\leq \norm{d\Delta}_{C^0}\norm{z}\leq 5\sqrt{\ve}\norm{z}$.

Finally, we estimate $\norm{\delta}$. Let $z$ s.t. $\Psi_{\wh x_0}(z)=x$.
We have $0=(\Psi_{\wh x}^{-1}\circ\Psi_{\wh x_0})(z)=\delta+Oz+\Delta(z)$
and so $\delta=-Oz-\Delta(z)$. By Lemma \ref{Lemma-admissible-manifolds}
we have $\norm{z}< 50^{-1}\eta$, therefore for $\ve>0$ small enough
$$
\norm{\delta}\leq \norm{Oz}+\norm{\Delta(z)}\leq (1+5\sqrt{\ve})\norm{z}
\leq \tfrac{1+5\sqrt{\ve}}{50}\eta<25^{-1}\eta.
$$

\subsection{Proof of Corollary \ref{Corollary-inverse}}

Parts (1)--(2) are consequences of parts (1) and (4) of Theorem \ref{Thm-inverse}.
We prove part (3) as we proved Theorem \ref{Thm-inverse}(5), with the required
modifications.
Write $\pi(\un v)=\pi(\un w)=\wh x$. Assume $n=0$, and let
$x=\vt[\wh x]$, $x_0=\vt[\wh x_0]$, $y_0=\vt[\wh y_0]$, $\eta=p^s_0\wedge p^u_0$,
$\overline\eta=q^s_0\wedge q^u_0$. By Theorem \ref{Thm-inverse}(4),
$\eta/\overline\eta=e^{\pm 2\sqrt[3]{\ve}}$. Let
$$
\begin{array}{ccc}
A=\Theta_{\wh y_0,\wh x}^{-1},& B=\Theta_{\wh x_0,\wh x},& \Theta=A\circ B\\
&&\\
C=P_{y_0,x}^{-1},& D=P_{x_0,x},& P=C\circ D.
\end{array}
$$
The proof that $\Psi_{\wh y_0}^{-1}\circ\Psi_{\wh x_0}$ is well-defined in $B[10Q(\wh x_0)]$
is the same. Now, we have $\Psi_{\wh y_0}^{-1}\circ\Psi_{\wh x_0}=C(\wh y_0)^{-1}\circ \Phi\circ C(\wh x_0)$,
where $\Phi=\exp{y_0}^{-1}\circ \exp{x_0}$, and 
$\|C(\wh y_0)^{-1}\circ \Theta\circ C(\wh x_0)v\|=e^{\pm 4\sqrt{\ve}}\norm{v}$ for all $v$.
Hence $C(\wh y_0)^{-1}\circ \Theta\circ C(\wh x_0)=OR$ where $O$ is an orthogonal matrix 
preserving the splitting $\R^d\times\R^{m-d}$ and $R$ is positive symmetric with
$\norm{R-{\rm Id}}\leq 8\sqrt{\ve}$. Let $\delta=(\Psi_{\wh y_0}^{-1}\circ\Psi_{\wh x_0})(0)\in\R^m$ and
$\Delta$ s.t. $\Psi_{\wh y_0}^{-1}\circ\Psi_{\wh x_0}=\delta+O+\Delta$.
If $y\in B[10Q(\wh x_0)]$ then
$$
(d\Delta)_z=C(\wh y_0)^{-1}\circ E\circ C(\wh x_0)+OR-O
$$
where $E=(d\Phi)_{C(\wh x_0)z}-\Theta$. Note that:
\begin{enumerate}[$\circ$]
\item By lemma \ref{Lemma-pi}(2), $\norm{A^{-1}-C^{-1}}\leq \tfrac{1}{2}\overline\eta^{15\beta/48}$
and $\norm{B-D}\leq \tfrac{1}{2}\eta^{15\beta/48}$.
We also have $\norm{A},\norm{B},\norm{C},\norm{D}<2$.
Since $A-C=A(C^{-1}-A^{-1})C$, we have
$$
\norm{A-C}\leq 4\norm{A^{-1}-C^{-1}}\leq 2\overline\eta^{15\beta/48}
$$
and so 
\begin{align*}
&\,\norm{P-\Theta}=\norm{AB-CD}\leq \norm{A}\norm{B-D}+\norm{A-C}\norm{D}\\
&\leq \eta^{15\beta/48}+4\overline\eta^{15\beta/48}\leq (e^{2\sqrt[3]{\ve}}+4)\overline\eta^{15\beta/48}
<6\overline\eta^{15\beta/48}
\end{align*}
for $\ve>0$ small enough.
\item Proceeding similarly as before and using part (1), we have
\begin{align*}
&\,\norm{(d\Phi)_{C(\wh x_0)z}-P}\leq 2d(y_0,\mathfs S)^{-a}d(x_0,y_0)
\leq 2\rho(\wh y_0)^{-a}\tfrac{1}{10}\max\{\eta,\overline\eta\}\\
&\leq \tfrac{1}{5}e^{2\sqrt[3]{\ve}}\rho(\wh y_0)^{-a}\overline\eta\ll \overline\eta^{15\beta/48}.
\end{align*}
\end{enumerate}
Hence $\norm{E}<7\overline\eta^{15\beta/48}$ and so
\begin{align*}
&\,\norm{C(\wh y_0)^{-1}\circ E\circ C(\wh x_0)}\leq 7\norm{C(\wh y_0)^{-1}}\overline\eta^{15\beta/48}
\leq 7\ve^{1/8}\overline\eta^{14\beta/48}\ll \sqrt{\ve},
\end{align*}
which implies that $\norm{(d\Delta)_z}< 9\sqrt{\ve}$.
To estimate $\norm{\delta}$ we let $z$ s.t. $\Psi_{\wh x_0}(z)=x$.
By Lemma \ref{Lemma-admissible-manifolds} we have $\norm{z}< 50^{-1}\eta$
and $\|(\Psi_{\wh y_0}^{-1}\circ\Psi_{\wh x_0})(z)\|< 50^{-1}\overline\eta$, and so
\begin{align*}
&\,\norm{\delta}\leq \|(\Psi_{\wh y_0}^{-1}\circ\Psi_{\wh x_0})(z)\|+\norm{Oz}+\norm{\Delta(z)}\leq
\|(\Psi_{\wh y_0}^{-1}\circ\Psi_{\wh x_0})(z)\|+(1+9\sqrt{\ve})\norm{z}\\
&< 50^{-1}\overline\eta +50^{-1}(1+9\sqrt{\ve})\eta 
\leq 50^{-1}\left[1+(1+9\sqrt{\ve})e^{2\sqrt[3]{\ve}}\right]\overline\eta<10^{-1}\overline\eta
\end{align*}
for $\ve>0$ small enough.

\section{Symbolic dynamics}

We are approaching the completion of the proof of Theorem \ref{Thm-Main}.
In this section, we will employ the same methods used in
\cite{Sarig-JAMS,Ben-Ovadia-2019} to our context (the results are stated in terms of $\wh M$).
This part is more abstract than the material of the previous sections, and whenever
possible we will just invoke the results of \cite{Sarig-JAMS,Ben-Ovadia-2019}.
When needed, we will provide the complete details.
In the sequel, we fix $\chi>0$ and
for $\ve>0$ small enough we let $(\Sigma,\sigma,\pi)$ be the triple given by
Proposition \ref{Prop-pi}.
Most of the results below will use Lemma \ref{Lemma-stable-set}, Proposition \ref{Prop-stable-sets},
Theorem \ref{Thm-coarse-graining}, Proposition \ref{Prop-pi}, and Corollary \ref{Corollary-inverse}.

\subsection{A countable Markov partition}

We use Corollary \ref{Corollary-inverse} to construct a cover of
${\rm NUH}^\#$ that is locally finite and satisfies a (symbolic) Markov property.

\medskip
\noindent
{\sc The Markov cover $\mathfs Z$:} Let $\mathfs Z:=\{Z(v):v\in\mathfs A\}$, where
$$
Z(v):=\{\pi(\un v):\un v\in\Sigma^\#\text{ and }v_0=v\}.
$$

\medskip
In other words, $\mathfs Z$ is the family defined by the natural partition of $\Sigma^\#$ into
cylinder at the zeroth position. Using stable/unstable sets, we define {\em invariant sets}
inside each $Z\in\mathfs Z$. Let $Z=Z(v)$.

\medskip
\noindent
{\sc $s$/$u$--sets in $\mathfs Z$:} Given $\wh x\in Z$, let $\wh W^s(\wh x,Z):=\wh V^s[\{v_n\}_{n\geq 0}]\cap Z$
be the {\em $s$--set} of $\wh x$ in $Z$ for some (any) $\un v=\{v_n\}_{n\in\Z}\in\Sigma^\#$
s.t. $\pi(\un v)=\wh x$ and $v_0=v$. Similarly, let $\wh W^u(\wh x,Z):=\wh V^u[\{v_n\}_{n\leq 0}]\cap Z$ be
the {\em $u$--set} of $\wh x$ in $Z$.

\medskip
By Proposition \ref{Prop-stable-sets}(5), the definitions above do not depend on the choice of $\un v$, 
and any two $s$--sets ($u$--sets) either coincide or are disjoint. We also
define $\wh V^s(\wh x,Z):=\wh V^s[\{v_n\}_{n\geq 0}]$ and $\wh V^u(\wh x,Z):=\wh V^u[\{v_n\}_{n\leq 0}]$.
Here are the main properties of $\mathfs Z$.

\begin{proposition}\label{Prop-Z}
The following holds for all $\ve>0$ small enough.
\begin{enumerate}[{\rm (1)}]
\item {\sc Covering property:} $\bigcup_{Z\in\mathfs Z}Z={\rm NUH}^\#$.
\item {\sc Local finiteness:} For every $Z\in\mathfs Z$, $\#\{Z'\in\mathfs Z:Z\cap Z'\neq\emptyset\}<\infty$.
\item {\sc Product structure:} For every $Z\in\mathfs Z$ and every $\wh x,\wh y\in Z$, the intersection
$\wh W^s(\wh x,Z)\cap \wh W^u(\wh y,Z)$ consists of a single point, and this point belongs to $Z$.
\item {\sc Symbolic Markov property:} If $\wh x=\pi(\un v)$ with $\un v=\{v_n\}_{n\in\Z}\in\Sigma^\#$, then
$$
\wh f(\wh W^s(\wh x,Z(v_0)))\subset \wh W^s(\wh f(\wh x),Z(v_1))\, 
\text{ and }\, \wh f^{-1}(\wh W^u(\wh f(\wh x),Z(v_1)))\subset \wh W^u(\wh x,Z(v_0)).
$$
\end{enumerate}
\end{proposition}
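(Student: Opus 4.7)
The plan is to prove (1)--(4) in order, exploiting the correspondence between $\ve$--gpo's in $\Sigma^\#$ and shadowed points established in Lemma \ref{Lemma-shadowing} and Theorem \ref{Thm-inverse}. Covering (1) is immediate: the inclusion $\bigcup_Z Z \subset {\rm NUH}^\#$ is the conclusion of Theorem \ref{Thm-inverse}, while $\bigcup_Z Z \supset {\rm NUH}^\#$ is Proposition \ref{Prop-pi}(4). For local finiteness (2), I would fix $Z = Z(v)$ with $v = \Psi_{\wh x}^{p^s,p^u}$ and observe that any $Z' = Z(w)$ meeting $Z$ arises from some $w = \Psi_{\wh y}^{q^s,q^u}$ for which there exist $\un v,\un w \in \Sigma^\#$ coding a common point with $v_0=v$, $w_0=w$. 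Corollary \ref{Corollary-inverse}(2) then forces $q^s/p^s, q^u/p^u = e^{\pm 2\sqrt[3]{\ve}}$, so $q^s,q^u$ are bounded below uniformly in $w$, and the discreteness of $\mathfs A$ from Theorem \ref{Thm-coarse-graining}(1) yields only finitely many candidates.

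For the product structure (3), given $\wh x,\wh y \in Z = Z(v)$, choose $\un v,\un w \in \Sigma^\#$ with $v_0 = w_0 = v$, $\pi(\un v) = \wh x$, and $\pi(\un w) = \wh y$. Define $\un z$ by splicing: take $z_n = v_n$ for $n \geq 0$ and $z_n = w_n$ for $n < 0$. The junction edge $z_{-1} \to z_0$ is $w_{-1} \to w_0 = v_0$, which already belongs to $\un w$, so $\un z$ is an $\ve$--gpo; recurrence is inherited from the future of $\un v$ and the past of $\un w$, placing $\un z \in \Sigma^\#$. Proposition \ref{Prop-stable-sets}(1) then identifies the single point of $\wh V^s[\{v_n\}_{n\geq 0}] \cap \wh V^u[\{w_n\}_{n\leq 0}]$ as $\pi(\un z)$, and intersecting with $Z$ gives the unique element of $\wh W^s(\wh x,Z) \cap \wh W^u(\wh y,Z)$, which lies in $Z$ by construction.

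The symbolic Markov property (4) is the most delicate step and the one I expect to require the most care. For the stable inclusion, take $\wh y \in \wh W^s(\wh x,Z(v_0))$, write $\wh y = \pi(\un w)$ with $\un w \in \Sigma^\#$ and $w_0 = v_0$, and splice as before: let $z_n = v_{n+1}$ for $n \geq 0$ and $z_n = w_{n+1}$ for $n < 0$, giving $\un z \in \Sigma^\#$ with $z_0 = v_1$. A direct verification via Lemma \ref{Lemma-stable-set}(1) shows that $\wh f(\wh y) = (y_{n+1})_{n\in\Z}$ satisfies the shadowing condition for $\un z$: future positions use $\wh y \in \wh V^s[\{v_n\}_{n \geq 0}]$, past positions use $\wh y = \pi(\un w)$. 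Uniqueness of the shadowed point (Lemma \ref{Lemma-shadowing}) yields $\pi(\un z) = \wh f(\wh y)$, hence $\wh f(\wh y) \in Z(v_1)$; combined with Proposition \ref{Prop-stable-sets}(3) this gives $\wh f(\wh y) \in \wh W^s(\wh f(\wh x), Z(v_1))$. The unstable inclusion is symmetric in structure, but the hard part is that $\wh W^u$ carries the specific inverse branches prescribed by the $\ve$--gpo, and one must verify that the branches along the spliced sequence agree with those coming from any representative of $\wh f(\wh x)$. This will follow from Lemma \ref{Lemma-inverse-branches} applied at the junction, since the overlap condition forces the candidate inverse branches to coincide on the relevant Pesin chart domains.
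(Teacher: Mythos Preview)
Your proposal is correct and follows essentially the same route as the paper, which simply cites the corresponding arguments in \cite{Sarig-JAMS} (Theorems~10.2, Propositions~10.5 and~10.9) together with Corollary~\ref{Corollary-inverse}(2), Theorem~\ref{Thm-coarse-graining}(1), Proposition~\ref{Prop-stable-sets}(1), Lemma~\ref{Lemma-stable-set}, and Proposition~\ref{Prop-stable-sets}(3); your splicing constructions for (3) and (4) are precisely those arguments carried out in detail. One remark: your concern about inverse branches at the junction in the unstable case of (4) is already absorbed by Proposition~\ref{Prop-stable-sets}(3), whose proof internalises Lemma~\ref{Lemma-inverse-branches}, so no additional verification is needed beyond invoking that proposition and the shadowing argument you describe.
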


\begin{proof}
By Proposition \ref{Prop-pi}(4), we have $\pi[\Sigma^\#]\supset{\rm NUH}^\#$. On the other
hand, Theorem \ref{Thm-inverse} implies that $\pi[\Sigma^\#]\subset{\rm NUH}^\#$,
hence part (1) follows.
Part (2) is proved as in \cite[Theorem 10.2]{Sarig-JAMS}, using 
Corollary \ref{Corollary-inverse}(2) and Theorem \ref{Thm-coarse-graining}(1).
Part (3) is proved as in \cite[Proposition 10.5]{Sarig-JAMS}, using
Proposition \ref{Prop-stable-sets}(1). Part (4) is proved as in \cite[Prop. 10.9]{Sarig-JAMS},
using Lemma \ref{Lemma-stable-set} and Proposition \ref{Prop-stable-sets}(3).
\end{proof}

For $\wh x,\wh y\in Z$, let $[\wh x,\wh y]_Z:=$ intersection point of $\wh W^s(\wh x,Z)$
and $\wh W^u(\wh y,Z)$, and call it the {\em Smale bracket} of $\wh x,\wh y$ in $Z$.

\begin{lemma}\label{Lemma-fibres}
The following holds for all $\ve>0$ small enough.
\begin{enumerate}[{\rm (1)}]
\item {\sc Compatibility:} If $\wh x,\wh y\in Z(v_0)$ and $\wh f(\wh x),\wh f(\wh y)\in Z(v_1)$ with
$v_0\overset{\ve}{\to} v_1$ then $\wh f([\wh x,\wh y]_{Z(v_0)})=[\wh f(\wh x),\wh f(\wh y)]_{Z(v_1)}$.
\item {\sc Overlapping charts properties:} If $Z=Z(\Psi_{\wh x_0}^{p_0^s,p_0^u}),
Z'=Z(\Psi_{\wh y_0}^{q_0^s,q_0^u})\in\mathfs Z$
with $Z\cap Z'\neq \emptyset$ then:
\begin{enumerate}[{\rm (a)}]
\item $\vt[Z]\subset \Psi_{\wh y_0}(B[q_0^s\wedge q_0^u])$.
\item If $\wh x\in Z\cap Z'$ then $\wh W^{s/u}(\wh x,Z)\subset \wh V^{s/u}(\wh x,Z')$. 
\item If $\wh x\in Z,\wh y\in Z'$ then $\wh V^s(\wh x,Z)$ and $\wh V^u(\wh y,Z')$ intersect at a unique point. 
\end{enumerate}
\end{enumerate}
\end{lemma}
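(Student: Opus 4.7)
The plan is to handle each assertion by combining the symbolic Markov property of $\mathfs Z$ (Proposition~\ref{Prop-Z}(4)), the shadowing lemma (Lemma~\ref{Lemma-shadowing}), and the quantitative change-of-coordinates provided by Corollary~\ref{Corollary-inverse}. Throughout, I reuse the notation $v_0=\Psi_{\wh x_0}^{p_0^s,p_0^u}$, $w_0=\Psi_{\wh y_0}^{q_0^s,q_0^u}$, $\eta_n=p_n^s\wedge p_n^u$, $\overline\eta_n=q_n^s\wedge q_n^u$.

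\textbf{Part (1): Compatibility.} First choose $\ve$--gpo's $\un a,\un b\in\Sigma^\#$ with $\pi(\un a)=\wh x$, $\pi(\un b)=\wh y$, $a_0=b_0=v_0$ and $a_1=b_1=v_1$ (such choices exist because $\wh f(\wh x),\wh f(\wh y)\in Z(v_1)$ and the relation $v_0\overset{\ve}{\to}v_1$ holds). The Smale bracket $[\wh x,\wh y]_{Z(v_0)}$ will be identified, following Sarig's construction, as $\pi(\un c)$ where $c_n=a_n$ for $n\geq 0$ and $c_n=b_n$ for $n\leq 0$; admissibility of $\un c$ at the only non-trivial junction $n=0$ is automatic since $a_0=b_0$. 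Applying $\wh f$ and using $\wh f\circ\pi=\pi\circ\sigma$ (Proposition~\ref{Prop-pi}(3)) yields $\wh f([\wh x,\wh y]_{Z(v_0)})=\pi(\sigma\un c)$; then $\sigma\un c$ is exactly the analogous mixture of the shifted sequences $\sigma\un a$ and $\sigma\un b$, which represent $\wh f(\wh x)$ and $\wh f(\wh y)$ respectively and share $v_1$ at position $0$. Hence $\pi(\sigma\un c)=[\wh f(\wh x),\wh f(\wh y)]_{Z(v_1)}$.

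\textbf{Part (2)(a).} Take $\wh x\in Z$, so by Lemma~\ref{Lemma-shadowing} (and its refinement) $\vt[\wh x]=\Psi_{\wh x_0}(z)$ with $\|z\|<\eta_0$. Fix an auxiliary point $\wh z\in Z\cap Z'$ and apply Corollary~\ref{Corollary-inverse}(3) at $n=0$ to its two codings, obtaining $\Psi_{\wh y_0}^{-1}\circ\Psi_{\wh x_0}=\delta+O+\Delta$ on $B[10Q(\wh x_0)]$ with $\|\delta\|<10^{-1}\overline\eta_0$, $\|d\Delta\|_{C^0}<9\sqrt{\ve}$ and $O$ orthogonal. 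Evaluating at $z$ and using Corollary~\ref{Corollary-inverse}(2) to bound $\eta_0\leq e^{2\sqrt[3]{\ve}}\overline\eta_0$, the norm of $\delta+Oz+\Delta(z)$ is smaller than $\overline\eta_0$ for $\ve$ small, which gives (a).

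\textbf{Part (2)(b) and (c).} For (b), let $\wh p\in\wh W^s(\wh x,Z)$ and pick codings $\un v,\un w\in\Sigma^\#$ of $\wh x$ with $v_0,w_0$ being the respective chart labels of $Z,Z'$. By Proposition~\ref{Prop-stable-manifolds}(3), membership in $\wh V^s[\{v_n\}_{n\geq 0}]$ is characterized by $f^n(\vt[\wh p])\in\Psi_{\wh x_n}(B[20Q(\wh x_n)])$ for all $n\geq 0$, together with $\vt[\wh p]\in\Psi_{\wh x_0}(B^d[p_0^s]\times B^{m-d}[p_0^s])$. Applying Corollary~\ref{Corollary-inverse}(1)--(2) at each $n\geq 0$ (for the pair $\un v,\un w$, both shadowing $\wh x$), the centers $\vt[\wh x_n]$ and $\vt[\wh y_n]$ are much closer than $\eta_n\wedge\overline\eta_n$ and the $Q$--values are comparable up to $e^{\sqrt[3]{\ve}}$; combining this with Proposition~\ref{Prop-stable-manifolds}(4) (hyperbolicity bound on $d(f^n(\vt[\wh p]),f^n(\vt[\wh x]))$) shows that $f^n(\vt[\wh p])\in\Psi_{\wh y_n}(B[20Q(\wh y_n)])$ as well, and the change-of-coordinates in (a) transports the zeroth coordinate into $\Psi_{\wh y_0}(B^d[q_0^s]\times B^{m-d}[q_0^s])$. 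Invoking Proposition~\ref{Prop-stable-manifolds}(3) in the reverse direction, $\wh p\in\wh V^s[\{w_n\}_{n\geq 0}]=\wh V^s(\wh x,Z')$. The unstable case is analogous, except that the inverse branches used for the two codings must be identified; this follows from Lemma~\ref{Lemma-inverse-branches} and (A5), exactly as in the proof of Lemma~\ref{Lemma-finite-norm-1}.

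For (c), pick any $\wh z\in Z\cap Z'$ and codings $\un{a},\un{b}\in\Sigma^\#$ with $\pi(\un a)=\wh x$, $a_0=v_0$, and $\pi(\un b)=\wh y$, $b_0=w_0$. Pick any coding $\un c\in\Sigma^\#$ of $\wh z$ with $c_0=w_0$ (using $\wh z\in Z'$) and form the mixture $\un u$ defined by $u_n=a_n$ for $n\geq 1$, $u_0=v_0$, and $u_n=b_n$ for $n\leq -1$; a companion gpo $\un u'$ with $u'_0=w_0$ and the same tails is obtained by using $\un c$ to bridge (via part (b) applied to $\wh z$ in $Z\cap Z'$, the admissible manifolds at $v_0$ and $w_0$ representing common stable/unstable families coincide up to the change of coordinates of (a)). Expressing both $\wh V^s(\wh x,Z)$ and $\wh V^u(\wh y,Z')$ as admissible manifolds inside the single chart $\Psi_{\wh y_0}$ via Corollary~\ref{Corollary-inverse}(3)--which by (a) is defined on a ball comfortably larger than $\max\{\eta_0,\overline\eta_0\}$--their intersection becomes the intersection of an $s$-- and a $u$--admissible manifold at $w_0$, which by Lemma~\ref{Lemma-admissible-manifolds} consists of a single point.

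The main obstacle is part (2)(b)/(c): ensuring that the overlap and inverse-theorem estimates are strong enough to legitimately transfer admissible manifolds from the chart of $Z$ to the chart of $Z'$ while keeping the orbits of points in $\wh W^{s/u}(\wh x,Z)$ inside the $20Q$--balls of the alternative coding. The key quantitative ingredients will be the inequalities $\eta_n=e^{\pm 2\sqrt[3]{\ve}}\overline\eta_n$ and the $9\sqrt{\ve}$--bound on $d\Delta_n$ in Corollary~\ref{Corollary-inverse}, together with Proposition~\ref{Proposition-overlap}(4) to convert the pointwise proximity of $\wh x_n,\wh y_n$ into the desired inclusions.
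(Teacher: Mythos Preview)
Your approach to (1) and (2)(a) is essentially the paper's. One quantitative slip in (2)(a): the bound $\|z\|<\eta_0$ is too weak---with it, $\|\delta+Oz+\Delta(z)\|\leq 10^{-1}\overline\eta_0+(1+9\sqrt\ve)\eta_0\approx 1.1\,\overline\eta_0$, which does not land in $B[\overline\eta_0]$. You must use the sharper $\|z\|<50^{-1}\eta_0$ coming from Lemma~\ref{Lemma-admissible-manifolds} (this is the ``refinement'' you allude to), exactly as the paper does.

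The real gaps are in (2)(b) and (2)(c).

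\textbf{Part (2)(b).} Your plan is to bound $d\bigl(f^n(\vt[\wh p]),f^n(\vt[\wh x])\bigr)\le 4p_0^s e^{-\chi n/2}$ via Proposition~\ref{Prop-stable-manifolds}(4) and then transport into the chart $\Psi_{\wh y_n}$. The problem is that passing from a distance in $M$ to chart coordinates costs a factor $\operatorname{Lip}(\Psi_{\wh y_n}^{-1})=2\|C(\wh y_n)^{-1}\|$, and nothing in your list of ingredients controls $\|C(\wh y_n)^{-1}\|\cdot p_0^s$ by $Q(\wh y_n)$. The paper sidesteps this entirely: by the symbolic Markov property (Proposition~\ref{Prop-Z}(4)) one has $\wh f^n(\wh W^s(\wh x,Z))\subset Z(v_n)$; since $\wh f^n(\wh x)\in Z(v_n)\cap Z(w_n)$, part (a) \emph{applied at level $n$} gives $\vt[Z(v_n)]\subset\Psi_{\wh y_n}(B[\overline\eta_n])\subset\Psi_{\wh y_n}(B[20Q(\wh y_n)])$ directly, with no $\|C^{-1}\|$ factor anywhere. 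Lemma~\ref{Lemma-stable-set} then finishes the argument.

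\textbf{Part (2)(c).} The assertion that after transporting $V^s(\wh x,Z)$ via $\Psi_{\wh y_0}^{-1}\circ\Psi_{\wh x_0}$ one obtains ``an $s$--admissible manifold at $w_0$'' is false: the change of coordinates $\delta+O+\Delta$ has $\|\delta\|<10^{-1}\overline\eta_0$ and $\|d\Delta\|<9\sqrt\ve$, which are far too large to preserve (AM1) (which demands $10^{-3}\overline\eta_0$) and (AM2). So Lemma~\ref{Lemma-admissible-manifolds} does not apply. The paper instead carries out an explicit computation: it writes the transported manifold as a graph $\{(\wt w,\wt G(\wt w))\}$ over a domain $\Gamma(B^d[p_0^s])$, proves the weaker bounds $\|\wt G(0)\|<\tfrac16(p\wedge q)$ and $\|d\wt G\|_{C^0}<15\sqrt\ve$, and then runs a bespoke fixed-point argument for $H\circ\wt G$ on $B^d[\tfrac23 p]$ (plus a Kirszbraun extension for uniqueness). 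Your mixture-of-gpo's construction for $\un u,\un u'$ is also not well-posed: there is no reason the edges $b_{-1}\overset{\ve}{\to}v_0$ or $v_0\overset{\ve}{\to}a_1$ exist, so these are not $\ve$--gpo's in general.
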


\begin{proof}
Part (1) is proved as in \cite[Lemma 10.7]{Sarig-JAMS}, using Proposition \ref{Prop-stable-sets}.
The proof of part (2) is based on \cite[Lemmas 10.8 and 10.10]{Sarig-JAMS} and
\cite[Lemmas 5.8 and 5.10]{Ben-Ovadia-2019}. Since the referred proofs are not simple applications
of previous results, and since they use two norms (the euclidean and supremum norms),
we will provide the complete details below.

Let $p=p_0^s\wedge p_0^u$ and $q=q_0^s\wedge q_0^u$.
Since $Z\cap Z'\neq\emptyset$, Corollary \ref{Corollary-inverse} 
implies that $\tfrac{p}{q}=e^{\pm 2\sqrt[3]{\ve}}$ and that
$\Psi_{\wh y_0}^{-1}\circ\Psi_{\wh x_0}=\delta+O+\Delta$ on $B[10Q(\wh x_0)]$,
where $\delta\in\R^m$  satisfies $\norm{\delta}<10^{-1}q$,
$O$ is an orthogonal linear map preserving the splitting $\R^d\times\R^{m-d}$,
and $\Delta:B[10Q(\wh x_0)]\to \R^m$ satisfies
$\Delta(0)=0$ and $\norm{d\Delta}_{C^0}< 9\sqrt{\ve}$ on $B[10Q(\wh x_0)]$.
In particular, $\norm{\Delta(v)}\leq 9\sqrt{\ve}\norm{v}$ for all $v\in B[10Q(\wh x_0)]$.

\medskip
\noindent
(a) By Lemma \ref{Lemma-admissible-manifolds} and the definition of $Z$, we have 
$\vt[Z]\subset\Psi_{\wh x_0}(B[50^{-1}p])$. If $v\in B[50^{-1}p]$ then
\begin{align*}
&\, \|(\Psi_{\wh y_0}^{-1}\circ\Psi_{\wh x_0})(v)\|\leq \norm{\delta}+\norm{Ov}+\norm{\Delta(v)}
\leq \norm{\delta}+(1+9\sqrt{\ve})\norm{v}\\
&\leq 10^{-1}q+(1+9\sqrt{\ve})50^{-1}p\leq \left[10^{-1}+50^{-1}(1+9\sqrt{\ve})e^{2\sqrt[3]{\ve}}\right]q<q
\end{align*}
for $\ve>0$ small enough, and so
$\vt[Z] \subset\Psi_{\wh y_0}[(\Psi_{\wh y_0}^{-1}\circ\Psi_{\wh x_0})(B[50^{-1}p])]
\subset\Psi_{\wh y_0}(B[q])$.

\medskip
\noindent
(b) Let $\un v=\{v_n\}_{n\in\Z}=\{\Psi_{\wh x_n}^{p^s_n,p^u_n}\}_{n\in\Z}$
and $\un w=\{w_n\}_{n\in\Z}=\{\Psi_{\wh y_n}^{q^s_n,q^u_n}\}_{n\in\Z}$ in $\Sigma^\#$
s.t. $\wh x=\pi(\un v)=\pi(\un w)$. We have to show that
$\wh W^{s/u}(\wh x,Z)\subset \wh V^{s/u}(\wh x,Z')$. Let us start proving the statement
for $\wh W^s$. Fix $n\geq 0$. By Proposition \ref{Prop-pi}(3), we have
$\wh f^n(\wh x)=\pi[\sigma^n(\un v)]=\pi[\sigma^n(\un w)]$ and so $\wh f^n(\wh x)\in Z(v_n)\cap Z(w_n)$.
By item (a) above, $\vt[Z(v_n)]\subset\Psi_{\wh y_n}(B[q_n^s\wedge q_n^u])\subset\Psi_{\wh y_n}(B[20Q(\wh y_n)])$.
Now, Proposition \ref{Prop-Z}(4) implies that
$\wh f^n(\wh W^s(\wh x,Z))\subset \wh W^s(\wh f^n(\wh x), Z(v_n))\subset Z(v_n)$ and so
$$
\vt_n\left[\wh W^s(\wh x,Z)\right]=\vt\left[\wh f^n(\wh W^s(\wh x,Z))\right]\subset 
\vt[Z(v_n)]\subset \Psi_{\wh y_n}(B[20Q(\wh y_n)]).
$$
By Lemma \ref{Lemma-stable-set}, it follows that
$\wh W^s(\wh x,Z)\subset \wh V^s[\{w_n\}_{n\geq0}]=\wh V^s(\wh x, Z')$.
The proof for $\wh W^u$ is analogous, once we pay attention that the equality $\wh x=\pi(\un v)=\pi(\un w)$
implies that the inverse branches associated to $\un v$ and $\un w$ coincide.

\medskip
\noindent
(c) We apply methods very similar to those used in the appendix.
Let $\wh V^s(\wh x,Z)=\wh V^s[\un v^+]$ and $\wh V^u(\wh y,Z')=\wh V^u[\un w^-]$.
Any point in $\wh V^s(\wh x,Z)\cap \wh V^u(\wh y,Z')$ is defined by its zeroth position,
so we need to show that $V^s[\un v^+]$ and $V^u[\un w^-]$ intersect at a single point.
Write $V^s=V^s[\un v^+]$, and let $G:B^d[p^s_0]\to \R^{m-d}$ be the representing function of $V^s$, i.e.
$V^s=\Psi_{\wh x_0}\{(v,G(v)):v\in B^d[p^s_0]\}$. Recall that $G$ satisfies (AM1)--(AM3)
of Section \ref{Sec-graph-transform}.
By item (a), we can express $V^s$
in the chart $\Psi_{\wh y_0}$ by
\begin{align*}
V^s&=\left[\Psi_{\wh y_0}\circ(\Psi_{\wh y_0}^{-1}\circ \Psi_{\wh x_0})\right]\left\{(v,G(v)):v\in B^d[p^s_0]\right\}\\
&=\Psi_{\wh y_0}\left\{\delta+O(v,G(v))+\Delta(v,G(v)):v\in B^d[p^s_0]\right\}.
\end{align*}
We want to express $\delta+O(v,G(v))+\Delta(v,G(v))$ as a graph. With respect to the splitting
$\R^d\times\R^{m-d}$, write $\delta=(\delta_1,\delta_2)$ and $\Delta=(\Delta_1,\Delta_2)$.
Since $O$ preserves such splitting, if we write $w=Ov$ then 
\begin{align*}
&\, \delta+O(v,G(v))+\Delta(v,G(v))=(Ov+\Delta_1(v,G(v))+\delta_1,OG(v)+\Delta_2(v,G(v))+\delta_2)\\
&=(w+\overline{\Delta}_1(w)+\delta_1,OGO^{-1}w+\overline\Delta_2(w)+\delta_2)=
(\Gamma(w),OGO^{-1}w+\overline\Delta_2(w)+\delta_2),
\end{align*}
where $\overline\Delta_1(w)=\Delta_1(O^{-1}w,GO^{-1}w)$,
$\overline\Delta_2(w)=\Delta_2(O^{-1}w,GO^{-1}w)$ and $\Gamma={\rm Id}+\overline\Delta_1+\delta_1$
are defined in  $B^d[p^s_0]$. We start collecting some estimates for $\overline\Delta_1,\overline\Delta_2,\Gamma$:
\begin{enumerate}[$\circ$]
\item $\|\overline\Delta_i(0)\|=\norm{\Delta_i(0,G(0))}\leq \norm{d\Delta_i}_{C^0}\norm{G(0)}\leq
9\sqrt{\ve}10^{-3}p<10^{-2}\sqrt{\ve}p$.
\item $\|d\overline\Delta_i\|_{C^0}\leq \norm{d\Delta_i}_{C^0}(1+\norm{dG}_{C^0})<9\sqrt{\ve}(1+\ve)<10\sqrt{\ve}$.
\item $\|\Gamma(0)\|\leq \|\overline\Delta_1(0)\|+\|\delta_1\|<10^{-2}\sqrt{\ve}p+10^{-1}q<\tfrac{1}{6}(p\wedge q)$,
since $\tfrac{p}{q}=e^{\pm 2\sqrt[3]{\ve}}$.
\end{enumerate}
In particular, if $v\in B^d[p^s_0]$ then
$$\|\overline\Delta_1(v)\|\leq \|d\overline\Delta_1\|_{C^0}\|v\|+\|\overline\Delta_1(0)\|
\leq 10\sqrt{\ve}p^s_0+10^{-2}\sqrt{\ve}p<20\sqrt{\ve}p^s_0$$
and so $\|\overline\Delta_1+\delta_1\|_{C^0}\leq 20\sqrt{\ve}p^s_0+10^{-1}q<20\ve^{3/2}+10^{-1}\ve<\ve$.
By Lemma \ref{App-Lemma-Holder-norm}(2), $\Gamma$ has an inverse
$\Phi:\Gamma(B^d[p^s_0])\to B^d[p^s_0]$. We can also estimate $\|d\Gamma\|_{C^0}$
and $\|d\Phi\|_{C^0}$ in their domains as definition, as follows:
\begin{enumerate}[$\circ$]
\item $d\Gamma={\rm Id}+d\overline\Delta_1$ and so
$\|d\Gamma\|_{C^0}=1\pm\|d\overline\Delta_1\|_{C^0}=1\pm 10\sqrt{\ve}=e^{\pm 15\sqrt{\ve}}$,
since $e^{-15\sqrt{\ve}}<1-10\sqrt{\ve}<1+10\sqrt{\ve}<e^{15\sqrt{\ve}}$ for $\ve>0$ small enough.
\item Letting $A=(d\overline\Delta_1)_v$, we have
$(d\Phi)_{\Gamma(v)}=[(d\Gamma)_v]^{-1}=[{\rm Id}+A]^{-1}={\rm Id}+\sum_{k\geq 1}(-1)^k A^k$.
Noting that $\norm{\sum_{k\geq 1}(-1)^kA^k}\leq \sum_{k\geq 1}\|A\|^k<\tfrac{10\sqrt{\ve}}{1-10\sqrt{\ve}}<12\sqrt{\ve}$,
we conclude that
$\|d\Phi\|_{C^0}=1\pm 12\sqrt{\ve}=e^{\pm 15\sqrt{\ve}}$,
since $e^{-15\sqrt{\ve}}<1-12\sqrt{\ve}<1+12\sqrt{\ve}<e^{15\sqrt{\ve}}$ for $\ve>0$ small enough.
\end{enumerate}
Introducing the variable $\wt w=\Gamma(w)$, we obtain that 
$$
V^s=\Psi_{\wh y_0}\left\{(\wt w,\wt G(\wt w)):\wt w\in\Gamma(B^d[p^s_0])\right\},
$$
where $\wt G:\Gamma(B^d[p^s_0])\to \R^{m-d}$ is defined by
$\wt G=(OGO^{-1}+\overline\Delta_2+\delta_2)\circ\Phi$.
Now we estimate $\|\wt G(0)\|$ and $\|d\wt G\|_{C^0}$. Since 
$$
\|\Phi(0)\|=\|\Phi(0)-\Phi(\Gamma(0))\|\leq \|d\Phi\|_{C^0}\|\Gamma(0)\|\leq e^{15\sqrt{\ve}}\tfrac{1}{6}(p\wedge q)
$$
and
\begin{align*}
&\,\|OGO^{-1}\Phi(0)\|=\|GO^{-1}\Phi(0)\|\leq \|GO^{-1}\Phi(0)-G\underbrace{O^{-1}\Phi(\Gamma(0))}_{=0}\|+\|G(0)\|\\
&\leq \|dG\|_{C^0}\|d\Phi\|_{C^0}\|\Gamma(0)\|+\|G(0)\|\leq \ve e^{15\sqrt{\ve}}\tfrac{1}{6}(p\wedge q)+10^{-3}p\\
&\leq \left(\tfrac{1}{6}\ve e^{15\sqrt{\ve}}+10^{-3}e^{2\sqrt[3]{\ve}}\right)(p\wedge q)<10^{-2}(p\wedge q),
\end{align*}
we obtain that
\begin{align*}
&\,\|\wt G(0)\|\leq \|OGO^{-1}\Phi(0)\|+\|\overline\Delta_2(\Phi(0))\|+\|\delta_2\|\\
&\leq \|OGO^{-1}\Phi(0)\|+\|d\overline\Delta_2\|_{C^0}\|\Phi(0)\|+\|\overline\Delta_2(0)\|+\|\delta_2\|\\
&\leq 10^{-2}(p\wedge q)+10\sqrt{\ve} e^{15\sqrt{\ve}}\tfrac{1}{6}(p\wedge q)+10^{-2}\sqrt{\ve}p+10^{-1}q\\
&\leq \left(10^{-2}+2\sqrt{\ve}e^{15\sqrt{\ve}}+10^{-2}\sqrt{\ve}e^{2\sqrt[3]{\ve}}+10^{-1}e^{2\sqrt[3]{\ve}}\right)(p\wedge q)
<\tfrac{1}{6}(p\wedge q),
\end{align*}
where in the last inequality we used that $\ve>0$ is small enough.
The estimate of $d\wt G$ is simpler:
\begin{align*}
&\,\|d\wt G\|_{C^0}\leq \|O\circ dG\circ O^{-1}+d\overline\Delta_2\|_{C^0}\|d\Phi\|_{C^0}
\leq \left( \|dG\|_{C^0}+\|d\overline\Delta_2\|_{C^0}\right)\|d\Phi\|_{C^0}\\
&\leq (\ve+10\sqrt{\ve})e^{15\sqrt{\ve}}<15\sqrt{\ve},
\end{align*}
where again in the last inequality we used that $\ve>0$ is small enough.

We finally prove item (c). We proceed similarly to the proof of 
Lemma \ref{App-Lemma-admissible-graphs}(1).
Write $V^u=V^u[\wh w^-]$, and let $H:B^{m-d}[q^u_0]\to \R^d$
be its representing function. The intersection $V^s\cap V^u$
corresponds to points $(\wt w,v)$ satisfying the system 
$$
\left\{
\begin{array}{l}
\, v=\wt G(\wt w)\\
\wt w=H(v)
\end{array}
\right.
$$
for which necessarily $\wt w$ is a fixed point of $H\circ\wt G$.
Note the following:
\begin{enumerate}[$\circ$]
\item If $\wt w\in B^d[\tfrac{2}{3}p]$ then $\|\wt G(\wt w)\|\leq \|d\wt G\|_{C^0}\|\wt w\|+\|\wt G(0)\|
\leq 15\sqrt{\ve}\tfrac{2}{3}p+\tfrac{1}{6}(p\wedge q)<\tfrac{2}{3}p$.
\item If $v\in B^{m-d}[\tfrac{2}{3}p]$ then $\|H(v)\|\leq \|dH\|_{C^0}\|v\|+\|H(0)\|\leq \ve\tfrac{2}{3}p+10^{-3}q<\tfrac{2}{3}p$.
\end{enumerate}
Therefore 
$$
B^d[\tfrac{2}{3}p] \ \xrightarrow{\ \wt G\ } \ B^{m-d}[\tfrac{2}{3}p]\ \xrightarrow{\ H\ }\ B^d[\tfrac{2}{3}p],
$$
so the restriction of $H\circ\wt G$ to $B^d[\tfrac{2}{3}p]$ is well-defined. The derivative of this composition
is at most $\|dH\|_{C^0}\|d\wt G\|_{C^0}\leq 15\ve^{3/2}<1$, so it has a fixed point
$\wt w\in B^d[\tfrac{2}{3}p]$. For this fixed point, $v=\wt G(\wt w)\in B^{m-d}[\tfrac{2}{3}p]\subset B^{m-d}[q^u_0]$,
and so we obtain a point in $V^s\cap V^u$. 
It remains to prove the uniqueness. Note that $\Gamma(B^d[p^s_0])\subset B^d[2p^s_0]$.
Leting $a=(2p^s_0)\vee q^u_0$, apply the Kirszbraun theorem to extend $\wt G$ and $H$ to functions
in the domains $B^d[a]$ and $B^{m-d}[a]$ with Lipschitz constants bounded by
$15\sqrt{\ve}$ and $\ve$ respectively. The same calculation above implies that 
$$
B^d[a] \ \xrightarrow{\ \widetilde G\ } \ B^{m-d}[a] \ \xrightarrow{\ H\ }\ B^d[a]
$$
and that $H\circ\wt G$ is a contraction, hence $V^s\cap V^u$ consists of a single point.
\end{proof}

\medskip
The next step is to refine $\mathfs Z$ to destroy its non-trivial intersections. 
The result will be a partition of ${\rm NUH}^\#$ by sets with the
(geometrical) Markov property. This method, first developed by Sina{\u\i} and Bowen
for finite covers \cite{Sinai-Construction-of-MP,Sinai-MP-U-diffeomorphisms,Bowen-LNM},
also works for countable covers satisfying the local finiteness property \cite{Sarig-JAMS}.
Write $\mathfs Z=\{Z_1,Z_2,\ldots\}$.

\medskip
\noindent
{\sc The Markov partition $\mathfs R$:} For every $Z_i,Z_j\in\mathfs Z$, define a partition of $Z_i$ by:
\begin{align*}
T_{ij}^{su}&=\{\wh x\in Z_i: \wh W^s(\wh x,Z_i)\cap Z_j\neq\emptyset,
\wh W^u(\wh x,Z_i)\cap Z_j\neq\emptyset\}\\
T_{ij}^{s\emptyset}&=\{\wh x\in Z_i: \wh W^s(\wh x,Z_i)\cap Z_j\neq\emptyset,
\wh W^u(\wh x,Z_i)\cap Z_j=\emptyset\}\\
T_{ij}^{\emptyset u}&=\{\wh x\in Z_i: \wh W^s(\wh x,Z_i)\cap Z_j=\emptyset,
\wh W^u(\wh x,Z_i)\cap Z_j\neq\emptyset\}\\
T_{ij}^{\emptyset\emptyset}&=\{\wh x\in Z_i: \wh W^s(\wh x,Z_i)\cap Z_j=\emptyset,
\wh W^u(\wh x,Z_i)\cap Z_j=\emptyset\}.
\end{align*}
Let $\mathfs T:=\{T_{ij}^{\alpha\beta}:i,j\geq 1,\alpha\in\{s,\emptyset\},\beta\in\{u,\emptyset\}\}$,
and let $\mathfs R$ be the partition generated by $\mathfs T$.
Clearly $T_{ii}^{su}=Z_i$, hence $\mathfs R$ is a partition of ${\rm NUH}^\#$.
Using Lemma \ref{Lemma-fibres}(2)(b), we prove that $T^{su}_{ij}=Z_i\cap Z_j$.
Also, proceeding as in \cite[Proposition 11.2]{Sarig-JAMS},
$\mathfs R$ is the partition defined by the following equivalence relation in ${\rm NUH}^\#$:
$$
\wh x\sim \wh y\iff\text{for all }Z,Z'\in\mathfs Z:
\left\{
\begin{array}{rcl}
\wh x\in Z&\Leftrightarrow & \wh y\in Z\\
\wh W^s(\wh x,Z)\cap Z'\neq\emptyset&\Leftrightarrow &\wh W^s(\wh y,Z)\cap Z'\neq\emptyset\\
\wh W^u(\wh x,Z)\cap Z'\neq\emptyset&\Leftrightarrow &\wh W^u(\wh y,Z)\cap Z'\neq\emptyset.
\end{array}
\right.
$$
By definition, $\mathfs R$ is a refinement of $\mathfs Z$. By Corollary \ref{Corollary-inverse}(2)
and Theorem \ref{Thm-coarse-graining}(1), $\mathfs R$ possesses two local finiteness properties:
\begin{enumerate}[$\circ$]
\item For every $Z\in\mathfs Z$, $\#\{R\in\mathfs R:R\subset Z\}<\infty$.
\item For every $R\in\mathfs R$, $\#\{Z\in\mathfs Z:Z\supset R\}<\infty$.
\end{enumerate}
Now we show that $\mathfs R$ is a Markov partition in the sense of
Sina{\u\i} \cite{Sinai-MP-U-diffeomorphisms}. 

\medskip
\noindent
{\sc $s$/$u$--sets in $\mathfs R$:} Given $\wh x\in R\in\mathfs R$, we define the {\em $s$--set}
and {\em $u$--set} of $\wh x$ by:
\begin{align*}
\wh W^s(\wh x,R):=
\bigcap_{T_{ij}^{\alpha\beta}\in\mathfs T\atop{T_{ij}^{\alpha\beta}\supset R}} \wh W^s(\wh x,Z_i)\cap T_{ij}^{\alpha\beta}
\, \text{ and }\, \wh W^u(\wh x,R):=
\bigcap_{T_{ij}^{\alpha\beta}\in\mathfs T\atop{T_{ij}^{\alpha\beta}\supset R}} \wh W^u(\wh x,Z_i)\cap T_{ij}^{\alpha\beta}.
\end{align*}

We have that $\wh W^{s/u}(\wh x,R)\subset R$, $\wh W^s(\wh x,R)\cap \wh W^u(\wh x,R)=\{\wh x\}$, 
and any two $s$--sets ($u$--sets) either coincide or are disjoint. The proofs are made
as in \cite[Prop. 11.5(1)--(2)]{Sarig-JAMS}, using Proposition \ref{Prop-stable-sets}(5).

\begin{proposition}\label{Prop-R}
The following are true.
\begin{enumerate}[{\rm (1)}]
\item {\sc Product structure:} For every $R\in\mathfs R$ and every $\wh x,\wh y\in R$, the intersection
$\wh W^s(\wh x,R)\cap \wh W^u(\wh y,R)$ consists of a single point, and this
point belongs to $R$. Denote it by $[\wh x,\wh y]$.
\item {\sc Hyperbolicity:} If $\wh y,\wh z\in \wh W^s(\wh x,R)$ then
$d(\wh f^n(\wh y),\wh f^n(\wh z))\xrightarrow[n\to\infty]{}0$, and
if $\wh y,\wh z\in \wh W^u(\wh x,R)$ then $d(\wh f^n(\wh y),\wh f^n(\wh z))\xrightarrow[n\to-\infty]{}0$.
The rates are exponential.
\item {\sc Markov property:} Let $R_0,R_1\in\mathfs R$. If $\wh x\in R_0$ and
$\wh f(\wh x)\in R_1$ then 
$$
\wh f(\wh W^s(\wh x,R_0))\subset \wh W^s(\wh f(\wh x),R_1)\, \text{ and }\,
\wh f^{-1}(\wh W^u(\wh f(x),R_1))\subset \wh W^u(\wh x,R_0).
$$
\end{enumerate}
\end{proposition}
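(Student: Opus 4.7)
The plan is to reduce each of the three statements to the corresponding property already established for the cover $\mathfs Z$ in Proposition \ref{Prop-Z} and Lemma \ref{Lemma-fibres}, exploiting the fact that $\mathfs R$ refines $\mathfs Z$ and is characterized by the equivalence relation $\sim$ recalled in the paragraph preceding the statement. Throughout, given $R \in \mathfs R$ I will fix some $Z \in \mathfs Z$ with $R \subset Z$ (which exists because $R \subset T_{ii}^{su} = Z_i$ for any $Z_i\supset R$), so that by construction $\wh W^{s/u}(\wh x,R) \subset \wh W^{s/u}(\wh x,Z)$ for every $\wh x \in R$.

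For part (1), given $\wh x, \wh y \in R$, Proposition \ref{Prop-Z}(3) produces a unique $\wh z := [\wh x,\wh y]_Z \in Z$ lying in $\wh W^s(\wh x,Z) \cap \wh W^u(\wh y,Z)$. Uniqueness at the coarser level is automatic, so the whole issue is to upgrade $\wh z \in Z$ to $\wh z \in R$. Since $s$-sets (resp.\ $u$-sets) in $Z$ either coincide or are disjoint, one has the key identities $\wh W^s(\wh z,Z) = \wh W^s(\wh x,Z)$ and $\wh W^u(\wh z,Z) = \wh W^u(\wh y,Z)$; combined with Lemma \ref{Lemma-fibres}(2)(b), which transfers $\wh W^{s/u}(\wh z,Z)$ to $\wh V^{s/u}(\wh z,Z')$ for any other chart $Z' \ni \wh z$, this lets me verify directly the three bullets defining $\wh z \sim \wh x$: membership in the same elements of $\mathfs Z$, and agreement of the nonemptiness of $\wh W^s(\cdot, Z') \cap Z''$ and $\wh W^u(\cdot, Z') \cap Z''$ for every $Z',Z''\in\mathfs Z$. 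This is the classical argument of \cite[Prop.~11.7]{Sarig-JAMS}, now carried out in $\wh M$.

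For part (2), exponential convergence is inherited from the stable/unstable sets of the underlying $\ve$-gpo's: since $\wh W^s(\wh x,R) \subset \wh W^s(\wh x,Z) \subset \wh V^s(\wh x,Z)$ and the latter is of the form $\wh V^s[\{v_n\}_{n\ge 0}]$ for some $\un v \in \Sigma^\#$ with $v_0$ the symbol defining $Z$, Proposition \ref{Prop-stable-sets}(4) immediately gives $d(\wh f^n(\wh y),\wh f^n(\wh z)) < \theta^n$ for every $\wh y,\wh z \in \wh W^s(\wh x,R)$ with $\theta = \max\{e^{-\chi/2},\tfrac12\}$. The unstable case is identical, using the compatibility of inverse branches inside $\wh V^u$ built into Lemma \ref{Lemma-stable-set}(2).

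The main obstacle is part (3), because one has to propagate a refinement of $\mathfs Z$ through $\wh f$. Fix $\wh x \in R_0$ with $\wh f(\wh x) \in R_1$, pick $\un v \in \Sigma^\#$ with $\pi(\un v) = \wh x$, so that $\wh f(\wh x) = \pi(\sigma \un v)$, and note $\wh x \in Z(v_0)$ and $\wh f(\wh x) \in Z(v_1)$ with $v_0 \overset{\ve}{\to} v_1$. For $\wh y \in \wh W^s(\wh x,R_0)$, Proposition \ref{Prop-Z}(4) gives at once $\wh f(\wh y) \in \wh W^s(\wh f(\wh x), Z(v_1))$, so to reach the refined conclusion $\wh f(\wh y) \in \wh W^s(\wh f(\wh x),R_1)$ one must check $\wh f(\wh y) \sim \wh f(\wh x)$. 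This will be done by translating the hypothesis $\wh y \sim \wh x$, i.e.\ the coincidence of the three ``$\mathfs Z$-triples'' at $\wh x$ and $\wh y$, into a coincidence at $\wh f(\wh x)$ and $\wh f(\wh y)$: membership is transferred by the covering property (part (1) of Proposition \ref{Prop-Z}) applied to $\wh f(\wh x)$ and $\wh f(\wh y)$; the nonemptiness of $\wh W^s \cap Z_j$ is preserved by Proposition \ref{Prop-Z}(4) together with Lemma \ref{Lemma-fibres}(2)(b) to pass freely between overlapping charts; and the nonemptiness of $\wh W^u \cap Z_j$ is preserved using the compatibility of Smale brackets under $\wh f$ (Lemma \ref{Lemma-fibres}(1)), which ensures that if $\wh W^u(\wh x,Z(v_0))$ meets $Z_j$ at some $\wh p$, then $\wh f(\wh p)$ witnesses $\wh W^u(\wh f(\wh x),Z(v_1)) \cap \wh f(Z_j) \ne \emptyset$ and hence, by overlap, $\wh W^u(\wh f(\wh y),Z(v_1)) \cap Z_j \ne \emptyset$. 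The unstable Markov inclusion follows by the symmetric argument applied to $\wh f^{-1}$, where Lemma \ref{Lemma-inverse-branches} guarantees that the inverse branches appearing in the definition of $\wh V^u$ are unambiguous; formally this is the $\wh M$-version of \cite[Prop.~11.7]{Sarig-JAMS} and \cite[Prop.~5.11]{Ben-Ovadia-2019}.
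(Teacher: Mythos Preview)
Your proposal is correct and follows essentially the same route as the paper: reduce each part to the corresponding statement for $\mathfs Z$ via the equivalence relation $\sim$, invoking Proposition~\ref{Prop-Z}, Proposition~\ref{Prop-stable-sets}(4), and Lemma~\ref{Lemma-fibres} exactly as the paper does by citing \cite[Prop.~11.5 and 11.7]{Sarig-JAMS}. Two small points: your citation for part~(1) should be \cite[Prop.~11.5(3)]{Sarig-JAMS} rather than 11.7, and for part~(1) you should make explicit (as the paper notes) that the bracket $[\wh x,\wh y]_Z$ is independent of the choice of $Z\supset R$, since this is needed both to conclude $\wh z\in\wh W^s(\wh x,Z_i)$ for \emph{every} $Z_i\supset R$ and later in part~(3).
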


\begin{proof}
Part (1) is proved as in \cite[Proposition 11.5(3)]{Sarig-JAMS}, using Lemma \ref{Lemma-fibres}(2)(b).
Along the proof we also obtain that $[\wh x,\wh y]=[\wh x,\wh y]_Z$ for every 
$Z\in\mathfs Z$ containing $R$.
Part (2) is proved as in \cite[Proposition 11.5(4)]{Sarig-JAMS}, using
Proposition \ref{Prop-stable-sets}(4). Finally, part (3) is proved as in \cite[Proposition 11.7]{Sarig-JAMS},
using Lemma \ref{Lemma-fibres}(1), Proposition \ref{Prop-Z}(4), Lemma \ref{Lemma-fibres}(2)(b)
and the equality $[\wh x,\wh y]=[\wh x,\wh y]_Z$ for all $\wh x,\wh y\in R$ and  
all $Z\in\mathfs Z$ containing $R$.
\end{proof}

\subsection{A finite-to-one Markov extension}

Using the Markov partition $\mathfs R$, we now define a
symbolic coding that satisfies Theorem \ref{Thm-Main}.
Let $\widehat{\mathfs G}=(\widehat V,\widehat E)$ be the oriented graph with vertices
$\widehat V=\mathfs R$ and edges
$\widehat E=\{R\to S:R,S\in\mathfs R\text{ s.t. }\wh f(R)\cap S\neq\emptyset\}$,
and let $(\widehat\Sigma,\widehat\sigma)$ be the TMS induced by $\widehat{\mathfs G}$.
Since the ingoing and outgoing degrees of every vertex in $\Sigma$ is finite,
the same occurs to $\widehat\Sigma$, see \cite[Proposition 12.3]{Sarig-JAMS}.

For $\ell\in\Z$ and a path $R_m\to\cdots\to R_n$ on $\widehat{\mathfs G}$, define
$$
_\ell[R_m,\ldots,R_n]:=\wh f^{-\ell}(R_m)\cap\cdots\cap\wh f^{-\ell-(n-m)}(R_n),
$$
that represents the set of points whose itinerary
from $\ell$ to $\ell+(n-m)$ visits the rectangles $R_m,\ldots,R_n$. The crucial property that
gives the new coding is that $_\ell[R_m,\ldots,R_n]\neq\emptyset$. This is a consequence
of the Markov property of $\mathfs R$ (Proposition \ref{Prop-R}(3)), and the proof can be 
made by induction as in \cite[Lemma 12.1]{Sarig-JAMS}.
By Proposition \ref{Prop-R}(2), if $\{R_n\}_{n\in\Z}\in\wh\Sigma$ then
$\bigcap_{n\geq 0}\overline{_{-n}[R_{-n},\ldots,R_n]}$
is the intersection of a descending chain of nonempty closed sets with
diameters converging to zero. We can therefore consider the following definition.

\medskip
\noindent
{\sc The map $\widehat\pi:\widehat\Sigma\to \wh M$:} Given $\un R=\{R_n\}_{n\in\Z}\in\widehat\Sigma$,
$\widehat\pi(\un R)$ is defined by the identity
$$
\{\widehat\pi(\un R)\}:=\bigcap_{n\geq 0}\overline{_{-n}[R_{-n},\ldots,R_n]}.
$$

\medskip
As we will prove, $(\widehat\Sigma,\widehat\sigma,\widehat\pi)$ is a triple satisfying
Theorem \ref{Thm-Main}. To show this, we start relating $\pi$ and $\wh\pi$.
Consider the cylinder sets in $\Sigma$: for $\ell\in\Z$ and a path
$v_m\overset{\ve}{\to}\cdots\overset{\ve}{\to}v_n$ in $\Sigma$, let
$$
Z_\ell[v_m,\ldots,v_n]:=\{\pi(\un w):\un w\in\Sigma^\#\text{ and }w_\ell=v_m,\ldots,w_{\ell+(n-m)}=v_n\}.
$$
By Proposition \ref{Prop-pi}(2), the diameter of $Z_{-n}[v_{-n},\ldots,v_n]$ goes to
zero as $n\to\infty$.

\begin{lemma}\label{Lemma-relation-R-Z}
The following are true.
\begin{enumerate}[{\rm (1)}]
\item For each $\un R=\{R_n\}_{n\in\Z}\in\wh\Sigma$ and $Z(v)\supset R_0$, there exists
$\un v=\{v_n\}_{n\in\Z}\in\Sigma$ with $v_0=v$ s.t. $_m[R_m,\ldots,R_n]\subset Z_m[v_m,\ldots,v_n]$
for every $m\leq n$. In particular, $\wh\pi(\un R)=\pi(\un v)$.
If $\un R\in\wh\Sigma^\#$, then $\un v\in\Sigma^\#$.
\item If $R_m\to\cdots\to R_n$ is a path on $\wh\Sigma$ and
$v_m\overset{\ve}{\to}\cdots\overset{\ve}{\to}v_n$ is a path on $\Sigma$
s.t. $R_k\subset Z(v_k)$ for $k=m,\ldots,n$, then  $_m[R_m,\ldots,R_n]\subset Z_m[v_m,\ldots,v_n]$.
\end{enumerate}
\end{lemma}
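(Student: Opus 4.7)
The plan is to prove Part (2) first by induction on the length $n-m$, and then derive Part (1) via a diagonal/compactness argument that invokes Part (2) at the end.

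For Part (2), the base case $n=m$ is immediate: $\wh y \in {_m[R_m]} = \wh f^{-m}(R_m)$ means $\wh f^m(\wh y) \in R_m \subset Z(v_m)$, so by the definition of $Z(v_m)$ there is $\un u \in \Sigma^\#$ with $\pi(\un u) = \wh f^m(\wh y)$ and $u_0 = v_m$; then $\sigma^{-m}\un u$ codes $\wh y$ with entry $v_m$ at position $m$. For the inductive step, fix $\wh x \in {_m[R_m,\ldots,R_n]}$. The induction hypothesis produces $\un w^0 \in \Sigma^\#$ coding $\wh x$ with $w^0_k=v_k$ for $m\le k\le n-1$, and since $\wh f^n(\wh x)\in R_n\subset Z(v_n)$ the base case gives $\un w^1\in\Sigma^\#$ coding $\wh f^n(\wh x)$ with $w^1_0=v_n$. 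Splice: set $w_k=w^0_k$ for $k\le n-1$ and $w_k=w^1_{k-n}$ for $k\ge n$. The only new edge is $w_{n-1}\xrightarrow{\ve}w_n$, which reads $v_{n-1}\xrightarrow{\ve}v_n$ and is given. A direct matching of coordinates shows $\un w$ shadows $\wh x$: on the left half this is the shadowing by $\un w^0$, and on the right half it is the shadowing of $\wh f^n(\wh x)$ by $\un w^1$ translated back by $n$. By Lemma \ref{Lemma-shadowing}, $\pi(\un w)=\wh x$, and recurrence of $\un w$ is inherited from $\un w^0$ (past) and $\un w^1$ (future), so $\un w\in\Sigma^\#$.

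For Part (1), fix $\un R\in\wh\Sigma$ and $v$ with $Z(v)\supset R_0$. For each $N$, the Markov property of $\mathfs R$ (Proposition \ref{Prop-R}(3)) applied inductively yields a nonempty cylinder $_{-N}[R_{-N},\ldots,R_N]$; pick $\wh y_N$ in it. By Theorem \ref{Thm-coarse-graining}(2), $\wh y_N \in R_0\subset{\rm NUH}^\#$ has some coding $\un w^{(N)}\in\Sigma^\#$, and since $\wh y_N\in Z(v)$ the definition of $Z(v)$ lets us choose $w^{(N)}_0=v$. For every $|k|\le N$, $\wh f^k(\wh y_N)\in R_k\cap Z(w^{(N)}_k)$; because $\mathfs R$ refines the $T^{\alpha\beta}_{ij}$-partition of $\mathfs Z$, any $Z\in\mathfs Z$ meeting $R_k$ in fact contains $R_k$, so $R_k\subset Z(w^{(N)}_k)$. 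The local finiteness $\#\{Z\in\mathfs Z:Z\supset R_k\}<\infty$ allows a standard diagonal extraction along which $w^{(N)}_k$ stabilizes to some $v_k$, and edges pass to the limit, yielding $\un v\in\Sigma$ with $v_0=v$. If $\un R\in\wh\Sigma^\#$, pigeonhole applied to the finite sets $\{Z\supset R^*\}$ (for recurrent $R^*$) gives $\un v\in\Sigma^\#$. The equality $\wh\pi(\un R)=\pi(\un v)$ follows from $\wh y_N\to\wh\pi(\un R)$ (definition of $\wh\pi$) combined with $\wh y_N\in Z_{-K}[v_{-K},\ldots,v_K]$ eventually and the diameter decay of these cylinders from Proposition \ref{Prop-pi}(2). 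The cylinder inclusion $_m[R_m,\ldots,R_n]\subset Z_m[v_m,\ldots,v_n]$ is now a direct consequence of Part (2) applied to the $\Sigma$-subpath $v_m\xrightarrow{\ve}\cdots\xrightarrow{\ve}v_n$, since $R_k\subset Z(v_k)$ by construction.

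The main obstacle is the splicing in Part (2): one must glue two $\Sigma$-codings of different but related points at a prescribed symbol while preserving both the edge condition of $\Sigma$ and the shadowing of the correct point. The edge at the junction is handed to us by hypothesis ($v_{n-1}\xrightarrow{\ve}v_n$), and uniqueness of the shadowed point in Lemma \ref{Lemma-shadowing} forces the spliced $\ve$-gpo to code $\wh x$, so no delicate geometric argument is required at this step. Once Part (2) is in hand, the diagonal argument in Part (1) is classical and rests entirely on the two local finiteness properties of $\mathfs R$ inside $\mathfs Z$ stated before Proposition \ref{Prop-R}.
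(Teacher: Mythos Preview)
Your proof is correct and follows essentially the same strategy as the paper: a diagonal extraction over points $\wh y_N\in{_{-N}[R_{-N},\ldots,R_N]}$ to build $\un v$, combined with a splicing argument for the cylinder inclusion. The paper organizes things in the reverse order --- it performs a single two-endpoint splice inside the proof of Part~(1) and then remarks that Part~(2) is the same argument, whereas you prove Part~(2) first by induction (splicing one edge at a time) and invoke it at the end of Part~(1); the only minor imprecision is that ``$\wh y_N\in Z_{-K}[v_{-K},\ldots,v_K]$ eventually'' should read ``along a subsequence'', which is what the diagonal argument actually delivers and is all that is needed.
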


\begin{proof}
The proof of part (1) follows \cite[Claim in pp. 20]{BLPV}.
Fix $\underline R=\{R_n\}_{n\in\mathbb Z}\in\wh\Sigma$ and $v$ s.t. $Z(v)\supset R_0$.
For each $k\geq 0$, fix some $\wh y_k\in{_{-k}[R_{-k}},\ldots,R_k]$. Since $\mathfs R$ and $\mathfs Z$
both cover the same set ${\rm NUH}^\#$, there is
$\underline v^{(k)}=\{v^{(k)}_\ell\}_{\ell\in\mathbb Z}\in\Sigma^\#$ with $v^{(k)}_0=v$ s.t.
$\wh y_k=\pi[\underline v^{(k)}]$.
Since the degrees of the graph defining $\Sigma$ are finite,
for each $\ell\geq 0$ there are finitely many possibilities for the tuple $(v^{(k)}_{-\ell},\ldots,v^{(k)}_{\ell})$.
By a diagonal argument, there is $\underline v=\{v_\ell\}_{\ell\in\mathbb Z}\in\Sigma$
s.t. for each $\ell\geq 0$ we have $(v_{-\ell},\ldots,v_\ell)=(v^{(k)}_{-\ell},\ldots,v^{(k)}_{\ell})$
for infinitely many $k\geq 0$. We claim that $\underline v$ satisfies part (1).
Clearly $v_0=v$. For the other statements, firstly note that
$\wh f^n(\wh y_k)=\pi[\sigma^n(\underline v^{(k)})]$,
and so $R_n\subset Z_{v^{(k)}_n}$ for all $k\geq |n|$. In particular, $R_n\subset Z_{v_n}$. Now let
$m\leq n$, and take $k\geq |m|,|n|$. Fix $\wh y\in{_{m}[R_m},\ldots,R_n]$. We need to show
that $\wh y\in Z_m[v_m,\ldots,v_n]$.
\begin{enumerate}[$\circ$]
\item Since $\wh f^m(\wh y)\in R_m\subset Z_{v_m}$, there is
$\underline w\in\Sigma^\#$ with $w_m=v_m$ s.t. $\wh f^m(\wh y)=\pi[\sigma^m(\underline w)]$.
\item Since $\wh f^n(\wh y)\in R_n\subset Z_{v_n}$, there is $\underline z\in\Sigma^\#$ with $z_n=v_n$
s.t. $\wh f^n(\wh y)=\pi[\sigma^n(\underline z)]$.
\end{enumerate}
Define $\underline a=\{a_\ell\}_{\ell\in\mathbb Z}$ by:
$$
a_\ell=\left\{
\begin{array}{ll}
w_\ell&, \ell\leq m\\
v_\ell&, m\leq \ell\leq n\\
z_\ell&, \ell\geq n.
\end{array}
\right.
$$
Since both $\underline w,\underline z\in\Sigma^\#$, also $\underline a\in\Sigma^\#$. Observe that:
\begin{enumerate}[$\circ$]
\item If $\ell\leq m$ then $\wh f^\ell(\wh y)=\pi[\sigma^\ell(\underline w)]\in Z_{w_\ell}=Z_{a_\ell}$.
\item If $m\leq \ell\leq n$ then $\wh f^\ell(\wh y)\in R_\ell\subset Z_{v_\ell}=Z_{a_\ell}$.
\item If $\ell\geq n$ then $\wh f^\ell(\wh y)=\pi[\sigma^\ell(\underline z)]\in Z_{z_\ell}=Z_{a_\ell}$.
\end{enumerate}
Hence $\vt[\wh f^\ell(\wh y)]\in \vt[Z_{a_\ell}]$ for all $\ell\in\Z$, which implies that
$\underline a$ shadows $\wh y$ and so $\pi(\underline a)=\wh y$.
In particular $\wh y\in Z_m[v_m,\ldots,v_n]$. To prove the equality $\widehat\pi(\underline R)=\pi(\underline v)$,
firstly observe that
$$\{\widehat\pi(\underline R)\}=\bigcap_{n\geq 0}\overline{_{-n}[R_{-n},\ldots,R_n]}\subset
\bigcap_{n\geq 0}\overline{Z_{-n}[v_{-n},\ldots,v_n]}.
$$
Since
$$
\{\pi(\underline v)\}=\bigcap_{n\geq 0}Z_{-n}[v_{-n},\ldots,v_n]\subset
\bigcap_{n\geq 0}\overline{Z_{-n}[v_{-n},\ldots,v_n]},
$$
the intersection $\bigcap_{n\geq 0}\overline{Z_{-n}[v_{-n},\ldots,v_n]}$ is non-empty.
But it consists of a descending chain of closed sets
with diameter converging to zero, so it equals $\{\pi(\underline v)\}$.
It follows that $\widehat\pi(\underline R)=\pi(\underline v)$. Finally, the local finiteness
properties of $\mathfs R$ and $\mathfs Z$ imply that if $\un R\in\wh\Sigma^\#$ then $\un v\in\Sigma^\#$.
This concludes the proof of part (1). 

The proof of part (2) follows exactly the same argument made above, when 
we took $\wh y\in~_{m}[R_m,\ldots,R_n]$ and proved that $\wh y\in Z_m[v_m,\ldots,v_n]$.
\end{proof}

The non-injectivity of $\wh\pi$ is analysed using the notion of affiliation \cite[\S 12.3]{Sarig-JAMS}.

\medskip
\noindent
{\sc Affiliation:} Two rectangles $R,R'\in\mathfs R$ are called {\em affiliated} if there are 
$Z,Z'\in\mathfs Z$ s.t. $Z\supset R$, $Z'\supset R'$ and $Z\cap Z'\neq\emptyset$.
If this occurs, we write $R\sim R'$.

\begin{lemma}[Bowen property]\label{Lemma-Bowen-relation}
If $\un R=\{R_n\}_{n\in\Z},\un S=\{S_n\}_{n\in\Z}\in\wh\Sigma^\#$, then 
$\wh\pi(\un R)=\wh\pi(\un S)$ iff $R_n\sim S_n$ for all $n\in\Z$.
\end{lemma}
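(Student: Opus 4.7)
The plan is to prove both directions through the bridge provided by Lemma \ref{Lemma-relation-R-Z}(1), which identifies $\wh\Sigma^\#$-codings with $\Sigma^\#$-codings. For the forward direction, assume $\wh\pi(\un R)=\wh\pi(\un S)=\wh x$. Applying Lemma \ref{Lemma-relation-R-Z}(1) to $\un R$ and $\un S$ separately produces $\un v,\un w\in\Sigma^\#$ with $\pi(\un v)=\pi(\un w)=\wh x$, $R_n\subset Z(v_n)$ and $S_n\subset Z(w_n)$ for every $n\in\Z$. Since $\wh f^n(\wh x)\in R_n\cap S_n\subset Z(v_n)\cap Z(w_n)$ for every $n$, the pair $(Z(v_n),Z(w_n))$ witnesses $R_n\sim S_n$ directly from the definition of affiliation.

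For the reverse direction, assume $R_n\sim S_n$ for every $n\in\Z$ and fix $\un v,\un w\in\Sigma^\#$ as above with $\wh x:=\pi(\un v)=\wh\pi(\un R)$ and $\wh y:=\pi(\un w)=\wh\pi(\un S)$; the goal is $\wh x=\wh y$. The preparatory step is to upgrade each abstract affiliation $R_n\sim S_n$ into a concrete overlap between $Z(v_n)$ and $Z(w_n)$: the definition of affiliation supplies $Z_n\supset R_n$ and $Z'_n\supset S_n$ with $Z_n\cap Z'_n\neq\emptyset$, while the inclusions $\wh f^n(\wh x)\in R_n\subset Z(v_n)\cap Z_n$ and $\wh f^n(\wh y)\in S_n\subset Z'_n\cap Z(w_n)$ furnish a chain $Z(v_n)\to Z_n\to Z'_n\to Z(w_n)$ of consecutively overlapping rectangles. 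Chaining Lemma \ref{Lemma-fibres}(2)(a) with Corollary \ref{Corollary-inverse}(1)--(3) three times along this chain shows that $v_n$ and $w_n$ have comparable parameters and that their Pesin charts overlap strongly enough to satisfy both the product-structure hypothesis of Lemma \ref{Lemma-fibres}(2)(c) and the edge conditions (GPO1)--(GPO2) for the crossing edge $w_{-1}\overset{\ve}{\to}v_0$.

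With the concatenation legitimate, form the hybrid $\ve$-gpo $\un u$ by $u_n:=v_n$ for $n\geq 0$ and $u_n:=w_n$ for $n<0$, and let $\wh z$ be the unique point of $\wh V^s(\wh x,Z(v_0))\cap \wh V^u(\wh y,Z(w_0))$ produced by Lemma \ref{Lemma-fibres}(2)(c). The dynamical invariance of stable/unstable sets (Proposition \ref{Prop-stable-sets}(3)) together with Lemma \ref{Lemma-stable-set} implies that $\un u$ shadows $\wh z$, while the chart overlap from the preparatory step implies that $\un u$ also shadows $\wh x$ (whose backward orbit sits in charts of $\un v$ close to those of $\un w$) and $\wh y$ (by the symmetric argument on the forward orbit). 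The uniqueness of shadowing (Lemma \ref{Lemma-shadowing}) then forces $\wh z=\wh x=\wh y$.

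The main obstacle will be the chaining step in paragraph two: the witness charts $Z_n,Z'_n$ in the definition of affiliation need not coincide with $Z(v_n),Z(w_n)$, so several successive applications of Lemma \ref{Lemma-fibres}(2)(a) and Corollary \ref{Corollary-inverse}(1)--(3) must be combined while keeping the accumulated error within the tolerance demanded both by Lemma \ref{Lemma-fibres}(2)(c) and by (GPO1)--(GPO2). This delicate bookkeeping, which requires $\ve$ to be small, mirrors the corresponding step in the proof of Proposition 12.8 of \cite{Sarig-JAMS}.
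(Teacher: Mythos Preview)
The forward direction is essentially the paper's argument, with one slip: you write $\wh f^n(\wh x)\in R_n\cap S_n$, but $\wh\pi$ is defined via closures, so membership in $R_n$ is not immediate. What you actually have (and all you need) is $\wh f^n(\wh x)=\pi[\sigma^n(\un v)]\in Z(v_n)$ and similarly $\in Z(w_n)$; together with $R_n\subset Z(v_n)$, $S_n\subset Z(w_n)$ this already witnesses $R_n\sim S_n$.

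In the reverse direction your preparatory chain $Z(v_n)\,\text{--}\,Z_n\,\text{--}\,Z'_n\,\text{--}\,Z(w_n)$ of three overlaps is exactly the paper's key observation. The gap is what you do next. You attempt to concatenate $\un w^{-}$ with $\un v^{+}$ into a hybrid $\ve$--gpo $\un u$, asserting that the chain forces the crossing edge $w_{-1}\overset{\ve}{\to}v_0$ to satisfy (GPO1)--(GPO2). But (GPO2) is not an approximate condition: it demands the exact equalities $q^s_{-1}=\min\{e^\ve p^s_0,\delta_\ve Q(\wh y_{-1})\}$ and $p^u_0=\min\{e^\ve q^u_{-1},\delta_\ve Q(\wh x_0)\}$ among elements of the discrete set $I_\ve$. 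Three applications of Corollary~\ref{Corollary-inverse} along the chain yield only $p^{s/u}_n/q^{s/u}_n=e^{\pm C\sqrt[3]{\ve}}$, which need not collapse to equality in $I_\ve$; hence $\un u$ may fail to be an $\ve$--gpo and Lemma~\ref{Lemma-shadowing} does not apply. The subsequent claims that $\un u$ shadows $\wh x$ and $\wh y$ inherit this problem.

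The paper avoids this entirely by a much shorter route that your own chain already sets up: iterating the calculation behind Lemma~\ref{Lemma-fibres}(2)(a) along the three overlaps gives
\[
\vt[Z(v_n)]\subset \Psi_{\wh c_n}(B[r_n])\subset \Psi_{\wh d_n}(B[3t_n])\subset \Psi_{\wh y_n}(B[9q_n])\subset \Psi_{\wh y_n}(B[20Q(\wh y_n)]).
\]
Since $\vt_n[\wh x]\in\vt[Z(v_n)]$ for every $n$, this shows that the \emph{original} sequence $\un w$ shadows $\wh x$, so $\wh x=\pi(\un w)=\wh y$ directly. No hybrid sequence, no crossing edge, no (GPO2) verification is needed.
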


\begin{proof}
$(\Longrightarrow)$ Asume that $\un R,\un S\in\wh\Sigma^\#$ with $\wh x=\wh\pi(\un R)=\wh\pi(\un S)$.
By Lemma \ref{Lemma-relation-R-Z}, there are $\un v=\{v_n\}_{n\in\Z},\un w=\{w_n\}_{n\in\Z}\in\Sigma^\#$ s.t.
$\wh x=\pi(\un v)=\pi(\un w)$. By Proposition \ref{Prop-pi}(3), for every $n\in\Z$ we have
$\wh f^n(\wh x)=\pi[\sigma^n(\un v)]=\pi[\sigma^n(\un w)]$ and so
$\wh f^n(\wh x)\in Z(v_n)\cap Z(w_n)$, proving that $R_n\sim S_n$.

\medskip
\noindent
$(\Longleftarrow)$ Assume that $\un R=\{R_n\}_{n\in\Z},\un S=\{S_n\}_{n\in\Z}\in\wh\Sigma^\#$
with $R_n\sim S_n$. Letting $\wh x=\wh\pi(\un R)$ and $\wh y=\wh\pi(\un S)$, we want to show that
$\wh x=\wh y$. Start applying Lemma \ref{Lemma-relation-R-Z}(1) to obtain
$\un v=\{v_n\}_{n\in\Z}=\{\Psi_{\wh x_n}^{p^s_n,p^u_n}\}\in\Sigma^\#$ s.t. $\wh x=\pi(\un v)$ and
$\un w=\{w_n\}_{n\in\Z}=\{\Psi_{\wh y_n}^{q^s_n,q^u_n}\}\in\Sigma^\#$ s.t. $\wh y=\pi(\un w)$.
Had we that $Z(v_n)\cap Z(w_n)\neq\emptyset$ for all $n\in\Z$, we could just apply Lemma \ref{Lemma-fibres}(2)(a)
to obtain that $\wh x$ is shadowed by $\un w$, thus concluding that $\wh x=\pi(\un w)=\wh y$. 
Unfortunately, we do not know if $Z(v_n)$ and $Z(w_n)$ intersect, but we do know that they cannot be
far apart. Indeed, we claim that $\vt[Z(v_n)]\subset \Psi_{\wh y_n}(B[20Q(\wh y_n)])$ for all $n\in\Z$.
Once this is proved, we get that $\wh x=\wh y$. We prove the inclusion for $n=0$.
Since $R_0\sim S_0$, there are $a=\Psi_{\wh c}^{r^s,r^u}$ and $b=\Psi_{\wh d}^{t^s,t^u}$
s.t. $Z(a)\supset R_0$, $Z(b)\supset S_0$ and $Z(a)\cap Z(b)\neq\emptyset$.
Since $Z(v_0),Z(a)\supset R_0$, we have that $Z(v_0)\cap Z(a)\neq\emptyset$.
Similarly, $Z(w_0)\cap Z(b)\neq\emptyset$. Letting $r=r^s\wedge r^u$, $t=t^s\wedge t^u$ and
$q=q^s_0\wedge q^u_ 0$, the same arguments used the proof of Lemma \ref{Lemma-fibres}(2)(a)
imply that
$$
\vt[Z(v_0)]\subset \Psi_{\wh c}(B[r])\subset \Psi_{\wh d}(B[3t])\subset \Psi_{\wh y_0}(B[9q]),
$$
which is contained in $\Psi_{\wh y_0}(B[20Q(\wh y_0)])$.
\end{proof}

The next theorem completes the proof of Theorem \ref{Thm-Main}.

\begin{theorem}\label{Thm-widehat-pi}
The following holds for all $\ve>0$ small enough.
\begin{enumerate}[{\rm (1)}]
\item $\widehat\pi:\widehat\Sigma\to\wh M$ is H\"older continuous.
\item $\widehat\pi\circ\widehat\sigma=\wh f\circ\widehat\pi$.
\item $\widehat\pi\restriction_{\widehat\Sigma^\#}:\widehat\Sigma^\#\to {\rm NUH}^\#$
is a finite-to-one surjective map.  
\end{enumerate}
\end{theorem}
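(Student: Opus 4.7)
The plan is to verify the three items by combining the Markov structure of $\mathfs R$ (Proposition \ref{Prop-R}) with the comparison between $\wh\pi$ and $\pi$ from Lemma \ref{Lemma-relation-R-Z} and the affiliation dichotomy of Lemma \ref{Lemma-Bowen-relation}. Item (2) is immediate from the defining identity $\wh f\bigl({_{-n-1}[R_{-n-1},\ldots,R_{n-1}]}\bigr)\supset {_{-n}[R_{-n},\ldots,R_n]}$, combined with the Markov property Proposition \ref{Prop-R}(3); taking closures and intersecting over $n\to\infty$ yields $\wh f\circ\wh\pi=\wh\pi\circ\wh\sigma$. For the H\"older property (1), the idea is to control the diameter of ${_{-N}[R_{-N},\ldots,R_N]}$ in $\wh M$ by exponentially decaying quantities. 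Given $\un R,\un S\in\wh\Sigma$ agreeing on $|n|\leq N$, I apply Lemma \ref{Lemma-relation-R-Z}(1) to pick $\un v\in\Sigma$ lifting $\un R$; by Lemma \ref{Lemma-relation-R-Z}(2) the common block forces the cylinder ${_{-N}[R_{-N},\ldots,R_N]}$ to lie inside $Z_{-N}[v_{-N},\ldots,v_N]$. The H\"older continuity of $\pi$ from Proposition \ref{Prop-pi}(2) and Proposition \ref{Prop-stable-manifolds}(5) give that the latter cylinder has diameter $\leq K\theta_0^{N}$ in $\wh M$ for some $K>0,\theta_0\in(0,1)$, yielding $\wh d(\wh\pi(\un R),\wh\pi(\un S))\leq K\theta_0^N$.

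For surjectivity in (3), fix $\wh x\in{\rm NUH}^\#$. By Proposition \ref{Prop-pi}(4) there exists $\un v=\{v_n\}_{n\in\Z}\in\Sigma^\#$ with $\pi(\un v)=\wh x$. For every $n\in\Z$ the point $\wh f^n(\wh x)$ lies in $Z(v_n)\cap{\rm NUH}^\#$ and hence in a unique $R_n\in\mathfs R$; because $\mathfs R$ refines the family $\mathfs T$ and $Z(v_n)\in\mathfs Z$, we have $R_n\subset Z(v_n)$. The Markov property of $\mathfs R$ (Proposition \ref{Prop-R}(3)) implies $R_n\to R_{n+1}$ is an edge of $\wh{\mathfs G}$, so $\un R:=\{R_n\}_{n\in\Z}\in\wh\Sigma$, and by construction $\wh\pi(\un R)=\wh x$. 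To see $\un R\in\wh\Sigma^\#$, fix a symbol $v$ repeating infinitely often in the positive tail of $\un v$; the local finiteness $\#\{R\in\mathfs R:R\subset Z(v)\}<\infty$ and pigeonhole produce an $R\in\mathfs R$ appearing infinitely often among $\{R_n\}_{n\geq 0}$, and symmetrically for the negative tail. On the opposite direction, if $\un R\in\wh\Sigma^\#$ then Lemma \ref{Lemma-relation-R-Z}(1) produces $\un v\in\Sigma^\#$ with $\pi(\un v)=\wh\pi(\un R)$, and Theorem \ref{Thm-inverse} forces $\wh\pi(\un R)\in{\rm NUH}^\#$, so the map is well-defined on the recurrent set.

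The main obstacle will be the finite-to-one property. Fix $\wh x=\wh\pi(\un R)\in{\rm NUH}^\#$; by Lemma \ref{Lemma-Bowen-relation} every $\un S\in\wh\pi^{-1}(\wh x)\cap\wh\Sigma^\#$ satisfies $S_n\sim R_n$ for all $n$. The first ingredient is that each affiliation class $[R]:=\{R'\in\mathfs R:R'\sim R\}$ is finite: indeed, $R'\sim R$ means there exist $Z\supset R$, $Z'\supset R'$ with $Z\cap Z'\neq\emptyset$, and all three facts $\#\{Z\in\mathfs Z:Z\supset R\}<\infty$, $\#\{Z'\in\mathfs Z:Z'\cap Z\neq\emptyset\}<\infty$ (Proposition \ref{Prop-Z}(2)), and $\#\{R'\in\mathfs R:R'\subset Z'\}<\infty$ hold, so $\#[R]$ is bounded. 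The second ingredient is the Bowen--Sina\u{\i}--Manning counting argument, carried out as in \cite[\S 12.3]{Sarig-JAMS} and \cite[\S 5.3]{Ben-Ovadia-2019}: since $\un R\in\wh\Sigma^\#$, there exist $R_+,R_-\in\mathfs R$ and sequences $m_k\to+\infty$, $\ell_k\to-\infty$ with $R_{m_k}=R_+$ and $R_{\ell_k}=R_-$. For any $\un S$ in the preimage, $S_{m_k}\in[R_+]$ and $S_{\ell_k}\in[R_-]$ for all $k$; by two applications of pigeonhole we find $S_+\in[R_+]$, $S_-\in[R_-]$ appearing infinitely often in $\un S$, so $\un S\in\wh\Sigma^\#$. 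Then the Markov structure together with the finiteness of $[R_+]$, $[R_-]$ and Proposition \ref{Prop-R}(2) (stable sets in $R_+$ and unstable sets in $R_-$ determine the point) bound the number of such $\un S$ by an explicit function of $\#[R_+]\cdot\#[R_-]$, proving finiteness.
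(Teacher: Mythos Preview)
Your treatment of parts (1), (2) and the surjectivity half of (3) is essentially correct and matches the paper's approach; note only that the paper obtains (2) more cleanly by invoking Lemma~\ref{Lemma-relation-R-Z}(1) to write $\wh\pi(\un R)=\pi(\un v)$ and $\wh\pi(\wh\sigma(\un R))=\pi(\sigma(\un v))$ and then applying Proposition~\ref{Prop-pi}(3), which avoids worrying about closures.

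The gap is in the finite-to-one argument. Your sketch stops at the wrong place and invokes the wrong mechanism. The pigeonhole you perform only shows that each individual preimage $\un S$ has \emph{some} recurrent symbols $S_+\in[R_+]$, $S_-\in[R_-]$; it does not bound the number of preimages, since different $\un S$'s may realize $S_+,S_-$ at completely different times and nothing you have said forces two preimages sharing the same pair $(S_+,S_-)$ to coincide. The appeal to Proposition~\ref{Prop-R}(2) (``stable sets in $R_+$ and unstable sets in $R_-$ determine the point'') is not the correct tool: hyperbolicity of the invariant sets says nothing about uniqueness of the symbolic itinerary between two returns. The paper's argument (following \cite[\S12.3]{Sarig-JAMS}) is genuinely different in structure: it is by contradiction, with pigeonhole applied at \emph{fixed} times $-k,\ell$ (chosen so that $R^{(0)}_{-k}=R$, $R^{(0)}_\ell=S$) across the $N+1$ candidate preimages, and the objects being counted are \emph{pairs} $(R^{(j)}_{-k},v^{(j)}_{-k})$ and $(R^{(j)}_\ell,v^{(j)}_\ell)$ with $v^{(j)}\in\Sigma^\#$ lifts of the $\un R^{(j)}$ via Lemma~\ref{Lemma-relation-R-Z}(1). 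The correct bound is therefore $N(R)N(S)$ with $N(R)=\#\{(R',v'):R'\sim R,\ Z(v')\supset R'\}$, not a function of $\#[R_+]\cdot\#[R_-]$ alone. The $\Sigma$--symbols $v^{(j)}$ are essential: once two preimages agree on the full quadruple, one builds two points $\wh z_A,\wh z_B$ via Smale brackets (Proposition~\ref{Prop-R}(1),(3)) that lie in distinct $\mathfs R$--cylinders, yet one can splice the matching $v$--symbols to produce a single $\un c\in\Sigma^\#$ shadowing both, forcing $\wh z_A=\wh z_B$ by Lemma~\ref{Lemma-shadowing} --- a contradiction. Without the $v'$ component there is no way to perform this splice, and your argument as written does not close.
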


\begin{proof}
(1)  We have to show that the diameter of $_{-n}[R_{-n},\ldots,R_n]$
decays exponentially fast in $n$. By Proposition \ref{Prop-pi}(2), there are
$C>0$ and $\theta\in(0,1)$ s.t. the diameter of every $Z_{-n}[v_{-n},\ldots,v_n]$
is at most $C\theta^n$. For a fixed $_{-n}[R_{-n},\ldots,R_n]$, apply 
Lemma \ref{Lemma-relation-R-Z} to obtain $v_{-n}\overset{\ve}{\to}\cdots\overset{\ve}{\to}v_n$
s.t. $_{-n}[R_{-n},\ldots,R_n]\subset Z_{-n}[v_{-n},\ldots,v_n]$. This concludes the proof.

\medskip
\noindent
(2) Fix $\un R\in \wh\Sigma$, and let $\un v\in\Sigma$ satisfying Lemma \ref{Lemma-relation-R-Z}(1).
Then $\wh\pi(\un R)=\pi(\un v)$ and $\wh\pi[\wh\sigma(\un R)]=\pi[\sigma(\un v)]$.
By Proposition \ref{Prop-pi}(3), it follows that
$$
\wh\pi[\wh\sigma(\un R)]=\pi[\sigma(\un v)]=\wh f[\pi(\un v)]=\wh f[\wh\pi(\un R)].
$$

\medskip
\noindent
(3) The proof is an adaptation of \cite[Theorem 5.6(5)]{Lima-Sarig} to our context.
We provide the details. For $R\in\mathfs R$, define
$$
N(R):=\#\{(R',v')\in\mathfs R\times\mathfs A:\textrm{$R'$ is affiliated to $R$ and $Z(v')\supset R'$}\}.
$$
By Proposition \ref{Prop-Z}(2), $N(R)$ is finite.
If $R,S\in\mathfs R$ then $N(R,S):=N(R)N(S)$ is also finite.
 
\medskip
\noindent
{\sc Claim:} If $\wh x=\wh \pi(\un{R})$ where $R_n=R$ for infinitely many $n<0$ and $R_n=S$
for infinitely many $n>0$, then $\#[\wh\pi^{-1}(\wh x)\cap\wh\Sigma^\#]\leq N(R,S)$.

\begin{proof}[Proof of the claim.]
The proof is by contradiction. Write $N:=N(R,S)$, and assume that
$\un{R}^{(0)},\ldots,$ $\un{R}^{(N)}\in\wh\Sigma^\#$ are distinct
and satisfy $\wh\pi[\un R^{(j)}]=\wh x$ for $j=0,\ldots,N$.
Write $\un{R}^{(0)}=\un{R}$ and $\un{R}^{(j)}=\{R^{(j)}_k\}_{k\in\Z}$.
By Lemma \ref{Lemma-relation-R-Z}(1), for each $j$ there is $\un{v}^{(j)}\in \Sigma^\#$
with $\wh x=\pi(\un{v}^{(j)})$. By Lemma \ref{Lemma-Bowen-relation},
for every $i\in \Z$ the sets $R^{(0)}_i,\ldots, R^{(N)}_i$ are affiliated.
Fix $n$ large enough s.t. the sequences $(R_{-n}^{(j)},\ldots,R_n^{(j)})$, $j=0,\ldots,N$,
are all distinct, and fix $k,\ell>n$ s.t. $R^{(0)}_{-k}=R$ and $R^{(0)}_{\ell}=S$.
There are at most $N=N(R)N(S)$ possibilities for the quadruple
$(R^{(j)}_{-k},v^{(j)}_{-k};R^{(j)}_\ell,v^{(j)}_\ell)$, $j=0,\ldots,N$, 
and so by the pigeonhole principle
there are $0\leq j_1,j_2\leq N$ with $j_1\neq j_2$ s.t.
$$
(R^{(j_1)}_{-k},v^{(j_1)}_{-k},R^{(j_1)}_\ell,v^{(j_1)}_\ell)=
(R^{(j_2)}_{-k},v^{(j_2)}_{-k},R^{(j_2)}_\ell,v^{(j_2)}_\ell).
$$
Write $\un{A}:=\un{R}^{(j_1)}$, $\un{B}:=\un{R}^{(j_2)}$, $\un{a}:=\un{v}^{(j_1)}$, $\un{b}:=\un{v}^{(j_2)}$,
 $A:=A_{\ell}=B_{\ell}$, $B:=A_{-k}=B_{-k}$, $a:=a_\ell=b_\ell$ and $b:=a_{-k}=b_{-k}$.
Fix $\wh x_A\in {_{-k}[}A_{-k},\ldots,A_\ell]\textrm{ and }\wh x_B\in {_{-k}[}B_{-k},\ldots,B_\ell]$,
and define $\wh z_A, \wh z_B$ by the equalities
$$
\wh f^{\ell}(\wh z_A):=[\wh f^{\ell}(\wh x_A),\wh f^{\ell}(\wh x_B)]\ \text{ and }\
\wh f^{-k}(\wh z_B):=[\wh f^{-k}(\wh x_A),\wh f^{-k}(\wh x_B)].
$$
These points are uniquely defined by Proposition \ref{Prop-R}(1),
since  $\wh f^{\ell}(\wh x_A),\wh f^{\ell}(\wh x_B)\in A$ and $\wh f^{-k}(\wh x_A), \wh f^{-k}(\wh x_B)\in B$.
We will obtain a contradiction once we show that $\wh z_A\neq\wh z_B$ and $\wh z_A=\wh z_B$.
Firstly, note that by Proposition \ref{Prop-R}(3) we have $\wh z_A\in  {_{-k}[}B_{-k},\ldots,B_\ell]$
and $\wh z_B\in {_{-k}[}A_{-k},\ldots,A_\ell]$. Since $(A_{-k},\ldots,A_\ell)\neq (B_{-k},\ldots,B_\ell)$,
we get that $\wh z_A\neq \wh z_B$.

Now we show that $\wh z_A=\wh z_B$.
Since $\wh f^{\ell}(\wh z_A)\in A_{\ell}\subset Z(a)$ and $\wh f^{-k}(\wh z_B)\in B_{-k}\subset  Z(b)$,
there are $\un{\alpha}=\{\alpha_n\}_{n\in\Z},\un{\beta}=\{\beta_n\}_{n\in\Z}\in\Sigma^\#$ with
$\alpha_\ell=a$ and $\beta_{-k}=b$ s.t. $\wh z_A=\pi(\un{\alpha})$ and $\wh z_B=\pi(\un{\beta})$.
Define $\un c=\{c_n\}_{n\in\Z}\in\Sigma$ by:
$$
c_n=\left\{
\begin{array}{ll}
\beta_n&, n\leq -k\\
a_n&, -k\leq n\leq \ell\\
\alpha_n&, n\geq\ell.
\end{array}
\right.
$$
Since both $\un \alpha,\un \beta\in\Sigma^\#$, also $\underline c\in\Sigma^\#$.
We claim that $\wh z_A=\pi(\un{c})=\wh z_B$.
Write $c_n=\Psi_{\wh x_n}^{p^s_n,p^u_n}$. By Proposition \ref{Prop-R}(3),
$\wh f^{-k}(\wh z_A),\wh f^{-k}(\wh z_B)$ both belong to $\wh W^u(\wh f^{-k}(\wh x_B),B)$.
Since this latter $s$--set is contained in $\wh V^u[\{c_n\}_{n\leq -k}]$,
Lemma \ref{Lemma-stable-set} implies that
$$
\vt_n[\wh z_A],\vt_n[\wh z_B]\in \Psi_{\wh x_n}(B[20Q(\wh x_n)]) \text{ for all }n\leq -k.
$$
Similarly, we obtain that 
$$
\vt_n[\wh z_A],\vt_n[\wh z_B]\in \Psi_{\wh x_n}(B[20Q(\wh x_n)]) \text{ for all }n\geq \ell.
$$
Now fix $-k\leq n\leq \ell$. We have
$\wh f^n(\wh z_A),\wh f^n(\wh z_B)\in A_n\cup B_n\subset Z(a_n)\cup Z(b_n)$.
Since $\pi(\un a)=\wh x=\pi(\un b)$, we have $Z(a_n)\cap Z(b_n)\neq\emptyset$
and so, recalling that $a_n=c_n$, Lemma \ref{Lemma-fibres}(2)(a) implies that
$\vt[Z(a_n)\cup Z(b_n)]\subset \Psi_{\wh x_n}(B[20Q(\wh x_n)])$.
Therefore 
$$
\vt_n[\wh z_A],\vt_n[\wh z_B]\in \Psi_{\wh x_n}(B[20Q(\wh x_n)]) \text{ for all }-k\leq n\leq \ell.
$$
But then $\un c$ shadows both $\wh z_A$ and $\wh z_B$ which, by Lemma \ref{Lemma-shadowing},
implies that $\wh z_A=\wh z_B$. We therefore obtain a contradiction.
\end{proof}
\noindent
The claim concludes the proof of the theorem.
\end{proof}

We finish this section stating a result that will be used in Part \ref{Part-Applications}.
Since $\wh\pi[\wh\Sigma^\#]={\rm NUH}^\#$ and every $\wh x\in{\rm NUH}^\#$ has
invariant subspaces $E^s_{\wh x},E^u_{\wh x}$, each $\un R\in\wh\Sigma^\#$
is associated to subspaces $E^s_{\wh\pi(\un R)},E^u_{\wh\pi(\un R)}$. 

\begin{proposition}\label{Lemma-Holder-directions}
The maps $\un R\in\wh\Sigma^\# \mapsto E^{s/u}_{\wh\pi(\un R)}$ are H\"older continuous.
\end{proposition}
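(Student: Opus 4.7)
The plan is to reduce the Hölder continuity of $\un R \mapsto E^{s/u}_{\wh\pi(\un R)}$ to the Hölder $C^1$--continuity of stable/unstable admissible manifolds in their defining $\ve$--gpo, which is Proposition \ref{Prop-stable-manifolds}(5). I focus on $E^s$; the argument for $E^u$ is symmetric using inverse branches (as usual in this paper, these are controlled via Lemma \ref{Lemma-stable-set}(2)). Throughout I work with the Grassmannian bundle over $\wh M$ whose fiber distance at close points is computed via parallel transport between fibers of $\wh{TM}$.

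First I would prove a coherent lifting lemma: if $\un R, \un S \in \wh\Sigma^\#$ agree on positions $|n|\leq N$, one can choose $\un v,\un w \in \Sigma^\#$ with $\pi(\un v)=\wh\pi(\un R)$, $\pi(\un w)=\wh\pi(\un S)$ and $v_n=w_n$ for $|n|\leq N$. The proof adapts the diagonal construction of Lemma \ref{Lemma-relation-R-Z}(1): the set of $\ve$--gpo blocks $(a_{-N},\ldots,a_N)$ compatible with the common rectangle block $(R_{-N},\ldots,R_N)$ (in the sense $Z(a_n)\supset R_n$ and $a_n\overset{\ve}{\to} a_{n+1}$) is finite, thanks to the two local finiteness properties of $\mathfs R$ recorded after its definition, together with Proposition \ref{Prop-Z}(2) and the finite ingoing/outgoing degrees of $\Sigma$. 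A diagonal extraction then produces two-sided extensions of a common central block for both $\un R$ and $\un S$.

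Next, since $v_0=w_0=:\Psi_{\wh z}^{p^s,p^u}$, the stable admissible manifolds $V^s[\{v_n\}_{n\geq 0}]$ and $V^s[\{w_n\}_{n\geq 0}]$ live in the same chart $\Psi_{\wh z}$ and have the same domain $B^d[p^s]$. By Proposition \ref{Prop-stable-manifolds}(5), their representing functions $G_{\un v}, G_{\un w}$ satisfy $\|G_{\un v}-G_{\un w}\|_{C^1}\leq K\theta^N$. I then compare the subspaces in this common chart. Writing $\vt[\wh\pi(\un R)]=\Psi_{\wh z}(a,G_{\un v}(a))$ and $\vt[\wh\pi(\un S)]=\Psi_{\wh z}(b,G_{\un w}(b))$, Theorem \ref{Thm-widehat-pi}(1) together with the bi-Lipschitz property of $\Psi_{\wh z}$ (Lemma \ref{Lemma-Pesin-chart}(1)) gives $\|a-b\|\leq K'\theta^N$. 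Using the triangle inequality
\[
\|(dG_{\un v})_a-(dG_{\un w})_b\|\leq \|(dG_{\un v})_a-(dG_{\un v})_b\|+\|dG_{\un v}-dG_{\un w}\|_{C^0},
\]
the first term is $O(\theta^{N\beta/3})$ by the Hölder bound (AM3) on $dG_{\un v}$, and the second is $O(\theta^N)$ by Proposition \ref{Prop-stable-manifolds}(5). Now $E^s_{\wh\pi(\un R)}$ and $E^s_{\wh\pi(\un S)}$ are the images under $(d\Psi_{\wh z})_{(a,G_{\un v}(a))}$ and $(d\Psi_{\wh z})_{(b,G_{\un w}(b))}$ respectively of the graphs of $(dG_{\un v})_a$ and $(dG_{\un w})_b$. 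Combining the estimate on the differentials with the Hölder control of $d\Psi_{\wh z}$ from Lemma \ref{Lemma-Pesin-chart}(2) (applied inside the fixed chart $\Psi_{\wh z}$, so the constant $d(\vt[\wh z],\mathfs S)^{-a}$ is harmless), one obtains Hölder closeness of the two subspaces in the Sasaki metric.

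The main obstacle is the coherent lifting step. The diagonal argument in Lemma \ref{Lemma-relation-R-Z}(1) produces \emph{some} $\un v$ but is not canonical, so one must show that a \emph{common} finite central block can be chosen for $\un R$ and $\un S$. This hinges on the two local finiteness properties of $\mathfs R$ (each $Z\in\mathfs Z$ contains finitely many rectangles, and each rectangle is contained in finitely many elements of $\mathfs Z$) and on the finiteness of the degrees of $\Sigma$ coming from (GPO2); with these in hand, the same diagonal-plus-pigeonhole procedure used in Lemma \ref{Lemma-relation-R-Z}(1) and in the proof of Theorem \ref{Thm-widehat-pi}(3) (where an analogous central-block finiteness is exploited) yields the claim.
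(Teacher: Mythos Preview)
Your proposal is correct and takes essentially the same approach as the paper, which cites \cite[Lemma 12.6]{Sarig-JAMS} using Lemma \ref{Lemma-relation-R-Z}(2) and Proposition \ref{Prop-stable-manifolds}(5). Your coherent lifting lemma is precisely what Lemma \ref{Lemma-relation-R-Z} provides: part (1) yields $\un v$ lifting $\un R$, and since $S_k=R_k\subset Z(v_k)$ for $|k|\leq N$, part (2) combined with the diagonal argument of part (1) gives $\un w$ lifting $\un S$ with the same central block.
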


The proof is made as in \cite[Lemma 12.6]{Sarig-JAMS}, using 
Lemma \ref{Lemma-relation-R-Z}(2) and Proposition \ref{Prop-stable-manifolds}(5).

\part{Applications}\label{Part-Applications}

In this part, we provide some applications of Theorem \ref{Thm-Main}:
\begin{enumerate}[$\circ$]
\item Estimating the number of closed orbits.
\item Establishing ergodic properties of equilibrium measures.
\end{enumerate}
Let $f$ satisfying conditions (A1)--(A7), and recall that $\vt:\wh M\to M$ is the
projection into the zeroth coordinate, see Subsection \ref{Section-natural-extension}.
Recall also that there are bijective projection/lift procedures between $f$--invariant
probability measures $\mu$ in $M$ and $\wh f$--invariant probability measures $\wh\mu$ in $\wh M$.
Using this bijection, call $\wh\mu$ a $\chi$--hyperbolic measure if $\mu$ is $\chi$--hyperbolic,
and $\wh\mu$ an $\wh f$--adapted measure if $\mu$ is $f$--adapted. 
In the sequel, let ${\rm NUH}_\chi^\#$ represent the set ${\rm NUH}^\#$ for the parameter $\chi>0$.
Also, let $(\Sigma,\sigma,\pi)$ be the triple given by Theorem \ref{Thm-Main} for $f$ and the parameter $\chi$.
The next result is of great relevance to our applications.

\begin{proposition}\label{Prop-measures}
Fix $f$ satisfying conditions {\rm (A1)--(A7)} and $\chi>0$.
Each $f$--adapted $\chi$--hyperbolic measure $\mu$ in $M$ can be lifted to a
$\sigma$--invariant probability measure $\nu$ in $\Sigma$ s.t.:
\begin{enumerate}[{\rm (1)}]
\item {\sc Ergodicity:} $\mu$ is ergodic iff $\nu$ is ergodic.
\item {\sc Entropy:} $h_\mu(f)=h_\nu(\sigma)$.
\item {\sc Integral:} For all $\psi:M\to \R$ measurable it holds
$\int_M\psi d\mu=\int_\Sigma(\psi\circ\vt\circ\pi)d\nu$.
\end{enumerate}
Reversely, each $\sigma$--invariant probability measure $\nu$ in $\Sigma$
projects to a $\chi$--hyperbolic probability measure $\mu$ in $M$ s.t. {\rm (1)--(3)} hold.
\end{proposition}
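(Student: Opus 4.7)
The plan is to deduce the proposition from Theorem~\ref{Thm-Main}, combined with the bijection between $f$-invariant and $\wh f$-invariant measures recalled in Section~\ref{Section-natural-extension} and the standard machinery for lifting measures through countable-to-one Borel factor maps. Throughout, write $\wh\mu$ for the lift of $\mu$ to $\wh M$.

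For the forward direction, start with an $f$-adapted $\chi$-hyperbolic $\mu$ on $M$, and lift it to $\wh\mu$. By Theorem~\ref{Thm-Main}(2) we have $\wh\mu[{\rm NUH}^\#]=1$, and by Theorem~\ref{Thm-Main}(3) the restriction $\pi\colon\Sigma^\#\to{\rm NUH}^\#$ is Borel, surjective, and finite-to-one. The Lusin--Novikov uniformization theorem then provides a Borel section $s\colon{\rm NUH}^\#\to\Sigma^\#$ with $\pi\circ s=\mathrm{id}$. To promote $s_*\wh\mu$ to a $\sigma$-invariant measure, I would first decompose $\wh\mu$ into ergodic components $\wh\mu_\alpha$; for each, any weak-$*$ accumulation point of the Ces\`aro averages $\frac{1}{n}\sum_{k=0}^{n-1}\sigma^k_*(s_*\wh\mu_\alpha)$ is $\sigma$-invariant, and projects under $\pi$ to $\wh\mu_\alpha$ thanks to the intertwining $\pi\circ\sigma=\wh f\circ\pi$ and the $\wh f$-invariance of $\wh\mu_\alpha$. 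Finite-to-oneness of $\pi$ on $\Sigma^\#$ ensures the set of $\sigma$-invariant probability lifts of $\wh\mu_\alpha$ is a non-empty compact convex subset of a finite-dimensional simplex, from which we pick an ergodic lift; integrating over $\alpha$ (measurably, again via Lusin--Novikov) produces the desired $\nu$.

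Properties (1)--(3) are then essentially automatic. Item~(3) follows from $\pi_*\nu=\wh\mu$ and $\vt_*\wh\mu=\mu$. Item~(1): the correspondence $\mu\leftrightarrow\wh\mu$ preserves ergodicity by Section~\ref{Section-natural-extension}, and the correspondence $\nu\leftrightarrow\wh\mu$ preserves ergodicity because the ergodic decomposition was respected by construction (with ergodic lifts chosen for each component). Item~(2): $h_\mu(f)=h_{\wh\mu}(\wh f)$ comes from natural extension theory, while $h_\nu(\sigma)=h_{\wh\mu}(\wh f)$ is the classical Rokhlin fact that a countable-to-one Borel factor has vanishing conditional entropy.

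The reverse direction is more direct. Any $\sigma$-invariant probability $\nu$ on $\Sigma$ is carried by $\Sigma^\#$ (a standard feature of locally compact TMS, recalled in Section~\ref{Section-preliminaries}), so $\wh\mu:=\pi_*\nu$ is supported on $\pi(\Sigma^\#)\subset{\rm NUH}^\#\subset{\rm NUH}$; projecting further via $\vt$ gives $\mu$. That $\mu$ is $\chi$-hyperbolic is then extracted from (NUH1)--(NUH2) together with the Oseledets theorem, whose integrability hypothesis $\log^+\|df\|\in L^1(\mu)$ is controlled via (A6) on ${\rm NUH}$. Items (1)--(3) are verified exactly as above. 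The main technical obstacle I anticipate is the measurable gluing of ergodic lifts across the ergodic decomposition in the forward direction, which is precisely the point at which the finite-fiber structure of Theorem~\ref{Thm-Main}(3) enters crucially and Lusin--Novikov does the heavy lifting.
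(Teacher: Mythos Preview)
Your overall strategy mirrors the paper's---lift $\mu$ to $\wh\mu$ via the natural extension, then lift $\wh\mu$ to $\Sigma$ through the finite-to-one map $\pi\restriction_{\Sigma^\#}$---but your mechanism for the second lift has a genuine gap. The TMS $\Sigma$ is only locally compact, not compact, so the Ces\`aro averages $\frac{1}{n}\sum_{k=0}^{n-1}\sigma^k_*(s_*\wh\mu_\alpha)$ need not have any weak-$*$ accumulation point: mass can escape to infinity. You would need tightness, and an arbitrary Lusin--Novikov section $s$ gives no control over how the symbol $s(\wh x)_k$ distributes over the countable alphabet as $k$ varies. Your subsequent claim that the set of invariant lifts forms a finite-dimensional simplex is correct once non-emptiness is known (the fiber cardinality $N(\wh x)=|\pi^{-1}(\wh x)\cap\Sigma^\#|$ is $\wh f$-invariant, hence a.e.\ constant by ergodicity of $\wh\mu_\alpha$), but it cannot supply the first lift. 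The paper instead cites \cite[\S 13]{Sarig-JAMS}, which constructs the lift directly: with $N_\alpha$ the a.e.\ constant fiber size, set $\nu_\alpha(A)=\frac{1}{N_\alpha}\int |\pi^{-1}(\wh x)\cap\Sigma^\#\cap A|\,d\wh\mu_\alpha(\wh x)$. This is immediately $\sigma$-invariant because $\sigma$ bijects $\pi^{-1}(\wh x)\cap\Sigma^\#$ onto $\pi^{-1}(\wh f(\wh x))\cap\Sigma^\#$; no compactness or averaging is needed, and the formula is explicitly measurable in $\alpha$.

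A smaller issue in your reverse direction: you justify the Oseledets integrability $\log^+\|df\|\in L^1(\mu)$ via (A6), but (A6) only gives $\log\|df_x\|\leq a|\log d(x,\mathfs S)|$, whose integrability is precisely $f$-adaptedness---which the proposition explicitly does \emph{not} claim for the projected $\mu$. The paper avoids Oseledets here: since $\wh\mu$ is carried by ${\rm NUH}^\#_\chi\subset{\rm NUH}_\chi$, the splitting and rate conditions come directly from (NUH1)--(NUH3).
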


Note that we do not claim that the projected measure $\mu$ is $f$--adapted.

\begin{proof}
The bijection of Subsection \ref{Section-natural-extension} between $f$--invariant probability
measures $\mu$ in $M$ and $\wh f$--invariant probability measures $\wh\mu$ in $\wh M$ preserve (1)--(3),
and restricts to a bijection between $f$--adapted $\chi$--hyperbolic measures in $M$ and
$\wh f$--adapted $\chi$--hyperbolic measures in $\wh M$.
By Lemma \ref{Lemma-adaptedness}, if $\mu$ is $f$--adapted
and $\chi$--hyperbolic, then $\wh\mu$ is carried by ${\rm NUH}^\#_\chi$.
Proceeding as in \cite[\S 13]{Sarig-JAMS}, we can lift $\wh\mu$ to a $\sigma$--invariant probability
measure $\nu$ on $\Sigma$ that satisfies (1)--(3).

Now we project measures. By the Poincar\'e recurrence theorem,
every $\sigma$--invariant probability measure $\nu$ in $\Sigma$ is carried by $\Sigma^\#$,
hence it descends to an $\wh f$--invariant probability measure $\wh\mu$ carried by ${\rm NUH}^\#_\chi$
(attention: $\wh\mu$ is not necessarily $\wh f$--adapted). This projection clearly
satisfies (1) and (3) and, since $\pi\restriction_{\Sigma^\#}:\Sigma^\#\to {\rm NUH}^\#_\chi$ is finite-to-one,
it also satisfies (2). By the definition of ${\rm NUH}^\#_\chi$, $\wh\mu$ descends
to a $\chi$--hyperbolic measure $\mu$ on $M$.
\end{proof}

\section{Equilibrium measures}\label{Section-equilibrium}

Call a measure {\em hyperbolic} if it is $\chi$--hyperbolic for some $\chi>0$.
We will restrict our attention to equilibrium measures of H\"older continuous potentials
or of multiples of the geometric potential. Let us first recall some definitions.
Let $(Y,S)$, where $Y$ is a complete metric separable space and $S:Y\to Y$ is a map,
and let $\psi:Y\to\R$ be a (not necessarily continuous) potential.

\medskip
\noindent
{\sc Topological pressure:} The {\em topological pressure} of $\psi$ is
$P_{\rm top}(\psi):=\sup\{h_\mu(S)+\int\psi d\mu\}$,
where the supremum ranges over all $S$--invariant probability measures
for which $\int\psi d\mu$ makes sense and $h_\mu(S)+\int\psi d\mu\neq \infty-\infty$.

\medskip
\noindent
{\sc Equilibrium measure:} An {\em equilibrium measure} for $\psi$ is an $S$--invariant probability 
measure $\mu$ for which $\int\psi d\mu$ makes sense, $h_\mu(S)+\int\psi d\mu\neq \infty-\infty$,
and $P_{\rm top}(\psi)=h_\mu(S)+\int\psi d\mu$.

\medskip
When $\psi\equiv 0$, equilibrium measures are called {\em measures of maximal entropy} and
$P_{\rm top}(0)$ is called the {\em topological entropy} of $S$, which we denote by $h_{\rm top}(S)$.
If $\pi:(X,T)\to (Y,S)$ is finite-to-one, then equilibrium measures for
$\psi$ lift to equilibrium measures for $\vf=\psi\circ\pi$.
Note that $\vf$ is H\"older continuous whenever $\psi,\pi$ are.

\subsection{H\"older continuous potentials}

In this section we prove the following result.

\begin{theorem}\label{Thm-Application-1}
Assume that $f$ satisfies assumptions {\rm (A1)--(A7)}.
Every equilibrium measure of a bounded H\"older continuous potential with finite
topological pressure has at most countably many $f$--adapted hyperbolic ergodic components.
Furthermore, the lift to $\wh M$ of each such ergodic component is Bernoulli up to a period.
\end{theorem}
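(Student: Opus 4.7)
The plan is to reduce the problem to the corresponding statement for equilibrium measures on countable topological Markov shifts, via the symbolic extension $(\Sigma,\sigma,\pi)$ of Theorem \ref{Thm-Main}. Let $\vf:M\to\R$ be the bounded H\"older continuous potential with $P_{\rm top}(\vf)<\infty$, let $\mu$ be an equilibrium measure, and consider its ergodic decomposition. Restrict attention to those ergodic components $\mu_\alpha$ that are $f$--adapted and $\chi_\alpha$--hyperbolic for some $\chi_\alpha>0$. By countable exhaustion it suffices to bound, for each rational $\chi>0$, the number of $f$--adapted ergodic components that are $\chi$--hyperbolic; so fix such a $\chi$ and let $(\Sigma,\sigma,\pi)$ be the triple of Theorem \ref{Thm-Main} at parameter $\chi$.

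For each such component $\mu_\alpha$ lift to $\wh\mu_\alpha$ on $\wh M$ and then, by Proposition \ref{Prop-measures}, to a $\sigma$--invariant ergodic probability measure $\nu_\alpha$ on $\Sigma$ with $h_{\nu_\alpha}(\sigma)=h_{\mu_\alpha}(f)$ and $\int\psi\,d\nu_\alpha=\int\vf\,d\mu_\alpha$ for the bounded potential $\psi:=\vf\circ\vt\circ\pi$. Since $\pi$ is H\"older continuous (Proposition \ref{Prop-pi}) and $\vt$ is $1$--Lipschitz, $\psi$ is H\"older continuous on $\Sigma$. A standard upper bound $P_{\rm top}(\psi)\le P_{\rm top}(\vf)<\infty$ (together with the identity $h_\mu(f)+\int\vf\,d\mu=h_\nu(\sigma)+\int\psi\,d\nu$ produced by the lift) shows that each $\nu_\alpha$ is an ergodic equilibrium measure for $\psi$ on the countable Markov shift $\Sigma$, which is locally compact by Proposition \ref{Prop-pi}(1).

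Now I would invoke the machinery of equilibrium states on countable Markov shifts developed by Buzzi--Sarig and Sarig (existence, uniqueness in each irreducible component, and the fact that a H\"older potential of finite pressure has at most countably many ergodic equilibrium measures, each carried by a topologically transitive subshift). This bounds the number of $\nu_\alpha$ by a countable cardinal, and since $\pi\restriction_{\Sigma^\#}$ is finite-to-one onto ${\rm NUH}^\#_\chi$, the same bound holds for the $\wh\mu_\alpha$ and hence for the $\mu_\alpha$. Varying $\chi$ over rationals and taking a union completes the count.

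For the Bernoulli conclusion, restrict each $\nu_\alpha$ to its transitive component; by a theorem of Sarig (following Ornstein--Weiss and the earlier work of Bowen and Ratner), the natural extension of a H\"older equilibrium measure on a topologically transitive countable Markov shift is measurably isomorphic, up to a finite cyclic period given by the g.c.d.\ of return times, to the product of a Bernoulli shift with a cyclic permutation. Pushing this isomorphism down through $\pi$, which is finite-to-one and measure-theoretically invertible after a standard Rokhlin/Bowen-type argument identifying the fibers, transports the Bernoulli-up-to-period structure to $\wh\mu_\alpha$ on $\wh M$. The main obstacle I anticipate is checking that the equality of pressures and the H\"older regularity of $\psi$ really place $\nu_\alpha$ in the regime where the Buzzi--Sarig/Sarig classification applies (in particular controlling $\int\psi\,d\nu_\alpha$ and the Gurevich pressure), and ensuring that the Ornstein-theoretic descent through the finite-to-one factor $\pi$ is harmless, for which one uses Proposition \ref{Lemma-Holder-directions} and the standard argument that a finite extension of a Bernoulli automorphism is Bernoulli up to a period.
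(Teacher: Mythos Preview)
Your approach is essentially identical to the paper's: lift each $f$--adapted $\chi$--hyperbolic ergodic component to the symbolic extension via Proposition~\ref{Prop-measures}, observe that the lifted potential is bounded H\"older with finite pressure, invoke Buzzi--Sarig to get at most one equilibrium state per transitive component (hence countably many in total), and then use Sarig's Bernoulli theorem at the symbolic level together with the fact that Bernoulli-up-to-a-period is preserved under finite-to-one factors. Two minor corrections: Proposition~\ref{Lemma-Holder-directions} is irrelevant here (it concerns the geometric potential, not this argument), and the descent of the Bernoulli property goes via \emph{factors} of Bernoulli being Bernoulli (Ornstein), not via finite extensions---$\wh\mu_\alpha$ is a factor of $\nu_\alpha$, not an extension.
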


Of course, if $f$ is invertible then each measure itself is Bernoulli up to a period.
Note that if $P_{\rm top}(0)<\infty$ then every bounded H\"older continuous potential
has finite topological pressure.

\begin{proof}
Let $\psi:M\to\R$ be a bounded H\"older continuous potential with $P_{\rm top}(\psi)<\infty$.
We prove that, for each $\chi>0$, $\psi$ possesses at most countably
many $f$--adapted $\chi$--hyperbolic ergodic equilibrium measures. The first part of the theorem
will follow by taking the union of these measures for $\chi_n=\tfrac{1}{n}$, $n>0$. 
Fix $\chi>0$, and let $\pi:\Sigma\to\wh M$ be the coding given by Theorem \ref{Thm-Main}
for the parameter $\chi$. Define $\vf:\Sigma\to \R$ by $\vf=\psi\circ\vt\circ\pi$,
a bounded H\"older continuous potential. We claim that $P_{\rm top}(\vf)\leq P_{\rm top}(\psi)$.
Indeed, for each $\sigma$--invariant probability measure $\nu$ there is an $f$--invariant probability
measure $\mu$ satisfying Proposition \ref{Prop-measures}, and so
$h_\nu(\sigma)+\int \vf d\nu=h_\mu(f)+\int \psi d\mu$. Taking the supremum over all $\nu$,
we conclude that $P_{\rm top}(\vf)\leq P_{\rm top}(\psi)$.
Now assume that $\mu$ is an $f$--adapted $\chi$--hyperbolic equilibrium measure for $\psi$.
Again by Proposition \ref{Prop-measures}, there is a $\sigma$--invariant probability measure $\nu$
s.t. $h_\nu(\sigma)+\int \vf d\nu=h_\mu(f)+\int \psi d\mu=P_{\rm top}(\psi)$,
and so $P_{\rm top}(\vf)\geq P_{\rm top}(\psi)$.
Therefore,  $P_{\rm top}(\vf)=P_{\rm top}(\psi)$ and $\nu$ is an equilibrium measure for $\vf$.
By \cite[Thm. 1.1]{Buzzi-Sarig}, each topologically transitive countable topological Markov shift
carries at most one equilibrium measure for $\vf$. Hence, there are at most countably many such $\nu$,
which implies that there are at most countably many $f$--adapted $\chi$--hyperbolic
equilibrium measure for $\psi$. This concludes the proof of the first part of theorem. 

Now we prove the second part. Buzzi \& Sarig have shown how to construct, when they exist,
equilibrium measures for bounded H\"older continuous potentials in symbolic spaces,
see \cite[Thm. 1.2]{Buzzi-Sarig}. Using this, Sarig proved that such measures have local
product structure and are Bernoulli up to a period, see \cite{Sarig-Bernoulli-JMD}. 
Since these results are proved at a symbolic level, each $\nu$ in the previous paragraph 
is Bernoulli up to a period. Finally, since this latter property is preserved by finite-to-one factor maps,
we conclude that $\wh\mu$ is Bernoulli up to a period.
\end{proof}

Theorem \ref{Thm-Application-1} also holds for unbounded
H\"older continuous potentials, provided $\sup \psi<\infty$ and $P_{\rm top}(\psi)<\infty$.
Indeed, these are the conditions required for the potential in \cite[Thm. 1.1]{Buzzi-Sarig}.

\begin{corollary}\label{Coro-Application-1}
If $f$ satisfies assumptions {\rm (A1)--(A7)} and $h_{\rm top}(f)<\infty$,
then $f$ possesses at most countably many $f$--adapted hyperbolic ergodic 
measures of maximal entropy, and the lift to $\wh M$ of each of them is Bernoulli up to a period.
\end{corollary}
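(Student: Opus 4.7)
The plan is to deduce the corollary as an immediate specialization of Theorem \ref{Thm-Application-1} to the zero potential. I would take $\psi \equiv 0$ on $M$, which is trivially bounded and H\"older continuous with any exponent. With this choice, for every $f$--invariant probability measure $\mu$ we have $h_\mu(f) + \int \psi\, d\mu = h_\mu(f)$, so the variational principle gives $P_{\rm top}(0) = h_{\rm top}(f)$, which is finite by hypothesis. Hence the standing assumption of Theorem \ref{Thm-Application-1} that the potential has finite topological pressure is automatic.

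With these verifications in place, the equilibrium measures of $\psi \equiv 0$ are precisely the measures of maximal entropy of $f$. Applying the first conclusion of Theorem \ref{Thm-Application-1} yields that the set of $f$--adapted hyperbolic ergodic measures of maximal entropy is at most countable. Applying the second conclusion yields that the lift $\widehat\mu$ to $\widehat M$ of each such ergodic component is Bernoulli up to a period. These two assertions together are exactly the statement of the corollary.

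The only point that needs any attention is the finiteness hypothesis $h_{\rm top}(f) < \infty$: without it, the zero potential would not satisfy the standing assumption of Theorem \ref{Thm-Application-1}, and moreover the argument in that theorem (invoking \cite{Buzzi-Sarig}) requires finite pressure to rule out pathological behavior on the symbolic side. Since this hypothesis is explicitly assumed, there is no additional obstacle and the corollary follows without further work.
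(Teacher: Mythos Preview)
Your proposal is correct and is exactly the intended argument: the paper states the corollary without a separate proof, treating it as the immediate specialization of Theorem~\ref{Thm-Application-1} to $\psi\equiv 0$, with $P_{\rm top}(0)=h_{\rm top}(f)<\infty$ supplying the finite-pressure hypothesis.
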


Again, if $f$ is invertible then each measure itself is Bernoulli up to a period.

\subsection{Multiples of the geometric potential}\label{Section-J}

The proof of Theorem \ref{Thm-Application-1} relies on the properties on $\vf$ that allow
to use \cite{Buzzi-Sarig}. We will prove, using Proposition \ref{Lemma-Holder-directions}, that
these conditions are also satisfied for multiples of the geometric potential. We start with 
some definitions.

\medskip
\noindent
{\sc Hyperbolic locus of $f$:} The {\em hyperbolic locus} of $f$ is the set $\wh M_{\rm hyp}$ defined as 
$$
\wh M_{\rm hyp}:=\bigcup_{\chi>0}{\rm NUH}_\chi.
$$

Clearly, $\wh M_{\rm hyp}$ is $\wh f$--invariant and carries all hyperbolic measures.
Observe that, by Proposition \ref{Prop-hyperbolicity-manifolds}, each $\wh x\in\wh M_{\rm hyp}$
is associated to two subspaces $E^s_{\wh x},E^u_{\wh x}$ of $\wh{TM}_{\wh x}$.

\medskip
\noindent
{\sc Geometric potential of $f$:} The {\em geometric potential} of $f$ is the function
$J:\wh M_{\rm hyp}\to\R$ defined by
$$
J(\wh x):=-\log {\rm det}\left[\wh{df}\restriction_{E^u_{\wh x}}\right].
$$

\medskip
Note that $J$ is defined in the natural extension only. Indeed, it is in this set that
unstable directions are well-defined. Let $t\in\R$.

\medskip
\noindent
{\sc Topological pressure of $tJ$:} The {\em topological pressure} of $tJ$ is defined as
$P_{\rm top}(tJ)=\sup\{h_{\wh\mu}(\wh f)+\int tJ d\wh\mu\}$
where the supremum ranges over all $\wh f$--invariant probability measures $\wh\mu$
supported on $\wh M_{\rm hyp}$ for which $\int J d\wh\mu$ makes sense and
$h_{\wh\mu}(\wh f)+\int tJ d\wh\mu\neq \infty-\infty$.

\medskip
\noindent
{\sc Equilibrium measure for $tJ$:} An {\em equilibrium measure} for $tJ$ is an
$\wh f$--invariant probability measure $\wh\mu$ supported on $\wh M_{\rm hyp}$
for which $\int J d\wh\mu$ makes sense, $h_{\wh\mu}(\wh f)+\int tJ d\wh\mu\neq \infty-\infty$,
and $P_{\rm top}(tJ)=h_{\wh\mu}(\wh f)+\int tJ d\wh\mu$.

\medskip
In the generality of (A1)--(A7), the functions $tJ$ can be unbounded (possibly approaching $\pm\infty$),
as well as $P_{\rm top}(tJ)$ might not be finite. Except for these possibilities, an analogue
of Theorem \ref{Thm-Application-1} holds. 

\begin{theorem}\label{Thm-Application-2}
Assume that $f$ satisfies assumptions {\rm (A1)--(A7)}, and fix $t\in\R$.
If $\sup(tJ)<\infty$ and $P_{\rm top}(tJ)<\infty$, then every equilibrium measure of $tJ$
has at most countably many $\wh f$--adapted hyperbolic ergodic components.
Furthermore, each such ergodic component is Bernoulli up to a period.
\end{theorem}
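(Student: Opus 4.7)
The plan is to mimic the proof of Theorem \ref{Thm-Application-1}, using the symbolic model from Theorem \ref{Thm-Main} to transfer the problem to one about equilibrium measures of a H\"older continuous potential on a countable topological Markov shift, and then invoking \cite{Buzzi-Sarig} for uniqueness on transitive components and \cite{Sarig-Bernoulli-JMD} for the Bernoulli property. Fix $\chi>0$ and let $(\Sigma,\sigma,\pi)$ be the coding from Theorem \ref{Thm-Main} at this value of $\chi$. Let $\wh\mu$ be any $\wh f$--adapted $\chi$--hyperbolic ergodic component of the given equilibrium measure of $tJ$. By Lemma \ref{Lemma-adaptedness}, $\wh\mu$ is carried by ${\rm NUH}^\#_\chi$, and the argument used in Proposition \ref{Prop-measures} lifts $\wh\mu$ directly on $\wh M$ to a $\sigma$--invariant ergodic probability measure $\nu$ on $\Sigma$ with $h_\nu(\sigma)=h_{\wh\mu}(\wh f)$ and $\int(tJ\circ\pi)\,d\nu=\int tJ\,d\wh\mu$.

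The crucial step is to show that $\vf:=tJ\circ\pi$ is weakly H\"older continuous on $\Sigma$ in the sense needed for \cite{Buzzi-Sarig}. Writing $J(\wh x)=-\log\det\bigl(\wh{df}_{\wh x}\restriction_{E^u_{\wh x}}\bigr)$, the value $J(\wh x)$ depends on the basepoint $\vartheta[\wh x]$, on the derivative $df_{\vartheta[\wh x]}$, and on the unstable subspace $E^u_{\wh x}$. The map $\pi$ is H\"older by Theorem \ref{Thm-widehat-pi}(1); the map $\un R\mapsto E^u_{\pi(\un R)}$ is H\"older by Proposition \ref{Lemma-Holder-directions}; and $df$ is locally $\beta$--H\"older by (A7). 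The remaining point is that inside any fixed cylinder of $\Sigma$, Theorem \ref{Thm-inverse} and Corollary \ref{Corollary-inverse} control all the relevant parameters of $\pi(\un R)$ (the distance of the basepoint to $\mathfs S$ and the Lyapunov norm $\|C^{-1}\|$) up to bounded ratios, so the local H\"older constants provided by (A6)--(A7) are uniformly bounded on such a cylinder. Combining these ingredients, the variations of $\vf$ on cylinders of length $n$ decay geometrically in $n$, so $\vf$ is weakly H\"older, and $\sup\vf\leq\sup(tJ)<\infty$ by hypothesis.

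By standard lifting/projection, $P_{\rm top}(\vf)=P_{\rm top}(tJ)<\infty$ and $\nu$ is an ergodic equilibrium measure of $\vf$ on $\Sigma$. Invoking \cite[Thm.~1.1]{Buzzi-Sarig}, each topologically transitive component of $\Sigma$ carries at most one equilibrium measure of $\vf$; since $\Sigma$ has countably many transitive components, there are at most countably many such $\nu$. Consequently there are at most countably many $\wh f$--adapted $\chi$--hyperbolic ergodic components of the original equilibrium measure of $tJ$, and a union over $\chi=1/n$, $n\geq 1$, yields the first assertion. For the second, \cite{Sarig-Bernoulli-JMD} shows that every such $\nu$ is Bernoulli up to a period at the symbolic level; since $\pi\restriction_{\Sigma^\#}$ is a finite-to-one factor map (Theorem \ref{Thm-Main}(3)) and the Bernoulli-up-to-a-period property is preserved by finite-to-one factors, the property passes to $\wh\mu$.

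The hard part will be the weak H\"older regularity of $\vf$ claimed above. Unlike in Theorem \ref{Thm-Application-1}, where $\psi$ is assumed H\"older on $M$ at the outset, here $J$ is only defined on $\wh M_{\rm hyp}$ and its regularity is coupled to the Oseledets splitting, whose H\"older constants and whose compatibility with (A7) can degenerate near $\mathfs S$. The saving feature is precisely the refined control of parameters across a single Markov state provided by Theorem \ref{Thm-inverse} and Corollary \ref{Corollary-inverse}, combined with Proposition \ref{Lemma-Holder-directions}: together they turn the a priori only measurable object $E^u$ into a H\"older object at the symbolic level, with H\"older constants that are uniform on each cylinder. Once this is in place, the rest of the proof is the same template as in Theorem \ref{Thm-Application-1}.
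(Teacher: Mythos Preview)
Your proposal is correct and follows essentially the same route as the paper: lift the potential $tJ$ to $\vf=(tJ)\circ\pi$ on the symbolic model of Theorem~\ref{Thm-Main}, observe that $\sup\vf<\infty$ and $P_{\rm top}(\vf)\le P_{\rm top}(tJ)<\infty$, and then invoke \cite{Buzzi-Sarig} and \cite{Sarig-Bernoulli-JMD} exactly as in Theorem~\ref{Thm-Application-1}. The paper compresses the H\"older-regularity step into a bare citation of Proposition~\ref{Lemma-Holder-directions}; your more detailed unpacking (H\"older dependence of $E^u$ via Proposition~\ref{Lemma-Holder-directions}, local $\beta$--H\"older regularity of $df$ from (A7), and uniform control of $\rho$ and $\|C^{-1}\|$ on each cylinder via Theorem~\ref{Thm-inverse}/Corollary~\ref{Corollary-inverse}) is the honest justification behind that citation. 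One minor point: you assert $P_{\rm top}(\vf)=P_{\rm top}(tJ)$ outright, but only the inequality $\le$ is automatic for a fixed $\chi$; equality (and the fact that $\nu$ is an equilibrium measure of $\vf$) follows \emph{a posteriori} once the lift $\nu$ of the given $\wh\mu$ realizes $h_\nu(\sigma)+\int\vf\,d\nu=P_{\rm top}(tJ)\ge P_{\rm top}(\vf)$.
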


\begin{proof}
Each ergodic hyperbolic equilibrium measure of $tJ$ is $\chi$--hyperbolic for some $\chi>0$.
For each $\chi>0$, consider $(\Sigma,\sigma,\pi)$ given by Theorem \ref{Thm-Main} and let
$\vf:\Sigma\to\R$ be defined by $\vf=(tJ)\circ\pi$. Observe that $\vf$ is globally defined
with $\sup(\vf)<\infty$, and by Proposition \ref{Lemma-Holder-directions} it is H\"older continuous. 
Proceeding as in the proof of Theorem  \ref{Thm-Application-1}, we get that 
$P_{\rm top}(\vf)\leq P_{\rm top}(tJ)<\infty$. Hence we can apply \cite{Buzzi-Sarig} to $\vf$
and repeat the rest of the proof of Theorem \ref{Thm-Application-1}.
\end{proof}

It is important noticing that, in our setting, neither the Ruelle inequality nor the Pesin entropy formula
are known to hold, hence we do not know if an invariant Liouville measure is an equilibrium measure
of $J$.

\section{Exponential growth rate of periodic orbits}

Let $f$ satisfying assumptions {\rm (A1)--(A7)}. For $\chi>0$ and
$n\geq 1$, let ${\rm Per}_{n,\chi}(f)$ denote the number of periodic points of
period $n$ for $f$ with all Lyapunov exponents greater than $\chi$ in absolute value.
The next result provides sufficient conditions for the exponential growth rate of ${\rm Per}_{n,\chi}(f)$.

\begin{theorem}\label{Thm-Application-3}
Assume that $f$ satisfies assumptions {\rm (A1)--(A7)}. If $f$ possesses
an $f$--adapted $\chi$--hyperbolic mixing measure with entropy $h>0$, then
$$
\liminf\limits_{n\to\infty}e^{-hn}{\rm Per}_{n,\chi}(f)\geq 1.
$$
\end{theorem}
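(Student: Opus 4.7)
The plan is to transfer the periodic-orbit count to the symbolic side via Theorem \ref{Thm-Main} and then to invoke Gurevich's theorem on the exponential growth of periodic points in topologically mixing countable Markov shifts.

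First, I would choose a parameter $\chi'>\chi$ close enough to $\chi$ that $\mu$ remains $\chi'$-hyperbolic; this is possible because $\mu$ is ergodic, so its finitely many Lyapunov exponents are all of absolute value strictly larger than $\chi$. Applying Theorem \ref{Thm-Main} at $\chi'$ yields a coding $\pi\colon\Sigma\to\wh M$, and by Proposition \ref{Prop-measures} the measure $\mu$ lifts to a $\sigma$-invariant ergodic probability measure $\nu$ on $\Sigma$ with $h_\nu(\sigma)=h$. Since the projection $\mu\leftrightarrow\wh\mu$ is a measure-theoretic isomorphism and $\pi\!\restriction_{\Sigma^\#}$ is a finite-to-one factor onto a set of full $\wh\mu$-measure, the mixing of $\mu$ propagates (upon restricting to a suitable ergodic component under iterates of $\sigma$) to $\nu$.

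Being ergodic, $\nu$ is carried by a single irreducible subshift $\Sigma'\subset\Sigma$, and the mixing of $\nu$ forces the period of $\Sigma'$ to equal $1$, so $\Sigma'$ is topologically mixing. The variational principle for countable topological Markov shifts yields $h_{\rm Gur}(\Sigma')\geq h_\nu(\sigma)=h$, and Gurevich's theorem then gives, for any vertex $v$ of $\Sigma'$,
\[
\lim_{n\to\infty}\tfrac{1}{n}\log Z_n(v)=h_{\rm Gur}(\Sigma')\geq h,\quad Z_n(v):=\#\{\un u\in\Sigma':\sigma^n\un u=\un u,\,u_0=v\}.
\]
If $h_{\rm Gur}(\Sigma')>h$ strictly, then $e^{-hn}Z_n(v)\to\infty$; if $h_{\rm Gur}(\Sigma')=h$, then $\nu$ is the unique measure of maximal entropy on $\Sigma'$, $\Sigma'$ is positive recurrent, and the sharp Vere-Jones--Gurevich asymptotic $Z_n(v)\sim\lambda(v)e^{hn}$ applies, from which a suitable choice (or aggregation over a family) of vertices $v$ produces the bound $\liminf\geq 1$.

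Finally, I would push the symbolic estimate down to $\wh f$, and hence to $f$, via $\pi$. Each $\sigma$-periodic orbit in $\Sigma'$ projects to an $\wh f$-periodic orbit of the same or smaller period, and the finite-to-one property in Theorem \ref{Thm-widehat-pi}(3) bounds the projection multiplicity uniformly along each orbit, so it does not affect the exponential growth rate. The periodic orbits so obtained automatically satisfy the hyperbolicity requirement defining ${\rm Per}_{n,\chi}(f)$: they lie in $\pi(\Sigma^\#)\subset{\rm NUH}^\#_{\chi'}$, and for a periodic orbit the Lyapunov exponents of $df$ coincide with the eigenvalues of the derivative of the return map, whose absolute values are therefore at least $\chi'>\chi$. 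The main obstacle is the last normalization: converting the Gurevich estimate into the sharp bound ``$\liminf e^{-hn}{\rm Per}_{n,\chi}(f)\geq 1$'' (as opposed to the weaker ``$\liminf\tfrac{1}{n}\log{\rm Per}_{n,\chi}(f)\geq h$'') requires the refined Vere-Jones--Gurevich asymptotics in the positive recurrent mixing case, together with careful bookkeeping of the finite-to-one multiplicity of $\pi$ on periodic orbits.
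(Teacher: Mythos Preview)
Your overall strategy matches the paper's, but the step ``mixing of $\mu$ propagates to $\nu$'' is a genuine gap. A finite-to-one extension of a mixing system need not be mixing: the lift $\nu$ guaranteed by Proposition~\ref{Prop-measures} is ergodic, but nothing prevents the irreducible component $\Sigma'$ carrying $\nu$ from having period $p>1$. If $p>1$ then $Z_n(v)=0$ whenever $p\nmid n$, and the $\liminf$ over all $n$ is $0$, not $\geq 1$. Your hedge ``upon restricting to a suitable ergodic component under iterates of $\sigma$'' amounts to passing to $\sigma^p$ on a periodicity class, which yields information only along multiples of $p$. This is exactly why Theorem~\ref{Thm-Application-1} concludes only ``Bernoulli up to a period''.

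There is also a second gap, which you partially flag: even granting period $1$, the map $\pi$ is finite-to-one but \emph{not} bounded-to-one (see the remark after Theorem~\ref{Thm-Main}), and while the multiplicity over periodic orbits through a fixed vertex $v$ is bounded by a constant $N(v)$ via the Bowen-property argument in Theorem~\ref{Thm-widehat-pi}(3), that constant together with the Vere-Jones--Gurevich asymptotic $Z_n(v)\sim c(v)e^{hn}$ only yields $\liminf e^{-hn}{\rm Per}_{n,\chi}(f)\geq c(v)/N(v)$, not $\geq 1$. The paper resolves both issues at once by invoking the main theorem of \cite{Buzzi-2019}, which upgrades the finite-to-one $\pi$ (thanks to the Bowen property of Lemma~\ref{Lemma-Bowen-relation}) to an \emph{injective} semiconjugacy $\pi\circ\theta:\wt\Sigma\to{\rm NUH}^\#_\chi$ from a new TMS $\wt\Sigma$. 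Injectivity makes $(\wt\Sigma,\lambda)\to(\wh M,\wh\mu)$ a measure-theoretic isomorphism, so $\lambda$ is genuinely mixing and $\wt\Sigma_\lambda$ is aperiodic; and it makes the projection of periodic orbits injective, so the symbolic count transfers with no loss. The sharp $\geq 1$ then follows from \cite[Lemma~6.2]{Buzzi-2019} in the MME case, or by dropping to a finite aperiodic subshift of strictly larger entropy otherwise. Your preliminary move to $\chi'>\chi$ is harmless but unnecessary: a periodic point lies in ${\rm NUH}_\chi$ iff all its exponents strictly exceed $\chi$ in absolute value, since condition (NUH3) rules out equality.
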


\begin{proof}
This proof was suggested to us by J\'er\^ome Buzzi, making use of his recent work \cite{Buzzi-2019}.
We will use some standard terminology of symbolic dynamics on Markov shifts,
see e.g. \cite{Kitchens-Book}. Let $\mu$ be $f$--adapted $\chi$--hyperbolic mixing with $h_\mu(f)=h>0$.
Being mixing, $\mu$ is in particular ergodic.
Consider the triple $(\Sigma,\sigma,\pi)$ given by Theorem \ref{Thm-Main}.
Then $\pi\restriction_{\Sigma^\#}:\Sigma^\#\to{\rm NUH}^\#_\chi$ satisfies the Main Theorem of 
\cite{Buzzi-2019}: it is a Borel semiconjugacy with the Bowen property (see Lemma \ref{Lemma-Bowen-relation}).
Hence there is a TMS $(\wt\Sigma,\wt\sigma)$ and a 1--Lipschitz map $\theta:\wt\Sigma\to\Sigma^\#$
s.t. $\pi\circ\theta:\wt\Sigma\to{\rm NUH}^\#_\chi$ is an {\em injective} semiconjugacy and
$(\pi\circ\theta)[\wt\Sigma^\#]$ carries all $\wh f$--invariant measures of $\pi[\Sigma^\#]={\rm NUH}^\#_\chi$.
This allows to lift $\wh\mu$ to a {\em mixing} $\wt\sigma$--invariant probability measure $\lambda$
on $\wt\Sigma$ with entropy $h$. If $\wt\Sigma$ is defined by the oriented graph $(\wt V,\wt E)$, then $\lambda$
is carried by the topological Markov subshift $(\wt\Sigma_\lambda,\wt\sigma)$ defined by the 
graph with vertex set $\wt V_\lambda:=\{\wt v\in \wt V: \lambda(_{0}[\wt v])>0\}$ and edges accordingly.
Since $\lambda$ is mixing, $(\wt\Sigma_\lambda,\wt\sigma)$ is topologically mixing (its graph
is aperiodic).

Now we prove the theorem. Since the set of periodic points of $f$ is in bijection with the set of periodic points
of its natural extension $\wh f$ and $\pi\circ\theta$ is injective, it is enough to estimate 
${\rm Per}_n(\wt\sigma\restriction_{\wt\Sigma_\lambda})$.
We consider two cases. In the first, assume that $\lambda$ is a
measure of maximal entropy for $(\wt\Sigma_\lambda,\wt\sigma)$. By \cite[Lemma 6.2]{Buzzi-2019},
$$
\lim_{n\to\infty}e^{-hn}{\rm Per}_n(\wt\sigma\restriction_{\wt\Sigma_\lambda})\geq 1
$$
and the result follows. Now assume that $\lambda$ is not a measure of maximal entropy.
Since $h_{\rm top}(\wt\Sigma_\lambda)=\sup\{h_{\rm top}(\wt\sigma\restriction_{\overline{\Sigma}})\}$
where $\overline{\Sigma}$ runs over all subshifts of $\wt\Sigma_\lambda$ with finitely many states,
we can fix one such subshift with $h_{\rm top}(\wt\sigma\restriction_{\overline{\Sigma}})>h$.
The pair $(\overline{\Sigma},\wt\sigma)$ might not be topologically mixing, but since 
$\overline{\Sigma}$ is defined by a finite graph inside an aperiodic (in particular transitive) graph,
we can add to this finite graph finitely many closed paths with coprime lengths
and get a new finite graph that is aperiodic
and whose associated TMS, still denoted by $\overline{\Sigma}$, satisfies
$h_{\rm top}(\wt\sigma\restriction_{\overline{\Sigma}})=h'>h$. By a standard result
for TMS with finitely many states (see e.g. \cite[Observation 1.4.3]{Kitchens-Book}),
we have
$$
\lim_{n\to\infty}e^{-h'n}{\rm Per}_n(\wt\sigma\restriction_{\overline\Sigma})\geq 1
$$
and then we obtain a result stronger than the one stated in the theorem.
\end{proof}

\section{Flows with positive speed}\label{Section-Flows}

In this section we recast some definitions and results of \cite{Lima-Sarig},
which will allow us to apply Theorem \ref{Thm-Main} to code nonuniformly hyperbolic measures for flows.
We begin defining the flow counterpart of a TMS. Let $(\Sigma,\sigma)$ be a TMS
and $r:\Sigma\to\R^+$ a H\"older continuous map bounded away from zero and infinity.
Let $r_n$ denote the $n$--th Birkhoff sum of $r$ with respect to $\sigma$.

\medskip
\noindent
{\sc Topological Markov flow:} The {\em topological Markov flow} with roof function $r$ and
base map $\sigma:\Sigma\to\Sigma$ is the flow $\sigma_r:\Sigma_r\to\Sigma_r$ where
$$\Sigma_r:=\{(\un{v},t):\un{v}\in\Sigma,0\leq t<r(\un{v})\},\
\sigma_r^\tau(\un{v},t)=(\sigma^n(\un{v}),t+\tau-r_n(\un{v}))
$$
for the unique $n\in\Z$ s.t. $0\leq t+\tau-r_n(\un{v})<r(\sigma^{n}(\un{v}))$.

\medskip
Informally,  $\sigma_r$ increases the $t$ coordinate at unit speed subject to the identifications
$(\un{v}, r(\un{v}))\sim (\sigma(\un{v}),0)$. The cocycle identity guarantees that
$\sigma_r^{\tau_1+\tau_2}=\sigma_r^{\tau_1}\circ\sigma_r^{\tau_2}$.
There is a natural metric $d_r$ on $\Sigma_r$, called the {\em Bowen-Walters metric},  s.t. $\sigma_r$
is a continuous flow \cite{Bowen-Walters-Metric}. Moreover, $\exists C>0, 0<\kappa<1$ s.t. $d_r(\sigma_r^\tau(\omega_1),\sigma_r^\tau(\omega_2))\leq C d_r(\omega_1,\omega_2)^\kappa$
for all $|\tau|<1$ and $\omega_1,\omega_2\in\Sigma_r$, see \cite[Lemma 5.8(3)]{Lima-Sarig}.
Accordingly, we define the {\em recurrent set} of $\Sigma_r$ by
$\Sigma_r^\#:=\{(\un{v},t)\in\Sigma_r: \un{v}\in\Sigma^\#\}$.

Let $N$ be a closed $C^\infty$ Riemannian manifold,
let $X$ be a $C^{1+\beta}$ vector field on $N$ s.t. $X_x\neq 0$ for all $x\in N$
and $g=\{g^t\}_{t\in\R}:N\to N$ be the flow defined by $X$.
Given $\chi>0$, a $g$--invariant probability measure $\mu$ on $N$ is called
{\em $\chi$--hyperbolic} if all of its Lyapunov exponents are greater than $\chi$
in absolute value, except for the zero exponent in the flow direction.
Theorem \ref{Thm-Main} implies the following result.

\begin{theorem}\label{Thm-Flows}
Let $g$ be as above and $\chi>0$. If $\mu$ is $\chi$--hyperbolic,
then there is a topological Markov flow $(\Sigma_r,\sigma_r)$
and a H\"older continuous map $\pi_r:\Sigma_r\to N$ s.t.:
\begin{enumerate}[$(1)$]
\item $r:\Sigma\to\R^+$ is H\"older continuous and bounded away from zero and infinity.
\item $\pi_r\circ\sigma^t_r=g^t\circ\pi_r$ for all $t\in\R$.
\item $\pi_r[\Sigma^\#_r]$ has full measure with respect to $\mu$.
\item Every $x\in N$ has finitely many pre-images in $\Sigma_r^\#$.
\end{enumerate}
\end{theorem}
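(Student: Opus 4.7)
The plan is to reduce Theorem \ref{Thm-Flows} to Theorem \ref{Thm-Main} via a Poincar\'e section construction, treating the flow as a suspension over the return map on a suitable global cross-section. Since $X$ is non-vanishing on the closed manifold $N$, by a compactness and flow-box argument I can build a finite family $\Lambda_1,\ldots,\Lambda_k$ of smooth codimension-one embedded disks, transverse to $X$, whose union $\Lambda=\bigsqcup_i\Lambda_i$ meets every orbit in time at most some $T>0$. Let $f:\Lambda\setminus\mathfs D\to\Lambda$ be the associated Poincar\'e return map and $r:\Lambda\setminus\mathfs D\to (0,T]$ the return time, where $\mathfs D\subset\Lambda$ is the closed set of points whose forward orbit first re-meets $\Lambda$ tangentially or on $\partial\Lambda$. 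The roof function $r$ is H\"older continuous on its domain, bounded above, and the trajectory structure of $g$ identifies $N$ with the suspension $\Lambda_r$ via the map $(x,t)\mapsto g^t(x)$.

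Next I would verify that $(\Lambda,f)$ satisfies assumptions (A1)--(A7) with singular set $\mathfs S=\mathfs C\cup\mathfs D$. The manifold conditions (A1)--(A4) are inherited from $N$: since $\Lambda$ is a finite union of precompact disks in a smooth compact ambient manifold, curvature is uniformly bounded, so (A1)--(A4) hold with $a=1$ up to rescaling. For (A5)--(A7), the standard flow-box calculation gives $\|df_x^{\pm 1}\|\leq Cd(x,\mathfs S)^{-1}$ and a H\"older control of $df$ on $d(x,\mathfs S)^a$-neighborhoods, because near $\mathfs D$ the return time blows up only polynomially in the transverse distance to the tangency locus, and the flow is $C^{1+\beta}$. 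The critical set $\mathfs C$ of $f$ is empty (Poincar\'e maps are local diffeomorphisms), which simplifies matters and in particular makes $f$ locally invertible, although I will still pass to the natural extension $\wh f$ on $\wh\Lambda$ to invoke Theorem \ref{Thm-Main}.

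Now I would lift $\mu$ to an $f$-invariant probability measure $\nu$ on $\Lambda$ by disintegration along the flow (Ambrose--Kakutani): $\nu=\tfrac{1}{\int r\,d\nu}\mu\restriction_{\Lambda}$ in the usual sense of cross-section measures. The Abramov formula gives $h_\nu(f)=h_\mu(g)\int r\,d\nu$, and the Lyapunov exponents of $\nu$ are related to those of $\mu$ by multiplication by $\int r\,d\nu$; in particular $\nu$ is $\chi'$-hyperbolic for $\chi'=\chi\inf r>0$. To invoke Theorem \ref{Thm-Main}, I must also check that $\nu$ is $f$-adapted, i.e.\ $\log d(x,\mathfs S)\in L^1(\nu)$. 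This reduces to an estimate of the form $\nu\{d(x,\mathfs S)<\eta\}\lesssim \eta^\alpha$ for some $\alpha>0$, which follows from the $g$-invariance of $\mu$, the absence of fixed points, and the fact that the set where the orbit stays near a tangency for a long time has exponentially small $\mu$-measure by a Borel--Cantelli argument applied to flow-boxes. Having (A1)--(A7) and an $f$-adapted $\chi'$-hyperbolic measure $\nu$, Theorem \ref{Thm-Main} yields a locally compact TMS $(\Sigma,\sigma)$, a set $\mathrm{NUH}^\#\subset\wh\Lambda$ of full $\wh\nu$-measure, and a finite-to-one H\"older projection $\pi:\Sigma\to\wh\Lambda$ commuting with $\sigma$ and $\wh f$. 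Composing with the zeroth-coordinate projection $\vt:\wh\Lambda\to\Lambda$ and using the roof function $\wt r:=r\circ\vt\circ\pi$, which is H\"older and bounded away from $0$ and $\infty$, I form the topological Markov flow $\Sigma_{\wt r}$ and define $\pi_r:\Sigma_{\wt r}\to N$ by $\pi_r(\un v,t):=g^t(\vt(\pi(\un v)))$.

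It remains to verify (1)--(4). Property (1) is by construction of $\wt r$. Property (2) is immediate from $\pi\circ\sigma=\wh f\circ\pi$, $\vt\circ\wh f=f\circ\vt$, and the suspension identity $g^{r(x)}(x)=f(x)$. For (3), Theorem \ref{Thm-Main} gives $\wh\nu[\pi(\Sigma^\#)]=1$, hence $\nu[\vt(\pi(\Sigma^\#))]=1$, and then the Ambrose--Kakutani correspondence transfers this to $\mu$-full measure of $\pi_r[\Sigma_{\wt r}^\#]$. For (4), finite-to-oneness follows by combining the finite-to-one property of $\pi\restriction_{\Sigma^\#}$ with the fact that a generic point of $N$ meets $\Lambda$ at finitely many times in $[0,T]$, together with the bounded ratio $\inf\wt r>0$ which prevents infinite fibre accumulation in the suspension coordinate. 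The main obstacle I anticipate is verifying adaptedness of $\nu$, since it requires a quantitative estimate on how often $\mu$-a.e.\ orbit approaches the tangency locus of $\Lambda$; this demands a careful choice of the cross-sections $\Lambda_i$ (transverse everywhere to $X$ and with boundary away from the $\omega$-limits of dynamically relevant orbits) and a Pliss-type argument converting integrability of $\log\|X\|$ into integrability of $\log d(\cdot,\mathfs S)$.
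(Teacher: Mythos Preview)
Your overall strategy—take a global Poincar\'e section, verify (A1)--(A7) for the return map, push $\mu$ to an induced measure $\nu$, apply Theorem \ref{Thm-Main}, and suspend—is exactly the paper's route. Two points deserve correction or sharpening.

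First, your picture of the singularities is off. For the standard Poincar\'e sections constructed here (finitely many canonical transverse balls with roof function bounded above and below), the derivatives of the return map and its inverse are \emph{uniformly bounded}: see Lemma \ref{Lemma_Smooth_Section}, which gives a constant $\mathfrak C$ with $\sup\|df_x\|,\sup\|(df_x)^{-1}\|,\|f\|_{C^{1+\beta}}<\mathfrak C$ on the regular set. The singular set $\mathfs S$ records only discontinuities of $f$ (where the forward or backward orbit first hits $\partial\Lambda$), not blow-up of $df$. So (A5)--(A7) are trivial here; the return time does not blow up, and your claimed estimate $\|df_x^{\pm1}\|\lesssim d(x,\mathfs S)^{-1}$, while not false, understates what is true and misidentifies the mechanism.

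Second, and more seriously, your adaptedness argument has a gap. For a fixed choice of sections there is no reason that $\nu\{d(\cdot,\mathfs S)<\eta\}\lesssim\eta^\alpha$: the boundary $\partial\Lambda$ is arbitrary relative to $\mu$, and none of ``$g$-invariance, absence of fixed points, exponentially small measure of long tangency visits'' implies such a bound. The paper does not attempt this. Instead (Theorem \ref{Thm-Adapted-Section}, following \cite{Lima-Sarig}) it builds a one-parameter family $\{M_r\}_{r\in[a,b]}$ of standard sections of radius $r$, and runs a Borel--Cantelli argument \emph{in the parameter $r$} to conclude that for Lebesgue-a.e.\ $r$ the section $M_r$ is adapted, i.e.\ $\lim_{n\to\pm\infty}\tfrac1n\log d_M(f^n(x),\mathfs S)=0$ for $\nu$-a.e.\ $x$. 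This is the step you correctly flag as the main obstacle, but a Pliss-type argument on a single section is not the right tool; the point is freedom in choosing the section. Once $M$ is adapted, Lemma \ref{Lemma_Lyap_Exp} gives that $\nu$ is $\chi'$-hyperbolic with $\chi'=\chi\inf R$, Theorem \ref{Thm-Main} applies, and your suspension construction of $\pi_r$ goes through as in \cite[pp.\ 230--232]{Lima-Sarig}.
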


To prove the above theorem, we recall some facts from \cite{Lima-Sarig}.

\medskip
\noindent
{\sc Canonical transverse ball and flow box:} Given $r>0$, the {\em canonical transverse ball}
at $x$ of radius $r$ is $S_r(x):=\{\exp{x}(v):v\in T_xN, v\perp X_x,\|v\| \leq r\}$, and
the {\em canonical flow box} of $S_r(x)$ is ${\rm FB}_r(x):=\{g^t(y):y\in S_r(x),|t|\leq r\}$.

\medskip
The following lemmas are the higher dimensional counterparts of \cite[Lemmas 2.2 and 2.3]{Lima-Sarig},
and their proofs are canonical.

\begin{lemma}\label{LemmaFB_1}
There are constants $r_0,\mathfrak d\in (0,1)$ which only depend on $g$ s.t.
for every $x\in N$ the flow box ${\rm FB}_{r_0}(x)$ contains $B(x,\mathfrak d)$,
and the map $(y,t)\mapsto g^t(y)$ is a diffeomorphism from
$S_{r_0}(x)\times [-r_0,r_0]$ onto ${\rm FB}_{r_0}(x)$.
\end{lemma}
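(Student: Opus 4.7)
The plan is to exploit the compactness of $N$ together with the hypothesis that $X$ vanishes nowhere in order to obtain uniform quantitative control of the flow box construction, and then invoke the inverse function theorem with bounds that are independent of the basepoint. First I would establish uniform bounds: since $N$ is compact and $X$ is continuous with $X_x \neq 0$ everywhere, there exist $0 < c \leq C$ with $c \leq \|X_x\| \leq C$ on $N$, and by the $C^{1+\beta}$ regularity of $X$ the flow $g^t$ and its derivative $dg^t$ are uniformly continuous in $(x,t)$ for $t$ in a fixed compact interval. In particular, there is a uniform lower bound for the injectivity radius of $N$, so $S_r(x)$ is a well-defined embedded $(\dim N - 1)$--disc transverse to $X$ for every $x \in N$ and every $r$ below a uniform threshold.

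Next I would analyze, for each $x \in N$, the smooth map $\Phi_x : S_r(x) \times (-r,r) \to N$ given by $\Phi_x(y,t) = g^t(y)$. At $(x,0)$ the partial derivative in $t$ is $X_x$, while the partial derivative in the transverse direction is the identity on $T_x S_r(x) = (X_x)^\perp$. Hence $d\Phi_x|_{(x,0)}$ is a linear isomorphism whose operator norm and inverse-norm are controlled in terms of $c$ and $C$ alone, i.e. uniformly in $x$. Using the uniform continuity of $dg^t$ and of $d\exp{x}$, there is a radius $r_0 > 0$, independent of $x$, such that $d\Phi_x$ remains an isomorphism throughout $S_{r_0}(x) \times [-r_0, r_0]$ with uniform control on $\|d\Phi_x^{-1}\|$.

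To upgrade this to a global diffeomorphism onto ${\rm FB}_{r_0}(x)$, I would prove injectivity by contradiction. If the claim failed for every $r_0 > 0$, one would produce sequences $x_n \in N$, $r_n \downarrow 0$, and distinct pairs $(y_n, t_n) \neq (y_n', t_n') \in S_{r_n}(x_n) \times [-r_n, r_n]$ with $g^{t_n}(y_n) = g^{t_n'}(y_n')$; extracting a subsequence with $x_n \to x_*$, both pairs would converge to $(x_*, 0)$, violating the local-diffeomorphism property established above. Finally, to get the inclusion $B(x,\mathfrak d) \subset {\rm FB}_{r_0}(x)$, I would apply a quantitative inverse function theorem to $\Phi_x$ at $(x,0)$: the uniform bounds on $\|d\Phi_x\|$, on $\|d\Phi_x^{-1}\|$ and on the modulus of continuity of $d\Phi_x$ together yield a single $\mathfrak d > 0$, independent of $x$, such that the image of $\Phi_x$ on $S_{r_0}(x) \times [-r_0, r_0]$ contains $B(x,\mathfrak d)$.

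The main technical obstacle is precisely this uniformity: every estimate — the non-degeneracy of $d\Phi_x|_{(x,0)}$, the neighborhood produced by the inverse function theorem, and the compactness argument for injectivity — must be obtained with constants that do not depend on the basepoint $x$. This is what the compactness of $N$ and the uniform non-vanishing of $X$ provide. Once these uniform constants are in hand, the two assertions of the lemma follow directly from the inverse function theorem applied to the single family of maps $\{\Phi_x\}_{x \in N}$.
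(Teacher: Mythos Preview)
Your proposal is correct and is precisely the canonical argument the paper has in mind: the paper does not give its own proof, merely stating that the lemma is the higher-dimensional counterpart of \cite[Lemma 2.2]{Lima-Sarig} and that the proof is canonical. Your outline---uniform nonvanishing of $X$ on the compact manifold $N$, the inverse function theorem applied to $\Phi_x(y,t)=g^t(y)$ with uniform control on $d\Phi_x$ and its inverse, and a compactness/contradiction argument for global injectivity---is exactly that standard proof.
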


\begin{lemma}\label{LemmaFB_2}
There is a constant $\mathfrak H>1$ which only depends on $g$ s.t. for every $x\in N$
the maps $\mathfrak t_x:{\rm FB}_{r_0}(x)\to [-r_0,r_0]$ and
$\mathfrak q_x:{\rm FB}_{r_0}(x)\to S_{r_0}(x)$ defined by
$y=\vf^{\mathfrak t_x(y)}[\mathfrak q_x(y)]$ are well-defined maps with
${\rm Lip}(\mathfrak t_x),{\rm Lip}(\mathfrak q_x),\|\mathfrak t_x\|_{C^{1+\beta}},
\|\mathfrak q_x\|_{C^{1+\beta}}\leq \mathfrak H$.
\end{lemma}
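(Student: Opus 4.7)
The plan is to realize $\mathfrak{t}_x$ and $\mathfrak{q}_x$ as the second and first components of the inverse of the diffeomorphism
\[
\Phi_x : S_{r_0}(x) \times [-r_0, r_0] \to {\rm FB}_{r_0}(x), \qquad \Phi_x(y,t) = g^t(y),
\]
whose bijectivity is guaranteed by Lemma \ref{LemmaFB_1}. The task then reduces to obtaining a $C^{1+\beta}$ bound on $\Phi_x^{-1}$ that is uniform in $x \in N$, which I would derive via the inverse function theorem together with compactness of $N$.

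First I would compute $(d\Phi_x)_{(y,t)}(v,s) = dg^t_y(v) + s\, X_{g^t(y)}$. At $(y,t) = (x,0)$ this is $(v,s) \mapsto v + sX_x$ acting on $(X_x)^{\perp} \times \mathbb{R}$, whose inverse has norm controlled by $\|X_x\|^{-1}$. Since $X$ is continuous and never vanishes on the compact manifold $N$, the quantities
\[
c := \inf_{x \in N}\|X_x\| > 0, \qquad C := \sup_{x \in N}\|X\|_{C^{1+\beta}(B(x,\mathfrak d))} < \infty
\]
are finite and positive. By continuity of $dg^t$ and of $X \circ g^t$, the operator norm of $(d\Phi_x)_{(y,t)}^{-1}$ remains within a uniform factor of $\|(d\Phi_x)_{(x,0)}^{-1}\|$ provided $r_0$ is chosen small enough; by compactness, a single $r_0$ works for all $x$.

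Next I would estimate $\beta$-H\"older continuity of $d\Phi_x^{-1}$. Since matrix inversion is smooth on the set of invertible matrices whose inverse norm is bounded, it suffices to get a uniform $\beta$-H\"older bound on $d\Phi_x$ itself. This reduces to H\"older control of $dg^t_y$ jointly in $(y,t)$ and of $X \circ g^t$, both of which follow from the hypothesis $X \in C^{1+\beta}$: writing $g^t$ via the ODE $\dot g = X \circ g$ yields $\|dg^t\|_{C^\beta} \le e^{C|t|}$ with $\beta$-H\"older derivative uniformly bounded for $|t| \le r_0$. Covering $N$ by finitely many charts of a fixed radius and taking the worst of the resulting bounds, the H\"older constant can be chosen independent of $x$.

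The main technical point is purely bookkeeping: verifying that each estimate --- non-degeneracy of $d\Phi_x$, its $\beta$-H\"older derivative, and the inverse function theorem's domain of validity --- can be made uniform in the basepoint $x$. The crucial ingredients are the non-vanishing of $X$ on the compact $N$ (so that $c>0$ and transversality of $X_x$ to $(X_x)^\perp$ does not degenerate) and the finite $C^{1+\beta}$ norm of $X$. Once a uniform bound $\|\Phi_x^{-1}\|_{C^{1+\beta}} \le \mathfrak{H}$ is in hand, projecting onto the $S_{r_0}(x)$ and $[-r_0,r_0]$ factors yields the stated bounds on $\mathfrak{q}_x$ and $\mathfrak{t}_x$, and in particular on their Lipschitz constants.
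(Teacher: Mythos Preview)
The paper does not actually supply a proof of this lemma; it simply remarks that the statement is the higher-dimensional counterpart of \cite[Lemma~2.3]{Lima-Sarig} and that the proof is ``canonical.'' Your proposal is precisely that canonical argument --- invert the flow-box diffeomorphism $\Phi_x$ from Lemma~\ref{LemmaFB_1} via the inverse function theorem, and use compactness of $N$ together with the non-vanishing and $C^{1+\beta}$ regularity of $X$ to make all bounds uniform in $x$ --- so it is correct and matches the intended approach.
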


\medskip
\noindent
{\sc Standard Poincar\'e section:} A {\em standard Poincar\'e section} for $g$ 
is a set $M=\biguplus_{i=1}^n S_r(x_i)$ satisfying the following conditions:
\begin{enumerate}[$\circ$]
\item For every $x\in N$ the set $\{t>0: g^t(x)\in M\}$ is a sequence tending to $+\infty$
and the set $\{t<0:g^t(x)\in M\}$ is a sequence tending to $-\infty$.
\item The {\em roof function} $R:M\to (0,\infty)$, defined by $R(x):=\min\{t>0:g^t(x)\in M\}$,
is bounded away from zero and infinity.
\end{enumerate} 

\medskip
\noindent
{\sc Poincar\'e map:} The {\em Poincar\'e map} of a standard Poincar\'e section $M$ is the map
$f:M\to M$ defined by $f(x):=g^{R(x)}(x)$.

\medskip
Above, $r$ is called the radius of $M$.
Let $\partial M$ denote the relative boundary of $M$, equal to the disjoint union of the
sets $\partial S_r(x_i):=\{\exp{x_i}(v):v\in T_{x_i}N, v\perp X_{x_i},\|v\| = r\}$.
The set $\partial M$ introduces discontinuities for $f$.

\medskip
\noindent
{\sc Singular and regular sets:} The {\em singular set} of $f$ is
$$
\mathfs S:=\left\{x\in M:\begin{array}{l} x\textrm{ does {\em not} have a relative neighborhood
$V\subset M\setminus\partial M$ s.t. }\\
\textrm{$V$ is diffeomorphic to an open ball, and $f:V\to f(V)$ and}\\
\textrm{$f^{-1}:V\to f^{-1}(V)$ are diffeomorphisms}
\end{array}\right\}.
$$
The {\em regular set} is $M'=M\setminus\mathfs S$.

\begin{lemma}\label{Lemma_Smooth_Section}
The maps $R,f,f^{-1}$ are differentiable on $M'$, and there is $\mathfrak C>0$
only depending on $g$ s.t. $\sup_{x\in M'}\|df_x\|<\mathfrak C$, $\sup_{x\in M'}\|(df_x)^{-1}\|<\mathfrak C$
and $\|f\restriction_{U}\|_{C^{1+\beta}}<\mathfrak C,\|f^{-1}\restriction_{U}\|_{C^{1+\beta}}<\mathfrak C$
for all open and connected $U\subset M'$.
\end{lemma}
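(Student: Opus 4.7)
The plan is to exploit the smoothness of the flow $g$ together with the uniform geometric control provided by Lemmas \ref{LemmaFB_1} and \ref{LemmaFB_2}. Since $N$ is closed and $X$ is $C^{1+\beta}$ with $X_x\neq 0$ everywhere, standard ODE theory gives that $(x,t)\mapsto g^t(x)$ is $C^{1+\beta}$ as a map $N\times\mathbb{R}\to N$, and since $R$ is bounded above by some $R_{\max}<\infty$, on the compact time interval $[-R_{\max}-1,R_{\max}+1]$ all the $C^{1+\beta}$ norms of $g^t$ and $g^{-t}$ are uniformly bounded by a constant depending only on $g$.

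First I would prove the differentiability of $R$ and $f$ on $M'$. For $x_0\in M'$, the definition of $M'$ provides a relative neighborhood $V\subset M\setminus\partial M$ on which $f$ and $f^{-1}$ are diffeomorphisms. To promote this to $C^{1+\beta}$ regularity I apply the implicit function theorem to the equation characterizing $R$: the image $g^{R(x)}(x)$ lies in a single piece $S_r(x_j)$ of the section, and the transversality of the flow to this piece (which holds because $S_r(x_j)\subset\exp{x_j}(X_{x_j}^\perp)$ and $X$ varies continuously and is nonzero) gives the non-degeneracy needed to solve for $R(x)$ as a $C^{1+\beta}$ function, so $f=g^{R(\cdot)}(\cdot)$ is $C^{1+\beta}$ on $V$.

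Second, I would extract a uniform bound by writing $f$ locally in flow box coordinates. Given $x_0\in M'$ with $f(x_0)\in S_r(x_j)$, for $x$ in a small neighborhood of $x_0$ the point $g^{R(x_0)}(x)$ lies in ${\rm FB}_{r_0}(x_j)$, and by Lemma \ref{LemmaFB_1} one can read off
\begin{equation*}
f(x)=\mathfrak{q}_{x_j}\bigl(g^{R(x_0)}(x)\bigr),\qquad R(x)=R(x_0)+\mathfrak{t}_{x_j}\bigl(g^{R(x_0)}(x)\bigr).
\end{equation*}
The first factor is uniformly $C^{1+\beta}$ by Lemma \ref{LemmaFB_2} (constant $\mathfrak{H}$) and the second by the time-window uniform bound on the flow. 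The same reasoning applied to $f^{-1}(y)=\mathfrak{q}_{x_i}\bigl(g^{-R(f^{-1}(y))}(y)\bigr)$ yields the corresponding bound for $f^{-1}$, and in particular for $\|(df_x)^{-1}\|=\|df^{-1}_{f(x)}\|$.

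The main obstacle will be upgrading these local estimates to the claim $\|f\restriction_U\|_{C^{1+\beta}}<\mathfrak{C}$ uniformly in all open connected $U\subset M'$, since the H\"older seminorm of $df$ compares pairs of points that may live in different flow boxes. I would resolve this with a two-scale argument: pick $\delta>0$, depending only on $g$, such that whenever $x,y\in M'$ satisfy $d(x,y)\leq\delta$, both $g^{R(x)}(x)$ and $g^{R(x)}(y)$ lie in a common flow box ${\rm FB}_{r_0}(x_j)$; for such close pairs the composition formula above together with the uniform $C^{1+\beta}$ bounds on $\mathfrak{q}_{x_j}$ and $g^t$ controls $\|df_x-df_y\|/d(x,y)^\beta$ by a constant depending only on $g$. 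For pairs with $d(x,y)>\delta$ the H\"older ratio is bounded by $2\sup_{M'}\|df\|/\delta^\beta$, which is again a function of $g$ alone. Combining the two regimes and taking $\mathfrak{C}$ sufficiently large absorbs all constants, and the analogous argument for $f^{-1}$ closes the proof.
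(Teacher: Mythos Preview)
Your approach is correct and is precisely the standard argument one would give: use the implicit function theorem together with transversality of the flow to the section to get local $C^{1+\beta}$ regularity of $R$ and $f$, then express $f$ locally as $\mathfrak q_{x_j}\circ g^{R(x_0)}$ to read off uniform $C^{1+\beta}$ bounds from Lemmas~\ref{LemmaFB_1}--\ref{LemmaFB_2}, and handle the global H\"older seminorm via a close/far dichotomy. (One minor sign slip: from $y=g^{\mathfrak t(y)}(\mathfrak q(y))$ one gets $R(x)=R(x_0)-\mathfrak t_{x_j}(g^{R(x_0)}(x))$, not $+$; this is immaterial.)

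The paper itself does not give a proof: it simply states that this is \cite[Lemma~2.5]{Lima-Sarig} and that the proof is the same. Your write-up is exactly the kind of argument that citation is pointing to, so there is nothing to compare beyond noting that you have supplied what the paper omits.
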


This is \cite[Lemma 2.5]{Lima-Sarig}, and the proof is the same.
There is a relation between $g$--invariant and $f$--invariant measures.

\medskip
\noindent
{\sc Induced measure:\/} Every $g$--invariant probability measure $\mu$ on $N$ induces an
$f$--invariant measure $\nu$ on $M$ by the equality
$$
\int_N \vf d\mu=\frac{1}{\int_M R d\nu}\int_M\left(\int_0^{R(x)}\vf[g^t(x)]dt\right)d\nu(x),
\ \text{for all }\vf\in L^1(\mu).
$$

\medskip
\noindent
{\sc Adapted Poincar\'e section:} A standard Poincar\'e section $M$ is {\em adapted} for $\mu$ if
$\lim\limits_{n\to\pm\infty}\frac{1}{n}\log d_M(f^{n}(x),\mathfs S)=0$
for $\nu$--a.e. $x\in M$.

\medskip
A fortiori, $\nu(\mathfs S)=0$. Above, $d_M$ is the intrinsic Riemannian distance on $M$,
with the convention that the distance between different connected components of $M$ is infinite.
The next result states that any $g$--invariant probability measure has adapted Poincar\'e sections.

\begin{theorem}\label{Thm-Adapted-Section}
Every $g$--invariant probability measure $\mu$ has adapted Poincar\'e sections
with arbitrarily small radius and roof function.
\end{theorem}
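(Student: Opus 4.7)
The strategy is the higher dimensional analogue of the construction in \cite[\S 2]{Lima-Sarig}: first produce a family of standard Poincar\'e sections of the required size by a finite cover, then select within this family a section whose singular set is encountered by the orbit in a sufficiently non-degenerate manner.

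\textbf{Step 1 (construction of the family).} Fix $\rho_0 \in (0,r_0)$ arbitrarily small and choose points $x_1,\ldots,x_n\in N$ such that the flow boxes ${\rm FB}_{\rho_0/2}(x_i)$ cover $N$ (by compactness of $N$ and Lemma \ref{LemmaFB_1}). For each $r\in(\rho_0/4,\rho_0/2)$, define $M_r$ by disjointifying the union $\bigcup_{i=1}^n S_r(x_i)$ via the flow-box charts of Lemma \ref{LemmaFB_2}; then $M_r$ is a standard Poincar\'e section, its roof function $R_r$ is bounded away from $0$ and from $\infty$ by constants depending only on $g$ and $\rho_0$, and its radius is at most $\rho_0$. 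The induced measure $\nu_r$ is the normalized restriction of $\mu$ to an $R_r$--tubular neighborhood of $M_r$, transported by the flow.

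\textbf{Step 2 (reduction to an integrability statement).} By assumption (A5)--(A7) for the Poincar\'e map, a point $x\in M_r$ lies in the singular set $\mathfrak S_r$ if and only if $x\in\partial M_r$ or $f_r^{\pm 1}$ sends a neighborhood of $x$ into $\partial M_r$. In particular, the distance $d_{M_r}(x,\mathfrak S_r)$ is bounded below by the distance from $x$ (resp.\ $f_r(x)$, $f_r^{-1}(x)$) to $\partial M_r$ along the transverse disks, the conversion between these distances being controlled by Lemma \ref{Lemma_Smooth_Section}. So it suffices to show that, for Lebesgue-a.e.\ $r\in(\rho_0/4,\rho_0/2)$, one has $\log d_{M_r}(\,\cdot\,,\partial M_r)\in L^1(\nu_r)$, because then $f$--invariance of $\nu_r$, assumption (A6)-style derivative bounds, and Birkhoff / Borel--Cantelli give the desired subexponential decay along orbits.

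\textbf{Step 3 (Fubini on the radius parameter).} This is the core of the argument. For each $i$, parametrize a neighborhood of $x_i$ by normal coordinates $v\mapsto \exp_{x_i}(v)$ with $v\perp X_{x_i}$, so that $S_r(x_i)$ corresponds to the Euclidean ball of radius $r$ and $\partial S_r(x_i)$ corresponds to the sphere. For any fixed $r$, $d_{M_r}(y,\partial S_r(x_i))$ is essentially $|\,r-\|v(y)\|\,|$ up to the bounded distortion of $\exp_{x_i}$. A direct Fubini computation gives
\[
\int_{\rho_0/4}^{\rho_0/2}\!\!\int_{M_r}\bigl|\log d_{M_r}(x,\partial M_r)\bigr|\,d\nu_r(x)\,dr \;\le\; C\int_{N}\!\!\int_{\rho_0/4}^{\rho_0/2}\sum_i \bigl|\log|\,r-\|v_i(y)\|\,|\bigr|\,dr\,d\mu(y),
\]
where $v_i$ are the normal coordinates associated to $x_i$ (defined wherever meaningful). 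The inner $r$--integral is $\int_{\rho_0/4}^{\rho_0/2}|\log|r-s||\,dr$ which is bounded uniformly in $s\in\R$, so the whole double integral is finite. Hence for Lebesgue-a.e.\ $r$ the inner integral is finite, which is the integrability statement we need. The same argument applied to $f_r^{\pm 1}$ and to the finitely many branches needed to describe $\mathfrak S_r$ still gives an a.e.\ finite integral.

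\textbf{Step 4 (conclusion).} Choose any $r$ in the full-measure set produced in Step 3; the associated $M_r$ is the desired adapted Poincar\'e section, and its radius and roof function are $\le \rho_0$ (up to a fixed multiplicative constant). Since $\rho_0$ was arbitrary, this proves the theorem.

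The main obstacle is Step 3: one has to verify that the different contributions to $\mathfrak S_r$ -- coming from $\partial M_r$, $f_r(\partial M_r)$, and $f_r^{-1}(\partial M_r)$, together with the corners where distinct sections $S_r(x_i)$ abut after disjointification -- all admit the logarithmic Fubini estimate simultaneously. This requires checking that the relevant Jacobians of the Poincar\'e map and of the flow-box projections are bounded, which follows from Lemmas \ref{LemmaFB_1}--\ref{Lemma_Smooth_Section}, and that the geometric structure of the ``corners'' between different $S_r(x_i)$ does not produce non-integrable singularities. This last point is the only genuinely new ingredient compared to the surface case treated in \cite{Lima-Sarig}, and it can be handled by refining the cover so that each pairwise intersection ${\rm FB}_{\rho_0}(x_i)\cap {\rm FB}_{\rho_0}(x_j)$ is transverse in a quantitative sense.
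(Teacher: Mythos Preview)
Your overall strategy---construct a one-parameter family $\{M_r\}$ of standard Poincar\'e sections and then use a Fubini/Borel--Cantelli argument on the radius parameter to show that $M_r$ is adapted for Lebesgue--a.e.\ $r$---matches the paper's approach, which simply invokes \cite[Theorem 2.8]{Lima-Sarig} and observes that the same proof goes through in higher dimension because Lemmas \ref{LemmaFB_1} and \ref{LemmaFB_2} hold.

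There is, however, one point where you diverge from the paper and make your own life harder. In Step~1 you ``disjointify'' the overlapping disks $S_r(x_i)$ by some cutting procedure in the flow-box charts, which produces the ``corners'' that you then flag in Step~3 as the only genuinely new difficulty. The construction in \cite{Lima-Sarig} (and hence in the paper) avoids this issue entirely: one first takes a fine discretization of the transverse balls, and then \emph{displaces the centers along the flow direction} so that the resulting smaller transverse disks are pairwise disjoint as subsets of $N$. With this construction $\partial M_r$ is just a finite disjoint union of smooth spheres $\partial S_r(x_i)$, there are no corners, and the Fubini computation in Step~3 goes through exactly as in the surface case with no new ingredient. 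So your transversality refinement at the end is not needed; replacing your disjointification by the flow-displacement trick removes the obstacle you identified.
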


This is the higher dimensional version of \cite[Theorem 2.8]{Lima-Sarig}, whose proof
consists on constructing a one-parameter family $\{M_r\}_{r\in[a,b]}$ of standard Poincar\'e sections
s.t. $M_r$ has radius $r$ and then using a Borel-Cantelli argument to show that $M_r$ is adapted
to Lebesgue a.e. $r\in[a,b]$. To construct the family of standard Poincar\'e sections, \cite{Lima-Sarig}
fixes a finite set of flows boxes that covers $N$ and then takes a fine discretization of the associated
canonical transverse balls. Then they show that it is possible to lift/decrease the points of the discretization
in the flow direction and take smaller canonical transverse balls centered at these displaced points 
so that the balls are pairwise disjoint. This argument uses Lemmas \ref{LemmaFB_1} and \ref{LemmaFB_2},
which are also true in our case. Hence the same proof applies.

Now we discuss nonuniform hyperbolicity. Consider a $g$--invariant probability measure $\mu$.
Let $\mu$ be $\chi$--hyperbolic, let $M$ be an adapted Poincar\'e section for $\mu$, and let
$\nu$ be the induced measure on $M$. The derivative cocycle
$df^n_x:T_xM\to T_{f^n(x)}M$ is well-defined $\nu$--a.e.
By Lemma \ref{Lemma_Smooth_Section}, the functions $\log\|df_x\|, \log\|df_x^{-1}\|$ are bounded,
hence the Oseledets theorem applies and so $f$ possesses
well-defined Lyapunov exponents $\nu$--a.e. The following lemma proves
that $\nu$ is hyperbolic. Let $\chi'=\chi\inf R$.

\begin{lemma}\label{Lemma_Lyap_Exp}
If $\mu$ is $\chi$--hyperbolic for $g$ then $\nu$ is $\chi'$--hyperbolic for $f$.
\end{lemma}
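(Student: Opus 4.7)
The plan is to relate the derivative cocycle of the Poincaré map to that of the flow by the identity
\[
df^n_x \;=\; P_{f^n(x)} \circ dg^{R_n(x)}_x\bigr|_{T_xM},
\]
where $R_n(x):=\sum_{k=0}^{n-1}R(f^k(x))$ and $P_y:T_yN\to T_yM$ is the projection along the flow direction $X_y$ (well-defined since the canonical transverse balls making up $M$ are transverse to $X$, hence $\sup_{y\in M}\|P_y\|<\infty$). First I would verify this identity by differentiating the relation $f(x)=g^{R(x)}(x)$: one obtains $df_x v = dg^{R(x)}_x v + dR_x(v)\,X_{f(x)}$, and since $df_x v\in T_{f(x)}M$ has no $X_{f(x)}$-component, the term $dR_x(v)X_{f(x)}$ must kill precisely the $X_{f(x)}$-component of $dg^{R(x)}_x v$, which is exactly the effect of $P_{f(x)}$. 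Iterating through the cocycle property yields the claim for arbitrary $n$.

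Next I would exploit the Oseledets decomposition for the flow. Since $\mu$ is $\chi$-hyperbolic, at $\mu$-a.e.\ point there is a $dg^t$-invariant splitting $T_xN=\mathbb{R}X_x\oplus V_x$, where $V_x:=\bigoplus_{\lambda_i\neq 0}E^{\lambda_i}_x$ collects the nonzero-exponent subspaces. Both $V_x$ and $T_xM$ are hyperplanes transverse to $X_x$, so $P_x|_{V_x}:V_x\to T_xM$ is a linear isomorphism. Writing $v\in T_xM$ uniquely as $v=w+\alpha X_x$ with $w\in V_x$ and using the identities $dg^t_x X_x=X_{g^t(x)}$ and $P\circ X=0$, one computes
\[
df^n_x v \;=\; P_{f^n(x)}\bigr|_{V_{f^n(x)}}\circ dg^{R_n(x)}_x\, w.
\]
The norm $\|P_{f^n(x)}\|$ is uniformly bounded, while $\|(P_{f^n(x)}|_{V_{f^n(x)}})^{-1}\|$ grows at most subexponentially along the orbit by temperedness of the Oseledets decomposition — which applies since $\log\|dg^{\pm1}\|$ is globally bounded on the compact manifold $N$.

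For any nonzero $v\in T_xM$ the displayed identity yields
\[
\tfrac{1}{n}\log\|df^n_x v\|\;=\;\tfrac{R_n(x)}{n}\cdot\tfrac{1}{R_n(x)}\log\|dg^{R_n(x)}_x w\|\;+\;o(1).
\]
By the Birkhoff ergodic theorem applied to $R\in L^1(\nu)$, the factor $R_n(x)/n$ converges $\nu$-a.e.\ to some $\bar R(x)\geq\inf R>0$; by the Oseledets theorem for $g$, the remaining factor converges to a nonzero flow exponent $\lambda$ with $|\lambda|>\chi$. Hence the Lyapunov exponents of $f$ at $x$ are exactly the products $\bar R(x)\lambda_i$ over the nonzero flow exponents $\lambda_i$, and each satisfies $|\bar R(x)\lambda_i|\geq(\inf R)\cdot\chi=\chi'$. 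The main delicate point is controlling the projection $P_{f^n(x)}|_{V_{f^n(x)}}$ asymptotically so that it does not interfere with the growth rate; once the standard temperedness argument handles this, the remainder of the proof is a direct application of the Birkhoff and Oseledets theorems.
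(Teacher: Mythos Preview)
Your approach is correct and is essentially the same as the argument in \cite[Lemma 2.6]{Lima-Sarig} that the paper invokes: relate $df^n$ to $dg^{R_n}$ via projection along the flow direction, split $T_xM\ni v = w + \alpha X_x$ with $w$ in the hyperbolic Oseledets subspace $V_x$, use temperedness to control the projection term, and conclude via Birkhoff on $R$ together with the Oseledets theorem for the flow. One small point worth tightening: when you write that $\tfrac{1}{R_n(x)}\log\|dg^{R_n(x)}_x w\|$ ``converges to a nonzero flow exponent $\lambda$'', for a general $w\in V_x$ this limit is the largest exponent appearing in the Oseledets decomposition of $w$; the cleanest way to obtain the full exponent correspondence is to note that $P_x$ carries each $E^{\lambda_i}_x$ isomorphically onto a subspace of $T_xM$ and that these images form the Oseledets splitting for $f$, with exponents $\bar R(x)\lambda_i$.
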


This is the higher dimensional version of \cite[Lemma 2.6]{Lima-Sarig}, and the proof
is the same.

\begin{proof}[Proof of Theorem \ref{Thm-Flows}]
Let $\mu$ be $\chi$--hyperbolic, let $M$ be an adapted Poincar\'e section for $\mu$ given by Theorem 
\ref{Thm-Adapted-Section}. Let $f:M\to M$ be its Poincar\'e map and $\nu$ be the measure on $M$
induced by $\mu$. By Lemma \ref{Lemma_Lyap_Exp}, $\nu$ is $\chi'$--hyperbolic.
Since $M$ is the disjoint union of smooth balls of codimension one,  
assumptions (A1)--(A4) are satisfied. By Lemma \ref{Lemma_Smooth_Section},
assumptions (A5)--(A7) are satisfied. Apply Theorem \ref{Thm-Main}
for $f$ and the parameter $\chi'>0$ to get a triple $(\Sigma,\sigma,\pi)$. Since $\nu$ is $\chi'$--hyperbolic,
it is carried by $\pi[\Sigma^\#]={\rm NUH}^\# _{\chi'}$. Now proceed as in \cite[pp. 230--232]{Lima-Sarig}
to construct a coding $\pi_r:\Sigma_r\to N$ satisfying Theorem \ref{Thm-Flows}.
\end{proof}

Concluding this section, we prove Theorems \ref{Thm-Flows-periodic} and \ref{Thm-Flows-Bernoulli}.
Start recalling the definition of the geometric potential of a flow and its equilibrium measures,
see e.g. \cite{LLS-2016}. Let $\chi>0$. The following definition is the flow version of the nonuniformly
hyperbolic locus.

\medskip
\noindent
{\sc The nonuniformly hyperbolic locus ${\rm NUH}_\chi$:} It is defined as the set of points 
$x\in N $ for which there is a splitting $T_xN=E^s_{x}\oplus X_x\oplus E^u_{x}$ s.t.:
\begin{enumerate}[(NUH1)]
\item Every $v\in E^s_{x}$ contracts in the future at least $-\chi$ and expands in the past:
$$\limsup_{t\to+\infty}\tfrac{1}{t}\log \|dg^{-t} v\|\leq -\chi\ \text{ and } 
\ \liminf_{t\to+\infty}\tfrac{1}{t}\log \|dg^{t} v\|>0.
$$ 
\item Every $v\in E^u_{x}$ contracts in the past at least $-\chi$ and expands in the future:
$$\limsup_{t\to+\infty}\tfrac{1}{t}\log \|dg^{-t} v\|\leq -\chi\text{ and } 
\liminf_{t\to+\infty}\tfrac{1}{t}\log \|dg^{t}v\|>0.$$
\item \label{Def-NUH3-flow} The parameters $s(x)=\sup\limits_{v\in E^s_{x}\atop{\|v\|=1}}S(x,v)$
and $u(x)=\sup\limits_{w\in E^u_{x}\atop{\|w\|=1}}U(x,w)$ are finite, where:
\begin{align*}
S(x,v)&=\sqrt{2}\left(\sum_{n\geq 0}e^{2n\chi}\|dg^nv\|^2\right)^{1/2},\\
U(x,w)&=\sqrt{2}\left(\sum_{n\geq 0}e^{2n\chi}\|dg^{-n}w\|^2\right)^{1/2}.
\end{align*}
\end{enumerate}
\medskip
\noindent
{\sc Hyperbolic locus of $g$:} The {\em hyperbolic locus} of $g$ is the set $N_{\rm hyp}$ defined as 
$$
N_{\rm hyp}:=\bigcup_{\chi>0}{\rm NUH}_\chi.
$$

\medskip
\noindent
{\sc Geometric potential of $g$ \cite{Bowen-Ruelle-SRB}:} The {\em geometric potential} of $g$
is the function $J:N_{\rm hyp}\to\R$ defined by
$$
J(x)=-\tfrac{d}{dt}\Big|_{t=0}\log \det\left[dg^t  \restriction_{E^u_x}\right]=
-\lim_{t\to 0}\tfrac{1}{t}\log\det\left[dg^t \restriction_{E^u_{\wh x}}\right].
$$

\medskip
Observe that $J$ is bounded, since $g$ is $C^{1+\beta}$.

\medskip
\noindent
{\sc Equilibrium measure for $tJ$:} An {\em equilibrium measure} for $tJ$ is a
$g$--invariant probability measure $\mu$ supported on $N_{\rm hyp}$ s.t.
$P_{\rm top}(tJ)=h_\mu(g^1)+\int tJ d\mu$, where 
$$
P_{\rm top}(tJ):=\sup\left\{
h_\mu(g^1)+\int_N tJ d\mu:
\begin{array}{c}
\mu\text{ is $g$--invariant probability}\\
\text{ measure with }\mu(N_{\rm hyp})=1
\end{array}
\right\}.
$$

\medskip
Since $J$ is bounded, $P_{\rm top}(tJ)<\infty$.
The proof of Theorem \ref{Thm-Flows-Bernoulli} is made exactly as in \cite{LLS-2016},
since it establishes the ``Bernoulli up to a period'' property at the symbolic extension and then projects
to the flow. The same reasoning can be made to prove Theorem \ref{Thm-Flows-periodic}, using 
its symbolic version \cite[Theorem 8.1]{Lima-Sarig}.

\section{Multidimensional billiards}\label{Section-Billiards}

We refer to the books \cite{Cornfeld-Fomin-Sinai,Chernov-Markarian} and the survey 
\cite{Szasz-survey} for general texts on the dynamics of billiards. 
Let $(N,\langle\cdot,\cdot\rangle)$ be a $C^\infty$ compact manifold, let $\{g^t\}_{t\in\R}$ be its
geodesic flow, and let $B_1,\ldots,B_k$ be open sets of $N$ s.t. their boundaries
$\partial B_1,\ldots,\partial B_k$ are $C^3$ submanifolds of codimension one.
The sets $B_1,\ldots,B_k$ are called {\em scatterers}.
For each $x\in \partial B_i$, let $n(x)\in T_xN$ be the unitary vector orthogonal to $T_x \partial B_i$ that points
outward $B_i$ (points in two boundaries have two normal vectors). If $N_0=N\setminus \bigcup_{i=1}^k B_i$,
then we define the {\em billiard flow} $\vf=\{\vf^t\}_{t\in\R}$ on $N$ with scatterers $B_1,\ldots,B_k$
as the restriction of the geodesic flow to the unit tangent bundle $T^1N$ with the identification 
$v\sim v-2\langle n(x),v\rangle n(x)$ for $(x,v)\in T^1N_0$ s.t. $x\in \bigcup_{i=1}^k \partial B_i$.
In other words, we flow under the geodesic flow until we hit one of the scatterers, and when this
occurs we reflect the vector according to the law of optics, i.e. the angle of incidence equals
the angle of reflection. The flow $\vf$ is well-defined in a subset of $T^1N_0$. Letting
$M=\{(x,v)\in T^1N_0:x\in \bigcup_{i=1}^k \partial B_i\text{ and }\langle v,n(x)\rangle \geq 0\}$,
the {\em billiard map} $f$ is the map of successive hits to the scatterers, i.e.
$f(x,v)=(x',v')$ where $(x,v)$ represents a collision point with a direction and $(x',v')$ represents
the next collision point with the new direction. Formally, there are closed sets $\mathfs S^+,\mathfs S^-\subset M$
s.t. $f:M\setminus\mathfs S^+\to M$ and $f^{-1}:M\setminus\mathfs S^-\to M$ are diffeomorphisms
onto their images, so that we can regard $f$ as an invertible map with singular set $\mathfs S=\mathfs S^+\cup\mathfs S^-$. When the second fundamental form of the scatterers is positive (the scatterers are strictly convex),
the billiard is called {\em dispersing}, and when it is nonnegative (the scatterers are convex),
then the billiard is called {\em semi-dispersing}. See Figure \ref{figure-billiards} for examples.
The map $f$ has a natural invariant Liouville measure $\mu_{\rm SRB}=\cos \angle(n(x),v)dxdv$.
\begin{center}
\begin{figure}[hbt!]
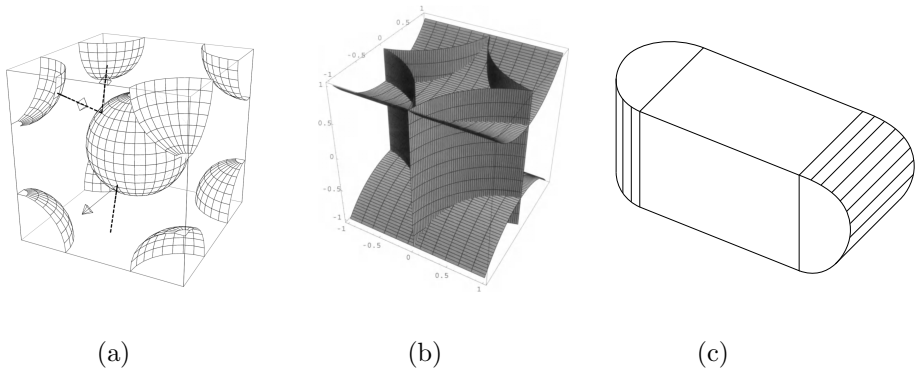

\centering
\def\svgwidth{12.3cm}

\caption{(a) Dispersing billiard in $\mathbb T^3$ (credits to Imre P\'eter T\'oth).
(b) Billiard of a hard balls system in $\mathbb T^3$ (credits to \cite{Posch-Hirschl}).
(c) Three-dimensional Bunimovich stadium.}
\label{figure-billiards}
\end{figure}
\end{center}

Billiard maps have been studied since the beginning of the twentieth century, one of the reasons
being their relation with statistical mechanics. Indeed, the dynamics associated to a
mechanical system of $m$ one-dimensional point particles in an interval with heavy walls 
is a billiard in a subset of $\R^m$. More generally, the famous Lorentz gas and hard balls systems
define billiards in some $\R^m$, see \cite[Chapter 6]{Cornfeld-Fomin-Sinai}.
The celebrated {\em Boltzmann-Sina{\u\i} hypothesis} states that in billiards of $N\geq 2$
hard balls of unit mass moving on the flat torus $\mathbb T^d=\R^d/\Z^d$, $d\geq 2$, the
measure $\mu_{\rm SRB}$ is ergodic after the obvious reductions coming from physical invariants \cite{Sinai-1963}. 
The billiards of classical Lorentz gases are dispersing, see Figure \ref{figure-billiards}(a).
For hard balls systems, the scatterers are cylinders and so the billiards are semidispersing,
see Figure \ref{figure-billiards}(b). 

There is a large literature on billiards, which includes proving the existence of invariant manifolds,
nonuniform hyperbolicity, ergodicity, the K property, the Bernoulli property, decay of correlations, etc.
The proofs of ergodicity follow Sina{\u\i}'s approach of first obtaining local ergodicity via the so-called
{\em fundamental theorem}. Many authors contributed to this question,
among them Sina\u{\i}, Bunimovich, Chernov, Kr\'amli, Sim\'anyi, Sz\'asz
\cite{Sinai-billiards,Bunimovich-Sinai-fundamental,Chernov-Sinai-1987,KSS-fundamental}.
While dynamical properties where checked for two-dimensional billiards, most of the
above contributions assume that these same properties hold for billiards in higher dimension.
For instance, it was commonly assumed that singularity submanifolds have bounded curvature,
which turned not to be always true \cite{Balint-et-al-Asterisque}.
Fortunately, the same authors showed that the theory remains true under a weaker condition called
{\em Lipschitz decomposability}, see \cite{Balint-et-al-IHP}. They also showed that algebraic semi-dispersing
billiards are Lipschitz decomposable (here, algebraic means that the scatterers are defined by algebraic equations).
Since billiards coming from hard balls systems are algebraic and $\mu_{\rm SRB}$ is hyperbolic
\cite{Simanyi-Szasz-completely-hyperbolic}, it was possible to establish the Boltzmann-Sina{\u\i} ergodic hypothesis,
whose contributions include but are not limited to
\cite{KSS-3-balls,KSS-4-balls,Simanyi-1992-II,Simanyi-1992-I,Simanyi-conditional,Simanyi-singularities}.
Actually, $\mu_{\rm SRB}$ is Bernoulli \cite{Chernov-Haskell}.

There is another class of billiards, known as {\em nowhere dispersing}, where
the boundary of scatterers is nowhere convex and yet $\mu_{\rm SRB}$ is hyperbolic.
In dimension two, the first examples are due to Bunimovich
\cite{Bunimovich-close-to-scattering,Bunimovich-ergodic-properties,Bunimovich-Nowhere-dispersing},
which include the famous Bunimovich stadium. There are some generalizations in higher dimension 
\cite{Bunimovich-Rehacek-NUH,Bunimovich-Rehacek-any-dimension,Bunimovich-delMagno-hyperbolicity},
and it is known that $\mu_{\rm SRB}$ is ergodic
\cite{Bunimovich-Rehacek-ergodicity,Bunimovich-delMagno-ergodicity}.
The example considered in \cite{Bunimovich-delMagno-hyperbolicity,Bunimovich-delMagno-ergodicity}
is depicted in Figure \ref{figure-billiards}(c), and can be regarded as a three-dimensional
version of the Bunimovich stadium.

Ledrappier \& Strelcyn have shown that the Pesin entropy formula holds for $\mu_{\rm SRB}$
for a class of systems that includes the billiards mentioned above \cite[Part III]{KSLP},
therefore in each of these cases $h=h_{\mu_{\rm SRB}}(f)>0$. The application
of Theorem \ref{Thm-Main} for multidimensional billiards deals with the growth rate on the
number of periodic orbits. Let ${\rm Per}_n(f)$ denote the number of periodic points of
period $n$ for $f$. Up to the authors' knowledge, the only available result about ${\rm Per}_n(f)$
in the literature is due to Stoyanov, who proved that for semi-dispersing billiards ${\rm Per}_n(f)$ grows
at most exponentially fast \cite{Stojanov-periodic-points}. Our result provides a sufficient
condition for the growth rate to be indeed exponential.

\begin{theorem}
Let $f$ be a billiard map in any dimension, and assume that
$\mu_{\rm SRB}$ is a hyperbolic mixing measure with entropy $h>0$.
Then there exists $C>0$ s.t. ${\rm Per}_n(f)\geq Ce^{hn}$ for all $n$ large enough.
\end{theorem}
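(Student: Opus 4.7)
The plan is to deduce the result as a direct application of Theorem \ref{Thm-Application-3}: once we verify that the billiard map $f$ fits the setting (A1)--(A7) and that $\mu_{\rm SRB}$ is $f$--adapted, the hypothesis that $\mu_{\rm SRB}$ is hyperbolic mixing with $h>0$ yields $\liminf_{n\to\infty} e^{-hn}{\rm Per}_{n,\chi}(f)\geq 1$ for some $\chi>0$, and since ${\rm Per}_n(f)\geq {\rm Per}_{n,\chi}(f)$, the conclusion follows by taking $C$ slightly smaller than $1$.

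First I would verify (A1)--(A4) on the phase space $M$. Recall that $M$ is a finite disjoint union of pieces of the form $\{(x,v)\in T^1N : x\in \partial B_i,\ \langle v,n(x)\rangle\geq 0\}$, each a bounded open subset of a smooth manifold with boundary. Setting $\mathfs D$ to be the relative boundary of $M$ and working with a Riemannian metric of bounded geometry inherited from the ambient $T^1N$ (the scatterers being $C^3$ with compact boundaries, all relevant curvature tensors are globally bounded on the interior), the exponential map and its first two derivatives are uniformly controlled on balls of a fixed size, so (A1)--(A4) are automatic in the stronger form without dependence on $d(\cdot,\mathfs D)$. This step is routine and mirrors the two-dimensional setting in \cite{Lima-Matheus}.

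The core step is verifying (A5)--(A7) and adaptedness, and it is here that I invoke the classical billiard estimates. For dispersing and semi-dispersing billiards one has bounds of the form $\|df_x\|,\|df_x^{-1}\|\leq \mathfrak{K} d(x,\mathfs S)^{-a}$ and Hölder estimates $\|\widetilde{df_y}-\widetilde{df_z}\|\leq \mathfrak{K}\,d(x,\mathfs S)^{-c}\,d(y,z)^\beta$ on a ball of radius comparable to $d(x,\mathfs S)^a$, which are standard consequences of the explicit formula for the derivative in terms of the free flight time, the sine of the incidence angle, and the second fundamental forms of the scatterers; see e.g. \cite{Chernov-Markarian,Balint-et-al-IHP} for dispersing and semi-dispersing cases, and \cite{Bunimovich-delMagno-hyperbolicity,Bunimovich-delMagno-ergodicity} for the three-dimensional Bunimovich stadium (where Lipschitz decomposability of the singular set must also be used). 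By Remark \ref{remark-mult-constants}, we may absorb multiplicative constants by enlarging $a$ and shrinking $\beta$, so (A5)--(A7) hold after a harmless adjustment of constants. The hard part is that the derivative blow-up estimates require a careful choice of the local scale $\mathfrak{r}(x)$ comparable to a power of $d(x,\mathfs S)$, rather than a single uniform radius; nevertheless this matches exactly the flexibility built into (A5)--(A7).

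Finally, to establish $f$--adaptedness of $\mu_{\rm SRB}$, I would use the well-known volume estimate $\mu_{\rm SRB}\{x: d(x,\mathfs S)<\epsilon\}\leq C\epsilon^\kappa$ for some $\kappa>0$, valid in all of the listed billiard contexts; this yields at once $\log d(\cdot,\mathfs S)\in L^1(\mu_{\rm SRB})$. Combining hyperbolicity of $\mu_{\rm SRB}$ (so $\mu_{\rm SRB}$ is $\chi$--hyperbolic for some $\chi>0$) with adaptedness, mixing, and $h>0$, Theorem \ref{Thm-Application-3} applies and gives the required exponential lower bound on ${\rm Per}_n(f)$.
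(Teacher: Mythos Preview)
Your proposal is correct and takes essentially the same approach as the paper: the paper's proof is a single sentence stating that the theorem is a direct consequence of Theorem~\ref{Thm-Application-3}, relying on the ambient discussion (bounded curvature of the billiard phase space for (A1)--(A4), polynomial blow-up of $\|df^{\pm1}\|$ near $\mathfs S$ for (A5)--(A7), and the standard volume estimate for adaptedness of $\mu_{\rm SRB}$). Your more detailed verification of the hypotheses is appropriate; the only minor imprecision is that $\mathfs D$ should be the full discontinuity set $\mathfs S^+\cup\mathfs S^-$ of $f$ rather than merely the relative boundary of $M$, though this does not affect the argument since the bounded-geometry justification of (A1)--(A4) goes through regardless.
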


This theorem is a direct consequence of Theorem \ref{Thm-Application-3}
(actually, if $\mu_{\rm SRB}$ is $\chi$--hyperbolic then $\liminf\limits_{n\to\infty}e^{-hn}{\rm Per}_{n,\chi}(f)\geq 1$).
It applies to all examples in Figure \ref{figure-billiards}. Note that,
the bigger $h$ is, the better is the estimate. The usual way of obtaining sharp
estimates is considering measures of maximal entropy, but in the broad setup considered here
we do not know if measures of maximal entropy exist nor if they are 
adapted and hyperbolic. The only situation where these are true is for a class of two-dimensional
dispersing billiards \cite{Baladi-Demers-MME}.
Using \cite{Lima-Matheus,Baladi-Demers-MME,Buzzi-2019},
it follows that ${\rm Per}_n(f)\geq Ce^{nh_*}$ for all $n$ large enough, where $h_*$ is the topological 
entropy of $f$. It would be interesting to extend this result to higher dimensions.

\section{Viana maps}\label{Section-Viana-maps}

We finish our applications proving Theorems~\ref{Thm-Viana-periodic} and \ref{Thm-Viana-Bernoulli}.
Recall that, for $a_0\in (1,2)$ s.t. $t=0$ is pre-periodic for the quadratic map $x\mapsto a_0-t^2$,
for $d\geq 16$ and $\alpha>0$, the associated Viana map is the skew product 
$f=f_{a_0,d,\alpha}:\mathbb S^1\times \mathbb{R}\to \mathbb S^1\times \mathbb{R}$ defined by 
$f(\theta,t)=(d\theta,a_0+\alpha \sin(2\pi \theta)-t^2)$, whose restriction to the annulus $\mathbb S^1\times I_0$
has an attractor. In the sequel, $f$ is considered as this restriction.
The map $f$ is $C^\infty$ with a critical set $\mathbb S^1\times\{0\}$ where $df$ is non-invertible and,
when $\alpha>0$ is small, it has a unique invariant probability measure $\mu_{\rm SRB}$
that is absolutely continuous with respect to Lebesgue \cite{Alves-SRB-2000,Alves-Viana-2002},
nonuniformly expanding \cite{Viana-maps}, and mixing \cite{Alves-Luzzato-Pinheiro}.
All these properties hold for a $C^3$ neighborhood of $f$. We claim that
assumptions (A1)--(A7) hold for every map in a $C^3$ neighborhood of $f$.
We also show that, for maps in this neighborhood, every measure with large entropy is adapted.
Once these are proved, Theorem \ref{Thm-Viana-periodic} follows from Theorem \ref{Thm-Application-3},
and Theorem \ref{Thm-Viana-Bernoulli} follows from Corollary \ref{Coro-Application-1}.

\begin{proposition}\label{prop.viana-conditions}
Every $f$ as above possesses a $C^2$ neighborhood $\mathfs U_f$ s.t. every map 
in $\mathfs U_f$ satisfies assumptions {\rm (A1)--(A7)}.
\end{proposition}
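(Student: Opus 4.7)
The plan is to verify assumptions (A1)--(A7) directly from the explicit formula for Viana maps, and to observe that the verification relies only on $C^2$-bounds that are stable under $C^2$-perturbations. I would take $M = \mathbb S^1 \times I$, a slight enlargement of $\mathbb S^1 \times I_0$ still satisfying $f(M) \subset \textrm{int}(M)$, set $\mathfs D = \partial M$, and note that $f$ is everywhere $C^\infty$. Since $M$ is a smooth compact $2$-manifold with bounded curvature, the exponential maps and their derivatives are uniformly controlled on any fixed-radius neighborhood of the attractor; and since every point relevant to the dynamics lies in $\mathbb S^1 \times I_0$, at uniform positive distance from $\mathfs D$, assumptions (A1)--(A4) hold with any $a > 1$.

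For (A5)--(A7) I would analyze the critical set
$$
\mathfs C = \{(\theta, t) \in M : t = 0\},
$$
where $\det df_{(\theta, t)} = -2dt$ vanishes, so that $\mathfs S = \mathfs C \cup \mathfs D$. A direct computation yields, for $x = (\theta, t)$ with $t \neq 0$, the bounds $\|df_x\| \leq C_1$, $\|(df_x)^{-1}\| \leq C_2/|t|$, $\|d^2 f_x\| \leq C_3$, and $\|d^2(f^{-1})_{f(x)}\| \leq C_4/|t|^{k}$ for some integer $k$. Since $d(x, \mathfs S) \asymp |t|$ near $\mathfs C$, these translate into polynomial-type bounds in $d(x, \mathfs S)^{-1}$. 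Applying Remark \ref{remark-mult-constants} to enlarge $a$ and to absorb multiplicative constants, (A6) and (A7) follow with $\beta = 1$.

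The main obstacle is (A5), specifically the requirement that the local inverse branch of $f$ be defined on the whole ball $E_x = B(f(x), 2\mathfrak r(x))$. I would choose $\mathfrak r(x) = c \min\{d(x, \mathfs S)^a, d(f(x), \mathfs S)^a\}$ with $a$ large and $c > 0$ small. Then $D_x$ misses $\mathfs C$, so $f|_{D_x}$ is a diffeomorphism. For $E_x$, the naive estimate of the injectivity radius of $f$ at $x$ only gives $\sim d(x, \mathfs C)\cdot \mathfrak r(x)$, which is insufficient. The resolution is geometric: the fold curve $f(\mathfs C) = \{(\theta, a_0 + \alpha \sin(2\pi\theta))\}$ lies at distance $\asymp t^2 = d(x, \mathfs C)^2$ from $f(x)$, so the local inverse branch of $f$ at $f(x)$ extends to a ball of radius comparable with $d(x, \mathfs C)^2$; choosing $a \geq 3$ guarantees $2\mathfrak r(x) \ll d(f(x), f(\mathfs C))$, which makes $f_x^{-1}$ well-defined on $E_x$.

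Finally, for stability under $C^2$-perturbations: all the bounds depend only on the $C^2$-norm of $f$, the expansion factor $d$, and the transversal geometry of $\mathfs C$ and $f(\mathfs C)$, which are preserved under sufficiently small $C^2$-perturbations. For any $\tilde f$ in a small enough $C^2$-neighborhood $\mathfs U_f$ of $f$, the implicit function theorem applied to $\det d\tilde f = 0$ produces a critical curve $\tilde{\mathfs C}$ that is a $C^1$-small perturbation of $\mathbb S^1 \times \{0\}$, with fold image $\tilde f(\tilde{\mathfs C})$ a $C^1$-small perturbation of $f(\mathfs C)$; the constants $a, c, \mathfrak K, \beta$ chosen for $f$ then continue to work uniformly throughout $\mathfs U_f$, completing the verification of (A1)--(A7).
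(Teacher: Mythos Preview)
Your argument is correct and follows essentially the same plan as the paper: (A1)--(A4) from the flat metric, explicit $C^1$ and $C^{2}$ bounds on $df$, $(df)^{-1}$ and $dg$ for (A6)--(A7) via Remark~\ref{remark-mult-constants}, and the implicit function theorem for stability of the critical curve under $C^2$ perturbations.

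The one genuine difference is your handling of (A5). You argue geometrically: the fold curve $f(\mathfs C)$ sits at distance $\asymp t^2$ from $f(x)$, so the inverse branch extends to a ball of that radius. The paper instead invokes the quantitative inverse function theorem: from $\|(df_x)^{-1}\|^{-1}\gtrsim d(x,\mathfs S)$ it gets directly that $f$ is a diffeomorphism on a ball of radius $\sim d(x,\mathfs S)$, and then that $f(B_x)$ contains a ball of radius $\sim d(x,\mathfs S)^2$ around $f(x)$, yielding $\mathfrak r(x)=c\,d(x,\mathfs S)^2$. Both routes give the same exponent, but the paper's is cleaner: it needs no analysis of the fold image, applies verbatim to every map in the $C^2$ neighborhood, and avoids the implicit step (present in your argument) of checking that avoiding $f(\mathfs C)$ really is the only obstruction to extending the inverse branch. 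Your setup with $\mathfs D=\partial M$ is harmless but unnecessary; the paper simply works in $\mathbb S^1\times\R$ with $\mathfs D=\emptyset$, where the flat metric makes (A1)--(A4) automatic.
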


\begin{proof}
Start observing that $\mathbb S^1\times\R$ has zero curvature, hence (A1)--(A4) are automatic.
It remains to obtain (A5)--(A7).
In this proof, we will write $f_0:=f_{a_0,d,\alpha}$ and let $f$ be a map in a $C^2$ neighborhood of $f_0$.
While we will check (A5) directly, recall from Remark~\ref{remark-mult-constants} that (A6)--(A7) follow
from simpler estimates, allowing multiplicative constants and the Lipschitz constants of the derivatives
of inverse branches $g:E_x\to g(E_x)$ to be bounded by a multiple of $d(x,\mathfs S)^{-c}$ for some $c>0$.

The map $f_0$ is $C^2$ with $df_0(\theta,t)=\begin{bmatrix}d & 0 \\ * & -2t\end{bmatrix}$, 
hence it has no discontinuities and its critical/singular set is equal to
$\mathfs S_{f_0}=\mathbb S^1\times \{0\}$. We fix a $C^2$ neighborhood $\mathfs U$ of $f_0$
s.t. for all $f\in\mathfs U$:
\begin{enumerate}[$\circ$]
\item $\frac{\partial}{\partial t}\det df_x<-2$ for $x=(\theta,t)$.
\item $\|df\|_{C^0}<\infty$ and $\min\limits_{x\in \mathbb S^1\times I_0}\|df_x\|>0$. 
\end{enumerate}
Using the first condition, the implicit function theorem implies that the singular set of $f$ is
a graph over $\mathbb S^1$, i.e. $\mathfs S_f=\{(\theta,t_\theta);\theta\in \mathbb S^1\}$.
 
Fix $f\in \mathfs U$, and write $\mathfs S=\mathfs S_f$ and $h(x)=|\det df_x|=\|df_x\|\cdot \|(df_x)^{-1}\|^{-1}$.
We start estimating $\|(df_x)^{-1}\|^{-1}$ from below in terms of $d(x,\mathfs S)$.
Fix $x\in \mathbb S^1\times I_0\setminus \mathfs S$.
By assumption, $\left|\tfrac{\partial h}{\partial t}\right|>2$ and so by the mean value theorem
$h(x)=h(\theta,t)-h(\theta,t_\theta)\geq 2|t-t_\theta|\geq 2d(x,\mathfs S)$, hence 
$$
\|(df_x)^{-1}\|^{-1}=\tfrac{h(x)}{\|df_x\|}\geq 2\|df\|_{C^0}^{-1}d(x,\mathfs S).
$$
By the quantitative version of the inverse function theorem, see e.g. \cite{Smart}, there
is a constant $K=K(f)$ s.t. $f$ is invertible in a ball centered at $x$ with radius
$K\|(df_x)^{-1}\|^{-1}\geq 2K\|df\|_{C^0}^{-1} d(x,\mathfs S)$.
Renaming and reducing $K$ if necessary, we can take $r_1(x):=Kd(x,\mathfs S)$ with $K<\tfrac{1}{2}$ and
assume that the restriction of $f$ to $B_x:=B(x,r_1(x))$ is a diffeomorphism. Now we prove (A5)--(A7).

\medskip
\noindent
(A5) If $y\in B_x$ then $d(y,\mathfs S)\geq \tfrac{1}{2}d(x,\mathfs S)$, hence
$\|(df_y)^{-1}\|^{-1}\geq 2\|df\|_{C^0}^{-1}d(y,\mathfs S)\geq \|df\|_{C^0}^{-1}d(x,\mathfs S)$.
Therefore $f(B_x)$ contains the ball with center $f(x)$ and radius
$\|df\|_{C^0}^{-1}d(x,\mathfs S)r_1(x)=K\|df\|_{C^0}^{-1}d(x,\mathfs S)^2$ and so
$\mathfrak r(x):=\tfrac{1}{2}K\|df\|_{C^0}^{-1}d(x,\mathfs S)^2$ satisfies (A5) up to multiplicative constants.

\medskip
\noindent
(A6) By hypothesis, $\|df\|_{C^0}<\infty$ and $\min\limits_{y\in \mathbb S^1\times I_0}\|df_y\|>0$
and so the assumption on $df$ is satisfied up to multiplicative constants.
Now let $g:E_x\to g(E_x)$ be the inverse branch of $f$
taking $f(x)$ to $x$. Let $z\in E_x$ and $g(z)=y\in B_x$. We have
$\|dg_z\|=\|(df_y)^{-1}\|\geq \tfrac{1}{\|df_y\|}\geq \|df\|_{C^0}^{-1}$ and
$$
\|dg_z\|=\|(df_y)^{-1}\|=\tfrac{\|df_y\|}{h(y)}\leq \tfrac{\|df\|_{C^0}}{2d(y,\mathfs S)}\leq \|df\|_{C^0}d(x,\mathfs S)^{-1},
$$
thus proving that $dg$ satisfied (A6) up to multiplicative constants.

\medskip
\noindent
(A7) The estimate on $df$ holds because $f$ has bounded $C^2$ norm. 
We now estimate the Lipschitz constant of $dg$. If $z,w\in E_x$ then
\begin{align*}
&\,\|dg_z-dg_w\|=\|(df_{g(z)})^{-1}-(df_{g(w)})^{-1}\|\\
&\leq\|(df_{g(z)})^{-1}\|\cdot\|(df_{g(w)})^{-1}\|\cdot\|df_{g(z)}-df_{g(w)}\|\\
&\leq \|d^2f\|_{C^0}\cdot\|(df_{g(z)})^{-1}\|\cdot\|(df_{g(w)})^{-1}\|\cdot d(g(z),g(w))\\
&\leq \left(\|df\|_{C^0}\|d^2f\|_{C^0}\|(df_{g(z)})^{-1}\| \|(df_{g(w)})^{-1}\| d(x,\mathfs S)^{-1}\right) d(z,w)\\
&\leq  \left(\|df\|_{C^0}^3\|d^2f\|_{C^0}d(x,\mathfs S)^{-3}\right) d(z,w),
\end{align*}
hence we obtain (A7) for $g$ up to multiplicative constants.
\end{proof}


\begin{proposition}\label{prop.viana-adapted}
If $f$ is as above, then every ergodic $f$--invariant measure $\mu$ with
$h_\mu(f)\geq h_{\mu_{\rm SRB}}(f)$ is hyperbolic and $f$--adapted.
The same applies to every map that is $C^3$ close to $f$.
\end{proposition}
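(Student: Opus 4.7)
The strategy is to extract hyperbolicity directly from Ruelle's inequality and then deduce adaptedness from the elementary algebraic relation between the Jacobian and the distance to the singular set. For the unperturbed $f(\theta,t)=(d\theta,\,a_0+\alpha\sin(2\pi\theta)-t^2)$ the skew-product structure forces every ergodic $f$-invariant measure $\mu$ to have a Lyapunov exponent $\lambda_1=\log d$ in the base direction; moreover $\det df_x=-2td$, so the singular set is $\mathfs S=\mathbb S^1\times\{0\}$ and
\[
|\det df_x|=2d\cdot d(x,\mathfs S).
\]
Viana's construction \cite{Viana-maps} combined with Pesin's entropy formula for the absolutely continuous invariant measure yields $h_{\mu_{\rm SRB}}(f)=\log d+\lambda_2^{\rm SRB}$ with a fiber exponent $\lambda_2^{\rm SRB}\ge c>0$. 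Ruelle's inequality applied to $\mu$ gives $h_\mu(f)\le \log d+\lambda_2^+$, so the hypothesis $h_\mu(f)\ge h_{\mu_{\rm SRB}}(f)$ forces $\lambda_2\ge \lambda_2^{\rm SRB}\ge c$. Taking $\chi:=c$ shows that $\mu$ is $\chi$-hyperbolic.

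For adaptedness, set $\psi:=\log|\det df|$. Since $\|df\|$ is globally bounded, $\psi^+$ is bounded and hence integrable. The Oseledets theorem for non-invertible cocycles (applicable because $\log^+\|df\|\in L^\infty(\mu)$) implies that $\tfrac{1}{n}\log|\det df^n_x|$ converges $\mu$-a.e.\ to the finite number $\lambda_1+\lambda_2$. Writing $\psi=\psi^+-\psi^-$ and applying Birkhoff to the bounded function $\psi^+$, one obtains that the Birkhoff averages of the non-negative function $\psi^-$ converge $\mu$-a.e.\ to the finite value $\int\psi^+\,d\mu-(\lambda_1+\lambda_2)$. By the monotone (possibly $+\infty$-valued) form of Birkhoff's theorem this limit equals $\int\psi^-\,d\mu$, forcing $\psi^-\in L^1(\mu)$. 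Combined with the identity $\log d(x,\mathfs S)=\psi(x)-\log(2d)$, this gives $\log d(x,\mathfs S)\in L^1(\mu)$, so $\mu$ is $f$-adapted.

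For a map $\tilde f$ in a $C^3$ neighborhood of $f$ the same two-step argument goes through once the exact identities $\lambda_1=\log d$ and $|\det df_x|=2d\cdot d(x,\mathfs S)$ are replaced by approximate versions. The first follows from the persistent dominated splitting of $f$: the strong-unstable cone remains uniformly expanding at rate $\log d-O(\epsilon)$, so every ergodic $\tilde f$-invariant measure has top Lyapunov exponent at least $\log d-\epsilon$. The second is essentially the content of the proof of Proposition \ref{prop.viana-conditions}: $\det d\tilde f$ vanishes transversally on its graph singular set $\mathfs S_{\tilde f}$, so $|\det d\tilde f_x|\asymp d(x,\mathfs S_{\tilde f})$ with constants uniform on a $C^2$ neighborhood, whence $\log|\det d\tilde f|$ and $\log d(\cdot,\mathfs S_{\tilde f})$ differ by a bounded function. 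The main obstacle, and the only point where smallness of the perturbation is really needed, is securing a $C^3$-uniform lower bound for $h_{\mu_{\rm SRB,\tilde f}}(\tilde f)-\log d$; this follows from the quantitative nonuniform expansion estimates of Alves-Viana \cite{Alves-Viana-2002}, which are stable under $C^3$ perturbations and provide a uniform positive lower bound for the fiber exponent of $\mu_{\rm SRB,\tilde f}$.
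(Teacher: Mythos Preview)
Your proposal is correct and follows essentially the same two-step scheme as the paper: hyperbolicity via Ruelle's inequality plus Pesin's entropy formula for $\mu_{\rm SRB}$ and Viana's positive fibre exponent, and adaptedness via the comparison $|\det df_x|\asymp d(x,\mathfs S)$ together with finiteness of the sum of Lyapunov exponents. Your Birkhoff argument with the $\psi^+/\psi^-$ decomposition is equivalent to the paper's invocation of Kingman's theorem.

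One point needs tightening in the perturbed case. Your cone-field justification (``strong-unstable cone remains uniformly expanding at rate $\log d-O(\epsilon)$'') only yields the \emph{lower} bound $\overline\lambda_1\ge\log d-\epsilon$. But the comparison $h_\mu\ge h_{\mu_{\rm SRB}}=\lambda_1^{\rm SRB}+\lambda_2^{\rm SRB}$ combined with Ruelle's inequality $h_\mu\le\overline\lambda_1+\overline\lambda_2^+$ forces $\overline\lambda_2>0$ only once you also know $\overline\lambda_1\le\log d+\epsilon$; otherwise a large $\overline\lambda_1$ could absorb all the entropy. The paper states this two-sided bound (``$\log(d)-\ve\le\liminf\le\limsup\le\log(d)+\ve$'') and uses it via $|\lambda_1-\overline\lambda_1|\le 2\ve<c$. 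The upper bound is easy---it follows from the explicit triangular estimate $\|df_0^n\|\le (1+o(1))d^n$ and its $C^1$-stability---but you should state it. Also, ``persistent dominated splitting'' is not quite the right phrase here (there is no uniform domination near the critical set; what persists is the strong-unstable cone field), and the robustness of the positive fibre exponent under $C^3$ perturbation is already in Viana's original paper \cite{Viana-maps} rather than \cite{Alves-Viana-2002}.
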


\begin{proof}
Similarly to the previous proposition, write $f_0:=f_{a_0,d,\alpha}$ and let $f$
be a map in a $C^3$ neighborhood of $f_0$. 
Given $(\theta_0,t_0)\in\mathbb S^1\times I_0$, 
we set $(\theta_\ell,t_\ell):=f_0^\ell(\theta_0,t_0)$. Thus
$
(df_0^n)_{(\theta_0,t_0)}=
\begin{bmatrix}
d^n & 0\\
b_n(\theta_0,t_0) & c_n(\theta_0,t_0)
\end{bmatrix}
$
where $c_n(\theta_0,t_0)=(-2)^n\prod_{\ell=0}^{n-1}t_\ell$ and 
$|b_n(\theta,t)|\leq  n d^n$ for all $n\geq1$, hence 
$\lim\limits_{n\to \infty}\frac{1}{n}\log \|(df_0^n)_x\|=\log(d)$ for all $x\in \mathbb S^1\times I_0$.
Moreover, if $\ve>0$ then there is $\delta>0$ s.t. every $f$ with $d_{C^1}(f,f_0)<\delta$ 
satisfies
$$
\log(d)-\ve\leq\liminf_{n\to \infty}\tfrac{1}{n}\log \| df^n_x\|\leq 
\limsup_{n\to \infty}\tfrac{1}{n}\log \|df^n_x\|\leq \log(d)+\ve.
$$
This implies that if $\lambda_1\geq \lambda_2$ are the Lyapunov exponents of $f$ with respect
to $\mu_{\rm SRB}$, then $\log(d)-\ve\leq \lambda_1\leq \log(d)+\ve$.
By \cite{Viana-maps}, there is a constant $c=c(f_0)>0$ s.t. if $f$ is $C^3$ close to $f_0$
then $\lambda_2>c$. In the sequel, we fix $\ve<\tfrac{c}{2}$ and take $f$ s.t.
both estimates on $\lambda_1,\lambda_2$ hold.

Let $\mu$ be an ergodic $f$--invariant probability measure with $h_\mu(f)\geq h_{\mu_{\rm SRB}}(f)$.
We first prove that $\mu$ is hyperbolic.
By the Pesin entropy formula (see \cite{Liu-entropy-endo} for a version for endomorphisms),
$h_{\mu_{\rm SRB}}(f)=\lambda_1+\lambda_2$ and by Ruelle inequality we have 
$h_\mu(f)\leq  \overline{\lambda}_1+\max\{0,\overline{\lambda}_2\}$,
where $\overline{\lambda}_1\geq\overline{\lambda}_2$ are the Lyapunov exponents of $f$ for
$\mu$. 

Let us prove that $\overline{\lambda}_2>0$. By contradiction assume $\overline{\lambda}_2\leq 0$. Recall that by assumption, $|\lambda_1-\overline{\lambda}_1|\leq 2\ve$
and so the condition $\overline{\lambda}_1\geq h_\mu(f)\geq h_{\mu_{\rm SRB}}(f)$ implies that
$$
\overline{\lambda}_1\geq \lambda_1+\lambda_2
$$
which gives that $0\geq \lambda_2-|\lambda_1-\overline{\lambda}_1|>c-2\ve>0$, a contradiction.
This proves that $\mu$ is hyperbolic and $\overline{\lambda}_2>0$. It remains to prove that it is $f$--adapted.
Since $f$ is $C^2$, there is a constant $C>0$ s.t. $\det df_x\leq C d(x,\mathfs S)$
and so it is enough to show that $\log(\det df_x)\in L^1(\mu)$. This is easy: 
first observe that $\log^+(\det df_x)$ is uniformly bounded, then 
$\log^+(\det df_x)\in L^1(\mu)$ so we can apply Kingman theorem to $\log(\det df_x)$. 
Recall that $\lim_{n\to\infty}\tfrac{1}{n}\log(\det df^n_x)=\overline{\lambda}_1+\overline{\lambda}_2$, 
$\mu$ almost everywhere. Kingman theorem implies that 
$\int \lim_{n\to\infty}\tfrac{1}{n}\log(\det df^n_x) d\mu=\int \log(\det df_x) d\mu$, hence  
$\int \log(\det df_x)d\mu\in (0,\infty)$. The proof is complete.
\end{proof}

\section{Appendix: The graph transform method}

In this appendix, we explain how to define and construct the graph transforms
induced by a map that is $C^{1+\delta}$ close to a hyperbolic matrix, for some $\delta>0$.
The results proved here are used in Section \ref{Sec-graph-transform}.
We start recalling some definitions introduced in Section \ref{Section-preliminaries}.
Let $\norm{\cdot}$ be the euclidean norm of $\mathbb R^n$, induced by the canonical
inner product $\langle \cdot,\cdot\rangle_{\R^n}$ of $\R^n$.
Given a linear transformation $T:\R^n\to \R^n$, let $\|T\|=\sup\limits_{v\in\R^n\setminus\{0\}}\tfrac{\|Tv\|}{\|v\|}$.
Given an open bounded set $U\subset \R^n$ and $h:U\to \R^m$,
let $\|h\|_{C^0}:=\sup_{x\in U}\|h(x)\|$ denote the $C^0$ norm of $h$. 
When the domain $U$ is not clear in the context, we will write $\|h\|_{C^0(U)}$.
For $0<\delta\leq 1$, let $\Hol{\delta}(h):=\sup\frac{\norm{h(x)-h(y)}}{\|x-y\|^\delta}$ 
where the supremum ranges over distinct elements $x,y\in U$. 
We call $h$ a {\em contraction} if ${\rm Lip}(h)<1$, and an {\em expansion} if $h$ is invertible
and ${\rm Lip}(h^{-1})<1$. When $h$ is a linear transformation, then $\|h\|={\rm Lip}(h)$.
We also define $\norm{h}_{C^\delta}=\norm{h}_{C^0}+\Hol{\delta}(h)$.
If $h$ is differentiable then ${\rm Lip}(h)=\|dh\|_{C^0}$. Let
$\|h\|_{C^1}:=\|h\|_{C^0}+\|dh\|_{C^0}=\|h\|_{C^0}+{\rm Lip}(h)$ denote its $C^1$ norm,
and for $0<\delta\leq 1$ let
$$
\|h\|_{C^{1+\delta}}:=\|h\|_{C^1}+\Hol{\delta}(dh)=\|h\|_{C^0}+\|dh\|_{C^\delta}=
\|h\|_{C^0}+\|dh\|_{C^0}+\Hol{\delta}(dh)
$$
denote its $C^{1+\delta}$ norm. Below we list some basic properties of these norms.

\begin{lemma}\label{App-Lemma-Holder-norm}
The following holds:
\begin{enumerate}[{\rm (1)}]
\item If $\vf,\psi$ are maps s.t. $\vf\circ\psi$ is well-defined, then
$$
\left\{
\begin{array}{l}
\Hol{\delta}(\vf\circ\psi)\leq {\rm Lip}(\vf)\Hol{\delta}(\psi)\\
\\
\Hol{\delta}(\vf\circ\psi)\leq \Hol{\delta}(\vf){\rm Lip}(\psi)^\delta.
\end{array}
\right.
$$
In particular, if $\psi$ is a contraction then $\norm{\vf\circ\psi}_{C^\delta}\leq \norm{\vf}_{C^\delta}$.
\item {\rm (Perturbation of identity)} If $\norm{\vf}_{C^\delta}<1$, then $I+\vf$ is invertible and
$$
\norm{(I+\vf)^{-1}}_{C^\delta}\leq \tfrac{1}{1-\norm{\vf}_{C^\delta}}\cdot
$$
Similarly, if $\vf$ is linear and $\norm{\vf}<1$, then $I+\vf$ is invertible and
$$
\norm{(I+\vf)^{-1}}\leq \tfrac{1}{1-\norm{\vf}}\cdot
$$
\end{enumerate}
\end{lemma}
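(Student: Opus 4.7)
The proof splits into its two parts. Part (1) follows by direct unpacking of the Hölder-seminorm definition. Fixing distinct points $x,y$ in the domain of $\psi$, the first inequality comes from applying the Lipschitz bound on $\vf$ and then the Hölder bound on $\psi$:
\[
\|\vf(\psi(x))-\vf(\psi(y))\|\leq \Lip{\vf}\,\|\psi(x)-\psi(y)\|\leq \Lip{\vf}\,\Hol{\delta}(\psi)\,\|x-y\|^\delta;
\]
the second follows from the reversed order,
\[
\|\vf(\psi(x))-\vf(\psi(y))\|\leq \Hol{\delta}(\vf)\,\|\psi(x)-\psi(y)\|^\delta\leq \Hol{\delta}(\vf)\,\Lip{\psi}^\delta\,\|x-y\|^\delta.
\]
The \emph{in particular} assertion is then immediate: $\|\vf\circ\psi\|_{C^0}\leq \|\vf\|_{C^0}$ trivially, and since $\Lip{\psi}<1$ implies $\Lip{\psi}^\delta<1$, the second inequality gives $\Hol{\delta}(\vf\circ\psi)\leq \Hol{\delta}(\vf)$; summing yields the bound on $\|\vf\circ\psi\|_{C^\delta}$.

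For the linear case of part (2), the standard Neumann-series argument applies verbatim: the partial sums $S_N=\sum_{k=0}^{N}(-\vf)^k$ are Cauchy in operator norm, dominated by the convergent geometric series $\sum_{k\geq 0}\|\vf\|^k$, and hence converge to a bounded linear operator $\psi$. Passing the telescoping identities $(I+\vf)S_N=I-(-\vf)^{N+1}$ to the limit gives $(I+\vf)\psi=\psi(I+\vf)=I$, while the triangle inequality provides $\|\psi\|\leq 1/(1-\|\vf\|)$.

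The nonlinear case of part (2) is the main obstacle, and the plan is to construct the inverse via Picard iteration: set $\psi_0=I$ and $\psi_{k+1}=I-\vf\circ\psi_k$, so that a fixed point of $\psi=I-\vf\circ\psi$ is precisely a right inverse to $I+\vf$. The core estimate to be proved is the recursion
\[
\|\psi_{k+1}-\psi_k\|_{C^\delta}\leq \|\vf\|_{C^\delta}\,\|\psi_k-\psi_{k-1}\|_{C^\delta},
\]
from which one concludes that $(\psi_k)$ is Cauchy in $C^\delta$ with limit $\psi$ satisfying the fixed-point equation (the other-sided identity following by symmetry), and the telescoping geometric sum yields the quantitative bound $\|(I+\vf)^{-1}\|_{C^\delta}\leq 1/(1-\|\vf\|_{C^\delta})$. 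The main difficulty is that this recursion concerns the difference of two compositions with a common outer factor, not a single composition, so part (1) cannot be applied directly; instead one regroups $(\vf\circ\psi_k-\vf\circ\psi_{k-1})(x)-(\vf\circ\psi_k-\vf\circ\psi_{k-1})(y)$ as $[\vf(\psi_k(x))-\vf(\psi_k(y))]-[\vf(\psi_{k-1}(x))-\vf(\psi_{k-1}(y))]$ and estimates each bracket using both the Lipschitz and Hölder regularity of $\vf$, combined with an inductive uniform bound $\Lip{\psi_k}\leq 1/(1-\|\vf\|_{C^\delta})$ that is carried through the iteration.
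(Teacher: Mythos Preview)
Your argument for part (1) and for the linear case of part (2) is correct and is what one should write; the paper states the lemma without proof.

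The nonlinear case of part (2) is where your proposal goes astray, and the problem is the interpretation of the statement. Look at how the paper actually applies this item (in the proof of Lemma~\ref{App-Lemma-inverse-function}, part (3)): there $\vf=\wt L\,D_1^{-1}$ is a \emph{matrix-valued} $C^\delta$ function, and $(I+\vf)^{-1}$ denotes the pointwise matrix inverse $x\mapsto(I+\vf(x))^{-1}$. With this reading the proof is again the Neumann series, once one checks that the $C^\delta$ norm is submultiplicative for the pointwise matrix product:
\[
\|\vf\psi\|_{C^\delta}\;\le\;\|\vf\|_{C^0}\|\psi\|_{C^0}+\|\vf\|_{C^0}\Hol{\delta}(\psi)+\Hol{\delta}(\vf)\|\psi\|_{C^0}\;\le\;\|\vf\|_{C^\delta}\|\psi\|_{C^\delta},
\]
whence $\bigl\|\sum_{k\ge0}(-\vf)^k\bigr\|_{C^\delta}\le\sum_{k\ge0}\|\vf\|_{C^\delta}^k=(1-\|\vf\|_{C^\delta})^{-1}$.

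Under your point-map reading the statement is in fact false, and the Picard scheme cannot be repaired. First, $\|\vf\|_{C^\delta}<1$ with $\delta<1$ gives no Lipschitz control on $\vf$, so injectivity of $I+\vf$ does not follow (from $x-y=\vf(y)-\vf(x)$ one only gets $\|x-y\|^{1-\delta}\le\Hol{\delta}(\vf)$, which is no contradiction). Second, the $C^0$ norm of the inverse \emph{function} would be $\sup_{x\in U}\|x\|$, a quantity unrelated to $\vf$. Third, your key recursion already fails at the $C^0$ level: one only gets $\|\psi_{k+1}-\psi_k\|_{C^0}\le\Hol{\delta}(\vf)\,\|\psi_k-\psi_{k-1}\|_{C^0}^{\,\delta}$, and a recursion $a_{k+1}\le c\,a_k^{\delta}$ with $c<1$, $\delta<1$ does not drive $a_k$ to zero (it stabilises near $c^{1/(1-\delta)}$). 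Finally, the H\"older estimate you sketch explicitly invokes ``the Lipschitz regularity of $\vf$'', which is not part of the hypothesis.
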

 
Obviously, the euclidean norm is equivalent to any other norm in $\R^n$, but the 
notions of contraction and expansion differ from the choice of the norm. 
On the other hand, statements claiming that a norm is small are not very sensitive
to the choice of the norm.
Given $r>0$, denote the ball of $\R^n$ with center 0 and radius $r$ by $B^n[r]$. 
Observe that if $0\leq d\leq n$ then $B^n[r]\subset B^d[r]\times B^{n-d}[r]\subset B^n[\sqrt{2}r]$.

In this appendix, we fix some parameters:
\begin{enumerate}[$\circ$]
\item $m>0$, the dimension of the euclidean space.
\item $0\leq d\leq m$, the dimension of one of the invariant subspaces. The other invariant subspace
has dimension $m-d$. 
\item $\delta\in (0,1)$, the regularity of derivatives.
\item $\chi>0$, that controls the minimal contraction and expansion.
\item $\ve>0$, the parameter of approximation.
\item $r>0$, the size of the domain where the maps will be defined.
\end{enumerate}
The reader should have in mind that the construction will be made for given parameters
$m,d,\delta,\chi$,
and that $\ve,r$ will be chosen to make the calculations work. The order of choice will be as follows: $\ve>0$ will
be arbitrarily small to make a finite number of inequalities to hold, and $r$ will be of the form
$\ve^{2/\delta}\times C$ with $C<1$. In particular, $2r^\delta<\ve$ for small $\ve>0$,
and we will use this inequality many times. Since $m$ will be fixed throughout the appendix,
we denote the ball of $\R^m$ with center 0 and radius $r$ simply by $B[r]$.

\subsection{Admissible graphs}

The introduction of admissible graphs requires fixing some extra parameters:
\begin{enumerate}[$\circ$]
\item $p\leq r/2$, the parameter that control the sizes of stable graphs.
\item $q\leq r/2$, the parameter that control the sizes of unstable graphs.
\end{enumerate}
All definitions in this section depend on the choice of these parameters. 
For simplicity of notation, the first $d$ coordinates in $\R^m$ will represent
the unstable subspace.

\medskip
\noindent
{\sc Graphs:} Given a $C^{1+\delta}$ function $G:B^d[p]\to \R^{m-d}$, let
$$
V^u=\{(v,G(v)): v\in B^d[p]\}
$$
denote the graph of $G$, and call it a {\em $u$--graph}.
Similarly, given a $C^{1+\delta}$ function $G:B^{m-d}[q]\to \R^{d}$, let
$$
V^s:=\{(G(w),w): w\in B^{m-d}[q]\}
$$
denote the graph of $G$, and call it an {\em $s$--graph}. 
We call $G$ the {\em representing function}.

\medskip
Among the $s/u$--graphs, we are interested in those that can eventually represent
stable/unstable manifolds, and for that we require some extra conditions on the
representing functions. We call them {\em admissible graphs}.
The definition requires some conditions on the representing function
that depends on the minimum $p\wedge q$, so that we can later relate $s$--graphs and
$u$--graphs. To simplify the exposition and ease the notation, we will work only with unstable graphs
with a given parameter $p$, and introduce an extra parameter $\eta\leq p$ that, for applications 
in Section \ref{Sec-graph-transform},
will be equal to $\eta=p\wedge q$, so that the results obtained in this and next section
work for both $s$--admissible and $u$--admissible graphs. We will also
just use the terminology graph and not mention that it is $u$ or $s$ graph.
From now on, fix $\eta\leq p\leq r/2$.

\medskip
\noindent
{\sc Admissible graph:} A graph $V=\{(v,G(v)): v\in B^d[p]\}$ is called an
{\em admissible graph} if its representing function $G$ satisfies the following conditions:
\begin{enumerate}
\item[(AM1)] $\|G(0)\|\leq 10^{-3}\eta$.
\item[(AM2)] $\|(dG)_0\|\leq \tfrac{1}{2}\eta^\delta$.
\item[(AM3)] $\norm{dG}_{C^\delta}=\|dG\|_{C^0}+{\rm Hol}_\delta(dG)\leq \tfrac{1}{2}$.
\end{enumerate}

\medskip
The estimate on the representing function is linear in $\eta$, while
the estimates on its derivative is of the order of $\eta^\delta$. This distinction
will be clear when we start establishing estimates in the next results. We start
with a basic lemma.

\begin{lemma}\label{App-Lemma-introductory}
If $G$ is the representing function of an admissible graph, then:
\begin{enumerate}[{\rm (1)}]
\item $\|dG\|_{C^0}<\ve$ where the norm is taken in $B^d[p]$.
\item $\|G\|_{C^0(B^d[t])}<10^{-2}t$, for every $t\in [\eta,p]$.
\end{enumerate}
\end{lemma}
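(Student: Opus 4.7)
The plan is to derive both estimates directly from (AM1)--(AM3) combined with the relation $2r^\delta<\ve$ (which holds because $r$ is chosen of the form $\ve^{2/\delta}C$ with $C<1$) and the inequality $p\leq r/2$.

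For part (1), the strategy is to bound $\|(dG)_v\|$ at an arbitrary $v\in B^d[p]$ by interpolating from $0$. Specifically, the triangle inequality gives
\[
\|(dG)_v\|\leq \|(dG)_0\|+\mathrm{Hol}_\delta(dG)\,\|v\|^\delta.
\]
The first term is at most $\tfrac{1}{2}\eta^\delta\leq \tfrac{1}{2}p^\delta$ by (AM2), and the second is at most $\tfrac{1}{2}p^\delta$ because (AM3) bounds $\mathrm{Hol}_\delta(dG)$ by $\tfrac{1}{2}$ and $\|v\|\leq p$. Summing, $\|(dG)_v\|\leq p^\delta$. Since $p\leq r/2$ and $2r^\delta<\ve$, this forces $p^\delta<\ve$, giving (1).

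For part (2), I would use the mean value inequality together with part (1). For $v\in B^d[t]$ with $t\in[\eta,p]$,
\[
\|G(v)\|\leq \|G(0)\|+\|dG\|_{C^0(B^d[t])}\,\|v\|\leq 10^{-3}\eta+\ve t.
\]
Using $\eta\leq t$ from the assumption $t\in[\eta,p]$, the right-hand side is at most $(10^{-3}+\ve)t$, which is smaller than $10^{-2}t$ once $\ve$ is taken sufficiently small (any $\ve<9\cdot 10^{-3}$ suffices).

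There is no genuine obstacle here; the only point requiring a moment of attention is that in (2) one must exploit the lower bound $t\geq\eta$ to absorb the constant $10^{-3}\eta$ coming from (AM1) into the linear factor $10^{-2}t$, and in (1) one must remember that (AM3) gives a bound on the H\"older seminorm of $dG$ and not merely on its $C^0$ norm, so that the non-trivial part of the estimate comes from controlling $\mathrm{Hol}_\delta(dG)\|v\|^\delta$ via $\|v\|\leq p\leq r/2$.
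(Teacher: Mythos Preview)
Your proof is correct and follows essentially the same argument as the paper: for (1) you interpolate $\|(dG)_v\|$ from $0$ via (AM2) and the H\"older bound in (AM3), then use $\eta\leq p\leq r/2$ together with $2r^\delta<\ve$; for (2) you apply the mean value inequality with (AM1), part (1), and $t\geq\eta$. The only cosmetic difference is that the paper keeps the two summands as $\tfrac{1}{2}\eta^\delta+\tfrac{1}{2}p^\delta\leq r^\delta<\ve$ rather than first majorizing $\eta^\delta$ by $p^\delta$.
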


\begin{proof}
(1) By (AM2)--(AM3), if $v\in B^d[p]$ then
$$
\|(dG)_v\| \leq \|(dG)_0\|+\tfrac{1}{2}\|v\|^\delta\leq \tfrac{1}{2}\eta^\delta+\tfrac{1}{2}p^\delta\leq r^\delta<\ve.
$$

\medskip
\noindent
(2) By the mean value inequality, the inequality above and (AM1) we obtain that
for $v\in B^d[t]$ we have
$$
\|G(v)\|\leq \|G(0)\|+\|dG\|_{C^0}\|v\|\leq 10^{-3}\eta+\ve t\leq (10^{-3}+\ve)t<10^{-2}t,
$$
where in the last inequality we require $\ve>0$ to be small.
\end{proof}

\medskip
We let $\M=\M^u_{p,\eta}$
denote the set of all admissible graphs as defined above.
We define a metric on $\M$,
comparing the representing functions. More specifically, for $V_1,V_2\in\M$ with
representing functions $G_1,G_2$ and for $i\geq 0$, define
$d_{C^i}(V_1,V_2):=\|G_1-G_2\|_{C^i}$ where the norm is taken in $B^d[p]$.

\subsection{Graph transforms}\label{App-Section-graph-transforms}

Now we introduce a dynamics for which we want to induce an action 
on admissible graphs. For that, consider a map $F$ that is close to a hyperbolic linear map.
We will apply the map $F$ to elements of $\M=\M^u_{p,\eta}$.
The idea is to see (a restriction of) each image as an admissible graph in $\wM=\M^u_{\wt p,\wt\eta}$,
where $\wt p,\wt\eta$ satisfy $\wt \eta\leq \wt p$ and 
are related to $p,\eta$. The relation that the four parameters, $p,\eta,\wt p,\wt\eta$,
must satisfy are consequences of the contraction/expansion properties required on $F$.
When properly implemented, we will have constructed an operator $\F:\M\to \wM$,
called {\em unstable graph transform} or simply {\em graph transform}.

We start fixing the four parameters that will define the spaces $\M$ and $\wM$:
\begin{enumerate}[$\circ$]
\item $0<\eta\leq p$, the parameters defining the domain $\M=\M^u_{p,\eta}$ of $\F$. 
\item $0<\wt\eta\leq \wt p$, the parameters defining the codomain
$\wM=\M^u_{\wt p,\wt\eta}$ of $\F$. 
\end{enumerate}
For $\ve>0$ small enough, we require the following:
\begin{enumerate}[ii..]
\item[(GT1)] $\wt p\leq e^\ve p$ and $\tfrac{\wt\eta}{\eta}=e^{\pm\ve}$.
\end{enumerate}
Now, let $F:B[r]\to\R^m$ be a $C^{1+\delta}$ map of the form $F=D+H$ with
$$
D=\left[\begin{array}{cc}D_1 & 0 \\
0 & D_2
\end{array}\right]
$$
s.t. the following conditions hold:
\begin{enumerate}[ii..]
\item[(GT2)] $D_1:\R^d\to\R^d$ and $D_2:\R^{n-d}\to\R^{n-d}$ are invertible linear maps
s.t. $\|D_1^{-1}\|<e^{-\chi}$ and $\|D_2\|<e^{-\chi}$.
\item[(GT3)] $H:B[r]\to\R^m$ satisfies:
\begin{enumerate}[(a)]
\item $\|H(0)\|<\ve\eta$.
\item $\|dH\|_{C^0(B[t])}<\ve t^\delta$, for every $t\in [\eta,2p]$.
\item ${\rm Hol}_\delta(dH)<\ve$, i.e. $\|dH_x-dH_y\|<\ve\|x-y\|^\delta$
for distinct $x,y\in B[r]$.
\end{enumerate}
\end{enumerate}
Let us explain conditions (GT1)--(GT3).
Conditions (GT2) and (GT3) state that $F$ is close to a hyperbolic matrix in the $C^{1+\delta}$ norm,
where the first $d$ coordinates expand and the last $m-d$ coordinates
contract\footnote{Recall that we have inverted the order
of $D_1,D_2$ in comparison to the main text.}.
To explain condition (GT1), consider an admissible graph $V$ with representing function
$G:B^d[p]\to\R^{m-d}$. We want to consider $F(V)$
as a new graph with representing function defined on $B^d[\wt p]$.
Since $F$ stretches $V$ horizontally roughly by $e^{\chi}$, if $\ve>0$ is small enough
then this growth is more than $e^{\ve}$.
By (GT1) we have $\wt p\leq e^\ve p$, therefore $F(V)$ contains an
admissible graph with representing function $\widetilde G:B^d[\wt p]\to \R^{m-d}$.
Moreover, by (GT1) the parameters $\eta,\wt\eta$ are comparable, and by the control on the error
term $H$ given by (GT2)--(GT3), we actually obtain that $\wt G$ satisfies
conditions (AM1)--(AM3) for the parameters $\wt p,\wt\eta$, thus defining an admissible graph
in $\wM$. The attentive reader might have observed the similarity
between (GT2)--(GT3) and the statement of Theorem \ref{Thm-non-linear-Pesin-2}.
It is indeed for $F_{\wh x,\wh y}^{\pm 1}$ that we will apply the results proved in this appendix.

Observe that, contrary to \cite{Sarig-JAMS,Ben-Ovadia-2019}, we do not require any uniform upper
bound on $\|D_1\|$ neither lower bound on $\|D_2\|$, so that the results can be applied
to the greater generality considered in the main text.
Indeed, classical works on the graph transform are set up similar to what we do, see e.g.
\cite{KSLP,Katok-Hasselblatt-Book}.
We start defining $\F$ properly.

\medskip
\noindent
{\sc Graph transform $\mathfs F$:} The {\em graph transform}
is the map $\F:\M\to\wM$ that sends an admissible graph $V\in \M$ with representing function $G$
to the unique admissible graph $\widetilde V$ with representing function
$\widetilde G:B^d[\wt p]\to\R^{m-d}$ s.t.
$\{(\wt v,\widetilde G(\wt v)):\wt v\in B^d[\wt p]\}\subset F(V)$.

\medskip
We already claim that $\wt V\in \wM$, but this requires
checking properties (AM1)--(AM3), that we will do in the sequel.
From now on write $H=(h_1,h_2)$, so that
$$
F(v,w)=(D_1v+h_1(v,w),D_2+h_2(v,w)).
$$

\subsubsection{Proof that $F(V)$ restricts to a graph}

We start showing that $F(V)$ can be restricted to a graph. This means that, in the proper
domain, the first coordinate of $F(V)$ is injective.

\begin{lemma}\label{App-Lemma-inverse-function}
The following holds for all $\ve>0$ small enough. The map $\Psi:B^d[p]\to \R^d$ defined by
\begin{align}\label{Def-Psi}
\Psi(v)=D_1v+h_1(v,G(v))
\end{align}
is injective, its image $\Psi(B^d[p])$ contains $B^d[e^{\chi-\sqrt{\ve}}p]$,
and its inverse $\Phi:\Psi(B^d[p])\to B^d[p]$  satisfies the following:
\begin{enumerate}[{\rm (1)}]
\item $\|d\Phi\|_{C^0}<e^{-\chi+\ve}$.
\item $\|\Phi(0)\|<2\ve\eta e^{-\chi+\ve}<2\ve\eta$.
\item $\norm{d\Phi}_{C^\delta}<e^{-\chi+3\ve}$.
\end{enumerate}
\end{lemma}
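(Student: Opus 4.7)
The plan is to factor $\Psi$ as $D_1\circ(I+E)$ where $E(v):=D_1^{-1}h_1(v,G(v))$, apply Lemma~\ref{App-Lemma-Holder-norm}(2) to invert $I+E$, and then read off the three estimates via the explicit formula $\Phi=(I+E)^{-1}\circ D_1^{-1}$. The key observation is that $E$ will turn out to be a $C^{1+\delta}$ map with very small $C^{1+\delta}$ norm, since it inherits the smallness of $h_1$ (from (GT3)) amplified by the contraction factor $\|D_1^{-1}\|<e^{-\chi}$ from (GT2).

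First I would establish auxiliary bounds for the composition $K(v):=h_1(v,G(v))$ on $B^d[p]$. Lemma~\ref{App-Lemma-introductory} places $(v,G(v))\in B[2t]$ for $v\in B^d[t]$ with $t\in[\eta,p]$, so applying (AM1)--(AM3) and (GT3) together with the chain and product rules gives
\begin{align*}
\|K(0)\| &\leq \|H(0)\|+\|dh_1\|_{C^0(B[2\eta])}\|G(0)\| < \ve\eta\bigl(1+10^{-3}\ve(2\eta)^\delta\bigr),\\
\|dK\|_{C^0(B^d[t])} &\leq \|dh_1\|_{C^0(B[2t])}(1+\|dG\|_{C^0}) < \ve(2t)^\delta(1+\ve),\\
\Hol{\delta}(dK) &\leq \Hol{\delta}(dh_1)\cdot 2^\delta(1+\|dG\|_{C^0})+\|dh_1\|_{C^0(B[2p])}\Hol{\delta}(dG) < 5\ve,
\end{align*}
where the smallness $r^\delta<\ve/2$ absorbs the secondary contributions into $O(\ve^2)$ corrections. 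Setting $E:=D_1^{-1}\circ K$ and using (GT2), I would deduce $\|E(0)\|\leq e^{-\chi}\|K(0)\|$, $\|dE\|_{C^0}<\ve e^{-\chi}(2p)^\delta$, and $\Hol{\delta}(dE)<5\ve e^{-\chi}$, so in particular $\|dE\|_{C^0}$ and $\|E\|_{C^\delta}$ are much smaller than $1$ for $\ve$ small.

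These smallness bounds are enough to invoke Lemma~\ref{App-Lemma-Holder-norm}(2) to obtain a $C^{1+\delta}$ inverse $\Theta:=(I+E)^{-1}$ defined on $(I+E)(B^d[p])$; injectivity of $\Psi=D_1\circ(I+E)$ is immediate since $D_1$ is an isomorphism. For the image claim $\Psi(B^d[p])\supset B^d[e^{\chi-\sqrt{\ve}}p]$, I would solve the fixed-point equation $v=D_1^{-1}(y-K(v))$ for $y\in B^d[e^{\chi-\sqrt{\ve}}p]$ by noting that $T_y(v):=D_1^{-1}(y-K(v))$ sends $B^d[p]$ into itself (since $\|T_y(v)\|\leq e^{-\sqrt{\ve}}p+O(\ve\eta)<p$) and is a contraction with Lipschitz constant $\|dE\|_{C^0}<1$. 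Banach's fixed-point theorem then produces a unique preimage $\Phi(y)\in B^d[p]$, showing $\Phi=\Theta\circ D_1^{-1}$ is defined on all of $B^d[e^{\chi-\sqrt{\ve}}p]$.

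Writing $\Phi=\Theta\circ D_1^{-1}$, the three quantitative estimates reduce to bounds on $\Theta$ propagated through $D_1^{-1}$. For (1), the Neumann expansion of $(d\Theta)_w=(I+dE_{\Theta(w)})^{-1}$ gives $\|d\Theta\|_{C^0}\leq(1-\|dE\|_{C^0})^{-1}$, whence $\|d\Phi\|_{C^0}\leq\|d\Theta\|_{C^0}\|D_1^{-1}\|<e^{-\chi+\ve}$ once $\ve$ is small. For (2), since $\Psi(\Phi(0))=0$ forces $\Phi(0)=-E(\Phi(0))$, the mean-value inequality yields $\|\Phi(0)\|\leq\|E(0)\|/(1-\|dE\|_{C^0})<2\ve\eta\,e^{-\chi+\ve}$. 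For (3), the standard identity
\begin{equation*}
d\Phi_{y_1}-d\Phi_{y_2}=d\Theta_{D_1^{-1}y_1}(dE_{\Phi(y_2)}-dE_{\Phi(y_1)})d\Theta_{D_1^{-1}y_2}\circ D_1^{-1}
\end{equation*}
combined with $\|\Phi(y_1)-\Phi(y_2)\|\leq\|d\Phi\|_{C^0}\|y_1-y_2\|$ produces $\Hol{\delta}(d\Phi)\leq\|d\Theta\|_{C^0}^{2+\delta}\Hol{\delta}(dE)\|D_1^{-1}\|^{1+\delta}$, which together with (1) bounds $\|d\Phi\|_{C^\delta}$. The main obstacle is keeping the multiplicative constants sharp enough in this last step: one has to exploit simultaneously the factor $\ve$ from $\Hol{\delta}(dK)$, the $C^0$-smallness $(2p)^\delta<\ve$ of $dH$ on small balls, and the exponent $1+\delta$ on $\|D_1^{-1}\|$, so that $\Hol{\delta}(d\Phi)$ fits into the slack $e^{-\chi+3\ve}-e^{-\chi+\ve}$ available after subtracting the $C^0$ bound. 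Carefully tracking these cancellations is the delicate part of the argument.
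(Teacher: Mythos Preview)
Your approach is essentially the same as the paper's: view $\Psi$ as the expanding linear map $D_1$ plus a small $C^{1+\delta}$ perturbation, prove the image claim by a Banach fixed-point argument for $v\mapsto D_1^{-1}(\wt v-K(v))$, and read off the bounds on $\Phi$ from Neumann-series/perturbation-of-identity estimates. The paper differs only in the order of factorization: it writes $d\Psi_v=(I+L(v)D_1^{-1})D_1$ and hence $d\Phi_{\wt v}=D_1^{-1}[I+\wt L(\wt v)D_1^{-1}]^{-1}$ with $\wt L=L\circ\Phi$, and then applies Lemma~\ref{App-Lemma-Holder-norm}(2) \emph{directly to the matrix-valued function} $\wt v\mapsto \wt L(\wt v)D_1^{-1}$ to get the clean bound $\|d\Phi\|_{C^\delta}\le e^{-\chi}/(1-\|\wt LD_1^{-1}\|_{C^\delta})<e^{-\chi+3\ve}$, with no delicate slack to manage. (There is in fact a Remark right after the lemma in the paper explaining why this ordering is preferred.)

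Your explicit difference identity for $d\Phi_{y_1}-d\Phi_{y_2}$ is correct, and it \emph{does} close the argument --- but only with a sharp constant in $\Hol{\delta}(dK)$. The available slack is $e^{-\chi}(e^{3\ve}-e^\ve)>2\ve\,e^{-\chi}$, and your formula yields $\Hol{\delta}(d\Phi)\lesssim \Hol{\delta}(dK)\cdot e^{-\chi(2+\delta)}$; thus you need $\Hol{\delta}(dK)\cdot e^{-\chi(1+\delta)}<2$, i.e.\ $\Hol{\delta}(dK)<2\ve$ uniformly in $\chi>0$. Your stated bound $\Hol{\delta}(dK)<5\ve$ would fail for small $\chi$ --- the culprit is the spurious $2^\delta$ factor, which should be $(1+\ve)^\delta$ since ${\rm Lip}(v\mapsto(v,G(v)))\le\sqrt{1+\ve^2}$ in the euclidean norm; with that correction the bound becomes $<\ve(1+\ve)^{1+\delta}+\tfrac12\ve^2<2\ve$ (exactly as the paper obtains for $\Hol{\delta}(L)$) and your argument goes through. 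Alternatively, you can sidestep the constant-tracking entirely by applying Lemma~\ref{App-Lemma-Holder-norm}(2) to the matrix-valued map $\wt v\mapsto D_1^{-1}\wt L(\wt v)$ as the paper does.
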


\begin{proof}
This lemma is a sole consequence of the expansion property of $D_1$.
We start proving that $\Psi$ is injective. By Lemma \ref{App-Lemma-introductory}(2),
if $v\in B^d[p]$ then $\|G(v)\|<10^{-2}p$,
and so $\norm{\begin{bmatrix} v \\ G(v)\end{bmatrix}}< 2p$. By (GT3),
we know that $\norm{dh_1}_{C^0(B[2p])}\leq \ve (2p)^\delta<\ve^2$.
Given $v_1,v_2\in B^d[p]$, we have
$$
\norm{(v_1,G(v_1))-(v_2,G(v_2))}=\norm{(v_1-v_2,G(v_1)-G(v_2))}\leq (1+\ve)\|v_1-v_2\|,
$$
since $\norm{G(v_1)-G(v_2)}\leq \|dG\|_{C^0}\norm{v_1-v_2}\leq \ve\norm{v_1-v_2}$.
Now assume that $\Psi(v_1)=\Psi(v_2)$, i.e.
$$
D_1(v_1-v_2)=h_1(v_2,G(v_2))-h_1(v_1,G(v_1)).
$$
We have $\norm{D_1(v_1-v_2)}\geq e^\chi\|v_1-v_2\|$, while the norm of the
right hand side of the above equality
is at most
$$
\norm{dh_1}_{C^0(B[2p])}\norm{(v_1,G(v_1))-(v_2,G(v_2))}\leq \ve^2(1+\ve)\|v_1-v_2\|,
$$
which implies that $v_1=v_2$, hence $\Psi$ is injective. Let 
$\Phi:\Psi(B^d[p])\to B^d[p]$ be the inverse of $\Psi.$

Before continuing, let $L(v)$ be the derivative of the map
$v\mapsto h_1(v,G(v))$, i.e.
$$
L(v)=(dh_1)_{(v,G(v))}\begin{bmatrix} I \\ dG_v\end{bmatrix}
$$
where $I$ is the $d\times d$ identity matrix.
By Lemma \ref{App-Lemma-introductory}(1), we have
$\norm{\begin{bmatrix} I \\ (dG)_v\end{bmatrix}}<1+\ve$ for all $v\in B^d[p]$, hence
$\norm{L}_{C^0}< 2\norm{dh_1}_{C^0(B[2p])}<2\ve^2$.

Now we show that the image of $\Psi$ contains $B^d[e^{\chi-\sqrt{\ve}}p]$.
We prove this using a fixed point theorem. Start observing that if $v\in B^d[p]$ then
$$
\norm{h_1(v,G(v))}\leq \norm{h_1(0,0)}+\norm{dh_1}_{C^0(B[2p])}\norm{\begin{bmatrix} v \\ G(v)\end{bmatrix}}
<\ve\eta+2\ve^2 p<2\ve p.
$$
If $\wt v\in B^d[e^{\chi-\sqrt{\ve}}p]$,
then $\Psi(v)=\wt v$ iff $v$ is a fixed point of the map $T:v\in B^d[p]\mapsto D_1^{-1}[\wt v-h_1(v,G(v))]$.
We have
$$
\norm{Tv}\leq \norm{D_1^{-1}}\left(\norm{\wt v}+\norm{h_1(v,G(v))}\right)
<e^{-\chi}\left(e^{\chi-\sqrt{\ve}}p+2\ve p\right)<\left(e^{-\sqrt{\ve}}+2\ve\right)p<p,
$$
where in the last passage we used that $e^{-\sqrt{\ve}}+2\ve<1-\sqrt{\ve}+3\ve<1$
if $\ve>0$ is small enough. Hence $T:B^d[p]\to B^d[p]$ and, since
$\norm{(dT)_v}=\norm{D_1^{-1}L(v)}\leq \norm{D_1^{-1}}\norm{L}_{C^0}<2\ve^2$,
the fixed point theorem implies that $T$ has a unique fixed point.
Since $\wt v\in B^d[e^{\chi-\sqrt{\ve}}p]$ is arbitrary, we get that 
$\Psi(B^d[p])$ contains $B^d[e^{\chi-\sqrt{\ve}}p]$.

The next step is to prove (1)--(3). We have $(d\Phi)_{\wt v}=[(d\Psi)_{v}]^{-1}$ where $v=\Phi(\wt v)$.
Letting $\wt L=L\circ\Phi$,
we have $(d\Psi)_v=D_1+\wt L(\wt v)=[I+\wt L(\wt v)D_1^{-1}]D_1$ and so 
$$
(d\Phi)_{\wt v}=D_1^{-1}[I+\wt L(\wt v)D_1^{-1}]^{-1}.
$$
We consider $I+\wt L(\wt v)D_1^{-1}$ as a perturbation of the
identity. We have
$\norm{\wt L D_1^{-1}}_{C^0}=\norm{L\Phi D_1^{-1}}_{C^0}\leq \norm{L}_{C^0}<2\ve^2$.
For a fixed $\wt v$, apply Lemma \ref{App-Lemma-Holder-norm}(2) to conclude that
$\norm{(d\Phi)_{\wt v}}<\tfrac{e^{-\chi}}{1-2\ve^2}<e^{-\chi+\ve}$,
where in the last inequality we used that $\tfrac{1}{1-2\ve^2}<1+\ve<e^\ve$
for $\ve>0$ small enough. Since $\wt v$ is arbitrary, we obtain (1).

To prove part (2), we start estimating $\norm{\Psi(0)}$. Since
$\begin{bmatrix} 0 \\ G(0)\end{bmatrix}\in B[\eta]$,
conditions (AM1) and (GT3) give that
\begin{align*}
&\,\norm{\Psi(0)}=\norm{h_1(0,G(0))}\leq
\norm{h_1(0,0)}+\norm{dh_1}_{C^0(B[\eta])} \norm{\begin{bmatrix} 0 \\ G(0)\end{bmatrix}}\leq
\ve\eta+\ve\eta^\delta 10^{-3}\eta\\
&\leq (\ve +10^{-3}\ve^2)\eta < 2\ve\eta,
\end{align*}
hence
$$
\norm{\Phi(0)}=\norm{\Phi(0)-\Phi(\Psi(0))}\leq e^{-\chi+\ve}\norm{\Psi(0)}<2\ve\eta e^{-\chi+\ve}.
$$

It remains to prove part (3). Again, we use that
$d\Phi=D_1^{-1}[I+\wt LD_1^{-1}]^{-1}$
and consider $I+\wt LD_1^{-1}$ as a perturbation of the
identity. We will estimate the values of $\Hol{\delta}(\wt LD_1^{-1}),\norm{\wt LD_1^{-1}}_{C^\delta}$ 
and, using this last estimate, apply Lemma \ref{App-Lemma-Holder-norm}(2) to conclude the proof.
Since $\wt LD_1^{-1}=L\Phi D_1^{-1}$ and $\Phi,D_1^{-1}$ are contractions,
the second estimate of Lemma \ref{App-Lemma-Holder-norm}(1) implies
that $\Hol{\delta}(\wt LD_1^{-1})\leq \Hol{\delta}(L)$, so it is enough to estimate 
$\Hol{\delta}(L)$. Recalling that $\norm{(v_1,G(v_1))-(v_2,G(v_2))}\leq (1+\ve)\|v_1-v_2\|$
for $v_1,v_2\in B^d[p]$, we have
\begin{align*}
&\,\norm{L(v_1)-L(v_2)}=
\norm{(dh_1)_{(v_1,G(v_1))}\begin{bmatrix} I \\ (dG)_{v_1}\end{bmatrix}-
(dh_1)_{(v_2,G(v_2))}\begin{bmatrix} I \\ (dG)_{v_2}\end{bmatrix}}\\
&\\
&\leq
\norm{(dh_1)_{(v_1,G(v_1))}\begin{bmatrix} I \\ (dG)_{v_1}\end{bmatrix}-
(dh_1)_{(v_1,G(v_1))}\begin{bmatrix} I \\ (dG)_{v_2}\end{bmatrix}}\ +\\
&  \\
& \ \ \ \
\norm{(dh_1)_{(v_1,G(v_1))}\begin{bmatrix} I \\ (dG)_{v_2}\end{bmatrix}-
(dh_1)_{(v_2,G(v_2))}\begin{bmatrix} I \\ (dG)_{v_2}\end{bmatrix}}\\
& \\
&\leq \norm{dh_1}_{C^0(B[2p])}\norm{(dG)_{v_1}-(dG)_{v_2}}\ +\\
& \ \ \ \
\norm{(dh_1)_{(v_1,G(v_1))}-(dh_1)_{(v_2,G(v_2))}}\norm{\begin{bmatrix} I \\ (dG)_{v_2}\end{bmatrix}}\\
&\leq \norm{dh_1}_{C^0(B[2p])}\Hol{\delta}(dG)\norm{v_1-v_2}^\delta+(1+\ve)^{1+\delta}\Hol{\delta}(dh_1)\norm{v_1-v_2}^\delta\\
&<\left[\norm{dh_1}_{C^0(B[2p])}\Hol{\delta}(dG)+(1+3\ve)\Hol{\delta}(dh_1)\right]\norm{v_1-v_2}^\delta,
\end{align*}
thus
$$
\Hol{\delta}(L)\leq \norm{dh_1}_{C^0(B[2p])}\Hol{\delta}(dG)+(1+3\ve)\Hol{\delta}(dh_1)<
\tfrac{1}{2}\ve^2+(1+3\ve)\ve<2\ve
$$
and so $\Hol{\delta}(\wt L D_1^{-1})<2\ve$.
We already know that $\norm{\wt L D_1^{-1}}_{C^0}<2\ve^2$, therefore
$\norm{\wt L D_1^{-1}}_{C^\delta}<2\ve^2+2\ve<2.5\ve$ for $\ve>0$ small enough.
Since $d\Phi=D_1^{-1}[I+\wt L D_1^{-1}]^{-1}$,
Lemma \ref{App-Lemma-Holder-norm}(2) gives that
$$
\norm{d\Phi}_{C^\delta}\leq \|D_1^{-1}\|\tfrac{1}{1-2.5\ve} < e^{-\chi}\tfrac{1}{1-2.5\ve}<e^{-\chi+3\ve},
$$
where in the last passage we used that $\tfrac{1}{1-2.5\ve}<1+3\ve<e^{3\ve}$ for small $\ve>0$.
\end{proof}

\begin{remark}
The derivative $(d\Psi)_v=D_1+\wt L(\wt v)$ has the form $A+B$, which could be written as
$(I+BA^{-1})A$ as we did or as $A(I+A^{-1}B)$ as did in \cite{Ben-Ovadia-2019}.
The problem with this second factorization is that the inverse map
has the form $(I+A^{-1}B)^{-1}A^{-1}$ and so, when estimating its $\delta$--H\"older constant,
the term ${\rm Lip}(A^{-1})^{\delta}$ appears and gives an estimate that depends on $\delta$ and 
is weaker than part (3).
\end{remark}

By the above lemma, $F(v,G(v))$ can be represented in terms of $\wt v=\Phi^{-1}(v)$ as 
$$
(D_1v+h_1(v,G(v)),D_2G(v)+h_2(v,G(v)))=(\wt v,\wt G(\wt v)),
$$
where $\wt G=[D_2G(\cdot)+h_2(\cdot,G(\cdot))]\circ\Phi=D_2G\Phi+h_2(\Phi,G\Phi)$. By hypothesis,
we have $\wt p\leq e^\ve p<e^{\chi-\sqrt{\ve}}p$ for $\ve>0$ small enough. Therefore, if $V\in\M$ is
an admissible graph with representing function $F$, then we can restrict $\wt G$ to $B^d[\wt p]$,
still denoting it by $\wt G:B^d[\wt p]\to\R^{m-d}$, and formally define $\F(V)$ as the graph
with representing function $\wt G$:
$$
\F(V)=\left\{(\wt v,\wt G(\wt v)):v\in B^d[\wt p]\right\}.
$$

\subsubsection{Proof that $\F(V)$ is admissible}

We let $V\in\M$ with representing function $G$, and 
$\F(V)$ with representing function $\wt G$. In the previous section we 
explicitly described $\wt G$. Using this, we will now prove that $\wt G$ satisfies
conditions (AM1)--(AM3) with respect to the parameters $\wt p,\wt n$, thus proving
that $\F(V)\in\wt\M$. The proofs rely on the contraction property of $D_2$.

\begin{lemma}\label{App-Lemma-condition-AM1}
$\norm{\wt G(0)}< e^{-\chi}\left(\norm{G(0)}+\sqrt{\ve}\eta\right)$, and so
$\norm{\wt G(0)}<10^{-3}\wt\eta$.
\end{lemma}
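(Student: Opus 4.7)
The plan is to use the explicit formula $\wt G = D_2 G \Phi + h_2(\Phi, G\Phi)$ derived in the previous section and evaluate it at $0$, giving
\[
\wt G(0) = D_2\, G(\Phi(0)) + h_2\bigl(\Phi(0),\, G(\Phi(0))\bigr).
\]
The first summand will be responsible for the leading $e^{-\chi}\norm{G(0)}$ term via the contraction of $D_2$, and everything else will be absorbed into the $e^{-\chi}\sqrt{\ve}\eta$ slack.

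First I would control the arguments. By Lemma \ref{App-Lemma-inverse-function}(2) we have $\|\Phi(0)\| < 2\ve\eta$, and then Lemma \ref{App-Lemma-introductory}(1) (applied to $G$) together with the mean value inequality yields
\[
\|G(\Phi(0))\| \le \|G(0)\| + \|dG\|_{C^0}\,\|\Phi(0)\| < \|G(0)\| + 2\ve^2\eta.
\]
In particular, the point $(\Phi(0), G(\Phi(0)))$ lies in $B[\eta]$ for $\ve$ small (using (AM1) which gives $\|G(0)\|\le 10^{-3}\eta$).

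Next I would bound the two summands separately. Since $\|D_2\|<e^{-\chi}$ by (GT2),
\[
\|D_2 G(\Phi(0))\| < e^{-\chi}\bigl(\|G(0)\| + 2\ve^2\eta\bigr).
\]
For the error term, (GT3)(a)--(b) with $t=\eta$ give $\|H(0)\|<\ve\eta$ and $\|dH\|_{C^0(B[\eta])}<\ve\eta^\delta$, hence
\[
\|h_2(\Phi(0), G(\Phi(0)))\| \le \|H(0)\| + \|dH\|_{C^0(B[\eta])}\|(\Phi(0),G(\Phi(0)))\| < \ve\eta + C\ve^{1+\delta}\eta
\]
for some absolute constant $C$ (coming from combining the estimates above on $\Phi(0)$ and $G(\Phi(0))$). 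Combining,
\[
\|\wt G(0)\| < e^{-\chi}\|G(0)\| + 2\ve^2 e^{-\chi}\eta + \ve\eta(1+\text{small}).
\]
The main thing to verify is that the last two terms are bounded by $e^{-\chi}\sqrt{\ve}\eta$; after dividing by $e^{-\chi}$ this amounts to $2\ve^2 + \ve e^{\chi}(1+\text{small}) < \sqrt{\ve}$, which holds for all $\ve>0$ sufficiently small (for the fixed $\chi$). This gives the first asserted inequality.

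For the second inequality, use (AM1) to substitute $\|G(0)\| \le 10^{-3}\eta$:
\[
\|\wt G(0)\| < e^{-\chi}(10^{-3} + \sqrt{\ve})\,\eta \le e^{\ve-\chi}(10^{-3}+\sqrt{\ve})\,\wt\eta,
\]
where in the last step I applied (GT1) in the form $\eta \le e^{\ve}\wt\eta$. Since $e^{\ve-\chi}<1$ and $\sqrt{\ve}\to 0$, the coefficient $e^{\ve-\chi}(10^{-3}+\sqrt{\ve})$ is strictly less than $10^{-3}$ once $\ve$ is small enough, giving $\|\wt G(0)\|<10^{-3}\wt\eta$. The only mild obstacle is keeping track of how many times we shrink $\ve$; everything else is bookkeeping on the inequalities $\|D_2\|<e^{-\chi}$, (GT1), (GT3), and the previously established Lemmas \ref{App-Lemma-introductory} and \ref{App-Lemma-inverse-function}.
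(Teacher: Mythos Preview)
Your proposal is correct and follows essentially the same approach as the paper: evaluate $\wt G(0)=D_2G(\Phi(0))+h_2(\Phi(0),G(\Phi(0)))$, control $\|\Phi(0)\|$ and $\|G(\Phi(0))\|$ via Lemma~\ref{App-Lemma-inverse-function}(2) and Lemma~\ref{App-Lemma-introductory}(1), and then split into the $e^{-\chi}\|G(0)\|$ contraction term and error terms that are absorbed into $e^{-\chi}\sqrt{\ve}\eta$ for small $\ve$. The only cosmetic differences are that the paper places $(\Phi(0),G(\Phi(0)))$ in $B[2\eta]$ rather than $B[\eta]$, and that your intermediate bound ``$C\ve^{1+\delta}\eta$'' is slightly imprecise (the actual bound is $\ve\eta^{1+\delta}$, which is even smaller since $\eta^\delta<\ve$); neither affects the argument.
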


\begin{proof}
We have $\wt G(0)=D_2G(\Phi(0))+h_2(\Phi(0),G(\Phi(0)))$. Observe that
$\norm{G(\Phi(0))}\leq \norm{G(0)}+\norm{dG}_{C^0}\norm{\Phi(0)}$ which,
by (AM2) and Lemma \ref{App-Lemma-inverse-function}(2), satisfies
$$
\norm{G(\Phi(0))}\leq 10^{-3}\eta+2\ve^2\eta=(10^{-3}+2\ve^2)\eta. 
$$
In particular, $\norm{G(\Phi(0))}<\eta$ and so
$\norm{\begin{bmatrix} \Phi(0) \\ G(\Phi(0))\end{bmatrix}}<2\eta$.
Therefore
\begin{align*}
&\,\norm{\wt G(0)}\leq \norm{D_2G(\Phi(0))}+\norm{h_2(\Phi(0),G(\Phi(0)))}\\
&\leq \norm{D_2G(\Phi(0))}+\norm{h_2(0,0)}+\norm{dh_2}_{C^0(B[2\eta])}
\norm{\begin{bmatrix} \Phi(0) \\ G(\Phi(0))\end{bmatrix}}\\
&\leq \norm{D_2}(\norm{G(0)}+\norm{dG}_{C^0}\norm{\Phi(0)})+\norm{h_2(0,0)}+
\norm{dh_2}_{C^0(B[2\eta])}\norm{\begin{bmatrix} \Phi(0) \\ G(\Phi(0))\end{bmatrix}}\\
&\leq e^{-\chi}(\norm{G(0)}+2\ve^2\eta)+\ve\eta+2\ve^2\eta=e^{-\chi}\left(\norm{G(0)}+2\ve^2\eta+
e^\chi[\ve\eta+2\ve^2\eta]\right).
\end{align*}
If $\ve>0$ is small enough then
$2\ve^2\eta+e^\chi\left(\ve\eta+2\ve^2\eta\right)=(2\ve^2+e^\chi[\ve+2\ve^2])\eta<\sqrt{\ve}\eta$, and so
$$
\norm{\wt G(0)}\leq e^{-\chi}\left(\norm{G(0)}+\sqrt{\ve}\eta\right).
$$
Using (AM1), in particular we have that
$$
\norm{\wt G(0)}\leq e^{-\chi}(10^{-3}\eta+\sqrt{\ve}\eta)=
e^{-\chi}(10^{-3}+\sqrt{\ve})\eta\leq e^{-\chi+\ve}(10^{-3}+\sqrt{\ve})\wt\eta<10^{-3}\wt\eta,
$$
since $e^{-\chi+\ve}(10^{-3}+\sqrt{\ve})<10^{-3}$ if $\ve>0$ is small enough.
\end{proof}

\begin{lemma}\label{App-Lemma-condition-AM2}
$\norm{(d\wt G)_0}< e^{-2\chi+\ve}\left(\norm{(dG)_0}+2\ve^\delta\eta^\delta\right)$, and so
$\norm{(d\wt G)_0}<\tfrac{1}{2}\wt\eta^\delta$.
\end{lemma}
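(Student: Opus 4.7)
The plan is to compute $(d\wt G)_0$ explicitly via the chain rule and bound each piece using the contraction property of $D_2$, the estimates on $\Phi$ from Lemma~\ref{App-Lemma-inverse-function}, and the control on $dH$ supplied by (GT3). Since $\wt G=D_2 G\Phi+h_2(\Phi,G\Phi)$, differentiating and evaluating at $0$ yields
$$
(d\wt G)_0=D_2\,(dG)_{\Phi(0)}\,(d\Phi)_0+(dh_2)_{(\Phi(0),G(\Phi(0)))}\begin{bmatrix}(d\Phi)_0\\ (dG)_{\Phi(0)}(d\Phi)_0\end{bmatrix}.
$$
So the task reduces to bounding the two summands separately and combining them.

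For the first summand I would first pass from $(dG)_{\Phi(0)}$ to $(dG)_0$. By (AM3) and Lemma~\ref{App-Lemma-inverse-function}(2), $\|(dG)_{\Phi(0)}-(dG)_0\|\le\tfrac12\|\Phi(0)\|^{\delta}\le\tfrac12(2\ve\eta)^{\delta}$. Combined with $\|D_2\|\le e^{-\chi}$ and $\|(d\Phi)_0\|\le e^{-\chi+\ve}$ (Lemma~\ref{App-Lemma-inverse-function}(1)), this gives
$$
\|D_2\,(dG)_{\Phi(0)}\,(d\Phi)_0\|\le e^{-2\chi+\ve}\bigl(\|(dG)_0\|+\tfrac12(2\ve)^{\delta}\eta^{\delta}\bigr).
$$
For the second summand, since $\|(\Phi(0),G(\Phi(0)))\|<2\eta$ (as in the proof of Lemma~\ref{App-Lemma-condition-AM1}), (GT3)(b) gives $\|(dh_2)_{(\Phi(0),G(\Phi(0)))}\|\le\ve(2\eta)^{\delta}$, while Lemma~\ref{App-Lemma-introductory}(1) bounds the column-vector norm by $(1+\ve)\|(d\Phi)_0\|\le(1+\ve)e^{-\chi+\ve}$. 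Adding the two contributions and absorbing the elementary numerical inequality $\tfrac12(2\ve)^{\delta}+2^{\delta}\ve(1+\ve)e^{\chi}\le 2\ve^{\delta}$ (which holds for small $\ve$ because $\ve\ll\ve^{\delta}$) into the leading factor $e^{-2\chi+\ve}$ yields the first claimed inequality
$$
\|(d\wt G)_0\|< e^{-2\chi+\ve}\bigl(\|(dG)_0\|+2\ve^{\delta}\eta^{\delta}\bigr).
$$

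For the second assertion I would substitute the admissibility bound $\|(dG)_0\|\le\tfrac12\eta^{\delta}$ from (AM2) and convert $\eta$ to $\wt\eta$ via (GT1), which gives $\eta^{\delta}\le e^{\ve\delta}\wt\eta^{\delta}$. Then
$$
\|(d\wt G)_0\|< e^{-2\chi+\ve+\ve\delta}\bigl(\tfrac12+2\ve^{\delta}\bigr)\wt\eta^{\delta},
$$
and since $\chi>0$ is fixed, choosing $\ve$ small enough makes the prefactor less than $\tfrac12$. The main obstacle is not any single estimate but rather the bookkeeping: keeping the $(dG)_0$--term cleanly separated from the error absorbed into $2\ve^{\delta}\eta^{\delta}$, so that the iterative structure (the fact that $\|(d\wt G)_0\|$ is controlled by the same quantity with an $e^{-2\chi+\ve}$ improvement) is preserved — this is exactly the feature that will let the admissibility condition (AM2) propagate under arbitrary compositions of graph transforms.
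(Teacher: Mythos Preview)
Your proof is correct and follows essentially the same approach as the paper: compute $(d\wt G)_0$ via the chain rule, factor out $(d\Phi)_0$, bound $\|(dG)_{\Phi(0)}\|$ in terms of $\|(dG)_0\|$ via (AM3) and Lemma~\ref{App-Lemma-inverse-function}(2), bound the $dh_2$ term via (GT3)(b) at scale $2\eta$, and absorb the lower-order error into $2\ve^{\delta}\eta^{\delta}$ before converting $\eta$ to $\wt\eta$ via (GT1). The only cosmetic difference is that the paper factors out $(d\Phi)_0$ from both terms at once rather than treating the two summands separately, but the estimates are identical.
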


\begin{proof}
We will use that $\norm{\begin{bmatrix} \Phi(0) \\ G(\Phi(0))\end{bmatrix}}<2\eta$, as
proved Lemma \ref{App-Lemma-condition-AM1}. Since
$\wt G=[D_2G(\cdot)+h_2(\cdot,G(\cdot))]\circ\Phi$, we have 
that
$$(d\wt G)_{\wt v}=\left(D_2(dG)_{\Phi(\wt v)}+(dh_2)_{(\Phi(\wt v),G(\Phi(\wt v))}
\begin{bmatrix} I \\ (dG)_{\Phi(\wt v)}\end{bmatrix}\right) (d\Phi)_{\wt v}
$$
and so 
$$(d\wt G)_0=\left(D_2(dG)_{\Phi(0)}+(dh_2)_{(\Phi(0),G(\Phi(0))}
\begin{bmatrix} I \\ (dG)_{\Phi(0)}\end{bmatrix}\right) (d\Phi)_0.
$$
By (AM3) and Lemma \ref{App-Lemma-inverse-function}(2),
$$
\norm{(dG)_{\Phi(0)}}\leq \norm{(dG)_0}+\tfrac{1}{2}\norm{\Phi(0)}^\delta< 
\norm{(dG)_0}+\tfrac{1}{2}(2\ve\eta)^\delta<\norm{(dG)_0}+\ve^\delta\eta^\delta,
$$
hence
\begin{align*}
&\,\norm{(d\wt G)_0}< \left(\norm{D_2}\norm{(dG)_{\Phi(0)}}+\norm{dh_2}_{C^0(B[2\eta])}\right)\norm{(d\Phi)_0}\\
&<\left(e^{-\chi}\norm{(dG)_{\Phi(0)}}+2\ve\eta^\delta\right)\norm{(d\Phi)_0}=
e^{-\chi}\norm{(d\Phi)_0}\left(\norm{(dG)_{\Phi(0)}}+2e^\chi \ve\eta^\delta\right)\\
&<e^{-2\chi+\ve}\left(\norm{(dG)_0}+\ve^\delta\eta^\delta+ 2e^\chi \ve\eta^\delta\right)
<e^{-2\chi+\ve}\left(\norm{(dG)_0}+2\ve^\delta\eta^\delta\right)
\end{align*}
where in the last passage we used Lemma \ref{App-Lemma-inverse-function}(1) and 
that $2e^\chi \ve<\ve^\delta$ for $\ve>0$ small enough. Using (AM2), in particular we obtain that
\begin{align*}
&\,\norm{(d\wt G)_0}<e^{-2\chi+\ve}\left(\tfrac{1}{2}\eta^\delta+2\ve^\delta\eta^\delta\right)=
e^{-2\chi+\ve}\left(\tfrac{1}{2}+2\ve^\delta\right)\eta^\delta\\
&\leq e^{-2\chi+2\ve}\left(\tfrac{1}{2}+2\ve^\delta\right)\wt\eta^\delta<\tfrac{1}{2}\wt\eta^\delta,
\end{align*}
since $e^{-2\chi+2\ve}\left(\tfrac{1}{2}+2\ve^\delta\right)<\tfrac{1}{2}$ if $\ve>0$ is small enough. 
\end{proof}

\begin{lemma}\label{App-Lemma-condition-AM3}
$\norm{d\wt G}_{C^\delta}<e^{-\chi}\left(\norm{dG}_{C^\delta}+\sqrt{\ve}\right)$, and so
$\norm{d\wt G}_{C^\delta}<\tfrac{1}{2}$.
\end{lemma}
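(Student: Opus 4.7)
As in the proofs of the previous two lemmas, I start from the explicit expression
$$(d\wt G)_{\wt v} = M(\Phi(\wt v))\cdot (d\Phi)_{\wt v}, \qquad M(v):=D_2(dG)_v+(dh_2)_{(v,G(v))}\begin{bmatrix} I \\ (dG)_v\end{bmatrix}.$$
The idea is to bound $\norm{M}_{C^\delta}$ on $B^d[p]$ so that the $D_2(dG)$ piece carries the only non-negligible contribution, then combine with the $C^\delta$ bound on $d\Phi$ from Lemma~\ref{App-Lemma-inverse-function}(3) and the observation that precomposition with the contraction $\Phi$ does not increase the $C^\delta$ norm (Lemma~\ref{App-Lemma-Holder-norm}(1)).

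\textbf{Step 1: estimate $\norm{M}_{C^\delta}$.} For the $C^0$ part, I bound
$\norm{M}_{C^0}\leq e^{-\chi}\norm{dG}_{C^0}+\norm{dh_2}_{C^0(B[2p])}(1+\norm{dG}_{C^0})$, and use (GT3)(b) with $t=2p$ together with Lemma~\ref{App-Lemma-introductory}(1) to get $\norm{M}_{C^0}\leq e^{-\chi}\norm{dG}_{C^0}+2\ve(2p)^\delta$. For the Hölder seminorm, I split $\Hol{\delta}(M)\leq\Hol{\delta}(D_2 dG)+\Hol{\delta}(\phi\psi)$, where $\phi(v)=(dh_2)_{(v,G(v))}$ and $\psi(v)=\bigl[\begin{smallmatrix}I\\(dG)_v\end{smallmatrix}\bigr]$. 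The first summand is at most $e^{-\chi}\Hol{\delta}(dG)$. For the second, the product rule
$\Hol{\delta}(\phi\psi)\leq \norm{\phi}_{C^0}\Hol{\delta}(\psi)+\Hol{\delta}(\phi)\norm{\psi}_{C^0}$ together with $\Hol{\delta}(\phi)\leq \Hol{\delta}(dh_2)\cdot\mathrm{Lip}(v\mapsto(v,G(v)))^\delta\leq\ve(1+\ve)^\delta$ and the trivial bounds $\norm{\psi}_{C^0}\leq 1+\ve$, $\Hol{\delta}(\psi)=\Hol{\delta}(dG)$ yield $\Hol{\delta}(\phi\psi)<3\ve$ for small $\ve$. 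Assembling,
$$\norm{M}_{C^\delta}\leq e^{-\chi}\norm{dG}_{C^\delta}+C\ve$$
for some absolute constant $C$ (and for $\ve>0$ small enough, depending only on $\chi,\delta$).

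\textbf{Step 2: pass from $M$ to $d\wt G$.} Since $\mathrm{Lip}(\Phi)\leq e^{-\chi+\ve}<1$, Lemma~\ref{App-Lemma-Holder-norm}(1) gives $\norm{M\circ\Phi}_{C^\delta}\leq \norm{M}_{C^\delta}$. Using the standard product estimate $\norm{fg}_{C^\delta}\leq\norm{f}_{C^\delta}\norm{g}_{C^\delta}$ together with Lemma~\ref{App-Lemma-inverse-function}(3),
$$\norm{d\wt G}_{C^\delta}\leq\norm{M\circ\Phi}_{C^\delta}\cdot\norm{d\Phi}_{C^\delta}\leq \bigl(e^{-\chi}\norm{dG}_{C^\delta}+C\ve\bigr)\cdot e^{-\chi+3\ve}.$$
Since $e^{-\chi+3\ve}<1$ and $Ce^{-\chi+3\ve}\ve<\sqrt{\ve}$ for $\ve$ small, the right-hand side is bounded by $e^{-\chi}\bigl(\norm{dG}_{C^\delta}+\sqrt{\ve}\bigr)$, proving the main inequality. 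Plugging in the hypothesis $\norm{dG}_{C^\delta}\leq\tfrac12$ from (AM3) and choosing $\ve$ small, the right-hand side is smaller than $\tfrac12$, yielding (AM3) for $\wt G$.

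\textbf{Main obstacle.} The bookkeeping is entirely routine; the only point that requires attention is the product-rule estimate in Step~1, where one must verify that all the cross terms between $dh_2$, $dG$ and the Lipschitz constant of $v\mapsto(v,G(v))$ produce an error of size $O(\ve)$ rather than $O(1)$. This is analogous to (and slightly simpler than) the analysis carried out at the end of the proof of Lemma~\ref{App-Lemma-inverse-function}(3), and it hinges on Lemma~\ref{App-Lemma-introductory}(1), which provides $\norm{dG}_{C^0}<\ve$, and on the Hölder control of $dh_2$ furnished by (GT3)(c).
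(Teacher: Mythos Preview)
Your proof is correct and follows essentially the same route as the paper's: both write $d\wt G=(M\circ\Phi)\cdot d\Phi$, use the submultiplicativity of $\norm{\cdot}_{C^\delta}$ for products together with $\norm{d\Phi}_{C^\delta}<e^{-\chi+3\ve}<1$, then bound $\norm{M\circ\Phi}_{C^\delta}\leq\norm{M}_{C^\delta}\leq e^{-\chi}\norm{dG}_{C^\delta}+O(\ve)$ via the same product-rule estimate on the $dh_2$ term (which the paper packages as $\norm{\wt L}_{C^\delta}\leq\norm{L}_{C^\delta}<3\ve$). The only cosmetic difference is that the paper immediately drops the factor $\norm{d\Phi}_{C^\delta}<1$ whereas you keep it as $e^{-\chi+3\ve}$ and discard it at the end, yielding the same inequality.
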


\begin{proof}
We have 
$$(d\wt G)_{\wt v}=\left(D_2(dG)_{\Phi(\wt v)}+(dh_2)_{(\Phi(\wt v),G(\Phi(\wt v))}
\begin{bmatrix} I \\ (dG)_{\Phi(\wt v)}\end{bmatrix}\right) (d\Phi)_{\wt v}.
$$
Since $\norm{d\Phi}_{C^\delta}<1$, Lemma \ref{App-Lemma-Holder-norm}(1) implies that
\begin{align*}
&\norm{d\wt G}_{C^\delta}\leq \norm{D_2(dG)_{\Phi(\wt v)}+(dh_2)_{(\Phi(\wt v),G(\Phi(\wt v))}
\begin{bmatrix} I \\ (dG)_{\Phi(\wt v)}\end{bmatrix}}_{C^\delta}\\
&\leq \norm{D_2}\norm{dG}_{C^\delta}+\norm{(dh_2)_{(\Phi(\wt v),G(\Phi(\wt v))}
\begin{bmatrix} I \\ (dG)_{\Phi(\wt v)}\end{bmatrix}}_{C^\delta}
\end{align*}
Call $\wt L=(dh_2)_{(\Phi(\wt v),G(\Phi(\wt v))}\begin{bmatrix} I \\ (dG)_{\Phi(\wt v)}\end{bmatrix}$.
Then $\wt L=L\circ\Phi$, with $L,\wt L$ defined as in the proof of Lemma
\ref{App-Lemma-inverse-function}, with $h_1$ replaced by $h_2$. Since we have the same estimates
for $h_1,h_2$, we obtain the same estimates for $L$, i.e. $\norm{L}_{C^0}<2\ve^2$ and
$\Hol{\delta}(L)<2\ve$. Since $\Phi$ is a contraction, we conclude that
$\norm{\wt L}_{C^\delta}\leq\norm{L}_{C^\delta}<2\ve^2+2\ve<3\ve$ for $\ve>0$ small enough.
Therefore, 
$$
\norm{d\wt G}_{C^\delta}\leq e^{-\chi}\norm{dG}_{C^\delta}+3\ve =e^{-\chi}\left(\norm{dG}_{C^\delta}+3\ve e^\chi\right)
<e^{-\chi}\left(\norm{dG}_{C^\delta}+\sqrt{\ve}\right)
$$
for $\ve>0$ small enough. In particular, using (AM3) we get that
$\norm{d\wt G}_{C^\delta}<e^{-\chi}\left(\tfrac{1}{2}+\sqrt{\ve}\right)<\tfrac{1}{2}$
when $\ve>0$ is small enough.
\end{proof}

By Lemmas \ref{App-Lemma-inverse-function}, \ref{App-Lemma-condition-AM1},
\ref{App-Lemma-condition-AM2}, \ref{App-Lemma-condition-AM3}, we finally
obtain that the graph transform $\F:\M\to\wM$ is well-defined. 

\begin{remark}\label{App-remark-extension}
The proofs of Lemmas \ref{App-Lemma-condition-AM1},
\ref{App-Lemma-condition-AM2}, \ref{App-Lemma-condition-AM3}
rely solely on our underlying assumptions and on Lemma \ref{App-Lemma-inverse-function}.
Therefore, the obtained estimates hold for $\wt G$ defined in the larger domain $\Psi(B^d[p])$
and, in particular, $\norm{d\wt G}_{C^0}<\ve$.
\end{remark}

\subsection{Estimates on the norm of $\F$}

In this section we obtain estimates of $\F$ in the $C^0$ and $C^1$ norms. 

\begin{proposition}\label{App-Prop-graph-transform}
The following holds for $\ve>0$ small enough. If
$V_1,V_2\in \M$ then:
\begin{enumerate}[{\rm (1)}]
\item $ d_{C^0}(\F(V_1),\F(V_2))\leq e^{-\chi/2} d_{C^0}(V_1,V_2)$.
\item $ d_{C^1}(\F(V_1),\F(V_2))\leq e^{-\chi/2}( d_{C^1}(V_1,V_2)+ d_{C^0}(V_1,V_2)^\delta)$.
\end{enumerate}
\end{proposition}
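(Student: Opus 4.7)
The plan is to use the explicit formula for the representing function of $\mathcal{F}(V)$ derived in Subsection \ref{App-Section-graph-transforms}, together with the contraction $\|D_2\|<e^{-\chi}$, to get contraction estimates on the graph transform. Write $G_1,G_2:B^d[p]\to\mathbb{R}^{m-d}$ for the representing functions of $V_1,V_2$ and $\widetilde G_i=D_2G_i\circ\Phi_i+h_2(\Phi_i,G_i\circ\Phi_i)$ for those of $\mathcal{F}(V_i)$, where $\Phi_i=\Psi_i^{-1}$ is the inverse of $\Psi_i(v)=D_1v+h_1(v,G_i(v))$ provided by Lemma \ref{App-Lemma-inverse-function}.

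For part (1), the first key step is to show that $\|\Phi_1-\Phi_2\|_{C^0}\leq K\,d_{C^0}(V_1,V_2)$ for some small $K=O(\varepsilon)$. Setting $v_i=\Phi_i(\widetilde v)$ we get the identity $D_1(v_1-v_2)=h_1(v_2,G_2(v_2))-h_1(v_1,G_1(v_1))$. I will split the right-hand side into a $v$-variation piece (bounded by $\|dh_1\|_{C^0(B[2p])}\|v_1-v_2\|$) and a $G$-variation piece (bounded by $\|dh_1\|_{C^0(B[2p])}(\|G_1-G_2\|_{C^0}+\varepsilon\|v_1-v_2\|)$), apply $\|D_1^{-1}\|<e^{-\chi}$ and (GT3)(b) with $t=2p$, and absorb the $\|v_1-v_2\|$ terms to the left. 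The second step is to compute
\begin{align*}
\widetilde G_1(\widetilde v)-\widetilde G_2(\widetilde v)
&=D_2\bigl[G_1(\Phi_1(\widetilde v))-G_2(\Phi_2(\widetilde v))\bigr]\\
&\quad+h_2(\Phi_1(\widetilde v),G_1(\Phi_1(\widetilde v)))-h_2(\Phi_2(\widetilde v),G_2(\Phi_2(\widetilde v))),
\end{align*}
bound the first bracket by $\|D_2\|(\|G_1-G_2\|_{C^0}+\|dG_1\|_{C^0}\|\Phi_1-\Phi_2\|_{C^0})$ and the $h_2$ difference by $\|dh_2\|_{C^0(B[2p])}$ times $\|\Phi_1-\Phi_2\|_{C^0}+\|G_1-G_2\|_{C^0}+\varepsilon\|\Phi_1-\Phi_2\|_{C^0}$. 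Using $\|D_2\|<e^{-\chi}$, Lemma \ref{App-Lemma-introductory}(1), the first step above, and (GT3)(b), the total is at most $(e^{-\chi}+O(\varepsilon))\|G_1-G_2\|_{C^0}\leq e^{-\chi/2}d_{C^0}(V_1,V_2)$ for $\varepsilon>0$ small.

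For part (2), I will differentiate the formula for $\widetilde G_i$ to get
\[
d(\widetilde G_i)_{\widetilde v}=\Bigl(D_2(dG_i)_{\Phi_i(\widetilde v)}+(dh_2)_{(\Phi_i,G_i\Phi_i)(\widetilde v)}\begin{bmatrix}I\\(dG_i)_{\Phi_i(\widetilde v)}\end{bmatrix}\Bigr)(d\Phi_i)_{\widetilde v},
\]
and compare the two expressions term by term. The $D_2(dG_i)_{\Phi_i}$ piece produces a factor $\|D_2\|<e^{-\chi}$ multiplying $\|(dG_1)_{\Phi_1}-(dG_2)_{\Phi_2}\|$, which via the triangle inequality splits into a $\|dG_1-dG_2\|_{C^0}$ term and, using (AM3), a $\mathrm{Hol}_\delta(dG_2)\|\Phi_1-\Phi_2\|^\delta\leq\tfrac12\|\Phi_1-\Phi_2\|^\delta$ term. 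The first gives the $d_{C^1}$ contraction; the second, combined with part (1), produces the $d_{C^0}(V_1,V_2)^\delta$ contribution. All remaining terms (involving differences of $dh_2$, of $\begin{bmatrix}I\\dG_i\end{bmatrix}$, and of $d\Phi_i$) are bounded by $O(\varepsilon)$ times $d_{C^1}(V_1,V_2)+d_{C^0}(V_1,V_2)^\delta$ using (GT3), Lemma \ref{App-Lemma-introductory}(1), and a short estimate comparing $d\Phi_1$ and $d\Phi_2$ obtained by differentiating $\Psi_i\circ\Phi_i=\mathrm{Id}$ and isolating $d\Phi_1-d\Phi_2$ via the Neumann/perturbation argument used in Lemma \ref{App-Lemma-inverse-function}. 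Collecting everything and using $\|D_2\|<e^{-\chi}$ to absorb the $O(\varepsilon)$ overhead yields the bound $e^{-\chi/2}(d_{C^1}(V_1,V_2)+d_{C^0}(V_1,V_2)^\delta)$.

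The main obstacle I expect is the comparison $\|d\Phi_1-d\Phi_2\|_{C^0}$: we do not have $d_{C^1}$ information on $\Phi_i$ for free, and the naive estimate would lose a factor depending on $\|d^2\Psi_i\|$, which is not available. The right approach is to write $d\Phi_i=[I+\widetilde L_iD_1^{-1}]^{-1}D_1^{-1}$ as in Lemma \ref{App-Lemma-inverse-function}(3), apply the identity $A^{-1}-B^{-1}=A^{-1}(B-A)B^{-1}$ with $A=I+\widetilde L_1D_1^{-1}$, $B=I+\widetilde L_2D_1^{-1}$, and then bound $\|\widetilde L_1-\widetilde L_2\|_{C^0}$ by a combination of $\|\Phi_1-\Phi_2\|_{C^0}^\delta$ (using the $\delta$-Hölder continuity of $dh_1$ and $dG_i$) and $\|dG_1-dG_2\|_{C^0}$. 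This is precisely the mechanism that produces the extra $d_{C^0}(V_1,V_2)^\delta$ term in part (2).
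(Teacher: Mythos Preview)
Your proposal is correct and follows essentially the same route as the paper: write $\widetilde G_i=\Delta_i\circ\Phi_i$ with $\Delta_i(v)=D_2G_i(v)+h_2(v,G_i(v))$, extract the contraction from $\|D_2\|<e^{-\chi}$, and control the cross terms coming from $\Phi_1\neq\Phi_2$ and $d\Phi_1\neq d\Phi_2$.

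The only notable stylistic differences are these. For $\|\Phi_1-\Phi_2\|_{C^0}$ you argue by isolating $D_1(v_1-v_2)$ and absorbing; the paper instead writes $\|\Phi_1-\Phi_2\|_{C^0}=\|\Phi_2\Psi_2-\Phi_2\Psi_1\|_{C^0}\le\|d\Phi_2\|_{C^0}\|\Psi_1-\Psi_2\|_{C^0}$, which avoids the absorption step. For $\|d\Phi_1-d\Phi_2\|_{C^0}$ your Neumann-series approach via $d\Phi_i=D_1^{-1}[I+\widetilde L_iD_1^{-1}]^{-1}$ and $A^{-1}-B^{-1}=A^{-1}(B-A)B^{-1}$ is a clean alternative to the paper's ``$\mathrm{Id}-(d\Psi_1)_v(d\Phi_2)_{\widetilde v}$'' trick, and arguably more transparent about basepoints. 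Both yield the same bound $\|d\Phi_1-d\Phi_2\|_{C^0}\le O(\varepsilon)\|dG_1-dG_2\|_{C^0}+O(\varepsilon)\|G_1-G_2\|_{C^0}^\delta$, which is exactly the mechanism producing the extra $d_{C^0}^\delta$ term in part~(2).
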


\begin{proof}
Let $V_1,V_2$ with representing functions $G_1,G_2$, and let $\wt G_1,\wt G_2$ be the representing
functions of $\F(V_1),\F(V_2)$. With respect to $G_1,G_2$, we consider the functions $\Psi_1,\Psi_2$
as defined in the proof of Lemma \ref{App-Lemma-inverse-function}, as well as their inverses
$\Phi_1,\Phi_2$. We also let
\begin{align*}
\Delta_1&=D_2G_1(\cdot)+h_2(\cdot,G_1(\cdot))\\
\Delta_2&=D_2G_2(\cdot)+h_2(\cdot,G_2(\cdot)),
\end{align*}
so that $\wt G_1=\Delta_1\Phi_1$ and $\wt G_2=\Delta_2\Phi_2$ (compositions).

\medskip
\noindent
(1) We want to prove that $\norm{\wt G_1-\wt G_2}_{C^0}\leq e^{-\chi/2}\norm{G_1-G_2}_{C^0}$. We have
\begin{align*}
&\,\norm{\wt G_1-\wt G_2}_{C^0}
\leq \norm{\Delta_1\Phi_1-\Delta_1\Phi_2}_{C^0}+\norm{\Delta_1\Phi_2-\Delta_2\Phi_2}_{C^0}\\
&\leq \norm{d\Delta_1}_{C^0}\norm{\Phi_1-\Phi_2}_{C^0}+\norm{\Delta_1-\Delta_2}_{C^0},
\end{align*}
so we estimate each of the terms above.
\begin{enumerate}[$\circ$]
\item $\norm{d\Delta_1}_{C^0}<2\ve$: to prove this, note that $d\Delta_1=D_2\circ dG_1+L_1$,
where $L_1$ is a function just like the one considered in Lemma \ref{App-Lemma-inverse-function},
so in particular $\norm{L_1}_{C^0}<2\ve^2$. Together with Lemma \ref{App-Lemma-introductory}(1),
we obtain that
$$
\norm{d\Delta_1}_{C^0}\leq \norm{D_2}\norm{dG_1}_{C^0}+\norm{L_1}_{C^0}\leq e^{-\chi}\ve +2\ve^2<2\ve.
$$ 
\item $\norm{\Delta_1-\Delta_2}_{C^0}\leq (e^{-\chi}+\ve^2)\norm{G_1-G_2}_{C^0}$: to prove this,
just observe that 
\begin{align*}
&\,\norm{\Delta_1-\Delta_2}_{C^0}=
\norm{D_2(G_1-G_2)+\left[h_2(\cdot,G_1(\cdot))-h_2(\cdot,G_2(\cdot))\right]}_{C^0}\\
&\leq \norm{D_2}\norm{G_1-G_2}_{C^0}+\norm{dh_2}_{C^0(B[2p])}\norm{G_1-G_2}_{C^0}\\
&\leq (e^{-\chi}+\ve^2)\norm{G_1-G_2}_{C^0}.
\end{align*}
\item $\norm{\Phi_1-\Phi_2}_{C^0}\leq \ve^2\norm{G_1-G_2}_{C^0}$: start observing that
$$
\norm{\Psi_1-\Psi_2}_{C^0}=\norm{h_1(\cdot,G_1(\cdot))-h_1(\cdot,G_2(\cdot))}_{C^0}
\leq \ve^2\norm{G_1-G_2}_{C^0}.
$$
It is worth reminding that the norms above are calculated in $B^d[p]$, while the norm of $\Phi_1-\Phi_2$
is to be calculated in $B^d[e^{\chi-\sqrt{\ve}}p]$. Since $\Psi_1(B^d[p])\supset B^d[e^{\chi-\sqrt{\ve}}p]$,
\begin{align*}
&\,\norm{\Phi_1-\Phi_2}_{C^0}\leq \norm{\Phi_1\Psi_1-\Phi_2\Psi_1}_{C^0}=\norm{{\rm Id}-\Phi_2\Psi_1}_{C^0}
=\norm{\Phi_2\Psi_2-\Phi_2\Psi_1}_{C^0}\\
&\leq \norm{d\Phi_2}_{C^0}\norm{\Psi_2-\Psi_1}_{C^0}\leq \norm{\Psi_2-\Psi_1}_{C^0}\leq
\ve^2\norm{G_1-G_2}_{C^0},
\end{align*}
where we used Lemma \ref{App-Lemma-inverse-function}(1).
\end{enumerate}
Using the three estimates above, we get that
\begin{align*}
&\,\norm{\wt G_1-\wt G_2}_{C^0}\leq 2\ve\cdot \ve^2\norm{G_1-G_2}_{C^0}+
(e^{-\chi}+\ve^2)\norm{G_1-G_2}_{C^0}\\
&=(e^{-\chi}+\ve^2+2\ve^3)\norm{G_1-G_2}_{C^0}\leq (e^{-\chi}+3\ve^2)\norm{G_1-G_2}_{C^0}
\end{align*}
and so $\norm{\wt G_1-\wt G_2}_{C^0}\leq e^{-\chi/2}\norm{G_1-G_2}_{C^0}$ for $\ve>0$ small enough.

\medskip
\noindent
(2) In view of part (1), we need to estimate $\norm{d\wt G_1-d\wt G_2}_{C^0}$
in terms of $\norm{G_1-G_2}_{C^0}$ and $\norm{G_1-G_2}_{C^0}^\delta$. We will show that
$$
\norm{d\wt G_1-d\wt G_2}_{C^0} \leq \left(e^{-\chi}+3\ve^2\right)\norm{dG_1-dG_2}_{C^0}+
\left(e^{-\chi+\ve}+3\ve^2\right)\norm{G_1-G_2}_{C^0}^\delta.
$$
Note that the contribution of $\norm{G_1-G_2}_{C^0}^\delta$ is very small.
We have $d\wt G_1=(d\Delta_1)_{\Phi_1}\circ d\Phi_1$ and 
$d\wt G_2=(d\Delta_2)_{\Phi_2}\circ d\Phi_2$, hence for $\wt v \in B^d[\wt p]$:
\begin{align*}
&\,\norm{(d\wt G_1)_{\wt v}-(d\wt G_2)_{\wt v}}=
\norm{(d\Delta_1)_{\Phi_1(\wt v)}(d\Phi_1)_{\wt v}-(d\Delta_2)_{\Phi_2(\wt v)}(d\Phi_2)_{\wt v}}\\
&\leq \norm{(d\Delta_1)_{\Phi_1(\wt v)}(d\Phi_1)_{\wt v}-(d\Delta_1)_{\Phi_1(\wt v)}(d\Phi_2)_{\wt v}}+\\
&\hspace{.47cm}\norm{(d\Delta_1)_{\Phi_1(\wt v)}(d\Phi_2)_{\wt v}-(d\Delta_1)_{\Phi_2(\wt v)}(d\Phi_2)_{\wt v}}+\\
&\hspace{.47cm} \norm{(d\Delta_1)_{\Phi_2(\wt v)}(d\Phi_2)_{\wt v}-(d\Delta_2)_{\Phi_2(\wt v)}(d\Phi_2)_{\wt v}}\\
&\leq \norm{d\Delta_1}_{C^0}\norm{d\Phi_1-d\Phi_2}_{C^0}+
\Hol{\delta}(d\Delta_1)\norm{\Phi_1-\Phi_2}_{C^0}^\delta \norm{d\Phi_2}_{C^0}+\\
&\hspace{.47cm} \norm{d\Delta_1-d\Delta_2}_{C^0}\norm{d\Phi_2}_{C^0}.
\end{align*}
In the above expression, we need to estimate the values of
$\norm{d\Delta_1-d\Delta_2}_{C^0}$, $\Hol{\delta}(d\Delta_1)$ and $\norm{d\Phi_1-d\Phi_2}_{C^0}$.

\medskip
\noindent
{\sc Claim 1:}
$\norm{d\Delta_1-d\Delta_2}_{C^0}\leq (e^{-\chi}+\ve^2)\norm{dG_1-dG_2}_{C^0}+2\ve\norm{G_1-G_2}_{C^0}^\delta$.

\begin{proof}[Proof of Claim $1$.]
We have $d\Delta_1=D_2\circ dG_1+L_1$ and $d\Delta_2=D_2\circ dG_2+L_2$ and so
\begin{align}\label{App-Estimate-Delta}
\norm{d\Delta_1-d\Delta_2}_{C^0}\leq \norm{D_2}\norm{dG_1-dG_2}_{C^0}+\norm{L_1-L_2}.
\end{align}
We will prove that $\norm{L_1-L_2}\leq \ve^2\norm{dG_1-dG_2}_{C^0}+2\ve\norm{G_1-G_2}_{C^0}^\delta$.
Indeed, since $\norm{\begin{bmatrix} I \\ (dG_2)_{v}\end{bmatrix}}\leq 1+\|dG_2\|_{C^0}<1+\ve<2$,
we have
\begin{align*}
\norm{L_1(v)-L_2(v)}&=\norm{(dh_2)_{(v,G_1(v))}\begin{bmatrix} I \\ (dG_1)_{v}\end{bmatrix}-
(dh_2)_{(v,G_2(v))}\begin{bmatrix} I \\ (dG_2)_{v}\end{bmatrix}}\\
&\leq \norm{(dh_2)_{(v,G_1(v))}\begin{bmatrix} I \\ (dG_1)_{v}\end{bmatrix}-
(dh_2)_{(v,G_1(v))}\begin{bmatrix} I \\ (dG_2)_{v}\end{bmatrix}}+\\
&\ \ \ \ \norm{(dh_2)_{(v,G_1(v))}\begin{bmatrix} I \\ (dG_2)_{v}\end{bmatrix}-
(dh_2)_{(v,G_2(v))}\begin{bmatrix} I \\ (dG_2)_{v}\end{bmatrix}}\\
&\leq \norm{dh_2}_{C^0(B[2p])}\norm{(dG_1)_v-(dG_2)_v}+\\
&\ \ \ \ 2\Hol{\delta}(dh_2)\norm{G_1(v)-G_2(v)}^\delta\\
&\leq \norm{dh_2}_{C^0(B[2p])}\norm{dG_1-dG_2}_{C^0}+\\
&\ \ \ \ 2\Hol{\delta}(dh_2)\norm{G_1-G_2}_{C^0}^\delta
\end{align*}
and so 
$$
\norm{L_1-L_2}_{C^0}\leq \ve^2\norm{dG_1-dG_2}_{C^0}+2\ve\norm{G_1-G_2}_{C^0}^\delta.
$$
Using (\ref{App-Estimate-Delta}),
$\norm{d\Delta_1-d\Delta_2}_{C^0}\leq (e^{-\chi}+\ve^2)\norm{dG_1-dG_2}_{C^0}+2\ve\norm{G_1-G_2}_{C^0}^\delta$.
\end{proof}

\medskip
\noindent
{\sc Claim 2:} $\Hol{\delta}(d\Delta_1)<\tfrac{1}{2}e^{-\chi}+3\ve<1$.

\begin{proof}[Proof of Claim $2$.]
By Lemma \ref{App-Lemma-Holder-norm}(1),
$$
\Hol{\delta}(d\Delta_1)\leq \norm{D_2}\Hol{\delta}(dG_1)+\Hol{\delta}(L_1)<\tfrac{1}{2}e^{-\chi}+\Hol{\delta}(L_1),
$$
so it is enough to prove that $\Hol{\delta}(L_1)<3\ve$. Calculating $\norm{L_1(v_1)-L_1(v_2)}$ as
in the proof of Claim 1, we get that
\begin{align*}
&\,\norm{L_1(v_1)-L_1(v_2)}\\
&=\norm{(dh_2)_{(v_1,G_1(v_1))}\begin{bmatrix} I \\ (dG_1)_{v_1}\end{bmatrix}-
(dh_2)_{(v_2,G_1(v_2))}\begin{bmatrix} I \\ (dG_1)_{v_2}\end{bmatrix}}\\
&\leq \norm{(dh_2)_{(v_1,G_1(v_1))}\begin{bmatrix} I \\ (dG_1)_{v_1}\end{bmatrix}-
(dh_2)_{(v_1,G_1(v_1))}\begin{bmatrix} I \\ (dG_1)_{v_2}\end{bmatrix}}+\\
& \ \ \ \ \norm{(dh_2)_{(v_1,G_1(v_1))}\begin{bmatrix} I \\ (dG_1)_{v_2}\end{bmatrix}-
(dh_2)_{(v_2,G_1(v_2))}\begin{bmatrix} I \\ (dG_1)_{v_2}\end{bmatrix}}\\
& \leq \norm{dh_2}_{C^0(B[2p])}\norm{(dG_1)_{v_1}-(dG_1)_{v_2}}+2\Hol{\delta}(dh_2)\norm{\begin{bmatrix} v_1-v_2 \\ G_1(v_1)-G_1(v_2)\end{bmatrix}}^\delta\\
&\leq \left(\norm{dh_2}_{C^0(B[2p])}\Hol{\delta}(dG_1)+
2(1+\ve)^\delta\Hol{\delta}(dh_2)\right)\norm{v_1-v_2}^\delta
\end{align*}
and so $\Hol{\delta}(L_1)\leq \tfrac{\ve^2}{2}+2(1+\ve)^\delta\ve<3\ve$.
\end{proof}

\medskip
\noindent
{\sc Claim 3:} $\norm{d\Phi_1-d\Phi_2}_{C^0}\leq \ve^2\norm{dG_1-dG_2}_{C^0}+2\ve\norm{G_1-G_2}_{C^0}^\delta$.

\begin{proof}[Proof of Claim $3$.]
Fix $\wt v\in B^d[e^{\chi-\sqrt{\ve}}p]$ and let $v=\Phi(\wt v)$. We have
\begin{align*}
&\,\norm{(d\Phi_1)_{\wt v}-(d\Phi_2)_{\wt v}}\leq\norm{{\rm Id}-(d\Psi_1)_v(d\Phi_2)_{\wt v}}\\
&=\norm{(d\Psi_2)_v(d\Phi_2)_{\wt v}-(d\Psi_1)_v(d\Phi_2)_{\wt v}}\leq \norm{(d\Psi_2)_v-(d\Psi_1)_v}
\end{align*}
because $\norm{d\Phi_1},\norm{d\Phi_2}<1$. We have $(\Psi_2-\Psi_1)(v)=h_1(v,G_2(v))-h_1(v,G_1(v))$, thus
\begin{align*}
\norm{d(\Psi_2-\Psi_1)_v}&=\norm{(dh_1)_{(v,G_2(v))}\begin{bmatrix} I \\ (dG_2)_v\end{bmatrix}-
(dh_1)_{(v,G_1(v))}\begin{bmatrix} I \\ (dG_1)_v\end{bmatrix}}\\
&\leq \norm{(dh_1)_{(v,G_2(v))}\begin{bmatrix} I \\ (dG_2)_v\end{bmatrix}-
(dh_1)_{(v,G_2(v))}\begin{bmatrix} I \\ (dG_1)_v\end{bmatrix}}+\\
& \ \ \ \ \norm{(dh_1)_{(v,G_2(v))}\begin{bmatrix} I \\ (dG_1)_v\end{bmatrix}-
(dh_1)_{(v,G_1(v))}\begin{bmatrix} I \\ (dG_1)_v\end{bmatrix}}\\
&\leq \norm{dh_1}_{C^0(B[2p])}\norm{(dG_1)_v-(dG_2)_v}+2\Hol{\delta}(dh_1)\norm{G_2(v)-G_1(v)}^\delta\\
&\leq \norm{dh_1}_{C^0(B[2p])}\norm{dG_1-dG_2}_{C^0}+2\Hol{\delta}(dh_1)\norm{G_1-G_2}_{C^0}^\delta\\
&\leq \ve^2\norm{dG_1-dG_2}_{C^0}+2\ve\norm{G_1-G_2}_{C^0}^\delta,
\end{align*}
concluding the proof of Claim 3.
\end{proof}
Plugging all estimates together, we obtain that for $\ve>0$ small enough it holds
\begin{align*}
\norm{d\wt G_1-d\wt G_2}_{C^0}&\leq 2\ve\left(\ve^2\norm{dG_1-dG_2}_{C^0}+2\ve\norm{G_1-G_2}_{C^0}^\delta\right)+\\
&\ \ \ \ e^{-\chi+\ve}\left(\ve^2\norm{G_1-G_2}_{C^0}\right)^\delta+\\
&\ \ \ \ e^{-\chi+\ve}\left[(e^{-\chi}+\ve^2)\norm{dG_1-dG_2}_{C^0}+2\ve\norm{G_1-G_2}_{C^0}^\delta\right]\\
&\leq \left(e^{-\chi}+\ve^2+2\ve^3\right)\norm{dG_1-dG_2}_{C^0}+\\
&\ \ \ \ \left(e^{-\chi+\ve}+4\ve^2+2\ve\right)\norm{G_1-G_2}_{C^0}^\delta\\
&\leq \left(e^{-\chi}+3\ve^2\right)\norm{dG_1-dG_2}_{C^0}
+\left(e^{-\chi+\ve}+3\ve\right)\norm{G_1-G_2}_{C^0}^\delta.
\end{align*}
Finally, with the estimate of part (1) we conclude that $\norm{\wt G_1-\wt G_2}_{C^1}$ is at most
$$
(e^{-\chi}+3\ve^2)\norm{G_1-G_2}_{C^0}+\left(e^{-\chi}+3\ve^2\right)\norm{dG_1-dG_2}_{C^0}
+\left(e^{-\chi+\ve}+3\ve\right)\norm{G_1-G_2}_{C^0}^\delta
$$
which is bounded by $e^{-\chi/2}\left(\norm{G_1-G_2}_{C^1}+\norm{G_1-G_2}_{C^0}^\delta\right)$,
proving part (2).
\end{proof}

\subsection{Relation between $s$--graphs and $u$--graphs}

We finish this appendix providing two relations between $s$--graphs and $u$--graphs.
The first deals with intersections of graphs based at the same point, and the second
deals with intersections of images of $u$--graphs under $F$ with
$s$--graphs. Let $\mathfs M^u=\mathfs M^u_{p,\eta}$
and $\mathfs M^s=\mathfs M^s_{q,\eta}$.

\begin{lemma}\label{App-Lemma-admissible-graphs}
The following holds for $\ve>0$ small enough. For
every $V^s\in\mathfs M^s$ and $V^u\in\mathfs M^u$ it holds:
\begin{enumerate}[{\rm (1)}]
\item $V^s$ and $V^u$ intersect at a single point $w$, and $\|w\|\leq 50^{-1}\eta$.
\item $w=w(V^s,V^u)$ is a Lipschitz function of the pair $(V^s,V^u)$, with Lipschitz constant
$\tfrac{2}{1-\ve^2}$.
\end{enumerate}
\end{lemma}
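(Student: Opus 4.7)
The plan is to reduce the intersection problem to finding a fixed point of the composition $G^s \circ G^u$ (or equivalently $G^u \circ G^s$), where $G^u : B^d[p] \to \R^{m-d}$ and $G^s : B^{m-d}[q] \to \R^d$ are the representing functions of $V^u$ and $V^s$. A point $(v,w) \in \R^d \times \R^{m-d}$ lies in $V^s \cap V^u$ iff simultaneously $w = G^u(v)$ and $v = G^s(w)$; substituting yields that $v$ must be a fixed point of $T := G^s \circ G^u$ (with image in the domain of $G^s$).

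For part (1), I would first invoke Lemma \ref{App-Lemma-introductory}(1) to get $\|dG^u\|_{C^0} < \varepsilon$ and $\|dG^s\|_{C^0} < \varepsilon$, so $T$ has Lipschitz constant at most $\varepsilon^2 < 1$ wherever it is defined. For the invariant ball: choosing $c \geq \frac{10^{-3}}{1-\varepsilon}$ (small, $\ll 1/50$), one checks that $T$ maps $B^d[c\eta]$ into itself. Indeed, for $v \in B^d[c\eta]$, (AM1) and $\|dG^u\|_{C^0}<\varepsilon$ give $\|G^u(v)\| \leq 10^{-3}\eta + \varepsilon c \eta$ (which lies in $B^{m-d}[q]$ since $\eta \leq q$), and then $\|G^s(G^u(v))\| \leq 10^{-3}\eta + \varepsilon(10^{-3}\eta + \varepsilon c\eta) \leq c\eta$. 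Banach's fixed point theorem produces a unique fixed point $v_* \in B^d[c\eta]$, and setting $w_* = G^u(v_*)$ yields the unique intersection point $w = (v_*, w_*) \in V^s \cap V^u$. Uniqueness in the full domain $B^d[p] \times B^{m-d}[q]$ follows from the same contraction estimate: any two intersection points produce two fixed points of $T$, and $\varepsilon^2 < 1$ forces them to coincide. The norm bound $\|w\| \leq 50^{-1}\eta$ follows since $\|v_*\| \leq c\eta$ and $\|w_*\| \leq 10^{-3}\eta + \varepsilon c\eta$, so $\|w\|^2 \leq 2(c\eta)^2$ is well under $(\eta/50)^2$ once $\varepsilon$ is small.

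For part (2), let $V_i^{s/u}$ have representing functions $G_i^{s/u}$ and intersection points $(v_i, w_i)$, $i=1,2$. Subtracting $v_i = G_i^s(w_i)$ and using the triangle inequality with $\|dG_1^s\|_{C^0} < \varepsilon$,
\[
\|v_1 - v_2\| \leq \|G_1^s(w_1) - G_1^s(w_2)\| + \|G_1^s(w_2) - G_2^s(w_2)\| \leq \varepsilon \|w_1 - w_2\| + d_{C^0}(V_1^s, V_2^s),
\]
and symmetrically $\|w_1 - w_2\| \leq \varepsilon \|v_1 - v_2\| + d_{C^0}(V_1^u, V_2^u)$. Substituting one into the other isolates
\[
(1-\varepsilon^2)\|v_1 - v_2\| \leq d_{C^0}(V_1^s, V_2^s) + \varepsilon\, d_{C^0}(V_1^u, V_2^u),
\]
and analogously for $\|w_1-w_2\|$, so the intersection point depends on $(V^s,V^u)$ in a Lipschitz manner with constant at most $\frac{2}{1-\varepsilon^2}$ (with respect to the natural sum-metric on pairs).

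I do not expect serious obstacles here; the result is essentially a contraction mapping argument on top of the a priori $C^0$ bounds from admissibility (AM1) and the $C^1$ bound from Lemma \ref{App-Lemma-introductory}(1). The only delicate point is bookkeeping: one must make sure that when setting up $T = G^s \circ G^u$ the image of the invariant ball really lies inside $B^{m-d}[q]$ (using $\eta \leq q$), and that the chosen radius $c\eta$ is simultaneously small enough to yield the $50^{-1}\eta$ norm bound and large enough to contain the fixed point, which is handled by taking $\varepsilon$ sufficiently small.
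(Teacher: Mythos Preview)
Your argument is correct and follows essentially the same route as the paper: reduce to a fixed point of the composition of the two representing functions, use the $C^1$ bound from Lemma~\ref{App-Lemma-introductory}(1) to get a contraction, and then read off Lipschitz dependence on the data. The one notable difference is in the uniqueness step: the paper worries that $G^s\circ G^u$ is a priori only defined on $\{v\in B^d[p]: G^u(v)\in B^{m-d}[q]\}$, and handles this by invoking the Kirszbraun theorem to extend both $G^u$ and $G^s$ to balls of the common radius $p\vee q$ before applying the contraction mapping theorem. Your direct two-step estimate (bounding $\|v_1-v_1'\|$ via the Lipschitz constant of $G^s$ on $B^{m-d}[q]$ and then of $G^u$ on $B^d[p]$, both convex) avoids this entirely and is cleaner; just make sure when you write it up that you phrase it that way rather than as ``$T$ is a contraction on its domain,'' since that domain need not be convex.
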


\begin{proof}
(1) Fix $V^s,V^u$ with representing functions $G,J$. Observe that intersections 
of $V^s$ and $V^u$ correspond to points $(v_1,v_2)$ that satisfy the system:
$$
\left\{
\begin{array}{l}
v_1=J(v_2)\\
v_2=G(v_1).
\end{array}
\right.
$$
This system, in turn, corresponds to $v_1$ being a fixed point of $J\circ G$,
wherever this composition is well-defined (one direction is 
direct; for the other, if $(J\circ G)(v_1)=v_1$ then defining $v_2=G(v_1)$ we have
$(v_1,v_2)=(v_1,G(v_1))=(J(v_2),v_2)$). Hence we are reduced to analysing the 
fixed points of $J\circ G$.

Since $G,J$ are defined in different domains,
we first prove the existence of a fixed point
for their composition and later prove that it is unique. 
For the existence, we reduce their
domains to $B^d[\eta]$ and $B^{m-d}[\eta]$. By Lemma \ref{App-Lemma-introductory}(2), we have
$$
B^d[\eta] \ \xrightarrow{\ G\ } \ B^{m-d}[10^{-2}\eta] \subset B^{m-d}[\eta]\ \xrightarrow{\ J\ }\ B^d[10^{-2}\eta],
$$
so the restriction of $J\circ G$ to $B^d[\eta]$ is well-defined. We claim that this restriction is
a contraction. Indeed, in the respective domains we have $\|dG\|_{C^0},\|dJ\|_{C^0}<\ve$ and
so $\|d(J\circ G)\|_{C^0}<\ve^2<1$. This guarantees the existence of a fixed point $v_1\in B^d[10^{-2}\eta]$,
which guarantees the existence of an intersection point $w=(v_1,v_2)\in V^s\cap V^u$ with 
$\|w\|< 50^{-1}\eta$.

To prove uniqueness, we first extend $G,J$ to domains on the same size.
By the Kirszbraun theorem, we can extend $G,J$ to functions
$\widetilde G:B^d[p\vee q]\to \R^{m-d}$
and $\widetilde J:B^{m-d}[p\vee q]\to \R^d$ so that ${\rm Lip}(\widetilde G)={\rm Lip}(G)<\ve$
and ${\rm Lip}(\widetilde J)={\rm Lip}(J)<\ve$. The same calculation performed in
Lemma \ref{App-Lemma-introductory}(2) gives that
$$
B^d[p\vee q] \ \xrightarrow{\ \widetilde G\ } \ B^{m-d}[10^{-2}(p\vee q)]
\subset B^{m-d}[p\vee q]\ \xrightarrow{\ \widetilde J\ }\ B^d[10^{-2}(p\vee q)],
$$
and by definition the composition $\widetilde J\circ\widetilde G$ is a contraction. 
It therefore has the unique fixed point $w$.

\medskip
\noindent
(2) We start with a simple claim about contractions on metric spaces.

\medskip
\noindent
{\sc Claim:} Let $(X,d)$ be a complete metric space, let $f_1,f_2$ be contractions
on $X$, and let $x_1,x_2$ be the unique fixed points of $f_1,f_2$. If
$d(f_1,f_2):=\sup\{d(f_1(x),f_2(x)):x\in X\}$, then
$$
d(x_1,x_2)\leq \tfrac{d(f_1,f_2)}{1-{\rm Lip}(f_1)}\cdot
$$

\begin{proof}[Proof of the claim.]
We have
\begin{align*}
&\,d(x_1,x_2)=d(f_1(x_1),f_2(x_2))\leq d(f_1(x_1),f_1(x_2))+d(f_1(x_2),f_2(x_2))\\
&\leq {\rm Lip}(f_1)d(x_1,x_2)+d(f_1,f_2)
\end{align*}
and the claim follows.
\end{proof}

Let $V^s_1,V^s_2\in \mathfs M^s$ with representing functions $G_1,G_2$,
and $V^u_1,V^u_2\in\mathfs M^u$ with representing functions $J_1,J_2$.
Let $f_1=J_1\circ G_1$ and $f_2=J_2\circ G_2$, and 
let $w_i$ be the intersection of $V^s_i$ and $V^u_i$. By part (1), 
$w_1=(v_1,v_2)$ and $w_2=(t_1,t_2)$ where $v_1$ is the unique fixed point of $f_1$ and
$t_1$ is the unique fixed point of $f_2$. Observing that ${\rm Lip}(f_1) <\ve^2$ and that
\begin{align*}
&\,\|f_1-f_2\|_{C^0}\leq \|J_1\circ G_1-J_1\circ G_2\|_{C^0}+\|J_1\circ G_2-J_2\circ G_2\|_{C^0}\\
&\leq {\rm Lip}(J_1)\|G_1-G_2\|_{C^0}+\|J_1-J_2\|_{C^0}\leq \ve \|G_1-G_2\|_{C^0}+\|J_1-J_2\|_{C^0}\\
&\leq \|G_1-G_2\|_{C^0}+\|J_1-J_2\|_{C^0},
\end{align*}
the claim implies that $\|v_1-t_1\|\leq \tfrac{\|G_1-G_2\|_{C^0}+\|J_1-J_2\|_{C^0}}{1-\ve^2}$.
Proceeding similarly with $v_2,t_2$, we conclude that
$$
\|w_1-w_2\| \leq \tfrac{2(\|G_1-G_2\|_{C^0}+\|J_1-J_2\|_{C^0})}{1-\ve^2}=
\tfrac{2(d_{C^0}(V^s_1,V^s_2)+d_{C^0}(V^u_1,V^u_2))}{1-\ve^2}\cdot
$$
\end{proof}

For the second lemma, let $F$ and $\mathfs F$ be as in Section \ref{App-Section-graph-transforms},
with $\mathfs F:\mathfs M^u_{p,\eta}\to\mathfs M^u_{\wt p,\wt \eta}$.
We let $\wt q\geq \wt\eta$ be another parameter and consider $\mathfs M^s_{\wt q,\wt \eta}$.

\begin{lemma}\label{App-Lemma-graphs-intersection}
For every $V^u\in\mathfs M^u_{p,\eta}$ and $V^s\in \mathfs M^s_{\wt q,\wt \eta}$,
the intersection $F(V^u)\cap V^s$ consists of a single point.
\end{lemma}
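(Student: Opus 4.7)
The plan is to mimic the proof of Lemma \ref{App-Lemma-admissible-graphs}(1), treating $F(V^u)$ as a single $u$--graph with representing function $\wt G$. By Lemma \ref{App-Lemma-inverse-function}, $F(V^u)$ is of the form $\{(\wt v,\wt G(\wt v)):\wt v\in\Psi(B^d[p])\}$ with $\Psi(B^d[p])\supset B^d[e^{\chi-\sqrt\ve}p]$, and by Remark \ref{App-remark-extension} the function $\wt G$ satisfies the admissibility estimates (AM1)--(AM3) on this larger domain (in particular $\|d\wt G\|_{C^0}<\ve$ and $\|\wt G(0)\|<10^{-3}\wt\eta$). Writing $V^s=\{(J(v_2),v_2):v_2\in B^{m-d}[\wt q]\}$, an intersection point $(v_1,v_2)\in F(V^u)\cap V^s$ is equivalent to $v_1=J(v_2)$, $v_2=\wt G(v_1)$, which in turn is equivalent to $v_2$ being a fixed point of $T:=\wt G\circ J$ on an appropriate subdomain of $B^{m-d}[\wt q]$.

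First I would verify that $T:B^{m-d}[\wt q]\to B^{m-d}[\wt q]$ is well-defined. For $v_2\in B^{m-d}[\wt q]$, the admissibility of $V^s$ gives $\|J(v_2)\|\leq\|J(0)\|+\ve\|v_2\|\leq 10^{-3}\wt\eta+\ve\wt q$, which (for $\ve$ small and under the standing relation that $\wt q$ is not exponentially larger than $p$) lies inside $B^d[e^{\chi-\sqrt\ve}p]\subset\Psi(B^d[p])$, so $\wt G(J(v_2))$ is defined. Applying the same admissibility bound to $\wt G$ gives $\|T(v_2)\|\leq 10^{-3}\wt\eta+\ve\bigl(10^{-3}\wt\eta+\ve\wt q\bigr)\leq\wt q$ for $\ve>0$ small (using $\wt\eta\leq\wt q$). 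The chain rule together with $\|d\wt G\|_{C^0},\|dJ\|_{C^0}<\ve$ yields $\mathrm{Lip}(T)<\ve^2<1$, so Banach's fixed point theorem produces a unique fixed point $v_2^\star\in B^{m-d}[\wt q]$, providing a single intersection point $(J(v_2^\star),v_2^\star)\in F(V^u)\cap V^s$.

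For global uniqueness over the full $F(V^u)$ I would observe that any intersection point $(v_1,v_2)$ automatically has $v_2\in B^{m-d}[\wt q]$ (it lies on $V^s$) and therefore $v_1=J(v_2)$ satisfies the above estimate; hence $v_2$ is itself a fixed point of $T$ on $B^{m-d}[\wt q]$, so it coincides with $v_2^\star$. If desired, one can make this argument fully symmetric by extending $\wt G$ and $J$ via the Kirszbraun theorem to a common ball of radius $\wt q\vee e^{\chi-\sqrt\ve}p$ without increasing Lipschitz constants, and repeating the self-map and contraction checks exactly as in the proof of Lemma \ref{App-Lemma-admissible-graphs}(1). The main obstacle I anticipate is bookkeeping the compatibility condition between the parameters $\wt q$ and $p$ that ensures $J(B^{m-d}[\wt q])$ actually lands in $\Psi(B^d[p])$; once this is secured (as it is in the situations where the lemma is invoked, via (GPO2) and the relations between $p^{s/u}$ and $q^{s/u}$ along an edge $v\overset{\ve}{\to}w$), the contraction mapping principle does the rest.
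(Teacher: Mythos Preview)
Your approach is correct and essentially the same as the paper's: represent $F(V^u)$ as a graph of $\wt G$ on $\Psi(B^d[p])$, use Remark~\ref{App-remark-extension} to get $\|d\wt G\|_{C^0}<\ve$ and $\|\wt G(0)\|<10^{-3}\wt\eta$, and then run the contraction-mapping argument of Lemma~\ref{App-Lemma-admissible-graphs}(1). The paper's proof is just this, stated in two lines.

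One comment on your ``main obstacle'': your direct self-map $T:B^{m-d}[\wt q]\to B^{m-d}[\wt q]$ requires $J(B^{m-d}[\wt q])\subset\Psi(B^d[p])$, i.e.\ roughly $10^{-2}\wt q\leq e^{\chi-\sqrt\ve}p$, and there is no hypothesis in the appendix bounding $\wt q$ above in terms of $p$. The paper's appeal to the proof of Lemma~\ref{App-Lemma-admissible-graphs}(1) avoids this entirely by using its two-step structure: existence comes from restricting to the common ball $B[\wt\eta]$ (and $\wt\eta\leq e^\ve\eta\leq e^\ve p<e^{\chi-\sqrt\ve}p$, $\wt\eta\leq\wt q$ both hold), while uniqueness comes from the Kirszbraun extension to a ball containing both domains, which needs no relation between $\wt q$ and $p$. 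You propose exactly this Kirszbraun fix at the end, so your argument is complete; just note that you can go straight to it rather than treating it as a fallback.
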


\begin{proof}
If $G$ is the representing function of $V^u$ then
$$
F(V^u)=\left\{(\wt v,\wt G(\wt v)):\wt v\in \Psi(B^d[p])\right\},
$$
where $\wt G:\Psi(B^d[p])\to \R^{m-d}$ is defined as before by
$\wt G=D_2G\Phi+h_2(\Phi,G\Phi)$.
By Remark \ref{App-remark-extension}, $\norm{d\wt G}_{C^0}<\ve$.
Hence we can apply the same proof of Lemma \ref{App-Lemma-admissible-graphs}(1)
and conclude that $F(V^u)\cap V^s$ consists of a single point.
\end{proof}

\bibliographystyle{alpha}
\bibliography{bibliography}{}

\end{document}